\def\centerarc[#1](#2)(#3:#4:#5)
\theoremstyle{plain}
\newtheorem{theorem}{Theorem}
\newtheorem{lemma}[theorem]{Lemma}
\newtheorem{proposition}[theorem]{Proposition}
\newtheorem{corollary}[theorem]{Corollary}
\newtheorem{remark}[theorem]{Remark}
\newtheorem{claim}[theorem]{Claim}
\newtheorem{fact}{Fact}
\theoremstyle{definition}
\newtheorem{definition}[theorem]{Definition}
\newtheorem{def/lem}[theorem]{Definition/Lemma}
\newtheorem{example}[theorem]{Example}
\newtheorem{assumptions}[theorem]{Assumptions}
\newtheorem{notation}[theorem]{Notation}
\newtheorem{note}[theorem]{Note}
\newtheorem{setting}[theorem]{Setting}
\numberwithin{theorem}{section}
\numberwithin{equation}{section}
\newcommand{\C}{\mathbb{C}}
\newcommand{\CP}{\mathbb{CP}}
\newcommand{\R}{\mathbb{R}}
\newcommand{\Z}{\mathbb{Z}}
\newcommand{\mH}{\mathcal{H}}
\newcommand{\mL}{\mathcal{L}}
\newcommand{\mO}{\mathcal{O}}
\newcommand{\mA}{\mathcal{A}}
\newcommand{\mB}{\mathcal{B}}
\newcommand{\mD}{\mathcal{D}}
\newcommand{\dd}{\partial\bar{\partial}}
\newcommand{\p}{\partial}
\newcommand{\tz}{\tilde{z}}
\newcommand{\tA}{\tilde{A}}
\newcommand{\tL}{\tilde{L}}
\newcommand{\tH}{\tilde{H}}
\newcommand{\tp}{\tilde{\partial}}
\newcommand{\Th}{\tilde{h}}
\newcommand{\inn}{\langle \cdot , \cdot \rangle}
\newcommand{\bmH}{\bar{\mH}}
\newcommand{\bH}{\bar{H}}
\newcommand{\bL}{\bar{L}}
\DeclareMathOperator{\tr}{tr}
\DeclareMathOperator{\End}{End}
\DeclareMathOperator{\Aut}{Aut}
\DeclareMathOperator{\Hom}{Hom}
\DeclareMathOperator{\codim}{codim}
\DeclareMathOperator{\Ann}{Ann}
\DeclareMathOperator{\Span}{span}
\DeclareMathOperator{\img}{img}
\DeclareMathOperator{\rk}{rk}
\DeclareMathOperator{\Id}{Id}
\DeclareMathOperator{\hol}{(hol.\!)}
\DeclareMathOperator{\hot}{(h.o.t.)}
\DeclareMathOperator{\Hol}{Hol}
\DeclareMathOperator{\Hess}{Hess}
\DeclareMathOperator{\grad}{grad}
\DeclareMathOperator{\RE}{Re}
\DeclareMathOperator{\RES}{Res}
\DeclareMathOperator{\Vol}{Vol}
\DeclareMathOperator{\dev}{dev}
\DeclarePairedDelimiter\floor{\lfloor}{\rfloor}
\newcommand{\Address}{{
		\bigskip
		\begin{flushright}
			\textsc{King's College London, Department of Mathematics\\
				Strand, London, WC2R 2LS, United Kingdom}
		\end{flushright}
}}
\title{Polyhedral K\"ahler cone metrics on \(\C^n\) singular at hyperplane arrangements}
\date{\today}
\author{
Martin de Borbon \\ (\small{\href{mailto:martin.deborbon@kcl.ac.uk}{\nolinkurl{martin.deborbon@kcl.ac.uk}}}) \and Dmitri Panov \\ (\small{\href{mailto:dmitri.panov@kcl.ac.uk}{\nolinkurl{dmitri.panov@kcl.ac.uk}}})
}
\begin{document}

\maketitle

\begin{abstract}
	Let \(X\) be a complex manifold and
	let \(g\) be a polyhedral metric on it inducing its topology. We say that \(g\) is a polyhedral K\"ahler (PK) metric on \(X\) if it is K\"ahler outside its singular set. The local geometry of PK metrics is modelled on PK cones, and in this article we focus on an interesting class of examples of these. Following work of Couwenberg-Heckman-Looijenga \cite{CHL}, we consider a special kind of flat torsion free meromorphic connections on \(\C^n\) with simple poles at the hyperplanes of a linear arrangement. In the case of unitary holonomy we show that, under suitable numerical conditions, the metric completion is a PK cone metric on \(\C^n\). We apply our results to the essential braid arrangement, extending to higher dimensions the classical story of spherical metrics on \(\CP^1\) with three cone points.
\end{abstract}

\setcounter{tocdepth}{2}
\tableofcontents

\section{Introduction and main results}

The general theme of this article is the study of constant holomorphic sectional curvature K\"ahler manifolds - whose local geometry is modelled on Euclidean \(\C^n\), Fubini-Study \(\CP^n\) and complex hyperbolic \(\C \mathbb{H}^n\) spaces - with metric singularities of conical type. 

Thurston showed in \cite{Thu} that the moduli space of flat metrics on the \(2\)-sphere with prescribed cone angles in the interval \((0,2\pi)\) at a fixed number of distinct points has a natural complex hyperbolic metric induced by the area of the flat surfaces it parametrises. In the same paper, he introduced the notion of a \((X,G)\) cone manifold (\(X= \C\mathbb{H}^n\) and \(G=PU(1,n)\) for the case at hand) 
to construct the metric completion of the moduli space. In a different direction, in their search for lattices in \(PU(1,n)\), Deligne-Mostow \cite{DM} were led to consider monodromy representations of the Lauricella  hypergeometric differential equation. The solutions of such equations form a locally constant sheaf of vector spaces over the configuration space of a fixed number of distinct points on \(\CP^1\). The monodromy of this local system preserves a natural Hermitian form given by the cup product of an appropriate twisted cohomology group. For suitable values of parameters, the signature of the Hermitian form is \((1,n)\) and it induces a complex hyperbolic metric on the configuration space. As it turns out, the families of complex hyperbolic metrics found by Deligne-Mostow and Thurston are the same. 

Couwenberg-Heckmann-Looijenga introduced in \cite{CHL} the notion of a Dunkl system and recovered Deligne-Mostow's results into a more geometric framework.
In this setting, for given complex parameters \(a \in \C^{n+1}\) there is an explicit Lauricella connection \(\nabla^a\) on \(\C^n\). The connection \(\nabla^a\) is meromorphic and has simple poles at the hyperplanes of the essential braid arrangement \(\mA_n\). Moreover, the connection \(\nabla^a\) is flat, torsion free and its local parallel sections correspond to solutions of the Lauricella hypergeometric equation. One advantage of the approach in \cite{CHL} is that it allows for generalization to different hyperplane arrangements, such as mirrors of unitary complex reflection groups; previously considered by Hirzebruch and collaborators \cite{Hir, BHH} in relation to ball quotients constructions. In the presence of an admissible flat Hermitian form and under special rational assumptions on the weights of a Dunkl connection, known as the Schwarz conditions, the authors \cite{CHL} realize the metric completion of the projectivized arrangement complement as a global orbifold quotient of a complex space form. The Schwarz conditions are not satisfied in generic cases, for example when cone angles in transverse directions at generic points of hyperplanes are irrational numbers, and  Thurston's \((X,G)\) cone manifolds serve as natural candidates for describing the metric completion in such situations, see the forthcoming paper \cite{Shen}. 

We build on \cite{CHL} and consider a special class of flat torsion free meromorphic connections on \(\C^n\) with logarithmic poles at the hyperplanes of a linear arrangement. Our setting is a bit more general than that of Dunkl connections \cite[Definition 2.8]{CHL} in the sense that we lift the self-adjoint condition on the residues with respect to some fixed positive definite inner product. This allows us to consider angles bigger than \(2\pi\) in full generality, a main novelty in our work. When the holonomy is unitary,  we identify the metric completions with polyhedral K\"ahler cones,  introduced in \cite{Pan}. 
In the special case of the Lauricella connection, our results provide a higher dimensional version of the existence criterion for spherical metrics with three cone points on $\CP^1$, a  topic which can be traced back to Klein's work on the monodromy of the Gauss hypergeometric equation, see \cite{Ere}.  
We proceed to the statement of our main results.

\subsection{Polyhedral K\"ahler cones from unitary connections}

We work on \(\C^n\) with linear coordinates \((z_1, \ldots, z_n)\). Let \(\mH\) be a finite collection complex hyperplanes \(H \subset \C^n\) going through the origin. For every \(H \in \mH\) we choose a defining linear equation \(h=h(z_1, \ldots, z_n)\) so that \(H= \{h=0\}\). Moreover,
suppose that for each \(H \in \mH\) we are given a \(n \times n\) complex matrix \(A_H\). Consider the singular affine connection \(\nabla\)  defined by 
\begin{equation}\label{eq:thmconect} 
\nabla = d - \sum_{H \in \mH} A_H \frac{dh}{h} .
\end{equation}
The expression \eqref{eq:thmconect} for \(\nabla\) is understood
with respect to the trivialization of \(T\C^n\) given by the coordinate frame \(\p/\p z_1, \ldots, \p / \p z_n\) and the \(d\) term represents the usual connection of Euclidean space given by the exterior derivative.

We refer to meromorphic connections of the form specified by Equation \eqref{eq:thmconect} as \emph{standard connections}. The connection \(\nabla\) is holomorphic on the hyperplane complement \((\C^n)^{\circ} = \C^n \setminus \mH\) and it has simple poles at \(\mH\). We say that \(\nabla\) is flat (torsion free, unitary) if it is so on its regular part \((\C^n)^{\circ}\).
Our first main result is the next.

\begin{theorem}[Metric completion]\label{PRODTHM}
	Let \(\nabla\) be a flat torsion free standard connection  given by Equation \eqref{eq:thmconect}. Suppose that \(\nabla\) is unitary and let \(g\) be a K\"ahler metric on \((\C^n)^{\circ}\) such that \(\nabla g =0\). 
	
	Moreover, assume that for all irreducible intersections \(L \in \mL_{irr}(\mH)\) the following numerical conditions on the weights \(a_L\) of \(\nabla\)  are satisfied:
	\begin{itemize}
		\item \emph{(non-integer)} \(a_L \notin \Z \);
		\item \emph{(positivity)}  \(1 - a_L>0\).
	\end{itemize}

	Then the metric completion of the flat K\"ahler manifold \(\left((\C^n)^{\circ}, g\right)\) \emph{is}  \(\C^n\) endowed with a polyhedral cone metric. 
\end{theorem}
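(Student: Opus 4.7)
The plan is to exploit the homogeneity of $\nabla$ under the $\C^{\ast}$-scaling to realise $g$ as a metric cone, and then to proceed inductively on $n$ by producing local product decompositions along the strata of the arrangement. Since each logarithmic form $dh/h$ is invariant under $z \mapsto \lambda z$, the connection $\nabla$ is compatible with the $\C^{\ast}$-action up to a canonical gauge transformation. It follows that $g$ is a cone metric on the regular part $(\C^n)^{\circ}$: a suitable rescaling of the Euler action dilates $g$ by a constant factor. Consequently it is enough to show that $g$ extends to a polyhedral K\"ahler structure on $\C^n$, since the global conical structure is then inherited automatically.

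I then proceed by induction on $n$. In the base case $n=1$ the arrangement reduces to the origin with a single weight $a$; the $\nabla$-parallel sections are multiples of $z^{a}$, a unitary parallel metric has the form $c\,|z|^{-2a}|dz|^{2}$, and the substitution $w = z^{1-a}$ —single-valued thanks to the non-integer hypothesis— transforms $g$ into a flat cone metric on $\C$ with one cone point of positive angle $2\pi(1-a)$, as forced by the positivity hypothesis. For the inductive step, stratify $\C^{n}$ by the intersection lattice $\mL(\mH)$ and pick a generic point $p$ in the open stratum of an irreducible intersection $L$ of codimension $k$. Splitting $\C^{n} \cong L \times L^{\perp}$, the sub-arrangement $\{H \in \mH : H \supset L\}$ restricts to a central arrangement on $L^{\perp} \cong \C^{k}$, and one verifies that in an appropriate holomorphic gauge $\nabla$ decomposes on a neighbourhood of $p$ as $\nabla^{\perp} \oplus d$, where $\nabla^{\perp}$ is itself a flat, torsion-free, unitary standard connection on $\C^{k}$ whose weights inherit the non-integer and positivity conditions. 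The inductive hypothesis applied to $\nabla^{\perp}$ produces a polyhedral K\"ahler cone completion on $\C^{k}$, and taking the product with the flat factor $L \cong \C^{n-k}$ yields a polyhedral K\"ahler local model near $p$.

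To assemble a global completion, one uses the developing map $\dev \colon \widetilde{(\C^{n})^{\circ}} \to \C^{n}$ associated to $\nabla$: it realises all local completions simultaneously inside a single affine target, and unitarity of the holonomy guarantees that the local metric models match isometrically along overlaps. Combined with the conical structure from the first step, this exhibits the metric completion as $\C^{n}$ endowed with a polyhedral K\"ahler cone metric. The main technical obstacle I anticipate is promoting the tangential/transverse decomposition from an infinitesimal statement to an honest holomorphic splitting of $\nabla$ near a generic point of an irreducible stratum: the irreducibility of $L$ must be used to preclude residues coupling the two factors, while flatness and torsion-freeness should force triviality of the tangential factor. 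A secondary but essential step is verifying that the induced transverse weights still obey the non-integer and positivity hypotheses, which requires analysing how $\mL_{irr}(\mH)$ restricts to the sub-arrangement $\{H \in \mH : H \supset L\}$, and ruling out extra singular strata in the completion beyond those dictated by the arrangement—the latter being ensured by the strict positivity of all cone angles $2\pi(1-a_L)$.
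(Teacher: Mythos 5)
Your overall architecture (cone structure from the Euler field, induction on dimension via local splittings along strata, then a global assembly) is the same as the paper's, but the two steps you treat as routine are where essentially all the work lies, and as written they are gaps. First, the local splitting: you assert that ``in an appropriate holomorphic gauge $\nabla$ decomposes on a neighbourhood of $p$ as $\nabla^{\perp}\oplus d$'', and you correctly flag this as the main obstacle, but nothing in your proposal closes it. Near $p\in L^{\circ}$ one only has $\nabla=\nabla^{L}+\hol$, where $\nabla^{L}$ is the localized standard connection; controlling residues does not control the holomorphic correction term. The paper's route is: use unitarity (semisimplicity of the holonomy, hence closedness of its conjugation orbit) to show that $\Hol(\nabla)$ and $\Hol(\nabla^{L})$ are conjugate on a punctured neighbourhood, produce a gauge transformation $G$ solving $dG=\Omega^{L}G-G\Omega$, extend $G$ holomorphically across the hyperplanes using the non-integer weights (a normal-form/non-resonance argument), integrate the resulting parallel distributions, and finally linearise a local dilation vector field via Poincar\'e--Dulac to recognise the transverse factors as standard connections. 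None of this is automatic, and without it your induction does not start. (Note also that your induction only treats irreducible $L$; reducible strata require the further decomposition of $\mH_L$ into irreducible factors, with pairwise orthogonality of the factors again coming from the non-integer condition via a holonomy eigenvalue argument.)

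Second, the assembly. The developing map is a local isometry defined only on the universal cover of the regular part; it does not ``realise all local completions inside a single affine target'', and matching local models on overlaps does not by itself produce a polyhedral structure on the completion. The genuinely global issues are (i) showing that the Riemannian distance $d_g^{\circ}$ extends to a continuous, complete metric on all of $\C^n$ inducing the standard topology --- in the paper this rests on proving that the localized radius functions $r_x=|e_x|_g$ extend continuously and strictly positively across $\mH$, itself a consequence of the local product theorem plus an induction on quotient connections --- and (ii) triangulating the completed space, in particular near the vertex $0$, where your induction gives nothing since there is no transverse dimension reduction there. The paper resolves (ii) by invoking the Lebedeva--Petrunin characterisation of polyhedral spaces (a complete locally compact length space is polyhedral iff every point has a neighbourhood isometrically embeddable in a metric cone with the point at the vertex); some such result, or an explicit triangulation of the completed link, is indispensable and is absent from your argument.
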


In the context of Theorem \ref{PRODTHM}, the torsion free property of \(\nabla\) implies that its residues \(A_H\) are rank one matrices and satisfy \(\ker A_H = H\) for all \(H \in \mH\). The flatness of \(\nabla\) results into commutation relations of the form \([A_{L_1}, A_{L_2}] =0\) whenever \(L_1 \subset L_2\) are linear subspaces given by intersections of hyperplanes in \(\mH\) and \(A_{L_i}\) denote the sum of the matrices \(A_H\) over all hyperplanes of the arrangement that contain \(L_i\). The weight of \(\nabla\) at a hyperplane \(H\) is defined as the complex number \(a_H\) given by
\begin{equation}\label{eq:aH}
a_H= \tr A_H .	
\end{equation}
In Theorem \ref{PRODTHM} we assume that the weights \(a_H\) are real and \(<1\).
More generally, if \(L\) is an intersection of hyperplanes in the arrangement the weight \(a_L\) is defined as \(a_L = (\codim L)^{-1} \tr A_L\).

Recall that a polyhedral (cone) metric is a complete length metric space that can be decomposed into Euclidean simplices (simplicial cones) glued isometrically along their faces. With this in mind,
we clarify the meaning of the word \emph{is} in last sentence of Theorem \ref{PRODTHM}. Let 
\[d^{\circ}_g: (\C^n)^{\circ} \times (\C^n)^{\circ} \to \R_{\geq 0}\] 
be the Riemannian distance function defined by \(g\). We show that \(d^{\circ}_g\) extends continuously to \(\C^n \times \C^n\) as a distance function of a complete length metric  \(d_g\) inducing the usual topology of \(\C^n\).
Moreover, the complete length space \((\C^n, d_g)\) is a polyhedral cone.
\newline

\noindent \textbf{2-cones.} We explain the elementary but nevertheless important case of complex dimension \(n=1\). Let \(a\) be a complex number and  consider the standard connection \(\nabla\) with a simple pole at \(0\) given by
\begin{equation}\label{eq:1dimnabla}
\nabla = d - a \frac{dz}{z} . 
\end{equation}
The connection \(\nabla\) is flat and torsion free. Parallel transport along a simple anti-clockwise loop encircling the origin equals multiplication by \(\exp(2\pi i a)\), so there is a non-zero parallel Hermitian form if and only if \(a \in \R\).

Let \(\alpha>0\) be a positive real number and write \(\C_{\alpha}\) for the complex line endowed with the metric of a \(2\)-cone of total angle \(2\pi\alpha\) with vertex at \(0\) defined by the line element 
\begin{equation}\label{eq:1dimmet}
|z|^{\alpha-1}|dz| .	
\end{equation}
The Levi-Civita connection of \eqref{eq:1dimmet} is given by Equation \eqref{eq:1dimnabla} with weight
\begin{equation}\label{eq:aalpha}
a = 1 -\alpha .
\end{equation}
The positivity assumption \(1-a>0\) for the connection \eqref{eq:1dimnabla} is equivalent to \(\alpha>0\). On the other hand,
if \(\alpha \leq 0\) then the origin is at infinite distance with respect to the line element \eqref{eq:1dimmet} and therefore it does not belong to the metric completion. Indeed, a neighbourhood of the origin is isometric to the end of a cone of total angle \(2\pi(-\alpha)\) when \(\alpha<0\) and to the end of a cylinder when \(\alpha=0\).

\subsection{Local product decomposition and holonomy representation}

\textbf{Local splitting.}
The core of our proof of Theorem \ref{PRODTHM} is to  show that the metric \(g\) splits locally away from the origin of \(\C^n\) as a product of a flat Euclidean part tangential to the metric singularities together with a lower dimensional transversal factor of the `same kind' as \(g\). To make a precise statement we introduce some notation.

Let \(\mL(\mH)\) be the set of linear subspaces obtained as intersections of hyperplanes in \(\mH\).  Given \(L \in \mL(\mH)\) we write \(\mH_L\) for the members of \(\mH\) that contain \(L\). 
The \emph{stratum} \(L^{\circ}\) is defined as
\[L^{\circ} = L \setminus  \bigcup_{H \in \mH \setminus \mH_L} H . \]

In the context of Theorem \ref{PRODTHM},
the connection \(\nabla\) gives rise to a standard quotient connection \(\nabla^{\C^n/L}\) with simple poles
at the quotient arrangement
\[\mH_L/L = \{H/L, \,\, H \in \mH_L \} .\] 
The standard connection \(\nabla^{\C^n/L}\) is also flat, torsion free and unitary. The key result is the following.

\begin{theorem}[Local Product Decomposition]\label{thm:locprod}
	Suppose that the hypothesis of Theorem \ref{PRODTHM}
	are satisfied
	and let \(x \in L^{\circ}\). Then
	close to \(x\) the metric \(g\) is holomorphically isometric to a direct product
	\begin{equation}\label{eq:lpt}
		\C^{d} \times \left((\C^{n-d})^{\circ}, g^{\perp} \right) 
	\end{equation}
	where \(d = \dim L\) and \(g^{\perp}\) is a flat K\"ahler metric on the complement of the quotient arrangement which is parallel for the standard quotient connection \(\nabla^{\C^n/L}\).
\end{theorem}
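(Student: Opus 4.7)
The plan is to construct, on a small neighborhood \(U\) of \(x\), a holomorphic \(g\)-orthogonal decomposition \(T\C^n|_U = V \oplus W\) of the tangent bundle into two complementary \(\nabla\)-parallel complex subbundles with \(V\) of constant fibre \(L\), and then to invoke the holomorphic version of de Rham's splitting theorem for K\"ahler manifolds.

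First, since \(x \in L^{\circ}\), the only hyperplanes of \(\mH\) passing through \(x\) are those in \(\mH_L\), so on \(U\) the connection reduces to \(\nabla = d - \sum_{H \in \mH_L} A_H \, dh/h\). For every \(H \in \mH_L\) one has \(L \subset H = \ker A_H\), hence \(A_H\) annihilates \(L \subset \C^n\) (identified with a fibre of \(T\C^n\) via the coordinate frame). Consequently every constant vector field with values in \(L\) is \(\nabla\)-parallel on \(U\), and the constant holomorphic subbundle \(V\) with fibre \(L\) is \(\nabla\)-parallel. Since \(\nabla g = 0\), its \(g\)-orthogonal complement \(W = V^{\perp_g}\) is also a holomorphic, \(\nabla\)-parallel subbundle (for sections \(u\) of \(V\) and \(w\) of \(W\), \(g(\nabla w, u) = d\, g(w,u) - g(w, \nabla u) = 0\)). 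Torsion-freeness of \(\nabla\) then forces both distributions to be integrable.

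Next, I would apply the holomorphic de Rham splitting theorem to the complementary, \(g\)-orthogonal, parallel complex distributions \(V\) and \(W\) on the K\"ahler manifold \((U^{\circ}, g)\); after shrinking \(U\) this yields a holomorphic isometry of \(U^{\circ}\) onto a product \(U_1 \times U_2^{\circ}\) with the product K\"ahler metric \(g_1 \oplus g_2\). Here \(U_1 \subset L\) is open and \(g_1 = g|_V\) is parallel for the trivial connection on \(L\) (since \(\nabla|_V = d\)), so \(g_1\) is a constant Hermitian form making \(U_1\) isometric to an open subset of \((\C^d, g_{\mathrm{flat}})\). To identify the transversal factor, pick a linear complement of \(L\) in \(\C^n\) and identify it with \(\C^n/L\): the defining equations \(h\) for \(H \in \mH_L\) descend to linear functions on \(\C^n/L\), each \(A_H\) descends to a rank-one endomorphism of \(\C^n/L\), and their collection is exactly the residue data of \(\nabla^{\C^n/L}\). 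Consequently the restriction of \(\nabla\) to \(W\), identified with \(\C^n/L\) via \(g\)-orthogonal projection, coincides with the pullback of \(\nabla^{\C^n/L}\); so \(g_2 = g|_W\) extends to a flat K\"ahler metric \(g^{\perp}\) parallel for \(\nabla^{\C^n/L}\) on the full complement \((\C^{n-d})^{\circ}\), with \(U_2^{\circ}\) realized as an open subset.

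The hard part will be promoting the infinitesimal parallel splitting of the tangent bundle to a genuine holomorphic isometric product of K\"ahler manifolds. This relies on identifying \(\nabla\) with the Levi-Civita/Chern connection of the K\"ahler metric \(g\) on \(U^{\circ}\) (a consequence of torsion-freeness, holomorphicity and \(\nabla g = 0\)) so that the parallel foliations become totally geodesic and holomorphically \(g\)-orthogonal, after which the K\"ahler analogue of de Rham's theorem applies on the contractible punctured neighborhood.
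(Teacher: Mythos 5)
Your first step contains the critical error: on a neighbourhood \(U\) of \(x \in L^{\circ}\) the connection does \emph{not} reduce to \(d - \sum_{H \in \mH_L} A_H\, dh/h\). The hyperplanes \(H \in \mH \setminus \mH_L\) do not pass through \(x\), but their terms \(A_H\, dh/h\) are still present in \(\Omega\); they are merely holomorphic (non-singular) on \(U\), not zero. One has \(\Omega = \Omega^L + \hol\), and for a constant vector \(v \in L\) and \(H \notin \mH_L\) we have \(L \not\subset H\), so \(dh(v) \neq 0\) and \(A_H(v) = dh(v)\cdot n_H \neq 0\). Hence the constant subbundle with fibre \(L\) is \emph{not} \(\nabla\)-parallel, and the entire splitting \(T\C^n|_U = V \oplus W\) you build on top of this collapses. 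Overcoming exactly this obstruction is the content of the paper's proof: one must first produce a holomorphic gauge equivalence \(G\) between \(\nabla\) and the model \(\nabla^L\) on \(U^{\circ}\) — whose existence requires the unitary hypothesis (to show the two holonomies are conjugate, via semi-simplicity and closedness of the conjugation orbit) and the non-integer weights (to extend \(G\) across \(\mH\)) — and only the images under \(G^{-1}\) of the constant subspaces give \(\nabla\)-parallel distributions. Your argument never invokes unitarity or non-integrality at the point where they are indispensable, which is a sign the difficulty has been bypassed rather than resolved.

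There is a second, independent gap in the identification of the transversal factor. Even once a parallel orthogonal splitting is in hand, the leaves of the transversal foliation are not affine subspaces in the original coordinates (they are deformed by the holomorphic tail of \(\Omega\)), so the induced connection on a leaf is a priori only a flat torsion-free logarithmic connection invariant under no obvious group action. The paper identifies it with the standard quotient connection \(\nabla^{\C^n/L}\) by constructing a local dilation vector field \(e_x\) with \(\nabla e_x = \Id\) and \(e_x(x)=0\), linearising it on each leaf via Poincar\'e--Dulac, and then using scale-invariance to kill the holomorphic part of the connection form. Your assertion that "the restriction of \(\nabla\) to \(W\) coincides with the pullback of \(\nabla^{\C^n/L}\)" replaces this entire step with a claim that is only true after the right choice of coordinates, whose existence is what needs to be proved. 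Finally, be careful with the appeal to de Rham splitting: \(U^{\circ}\) is not simply connected (it is a hyperplane-arrangement complement), so the local splitting must be argued via Frobenius and an explicit verification that \(\nabla\) is a product connection, as the paper does.
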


Theorem \ref{thm:locprod} gives detailed information of the metric at its singular locus, an immediate consequence is the following.

\begin{corollary}\label{cor:locprod}
	Suppose that the hypothesis of Theorem \ref{PRODTHM} are satisfied. Let \(H \in \mH\) and let \(x \in H^{\circ}\). Then close to \(x\) the metric \(g\) is holomorphically isometric to \(\C^{n-1} \times \C_{{\alpha_H}}\).
\end{corollary}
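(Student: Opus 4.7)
The plan is to apply Theorem \ref{thm:locprod} directly with $L = H$ and then identify the resulting one-dimensional transverse factor via the explicit ``$2$-cones'' description recalled in the excerpt. Since a single hyperplane is trivially an irreducible intersection, $H \in \mL_{irr}(\mH)$ and the hypotheses of Theorem \ref{thm:locprod} are in force at any $x \in H^{\circ}$.

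First I would specialize the Local Product Decomposition with $L = H$, so that $d = \dim H = n-1$. This produces, on a neighbourhood of $x$, a holomorphic isometry onto $\C^{n-1} \times ((\C)^{\circ}, g^{\perp})$, where $g^{\perp}$ is a flat K\"ahler metric on $\C \setminus \{0\}$ parallel for the standard quotient connection $\nabla^{\C^n/H}$.

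Next I would unpack $\nabla^{\C^n/H}$. The quotient arrangement $\mH_H/H$ consists of the single point $\{0\}$, and the residue of $\nabla^{\C^n/H}$ there is the image of $A_H$ acting on the line $\C^n/H$, which is multiplication by the scalar $\tr A_H = a_H$. Writing $z$ for a linear coordinate on $\C^n/H$, this gives
\[
\nabla^{\C^n/H} = d - a_H \frac{dz}{z}.
\]
By the one-dimensional computation recorded in the excerpt, every such parallel K\"ahler metric is $|z|^{\alpha_H - 1}|dz|$ (up to a positive constant absorbed by rescaling $z$), with $\alpha_H = 1 - a_H$.

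Finally, the positivity hypothesis $1 - a_H > 0$ of Theorem \ref{PRODTHM} is exactly $\alpha_H > 0$, so the ``$2$-cones'' discussion tells us that the vertex of the transverse factor is at finite distance and its metric completion is the $2$-cone $\C_{\alpha_H}$. Passing to completion through the product decomposition yields the claimed local holomorphic isometry onto $\C^{n-1} \times \C_{\alpha_H}$. There is no substantive obstacle beyond invoking Theorem \ref{thm:locprod}; the only mild point is that metric completion commutes with taking a product with the already-complete Euclidean factor $\C^{n-1}$, which is a standard property of product length metrics.
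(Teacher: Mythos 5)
Your proof is correct and follows essentially the same route as the paper: apply Theorem \ref{thm:locprod} with \(L=H\), identify the quotient connection \(\nabla^{\C^n/H}\) as the one-dimensional standard connection \(d - a_H\,dz/z\), and use the positivity hypothesis \(\alpha_H = 1-a_H>0\) together with the \(2\)-cone discussion to recognize the transverse factor as \(\C_{\alpha_H}\). The extra remarks on the scalar residue and on completion of the product are fine but not needed beyond what the paper records.
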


Recall that \(\C_{{\alpha_H}}\) denotes the \(2\)-cone with total angle \(2\pi\alpha_H\).
The number \(\alpha_H>0\) in Corollary \ref{cor:locprod} that measures the cone angle \(2\pi\alpha_H\) transverse to the hyperplane \(H\) is given by
\(\alpha_H=1-a_H\) where \(a_H\) is the weight of \(\nabla\) at \(H\) given by Equation \eqref{eq:aH}.
\newline

\noindent \textbf{Holonomy.}
A hyperplane arrangement \(\mH\) is essential if the common intersection of all its members is the origin.
We say that \(\mH\) is irreducible if it can not be expressed as a product of two non-empty arrangements.
With these notions,
our next main result is as follows.

\begin{theorem}[Irreducible holonomy]\label{thm:irredhol}
	Suppose that the hypothesis of Theorem \ref{PRODTHM}
	are satisfied. Moreover, assume that the arrangement \(\mH\) is essential and irreducible. Then the holonomy representation of \(\nabla\), or equivalently of the flat K\"ahler metric \(g\) on \((\C^n)^{\circ}\), is irreducible.
\end{theorem}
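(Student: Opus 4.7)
The plan is a proof by contradiction. Trivialise \(T(\C^n)^\circ\) by the coordinate frame \(\p/\p z_1,\dots,\p/\p z_n\), so that the holonomy representation acts on the fixed vector space \(\C^n\). Suppose that the holonomy preserves a proper non-zero complex subspace \(V\subset\C^n\). Since \(\nabla\) is unitary, the Hermitian orthogonal complement \(V^\perp\) is also preserved, and \(\C^n=V\oplus V^\perp\).

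Fix a hyperplane \(H\in\mH\) and a basepoint \(x\in H^\circ\). By Corollary~\ref{cor:locprod} a neighbourhood of \(x\) is holomorphically isometric to \(\C^{n-1}\times\C_{\alpha_H}\), an orthogonal product decomposition in which the first factor is tangent to \(H\). Under this identification the monodromy of a small meridian loop encircling \(H\) acts as the identity on the \(\C^{n-1}\) factor and as rotation by \(e^{-2\pi i a_H}\) on the transverse line; the non-integer hypothesis \(a_H\notin\Z\) ensures these two eigenvalues are distinct. Both \(V|_x\) and \(V^\perp|_x\) must therefore respect this eigenspace decomposition, so each of them is either contained in \(T_xH\) or else contains the one-dimensional transverse line. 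They cannot both contain the transverse line (because \(V\cap V^\perp=0\)), nor can both be contained in \(T_xH\) (because \(V+V^\perp=\C^n\)); hence exactly one of \(V\), \(V^\perp\) is tangent to \(H\) at \(x\). Through the coordinate trivialisation, tangency to \(H\) translates into being contained in \(H\) as a subspace of \(\C^n\), so every hyperplane in \(\mH\) contains exactly one of \(V\) or \(V^\perp\).

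This yields a disjoint partition \(\mH=\mH_1\sqcup\mH_2\) with \(\mH_1=\{H:V\subset H\}\) and \(\mH_2=\{H:V^\perp\subset H\}\). If \(\mH_2\) were empty, then every hyperplane would contain \(V\), whence by essentiality \(V\subset\bigcap_{H\in\mH}H=\{0\}\), contradicting \(V\neq 0\); a symmetric argument rules out \(\mH_1=\emptyset\). Each \(H\in\mH_1\) can be written uniquely as \(H=V\oplus H'\) with \(H'\) a hyperplane of the linear subspace \(V^\perp\), and analogously \(\mH_2\) corresponds to a non-empty arrangement inside \(V\). Hence \(\mH\) is the product of two non-empty arrangements with respect to the splitting \(\C^n=V\oplus V^\perp\), contradicting irreducibility.

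The decisive step is the eigenspace analysis in the second paragraph: one must leverage Corollary~\ref{cor:locprod} to reduce the local monodromy at a generic point of \(H\) to the product of the trivial action on \(T_xH\) and a rotation on the transverse line, and then invoke \(a_H\notin\Z\) to keep the two eigenvalues apart so that any holonomy-invariant subspace is forced to split along this decomposition. Once this linear-algebraic dichotomy is in place, the combinatorial conclusion that \(\mH\) factors as a product arrangement, combined with essentiality, delivers the contradiction.
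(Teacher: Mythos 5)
Your endgame is fine, but there is a genuine gap at the pivot of the argument: you silently treat the holonomy-invariant subspace as a \emph{constant} linear subspace of \(\C^n\). The invariant subspace \(V\) lives in \(T_p\C^n\) at the basepoint; spreading it out by parallel transport produces a flat subbundle \(F\subset T(\C^n)^{\circ}\) whose fibre \(F_q\subset\C^n\) (read in the coordinate frame) varies with \(q\), because parallel transport for \(\nabla=d-\Omega\) is not the identity in that frame. Your sentence ``through the coordinate trivialisation, tangency to \(H\) translates into being contained in \(H\) as a subspace of \(\C^n\)'' is exactly the assertion that \(F\) and \(F^{\perp}\) are constant, and this does not come for free. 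In the paper it is the content of Lemmas \ref{lem:irredhol1}--\ref{lem:irredhol3}: one first extends \(F\) and \(F^{\perp}\) holomorphically across the generic points of \(\mH\) using Corollary \ref{cor:locprod}, then across the codimension \(\geq 2\) strata by a Hartogs-type argument for a pair of complementary subbundles (the projectors extend and their kernels cannot jump), and only then does \(\C^*\)-invariance of \(\nabla\) force the resulting subbundles of \(T\C^n\) to be constant. Without this, the dichotomy you extract at each hyperplane produces data that cannot be compared between different hyperplanes, and the partition \(\mH=\mH_1\sqcup\mH_2\) with \(T(\mH_1)\supset V\), \(T(\mH_2)\supset V^{\perp}\) does not follow.

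Granting that constancy, the rest of your argument is correct and is genuinely different from the paper's proof: the eigenvalue dichotomy at a meridian (using \(a_H\notin\Z\)), the observation that exactly one of \(V,V^{\perp}\) lies in each \(H\), and the conclusion \(T(\mH_1)+T(\mH_2)=\C^n\) contradicting irreducibility via Lemma \ref{lem:ann}. The paper instead argues by induction on dimension through the induced connections \(\nabla''\) on hyperplanes, bottoming out at \(n=2\) where it invokes the correspondence with spherical metrics on \(\CP^1\) and the classification of co-axial monodromy from \cite{Dey}. Your route, once the constancy lemma is supplied, avoids both the induction and the appeal to the two-dimensional classification, so it is worth completing; but as written the decisive reduction from ``holonomy-invariant subspace'' to ``linear subspace contained in hyperplanes of \(\mH\)'' is missing.
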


As a consequence we have the next uniqueness result.

\begin{corollary}\label{cor:uniqueinnerprod}
	Let \(\mH\) be an essential irreducible arrangement and suppose that the hypothesis of Theorem \ref{PRODTHM}
	are satisfied. Moreover, assume that \(g\) and \(g'\) are K\"ahler metrics which are parallel with respect to \(\nabla\). Then \(g'=\lambda g\) for some \(\lambda>0\).
\end{corollary}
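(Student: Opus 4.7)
My plan is to deduce the statement from Theorem \ref{thm:irredhol} by applying Schur's lemma to the complex holonomy representation of \(\nabla\) on the tangent space at a basepoint.

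First, I fix some \(x_0 \in (\C^n)^{\circ}\) and denote by \(G \subset \Aut(T_{x_0}\C^n)\) the holonomy group of \(\nabla\) at \(x_0\). Because \(\nabla\) is torsion free and \(\nabla g = 0\), the connection \(\nabla\) is the Levi-Civita connection of the K\"ahler metric \(g\) and therefore also preserves the standard complex structure \(J\); hence \(G\) acts \(\C\)-linearly on \(T_{x_0}\C^n\). By Theorem \ref{thm:irredhol} this complex representation is irreducible. The Hermitian inner products \(g_{x_0}\) and \(g'_{x_0}\) on \(T_{x_0}\C^n\) are both \(G\)-invariant, since \(\nabla g = \nabla g' = 0\).

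Second, I introduce the unique endomorphism \(T\) of \(T_{x_0}\C^n\) characterized by
\[
g'_{x_0}(v,w) = g_{x_0}(Tv,w) \quad \text{for all } v,w \in T_{x_0}\C^n .
\]
A short computation using the fact that both metrics are \(J\)-Hermitian shows that \(T\) commutes with \(J\) and hence is \(\C\)-linear, and the \(G\)-invariance of both forms translates into the statement that \(T\) commutes with every element of \(G\). Schur's lemma applied to the irreducible complex representation of \(G\) then forces \(T = \lambda \Id\) for some \(\lambda \in \C\), and the positive definiteness of \(g'_{x_0}\) relative to \(g_{x_0}\) forces \(\lambda \in \R_{>0}\). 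Thus \(g'_{x_0} = \lambda\, g_{x_0}\), and since \(\nabla\)-parallel sections of the bundle of Hermitian forms on the connected manifold \((\C^n)^{\circ}\) are determined by their value at a single point, we obtain \(g' = \lambda g\) globally.

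The only point requiring care is to ensure that Schur's lemma is applied in its complex form, which produces a one-dimensional commutant, rather than in its real form, which would only yield a finite dimensional \(\R\)-algebra of commuting endomorphisms and would not by itself force proportionality. This subtlety is handled by the K\"ahler hypothesis: since \(\nabla J = 0\), the holonomy representation is automatically \(\C\)-linear, and the irreducibility furnished by Theorem \ref{thm:irredhol} is then irreducibility as a complex representation. With this observation in place the corollary is a routine consequence of the preceding theorem.
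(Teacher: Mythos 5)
Your argument is correct and is essentially the paper's own proof: both reduce to a single point, invoke Theorem \ref{thm:irredhol} for irreducibility of the complex holonomy representation, and apply Schur's lemma to the \(\C\)-linear intertwiner between the two parallel Hermitian forms. The extra care you take about \(\C\)-linearity versus \(\R\)-linearity of the commutant is a worthwhile remark but does not change the route.
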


\subsection{Properties of PK metrics on complex manifolds}

Following \cite{Pan}, we introduce the notion of a  polyhedral K\"ahler (PK) metric on a complex manifold and
provide a series of foundational results for them. The polyhedral cones provided by Theorem \ref{PRODTHM} fit into this framework. 

Let \(X\) be a complex manifold and let \(g\) be a polyhedral metric on it inducing its topology. We say that \(g\) is a \emph{PK metric on \(X\)} if \(g\) is K\"ahler on its regular part \(X^{\circ}\). A first non-trivial consequence of the definition is the following.

\begin{theorem}\label{thm:PK1}
	Let \(X\) be a complex manifold and let \(g\) be a PK metric on it. Then the set of metric singularities \(X^s\) is a complex hypersurface of \(X\). At smooth points of \(X^s\) the metric \(g\) is locally holomorphically isometric to a product \(\C^{n-1} \times \C_{\alpha} \).
\end{theorem}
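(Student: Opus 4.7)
The plan is to analyze the top-dimensional stratum of $X^s$, establish the claimed local holomorphic normal form there, and then deduce that the full singular set is complex analytic by taking closure. Since $g$ is polyhedral, $X^s$ admits a locally finite stratification by smooth totally geodesic submanifolds of real codimension $\ge 2$. Let $X^s_2$ denote the union of the open strata of real codimension exactly $2$, so that $X^s = \overline{X^s_2}$. At each point $x \in X^s_2$ the metric is locally isometric to $\R^{2n-2} \times C_\alpha$ for some $\alpha > 0$, $\alpha \neq 1$, where the $\R^{2n-2}$ factor is tangent to $X^s_2$.

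The key step is to show that this real splitting $T_x X = V_\parallel \oplus V_\perp$ is $J$-invariant. For this I use that $g$ is K\"ahler on $X^\circ$, so its Riemannian holonomy is contained in $U(n)$. Parallel transport around a small loop encircling $X^s_2$ in the transverse $2$-plane equals $\Id_{V_\parallel} \oplus R_{2\pi\alpha}$, where $R_{2\pi\alpha}$ is the rotation of $V_\perp$ by angle $2\pi\alpha$. When $\alpha \notin \Z$ this is a non-trivial unitary element with $(+1)$-eigenspace equal to $V_\parallel$; since the eigenspaces of a unitary transformation are complex subspaces, $V_\parallel$ is a complex hyperplane and $V_\perp$ is its orthogonal complex line. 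The case $\alpha \in \Z_{\ge 2}$, when the naive monodromy argument gives no information, is the main technical obstacle: here I would argue instead on the metric tangent cone at $x$ (which is itself a polyhedral K\"ahler cone on $\C^n$) by decomposing the limiting K\"ahler form $\omega$ into its parallel summands on the product structure and invoking the continuity of $J$ across $X^s_2$ together with the local branched uniformization of $C_\alpha$ by $\C$.

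Once the $J$-invariance is secured, a $g$-parallel unitary frame of $TX$ near $x$ adapted to $V_\parallel \oplus V_\perp$ integrates to holomorphic coordinates $(z_1, \ldots, z_n)$ in which $X^s_2 = \{z_n = 0\}$ and
\[ g = \sum_{i<n} |dz_i|^2 + |z_n|^{2\alpha - 2} |dz_n|^2, \]
realizing a local holomorphic isometry with $\C^{n-1} \times \C_\alpha$. In particular $X^s_2$ is locally a smooth complex hypersurface of $X$. To conclude that the full $X^s$ is a complex hypersurface, I observe that $X^s \setminus X^s_2$ has real codimension $\ge 4$, hence complex codimension $\ge 2$, in $X$: the Remmert--Stein extension theorem then identifies $\overline{X^s_2}$ as a complex analytic subset of pure complex codimension $1$, which equals $X^s$.
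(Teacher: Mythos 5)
Your overall strategy (a unitary monodromy argument at codimension-$2$ strata with non-integer transverse angle, a developing-map/branched-cover argument for the integer case, then an extension theorem to globalize) is essentially the paper's, but two assertions you treat as automatic are in fact the hardest parts of the theorem and are left unproved. First, you set $X^s_2$ to be the union of the real-codimension-$2$ open strata and write ``so that $X^s=\overline{X^s_2}$''. Nothing in the definition of a polyhedral metric forces this: a priori $X^s$ could have irreducible components consisting entirely of singularities of complex codimension $\ge 2$, and ruling this out is precisely the purity statement. The paper does it in Proposition \ref{stratprop}(3) by taking a generic point of such a component, observing that the transverse factor $X'$ of its tangent cone $\C^{n-k}\times X'$ would be a polyhedral cone smooth away from its vertex, so that its link is a space form and $X'\cong\C^k/\Gamma$ for a finite group acting freely on $\C^k\setminus\{0\}$ --- contradicting smoothness of the complex manifold $X$. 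Without some such argument your closing phrase ``which equals $X^s$'' is unsupported.

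Second, your extension step asserts that $X^s\setminus X^s_2$ has real codimension $\ge 4$. The stratification only gives codimension $\ge 3$; upgrading $3$ to $4$ is the statement that metric singularities have even codimension, which the paper derives from the ``complex directions'' machinery (Proposition \ref{fullyparallel}, Lemmas \ref{onlyinteger} and \ref{lemma:complexdirections}) and which rests exactly on the integer-angle case you flag as ``the main technical obstacle'' but do not resolve. Your sketch for that case --- continuity of $J$ plus the branched uniformization of $\C_\alpha$ --- is not yet an argument; the paper's proof averages a holomorphic function over the $\Z_m$ deck group of the degree-$m$ developing map and applies Rado's theorem to place $C^s$ inside a proper analytic subset, whence the developing map is holomorphic across it and the stratum is complex. (Note also that invoking the tangent cone as ``a polyhedral K\"ahler cone on $\C^n$'' at this stage is circular: that the tangent cone carries a complex structure uses the analyticity of the singular set you are trying to prove.) Finally, Remmert--Stein in the form used in the paper requires the exceptional set to be \emph{analytic} of the appropriate dimension, not merely a closed set of high codimension, so the globalization really needs the downward induction on codimension of Theorem \ref{singanalitic}(2), or a Shiffman/Bishop-type theorem with a Hausdorff-measure hypothesis that you would then have to verify. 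The codimension-$2$ local analysis itself --- the $(+1)$-eigenspace argument for non-integer $\alpha$ and the resulting normal form $\C^{n-1}\times\C_\alpha$ --- is correct.
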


The local study of PK metrics reduces to the consideration of PK cones and we provide more detailed information for these. Every cone is equipped with a canonical action of the real numbers by dilations of the metric fixing its vertex. As a mater of fact, this action can be complexified in the PK case and it generates a holomorphic vector field, we denote it by \(e\) and refer to it as the (complex) Euler vector field of the cone.

\begin{theorem}\label{thm:PK2}
	Let \(X\) be a complex manifold and let \(g\) be a PK cone metric on it with vertex at \(x\). Then \(X\) is biholomorphic to \(\C^n\). There are global complex coordinates \((z_1, \ldots, z_n)\) centred at \(x\) and positive real numbers \(c_1, \ldots, c_n > 0\) such that the complex Euler vector field of the cone is equal to
	\begin{equation}\label{eq:action}
		e = c_1 z_1\frac{\p}{\p z_1} + \ldots + c_n z_n \frac{\p}{\p z_n} .
	\end{equation}
	In particular, the metric singular set \(X^s\) is invariant under the flow generated by \eqref{eq:action}.
\end{theorem}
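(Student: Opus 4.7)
The plan is to construct from the cone structure a holomorphic vector field $e$ on $X$ whose real flow is the dilation, and then use the contraction property at the vertex $x$ to produce global linearizing coordinates. On the smooth locus $X^\circ$, the cone structure supplies the real Euler field $E$, the generator of the dilation flow $\{\phi_\lambda\}_{\lambda>0}$ with $\phi_\lambda^*g=\lambda^2 g$. The defining property of a metric cone is $\nabla_Y E=Y$ for every vector field $Y$, where $\nabla$ is the Levi-Civita connection of the flat K\"ahler metric $g$. Combined with $\nabla J=0$, a direct calculation yields $\mathcal{L}_E J=0$, so $E$ is real-holomorphic; symmetrically, $JE$ is a Killing field. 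Hence $e:=\tfrac{1}{2}(E-iJE)$ is a holomorphic vector field on $X^\circ$.

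The second step is to extend $e$ to all of $X$. Since the dilations are homeomorphisms of $X$ preserving the polyhedral cone structure and smooth on each cell, $E$ is a continuous vector field on $X$ vanishing at $x$. By Theorem \ref{thm:PK1}, $X^s$ is a complex hypersurface of $X$. In any local holomorphic chart on $X$, the components of the continuous vector field $e$ are thus holomorphic off a complex analytic subset, so Riemann's removable singularity theorem forces them to be holomorphic across $X^s$. This produces a global holomorphic vector field $e$ on $X$ vanishing precisely at $x$.

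The real flow of $e$ is the dilation action, which contracts $X$ onto $x$ as $\lambda\to0^+$. Consequently, the eigenvalues $c_1,\ldots,c_n$ of the linearization $de_x\colon T_xX\to T_xX$ are all positive real numbers, placing $e$ in the Poincar\'e domain at $x$. The rigidity of the cone---namely that each homogeneous piece of the Taylor expansion of $e$ at $x$ is an eigenvector of $\phi_\lambda^*$ with a specific positive power of $\lambda$---rules out resonant correction terms in the Poincar\'e--Dulac normal form and thus reduces $e$ to $\sum c_iz_i\partial/\partial z_i$ in suitable holomorphic coordinates around $x$. Transporting these coordinates along the flow, which sweeps out all of $X$ by the contraction property, extends them to global coordinates on $X$, yielding the desired biholomorphism $X\cong\C^n$ together with the formula for $e$. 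The final assertion is immediate: each $\phi_\lambda$ is an isometry up to scale and therefore preserves the metric singular set $X^s$, so $X^s$ is invariant under the real flow of $E$ and hence under the complexified flow of $e$.

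The main obstacle is the global linearization step: one must leverage the rigidity of the PK cone---not merely the local Poincar\'e-domain structure of $e$---to eliminate resonant monomials and to guarantee that the linearizing coordinates extend from a neighbourhood of $x$ to all of $X$ consistently along the flow of $e$. By contrast, the extension of $e$ across $X^s$ reduces to a standard Riemann removable singularity argument once continuity is established, and the calculation on $X^\circ$ producing $e$ is an essentially formal consequence of the cone identity $\nabla E=\mathrm{Id}$ together with K\"ahlerness.
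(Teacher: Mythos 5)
Your construction of $e$ on $X^\circ$ and the final homogeneity/flow argument for globalizing the coordinates are fine, and the extension across $X^s$ is in the spirit of the paper (though the paper extends the flow maps $\varphi_t$ by Riemann extension and then differentiates, rather than asserting continuity of the vector field itself across $X^s$, which you have not justified). The genuine problems are in the linearization step, where you depart from the paper's argument.

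First, the claim that contraction of the flow onto $x$ forces the eigenvalues of $de_x$ to be \emph{positive real} is false: the field $(1+i)z\,\p/\p z$ on $\C$ generates a flow whose real part contracts to the origin and scales the flat metric, yet its eigenvalue is not real. What forces reality (and semisimplicity) of the linear part is that the Reeb field $JE$ generates isometries fixing $x$ whose closure in the biholomorphism group of a small ball is a compact torus; since the metric is singular at the vertex, one cannot read this off from "the derivative of an isometry is orthogonal" and must invoke compactness of isotropy groups of bounded domains, which is exactly Lemma \ref{isomtery} in the paper. Second, your mechanism for killing resonant terms is backwards: a resonant monomial $z^{d}\p/\p z_j$ with $c_j=\sum_i d_i c_i$ is by definition \emph{invariant} under the semisimple flow $(\lambda^{c_1}z_1,\dots,\lambda^{c_n}z_n)$, so the observation that $e$ is invariant under its own flow cannot exclude such terms; when the $c_i$ are not all equal (which does occur for PK cones, cf. the two-dimensional classification cited in Lemma \ref{lem:residuesextend}), Poincar\'e--Dulac only yields a polynomial normal form $v_S+v_N$, and an extra argument is needed to show $v_N=0$. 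The paper avoids both issues at once: it linearises the compact torus $\overline{\{\psi_t\}}=T^k$ by the Bochner averaging trick (Lemma \ref{linearization}), which produces coordinates in which the whole $(\C^*)^k$-complexified action, hence the dilation flow, is linear and diagonal with exponents $c_i$, and $c_i>0$ then follows from $\lim_{\lambda\to 0}\lambda\cdot p=x$. You should either adopt that route or supply a separate argument eliminating nilpotent and resonant parts.
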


In our next result we identify \(X\) with \(\C^n\) using global complex coordinates \((z_1, \ldots, z_n)\) as above.

\begin{theorem}\label{thm:PK3}
	Let \(g\) be a PK cone metric on \(\C^n\) with vertex at the origin and complex Euler vector field given by Equation \eqref{eq:action}. Suppose that the set of metric singularities make a hyperplane arrangement \(\mH\). Then the following holds.
	\begin{itemize}
		\item[(i)] The stratification of \(\C^n\) given by the singularities of the polyhedral metric \(g\) agrees with the natural complex stratification of the arrangement given by the strata \(L^{\circ}\) with \(L \in \mL(\mH)\).
	
		\item[(ii)] The Levi-Civita connection of \(g\) is a standard connection of the form given by Equation \eqref{eq:thmconect}.
	\end{itemize}	
\end{theorem}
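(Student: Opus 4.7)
The plan is to combine Theorem \ref{thm:PK1} (local product decomposition at smooth points of the singular set) with the conical structure from Theorem \ref{thm:PK2} and the resulting $\C^\ast$-invariance of all intrinsic objects under the flow of the complex Euler vector field $e$.

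For (i), my approach is to argue that the polyhedral stratum of a point $x \in \C^n$ is determined by the germ of the singular set $X^s$ at $x$, which under the hypothesis equals $\bigcup_{H \in \mH_{L(x)}} H$, where $L(x)$ is the smallest element of $\mL(\mH)$ containing $x$. Since this germ (up to translation) only depends on $L(x)$, all points of a fixed $L^{\circ}$ lie in the same polyhedral stratum. Conversely, to distinguish different $L^{\circ}$, I would show that the flat factor of the tangent cone at $x$ has complex dimension $\dim L(x)$; for $L$ of codimension one this is immediate from Theorem \ref{thm:PK1}, and for deeper $L$ it follows by induction on codimension via restriction to a transverse complex slice, noting that such a slice inherits a PK cone structure whose singular set is the quotient arrangement $\mH_L/L$.

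For (ii), let $\nabla$ denote the Levi-Civita connection of $g$ on $X^{\circ}$. The key steps are: (a) near a generic point $p$ of a hyperplane $H \in \mH$, Theorem \ref{thm:PK1} gives a local holomorphic isometric splitting $\C^{n-1} \times \C_{\alpha_H}$, and computing the Levi-Civita connection of this product in local product coordinates and translating to the global coordinate frame by the holomorphic change of frame yields $\nabla = d - A_H \frac{dh}{h} + (\text{term holomorphic near } p)$, with $A_H$ a constant rank-one matrix satisfying $\ker A_H = H$ and $\tr A_H = 1-\alpha_H$; (b) invariance of $\nabla$ under the flow of $e$ propagates this local description along the weighted dilation orbits inside $H^{\circ}$, so after subtracting $\sum_{H \in \mH} A_H \frac{dh}{h}$ the remaining connection $1$-form is holomorphic on the complement of the codimension-two part of $X^s$, and Hartogs extends it to a global holomorphic $\End(\C^n)$-valued $1$-form $B$ on $\C^n$; (c) using $\C^\ast$-invariance of $B$ together with the positivity $c_i > 0$ from Theorem \ref{thm:PK2}, torsion-freeness, and K\"ahler compatibility of $\nabla$ with a cone metric whose unique vertex is the origin, one concludes $B \equiv 0$, yielding the standard form.

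\textbf{Main obstacle.} The most delicate part is step (c), eliminating the holomorphic correction $B$. This requires combining $\C^\ast$-invariance, torsion-freeness, and the requirement that $\nabla$ be the Levi-Civita of a genuine K\"ahler metric, in a way that rules out ``resonant'' weight-zero monomial terms even when the weights $c_i$ admit integer relations. The inductive step needed to extend the product decomposition in (i) to deeper strata is also non-trivial but should go through cleanly once the codimension-one case is in hand.
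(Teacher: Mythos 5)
The overall architecture you propose (local models at smooth points of \(X^s\), dilation invariance, Hartogs) is the paper's, but both halves have a genuine gap. For (i), the assertion that the polyhedral stratum of \(x\) is determined by the germ of the singular \emph{set} at \(x\) is unjustified: the polyhedral stratification is a metric notion (the codimension of the maximal Euclidean factor of the tangent cone), and a priori the locus of points of \(L\) whose metric singularity has codimension strictly larger than \(\codim L\) is a closed analytic subset that could meet \(L^{\circ}\). Ruling this out is precisely the content of Proposition \ref{prop:PKstrat}(3), and the paper's proof is not a germ or slice argument: one passes to the tangent cone at \(x\), picks a point \(y\in L\) where the codimension is generic, takes a parallel field tangent to \(L\) on a conic neighbourhood of \(y\), and uses that \(B_y\setminus \mH_L\hookrightarrow \C^n\setminus\mH_L\) is a homotopy equivalence to kill the monodromy obstruction and extend the field to a global isometric \(\C^{\dim L}\)-action; only then does the vertex acquire a Euclidean factor of dimension \(\dim L\). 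Your inductive ``transverse complex slice'' is also not available as stated: a generic transverse slice is not a polyhedral submanifold, so Lemma \ref{pksubmanifold} does not apply and its induced metric is not known to be polyhedral; the object that does make sense is the essential factor of the de~Rham splitting of the tangent cone, whose dimension is exactly what is in question.

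For (ii), step (a) is fine, but step (b) fails as written. Invariance of the residue \(A_H\) under the weighted dilation flow on \(H^{\circ}\) does not make it constant, because \(H^{\circ}\) omits the invariant divisor \(H\setminus H^{\circ}\) and invariant holomorphic functions on such complements can be non-constant (e.g.\ \(z_1/z_2\) for the standard action on \(\{z_2\neq0\}\)). Consequently \(\Omega-\sum_{H}A_H\,dh/h\) is not yet known to be holomorphic off a codimension-two analytic set, and Hartogs cannot be invoked. The missing ingredient is the holomorphic extension of the residues across the codimension-two strata of \(\mH\) — the paper's Lemma \ref{lem:residuesextend} — which is proved by a case analysis at codimension-two intersections using the classification of two-dimensional essential PK cones from \cite{Pan} (the hypothesis that \(X^s\) is a union of hyperplanes is what excludes the cones with singular locus \(c_1z^{\alpha}=c_2w^{\beta}\), \(\alpha\neq\beta\)). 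Once that extension is in hand, positivity of the \(c_i\) does force the residues to be constant and the remainder to vanish; your worry in (c) about resonant terms is a fair one to raise, but the substantive omission is the residue extension, not the final vanishing.
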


\subsection{PK cone metrics on \(\C^n\) singular at the \(\mA_n\)-arrangement}

The essential braid arrangement \(\mA_n =  \{H_{i,j}, \,\, 1 \leq i <j \leq n+1 \} \) consists of \(\binom{n+1}{2}\) hyperplanes in \(\C^n\) given as follows
\begin{equation*}
	H_{i,j} = \begin{cases}
	\{z_i = z_j\} \, &\mbox{ if } \, 1 \leq i < j \leq n ,\\
	\{z_i = 0\} \, &\mbox{ if } \, 1 \leq i \leq n \mbox{ and } j =n+1 .
	\end{cases}
\end{equation*}

Our final main result is the next.

\begin{theorem}\label{LAUTHM}
	Let \(\alpha_1, \ldots, \alpha_{n+1}\) be real numbers satisfying the following conditions:
	\begin{itemize}
		\item \emph{(non-integer)} \(\alpha_i \notin \Z\) for all \(1\leq i \leq n+1\);
		\item \emph{(positivity)} 
		\(\alpha_i + \alpha_j -1 >0\) 
		for all \(1 \leq i < j \leq n+1\);
		\item \emph{(signature)} either \(\sum_{i=1}^{n+1} \{\alpha_i\} < 1\) or \(\sum_{i=1}^{n+1} \{\alpha_i\} > n\);
		where \(0 \leq \{\alpha\} < 1\) denotes the fractional part of a real number \(\alpha\).
	\end{itemize}
	
	Then there is a unique up to scale PK cone metric 
	on \(\C^n\) singular at the \(\mA_n\)-arrangement with cone angles \(2\pi(\alpha_i + \alpha_j -1)\) at \(H_{i,j}\) for all \(1 \leq i < j \leq n+1\).
	Moreover, any PK cone metric on \(\C^n\) singular at the \(\mA_n\)-arrangement and with non-integer cone angles at every irreducible intersection must be equal to one of the above family.
\end{theorem}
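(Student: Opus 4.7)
The plan is to prove existence by applying Theorem \ref{PRODTHM} to a Lauricella connection, and uniqueness by combining Theorem \ref{thm:PK3} with a classification of flat torsion-free standard connections on the $\mA_n$-arrangement. For existence, set $a_i = 1 - \alpha_i$ and let $\nabla^a$ be the Lauricella connection from \cite{CHL} on $\C^n$: a flat torsion-free standard connection with simple poles along $\mA_n$ and rank-one residues satisfying $\tr A_{H_{i,j}} = a_i + a_j$. The resulting cone angle transverse to $H_{i,j}$ is then $2\pi(1 - a_i - a_j) = 2\pi(\alpha_i + \alpha_j - 1)$, matching the hypothesis. Unitarity of $\nabla^a$ with respect to a positive definite Hermitian form is the content of the Couwenberg-Heckman-Looijenga signature analysis: the Hermitian form on the solution space of the Lauricella system is positive definite in the two Euclidean chambers $\sum\{\alpha_i\}<1$ and $\sum\{\alpha_i\}>n$, the second being related to the first by the duality $\alpha_i \mapsto 1 - \alpha_i$ together with an integer shift that keeps all cone angles positive.

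Next I would verify the numerical conditions of Theorem \ref{PRODTHM} at every irreducible intersection. The irreducible flats of $\mA_n$ are $L_S$ for subsets $S\subset\{1,\ldots,n+1\}$ with $|S|\ge 2$, and a direct computation from $A_{L_S} = \sum_{i<j\in S} A_{H_{i,j}}$ and $\codim L_S = |S|-1$ gives $a_{L_S} = \sum_{i\in S} a_i = |S| - \sum_{i\in S}\alpha_i$. The hypotheses thus translate to $\sum_{i\in S}\alpha_i\notin\Z$ and $\sum_{i\in S}\alpha_i > |S|-1$ for every such $S$. The non-integer statement follows from the individual non-integer assumption on the $\alpha_i$'s together with the strict signature inequality, which prevents sub-sums of the fractional parts from hitting integer values. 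The positivity at higher-codimension flats is more delicate: the pairwise bound $\alpha_i + \alpha_j > 1$ only yields $\sum_{i\in S}\alpha_i > |S|/2$, so one must genuinely use the signature condition. In the branch $\sum\{\alpha_i\}>n$, each $\alpha_i$ has integer part at least one and the desired bound is immediate; the branch $\sum\{\alpha_i\}<1$ reduces to this case via the duality above. Theorem \ref{PRODTHM} then produces the required PK cone metric.

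For uniqueness, let $g$ be any PK cone metric on $\C^n$ singular at $\mA_n$ with non-integer cone angles at every irreducible intersection. Theorem \ref{thm:PK3} identifies its Levi-Civita connection $\nabla_g$ with a flat torsion-free standard connection of the form \eqref{eq:thmconect} whose residues $A_{H_{i,j}}$ are rank one with $\ker A_{H_{i,j}} = H_{i,j}$. The task is to realise $\nabla_g$ as a Lauricella connection for some parameters $(\alpha_1, \ldots, \alpha_{n+1})$. The decisive step is the analysis at each codimension-two irreducible flat $L_{\{i,j,k\}}$, through which exactly the three hyperplanes $H_{i,j}, H_{i,k}, H_{j,k}$ pass: the flatness relation $[A_{H_{i,j}}, A_{L_{\{i,j,k\}}}] = 0$, the rank-one structure and the torsion-free condition together force the three residues into a common Kohno-Lauricella normal form, determined by three real parameters. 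Because any two indices of $\{1, \ldots, n+1\}$ lie in a common triple, these parameters propagate consistently across the whole arrangement, yielding global numbers $a_i$ with $\tr A_{H_{i,j}} = a_i + a_j$ and identifying $\nabla_g$ with a Lauricella connection. Finally, Corollary \ref{cor:uniqueinnerprod} applied to the essential irreducible arrangement $\mA_n$ gives uniqueness of the parallel K\"ahler form up to scale, completing the classification.

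The main obstacles I expect are the verification of positivity at higher-codimension irreducible flats in the second step, and the Kohno-style normal form on codimension-two strata in the uniqueness argument: both hinge on the precise interplay between the signature hypothesis and the integer parts of the $\alpha_i$'s, which is where any concealed subtlety in the theorem is likely to reside.
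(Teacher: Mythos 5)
Your overall strategy matches the paper's (existence via Theorem \ref{PRODTHM} applied to the Lauricella connection, uniqueness via a classification of flat torsion-free standard connections on \(\mA_n\) plus Corollary \ref{cor:uniqueinnerprod}), but two of the steps you flag as delicate are carried out incorrectly or not at all.

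First, the positivity verification at higher-codimension irreducible flats. Your claim that in the branch \(\sum_i\{\alpha_i\}>n\) ``each \(\alpha_i\) has integer part at least one'' is false: take \(\alpha_1=\cdots=\alpha_{n+1}=0.9\) with \(n\le 8\); then \(\sum\{\alpha_i\}=0.9(n+1)>n\) while every integer part is \(0\). The duality \(\alpha_i\mapsto 1-\alpha_i\) you invoke for the other branch reverses the pairwise positivity \(\alpha_i+\alpha_j>1\), so it does not ``keep all cone angles positive'' and cannot be used to reduce one branch to the other. The correct argument (the paper's Lemma \ref{lem:P}) is an induction on \(|I|\): if some \(\alpha_i\ge 1\) peel it off; otherwise all \(\alpha_i\in(0,1)\), so \(\alpha_i=\{\alpha_i\}\), pairwise positivity forces \(\{\alpha_i\}+\{\alpha_j\}>1\) and hence rules out the branch \(\sum\{\alpha_i\}<1\), and then the heredity of the signature condition to subsets (Lemma \ref{lem:hereditary}) gives \(\sum_{i\in I}\{\alpha_i\}>|I|-1\). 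The non-integrality of the sub-sums needs the same heredity lemma, which you do not state.

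Second, the uniqueness argument has a real gap at the ``propagation'' step. The codimension-two analysis at a triple point \(\{i,j,k\}\) determines candidate values \(a_i^{(ijk)}=\tfrac12(b_{i,j}+b_{i,k}-b_{j,k})\), but two triples sharing the index \(i\) give the \emph{same} value of \(a_i\) if and only if the compatibility relations \(b_{i,j}+b_{k,l}=b_{i,k}+b_{j,l}\) hold, and these do not follow from the codimension-two data or from ``any two indices lie in a common triple.'' They are a genuinely higher-codimension constraint: the paper derives them (Lemma \ref{lem:completequad}) by comparing the weight at the origin computed from the induced connections on two different hyperplanes of the \(\mA_3\) sub-quotient arrangements (Corollary \ref{cor:linear2}), and then bootstraps to \(n\ge 4\) via quotient connections at codimension-three flats. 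Finally, the ``moreover'' clause requires showing that the parameters of an arbitrary such PK cone metric satisfy all three numerical conditions — positivity from local integrability of the polyhedral volume form (Lemmas \ref{lem:volumeform} and \ref{lem:positivityvolume}), non-integrality of the individual \(\alpha_i\) from irreducibility of the holonomy, and the signature condition from definiteness of the parallel Hermitian form — none of which appears in your proposal.
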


The complex link of a \emph{regular} PK cone is endowed with a singular Fubini-Study metric, obtained as the quotient of its unit sphere by the isometric action generated by the imaginary part of its complex Euler vector field. The regular condition means that the action is periodic and free. The PK cones of Theorem \ref{LAUTHM} are regular and we have corresponding Fubini-Study metrics on \(\CP^{n-1}\) singular at the projectivized \(\mA_n\)-arrangement.
The \(\mA_2\)-arrangement consists of three complex lines through the origin in \(\C^2\) and we recover spherical metrics on \(\CP^1\) with three non-integer cone points, which are all obtained by  doubling of spherical triangles\footnote{A spherical triangle is disk with a metric of curvature one whose boundary is composed of three geodesic arcs meeting at some angles, possibly larger than $\pi$.}. In this sense, Theorem \ref{LAUTHM} extends to higher dimensions the well known numerical criterion for the existence of a spherical triangle in terms of its angles.

\subsection{Outline of the article}

\textbf{Section} \ref{sect:hyparr} contains background material on hyperplane arrangements, the central notion is that of irreducibility.
We begin by recalling some basic definitions in Section \ref{sect:defhyp}. An arrangement, for us, is a finite collection of linear hyperplanes in \(\C^n\). The centre of an arrangement \(\mH\), denoted by \(T(\mH)\), is the linear subspace of \(\C^n\) obtained by intersecting all its members. We say that \(\mH\) is essential if its centre equals zero.
We denote by \(\mL(\mH)\)
the set of all subspaces of \(\C^n\) obtained as intersections of hyperplanes in \(\mH\). If \(L \in \mL(\mH)\) then the localization of \(\mH\) at \(L\) is the arrangement given by the members of the arrangement that contain \(L\), namely
\[\mH_L = \{H \in \mH, \,\, L \subset H\} .\]

In Section \ref{sect:irredarr} we discuss irreducible arrangements.
An arrangement \(\mH\) is called reducible if we can write it as a disjoint union \(\mH=\mH_1 \cup \mH_2\) such that, after a linear change of coordinates, the defining linear equations for hyperplanes in \(\mH_1\) and \(\mH_2\) share no common variables. We say that \(\mH\) is irreducible if it is not reducible. We explain that essential irreducible arrangements are characterized by the property that the identity component of their linear automorphism groups are as small as they can be, namely \(\C^*\) acting by usual scalar multiplication.

An intersection of hyperplanes  \(L \in \mL(\mH)\) is irreducible if the localization \(\mH_L\) is irreducible. In particular, hyperplanes of the arrangement are irreducible subspaces.
We write \(\mL_{irr}(\mH)\) for the subset of all irreducible subspaces.  For any \(L \in \mL(\mH)\) there are uniquely determined \emph{irreducible components} \(L_1, \ldots, L_k\). These are irreducible subspaces \(L_i \in \mL_{irr}(\mH)\) for \(1 \leq i \leq k\) characterized by the following three properties: (i) \(L \subset L_i\); (ii) their common intersection is \(L\); (iii) if \(M \in \mL_{irr}(\mH)\) contains \(L\) then there is a unique \(L_i\) such that \(L_i \subset M\). 

We denote by \(\mH_i\) the irreducible subsets \(\mH_{L_i} \subset \mH_L\) and decompose \(\mH_L\) as a product of the form
\begin{equation}\label{eq:hyperplanedec}
	(\C^n, \, \mH_L) = (\C^{\dim L}, \, \emptyset) \times (\C^{n_1}, \, \bar{\mH}_1) \times \ldots \times (\C^{n_k}, \, \bar{\mH}_k) 	
\end{equation}
where \((\C^{n_i}, \, \bar{\mH}_i)\) are essential irreducible arrangements isomorphic to the quotients \((\C^n/L_i, \, {\mH}_i/L_i)\). The decomposition given by Equation \eqref{eq:hyperplanedec} is crucial in our proof of Theorem \ref{PRODTHM} and is upgraded in  later sections  at a connection, affine and metric levels.
\newline

\noindent
\textbf{Section} \ref{sect:affst} is about standard connections,  that is connections \(\nabla\) of the form given by Equation \eqref{eq:thmconect}.  These are central objects of \cite{CHL}. In Section \ref{sect:ftfstnd} we recall a simple algebraic criterion in terms of the residue matrices \(A_H\) that determines when a standard connection \(\nabla\) is flat and torsion free, see \cite[Proposition 2.3]{CHL}. 

\emph{Criterion.}
Let \(\nabla\) be a standard connection of the form given by Equation \eqref{eq:thmconect}.
We define the residue of \(\nabla\) at \(L \in \mL(\mH)\) to be the matrix \(A_L\) equal to the sum of the matrices \(A_H\) over all \(H \in \mH_L\).
The connection \(\nabla\) is flat if and only if the commutation relations
\begin{equation}\label{eq:flatbracket}
	[A_L, A_H] = 0
\end{equation}
hold for every codimension two intersection \(L\) and every \(H \in \mH_L\). The connection \(\nabla\) is torsion free if and only if
\begin{equation}\label{eq:torfree}
	H \subset \ker A_H
\end{equation}
for every hyperplane \(H\) of the arrangement.

Let \(\nabla\) be a flat torsion free standard connection given by Equation \eqref{eq:thmconect}.
In Section \ref{sect:resnorm} we explore the implications of the conditions given by Equations \eqref{eq:flatbracket} and \eqref{eq:torfree}, in particular we show how these conditions determine the residues \(A_L\) at higher codimension subspaces.

The weight of \(\nabla\) at a hyperplane \(H \in \mH\) is the complex number \(a_H\) given by
\(a_H = \tr A_H\).
More generally, 
the weight of \(\nabla\) at an irreducible intersection \(L\) is the complex number \(a_L\) given by
\begin{equation}
a_L = \frac{1}{\codim L} \sum_{H \in \mH_L} a_H . 
\end{equation}

If \(H \in \mH\) and \(a_H\) is non-zero then the image of the matrix \(A_H\) is a one dimensional subspace \(\C \cdot n_H\) complementary to the hyperplane \(H\) and we denote it by \(H^{\perp}\).
Given \(L \in \mL(\mH)\)
we define its normal subspace \(L^{\perp}\) by
\[L^{\perp} = \Span \{n_H, \,\, H \in \mH_L\} . \]
The normal subspaces and weights of irreducible intersections completely determine the residues \(A_L\).


In Section \ref{sect:nablaLsplit} we introduce the  localization of \(\nabla\) at \(L \in \mL(\mH)\), it is  the connection \(\nabla^L\) obtained from Equation \eqref{eq:thmconect} by discarding all terms \(A_H dh/h\) for hyperplanes \(H\) which do not contain \(L\). More explicitly,
\[\nabla^L = d - \sum_{H \in \mH_L} A_H \frac{dh}{h} . \]
The connection \(\nabla^L\) is also flat and torsion free.
We improve the decomposition given by Equation \eqref{eq:hyperplanedec} to an affine splitting of the form
\begin{equation}\label{eq:affdecnablaL}
	\left(\C^n, \, \mH_L, \, \nabla^L\right) = \left(\C^{\dim L}, \, \emptyset, \, d \right) \times \left(\C^{n_1}, \, \bar{\mH}_1, \, \bar{\nabla}^i\right) \times \ldots \times \left(\C^{n_k}, \, \bar{\mH}_k, \, \bar{\nabla}^k\right) 	.
\end{equation}
In Section \ref{sect:quotconect} we introduce the quotient connection \(\nabla^{\C^n/L}\)
(which is also standard)
and interpret the terms \(\bar{\nabla}^i\) in Equation \eqref{eq:affdecnablaL} as  quotient connections \(\nabla^{\C^n/L_i}\).

Section \ref{sect:eulervf} is about Euler vector fields. Under suitable numerical conditions, every flat torsion free standard connection \(\nabla\) has a unique Euler vector field \(e\) characterized by the equation \(\nabla e = \Id\), meaning that \(\nabla_v e = v\) for every tangent vector \(v\).
In the one dimensional case given by Equation \eqref{eq:1dimnabla} we have \(e=(1-a)^{-1} z \p/\p z\) whenever \(a \neq 1\).
In higher dimensions, if the arrangement \(\mH\) is essential and irreducible, then 
\begin{equation}\label{eq:e}
e = \frac{1}{\alpha_0} \sum_{i=1}^{n} z_i \frac{\p}{\p z_i} 	
\end{equation}
where \(\alpha_0=1-a_0\) and \(a_0\) is the weight of \(\nabla\) at the origin. In general, the arrangement \(\mH\) is a product of the empty arrangement \((\C^d, \emptyset)\), where \(d= \dim T(\mH)\) is the dimension of its centre, together with essential irreducible factors. The Euler vector field of \(\nabla\) is a sum of the usual Euler vector field of \(\C^d\) together with terms of the form \eqref{eq:e} for each of the irreducible factors.
These Euler vector fields play a crucial role in our arguments.
\newline

\noindent
\textbf{Section} \ref{sec:locprod} establishes the Local Product Theorem \ref{thm:locprod}.
Let \(L \in \mL(\mH)\) be a non-zero intersection of hyperplanes in \(\mH\) and fix a point \(x\) in the stratum \(L^{\circ}\). Take a sufficiently small neighbourhood \(U\) of \(x\) such that \(U \cap H = \emptyset\) for every \(H \in \mH\) with \(L \not\subset H\). We have a model connection \(\nabla^L\) (the localization of the standard connection \(\nabla\) at \(L\)) and  we can write
\[\nabla =  \nabla^L + \hol\]
where \(\hol\) denotes a holomorphic matrix-valued \(1\)-form on the neighbourhood \(U\).
The main work is to transplant the affine product decomposition \eqref{eq:affdecnablaL} from \(\nabla^L\) to \(\nabla\). This is the content of Proposition \ref{prop:locaffprod} and we prove it along the next steps.

\begin{itemize}
	\item[(1)] Section \ref{sect:step1}: \emph{Gauge transformation.} 
	We use that \(\nabla\) is unitary to show that holonomies of \(\nabla\) and the model connection \(\nabla^L\) are conjugate. As a consequence, we have a holomorphic gauge equivalence \(G\) between \(\nabla\) and \(\nabla^L\) over \(U^{\circ}= U \setminus \mH\). The non-integer condition on the weights in Theorem \ref{PRODTHM} allows us to extend \(G\) holomorphically across the hyperplanes.

	\item[(2)] Section \ref{sect:foliation}: \emph{Foliations.}
	The gauge transformation \(G\) takes
	the tangent sub-bundles of the factors in the product decomposition \eqref{eq:affdecnablaL}  to integrable distributions preserved by \(\nabla\). We show that \(\nabla\) decomposes as a product of its restrictions to the leaves of these distributions going through the point \(x\). 
	
	\item[(3)] Section \ref{sect:loceulervf}: \emph{Local dilation vector field.} 
	We introduce a local dilation field \(e_x\) that vanishes at \(x\) and whose flow preserves \(\nabla\). The vector field \(e_x\) is tangent to the leaves of the above foliations going through the point \(x\). The linearisation of \(e_x\) equals the Euler vector field \(e_L\) for the model connection \(\nabla^L\). 
	
	\item[(4)] Section \ref{sect:identification}: \emph{Identification of transversal factors.}
	We use the Poincar\'e-Dulac theorem to linearise the restriction of \(e_x\) to the leaves of the constructed foliations that go through the point \(x\). From here we conclude that the induced connections on the leaves are standard. This establishes the desired affine local product decomposition for \(\nabla\) given by Proposition \ref{prop:locaffprod}.
\end{itemize}

Finally, in Section \ref{sect:pflocprod} we use the non-integer assumptions of Theorem \ref{PRODTHM} to show that the factors in the affine product decomposition given by Proposition \ref{prop:locaffprod} are pairwise orthogonal and this finishes the proof of Theorem \ref{thm:locprod}.
\newline	

\noindent
\textbf{Section} \ref{sect:MC} settles Theorem \ref{PRODTHM}. 

In Section \ref{sect:riemcone} we show that the flat K\"ahler metric \(g\) of Theorem \ref{PRODTHM}, defined on \((\C^n)^{\circ}\), is a Riemannian cone. This fact is a direct consequence of the existence of a real Euler vector field \(e_r = (1/2) \RE (e)\) whose flow acts by dilations of the metric. 
On the complement of the hyperplanes we have a smooth function \(r\) defined as the norm of \(e_r\), that is
\[r^2 = g(e_r, e_r) . \]
Equivalently, \(r\) is the radius function of the Riemannian cone \(g\) in the sense that we can write
\(g = dr^2 + r^2 g_{S^{\circ}}\) where \(S^{\circ}=\{r=1\}\)
is the (real) link of \(g\). We note that \((S^{\circ}, g_{S^{\circ}})\) is an incomplete Sasakian manifold of constant sectional curvature \(1\).

In Section \ref{sect:r2} we use our Local Product Decomposition Theorem \ref{thm:locprod} together with an inductive argument to show that the radius function \(r\) on \((\C^n)^{\circ}\) extends continuously across the arrangement \(\mH\) by taking strictly positive values outside the origin of \(\C^n\). In a way, this relatively rough information is the key ingredient we need to prove Theorem \ref{PRODTHM}.

Sections \ref{sect:lengthspaces} and \ref{sect:LePe} contain background material on metric geometry. Section \ref{sect:lengthspaces} discusses length spaces and metric cones. Section \ref{sect:LePe} recalls a theorem of Lebedeva-Petrunin \cite{LePe} that characterizes polyhedral spaces as complete length spaces with the property that every point has a neighbourhood which embeds isometrically in a metric cone by sending the given point to the vertex of the cone. Section \ref{sect:metricexten} contains general results about metric completions,  tailored to our needs. More specifically, we consider triples \((X, X^{\circ}, d^{\circ})\) where \(X\) is a topological space, \(X^{\circ}\) and open dense subset and \(d^{\circ}\) is metric on \(X^{\circ}\) inducing its topology. We spell out conditions under which \(d^{\circ}\) extends continuously to \(X\) as a complete metric inducing its topology.

In Section \ref{sect:completion}
we identify \(\C^n\) as the metric completion of \(((\C^n)^{\circ}, g)\), this is  accomplished by verifying the conditions of Section \ref{sect:metricexten}.
The key players in our argument are the localized radius functions \(r_x\).
For each \(x \in \C^n\) we have a local dilation vector field \(e_x\) defined on a \emph{standard neighbourhood} of \(x\). The vector field \(e_x\) is holomorphic, it vanishes at \(x\) and its flow acts by dilations of the metric \(g\) on \((\C^n)^{\circ}\) fixing the point \(x\). We define the function \(r_x\) to be the norm of \(e_x\) on the complement of the hyperplanes. The \(r_x\)  are distance functions for \(g\), so integral curves of \(\nabla r_x\) (i.e reparametrized integral curves of \(e_x\)) are minimizing unit speed geodesics of length equal to the difference between the endpoint values of \(r_x\). The key fact, proved in Section \ref{sect:r2}, is that the functions \(r_x\) extend continuously across \(\mH\) by taking strictly positive values outside \(x\). This allows us to extend the  Riemannian distance \(d^{\circ}_g\) defined by \(g\) to a continuous function
\[d_g : \C^n \times \C^n \to \R_{\geq 0} .\]  
An elementary argument using standard neighbourhoods shows that \(d_g\) takes strictly positive values outside the diagonal (hence it is a metric) and it induces the usual topology. The properness of \(r = d_g( \cdot, 0)\) implies that \(d_g\) is a complete metric and therefore \((\C^n, d_g)\) is a metric completion of \(((\C^n)^{\circ}, d_g^{\circ})\). Since \(d_g\) is a metric completion of a length space, it follows that \(d_g\) is a length metric.
On the other hand \(d_g^{\circ}\) is the length distance of a Riemannian cone \((S^{\circ}, g_{S^{\circ}})\), so \(d_g\) is a metric cone over the metric completion \((S, g_S)\) of \((S^{\circ}, g_{S^{\circ}})\).

In order to finish the proof of Theorem \ref{PRODTHM} it remains to show that \(d_g\) is polyhedral. This is accomplished in Section \ref{sect:pfthm1} by applying Lebedeva-Petrunin's characterization of polyhedral spaces \cite{LePe}.
\newline 

\noindent
\textbf{Section} \ref{sect:PK} introduces and develops, following \cite{Pan}, the concept of a PK metric on a complex manifold. In Sections \ref{sect:simplicialcomplex} and \ref{sect:polyhedral cones}  we recall the definition of polyhedral spaces, polyhedral (pseudo)manifolds, their tangent cones and
the stratification given by the singularities of the metric. In Section \ref{sect:parallelvf}  we discuss vector fields \emph{parallel in codimension \(k \geq 1\)}. The natural category in which we work is the one of \emph{normal polyhedral pseudomanifolds}. In this setting, we show that a polyhedral cone admitting a non-zero vector field parallel in codimension \(2\) necessarily splits a Euclidean factor. 

In Section \ref{sect:polKahlerspaces} we introduce the notion of \emph{polyhedral K\"ahler spaces}, polyhedral pseudomanifolds with a parallel complex structure on their regular part and such that the metric singularities have \emph{complex directions}. We show that if the real codimension \(2\) strata have complex directions then all higher codimension strata do, in particular there are no metric singularities of odd real codimension. In Section \ref{sect:singularitiesPK} we analyse the singularities of a PK metric on a complex manifold. We prove existence of complex directions, identify the connected components of the open polyhedral strata with smooth complex submanifolds and show that the metric singular set \(X^s\) is a complex (possibly singular) hypersurface of \(X\).

The local study of PK metrics reduces to the one of PK cones,
which we consider in Section \ref{sect:globalcxcoord}. We show that if \((X,g)\) is a complex manifold endowed with a PK cone metric then \(X\) is biholomorphic to \(\C^n\); the main idea is to linearise the complex Euler vector field \(e\) of the cone. We establish the existence of global complex coordinates 
\[(z_1, \ldots, z_n): X \xrightarrow{\sim} \C^n\]
centred at the vertex of the cone such that the canonical dilation of the metric that scales distances by a factor of \(\lambda>0\) is given by
\begin{equation}\label{eq:actiontorus}
	(z_1, \ldots, z_n) \mapsto (\lambda^{c_1}z_1, \ldots, \lambda^{c_n}z_n)
\end{equation}
where \(c_1, \ldots, c_n\) are positive real numbers. The real dilations \eqref{eq:actiontorus} give rise to a linear action of a complex torus \((\C^*)^k\) and the metric singular set is a \((\C^*)^k\)-invariant complex hypersurface. In Section \ref{sect:sumfldandindmet} we look at complex polyhedral submanifolds of PK complex manifolds and show that the induced metric is also PK.

In Section \ref{sect:PKhyperplane} we consider PK cone metrics on \(\C^n\) whose singularities make a hyperplane arrangement \(\mH\). We prove that the polyhedral stratification given by the codimensions of metric singularities agrees with the natural complex stratification
\[\C^n = \bigcup_{L \in \mL(\mH)} L^{\circ} . \]
Moreover, we show that the Levi-Civita connection of the metric is of the type described by Equation \eqref{eq:thmconect}, i.e. it is standard.
\newline
 
\noindent
\textbf{Section} \ref{sect:FS} discusses a class of singular Fubini-Study (FS) metrics on \(\CP^{n-1}\) which arise as complex links of PK cones on \(\C^n\). 

We begin with Section \ref{sect:regPKcone} about regular PK cones.
Following common language is Sasakian geometry, we say that a PK cone is regular if its Reeb vector field generates a free \(S^1\)-action. Given a regular PK cone metric on \(\C^n\) we have linear coordinates \((z_1, \ldots, z_n)\) uniquely defined up to the action of \(GL(n, \C)\) in which the Euler vector field of the cone is given by Equation \eqref{eq:e}
where \(2\pi\alpha_0>0\) is the total angle of the \(2\)-cone obtained by restricting \(g\) to any complex line going through the origin. The distance squared to the origin is a K\"ahler potential for the metric and it is given by
\begin{equation}\label{eq:kpotpk}
	r^2 = e^{2\varphi} |z|^{2\alpha_0} 
\end{equation}
where \(\varphi\) is the pull-back of a continuous (non-explicit) function on \(\CP^{n-1}\) under the standard projection map \(\C^{n} \setminus \{0\} \to \CP^{n-1}\). 

In Section \ref{sect:VolformPKcone} we prove that the volume form of a PK cone metric on \(\C^n\) singular at a hyperplane arrangement \(\mH\) is equal to the Euclidean volume form multiplied by the factor
\begin{equation}\label{eq:voldensity}
	\prod_{H \in \mH} |h|^{2\alpha_H-2} .
\end{equation}
We show that the positivity assumption in Theorem \ref{PRODTHM} is equivalent to the volume density function
given by Equation \eqref{eq:voldensity} being locally integrable in the usual Lebesgue sense. 

In Section \ref{sect:FSquotPK} we consider singular Fubini-Study metrics \(g_{FS}\) on \(\CP^{n-1}\) obtained from the quotient of the unit sphere of a regular PK cone metric on \(\C^n\) under multiplication by complex units. If we write the K\"ahler potential of the PK cone as in Equation \eqref{eq:kpotpk} then the K\"ahler form of the Fubini-Study metric \(g_{FS}\) on its regular part is equal to
\begin{equation}\label{eq:globalpotential}
	\omega_{FS} = \alpha_0 \omega_{\CP^{n-1}} + i \dd \varphi
\end{equation} 
where \(\omega_{\CP^{n-1}}\) is the standard Fubini-Study metric with sectional curvatures in the range \(1 \leq \mbox{sec} \leq 4\). In Section \ref{sect:locmodsingFS} we provide local models for singularities of FS metrics. Tangent cones of FS metrics are PK cones, if \(C\) is the tangent cone at \(\bar{x} \in \CP^{n-1}\) then close to \(\bar{x}\) we have
\begin{equation*}
	\omega_{FS} = \frac{i}{2} \dd \log (1 + \rho^2)
\end{equation*}
where \(\rho\) is the distance to the vertex of \(C\). In Section \ref{sect:volformFS} we analyse the volume form \(\omega_{FS}^{n-1}\) close to singularities. The measure defined by the Riemannian volume form of \(g_{FS}\) on the regular part extends by zero over the projectivized arrangement to an absolutely continuous measure. 

The unit sphere of a polyhedral cone is a spherical polyhedral space, its (total) volume is the sum of the volumes of its top dimensional simplices or equivalently is the integral of the Riemannian volume form over its regular part. Let \((\C^n, g)\) be a regular PK cone and let \(g_{FS}\) be the corresponding Fubini-Study metric on \(\CP^{n-1}\). We define the \emph{volume of} \(g_{FS}\)
as the integral of its Riemannian volume form over its regular part, equivalently it is equal to the volume of the unit sphere of the cone divided by \(2\pi\alpha_0\) where \(2\pi\alpha_0\) is the length of the \(S^1\)-fibres of the projection map from the unit sphere to the complex link.
In Section \ref{sect:totalvol} we give a formula for the  volume of our Fubini-Study metrics, namely
\begin{equation}\label{eq:totvol}
	\Vol(g_{FS}) = \alpha_0^{n-1} \frac{\pi^{n-1}}{(n-1)!} 
\end{equation}
where \(\pi^{n-1}/(n-1)!\) is the volume of the standard Fubini-Study metric on \(\CP^{n-1}\).
Equation \eqref{eq:totvol} follows from Equation \eqref{eq:globalpotential} together with the fact that \(\varphi\) is continuous on the whole \(\CP^{n-1}\) and general results from pluripotential theory. In Section \ref{sect:PKliftsFS} we introduce the notion of a FS metric in \(\CP^{n-1}\) adapted to a projective arrangement and show that these metrics lift to PK cones metrics on \(\C^n\).
\newline 
 
\noindent
\textbf{Section} \ref{sect:irredhol}
establishes Theorem \ref{thm:irredhol} which asserts that,
in the context of Theorem \ref{PRODTHM}, if the arrangement \(\mH\) is essential and irreducible, then the holonomy of the connection \(\nabla\) is irreducible.
The proof goes by induction on the dimension, by looking at the \emph{induced connections} on the hyperplanes of the arrangement. 

In Section \ref{sec:triples} we show that if \(\mH\) is an essential irreducible arrangement then there is a hyperplane \(H_0 \in \mH\) such that the induced arrangement
\[\mH'' = \{H \cap H_0, \,\, H \in \mH \setminus \{H_0\} \} \]
is also essential and irreducible. This result is needed for our inductive proof of Theorem \ref{thm:irredhol}.

In Section \ref{sect:indcon} we discuss induced connections on hyperplanes \(\nabla''\) along the lines of \cite{CHL} in the Dunkl case. If \(\nabla\) is a standard connection as in Theorem \ref{PRODTHM} given by Equation \eqref{eq:thmconect} and \(H_0 \in \mH\), then \(\nabla''\) is defined as the restriction to \(TH_0\) of the connection
\[\nabla' = d - \sum_{H \in \mH \setminus \{H_0\}} A_H \frac{dh}{h} .\]
The induced connection is also flat, torsion free and unitary. Moreover, the weights of \(\nabla''\) at irreducible intersections of \(\mH''\) are a subset
of the weights of \(\nabla\) at irreducible intersections of \(\mH\). In particular \(\nabla''\) also satisfies the non-integer and positivity assumptions. 

In Section \ref{sect:pfirrhol} we finish the proof of Theorem \ref{thm:irredhol}.
By induction, we reduce to the case of \(n=2\). In the case of \(n=2\) the result follows from the correspondence between PK cones and spherical metrics (\cite{Pan}) together with the characterization of co-axial monodromy given in \cite{Dey}. 
\newline

\noindent
\textbf{Section} \ref{sect:Lau} establishes Theorem \ref{LAUTHM} which characterizes the values of the cone angles for which there is a PK cone metric on \(\C^n\) singular at the hyperplanes of the \(\mA_n\)-arrangement.
We introduce the braid arrangement and Lauricella connection in Sections \ref{sect:braidarrang} and \ref{sect:Lauconnect} following mainly the exposition in \cite{CHL}. 

In Section \ref{sect:charactLau} we characterize the Lauricella family as the unique flat torsion free connections with poles at the hyperplanes of the \(\mA_n\)-arrangement. This uniqueness result might be known to experts but we haven't found such a statement in the literature. 

In Section \ref{sect:flathermform} we recall well-known facts about the holonomy representation of the Lauricella connection and the existence of a natural flat Hermitian form \(\inn\) studied by several authors. Of particular importance is the formula for the signature of \(\inn\) which goes back at least to \cite{DM}. 
In Section \ref{sect:pfLau} we establish Theorem \ref{LAUTHM}. Our proof relies on Theorem \ref{PRODTHM} for the existence part and it uses Theorem \ref{thm:irredhol} together with the characterization of the Lauricella connection from Section \ref{sect:charactLau} for the uniqueness and non-existence statements. 

Finally, in Section \ref{sect:FSLau} we re-state Theorem \ref{LAUTHM} in terms of FS metrics and give formulas for their volumes. We believe that these FS metrics provide interesting genuine examples of K\"ahler-Einstein metrics with cone angles bigger that \(2\pi\). In the case of angles \(< 2\pi\) we relate the existence criterion to log K-stability (when angles are bigger that \(2\pi\) the divisors of the corresponding pairs are non-effective and there is no notion of stability available for these objects).

\subsection*{Acknowledgements}
\addcontentsline{toc}{subsection}{Acknowledgements}

We would like to thank Alexander Lytchak, Anton Petrunin, Dali Shen, Eduard Looijenga, Gert Heckman and Misha Verbitsky  for answering questions and discussions. D.P. is particularly grateful to Misha Vebritsky for many years of collaboration on polyhedral K\"ahler manifolds.

Thank you EPSRC Project EP/S035788/1, \emph{K\"ahler manifolds of constant curvature with conical singularities} for supporting our work.

\section{Hyperplane arrangements}\label{sect:hyparr}

We recall some elementary definitions and facts from the theory of hyperplane arrangements and at the same time set-up some notation for the rest of the paper.
A good general reference for the content of this section and more advanced aspects of the topic is Orlik-Terao's book \cite{OT1}. We present a self-contained treatment. The key concept is that of an irreducible arrangement (Definition \ref{def:irrarr}) and the main result is Lemma \ref{lem:HLdec}.

\subsection{Definitions}\label{sect:defhyp}

\begin{definition}
	An \textbf{arrangement} is a finite collection of linear hyperplanes \(H \subset V\), where \(V\) is a complex vector space of dimension \(n < \infty\).  
\end{definition}

\begin{notation}
	We use either \((V, \mH)\) or \(\mH\) to denote arrangements.
\end{notation}

\begin{remark}
	We allow \(\mH=\emptyset\). 
\end{remark}

\begin{definition}
	The \textbf{centre} of \(\mH\) is the common intersection of all its members, we denote it by 
	\[T(\mH)= \bigcap_{H \in \mH} H.\] 
\end{definition}

\begin{definition}
	The \textbf{intersection poset} is defined as
	\[\mL(\mH) = \{L, \,\ L \mbox{ is an intersection of hyperplanes in } \mH \} .\]
	\(\mL(\mH)\) is equipped with the reverse inclusion relation: \(L \leq M \iff L \supset M\).
\end{definition}

\begin{note}
The minimal element of \(\mL(\mH)\) is \(V\), corresponding to the intersection over the empty subset of \(\mH\). The maximal element is the centre \(T(\mH)\).	
\end{note}

\begin{notation}
Given a non-zero linear function \(h \in V^*\), it defines a linear hyperplane in \(V\) which we denote by 
\[H=\{h=0\} .\]
\end{notation}

\begin{definition}
	The \textbf{span of} \(\mH\) is the subspace of \(V^{*}\) generated by all linear functions \(h\) such that \(H=\{h=0\}\) belongs to \(\mH\). We  denote it by
	\[W(\mH) = \Span \{h \in V^* , \,\ H \in \mH\} . \]
\end{definition}

\begin{remark}
	The subspaces
	\(T(\mH) \subset V\) and \(W(\mH) \subset V^*\) are annihilators of each other.
\end{remark}

\begin{remark}
	If \(\mH=\emptyset\) then \(T(\mH)=V\) and \(W(\mH)=\{0\}\).
\end{remark}

\begin{definition}
The \textbf{rank} of \(\mH\) is the dimension of the span \(W(\mH)\), 
\begin{equation*}
r(\mH)= \dim W(\mH) = \codim T(\mH) . 
\end{equation*}	
\end{definition}

\begin{definition}\label{def:essarr}
	\((V,\mH)\) is \textbf{essential} if \(T(\mH)=\{0\}\), or equivalently \(r(\mH) = \dim V\).
\end{definition}

\begin{definition}
	Let \(S \subset V\) be a linear subspace such that \(S \subset T(\mH)\). The \textbf{quotient arrangement} \(\mH/S\) is defined as
	\[\mH/S = \{H/S, \,\ H \in \mH\},\]
	it is an arrangement of hyperplanes in \(V/S\).
\end{definition}

\begin{remark}
The quotient arrangement \(\mH/T(\mH)\) is always essential.	
\end{remark}

\begin{definition}
Let \(L \in \mL(\mH)\), the \textbf{localization} of \(\mH\) at \(L\) is 
\[\mH_L = \{H \in \mH, \,\ L \subset H\} .\]	
\end{definition}

\begin{note}
	\(\mL(\mH_L)\) consists of the subspaces in \(\mL(\mH)\) that contain \(L\).
\end{note}

\begin{lemma}\label{lem:HLMisHM}
	If \(M \in \mL(\mH_L)\) then \((\mH_L)_M =\mH_M\).
\end{lemma}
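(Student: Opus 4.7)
The strategy is to compare the two sides by unfolding the definitions and observing that the containment condition defining $\mH_L$ becomes automatic on the locus $M$.

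First I would note the containment $L \subseteq M$. Since $M \in \mL(\mH_L)$, by definition $M$ is obtained as an intersection of hyperplanes belonging to $\mH_L$, and every such hyperplane contains $L$. Therefore $L$ is contained in the intersection, i.e.\ $L \subseteq M$.

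Next, I would check the two set inclusions. The inclusion $(\mH_L)_M \subseteq \mH_M$ is immediate from the definitions: any $H \in (\mH_L)_M$ satisfies $H \in \mH$ and $M \subseteq H$, which is precisely the defining condition of $\mH_M$. For the reverse inclusion, take $H \in \mH_M$, so that $H \in \mH$ and $M \subseteq H$. By the observation in the previous paragraph, $L \subseteq M \subseteq H$, so $H \in \mH_L$. Combined with $M \subseteq H$, this gives $H \in (\mH_L)_M$.

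The main (and only) step is the observation $L \subseteq M$; everything else is a tautology. There is no real obstacle to this argument—it is a purely set-theoretic verification that the two iterated constructions of localization collapse to a single one, essentially because localization at $L$ records the condition ``$L \subseteq H$'' and this is subsumed by the subsequent localization at any $M$ lying above $L$ in the poset $\mL(\mH)$.
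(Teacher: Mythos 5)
Your proof is correct and is exactly the argument the paper has in mind (the paper simply writes ``Clear''): the only substantive point is that $M\in\mL(\mH_L)$ forces $L\subseteq M$, after which both inclusions are tautological. Note that the edge case $M=V$ (the empty intersection) also causes no trouble since $L\subseteq V$ trivially.
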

\begin{proof}
	Clear.
\end{proof}

\begin{definition}
Let \(L \in \mL(\mH)\), the \textbf{induced arrangement} on \(L\) is
\[\mH^L = \{H \cap L, \,\ H \in \mH \setminus \mH_L\} . \]	
\end{definition}

\begin{note}
	\(\mL(\mH_L)\) consists of the subspaces in \(\mL(\mH)\) which are contained in \(L\).
\end{note}

\begin{lemma}
	If \(M \in \mL(\mH^L)\) then \((\mH^L)^M = \mH^M\).
\end{lemma}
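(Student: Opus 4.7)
The plan is to prove the equality by a direct unpacking of definitions, so there is no serious obstacle here; the whole content of the lemma is bookkeeping about which hyperplanes are excluded on each side.

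First I would note that since $M \in \mL(\mH^L)$ is by definition an intersection of elements of $\mH^L$, and each such element is a subspace of $L$, we have $M \subset L$. From this inclusion two elementary consequences follow: (a) for any $H \in \mH$, the identity $(H \cap L) \cap M = H \cap M$ holds, and (b) $\mH_L \subset \mH_M$, since $L \subset H$ and $M \subset L$ together imply $M \subset H$.

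Next I would expand the left-hand side. By definition
\[
(\mH^L)^M = \bigl\{(H \cap L) \cap M \,:\, H \in \mH \setminus \mH_L,\; H \cap L \notin (\mH^L)_M\bigr\}.
\]
The condition $H \cap L \notin (\mH^L)_M$ means $M \not\subset H \cap L$, which, because $M \subset L$, is equivalent to $M \not\subset H$, i.e.\ $H \notin \mH_M$. Using observation (a) to rewrite $(H \cap L) \cap M$ as $H \cap M$, we obtain
\[
(\mH^L)^M = \bigl\{H \cap M \,:\, H \in \mH \setminus \mH_L,\; H \notin \mH_M\bigr\}.
\]
Finally, by observation (b) the condition $H \notin \mH_M$ automatically forces $H \notin \mH_L$, so the constraint $H \in \mH \setminus \mH_L$ is redundant. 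Hence
\[
(\mH^L)^M = \bigl\{H \cap M \,:\, H \in \mH \setminus \mH_M\bigr\} = \mH^M,
\]
which is exactly the claim. The only mild subtlety is that $\mH^L$ is a set, so distinct $H$ in $\mH \setminus \mH_L$ may produce the same hyperplane of $L$; this is harmless since we are proving a set equality and the argument above shows inclusion in both directions on the level of elements.
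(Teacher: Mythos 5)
Your proof is correct and is exactly the definition-unpacking the paper has in mind when it dismisses this lemma with ``Clear.'' The two observations you isolate ($M\subset L$ gives $(H\cap L)\cap M = H\cap M$, and $\mH_L\subset\mH_M$ makes the exclusion of $\mH_L$ redundant) are precisely the content, and your remark about distinct $H$ producing the same element of $\mH^L$ is handled correctly.
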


\begin{proof}
	Clear.
\end{proof}

\begin{definition}
	For \(L \in \mL(\mH)\) we denote its smooth part by 
	\[L^{\circ} = L \setminus \bigcup_{H \in \mH \setminus \mH_L}H.\]
	If \(L=V\) then \(V^{\circ}\) is the hyperplane complement. The sets \(L^{\circ}\) give a \textbf{stratification} of \(V\), meaning that \[V = \bigcup_{L \in \mL(\mH)} L^{\circ}\] 
	is a disjoint union of smooth submanifolds.
\end{definition}

\subsection{Irreducible arrangements}\label{sect:irredarr}

\begin{definition}
	A \textbf{u-plus decomposition} of \(\mH\), denoted by
	\begin{equation}\label{eq:udec}
	\mH = \mH_1 \uplus \ldots \uplus \mH_k ,
	\end{equation}	
	 is a collection of subsets \(\mH_i \subset \mH\) such that \(\cup_{i=1}^k\mH_i = \mH\) and
	\begin{equation}\label{eq:udec2}
	W(\mH)=W(\mH_1) \oplus \ldots \oplus W(\mH_k).
	\end{equation}
\end{definition}

\begin{remark}
	It follows from Equation \eqref{eq:udec2} that the union \(\cup_{i=1}^k\mH_i\) must be disjoint.  It also follows that the ranks satisfy
	\begin{equation}\label{eq:sumranks}
		r(\mH) = r(\mH_1) + \ldots + r(\mH_k) .
	\end{equation}
	In particular, if all \(\mH_i\) are non-empty, we must have \(k \leq r(\mH)\).
\end{remark}

\begin{note}
	Providing a u-plus decomposition of an arrangement is essentially the same thing as expressing the arrangement as a product. Indeed,
	after a suitable linear change of coordinates in \(V\), the defining equations of hyperplanes in \(\mH_i\) and \(\mH_j\) share no common variable for \(i\neq j\). We will say more about this in Section \ref{sect:productarr}.
\end{note}

\begin{lemma}\label{lem:HiisHTi}
	If \(\mH= \mH_1 \uplus \ldots \uplus \mH_k\) with centres \(T_i = T(\mH_i)\) then \(\mH_i=\mH_{T_i}\).
\end{lemma}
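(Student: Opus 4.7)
The plan is to prove the two inclusions $\mH_i \subset \mH_{T_i}$ and $\mH_{T_i} \subset \mH_i$ separately. The first is essentially tautological: any hyperplane $H \in \mH_i$ contains the common intersection $T_i = T(\mH_i)$ by the very definition of the centre, and since $\mH_i \subset \mH$, we have $H \in \mH$ with $T_i \subset H$, hence $H \in \mH_{T_i}$.

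For the reverse inclusion I would pick $H \in \mH_{T_i}$ with defining linear form $h \in V^*$ and show $H \in \mH_i$. The first observation is that $h$ annihilates $T_i$, because $T_i \subset H = \{h=0\}$. Using the duality recorded in the remark after the definition of $W(\mH)$ (namely $T(\mH_i)$ and $W(\mH_i)$ are annihilators of each other), this puts $h \in W(\mH_i)$. On the other hand, since $H \in \mH$ and the u-plus decomposition $\mH = \mH_1 \uplus \ldots \uplus \mH_k$ partitions $\mH$, there is a (unique) index $j$ with $H \in \mH_j$, so $h \in W(\mH_j)$.

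The conclusion will now follow from the direct sum hypothesis $W(\mH) = W(\mH_1) \oplus \ldots \oplus W(\mH_k)$: a non-zero element cannot lie in two distinct summands, so $i=j$ and hence $H \in \mH_i$. The only step that is not quite a definition chase is the annihilator identification $T(\mH_i)^\perp = W(\mH_i)$, which is standard linear algebra and is in any case flagged as a remark earlier in the section, so I do not expect any real obstacle.
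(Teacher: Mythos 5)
Your proof is correct and follows the same route as the paper: the inclusion $\mH_i \subset \mH_{T_i}$ is definitional, and for the converse both arguments place the defining form $h$ in $W(\mH_i) = \Ann(T_i)$ and in $W(\mH_j)$ for the unique $j$ with $H \in \mH_j$, then invoke the direct sum $W(\mH) = \bigoplus_l W(\mH_l)$ to force $i=j$. No gaps.
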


\begin{proof}
	By definition \(\mH_i \subset \mH_{T_i}\). Let \(H = \{h=0\} \in \mH_{T_i}\), we know that \(H \in \mH_j\) for some \(j\) and our goal is to show that \(j=i\). Note that, since \(T_i \subset H\), we have \(h|_{T_i}=0\) and \(h \in \Ann T_i = W(\mH_i)\). On the other hand,  since \(H \in \mH_j\), we also have \(h \in W(\mH_j)\). The subspaces \(W(\mH_i)\) and \(W(\mH_j)\) have zero intersection when \(i\neq j\), so it follows that \(i=j\).
\end{proof}

\begin{lemma}\label{lem:ann}
	Assume \(\mH = \mH_1 \cup \mH_2\) then
	\begin{equation*} 
		\mH = \mH_1 \uplus \mH_2 \iff  W(\mH_1) \cap W(\mH_2) = \{0\} \iff  T(\mH_1) + T(\mH_2) = V .
	\end{equation*}
\end{lemma}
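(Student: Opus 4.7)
The plan is to prove the two equivalences by purely linear-algebraic arguments, exploiting that $W(\mH_i)$ and $T(\mH_i)$ are annihilators of each other inside $V^*$ and $V$ respectively.

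For the first equivalence, I would observe that since $\mH = \mH_1 \cup \mH_2$, the spans satisfy
\[ W(\mH) = W(\mH_1) + W(\mH_2), \]
simply because the generators of $W(\mH)$ are distributed among $\mH_1$ and $\mH_2$. A sum of two subspaces is a direct sum precisely when their intersection is trivial, so the definition $W(\mH) = W(\mH_1) \oplus W(\mH_2)$ of a u-plus decomposition is equivalent to $W(\mH_1) \cap W(\mH_2) = \{0\}$.

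For the second equivalence, I would use the annihilator duality: writing $\Ann$ for the annihilator in the dual space (so $\Ann T(\mH_i) = W(\mH_i)$ and $\Ann W(\mH_i) = T(\mH_i)$), standard linear algebra gives
\[ \Ann\bigl(T(\mH_1) + T(\mH_2)\bigr) = \Ann T(\mH_1) \cap \Ann T(\mH_2) = W(\mH_1) \cap W(\mH_2). \]
A subspace of $V$ equals $V$ iff its annihilator is $\{0\}$, so $T(\mH_1) + T(\mH_2) = V$ iff $W(\mH_1) \cap W(\mH_2) = \{0\}$.

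There is essentially no obstacle here; the proof is a one-liner once one recalls the annihilator identities $\Ann(A+B)=\Ann A\cap \Ann B$ and that $T(\mH_i)$, $W(\mH_i)$ are mutual annihilators, which are both stated as remarks in Section \ref{sect:defhyp}. The only mild care needed is that $\mH = \mH_1 \cup \mH_2$ is required for the identity $W(\mH) = W(\mH_1) + W(\mH_2)$, which is what allows the first biconditional to go through.
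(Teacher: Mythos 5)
Your proof is correct and follows essentially the same route as the paper: the first equivalence is immediate from the definition of a u-plus decomposition, and the second is the standard annihilator duality (the paper invokes $\Ann(A\cap B)=\Ann A+\Ann B$ on the $W$'s, while you use the mirror identity $\Ann(A+B)=\Ann A\cap\Ann B$ on the $T$'s, which is the same argument read in the other direction).
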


\begin{proof}
	The first equivalence is a direct consequence of the definition, the second follows from the identity
	\[ \Ann (A \cap B) = \Ann (A) + \Ann (B) \]
	for annihilators of subspaces.
\end{proof}

\begin{definition}\label{def:irrarr}
	\((V, \mH)\) is called \textbf{reducible} if there are \emph{non-empty} subsets \(\mH_1, \mH_2 \subset \mH\) such that \(\mH=\mH_1 \uplus \mH_2\).
	 The arrangement is \textbf{irreducible} if it is not reducible. 
\end{definition}

It follows from Lemmas \ref{lem:HiisHTi} and \ref{lem:ann} that \(\mH\) is reducible if and only if there are two non-zero proper subspaces \(L_1, L_2 \in \mL(\mH)\) such that \(L_1+L_2=V\) and \(\mH = \mH_{L_1} \cup \mH_{L_2}\).

\begin{remark}
	An arrangement \(\mH\) is irreducible if and only if
	\(\mH/T(\mH)\) is.
\end{remark}

\begin{lemma} \label{lem:uplussubarrang}
	Assume \(\mH = \mH_1 \uplus \mH_2\) and let \(\mH' \subset \mH\). Then \(\mH' = (\mH' \cap \mH_1) \uplus (\mH' \cap \mH_2)\).
\end{lemma}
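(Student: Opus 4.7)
The plan is to verify the two defining conditions of a u-plus decomposition directly from the definitions, using only that \(W(\mH)= W(\mH_1) \oplus W(\mH_2)\) and that \(\mH' \subset \mH\) is an arbitrary subset.

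First I would note the set-theoretic decomposition. Since \(\mH = \mH_1 \cup \mH_2\) with \(\mH_1 \cap \mH_2 = \emptyset\) (by the remark following the definition of u-plus decomposition, the union in \eqref{eq:udec} is automatically disjoint), intersecting with \(\mH'\) yields
\[
\mH' = (\mH' \cap \mH_1) \cup (\mH' \cap \mH_2),
\]
and this union is also disjoint. So the only substantive thing to check is that the span of \(\mH'\) decomposes as a direct sum
\[
W(\mH') = W(\mH' \cap \mH_1) \oplus W(\mH' \cap \mH_2) .
\]

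For the directness of the sum: since \(\mH' \cap \mH_i \subset \mH_i\) we have \(W(\mH' \cap \mH_i) \subset W(\mH_i)\), and by Lemma \ref{lem:ann} applied to \(\mH = \mH_1 \uplus \mH_2\) we have \(W(\mH_1) \cap W(\mH_2) = \{0\}\). Hence \(W(\mH' \cap \mH_1) \cap W(\mH' \cap \mH_2) = \{0\}\), so \(W(\mH' \cap \mH_1) + W(\mH' \cap \mH_2)\) is indeed a direct sum.

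For the equality of spans, the inclusion \(W(\mH' \cap \mH_1) + W(\mH' \cap \mH_2) \subset W(\mH')\) is immediate since both summands are spans of subsets of \(\mH'\). Conversely, \(W(\mH')\) is spanned by the linear functionals \(h\) defining hyperplanes \(H = \{h=0\} \in \mH'\); any such \(H\) lies in exactly one of \(\mH' \cap \mH_1\) or \(\mH' \cap \mH_2\), so \(h\) belongs to one of the two summands. This gives the reverse inclusion and completes the proof. There is no real obstacle here; the statement is essentially an exercise in unpacking the definition of \(W(\cdot)\) and the direct sum condition \eqref{eq:udec2}, and no numerical hypothesis on \(\mH\) is needed.
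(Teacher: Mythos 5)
Your proof is correct and is exactly the unpacking the paper has in mind: the paper's own proof is just ``Clear.'', and the two points you verify (disjointness of the set-theoretic decomposition, and $W(\mH') = W(\mH'\cap\mH_1)\oplus W(\mH'\cap\mH_2)$ via $W(\mH_1)\cap W(\mH_2)=\{0\}$) are precisely what makes it clear.
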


\begin{proof}
	Clear.
\end{proof}

\begin{lemma}\label{lem:uniq0}
	Assume \(\mH = \mH_1 \uplus \mH_2\) and let \(\mH' \subset \mH\) such that \(\mH'\) is irreducible. Then either \(\mH' \subset \mH_1\) or \(\mH' \subset \mH_2\).
\end{lemma}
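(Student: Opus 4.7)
The plan is to deduce this immediately from the previous lemma together with the definition of irreducibility.

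First I would apply Lemma \ref{lem:uplussubarrang} to the given subset $\mH' \subset \mH$, which yields the u-plus decomposition
\[
\mH' = (\mH' \cap \mH_1) \uplus (\mH' \cap \mH_2).
\]
Since by hypothesis $\mH'$ is irreducible, Definition \ref{def:irrarr} forbids any u-plus decomposition of $\mH'$ into two \emph{non-empty} subsets. Therefore at least one of the intersections $\mH' \cap \mH_1$ or $\mH' \cap \mH_2$ must be empty. In the first case $\mH' = \mH' \cap \mH_2 \subset \mH_2$; in the second case $\mH' \subset \mH_1$. This gives the claim.

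There is really no obstacle here — everything is set up so that the statement is a one-line consequence of Lemma \ref{lem:uplussubarrang}. The only minor point worth keeping in mind is the boundary case $\mH' = \emptyset$, which is (vacuously) irreducible and trivially contained in either $\mH_1$ or $\mH_2$; it is consistent with the dichotomy above since both $\mH' \cap \mH_i$ are then empty.
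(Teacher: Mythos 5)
Your proof is correct and follows exactly the paper's argument: apply Lemma \ref{lem:uplussubarrang} to get \(\mH' = (\mH'\cap\mH_1)\uplus(\mH'\cap\mH_2)\) and invoke irreducibility to force one factor to be empty (the paper phrases this as a contradiction, but it is the same reasoning). The remark about the empty arrangement is a harmless extra observation.
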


\begin{proof}
	Suppose not. By Lemma \ref{lem:uplussubarrang} we have \(\mH' = (\mH'\cap \mH_1) \uplus (\mH' \cap \mH_2)\) with both \(\mH'\cap \mH_i\) non-empty which contradicts irreducibility.
\end{proof}

\begin{definition}
	Let \(\mH\) be a non-empty arrangement. We say that \eqref{eq:udec} is an \textbf{irreducible decomposition} of \(\mH\) if every \(\mH_i\) is non-empty and irreducible. 
\end{definition}

\begin{lemma}
	Every non-empty arrangement \(\mH\) admits an irreducible decomposition 
	\[\mH= \mH_1 \uplus \ldots \uplus \mH_k .\] 
	This decomposition is unique up to re-labelling of the subsets \(\mH_i \subset \mH\).
\end{lemma}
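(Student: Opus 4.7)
The statement has two parts: existence of an irreducible decomposition and uniqueness up to relabelling. I will treat them separately, using the preparatory lemmas just proved.

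For \textbf{existence}, I would proceed by strong induction on the cardinality \(|\mH|\). If \(\mH\) is already irreducible, then \(\mH = \mH\) is itself an irreducible decomposition (with \(k=1\)), providing the base case and handling the case \(|\mH|=1\) (a single hyperplane is clearly irreducible). Otherwise, by Definition \ref{def:irrarr} there exists a u-plus decomposition \(\mH = \mH_1' \uplus \mH_2'\) with both \(\mH_i'\) non-empty. Because the union is disjoint and both factors are non-empty, each \(\mH_i'\) has strictly smaller cardinality than \(\mH\), so by the inductive hypothesis each admits an irreducible decomposition \(\mH_i' = \mH_{i,1} \uplus \cdots \uplus \mH_{i,k_i}\). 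The key observation is that these can be concatenated: since \(W(\mH) = W(\mH_1') \oplus W(\mH_2')\) and each \(W(\mH_i') = \bigoplus_j W(\mH_{i,j})\), the whole family \(\{\mH_{i,j}\}\) satisfies the defining direct-sum condition \eqref{eq:udec2}, and their union is \(\mH\). This yields an irreducible decomposition of \(\mH\).

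For \textbf{uniqueness}, suppose we have two irreducible decompositions
\[\mH = \mH_1 \uplus \cdots \uplus \mH_k = \mH_1' \uplus \cdots \uplus \mH_l'.\]
The main tool is Lemma \ref{lem:uniq0}: since each \(\mH_i\) is irreducible and sits inside the u-plus decomposition on the right, there is an index \(\tau(i) \in \{1,\ldots,l\}\) with \(\mH_i \subset \mH_{\tau(i)}'\). Symmetrically, \(\mH_j' \subset \mH_{\sigma(j)}\) for some \(\sigma(j) \in \{1,\ldots,k\}\). Composing gives \(\mH_i \subset \mH_{\tau(i)}' \subset \mH_{\sigma(\tau(i))}\). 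Since the \(\mH_1,\ldots,\mH_k\) are pairwise disjoint (they form a u-plus decomposition) and \(\mH_i\) is non-empty, the only way \(\mH_i \subset \mH_{\sigma(\tau(i))}\) can hold is \(\sigma(\tau(i)) = i\); sandwiched between the two equal sets, \(\mH_i = \mH_{\tau(i)}'\). Running the argument with the roles reversed shows \(\tau\) is a bijection between \(\{1,\ldots,k\}\) and \(\{1,\ldots,l\}\) (in particular \(k=l\)), matching up the two decompositions.

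The bulk of the work for both parts is already packaged into Lemmas \ref{lem:uplussubarrang} and \ref{lem:uniq0}, so no step presents a real obstacle; the only place that demands attention is the bookkeeping in the existence step verifying that concatenating the direct sum decompositions of the \(W(\mH_i')\) still gives a genuine u-plus decomposition of \(\mH\), which reduces to associativity of the direct sum.
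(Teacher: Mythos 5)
Your proof is correct and follows essentially the same route as the paper: uniqueness via Lemma \ref{lem:uniq0} (an irreducible subset must land entirely in one factor of any u-plus decomposition), and existence by the standard induction that the paper dismisses as "clear." Your version merely spells out the bookkeeping in more detail.
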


\begin{proof}
	Existence is clear. 
	Fix an irreducible decomposition \(\mH= \mH_1 \uplus \ldots \uplus \mH_k\). In order to show uniqueness we use the following consequence of Lemma \ref{lem:uniq0}: if \(\mH'\subset \mH\) is an irreducible subset then \(\mH' \subset \mH_i\) for some \(i\). If we have another irreducible decomposition
	\(\mH= \tilde{\mH}_1 \uplus \ldots \uplus \tilde{\mH}_l\)
	then each \(\tilde{\mH_j}\) must be contained in some \(\mH_i\) and vice-versa, so the two decompositions agree after re-labelling.
\end{proof}

\begin{lemma}\label{lem:automorphisms}
	Let \((V, \mH)\) be an essential and irreducible arrangement and let \(A \in \End V\) be such that \(A(H)\subset H\) for every \(H \in \mH\), then \(A\) is multiplication by a scalar.
\end{lemma}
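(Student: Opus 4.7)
My plan is to work on the dual space and use the irreducibility criterion encoded by u-plus decompositions.

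\textbf{Setup via the dual.} The hypothesis $A(H) \subset H$ for each $H \in \mH$ says precisely that the dual map $A^{*} \colon V^{*} \to V^{*}$ preserves the line $\C \cdot h$ spanned by each defining equation $h$ of $H = \{h=0\}$. Hence each such $h$ is an eigenvector of $A^{*}$, with some eigenvalue $\lambda_H \in \C$:
\[
A^{*} h = \lambda_H \, h \qquad \text{for every } H \in \mH.
\]
Since $(V, \mH)$ is essential, the defining linear forms span $W(\mH) = V^{*}$, so $V^{*}$ admits a basis of eigenvectors of $A^{*}$. In particular, $A^{*}$ is diagonalisable; let $\lambda_1, \dots, \lambda_k$ be its distinct eigenvalues appearing among the $\lambda_H$, and let $V^{*}_{\lambda_i} \subset V^{*}$ be the corresponding eigenspaces, so that $V^{*} = \bigoplus_{i=1}^{k} V^{*}_{\lambda_i}$.

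\textbf{Extracting a u-plus decomposition.} Partition the arrangement by eigenvalue:
\[
\mH_i = \{ H \in \mH : \lambda_H = \lambda_i \}, \qquad i = 1, \dots, k.
\]
By construction every defining equation $h$ of a hyperplane in $\mH_i$ lies in $V^{*}_{\lambda_i}$, so $W(\mH_i) \subset V^{*}_{\lambda_i}$. On the other hand the union $\bigcup_i \mH_i = \mH$ gives $\sum_i W(\mH_i) = W(\mH) = V^{*}$, and this sum is contained in the direct sum $\bigoplus_i V^{*}_{\lambda_i} = V^{*}$. Comparing, each containment $W(\mH_i) \subset V^{*}_{\lambda_i}$ must be an equality and the sum is direct:
\[
W(\mH) = W(\mH_1) \oplus \dots \oplus W(\mH_k).
\]
This is exactly a u-plus decomposition $\mH = \mH_1 \uplus \dots \uplus \mH_k$ in the sense of Section \ref{sect:irredarr}.

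\textbf{Conclusion via irreducibility.} Since $\mH$ is irreducible, at most one of the $\mH_i$ can be non-empty; hence $k = 1$ and all eigenvalues $\lambda_H$ coincide with a single scalar $\lambda$. Therefore $A^{*}$ acts on the spanning set of $V^{*}$ as multiplication by $\lambda$, which gives $A^{*} = \lambda \, \Id_{V^{*}}$ and finally $A = \lambda \, \Id_V$, as claimed.

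There is no real obstacle here; the only step to be careful with is the passage from $W(\mH_i) \subset V^{*}_{\lambda_i}$ to equality, which needs the essentiality hypothesis in order to conclude that the eigenvectors span all of $V^{*}$.
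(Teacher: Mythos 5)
Your proof is correct and follows essentially the same route as the paper: dualise to get $A^{*}h=\lambda_H h$, use essentiality to diagonalise $A^{*}$ on a spanning set of defining equations, and read off a u-plus decomposition that irreducibility forces to be trivial. The only cosmetic difference is that the paper groups the eigenspaces into the $\lambda$-eigenspace versus all the others to produce a two-piece decomposition directly, whereas you keep all $k$ eigenspaces and conclude $k=1$.
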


\begin{proof}
	Consider the dual \(A^* \in \End V^*\). If \(h \in V^*\) is a defining equation for \(H \in \mH\) then \(A^*h=\lambda_H h\) for some \(\lambda_H \in \C\), as follows from \(A(H)\subset H\). Fix \(H_0 \in \mH\) and let \(\lambda=\lambda_{H_0}\). Since \(\mH\) is essential there is a basis of \(V^*\) made of defining linear equations of hyperplanes in the arrangement. In particular, \(A^*\) is diagonalizable and we can write \(V^*=W_1 \oplus W_2\) where \(W_1\) is the \(\lambda\)-eigenspace of \(A^*\) and \(W_2\) is the sum of all other eigenspaces. We obtain a u-plus decomposition \(\mH=\mH_1 \uplus \mH_2\) given by \(\mH_i=\{H \in \mH, \,\ h \in W_i\}\) for \(i=1,2\). Since \(\mH\) is irreducible we must have \(\mH=\mH_1\), therefore \(A^*=\lambda \cdot \Id_{V^*}\) and the statement follows.
\end{proof}

\subsubsection{{\large Irreducible intersections and components}}

\begin{lemma}\label{lem:ABirred}
	Let \(\mA \subset \mB\) be arrangements with equal centres \(T(\mA)=T(\mB)\). If \(\mA\) is irreducible then \(\mB\) is also irreducible. In particular, if \(\mA\) and \(\mB\) are both essential, then \(\mA\) irreducible implies \(\mB\) also is.
\end{lemma}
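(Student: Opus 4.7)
The plan is to argue by contradiction. Suppose \(\mB\) is reducible, so we can write \(\mB = \mB_1 \uplus \mB_2\) with both \(\mB_i\) non-empty. Since \(\mA \subset \mB\) is irreducible, Lemma \ref{lem:uniq0} applies and forces \(\mA\) to sit entirely inside one of the two blocks; after relabelling we may assume \(\mA \subset \mB_1\). The goal is then to derive a contradiction with the centre hypothesis \(T(\mA) = T(\mB)\).

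First I would translate the u-plus decomposition into information about centres. From \(\mB = \mB_1 \cup \mB_2\) we have \(T(\mB) = T(\mB_1) \cap T(\mB_2)\), and Lemma \ref{lem:ann} gives the complementary relation \(T(\mB_1) + T(\mB_2) = V\). The inclusion \(\mA \subset \mB_1\) yields \(T(\mA) \supset T(\mB_1) \supset T(\mB) = T(\mA)\), so in fact \(T(\mB_1) = T(\mA) = T(\mB)\). Substituting this back into \(T(\mB_1) + T(\mB_2) = V\) gives \(T(\mB) + T(\mB_2) = V\); but \(T(\mB) \subset T(\mB_2)\), hence \(T(\mB_2) = V\), which means \(\mB_2 = \emptyset\), contradicting the assumption that both blocks are non-empty. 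This contradiction establishes that \(\mB\) is irreducible.

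For the final sentence it suffices to observe that if both \(\mA\) and \(\mB\) are essential then \(T(\mA) = \{0\} = T(\mB)\), so the hypothesis of the first part is automatic and the conclusion follows. I do not expect any real obstacle here: the whole argument is a short manipulation combining Lemma \ref{lem:uniq0}, Lemma \ref{lem:ann}, and the basic fact \(T(\mB_1 \cup \mB_2) = T(\mB_1) \cap T(\mB_2)\), all of which are already in hand.
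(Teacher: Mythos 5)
Your proof is correct and follows essentially the same route as the paper: decompose \(\mB = \mB_1 \uplus \mB_2\), use irreducibility of \(\mA\) to place it inside one block, and run the centre computation \(T(\mA) \supset T(\mB_1) \supset T(\mB) = T(\mA)\) together with \(T(\mB_1) + T(\mB_2) = V\) to force \(T(\mB_2) = V\). No gaps.
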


\begin{proof}
	Let \(\mB = \mH_1 \uplus \mH_2\) with \(T(\mH_1) + T(\mH_2)=V\). Since \(\mA\) is irreducible we must have \(\mA \subset \mH_1\), say. In particular the centres satisfy
	\[T(\mA) \supset T(\mH_1)\supset T(\mB) .\]
	Since \(T(\mA)=T(\mB)\) we get that \(T(\mH_1) = T(\mB) =T(\mH_1)\cap T(\mH_2)\), so \(T(\mH_1)\subset T(\mH_2)\). The condition \(T(\mH_1)+T(\mH_2)=V\) implies that \(T(\mH_2)=V\); which is to say \(\mH_2=\emptyset\).
\end{proof}

\begin{definition}\label{def:Lirr}
	\(L \in \mL(\mH)\) is an \textbf{irreducible intersection} if \(\mH_L\) is irreducible. The set of all irreducible intersections is denoted by \(\mL_{irr}(\mH)\).
\end{definition}

\begin{lemma} \label{lem:irredHL}
	Let \(L \in \mL(\mH)\), then
	\[\mL_{irr}(\mH_L) = \{M \in \mL_{irr}(\mH), \,\ L \subset M\} . \]
\end{lemma}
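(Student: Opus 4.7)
The proof will be essentially immediate from the material already established, specifically Lemma \ref{lem:HLMisHM} together with the note characterizing \(\mL(\mH_L)\) as the subspaces in \(\mL(\mH)\) that contain \(L\). The plan is to unfold the definition of \(\mL_{irr}\) on both sides and apply these two earlier observations.

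First I would fix \(M \in \mL(\mH_L)\); by the note after Lemma \ref{lem:HLMisHM}, this is the same as picking \(M \in \mL(\mH)\) with \(L \subset M\). Then by definition \(M \in \mL_{irr}(\mH_L)\) means that the localization \((\mH_L)_M\) is an irreducible arrangement, while \(M \in \mL_{irr}(\mH)\) means that \(\mH_M\) is irreducible. By Lemma \ref{lem:HLMisHM} we have the equality
\[
(\mH_L)_M = \mH_M ,
\]
so the two conditions coincide. This proves both inclusions simultaneously and yields the claimed identity.

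There is essentially no obstacle here; the only thing to be careful about is the domain of quantification on the right-hand side, namely that \(M \in \mL_{irr}(\mH)\) with \(L \subset M\) automatically belongs to \(\mL(\mH_L)\), which is exactly the content of the note following Lemma \ref{lem:HLMisHM}. So the lemma is a direct consequence of the fact that localization is transitive on intersection posets.
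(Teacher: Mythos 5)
Your proof is correct and follows exactly the paper's own argument: both reduce the claim to the identity \((\mH_L)_M = \mH_M\) from Lemma \ref{lem:HLMisHM} together with the observation that \(\mL(\mH_L)\) consists precisely of the subspaces in \(\mL(\mH)\) containing \(L\). Nothing further is needed.
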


\begin{proof}
	If \(M \in \mL_{irr}(\mH_L)\) then \(M \in \mL(\mH)\), \(L \subset M\) and \((\mH_L)_M = \mH_M\) is irreducible, hence \(M \in \mL_{irr}(\mH)\). The reverse inclusion follows the same way. 
\end{proof}

\begin{lemma}
	Let \(\mH = \mH_1 \uplus \ldots \uplus \mH_k\) be an irreducible decomposition, then 
	\[\mL_{irr}(\mH) =  \bigcup_{i=1}^k \mL_{irr}(\mH_i) \]
	and the union is disjoint.
\end{lemma}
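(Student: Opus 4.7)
The plan is to verify the equality and the disjointness of the union separately; both will reduce to a single key observation that I would establish first.

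\textbf{Key observation.} For any $M \in \mL(\mH_i)$, one has $\mH_M = (\mH_i)_M$. The inclusion $(\mH_i)_M \subset \mH_M$ is immediate. For the reverse, I would take $H = \{h = 0\} \in \mH_M$ and argue as follows: since $M \in \mL(\mH_i)$ one has $T(\mH_i) \subset M$, hence $h \in \Ann(M) \subset \Ann(T(\mH_i)) = W(\mH_i)$; on the other hand, $H$ lies in a unique member $\mH_j$ of the u-plus decomposition, so $h \in W(\mH_j)$. The directness of $W(\mH) = W(\mH_1) \oplus \ldots \oplus W(\mH_k)$ then forces $j = i$, so $H \in \mH_i$ and therefore $H \in (\mH_i)_M$.

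\textbf{Equality.} Granted the observation, the inclusion $\bigcup_i \mL_{irr}(\mH_i) \subset \mL_{irr}(\mH)$ is immediate: for $M \in \mL_{irr}(\mH_i)$ the arrangement $\mH_M = (\mH_i)_M$ is irreducible by assumption, so $M \in \mL_{irr}(\mH)$. For the reverse inclusion, I would take $M \in \mL_{irr}(\mH)$ and apply Lemma \ref{lem:uniq0} iteratively along the decomposition (splitting off one factor at a time) to conclude that the irreducible sub-arrangement $\mH_M$ is contained in some $\mH_i$. Since $M$ equals the intersection of all hyperplanes in $\mH_M$, and $\mH_M \subset \mH_i$, this places $M \in \mL(\mH_i)$; the key observation then gives $(\mH_i)_M = \mH_M$, which is irreducible, so $M \in \mL_{irr}(\mH_i)$.

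\textbf{Disjointness.} I would argue by contradiction: if $M \in \mL_{irr}(\mH_i) \cap \mL_{irr}(\mH_j)$ with $i \neq j$, then $M \in \mL(\mH_i) \cap \mL(\mH_j)$, so $\Ann(M) \subset W(\mH_i) \cap W(\mH_j) = \{0\}$, forcing $M = V$. But then $\mH_M = \emptyset$, which is not an irreducible (non-empty) sub-arrangement, contradicting $M \in \mL_{irr}(\mH_i)$.

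The whole argument is short and I do not anticipate any real obstacle; the substance is concentrated in the key observation, which is a direct consequence of the orthogonality in $V^{*}$ encoded by the u-plus decomposition. The only mildly delicate point is the treatment of the ambient space $V$ in the disjointness step, which relies on the convention that an irreducible intersection comes from a non-empty localized sub-arrangement.
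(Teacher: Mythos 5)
Your proof is correct and follows essentially the same route as the paper: one inclusion via the fact that an irreducible subset of a u-plus decomposition lies in a single factor, and the other via the identification $(\mH_i)_M = \mH_M$. Your ``key observation'' is precisely the combination of the paper's earlier lemmas $\mH_i = \mH_{T_i}$ and $(\mH_L)_M = \mH_M$, re-proved inline with the same annihilator/span argument the paper uses for the former.
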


\begin{proof}
	Let \(L \in \mL_{irr}(\mH)\), then \(\mH_L\) is irreducible and there must be exactly one \(i\) such that \(\mH_L \subset \mH_i\). It follows that \((\mH_i)_L=\mH_L\) is irreducible and therefore \(L \in \mL_{irr}(\mH_i)\). Conversely, if \(L \in \mL_{irr}(\mH_i)\) then \(T_i=T(\mH_i)\subset L\) and \((\mH_i)_L\) is irreducible. By Lemma \ref{lem:HiisHTi} \(\mH_i=\mH_{T_i}\) so \((\mH_i)_L = (\mH_{T_i})_L\). By Lemma \ref{lem:HLMisHM} \((\mH_{T_i})_L = \mH_L\). We conclude that \(\mH_L=(\mH_i)_L\) is irreducible; hence \(L \in \mL_{irr}(\mH)\).
\end{proof}

\begin{definition}
	Let \(L \in \mL(\mH)\) and
	let \(\mH_L = \mH_1 \uplus \ldots \uplus \mH_k\) be its irreducible decomposition. The subspaces 
	\[L_i = \bigcap_{H \in \mH_i} H \in \mL_{irr}(\mH) \]
	are called the \textbf{irreducible components} of \(L\).
\end{definition}

\begin{note}
Clearly \(L \subset L_i\) for every \(i\) and \(L\)
is the common intersection of all \(L_i\).
Moreover, if \(L' \in \mL_{irr}(\mH)\) and \(L \subset L'\) then there is exactly one \(i\) such that \(L_i \subset L'\). 	
\end{note}

\subsubsection{{\large Products of arrangements}}\label{sect:productarr}

\begin{definition}\label{def:prodarr}
	The product of two arrangements \((V_1, \mH_1)\) and \((V_2, \mH_2)\) is the arrangement in the product vector space \(V_1 \times V_2\) defined by
	\[\mH_1 \times \mH_2 = \{{H}_1 \times V_2, \,\ {H}_1 \in {\mH}_1 \} \bigcup \{V_1 \times {H}_2, \,\  {H}_2 \in {\mH}_2 \} .  \]
\end{definition}

\begin{remark}
	In the context of Definition \ref{def:prodarr} the hyperplane  complements satisfy
	\[(V_1\times V_2)^{\circ} = V_1^{\circ} \times V_2^{\circ} .\]
	In words, the complement of a product arrangement is the product of the complements of its factors.
\end{remark}

\begin{lemma}
	The map \((L_1, L_2) \mapsto L_1 \times L_2\) defines a natural bijection between \(\mL(\mH_1) \times \mL(\mH_2)\) and \(\mL(\mH_1 \times \mH_2)\). Moreover, \(L \in \mL_{irr}(\mH_1 \times \mH_2)\) if and only if \(L=L_1 \times V_2\) with \(L_1 \in \mL_{irr}(\mH_1)\) or \(L=V_1 \times L_2\) with
	\(L_2 \in \mL_{irr}(\mH_2)\).
\end{lemma}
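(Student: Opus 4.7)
The argument splits cleanly into two stages, both of which amount to unwinding definitions.

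\emph{Bijection.} Every $L \in \mL(\mH_1 \times \mH_2)$ is an intersection of hyperplanes of the two prescribed forms; sorting the intersection by type yields
\[ L = \Bigl(\bigcap_{H_1 \times V_2 \,\supset\, L} H_1\Bigr) \times \Bigl(\bigcap_{V_1 \times H_2 \,\supset\, L} H_2\Bigr) = L_1 \times L_2 \]
with $L_i \in \mL(\mH_i)$. Conversely, any product $L_1 \times L_2$ with $L_i \in \mL(\mH_i)$ is obviously such an intersection, and the factors are uniquely recovered from $L$ by projecting onto $V_1$ and $V_2$, giving the claimed bijection.

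\emph{Localization identity.} The heart of the irreducibility statement is the equality of arrangements
\[ (\mH_1 \times \mH_2)_{L_1 \times L_2} = \bigl((\mH_1)_{L_1}\bigr) \times \bigl((\mH_2)_{L_2}\bigr), \]
which follows at once from $L_1 \times L_2 \subset H_1 \times V_2 \iff L_1 \subset H_1$ (and symmetrically). Combined with the observation that whenever two arrangements $\mA_1, \mA_2$ are both non-empty, the partition
\[ \mA_1 \times \mA_2 = \{H \times V_2 : H \in \mA_1\} \uplus \{V_1 \times H : H \in \mA_2\} \]
is a u-plus decomposition into non-empty pieces --- the spans of the two subsets lie in the complementary summands $V_1^{*},\, V_2^{*} \subset (V_1 \oplus V_2)^{*}$ and hence meet only at zero --- this shows that a product of two non-empty arrangements is always reducible.

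\emph{Conclusion.} Suppose $L_1 \neq V_1$ and $L_2 \neq V_2$; then both $(\mH_1)_{L_1}$ and $(\mH_2)_{L_2}$ are non-empty, so by the preceding paragraph the localization $(\mH_1 \times \mH_2)_L$ is reducible and $L \notin \mL_{irr}(\mH_1 \times \mH_2)$. If instead $L_1 = V_1$ (the case $L_2 = V_2$ is symmetric), then $(\mH_1)_{L_1} = \emptyset$ and the localization consists solely of hyperplanes $V_1 \times H$ with $H \in (\mH_2)_{L_2}$; the span of these in $(V_1 \oplus V_2)^{*}$ equals $W\bigl((\mH_2)_{L_2}\bigr) \subset V_2^{*}$, and since any prospective u-plus decomposition of the localization then necessarily lives entirely inside $V_2^{*}$, u-plus decompositions of the two arrangements correspond bijectively. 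Consequently the localization is irreducible iff $L_2 \in \mL_{irr}(\mH_2)$, which is the claimed characterization.

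There is no substantive obstacle in this proof; the only point that needs any care is keeping track of spans in $V_1^{*} \oplus V_2^{*}$ to confirm that the product arrangement decomposition really is a u-plus decomposition and that the reduction in the one-sided case is faithful.
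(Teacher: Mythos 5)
Your proof is correct. The paper offers no argument of its own here (it simply cites Orlik--Terao, Proposition 2.14), and what you have written is the standard unwinding of the definitions: the product decomposition of subspaces, the localization identity $(\mH_1\times\mH_2)_{L_1\times L_2}=(\mH_1)_{L_1}\times(\mH_2)_{L_2}$, and the fact that a product of two non-empty arrangements is reducible (which is exactly the paper's Lemma \ref{lem:ann} / Corollary \ref{lem:irrsameprod}), so your argument fills in the omitted details faithfully and consistently with the paper's conventions.
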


\begin{proof}
	Easy, see \cite[Proposition 2.14]{OT1}.
\end{proof}

\begin{definition}\label{def:isoarrang}
	Two arrangements \((V_1, \mH_1)\) and \((V_2, \mH_2)\) are isomorphic if there is a linear isomorphism \(\Phi: V_1 \to V_2\) with \(\Phi(\mH_1)=\mH_2\). We write it as \((V_1, \mH_1) \equiv (V_2, \mH_2)\).
\end{definition}

\begin{lemma}\label{lem:HLdec}
	Take \(L \in \mL(\mH)\). Let \(\mH_L= \mH_1 \uplus \ldots \uplus \mH_k\) be the irreducible decomposition and \(L_i=T(\mH_i)\) its irreducible components. Then we can choose linear coordinates
	\begin{equation}
	(x_1, \ldots, x_n) \,:\, V \xrightarrow{\sim} \C^n	
	\end{equation}
	such that 
	\begin{equation}\label{eq:HLdec}
	(\C^n, \, \mH_L) = (\C^{\dim L}, \, \emptyset) \times (\C^{n_1}, \, \bar{\mH}_1) \times \ldots \times (\C^{n_k}, \, \bar{\mH}_k) 	
	\end{equation}
	where the factors
	\begin{equation}
		(\C^{n_j}, \bar{\mH}_j) \equiv (V/L_j, \, {\mH}_j/L_j)
	\end{equation}
	are essential and irreducible.
\end{lemma}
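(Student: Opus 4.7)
The plan is to construct the coordinates by dualizing the defining u-plus decomposition of spans, and then verify the product structure and the identification of factors.

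First, I would unpack the data from the irreducible decomposition. By definition of a u-plus decomposition, $W(\mathcal{H}_L) = W(\mathcal{H}_1) \oplus \ldots \oplus W(\mathcal{H}_k)$ in $V^*$. Since $T(\mathcal{H}_L) = L$ and $W(\mathcal{H}_L) = \Ann(L)$, picking any complementary subspace $W_0 \subset V^*$ to $W(\mathcal{H}_L)$ of dimension $\dim L$ yields
\begin{equation*}
V^* = W_0 \oplus W(\mathcal{H}_1) \oplus \ldots \oplus W(\mathcal{H}_k).
\end{equation*}
Dually, $V = V_0 \oplus V_1 \oplus \ldots \oplus V_k$ where $V_0 = \Ann(W(\mathcal{H}_L))$ (which is canonically identified with $L$, upon choosing $W_0$) and $V_i = \Ann\!\bigl(W_0 \oplus \bigoplus_{j \neq i} W(\mathcal{H}_j)\bigr)$. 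Setting $n_i = \dim V_i = \dim W(\mathcal{H}_i) = \codim L_i$, and choosing a basis of $V^*$ subordinate to the direct sum above, I obtain coordinates $(x_1,\ldots,x_n): V \xrightarrow{\sim} \mathbb{C}^{\dim L} \times \mathbb{C}^{n_1} \times \ldots \times \mathbb{C}^{n_k}$.

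Next, I would verify the product structure on $\mathcal{H}_L$. Any $H \in \mathcal{H}_L$ belongs to exactly one $\mathcal{H}_i$ (the union is disjoint), so its defining equation $h$ lies in $W(\mathcal{H}_i)$, hence depends only on the $V_i$-coordinates. Consequently $H = V_0 \times V_1 \times \ldots \times H'_i \times \ldots \times V_k$ for some hyperplane $H'_i \subset V_i$, giving the product decomposition in \eqref{eq:HLdec} with $\bar{\mathcal{H}}_i = \{H'_i : H \in \mathcal{H}_i\}$.

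Then I would identify $(V_i, \bar{\mathcal{H}}_i)$ with $(V/L_i, \mathcal{H}_i/L_i)$ and check essentiality and irreducibility. In the chosen coordinates, $L_i = T(\mathcal{H}_i) = \bigcap_{H \in \mathcal{H}_i} H = V_0 \times V_1 \times \ldots \times T(\bar{\mathcal{H}}_i) \times \ldots \times V_k$, so $L_i$ equals the sum of all factors except $V_i$ plus the centre $T(\bar{\mathcal{H}}_i) \subset V_i$. The natural projection $V \to V/L_i$ identifies $V_i/T(\bar{\mathcal{H}}_i)$ with $V/L_i$ and sends $\bar{\mathcal{H}}_i$ to $\mathcal{H}_i/L_i$. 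But $T(\bar{\mathcal{H}}_i)$ must be $\{0\}$ (hence $\bar{\mathcal{H}}_i$ is essential and $V_i \cong V/L_i$ directly): indeed $W(\bar{\mathcal{H}}_i) = W(\mathcal{H}_i) = V_i^*$, which forces $\Ann(V_i^*) = \{0\}$ inside $V_i$.

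Finally, irreducibility of $(V_i, \bar{\mathcal{H}}_i)$ is inherited from irreducibility of $\mathcal{H}_i$: any u-plus splitting $\bar{\mathcal{H}}_i = \mathcal{K}_1 \uplus \mathcal{K}_2$ in $V_i$ would give a u-plus splitting $\mathcal{H}_i = \mathcal{K}_1' \uplus \mathcal{K}_2'$ in $V$ by pulling back defining equations through the projection $V \to V_i$, contradicting the fact that $\mathcal{H}_i$ is an irreducible block of the decomposition. No step here is genuinely difficult; the only point requiring care is keeping track of which complement yields the identification $V_0 \cong L$ versus only $V_0 \cong V/W(\mathcal{H}_L)^\vee$, but since the statement only asserts an isomorphism of arrangements (Definition \ref{def:isoarrang}) and not a canonical identification, any choice of $W_0$ works.
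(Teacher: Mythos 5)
Your proof is correct and follows essentially the same route as the paper: both choose a basis of \(V^*\) adapted to the direct sum \(W(\mH_L)=W(\mH_1)\oplus\ldots\oplus W(\mH_k)\) plus a complement, and read off the product decomposition dually. You merely spell out the essentiality and irreducibility of the factors (via \(W(\bar{\mH}_i)=V_i^*\) and pulling back u-plus splittings), which the paper leaves as ``by construction.''
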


\begin{proof}
	By definition of a u-plus decomposition, we have 
	\begin{equation}\label{eq:Wdec}
	W(\mH_L) = W(\mH_1) \oplus \ldots \oplus W(\mH_k) .	
	\end{equation}
	Write \(d = \dim L\) and \(n_i=\dim W(\mH_i) = \codim L_i\).
	Equation \eqref{eq:Wdec} implies that
	\(n-d=n_1+\ldots+n_k\).
	
	Introduce index sets 
	\begin{equation}\label{eq:indexset}
	\begin{gathered}
	I_1=\{1, \ldots, n_1\}, \,\ I_2 = \{n_1+1, \ldots, n_1+n_2\}, \ldots, \\
	I_k=\{n_1+\ldots+n_{k-1}+1, \ldots, n_1+\ldots+n_k\} 
	\end{gathered}
	\end{equation}
	and
	\begin{equation}\label{eq:I0}
	I_0 = \{n-d+1, \ldots, n\} .
	\end{equation}
	Note that \(|I_0|=d\) and \(|I_j|=n_j\) for \(1\leq j \leq k\). We have a disjoint union
	\[I_0 \cup I_1 \cup \ldots \cup I_k = \{1, \ldots, n\} .\]

	Take a basis \(\{x_1, \ldots, x_n\}\) of \(V^*\) such that
	\begin{equation}\label{eq:Wspan}
		\Span \{x_i, \,\ i \in I_j\} = W(\mH_j) \,\ \mbox{ for } \,\ 1 \leq j \leq k .
	\end{equation}
	We use the basis \(\{x_1, \ldots, x_n\}\) to identify \(V\cong \C^n\).
	If \(H \in \mH_j\) then 
	\[H = \{h_j(x_i, \,\ i \in I_j)=0\} = \C^{n_1} \times \ldots \times \bar{H} \times \ldots \times \C^{n_k} \times \C^d \]
	with \(\bar{H} \subset \C^{n_j}\).
	Define
	\[\bar{\mH}_j = \{\bar{H} \subset \C^{n_j}, \,\ H \in \mH_j\} \,\ \mbox{ for } 1\leq j \leq k . \]
	We have isomorphisms
	\begin{equation}\label{eq:quotiso}
	(V/L_j, \,\, \mH_j/L_j) \equiv (\C^{n_j}, \,\, \bar{\mH}_j) \,\, \mbox{ for }	1 \leq j \leq k
	\end{equation}
	given by the coordinates \(\{x_i, \,\ i \in I_j\}\). On the other hand, the coordinates \(\{x_i, \,\ i \in I_0\}\) identify the centre of the arrangement \(L=T(\mH_L)\) with \(\C^d\). Altogether, we get that
	\[(\C^n, \mH_L) = (\C^d, \emptyset) \times (\C^{n_1}, \bar{\mH}_1) \times \ldots \times (\C^{n_k}, \bar{\mH}_k) .\]
	By construction, the arrangements \((\C^{n_j}, \bar{\mH}_j)\equiv (V/L_j, \mH_j/L_j)\) are essential and irreducible.
\end{proof}

\begin{corollary}\label{lem:irrsameprod}
	\(\mH\) is reducible if and only if it is isomorphic to the product of two non-empty arrangements.
\end{corollary}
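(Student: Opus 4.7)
My plan is to deduce the corollary directly from Lemma \ref{lem:HLdec} (applied with $L = T(\mH)$, which is always a member of $\mL(\mH)$ and for which $\mH_L = \mH$), together with a bare-hands check of the reverse implication.

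For the easier direction $(\Leftarrow)$, I will start from an isomorphism $(V, \mH) \equiv (V_1 \times V_2, \mH_1 \times \mH_2)$ with $\mH_1, \mH_2$ both non-empty. Set $\mA = \{H_1 \times V_2 : H_1 \in \mH_1\}$ and $\mB = \{V_1 \times H_2 : H_2 \in \mH_2\}$. Then $\mH = \mA \cup \mB$, and $W(\mA) \subset V_1^* \subset (V_1 \times V_2)^*$ while $W(\mB) \subset V_2^*$ sit in complementary coordinate directions, so $W(\mA) \cap W(\mB) = \{0\}$. By Lemma \ref{lem:ann} this is a u-plus decomposition, and since both $\mA$ and $\mB$ are non-empty, $\mH$ is reducible by Definition \ref{def:irrarr}.

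For the main direction $(\Rightarrow)$, assume $\mH = \mH_1 \uplus \mH_2$ with $\mH_1$, $\mH_2$ non-empty, and I will directly construct the product isomorphism by running the coordinate argument of Lemma \ref{lem:HLdec}. Write $n_i = \dim W(\mH_i)$ for $i = 1,2$ and $d = \dim T(\mH) = n - n_1 - n_2$. Using $W(\mH) = W(\mH_1) \oplus W(\mH_2)$, I choose linear coordinates $(x_1, \ldots, x_n)$ on $V$ such that $W(\mH_1) = \Span\{x_1, \ldots, x_{n_1}\}$, $W(\mH_2) = \Span\{x_{n_1+1}, \ldots, x_{n_1+n_2}\}$, and the remaining coordinates $x_{n_1+n_2+1}, \ldots, x_n$ span a complement (so that every defining equation of $\mH$ involves none of the last $d$ variables). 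Then each $H \in \mH_1$ has defining equation in $x_1, \ldots, x_{n_1}$ only, yielding $H = \bar{H} \times \C^{n_2} \times \C^d$ with $\bar{H} \subset \C^{n_1}$; symmetrically for $\mH_2$. Setting $\bar{\mH}_i = \{\bar{H} : H \in \mH_i\}$, this gives $(\C^n, \mH) \equiv (\C^{n_1}, \bar{\mH}_1) \times (\C^{n_2}, \bar{\mH}_2) \times (\C^d, \emptyset)$.

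The only remaining subtlety is that when $\mH$ is not essential the construction naively produces three factors instead of two. I will handle this by absorbing the trivial factor $(\C^d, \emptyset)$ into one of the other factors, e.g.\ replacing $(\C^{n_2}, \bar{\mH}_2)$ by $(\C^{n_2 + d}, \bar{\mH}_2 \times \emptyset)$; associativity of the product of arrangements (immediate from Definition \ref{def:prodarr}) makes this legitimate, and both resulting factors remain non-empty since $\mH_1, \mH_2$ are. I do not anticipate any serious obstacle: the whole argument is a bookkeeping exercise that re-uses, in a simpler setting, the coordinate construction already established in Lemma \ref{lem:HLdec}.
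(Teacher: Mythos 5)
Your proof is correct and follows essentially the same route as the paper, which simply cites Lemma \ref{lem:HLdec} applied to $L=T(\mH)$; you unpack that citation by re-running its coordinate construction for an arbitrary (not necessarily irreducible) u-plus decomposition and by spelling out the converse direction, both of which the paper leaves implicit. The absorption of the trivial factor $(\C^d,\emptyset)$ is handled correctly.
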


\begin{proof}
	This is a direct consequence of Lemma \ref{lem:HLdec} applied to \(L=T(\mH)\), so \(\mH_L=\mH\).
\end{proof}

\begin{note}
	Our notion of irreducible arrangement differs slightly from \cite[Definition 2.15]{OT1}. According to us \((V, \mH)\) is irreducible if and only if \((V, \mH) \times (V', \emptyset)\) is. In particular, if \(\mH\) is irreducible  then it is not necessarily essential, e.g. take \(\mH\) to be a single hyperplane or three hyperplanes intersecting along a codimension two subspace. Irreducible arrangements (in our sense) are called indecomposable in \cite{OT2}. Our terminology agrees with one used in \cite{CHL}. 
\end{note}

\begin{definition}\label{def:automorphimsH}
	The collection of all isomorphisms of an arrangement \((V, \mH)\) with itself form a subgroup of \(GL(V)\) which we call the automorphism group of \(\mH\) and denote it by \(\Aut(\mH)\). We denote by \(\Aut_0(\mH)\) the subgroup made of automorphisms which preserve each of the hyperplanes in the arrangement.
\end{definition}

\begin{corollary}
	An arrangement \(\mH\) is essential and irreducible if and only if \(\Aut_0(\mH)=\C^*\).
\end{corollary}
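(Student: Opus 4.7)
The plan is to note that $\C^* \subseteq \Aut_0(\mH)$ always (homotheties $\lambda \cdot \Id_V$ preserve every linear hyperplane), and then prove the two implications separately using the two workhorse lemmas of this subsection.

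For the forward direction, suppose $(V,\mH)$ is essential and irreducible. An element $A \in \Aut_0(\mH) \subset GL(V)$ satisfies $A(H) = H$, and in particular $A(H) \subset H$, for every $H \in \mH$. Lemma~\ref{lem:automorphisms} then applies verbatim and forces $A = \lambda \cdot \Id_V$ for some $\lambda \in \C$; invertibility gives $\lambda \in \C^*$. Hence $\Aut_0(\mH) \subseteq \C^*$, and combined with the obvious inclusion we get equality.

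For the converse I would argue by contrapositive: assume $\mH$ fails to be essential or fails to be irreducible, and build a non-scalar automorphism in $\Aut_0(\mH)$. Apply Lemma~\ref{lem:HLdec} to the centre $L = T(\mH)$, so that $\mH_L = \mH$, to obtain linear coordinates in which
\[
(V,\mH) \;\equiv\; (\C^d,\,\emptyset) \times (\C^{n_1},\bar{\mH}_1) \times \cdots \times (\C^{n_k},\bar{\mH}_k),
\]
with $d = \dim T(\mH)$ and each $\bar{\mH}_i$ essential and irreducible. The failure hypothesis means either $d \geq 1$ or $k \geq 2$, so in every case the decomposition has at least two nontrivial factors. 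Pick scalars $\mu_0,\mu_1,\ldots,\mu_k \in \C^*$ that are not all equal, and let $A$ act on the $j$-th factor by $\mu_j \cdot \Id$. Each hyperplane of the product arrangement has a defining equation involving coordinates from exactly one of the factors, so $A$ preserves every hyperplane individually, giving $A \in \Aut_0(\mH)$. By construction $A$ is not a global homothety, so $\Aut_0(\mH) \supsetneq \C^*$, as required.

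There is essentially no hard step here — the statement is the clean algebraic packaging of Lemmas~\ref{lem:automorphisms} and \ref{lem:HLdec}. The only point to verify carefully is that in the product coordinates from Lemma~\ref{lem:HLdec}, the block-scaling map genuinely sends each hyperplane of $\mH$ to itself; this is immediate once one notes, via the spans $W(\bar{\mH}_i)$ appearing in the proof of that lemma, that every defining equation $h$ of a hyperplane in $\mH$ lies in the span of the coordinates attached to a single factor.
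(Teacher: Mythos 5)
Your proof is correct and follows the same route as the paper: the forward implication is exactly Lemma~\ref{lem:automorphisms}, and the converse uses the product decomposition of Lemma~\ref{lem:HLdec} applied to \(L=T(\mH)\) to scale the factors independently and produce a non-scalar element of \(\Aut_0(\mH)\). The extra observations you spell out (that \(\C^*\subseteq\Aut_0(\mH)\) always, and that each defining equation lives in the span of a single factor's coordinates) are exactly the points the paper leaves implicit.
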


\begin{proof}
	One direction is established in Lemma \ref{lem:automorphisms}. Conversely, if \(\mH\) is either non-essential or reducible then the product decomposition \eqref{eq:HLdec} applied to \(\mH=\mH_{T(\mH)}\) implies that 
	\((\C^*)^2 \subset \Aut_0(\mH)\) as we can act by scalar multiplication separately on different factors.
\end{proof}

\section{Affine structures on arrangement complements}\label{sect:affst}

\begin{definition}\label{def:affine}
	An affine structure on a complex manifold \(X\) consists of a maximal atlas made of holomorphic coordinate charts with transition functions given by complex affine linear transformations.
\end{definition}

\begin{lemma}[{\cite[Lemma 1.1]{CHL}}]\label{lem:affineconnectioncorresp}
	Given a complex manifold \(X\), there is a natural correspondence between affine structures on \(X\) and flat torsion free connections on \(TX\).
\end{lemma}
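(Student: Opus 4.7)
The plan is to construct both directions of the correspondence and verify they are mutual inverses. The proof is classical and essentially local, so the core of the argument is to build, starting from a flat torsion free connection, a coordinate chart in which the coordinate vector fields are parallel; the converse direction is essentially a bookkeeping exercise.

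First I would establish the easy direction: \emph{affine structure} $\Rightarrow$ \emph{flat torsion free connection}. In each affine chart $(z_1,\ldots,z_n)$ of the given atlas, define the connection $\nabla$ by declaring that the holomorphic frame $\partial/\partial z_1,\ldots,\partial/\partial z_n$ is parallel, i.e.\ all Christoffel symbols vanish. I would then check that this prescription is independent of the chart: if $(w_1,\ldots,w_n) = A(z_1,\ldots,z_n) + b$ is an affine transition, then $\partial/\partial w_i$ is a constant linear combination of the $\partial/\partial z_j$'s, hence parallel with respect to the connection defined in the $z$-chart. Because in each chart the Christoffel symbols vanish, the curvature and torsion tensors of $\nabla$ vanish in that chart, and being tensors they vanish globally.

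The substantive direction is \emph{flat torsion free connection} $\Rightarrow$ \emph{affine structure}. Given a flat torsion free $\nabla$ on $TX$ and a point $p \in X$, I would first use flatness to produce, on a simply connected neighbourhood $U$ of $p$, a holomorphic $\nabla$-parallel frame $e_1, \ldots, e_n$ of $TX|_U$; this follows from the vanishing of the curvature together with the standard parallel transport / holonomy argument. Next I would use the torsion free condition
\[
T(e_i, e_j) = \nabla_{e_i} e_j - \nabla_{e_j} e_i - [e_i, e_j] = 0
\]
which, since $\nabla_{e_i} e_j = 0$ for a parallel frame, reduces to $[e_i, e_j] = 0$. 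The simultaneous integrability of the commuting holomorphic vector fields $e_1, \ldots, e_n$ then produces, after possibly shrinking $U$, holomorphic coordinates $(z_1, \ldots, z_n)$ on $U$ with $\partial / \partial z_i = e_i$. This is the main obstacle in the proof in that it is the step where both hypotheses (flat and torsion free) are genuinely used; the rest is formal.

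Finally I would check that the collection of charts obtained this way forms an affine atlas and that the two constructions are mutually inverse. If $(w_1, \ldots, w_n)$ is a second chart produced from another $\nabla$-parallel frame $e'_1, \ldots, e'_n$ on an overlapping open set, then both $\{\partial/\partial z_i\}$ and $\{\partial/\partial w_i\}$ are $\nabla$-parallel frames on the overlap, so they are related by a constant matrix $M \in GL(n, \C)$ (parallel transport of a fixed vector is independent of path because $\nabla$ is flat, and the simply connected overlap has no monodromy obstruction). This forces the Jacobian $\partial w / \partial z$ to be the constant matrix $M$, so the transition $z \mapsto w$ is complex affine. The two correspondences are inverse by construction: starting from an affine atlas and applying both directions recovers the connection whose parallel frames are the affine coordinate frames, and starting from $\nabla$ the reconstructed connection in the built affine charts has, by construction, the $\partial/\partial z_i$ as parallel sections.
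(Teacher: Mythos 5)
Your proposal is correct and follows essentially the same route as the paper: the paper's proof is a two-sentence sketch of exactly this argument (parallel coordinate frames from a flat torsion free connection, affine transitions between such charts, and the inverse construction by declaring affine coordinate frames parallel), and your write-up simply supplies the standard details — parallel frame from flatness, commutativity from torsion-freeness, integration of the commuting frame to coordinates.
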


\begin{proof}
	In one direction, given a torsion free flat connection \(\nabla\) we consider local coordinate charts such that its coordinate vector fields form a parallel frame; transition functions among such coordinate systems are affine linear. In the other direction, given an affine structure, there is a unique connection defined by requiring the coordinate vector fields of affine charts to be parallel.
\end{proof}

In this section we study affine structures on the complement of a hyperplane arrangement given by a particular type of meromorphic connections with simple poles at the hyperplanes of the arrangement; we call these `standard connections'. These connections  play a key role in \cite[Section 2]{CHL}.
Definition \ref{def:localizationconection} introduces the standard connection \(\nabla^L\), the localization of the connection \(\nabla\) at a subspace \(L\).
Our main result is Proposition \ref{prop:nablaLsplit} which gives an affine splitting for the connection \(\nabla^L\). The main technical work is done in Proposition \ref{prop:AL}, which follows from elementary linear algebra and exploits the Basic Assumptions \ref{ass:basic} satisfied by every flat torsion free standard connection. We introduce Euler vector fields for standard connections in Section \ref{sect:eulervf} and show that under suitable numerical conditions these vector fields do exist \ref{lem:eulstndcon} and are unique \ref{lem:eulvfuniq}. Euler vector fields play a crucial role in forthcoming parts of our article.

\subsection{Standard connections}\label{sect:ftfstnd}

Recall that \(V\) is a finite dimensional complex vector space and \(\mH\) is a finite collection of hyperplanes \(H\) with defining linear equations \(h\).

\begin{definition}\label{def:stnd}
	A \textbf{standard connection} on the trivial vector bundle over \(V\) with fibre a vector space \(E\) is a connection of the form
	\begin{equation}\label{eq:stcon}
		\nabla = d - \sum_{H \in \mH} A_H \frac{dh}{h}
	\end{equation}
	where \(A_H \in \End E\) and \(d\) is the trivial connection given by the exterior derivative.
\end{definition}

\begin{notation}
	We write Equation \eqref{eq:stcon} in the form \(\nabla = d - \Omega\) where
	\begin{equation}
		\Omega = \sum_{H \in \mH} A_H \frac{dh}{h} 
	\end{equation}
	is the connection form.
	Note that \(d\Omega=0\), therefore a standard connection \(\nabla\) is flat if and only if \(\Omega \wedge \Omega =0\).
\end{notation}

\begin{definition} \label{def:residue}
	Suppose that \(\nabla\) is a standard connection given by Equation \eqref{eq:stcon}.
	For \(L \in \mL(\mH)\) we define the \textbf{residue} of \(\nabla\) at \(L\) to be the element of \(\End E\) given by
	\begin{equation} \label{eq:residue}
	A_L = \sum_{H \in \mH_L} A_H . 	
	\end{equation}
\end{definition}

\begin{assumptions}\label{ass:basic}
	A collection of endomorphisms \(A_H \in \End(V)\) indexed by \(H \in \mH\) satisfies the \textbf{Basic Assumptions} if
	\begin{description}
		\item[(F)] \([A_L, A_M]=0\) whenever \(L, M \in \mL(\mH)\) and \(L \subset M\);
		\item[(T)] \(H \subset \ker A_H\) for all \(H \in \mH\).
	\end{description}
\end{assumptions}

The letters \textbf{(F)} and \textbf{(T)} in Assumptions \ref{ass:basic} stand for flat and torsion free.
The next result is taken from {\cite[Proposition 2.3]{CHL}}. Because of its relevance, we give a complete proof. 

\begin{proposition}\label{prop:ftfstcon}
	Let \(\nabla\) be a standard connection on \(TV\) given by Equation \eqref{eq:stcon}. Then \(\nabla\) is flat torsion free if and only if the Basic Assumptions \ref{ass:basic} are satisfied.
\end{proposition}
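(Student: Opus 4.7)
The plan is to compute the torsion and curvature of \(\nabla\) directly in the global coordinate trivialization of \(TV\) and to translate the flat torsion free condition into algebraic constraints on the residues \(A_H\). The converse direction is immediate from the explicit formulas, so the substantive content lies in showing that flat torsion free implies the Basic Assumptions.

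For the torsion, let \(\theta \in \Omega^1(V; V)\) denote the tautological \(V\)-valued 1-form \(\sum_i dz_i \otimes \p/\p z_i\). The torsion of \(\nabla = d - \Omega\) equals \(\Theta = d\theta - \Omega \wedge \theta = -\Omega \wedge \theta\), and on constant vector fields \(X, Y\) this reads
\[\Theta(X, Y) \;=\; -\sum_{H \in \mH} \frac{dh(X)\, A_H Y \,-\, dh(Y)\, A_H X}{h}.\]
The rational functions \(1/h\) for distinct \(H \in \mH\) are linearly independent, so vanishing of \(\Theta\) forces each numerator to vanish identically in \(X, Y\). Specializing \(X \in H\), for which \(dh(X) = 0\), yields \(dh(Y)\, A_H X = 0\) for every \(Y\); hence \(A_H X = 0\), which is (T). Conversely, under (T) one can write \(A_H X = dh(X)\, A_H(n_H)\) for any transversal vector \(n_H\), and the numerator becomes manifestly antisymmetric in \(X, Y\), hence zero.

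For flatness, closedness of the logarithmic forms gives \(d\Omega = 0\), so the curvature equals \(\Omega \wedge \Omega\). Two distinct hyperplanes always meet in codimension exactly two, so grouping pairs \(\{H, H'\}\) by their common intersection \(L\) yields
\[\Omega \wedge \Omega \;=\; \sum_{\substack{L \in \mL(\mH) \\ \codim L = 2}} \Omega^L \wedge \Omega^L, \qquad \Omega^L \;:=\; \sum_{H \in \mH_L} A_H \frac{dh}{h}.\]
The summands have double poles along distinct codimension two subspaces, so the sum vanishes iff each \(\Omega^L \wedge \Omega^L\) does. Choosing coordinates \((x, y, z_3, \ldots, z_n)\) near a point of \(L^{\circ}\) with \(L = \{x = y = 0\}\), every \(h \in \mH_L\) depends only on \((x, y)\) and \(\Omega^L \wedge \Omega^L = F_L(x, y)\, dx \wedge dy\) for an \(\End V\)-valued rational \(F_L\). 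Using the Arnold relation
\[\frac{dh_1 \wedge dh_2}{h_1 h_2} + \frac{dh_2 \wedge dh_3}{h_2 h_3} + \frac{dh_3 \wedge dh_1}{h_3 h_1} \;=\; 0\]
valid for any three members of \(\mH_L\) (whose defining equations are linearly dependent in the two-dimensional space \(W(\mH_L)\)), one reduces the pole expansion of \(F_L\) along each \(H \in \mH_L\) to the commutator \([A_L, A_H]\); vanishing of \(F_L\) is therefore equivalent to the codimension two special case of (F), namely
\[[A_L,\, A_H] \;=\; 0 \qquad \text{for every codimension two } L \text{ and every } H \in \mH_L.\]

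To upgrade this to the full statement of (F), I note that for any \(L \subset M\) one has \(\mH_M \subset \mH_L\), so \(A_M = \sum_{H \in \mH_M} A_H\) and it suffices to prove \([A_L, A_H] = 0\) for every \(H \in \mH_L\). I would argue this by induction on \(\codim L\): the base case is the codimension two relation just established, and in the inductive step I would write \(A_L = A_{L'} + \sum_{H' \in \mH_L \setminus \mH_{L'}} A_{H'}\) for a suitably chosen intermediate \(L \subsetneq L'\) of strictly lower codimension, apply the inductive hypothesis to \([A_{L'}, A_H]\), and handle the remaining commutators \([A_{H'}, A_H]\) by applying the codimension two relation at \(L_0 := H \cap H' \supset L\) and re-aggregating using Lemma \ref{lem:HLMisHM}. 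The main obstacle in the proof is precisely this combinatorial bookkeeping: the codimension two vanishing is geometrically transparent, but extracting the uniform statement (F) requires a careful traversal of the intersection poset \(\mL(\mH)\) together with the decomposition of \(\mH_L\) into irreducible components provided by Lemma \ref{lem:HLdec}.
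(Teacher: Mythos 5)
Your torsion computation and your decomposition
\(\Omega\wedge\Omega=\sum_{\codim L=2}\Omega^L\wedge\Omega^L\)
are exactly the paper's route (Lemma \ref{lem:torfree} and Lemma \ref{lem:cod2i}). The one genuinely different ingredient is the codimension-two computation: you invoke the Arnold relations to express \(\Omega^L\wedge\Omega^L\) in a basis of the forms \(\omega_i\wedge\omega_j\), whereas the paper substitutes \(x=u\), \(y=uv\) and reads off the coefficient of \(du\wedge dv/u\) as \(\sum_i[A_L,A_{H_i}]/(v-a_i)\) (Lemma \ref{lem:cod2ii}). Both work; yours is the standard KZ-integrability argument, but you leave the residue bookkeeping as a sketch, and you should also record that the resulting basis forms are linearly independent over \(\C\) (distinct double poles), since that is what lets you conclude each coefficient \([A_H,A_L]\) vanishes.

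The step you flag as the "main obstacle" — upgrading the codimension-two commutation to the full condition \textbf{(F)} — is actually much simpler than your proposed induction, and your sketch of the inductive step is not quite right as written: the codimension-two relation at \(L_0=H\cap H'\) gives \(\sum_{H''\in\mH_{L_0}\setminus\{H\}}[A_{H''},A_H]=0\), \emph{not} the vanishing of the individual commutators \([A_{H'},A_H]\) (these are generically non-zero when \(L_0\) is irreducible with three or more hyperplanes). The correct move, which is the paper's Lemma \ref{lem:comm0} and requires no induction and no appeal to Lemma \ref{lem:HLdec}, is: reduce to \(N=H\) a hyperplane, partition \(\mH_L\setminus\{H\}\) by the fibres of \(H'\mapsto H\cap H'\), note that each fibre together with \(H\) determines a codimension-two \(L_i\subset H\) with \(\mH_{L_i}\subset\mH_L\), and then \([A_L,A_H]=\sum_i[A_{L_i},A_H]=0\) by summing the codimension-two relations over the parts. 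With that replacement, and with the Arnold-relation computation written out, your proof is complete.
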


\begin{proof}
	Follows from Proposition \ref{prop:comm} and Lemma \ref{lem:torfree} in the next sections.
\end{proof}

\subsubsection{{\large Flatness \(\iff\) Commutation relations}}

\begin{proposition}\label{prop:comm}
	Let \(\nabla\) be a standard connection given by Equation \eqref{eq:stcon}, then the following conditions are equivalent:
	\begin{itemize}
		\item[(i)] \(\Omega \wedge \Omega =0\);
		\item[(ii)] \([A_L, A_H]=0\) for every \(L \in \mL(H)\) of \(\codim L =2\) and every \(H \in \mH_L\);
		\item[(iii)] \([A_M, A_N]=0\) for every \(M, N \in \mL(\mH)\) with \(M \subset N\).
	\end{itemize}
\end{proposition}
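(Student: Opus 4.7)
The plan is to prove the chain $(\text{iii}) \Rightarrow (\text{ii}) \Rightarrow (\text{iii})$ and then $(\text{i}) \Leftrightarrow (\text{ii})$. The first implication $(\text{iii}) \Rightarrow (\text{ii})$ is immediate: for a codim-two $L$ and $H \in \mH_L$, the pair $(L, H)$ is an instance of $(M, N)$ with $M \subset N$ in the poset.

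For $(\text{ii}) \Rightarrow (\text{iii})$, I would fix $M \in \mL(\mH)$ and $H \in \mH_M$ and rewrite
$$[A_M, A_H] = \sum_{H' \in \mH_M \setminus \{H\}} [A_{H'}, A_H]$$
by grouping the summands according to the codim-two intersection $L := H \cap H'$. Each such $L$ contains $M$, has $H \in \mH_L$, and the fiber of the map $H' \mapsto H \cap H'$ is exactly $\mH_L \setminus \{H\}$ (since $L \supset M$ forces $\mH_L \subset \mH_M$). Regrouping gives $[A_M, A_H] = \sum_L [A_L - A_H, A_H] = \sum_L [A_L, A_H] = 0$ by (ii). The general case of $N$ with $M \subset N$ follows by expanding $A_N = \sum_{H \in \mH_N} A_H$ and using $\mH_N \subset \mH_M$.

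For $(\text{i}) \Leftrightarrow (\text{ii})$, I would start by expanding
$$\Omega \wedge \Omega = \sum_{H_1 \neq H_2} A_{H_1} A_{H_2}\, \omega_{H_1} \wedge \omega_{H_2}$$
(the diagonal self-wedges vanish) and partition the sum by $L = H_1 \cap H_2$, which yields the clean decomposition
$$\Omega \wedge \Omega = \sum_{\substack{L \in \mL(\mH) \\ \codim L = 2}} \Omega_L \wedge \Omega_L, \qquad \Omega_L := \sum_{H \in \mH_L} A_H\, \omega_H.$$
An analysis of leading-order singularities near a generic point of $L^{\circ}$ — where only the hyperplanes in $\mH_L$ are singular and the leading double-pole part of $\Omega \wedge \Omega$ is the pullback of $\Omega_L \wedge \Omega_L$ from a transverse $2$-plane — reduces $\Omega \wedge \Omega = 0$ to the vanishing of each $\Omega_L \wedge \Omega_L$ separately.

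The final step is to identify $\Omega_L \wedge \Omega_L = 0$ with the commutation relations in (ii). Fixing a distinguished $H_0 \in \mH_L$ and applying the Arnold relation $\omega_{H_1} \wedge \omega_{H_2} = \omega_{H_1} \wedge \omega_{H_0} - \omega_{H_2} \wedge \omega_{H_0}$ (valid whenever $H_0, H_1, H_2$ all contain $L$), a bookkeeping exercise separating the ``$H_0$'' and ``non-$H_0$'' contributions is expected to collapse the double sum into the compact formula
$$\Omega_L \wedge \Omega_L = \sum_{H_1 \in \mH_L \setminus \{H_0\}} [A_{H_1}, A_L]\, \omega_{H_1} \wedge \omega_{H_0}.$$
A residue argument shows that the $2$-forms $\omega_{H_1} \wedge \omega_{H_0}$ for varying $H_1 \in \mH_L \setminus \{H_0\}$ are linearly independent over $\C$, so $\Omega_L \wedge \Omega_L = 0$ forces $[A_{H_1}, A_L] = 0$ for all such $H_1$, and $[A_{H_0}, A_L] = 0$ then follows from $\sum_{H \in \mH_L}[A_H, A_L] = [A_L, A_L] = 0$. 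The main obstacle I anticipate is precisely this Arnold-relation bookkeeping yielding the compact formula, together with making the leading-singularity analysis rigorous enough to isolate each $\Omega_L \wedge \Omega_L$ from the global identity $\Omega \wedge \Omega = 0$.
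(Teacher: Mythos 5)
Your proposal is correct, and its global architecture coincides with the paper's: the equivalence of (ii) and (iii) via grouping \(\mH_M\setminus\{H\}\) by the codimension-two flats \(L=H\cap H'\) is exactly Lemma \ref{lem:comm0}, the decomposition \(\Omega\wedge\Omega=\sum_{\codim L=2}\Omega^L\wedge\Omega^L\) is Lemma \ref{lem:cod2i}, and the localization at a generic point of \(L^{\circ}\) to extract each summand is Lemma \ref{lem:comm1}. The only genuine divergence is in the two-dimensional heart of the argument (Lemma \ref{lem:cod2ii}): you derive the closed formula \(\Omega^L\wedge\Omega^L=\sum_{H_1\neq H_0}[A_{H_1},A_L]\,\omega_{H_1}\wedge\omega_{H_0}\) from the Arnold relations and then invoke linear independence of the forms \(\omega_{H_1}\wedge\omega_{H_0}\), whereas the paper substitutes \(x=u\), \(y=uv\) and reads off \(\Omega^L\wedge\Omega^L=\bigl(\sum_i[A_L,A_{H_i}]\tfrac{1}{v-a_i}\bigr)\tfrac{du\wedge dv}{u}\), reducing to linear independence of the functions \(1/(v-a_i)\). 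Your Arnold-relation identity is correct (I checked the bookkeeping: the coefficient of \(\omega_{H_1}\wedge\omega_{H_0}\) collapses to \([A_{H_1},A_L]\)) and is arguably more intrinsic, but note that the paper's coordinate substitution does double duty: it is also precisely what makes rigorous the ``leading double-pole'' step you flag as the main obstacle in the (i)\(\Rightarrow\)(ii) direction, since in the \((u,v)\) chart the cross terms \(\Omega^L\wedge(\mathrm{hol.})\) contribute only a holomorphic function of \(v\) inside the parenthesis and therefore cannot cancel the poles at \(v=a_i\). If you keep the Arnold route, you should supply the analogous separation explicitly, e.g.\ by taking residues of \(\Omega\wedge\Omega\) first along \(H_1\) and then along \(L\subset H_1\).
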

\begin{proof}
	This is a consequence of Lemmas \ref{lem:comm0} and \ref{lem:comm1}.
\end{proof}

\begin{lemma}\label{lem:comm0}
	Conditions (ii) and (iii) in Proposition \ref{prop:comm} are equivalent.
\end{lemma}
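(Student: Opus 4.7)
For the direction (iii) $\Rightarrow$ (ii), the plan is immediate: given a codimension-two $L \in \mL(\mH)$ and $H \in \mH_L$, take $M = L$ and $N = H$; then $M \subset N$ and (iii) directly yields $[A_L, A_H] = 0$.

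The substantive direction is (ii) $\Rightarrow$ (iii). Here the plan is to expand the commutator $[A_M, A_N]$ as a double sum over hyperplanes and then regroup pairs according to their pairwise intersections. First I would note that $M \subset N$ gives $\mH_N \subset \mH_M$, and write
\[
[A_M, A_N] \;=\; \sum_{H \in \mH_M,\, H' \in \mH_N} [A_H, A_{H'}],
\]
observing that the diagonal contributions $H = H'$ vanish. For any off-diagonal pair $H \neq H'$, both hyperplanes contain $M$, so $L := H \cap H'$ is a codimension-two element of $\mL(\mH)$ containing $M$; conversely, for each such $L$ the pairs $(H,H')$ with $H \cap H' = L$ correspond exactly to ordered pairs of distinct elements of $\mH_L$.

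Regrouping by $L$ then gives
\[
[A_M, A_N] \;=\; \sum_{\substack{L \in \mL(\mH)\\ \codim L = 2,\, M \subset L}} \; \sum_{H' \in \mH_L \cap \mH_N} \; \sum_{\substack{H \in \mH_L\\ H \neq H'}} [A_H, A_{H'}].
\]
For fixed $L$ and $H' \in \mH_L$, the identity $A_L = \sum_{H \in \mH_L} A_H$ shows that the innermost sum telescopes to $[A_L - A_{H'}, A_{H'}] = [A_L, A_{H'}]$, which vanishes by hypothesis (ii) since $H' \in \mH_L$ and $\codim L = 2$. Summing over $H'$ and over $L$ yields $[A_M, A_N] = 0$, establishing (iii).

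The only point requiring care is the combinatorial bookkeeping: one must verify that regrouping by $L = H \cap H'$ partitions the off-diagonal pairs without double counting, and that the remaining membership constraints are correctly tracked after reindexing (the condition $H \in \mH_M$ becomes automatic once $M \subset L$ and $H \in \mH_L$, while $H' \in \mH_N$ survives as the extra constraint $H' \in \mH_L \cap \mH_N$). Once this is laid out cleanly, the identity $A_L - A_{H'} = \sum_{H \in \mH_L,\, H \neq H'} A_H$ together with (ii) collapses the full expression to zero with no further input, so no serious obstacle is anticipated beyond the careful accounting of indices.
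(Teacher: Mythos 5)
Your argument is correct and is essentially the paper's proof in a slightly different packaging: the paper first reduces to $N=H_0$ a single hyperplane by linearity and then partitions $\mH_M\setminus\{H_0\}$ by the equivalence $H\sim H'\iff H\cap H_0=H'\cap H_0$, which is exactly your regrouping of ordered pairs by $L=H\cap H'$, followed by the same telescoping identity $\sum_{H\in\mH_L,\,H\neq H'}A_H=A_L-A_{H'}$ and an appeal to (ii). No gap; the combinatorial bookkeeping you flag does check out since $M\subset L$ forces $\mH_L\subset\mH_M$ and distinctness of $H,H'\in\mH_L$ forces $H\cap H'=L$ by codimension.
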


\begin{proof}
	It is enough to show that (ii) implies (iii). Moreover, from the definition of \(A_N\) in Equation \ref{eq:residue}, we can clearly assume that \(N=H_0\) is a hyperplane. Define an equivalence relation on \(\mH'_M=\mH_M - \{H_0\}\) by \(H \sim H'\) whenever \(H \cap H_0 = H' \cap H_0\), it gives a partition of \(\mH'_M\) into disjoint subsets \(\mH'_M=\sqcup_i \mH_i\).
	Let \(L_i\) be the common intersection of all hyperplanes in \(\mH_i\) together with \(H_0\). By construction \(\codim L_i = 2\) and \(L_i \subset H_0\). Moreover, we see that
	\[[A_M, A_{H_0}] = \sum_i [A_{L_i}, A_{H_0}] = 0 . \qedhere\] 
\end{proof}

For \(L \in \mL(\mH)\) we set 
\[\Omega^L = \sum_{H \in \mH_L} A_H \frac{dh}{h} .\]

\begin{lemma}\label{lem:cod2i}
	The following identity holds
	\begin{equation*}
		\Omega \wedge \Omega = \sum_{\codim L=2} \Omega^L \wedge \Omega^L
	\end{equation*}
	where the sum on the right hand side is over all codimension two intersections \(L \in \mL(\mH)\).
\end{lemma}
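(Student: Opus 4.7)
\noindent\textbf{Proof plan for Lemma \ref{lem:cod2i}.}

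The plan is to expand the wedge product $\Omega \wedge \Omega$ term by term and group the surviving contributions by the codimension two intersection of the two hyperplanes involved. More precisely, I would first write
\begin{equation*}
\Omega \wedge \Omega \;=\; \sum_{H, H' \in \mH} A_H A_{H'} \, \frac{dh}{h} \wedge \frac{dh'}{h'},
\end{equation*}
and immediately discard the diagonal terms $H = H'$, which vanish because $\frac{dh}{h}\wedge\frac{dh}{h} = 0$. So only ordered pairs $(H,H')$ with $H \neq H'$ contribute.

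For any such pair the intersection $H \cap H'$ is a subspace of $V$ of codimension exactly two (since the defining forms $h$ and $h'$ are linearly independent linear functionals), and therefore $L(H,H') := H\cap H' \in \mL(\mH)$ with $\codim L(H,H') = 2$. The next step is to partition the off-diagonal pairs according to this codimension two subspace:
\begin{equation*}
\Omega \wedge \Omega \;=\; \sum_{\substack{L \in \mL(\mH)\\ \codim L = 2}} \;\; \sum_{\substack{H, H' \in \mH \\ H \neq H',\; H\cap H' = L}} A_H A_{H'} \, \frac{dh}{h} \wedge \frac{dh'}{h'}.
\end{equation*}
This grouping is a genuine partition of the index set: each off-diagonal pair is assigned to a unique $L$, namely $L = H \cap H'$.

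The final observation — and really the only point requiring a brief argument — is that the inner sum above is exactly $\Omega^L \wedge \Omega^L$. Indeed, by definition $\Omega^L \wedge \Omega^L = \sum_{H, H' \in \mH_L} A_H A_{H'} \frac{dh}{h}\wedge\frac{dh'}{h'}$; the diagonal terms again vanish, and for an off-diagonal pair $H \neq H'$ in $\mH_L$ we have $L \subset H \cap H'$ with both subspaces of codimension two, forcing $H \cap H' = L$. Thus the pairs appearing in $\Omega^L \wedge \Omega^L$ are precisely those with $L(H,H') = L$. Substituting back yields the claimed identity.

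There is no real obstacle here: the argument is purely combinatorial bookkeeping, and the only subtlety is the trivial dimension count showing that $H \cap H'$ has codimension two whenever $H \neq H'$, together with the observation that among codimension two subspaces the containment $L \subset H \cap H'$ is actually an equality.
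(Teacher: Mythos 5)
Your proof is correct and follows essentially the same route as the paper: expand $\Omega\wedge\Omega$ over ordered off-diagonal pairs, partition these pairs by the codimension-two subspace $H\cap H'$, and identify each inner sum with $\Omega^L\wedge\Omega^L$. The extra remark that $L\subset H\cap H'$ forces equality for $H,H'\in\mH_L$ is a welcome (if minor) clarification that the paper leaves implicit.
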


\begin{proof}
	Clearly
	\[\Omega \wedge \Omega = \sum_{H, H'} A_H A_{H'} \frac{dh}{h} \wedge \frac{d h'}{h'} \]
	where the sum runs over all ordered tuples of \(\mH \times \mH \setminus \{\mbox{diagonal}\}\). On the other hand we have a disjoint union 
	\[\mH \times \mH \setminus \{\mbox{diagonal}\} = \bigcup_{\codim L =2} \{(H, H'), \,\ L = H \cap H' \} \]
	and we get that
	\begin{align*}
		\sum_{H, H'} A_H A_{H'} \frac{dh}{h} \wedge \frac{d h'}{h'} &=
		\sum_{\codim L=2} \left( \sum_{L=H\cap H'} A_H A_{H'} \frac{dh}{h} \wedge \frac{d h'}{h'} \right) \\
		&= \sum_{\codim L=2} \Omega^L \wedge \Omega^L.
		\qedhere
	\end{align*}
\end{proof}

\begin{lemma} \label{lem:cod2ii}
	Fix \(L \in \mL(\mH)\) with \(\codim L=2\). The following two conditions are equivalent:
	\begin{itemize}
		\item[(i)] \([A_L, A_H]=0\) for every \(H \in \mH_L\);
		\item[(ii)] \(\Omega^L \wedge \Omega^L = 0\).
	\end{itemize}
\end{lemma}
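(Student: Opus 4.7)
My plan is to reduce to a two-dimensional transverse slice and then expand $\Omega^L \wedge \Omega^L$ into partial fractions. Since every hyperplane $H \in \mH_L$ contains $L$ and $\codim L = 2$, after a linear change of coordinates on $V$ we may assume $L=\{z_1=z_2=0\}$ and write each $h \in \mH_L$ as a linear form in $z_1, z_2$ alone. Then $\Omega^L$ pulled back to the $(z_1,z_2)$-plane is a meromorphic matrix-valued $1$-form, and the identity $\Omega^L\wedge\Omega^L=0$ on $V$ is equivalent to its restriction to this $2$-plane vanishing. So, without loss of generality, we may assume $V\cong\C^2$ and that the hyperplanes of $\mH_L$ are distinct lines through the origin.

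After choosing a coordinate system in which no member of $\mH_L$ equals $\{z_2=0\}$, we parametrize $\mH_L=\{H_1,\ldots,H_k\}$ with $h_i = z_1 - \lambda_i z_2$ for pairwise distinct $\lambda_i \in \C$. The direct computation $dh_i \wedge dh_j = (\lambda_i - \lambda_j)\, dz_1\wedge dz_2$ together with antisymmetrization in $i,j$ gives
\begin{equation*}
\Omega^L\wedge\Omega^L \;=\; \Bigl(\sum_{i<j} [A_{H_i},A_{H_j}]\,\frac{\lambda_i-\lambda_j}{h_i h_j}\Bigr)\, dz_1\wedge dz_2.
\end{equation*}
The crucial algebraic input is the partial-fraction identity
\begin{equation*}
\frac{\lambda_i - \lambda_j}{h_i h_j} \;=\; \frac{1}{z_2}\Bigl(\frac{1}{h_i}-\frac{1}{h_j}\Bigr),
\end{equation*}
which is verified by clearing denominators. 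Substituting and regrouping terms by the index on which $h_i$ appears converts the double sum into
\begin{equation*}
\Omega^L\wedge\Omega^L \;=\; \frac{dz_1\wedge dz_2}{z_2}\,\sum_{H\in\mH_L}\frac{[A_H,A_L]}{h},
\end{equation*}
where I have used $A_L = \sum_{H\in\mH_L} A_H$ to collapse $\sum_{j\neq i}[A_{H_i},A_{H_j}]=[A_{H_i},A_L]$.

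From this compact formula, the equivalence is immediate in both directions. If $[A_L,A_H]=0$ for every $H\in\mH_L$, each summand vanishes and (ii) follows. Conversely, assuming $\Omega^L\wedge\Omega^L=0$, the meromorphic function $\sum_{H\in\mH_L} [A_H,A_L]/h$ (with values in $\End E$) must be zero; expanding near a generic point of the line $H_i \setminus \bigcup_{j\neq i} H_j$ the summands $1/h_j$ for $j\neq i$ are regular while $1/h_i$ has a simple pole, so the residue computation forces $[A_{H_i},A_L]=0$ for each $i$. The main obstacle, which is really just a bookkeeping point, is verifying the partial-fraction identity and making sure the $2$-dimensional reduction genuinely captures the $\Omega^L\wedge\Omega^L=0$ condition on all of $V$; both are elementary and no essential difficulty appears.
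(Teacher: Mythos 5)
Your proof is correct and follows essentially the same route as the paper: reduce to the two-dimensional case and show by explicit computation that \(\Omega^L\wedge\Omega^L\) equals a nonvanishing meromorphic \(2\)-form times \(\sum_{H\in\mH_L}[A_H,A_L]/h\), whose vanishing is equivalent to the vanishing of each commutator since the functions \(1/h\) are linearly independent. The only (cosmetic) difference is that the paper achieves this regrouping via the substitution \(x=u\), \(y=uv\), whereas you use the partial-fraction identity \((\lambda_i-\lambda_j)/(h_ih_j)=z_2^{-1}(1/h_i-1/h_j)\) directly in affine coordinates.
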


\begin{proof}
	This is essentially a two dimensional case. We can simplify notation to \(\mH=\mH_L\) and \(\Omega=\Omega^L\). Take linear coordinates so that \(L=\{x=y=0\}\) and \(h_i= y -a_ix\) are the equations of the hyperplanes \(\mH=\{H_1, \ldots, H_N\}\). We introduce variables \(u, v\) so that \(x=u\), \(y=uv\). This gives us
	\[\Omega = A_L \frac{du}{u} + \sum_i A_{H_i} \frac{dv}{v-a_i} \]
	and
	\[\Omega \wedge \Omega = \left(\sum_i [A_L, A_{H_i}] \frac{1}{v-a_i} \right) \frac{du\wedge dv}{u}, \]
	which vanishes if and only if the meromorphic function inside parenthesis does; equivalently \([A_L, A_{H_i}]=0\) for all \(i\).
\end{proof}

\begin{lemma} \label{lem:comm1}
	The following two conditions are equivalent:
	\begin{itemize}
		\item[(i)] \([A_L, A_H]=0\) for every \(L \in \mL(H)\) of \(\codim L =2\) and every \(H \in \mH_L\);
		\item[(ii)] \(\Omega \wedge \Omega = 0\).
	\end{itemize}
\end{lemma}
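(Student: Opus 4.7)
The plan is to deduce Lemma \ref{lem:comm1} by combining the two preceding lemmas: Lemma \ref{lem:cod2i} gives the global decomposition
\[
\Omega \wedge \Omega = \sum_{\codim L = 2} \Omega^L \wedge \Omega^L,
\]
while Lemma \ref{lem:cod2ii} identifies the vanishing of each summand $\Omega^L \wedge \Omega^L$ with the codimension-two commutator condition $[A_L, A_H] = 0$ for all $H \in \mH_L$. So the biconditional reduces to showing that the sum on the right vanishes if and only if each summand does.

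The direction (i) $\Rightarrow$ (ii) is immediate: apply Lemma \ref{lem:cod2ii} to every codimension-two $L$ to conclude $\Omega^L \wedge \Omega^L = 0$, then sum via Lemma \ref{lem:cod2i}. The content lies entirely in the converse.

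For (ii) $\Rightarrow$ (i), fix a codimension-two intersection $L_0$ and choose a generic point $p \in L_0^{\circ}$. The key observation is that on a sufficiently small neighbourhood $U$ of $p$ the only hyperplanes of $\mH$ that meet $U$ are precisely those in $\mH_{L_0}$. Consequently, for every other codimension-two subspace $L \neq L_0$ the common intersection locus $L$ does not meet $U$, so $\Omega^L \wedge \Omega^L$ is holomorphic on $U$. Restricting the identity $\Omega \wedge \Omega = 0$ to $U$ we therefore obtain
\[
\Omega^{L_0} \wedge \Omega^{L_0} = -\sum_{L \neq L_0} \Omega^L \wedge \Omega^L \quad \text{on } U,
\]
where the right-hand side is holomorphic on $U$. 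On the other hand, from the explicit computation in the proof of Lemma \ref{lem:cod2ii} the form $\Omega^{L_0} \wedge \Omega^{L_0}$ has at worst a simple pole along $L_0$ with vanishing holomorphic part; so a holomorphic equality of this shape forces $\Omega^{L_0} \wedge \Omega^{L_0} = 0$ on $U$, and then on all of $V$ by meromorphic continuation. Applying Lemma \ref{lem:cod2ii} at $L_0$ gives condition (i) at $L_0$, and since $L_0$ was arbitrary we obtain (i).

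The main obstacle is this separation step: making precise that the meromorphic 2-forms $\Omega^L \wedge \Omega^L$ for distinct codimension-two subspaces are \emph{linearly independent modulo holomorphic forms on suitable open sets}, which is exactly what the localisation near $L_0^{\circ}$ achieves. Everything else is immediate from the two lemmas already proved.
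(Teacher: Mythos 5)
The forward direction (i) $\Rightarrow$ (ii) is fine and is exactly the paper's argument: combine Lemma \ref{lem:cod2ii} applied to each codimension-two $L$ with the decomposition of Lemma \ref{lem:cod2i}.

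The converse has a genuine gap. Your separation step rests on the claim that for $L\neq L_0$ the form $\Omega^L\wedge\Omega^L$ is holomorphic on a small neighbourhood $U$ of a generic point $p\in L_0^{\circ}$. This is false in general. The form $\Omega^L\wedge\Omega^L$ is a sum of terms $A_HA_{H'}\,\frac{dh}{h}\wedge\frac{dh'}{h'}$ with $H\cap H'=L$, and its polar locus is the union of the hyperplanes in $\mH_L$, not the subspace $L$ itself. If some hyperplane $H_*$ of the arrangement contains both $L$ and $L_0$ (equivalently, if $L+L_0\in\mH$ — exactly one of $H,H'$ can be such an $H_*$, since both containing $L_0$ would force $L=H\cap H'=L_0$), then the term $A_{H_*}A_{H'}\frac{dh_*}{h_*}\wedge\frac{dh'}{h'}$ has a simple pole along $H_*\cap U\neq\emptyset$. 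For instance, with $\mH=\{z_1=0\},\{z_2=0\},\{z_1=z_2\},\{z_3=0\}$ in $\C^3$, $L_0=\{z_1=z_2=0\}$ and $L=\{z_1=z_3=0\}$, the form $\Omega^L\wedge\Omega^L$ contains $A_{\{z_1=0\}}A_{\{z_3=0\}}\frac{dz_1}{z_1}\wedge\frac{dz_3}{z_3}$, which is singular along $\{z_1=0\}$ and hence on every neighbourhood of a generic point of $L_0$. So the right-hand side of your identity on $U$ is not holomorphic, and the conclusion $\Omega^{L_0}\wedge\Omega^{L_0}=0$ does not follow as stated.

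What does work — and is what the paper does — is a finer comparison of polar parts rather than an appeal to holomorphy on $U$. Near $p\in L_0^{\circ}$ one writes $\Omega=\Omega^{L_0}+(\mathrm{hol.})$, substitutes $x=u$, $y=uv$ as in the proof of Lemma \ref{lem:cod2ii}, and inspects the coefficient of $\frac{du\wedge dv}{u}$: the contribution of $\Omega^{L_0}\wedge\Omega^{L_0}$ is $\sum_i[A_{L_0},A_{H_i}]\frac{1}{v-a_i}$, while all remaining terms contribute only a function holomorphic in $v$ to that coefficient. Linear independence of the functions $\frac{1}{v-a_i}$ modulo holomorphic functions then isolates each commutator. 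Your instinct about ``linear independence modulo holomorphic forms'' is the right one, but it must be applied to the residue along the exceptional divisor of the blow-up of $L_0$ (equivalently, in a generic $2$-plane transverse to $L_0$), not to the forms $\Omega^L\wedge\Omega^L$ on an open set in $\C^n$.
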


\begin{proof}
	The direction (i) implies (ii) follows from Lemmas \ref{lem:cod2ii} and  \ref{lem:cod2i}. To show the converse, we fix \(L \in \mL(\mH)\) and a smooth point \(p \in L^{\circ}\). Take coordinates centred at \(p\) so that \(L=\{x=y=0\}\) and proceed as in the proof of Lemma \ref{lem:cod2ii} to get
	\[\Omega \wedge \Omega = \left(\sum_i [A_L, A_{H_i}] \frac{1}{v-a_i} + \mbox{(hol.)} \right) \frac{du\wedge dv}{u}, \]
	where (hol.) denotes some holomorphic matrix-valued function and \(H_i\) are the hyperplanes that contain \(L\). The same reasoning gives us that \(\Omega \wedge \Omega = 0\) implies \([A_L, A_{H_i}]=0\) for every \(i\).
\end{proof}

\subsubsection{{\large Torsion free \(\iff\) \(H \subset \ker A_H\) }}

\begin{definition}
	The torsion of an affine connection is the \((2,1)\)-tensor given by
	\[T(X, Y) = \nabla_X Y - \nabla_Y X - [X, Y] . \]
\end{definition}

\begin{lemma}[cf. Lemma \ref{lem:apendixTDflat}]\label{lem:torfree}
	Let \(\nabla\) be a standard connection on \(TV\) given by Equation \eqref{eq:stcon} with \(d = \) the Euclidean connection and \(A_H \in \End V\). Then
	\(\nabla\) is torsion free (i.e. \(T\equiv 0\)) if and only if \(H \subset \ker A_H\) for every \(H \in \mH\).
\end{lemma}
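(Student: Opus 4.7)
The plan is a direct tensorial computation. Since the Euclidean connection $d$ is torsion free, writing $\nabla = d - \Omega$ with $\Omega$ the $\End V$-valued meromorphic $1$-form $\Omega = \sum_{H} A_H \, dh/h$, one computes
\begin{equation*}
T_\nabla(X,Y) = \nabla_X Y - \nabla_Y X - [X,Y] = T_d(X,Y) + \Omega(Y)X - \Omega(X)Y = \Omega(Y)X - \Omega(X)Y,
\end{equation*}
where $\Omega(X) \in \End V$ acts on $Y \in V$ in the natural way. Substituting the expression for $\Omega$ and using that for the linear form $h$ we have $dh(X) = h(X)$, this becomes
\begin{equation*}
T_\nabla(X,Y) = \sum_{H \in \mH} \frac{1}{h}\Bigl( h(Y) A_H X - h(X) A_H Y \Bigr).
\end{equation*}

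Next I would extract pointwise conditions by a residue argument. For fixed $X, Y \in V$ this is a meromorphic $V$-valued function on $V$ whose only possible poles are simple along the hyperplanes $H \in \mH$. The residue along a given $H$ (which is a constant, since the numerator of each summand is constant in the base point) is exactly $h(Y) A_H X - h(X) A_H Y$. Hence $T_\nabla$ vanishes identically on the complement $(V)^\circ$ of the arrangement if and only if, for every $H \in \mH$ and every $X, Y \in V$,
\begin{equation}\label{eq:torsionresidue}
h(Y) A_H X - h(X) A_H Y = 0.
\end{equation}

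Finally I would verify the equivalence between \eqref{eq:torsionresidue} and the condition $H \subset \ker A_H$. For the forward direction, pick $X \in H$ (so $h(X) = 0$) and any $Y$ with $h(Y) \neq 0$; then \eqref{eq:torsionresidue} forces $A_H X = 0$, so $H \subset \ker A_H$. For the converse, decompose an arbitrary $Y = Y_0 + \lambda_Y n$ with $Y_0 \in H$ and $n \notin H$, and similarly $X = X_0 + \lambda_X n$; then $A_H X = \lambda_X A_H n$ and $h(X) = \lambda_X h(n)$, and analogously for $Y$, so both sides of \eqref{eq:torsionresidue} equal $\lambda_X \lambda_Y h(n) A_H n$. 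This closes the equivalence.

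There is no real obstacle here; the only subtlety to state cleanly is the residue step, which works because the hyperplanes $H$ are distinct so the poles of the summands cannot cancel, and the numerators $h(Y)A_H X - h(X) A_H Y$ are constant (independent of the base point). This is exactly what allows the pointwise vanishing condition on the tensor $T$ to be separated into one independent condition per hyperplane.
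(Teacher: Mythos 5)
Your proposal is correct and follows essentially the same route as the paper: compute $T(X,Y)=\Omega(Y)X-\Omega(X)Y$, separate the conditions per hyperplane using the fact that the simple poles along distinct hyperplanes cannot cancel (the paper phrases this as linear independence of the functions $1/h$), and finish with the same linear-algebra equivalence. The only cosmetic difference is that for the ``if'' direction the paper invokes the rank-one form $A_H(X)=dh(X)\cdot n_H$ from Lemma \ref{lem:rk1} to see the symmetry in $X,Y$, whereas you re-derive it by decomposing $X,Y$ along $H\oplus\C\cdot n$; these are interchangeable.
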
 

\begin{proof}
	Plugging \(\nabla=d-\Omega\) into the formula for \(T(X,Y)\) and using the fact that \(d\) is torsion free, we get that
	\begin{equation} \label{eq:tor}
		T(X, Y) = \sum_{H \in \mH} \left( A_H(X) dh(Y) - A_H(Y) dh(X) \right) \frac{1}{h} .
	\end{equation}
	Suppose that \(H \subset \ker A_H\) for every \(H \in \mH\). This implies that \(A_H(X) = dh(X) \cdot n_H\) for some \(n_H \in V\), see Lemma \ref{lem:rk1} that follows. In particular, the expression
	\[A_H(X) dh(Y) = dh(X) dh(Y) \cdot n_H \]
	is symmetric in \(X, Y\) and each of the terms on the right hand side of Equation \eqref{eq:tor} vanishes, so \(T \equiv 0\). Conversely, if \(T\equiv 0\) then we use Equation \eqref{eq:tor} for \(X, Y\) constant vector fields. Since the rational functions \(1/h\) are linearly independent over \(\C\) we get that
	\[A_H(X)dh(Y)=A_H(Y)dh(X)\]
	for every \(X, Y \in V\). Take \(X \in H\) and \(Y \notin H\) to conclude \(A_H(X)=0\); hence \(H \subset \ker A_H\).
\end{proof}

\subsection{Residues and normal subspaces}\label{sect:resnorm}

Let \(\nabla\) be a flat torsion free standard connection with residues at hyperplanes \(\{A_H, \,\ H \in \mH\}\).
In this section we introduce the notion of `normal subspace' \(L^{\perp}\) (depending on \(\nabla\)) to a given \(L \in \mL(\mH)\)
and prove a direct sum decomposition result (Proposition \ref{prop:AL}) which will be used later in Section \ref{sec:locprod}.

\subsubsection{{\large Rank one endomorphisms}}

\begin{lemma}\label{lem:rk1}
	Let \(A_H \in \End V\) be a non-zero endomorphism and let \(H \subset V\) be a hyperplane with defining linear equation \(H=\{h=0\}\). If \(\ker A_H = H\) then 
	\begin{equation} \label{eq:Amat}
		A_H(X)=dh(X) \cdot n_H \hspace{2mm} \mbox{ for all } X \in V,
	\end{equation}
	where \(n_H \in V\) is a non-zero vector such that \(\img A = \C \cdot n_H\). Moreover, we have the following two cases:
	 \begin{itemize}
	 	\item Semi-simple: if \(n_H \notin H\) then \(A_H(n_H)= a_H n_H\) and \(a_H= \tr A_H \neq 0\).
	 	\item Nilpotent: if \(n_H \in H\) then \(A_H^2=0\) and \(\tr A_H=0\).
	 \end{itemize}
\end{lemma}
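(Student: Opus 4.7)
The plan is to exploit the rank-nullity theorem together with the fact that the ideal of linear functionals vanishing on a hyperplane is one-dimensional. Since $\ker A_H = H$ has codimension one, $A_H$ has rank one, so $\img A_H = \C \cdot n_H$ for some non-zero $n_H \in V$; this gives the second claim.

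For every $X \in V$ we may write $A_H(X) = \lambda(X)\, n_H$, where $\lambda \in V^*$ is a linear functional. The condition $\ker A_H = H = \ker(dh)$ forces $\lambda$ to be a scalar multiple of $dh$, and after absorbing this scalar into the vector $n_H$ we obtain the normal form $A_H(X) = dh(X) \cdot n_H$. This establishes \eqref{eq:Amat}.

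To analyse the eigenstructure, I would consider the scalar $dh(n_H)$. If $n_H \notin H$, then $dh(n_H) \neq 0$, and plugging $X = n_H$ into \eqref{eq:Amat} gives $A_H(n_H) = dh(n_H)\, n_H$, so $n_H$ is an eigenvector with non-zero eigenvalue $a_H := dh(n_H)$. Since $V = H \oplus \C \cdot n_H$ and $A_H$ vanishes on $H$, the trace is the sum of the eigenvalue $a_H$ and $n-1$ zero eigenvalues, yielding $\tr A_H = a_H \neq 0$. In the remaining case $n_H \in H$, we have $dh(n_H) = 0$, so $A_H(n_H) = 0$, and composing the normal form with itself gives $A_H^2(X) = dh(X)\, A_H(n_H) = 0$, i.e.\ $A_H$ is nilpotent; its only eigenvalue is $0$ and hence $\tr A_H = 0$.

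There is no real obstacle here: the whole statement is a direct consequence of the rank-one structure of $A_H$ combined with the observation that a linear functional on $V$ whose kernel contains $H$ is a multiple of $dh$. The only minor convention to pin down is the identification of $n_H$ up to scale, which is fixed precisely by the requirement that $dh$ (and not some scalar multiple) appear in \eqref{eq:Amat}.
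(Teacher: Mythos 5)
Your proof is correct and follows essentially the same elementary linear-algebra route as the paper: establish the rank-one normal form $A_H(X)=dh(X)\cdot n_H$ (the paper does this by evaluating on a basis $\{u,e_2,\ldots,e_n\}$ with $dh(u)=1$, you by noting that the functional $\lambda$ with $A_H=\lambda\otimes n_H$ has kernel $H$ and is hence proportional to $dh$), then read off the two cases from whether $dh(n_H)$ vanishes. No gaps.
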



\begin{proof}
	Take \(u \in V\) with \(dh(u)=1\). Define \(n_H= A_H(u)\)
	and \(\tilde{A} (X) = dh(X) \cdot n_H\) for all \(X \in V\). Comparing the values of \(A_H\) and  \(\tilde{A}\) on a basis of \(V\) given by \(\{u, e_2, \ldots, e_n\}\) where \(H = \Span \{e_2, \ldots, e_{n}\}\), we see that \(A_H= \tilde{A}\) and Equation \eqref{eq:Amat} follows. If \(n_H \notin H\) then \(A_H n_H = a_H n_H\) with \(a_H = dh(n_H) \neq 0\) and the matrix representing \(A_H\) with respect to the basis \(\{n_H, e_2, \ldots, e_n\}\) is diagonal with entries \((a_H, 0, \ldots, 0)\); which proves the semi-simple case. On the other hand, if \(n_H \in H\) then \(A_H^2(u)=A_H(n_H)=0\) and the nilpotent case holds.
\end{proof}

\begin{lemma} \label{lem:rk1invsub}
	Let \(A_H \in \End V\) be a rank one endomorphism as in Lemma \ref{lem:rk1}. Suppose \(E \subset V\) is an invariant subspace of \(A_H\), this is \(A_H (E) \subset E\). Then either \(E \subset H\) or \(n_H \in E\). 
\end{lemma}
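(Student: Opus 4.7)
The plan is a short case split driven by the rank one normal form of \(A_H\) recorded in Lemma \ref{lem:rk1}. I would distinguish the cases \(E \subset H\) and \(E \not\subset H\). In the first case there is nothing to show. In the second case, pick any \(v \in E\) with \(v \notin H\), equivalently \(dh(v) \neq 0\); such a \(v\) exists precisely because \(H = \ker dh\) does not contain \(E\). By the explicit formula \(A_H(v) = dh(v)\cdot n_H\) from Lemma \ref{lem:rk1}, together with the invariance assumption \(A_H(v) \in E\), dividing by the nonzero scalar \(dh(v)\) yields \(n_H \in E\), which is exactly the desired conclusion.

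Conceptually, the statement is nothing more than the observation that for an endomorphism whose kernel is the hyperplane \(H\) and whose image is the line \(\C \cdot n_H\), any invariant subspace \(E\) is forced to lie in one of the two canonical subspaces attached to \(A_H\): either it is swallowed by the kernel, or it meets the complement of the kernel and is then mapped onto the full image line, which must therefore be contained in \(E\). No appeal to the semi-simple vs.\ nilpotent dichotomy of Lemma \ref{lem:rk1} is needed. There is no real obstacle in this argument beyond setting up the dichotomy cleanly, so I would expect the write-up to be only a few lines.
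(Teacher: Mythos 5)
Your proof is correct and is exactly the paper's argument: if \(E \not\subset H\), then \(A_H(E)\) contains a nonzero multiple of \(n_H\) and is contained in \(E\), so \(n_H \in E\). The write-up just spells out the paper's one-line proof in slightly more detail.
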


\begin{proof}
	If \(E \not\subset H\) then \(\C \cdot n_H \subset A_H(E) \subset E\).
\end{proof}

\subsubsection{{\large Reducible intersections}}

\begin{definition}
	We say that two distinct hyperplanes \(H_1, H_2\) of the arrangement \(\mH\) have \textbf{reducible intersection}, and denote it by \(H_1 \pitchfork H_2\),
	if the codimension two subspace \(L=H_1 \cap H_2\) is reducible. Equivalently, 
	\[H_1 \pitchfork H_2 \iff \mH_L=\{H_1, H_2\} .\]
\end{definition}

\begin{lemma}\label{lem:transvinter}
	If \(\mH = \mH_1 \uplus \mH_2\) and \(H_i \in \mH_i\) for \(i=1,2\) then \(H_1 \pitchfork H_2\).
\end{lemma}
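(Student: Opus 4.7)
My plan is to unwind the definitions and exploit the direct sum decomposition of the dual span. Let $h_1, h_2, h$ be defining equations of $H_1, H_2$, and any hyperplane $H \in \mH_L$ where $L = H_1 \cap H_2$. The goal is to show that $\mH_L = \{H_1, H_2\}$, i.e. $H$ must coincide with either $H_1$ or $H_2$.

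First, I would note that $h_1 \in W(\mH_1)$ and $h_2 \in W(\mH_2)$, and by the u-plus hypothesis $W(\mH_1) \cap W(\mH_2) = \{0\}$ (Lemma \ref{lem:ann}), so $h_1$ and $h_2$ are linearly independent. Hence $L = H_1 \cap H_2$ has codimension exactly two, which makes the notation $H_1 \pitchfork H_2$ meaningful in the first place.

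Next, take any $H \in \mH$ with $L \subset H$. Since $L \subset H$ means $h$ vanishes on $\ker h_1 \cap \ker h_2$, and the annihilator of $\ker h_1 \cap \ker h_2 \subset V$ is $\Span\{h_1, h_2\} \subset V^*$, we get
\[
h = \alpha h_1 + \beta h_2
\]
for some scalars $\alpha, \beta \in \C$. Now use that $\mH = \mH_1 \uplus \mH_2$ is a disjoint union, so $H$ belongs to exactly one of the two subsets. If $H \in \mH_1$ then $h \in W(\mH_1)$, and since $h_1 \in W(\mH_1)$ as well, we obtain $\beta h_2 = h - \alpha h_1 \in W(\mH_1)$. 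But $\beta h_2 \in W(\mH_2)$, and the intersection $W(\mH_1) \cap W(\mH_2)$ is trivial, forcing $\beta = 0$. Thus $h = \alpha h_1$ with $\alpha \neq 0$ (as $h \neq 0$), giving $H = H_1$. The symmetric argument in case $H \in \mH_2$ forces $H = H_2$.

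This shows $\mH_L \subset \{H_1, H_2\}$, and the reverse containment is automatic, so $\mH_L = \{H_1, H_2\}$ and $H_1 \pitchfork H_2$. The argument is essentially just linear algebra on annihilators, and there is no real obstacle; the only conceptual point to keep straight is the characterization of $\pitchfork$ in terms of $\mH_L$ rather than just the codimension of the intersection.
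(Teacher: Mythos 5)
Your proof is correct. It takes a slightly different route from the paper: the paper's proof is a one-liner that applies Lemma \ref{lem:uplussubarrang} to the subarrangement $\mH_L$, obtaining $\mH_L = (\mH_L\cap\mH_1)\uplus(\mH_L\cap\mH_2)$ with both parts non-empty, and concludes that $L$ is reducible — i.e.\ it verifies the primary form of the definition of $\pitchfork$. You instead verify the "equivalently" clause, $\mH_L=\{H_1,H_2\}$, by a direct annihilator computation: $\Ann(L)=\Span\{h_1,h_2\}$, and $W(\mH_1)\cap W(\mH_2)=\{0\}$ forces any $h$ with $H\supset L$ to be proportional to $h_1$ or to $h_2$ according to which factor $H$ lies in. Both arguments ultimately rest on the same fact ($W(\mH_1)\cap W(\mH_2)=\{0\}$, Lemma \ref{lem:ann}); yours is more explicit and has the side benefit of effectively re-deriving the equivalence asserted in the definition of $\pitchfork$ (that reducibility of a codimension-two $L$ is the same as $|\mH_L|=2$), whereas the paper's is shorter because it delegates the work to the already-established restriction lemma. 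No gaps.
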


\begin{proof}
	Let \(L=H_1 \cap H_2\). By Lemma \ref{lem:uplussubarrang}
	\[\mH_L= (\mH_L \cap \mH_1) \uplus (\mH_L \cap \mH_2) .\]
	Note that \(H_i \in \mH_L \cap \mH_i \neq \emptyset\) for \(i=1,2\) and therefore \(\mH_L\) is reducible.
\end{proof}

\begin{lemma}\label{lem:normalintersect}
	Let \(A_i \in \End V\) for \(i=1,2\) be rank one endomorphisms with \(\img A_i = \C \cdot n_i\). Assume that the hyperplanes  \(H_i = \ker A_i\) are distinct. If \([A_1, A_2]=0\) then \(n_{1} \in H_2\) and \(n_{2} \in H_1\).
\end{lemma}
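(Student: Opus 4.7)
The plan is to put the endomorphisms in explicit normal form using Lemma \ref{lem:rk1} and then compute the commutator directly. By Lemma \ref{lem:rk1}, since $\ker A_i = H_i = \{h_i = 0\}$ and $\img A_i = \C \cdot n_i$, we may write
\[ A_i(v) = dh_i(v) \cdot n_i \qquad (i=1,2). \]
A direct computation then gives
\[ [A_1, A_2](v) = dh_1(n_2)\, dh_2(v)\, n_1 \;-\; dh_2(n_1)\, dh_1(v)\, n_2. \]
Since $H_1 \neq H_2$ by assumption, the linear forms $dh_1$ and $dh_2$ are linearly independent.

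The argument now splits into two cases according to whether the vectors $n_1, n_2$ are linearly independent. In the first case the vanishing of $[A_1, A_2]$ forces each of the coefficients of $n_1$ and $n_2$ to vanish identically in $v$, giving $dh_1(n_2) = 0$ and $dh_2(n_1) = 0$, which is exactly $n_2 \in H_1$ and $n_1 \in H_2$.

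In the remaining case $n_2 = \mu n_1$ for some $\mu \neq 0$. Substituting and collecting,
\[ [A_1, A_2](v) = \mu \bigl( dh_1(n_1)\, dh_2(v) \;-\; dh_2(n_1)\, dh_1(v) \bigr) n_1. \]
Vanishing of the parenthesised form, combined with linear independence of $dh_1, dh_2$, yields $dh_1(n_1) = dh_2(n_1) = 0$, so $n_1 \in H_1 \cap H_2$ and consequently $n_2 = \mu n_1 \in H_1 \cap H_2$ as well. In particular $n_1 \in H_2$ and $n_2 \in H_1$, completing the proof.

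There is no serious obstacle here; the only bookkeeping point is the degenerate configuration $\C n_1 = \C n_2$, which has to be treated separately because one cannot read off the two coefficients of the commutator independently in that case. (Alternatively, one could observe that $[A_1,A_2]=0$ forces $A_1$ to preserve $\img A_2 = \C n_2$ and then invoke Lemma \ref{lem:rk1invsub}, but the same case split reappears, so the direct calculation above is more efficient.)
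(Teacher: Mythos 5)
Your proof is correct, but it takes a different route from the paper's. You put both endomorphisms in the normal form $A_i(v)=dh_i(v)\,n_i$ from Lemma \ref{lem:rk1}, expand the commutator explicitly, and then split into cases according to whether $n_1,n_2$ are proportional; both cases are handled correctly, and the degenerate case $\C n_1=\C n_2$ is exactly the point that a careless computation would miss, so it is good that you isolated it. The paper instead argues in two lines without any case split: since $A_1$ and $A_2$ commute, the \emph{kernel} $H_2=\ker A_2$ is invariant under $A_1$, so Lemma \ref{lem:rk1invsub} (applied with $E=H_2$) gives either $H_2\subset H_1$ or $n_1\in H_2$, and the first option is excluded because the two hyperplanes are distinct. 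Your closing remark correctly notes that applying Lemma \ref{lem:rk1invsub} to the \emph{image} $\C n_2$ reintroduces the proportionality dichotomy; the paper's trick is to use the kernel rather than the image, which is a hyperplane and hence cannot be contained in the other hyperplane, killing the bad branch for free. Your computation is more self-contained and makes the vanishing conditions completely explicit; the paper's argument is shorter and generalises more cleanly to the setting of Lemma \ref{lem:trint} where it is reused.
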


\begin{proof}
	Since the endomorphisms commute,
	the subspace \(H_2 = \ker A_2\) is invariant under \(A_{H_1}\). It follows by Lemma \ref{lem:rk1invsub} that either \(H_2 \subset H_1\) or \(n_{1} \in H_2\). The first option can not happen because \(H_1 \neq H_2\), so \(n_{1} \in H_2\); similarly \(n_{2 } \in H_1\).
\end{proof}

\begin{lemma} \label{lem:trint}
	Suppose that the Basic Assumptions \ref{ass:basic} are satisfied. If \(H_1 \pitchfork H_2\) then \([A_{H_1}, A_{H_2}]=0\). Moreover, if \(A_{H_1}\) and \(A_{H_2}\) are non-zero then  \(n_{H_1} \in H_2\), so \(A_{H_1}\) preserves \(H_2\), and vice-versa.
\end{lemma}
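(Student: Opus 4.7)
The plan is to exploit the reducibility hypothesis to reduce the commutator claim to a one-line computation, and then invoke the rank-one structure lemmas already established.

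First I would note that because \(H_1 \pitchfork H_2\) the codimension-two subspace \(L = H_1 \cap H_2\) satisfies \(\mH_L = \{H_1, H_2\}\), so by the definition of the residue \eqref{eq:residue} we simply have
\[
A_L = A_{H_1} + A_{H_2}.
\]
Since \(L \subset H_1\), Assumption \textbf{(F)} of \ref{ass:basic} (in the version established in Proposition \ref{prop:comm}) gives \([A_L, A_{H_1}] = 0\). Expanding,
\[
0 = [A_{H_1} + A_{H_2}, A_{H_1}] = [A_{H_2}, A_{H_1}],
\]
which yields the first assertion \([A_{H_1}, A_{H_2}] = 0\).

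For the second assertion, assume both \(A_{H_1}\) and \(A_{H_2}\) are non-zero. Assumption \textbf{(T)} says \(H_i \subset \ker A_{H_i}\), and since \(H_i\) is already a hyperplane and \(A_{H_i} \neq 0\) forces \(\ker A_{H_i}\) to be a proper subspace, we conclude \(\ker A_{H_i} = H_i\). In particular each \(A_{H_i}\) has rank one and Lemma \ref{lem:rk1} applies, giving vectors \(n_{H_i}\) with \(\img A_{H_i} = \C \cdot n_{H_i}\). The hyperplanes \(H_1\) and \(H_2\) are distinct (being the two distinct members of \(\mH_L\)), so the hypothesis of Lemma \ref{lem:normalintersect} is satisfied, and we conclude \(n_{H_1} \in H_2\) and \(n_{H_2} \in H_1\).

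Finally, the preservation statement is a one-line consequence: for any \(v \in H_2\),
\[
A_{H_1}(v) = dh_1(v)\, n_{H_1} \in \C \cdot n_{H_1} \subset H_2,
\]
using Equation \eqref{eq:Amat} and the inclusion \(n_{H_1} \in H_2\) just established; the symmetric statement for \(A_{H_2}\) follows identically. I do not expect any real obstacle here — the only thing one has to spot is that reducibility of the intersection collapses the sum defining \(A_L\) to the two-term expression, after which flatness delivers the commutator immediately and the rank-one lemmas do the rest.
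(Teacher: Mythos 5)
Your proof is correct and follows essentially the same route as the paper: reducibility collapses \(A_L\) to \(A_{H_1}+A_{H_2}\), condition \textbf{(F)} applied to \(L\subset H_i\) gives the commutator, and Lemma \ref{lem:normalintersect} yields \(n_{H_1}\in H_2\) and vice versa. The only (immaterial) difference is that you spell out the rank-one reduction and the final preservation step, which the paper leaves implicit.
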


\begin{proof}
	Let \(L=H_1 \cap H_2\), so  \(\mH_L=\{H_1, H_2\}\) and \(A_L=A_{H_1}+A_{H_2}\). The flatness condition \textbf{(F)} implies that \([A_L, A_{H_2}] = [A_{H_1}, A_{H_2}]\) vanishes and the conclusion follows from Lemma \ref{lem:normalintersect}.
\end{proof}

\begin{remark}
	Suppose that \(\{A_H, \,\, H \in \mH\}\) satisfies the Basic Assumptions \ref{ass:basic} and the residues \(A_H\) are non-zero.
	Lemma \ref{lem:trint} imposes a strong restriction on the arrangement: fix \(H_0 \in \mH\), then the common intersection of all \(H \in \mH\) such that \(H \pitchfork H_0\) contains the line \(\C \cdot n_{H_0}\).
\end{remark}

\subsubsection{{\large Normal subspaces}}

The main result of this section is Proposition \ref{prop:AL}. It extends parts of \cite[Lemmas 2.11 and 2.13]{CHL} by lifting the existence of a Hermitian inner product in the Dunkl condition. 

\begin{definition}\label{def:weights}
	Let \((V, \mH)\) be an arrangement and let \(\nabla\) be a standard connection with residues \(\{A_H, \,\ H \in \mH\} \subset \End V\). 
	The \textbf{weight} of \(\nabla\) at an irreducible intersection \(L \in \mL_{irr}(\mH)\) is the complex number \(a_L\) defined as
	\[ a_L= (\codim L)^{-1} \tr A_L \]
	where \(A_L\) is the residue of \(\nabla\) at \(L\) given by Equation \ref{eq:residue}.
\end{definition}

\begin{note}
	The weights of hyperplanes are given by
	\(a_H= \tr A_H\). In general, we have  
	\[a_L = \frac{1}{\codim L} \sum_{H \in \mH_L} a_H .\]
\end{note} 

\begin{definition}
	The \textbf{normal subspace} to \(L \in \mL(\mH)\) is defined by
	\[L^{\perp} = \Span \{n_H, \,\ H \in \mH_L \}  .\]
\end{definition}

\begin{assumptions}\label{ass:nz}
	Let \((V, \mH)\) be an arrangement.
	A collection \(\{A_H, \,\ H \in \mH\} \subset \End V\) satisfies the \textbf{Non-Zero Weights Assumptions} if:
	\begin{description}
		\item[(F)] \([A_L, A_M]=0\) whenever \(L, M \in \mL(\mH)\) and \(L \subset M\);
		\item[(T)] \(H \subset \ker A_H\) for all \(H \in \mH\);
		\item[(NZ)] \(a_L \neq 0\) whenever \(L \in \mL_{irr}(\mH)\).
	\end{description}
\end{assumptions}

\begin{remark}
	If \emph{\textbf{(NZ)}} holds then the residues \(A_H\) are non-zero and therefore condition \emph{\textbf{(T)}} is equivalent to \(H=\ker A_H\).
\end{remark}

\begin{lemma}
	If the Non-Zero Weights Assumptions \ref{ass:nz} are satisfied then \(n_H \notin H\) for every \(H \in \mH\).
\end{lemma}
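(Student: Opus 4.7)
The plan is to reduce the claim to the semi-simple/nilpotent dichotomy provided by Lemma \ref{lem:rk1}. First I would observe that every hyperplane $H \in \mH$ lies in $\mL_{irr}(\mH)$: by definition $\mH_H = \{H' \in \mH : H \subset H'\} = \{H\}$, and a one-element arrangement cannot admit a u-plus decomposition into two non-empty subsets, hence is irreducible.

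Once $H \in \mL_{irr}(\mH)$ is established, hypothesis \textbf{(NZ)} yields $a_H = (\codim H)^{-1}\tr A_H = \tr A_H \neq 0$. In particular $A_H$ is a non-zero endomorphism, and together with \textbf{(T)} this forces $\ker A_H = H$ (the kernel contains the hyperplane $H$ and is proper since $A_H \neq 0$). Therefore $A_H$ satisfies the hypotheses of Lemma \ref{lem:rk1}, which splits into two mutually exclusive cases: the semi-simple case with $n_H \notin H$ and $\tr A_H \neq 0$, or the nilpotent case with $n_H \in H$ and $\tr A_H = 0$. Since we have just shown $\tr A_H = a_H \neq 0$, the nilpotent case is excluded, so we are in the semi-simple case and $n_H \notin H$, as required.

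There is essentially no obstacle: the statement is a direct consequence of Lemma \ref{lem:rk1} once one recognizes the (small but necessary) observation that hyperplanes themselves count as irreducible intersections, so that assumption \textbf{(NZ)} applies to them and gives $\tr A_H \neq 0$.
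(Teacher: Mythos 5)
Your proof is correct and follows the same route as the paper: hyperplanes belong to \(\mL_{irr}(\mH)\) by definition, so \textbf{(NZ)} gives \(a_H=\tr A_H\neq 0\), and the nilpotent case of Lemma \ref{lem:rk1} is excluded. The extra details you supply (irreducibility of a one-element arrangement, \(\ker A_H=H\)) are exactly the observations the paper leaves implicit.
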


\begin{proof}
	By definition \(\mH \subset \mL_{irr}(\mH)\), so \(a_H \neq 0\) and the statement follows from Lemma \ref{lem:rk1}.
\end{proof}

\begin{lemma}\label{lem:Lidec}
	If the Non-Zero Weights Assumptions \ref{ass:nz} are satisfied and \(L \in \mL_{irr}(\mH)\), then
	\[V = L \oplus L^{\perp} .\]
	Moreover, \(\ker A_L = L\) and \(L^{\perp}\) is the \(a_L\)-eigenspace of \(A_L\).
\end{lemma}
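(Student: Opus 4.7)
The plan is to first handle the essential case $L = \{0\}$ and then reduce the general situation to it by passing to the quotient arrangement $\mH_L/L$ on $V/L$. After that reduction, the lemma follows from two applications of the commutation relation \textbf{(F)} combined with elementary eigenspace analysis.

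Essential case: when $\mH$ is essential and irreducible with $L = \{0\}$, I claim that $A_L = \sum_{H \in \mH} A_H$ is a scalar endomorphism of $V$. Indeed, condition \textbf{(F)} gives $[A_L, A_H] = 0$ for every $H \in \mH$, so $A_L$ preserves each hyperplane $H = \ker A_H$, and Lemma~\ref{lem:automorphisms} forces $A_L = \lambda \cdot \Id_V$; tracing identifies $\lambda = a_L$. In this case $\ker A_L = \{0\} = L$ and $L^{\perp} = V$ (since $\img(A_L) = V$ is contained in $\Span\{n_H\} = L^\perp$), so the conclusions hold.

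General case: the quotient arrangement $\mH_L/L$ is essential and irreducible on $V/L$ by definition of $\mL_{irr}(\mH)$, and since each residue $A_H$ with $H \in \mH_L$ vanishes on $L$, the induced residues $\bar A_H$ on $V/L$ satisfy the Non-Zero Weights Assumptions with unchanged weights (both $\tr A_L$ and $\codim L$ are preserved under the quotient). Applying the essential case to $V/L$ yields $\bar A_L = a_L \cdot \Id_{V/L}$, equivalently $A_L v - a_L v \in L$ for every $v \in V$. Composing with $A_L$ and using $A_L|_L = 0$ gives the polynomial identity $A_L(A_L - a_L \Id) = 0$; since $a_L \neq 0$, $A_L$ is therefore diagonalizable on $V$ with eigenvalues in $\{0, a_L\}$, so $V = \ker A_L \oplus E_{a_L}$, and $\ker A_L = L$ (the latter because $\bar A_L$ is invertible on $V/L$).

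It remains to identify $L^{\perp}$ with $E_{a_L}$. The inclusion $E_{a_L} = \img(A_L) \subset L^{\perp}$ is immediate since each $A_H$ has image $\mathbb{C} \cdot n_H \subset L^{\perp}$. For the reverse inclusion, applying $[A_L, A_H] = 0$ to the vector $n_H$ and using $A_H n_H = a_H n_H$ (with $a_H \neq 0$) gives
\[a_H A_L n_H = A_L(A_H n_H) = A_H(A_L n_H) = dh(A_L n_H) \cdot n_H,\]
so $A_L n_H \in \mathbb{C} \cdot n_H$. Reducing modulo $L$ and using $\bar A_L = a_L \Id_{V/L}$ together with $\bar n_H \neq 0$ (because $n_H \notin H \supset L$) forces the eigenvalue to equal $a_L$, so each $n_H$ lies in $E_{a_L}$ and hence $L^{\perp} \subset E_{a_L}$. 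The main subtlety in executing this plan is the clean descent of all hypotheses to the quotient $V/L$; once that is secured, the remainder is polynomial identities and direct eigenspace analysis.
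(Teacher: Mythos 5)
Your proof is correct and follows essentially the same route as the paper's: pass to the quotient $V/L$, use the commutation relations and Lemma~\ref{lem:automorphisms} on the essential irreducible arrangement $\mH_L/L$ to see that $\bar A_L$ is the scalar $a_L$, and then apply $[A_L,A_H]=0$ to $n_H$ to pin down the eigenvalue. The only cosmetic difference is that you obtain the splitting $V=L\oplus L^{\perp}$ from the polynomial identity $A_L(A_L-a_L\Id)=0$, whereas the paper uses the chain of inclusions $L^{\perp}\subset a_L\text{-eigenspace}\subset\img A_L\subset L^{\perp}$.
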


\begin{proof}
	Clearly \(L \subset \ker A_L\), so \(A_L\) defines an endomorphism \(\bar{A}_L\) of \(V / L\). We prove first that \(\ker A_L \subset L\) by showing that \(\bar{A}_L\) has no kernel. 
	
	Let \(H \in \mH_L\) then \(A_H\) defines  \(\bar{A}_H \in \End(V/L)\) and \(\bar{A}_L = \sum_{L \subset H} \bar{A}_H\). Given a vector \(v \in V\), we denote by \(\bar{v} \in V/L\) its image under the quotient map. Our assumption that \(n_H \notin H\) implies that \(\bar{n}_H \neq 0\). It follows from the equation \(\bar{A}_H \bar{n}_H = a_H \bar{n}_H\) that \(\bar{A}_H\) is a rank one endomorphisms and 
	\[\tr \bar{A}_H = a_H .\] 
	On the other hand,  since \([A_L, A_H]=0\) for every \(H \in \mH_L\), it follows that \(\bar{A}_L\) preserves each of the hyperplanes of the essential irreducible arrangement \(\mH_L / L\), hence \(\bar{A}_L\) is a constant multiple \(\lambda\) of the identity. We can determine \(\lambda\) from the equation 
	\[ \lambda \dim(V/L) = \sum_{H \in \mH_L} \mbox{tr}(\bar{A}_H), \]
	we get that \(\lambda=a_L\). By assumption \(a_L \neq 0\) and therefore \(\ker \bar{A}_L = \{0\}\). If there was some \(v \notin L\) with \(A_L v = 0\) then we would have \(\bar{A}_L \bar{v} =  0\) with \(\bar{v} \neq 0\) which would contradict \(\ker \bar{A}_L = \{0\}\). We have shown that \(\ker A_L = L\).
	
	Note that \(L^{\perp} \subset \) \(a_L\)-eigenspace of \(A_L\). Indeed,  \(\C \cdot n_H\) is the \(a_H\)-eigenspace of \(A_H\) and \([A_L, A_H] = 0\) imply that \(A_L n_H = \lambda n_H\) for some \(\lambda \in \C\); taking the quotient we get \(\bar{A}_L \bar{n}_H = \lambda \bar{n}_H\) so \(\lambda=a_L\) because \(\bar{n}_H \neq 0\) and \(\bar{A}_L = a_L \mbox{Id}_{V/L} \). On the other hand, \(a_L \neq 0\) implies that \(a_L\)-eigenspace of \(A_L\) \(\subset \img A_L\). Note that \(\img A_L \subset L^{\perp}\) by definition, so we get a chain of inclusions
	\[L^{\perp} \subset a_L\mbox{-eigenspace} \subset \img A_L \subset L^{\perp} \]
	which must be equalities. This finishes the proof of the lemma.
\end{proof}

\begin{corollary}\label{cor:essirrA}
	Suppose \((V, \mH)\) is essential, irreducible and \(\{A_H, \,\  H \in \mH\}\) satisfy the Non-Zero Weights Assumptions \ref{ass:nz}. Then \(\sum_{H \in \mH}A_H = a_{0} \Id_V\) where \(a_{0} = (\dim V)^{-1} \sum_{H \in \mH} a_H \neq 0\) is the weight of \(\nabla\) at \(\{0\} \in \mL_{irr}(\mH)\). In particular, \(V= \Span \{n_H, \,\ H \in \mH\}\).
\end{corollary}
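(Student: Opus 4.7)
The plan is to apply Lemma \ref{lem:Lidec} directly to the irreducible intersection $L = \{0\}$. Since $(V, \mH)$ is essential, the centre $T(\mH) = \{0\}$ belongs to $\mL(\mH)$, and the localization $\mH_{\{0\}}$ consists of all hyperplanes containing the origin, i.e.\ $\mH_{\{0\}} = \mH$. Because $\mH$ is irreducible by hypothesis, we have $\{0\} \in \mL_{irr}(\mH)$, so the Non-Zero Weights Assumption gives $a_{\{0\}} \neq 0$, and I can write $a_0 := a_{\{0\}}$.

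By definition of the residue (Equation \eqref{eq:residue}),
\begin{equation*}
A_{\{0\}} = \sum_{H \in \mH_{\{0\}}} A_H = \sum_{H \in \mH} A_H,
\end{equation*}
and the definition of weight gives $a_0 = (\codim \{0\})^{-1} \tr A_{\{0\}} = (\dim V)^{-1} \sum_{H \in \mH} a_H$.

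Now I apply Lemma \ref{lem:Lidec} with $L = \{0\}$. The direct sum decomposition $V = L \oplus L^{\perp}$ collapses to $V = \{0\}^{\perp} = \Span\{n_H, H \in \mH\}$, which is the final assertion of the corollary. Moreover, by the same lemma, $L^{\perp}$ is the $a_L$-eigenspace of $A_L$; since $L^{\perp} = V$, this means $A_{\{0\}}$ acts as multiplication by $a_0$ on all of $V$, i.e.\ $\sum_{H \in \mH} A_H = a_0 \Id_V$. There is no real obstacle here: the entire content of the corollary is already packaged in Lemma \ref{lem:Lidec}, and the only thing to verify is that the essential and irreducible hypotheses on $\mH$ make $\{0\}$ qualify as an element of $\mL_{irr}(\mH)$ to which that lemma applies.
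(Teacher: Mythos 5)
Your proof is correct and matches the paper's intent exactly: the corollary is stated immediately after Lemma \ref{lem:Lidec} precisely because it is the special case $L=\{0\}$, which lies in $\mL_{irr}(\mH)$ by the essential and irreducible hypotheses. Nothing further is needed.
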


The main result of this section is the next.

\begin{proposition}\label{prop:AL}
	Suppose that the Non-Zero Weights Assumptions \ref{ass:nz}  are satisfied and let \(L \in \mL(\mH)\). Consider the irreducible decomposition 
	\[\mH_L = \mH_1 \uplus \ldots \uplus \mH_k , \]
	so \(L_i= T(\mH_i)\) are the irreducible components of \(L\). Then the following holds
	\begin{enumerate}
		\item[(i)] \(V = L \oplus L^{\perp}\), \(L=\ker A_L\) and \(L^{\perp} = \img A_L\);
		\item[(ii)] \(L^{\perp} = L_1^{\perp} \oplus \ldots \oplus L_k^{\perp}\) and \(A_L\) acts on \(L_j^{\perp}\) by scalar multiplication by \(a_{L_j}\);
		\item[(iii)] \(L_i = L  \oplus \left(\oplus_{j \neq i} L_j^{\perp}\right)\).
	\end{enumerate}
\end{proposition}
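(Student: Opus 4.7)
The plan is to reduce the statement for a general \(L\) to the irreducible case already settled in Lemma \ref{lem:Lidec}, by exploiting how the normal subspaces of the irreducible components of \(L\) sit inside one another's kernels. The starting point is that since the irreducible decomposition \(\mH_L = \mH_1 \uplus \ldots \uplus \mH_k\) is a disjoint union,
\[A_L = A_{L_1} + \ldots + A_{L_k},\]
and for each \(i\) Lemma \ref{lem:Lidec} applies to \(L_i \in \mL_{irr}(\mH)\) giving \(V = L_i \oplus L_i^{\perp}\), \(\ker A_{L_i} = L_i\), and \(L_i^{\perp}\) is the \(a_{L_i}\)-eigenspace of \(A_{L_i}\).

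The crux of the argument is a \emph{cross-vanishing} property: \(L_i^{\perp} \subset L_j\) whenever \(i \neq j\). To see this, take any \(H_i \in \mH_i\) and \(H_j \in \mH_j\); Lemma \ref{lem:transvinter} applied to the u-plus decomposition of \(\mH_L\) gives \(H_i \pitchfork H_j\), and Lemma \ref{lem:trint} then forces \(n_{H_i} \in H_j\) (note \(A_{H_i}, A_{H_j}\) are non-zero because each hyperplane is irreducible and \(a_H \neq 0\)). Running over \(H_j \in \mH_j\) yields \(n_{H_i} \in \cap_{H \in \mH_j} H = L_j\), and spanning over \(H_i \in \mH_i\) gives \(L_i^{\perp} \subset L_j\). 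In particular, \(A_{L_j}\) annihilates \(L_i^{\perp}\) for \(i \neq j\), so \(A_L\) acts on \(L_j^{\perp}\) by the scalar \(a_{L_j}\). The sum \(L_1^{\perp} + \ldots + L_k^{\perp}\) is direct: a relation \(\sum v_j = 0\) with \(v_j \in L_j^{\perp}\) gives, for each \(i\), \(v_i = -\sum_{j \neq i} v_j \in L_i^{\perp} \cap L_i = \{0\}\). Combined with \(L^{\perp} = \Span\{n_H : H \in \mH_L\} = \sum_j L_j^{\perp}\), this proves (ii).

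For (i), the dimension identity \(\dim L^{\perp} = \sum_j \dim L_j^{\perp} = \sum_j \codim L_j = \codim L\), where the last equality is the rank formula \eqref{eq:sumranks}, gives \(\dim L + \dim L^{\perp} = \dim V\). If \(v \in L \cap L^{\perp}\), decompose \(v = \sum v_j\) with \(v_j \in L_j^{\perp}\); then \(0 = A_L v = \sum a_{L_j} v_j\), and since \(a_{L_j} \neq 0\) and the sum is direct, each \(v_j = 0\), so \(v = 0\). Hence \(V = L \oplus L^{\perp}\). The equality \(\img A_L = L^{\perp}\) is immediate from (ii), and \(\ker A_L = L\) follows because \(A_L|_{L^{\perp}}\) is diagonalisable with non-zero eigenvalues, while \(L \subset \ker A_L\) trivially.

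Part (iii) is now bookkeeping: \(L \subset L_i\) because \(L\) is the common intersection of the \(L_j\), and \(L_j^{\perp} \subset L_i\) for \(j \neq i\) by the cross-vanishing; directness of \(L + \bigoplus_{j \neq i} L_j^{\perp}\) follows from the directness of \(\sum_j L_j^{\perp}\) together with \(L \cap L^{\perp} = \{0\}\), and the dimension count \(\dim L + \sum_{j \neq i} \dim L_j^{\perp} = \dim V - \dim L_i^{\perp} = \dim L_i\) forces equality. The only non-trivial step is the cross-vanishing \(L_i^{\perp} \subset L_j\), which is precisely where the flatness commutation relations (through Lemma \ref{lem:trint}) and the u-plus structure of the irreducible decomposition enter; the remainder is linear-algebra packaging of the irreducible case.
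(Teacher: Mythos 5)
Your proposal is correct and follows essentially the same route as the paper: the cross-vanishing property \(L_i^{\perp}\subset L_j\) for \(i\neq j\) (obtained from Lemmas \ref{lem:transvinter} and \ref{lem:trint}) is exactly the paper's Lemma \ref{lem:imageaj}, and the rest — directness of \(\sum_j L_j^{\perp}\), the scalar action of \(A_L\) on each \(L_j^{\perp}\), and the dimension counts via Equation \eqref{eq:sumranks} — mirrors the paper's chain of lemmas. The only cosmetic difference is that you deduce \(\ker A_L=L\) from \(V=L\oplus L^{\perp}\) and invertibility of \(A_L|_{L^{\perp}}\), while the paper derives it directly from the decomposition \(A_Lv=\sum_i A_{L_i}v\) with \(A_{L_i}v\in L_i^{\perp}\); both are valid.
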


The proof of Proposition \ref{prop:AL} is divided into several lemmas. We will simplify notation by letting
\[A_i = A_{L_i} \mbox{ and } a_i = a_{L_i} . \]

\begin{lemma} \label{lem:imageaj}
	If \(i \neq j\) then
	\begin{equation*}
	(L_j)^{\perp} \subset L_i .
	\end{equation*}
\end{lemma}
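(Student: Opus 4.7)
The plan is to show, for every $H \in \mH_j$ and every $H' \in \mH_i$ with $i \neq j$, that $n_H \in H'$. Intersecting over $H' \in \mH_i$ then forces $n_H \in T(\mH_i) = L_i$, and since $L_j^{\perp}$ is spanned by the vectors $n_H$ with $H$ running over the hyperplanes containing $L_j$, the inclusion $L_j^{\perp} \subset L_i$ will follow at once.

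To set this up cleanly I first want to identify $L_j^{\perp}$ with $\Span\{n_H : H \in \mH_j\}$. Since $L_j \in \mL(\mH_L)$, Lemma \ref{lem:HLMisHM} gives $(\mH_L)_{L_j} = \mH_{L_j}$, while Lemma \ref{lem:HiisHTi} applied to the irreducible decomposition $\mH_L = \mH_1 \uplus \ldots \uplus \mH_k$ identifies $\mH_j$ with $(\mH_L)_{L_j}$. Combining these, $\mH_j = \mH_{L_j}$, so the generators of $L_j^{\perp}$ are exactly the $n_H$ with $H \in \mH_j$, and it suffices to show $n_H \in L_i$ for each such $H$.

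For the core step, fix $H \in \mH_j$ and $H' \in \mH_i$. Applying Lemma \ref{lem:transvinter} to the coarsened u-plus decomposition $\mH_L = \mH_i \uplus (\mH_L \setminus \mH_i)$, we obtain $(\mH_L)_{H \cap H'} = \{H, H'\}$. Because $L \subset H \cap H'$, any hyperplane of $\mH$ passing through $H \cap H'$ automatically contains $L$ and therefore lies in $\mH_L$; this upgrades the statement to $\mH_{H \cap H'} = \{H, H'\}$ in the ambient arrangement, i.e.\ $H \pitchfork H'$. Lemma \ref{lem:trint} (which uses the Basic Assumptions, already guaranteed by \ref{ass:nz}) then delivers $n_H \in H'$. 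Intersecting over $H' \in \mH_i$ gives $n_H \in T(\mH_i) = L_i$ as required. No serious obstacle is anticipated: the one conceptual move is recognizing that the pairwise transversality extracted from the u-plus decomposition of $\mH_L$ upgrades to transversality in $\mH$, and this is immediate from $L \subset H \cap H'$.
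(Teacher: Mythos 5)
Your proof is correct and follows essentially the same route as the paper: Lemma \ref{lem:transvinter} gives $H \pitchfork H'$ for $H \in \mH_j$, $H' \in \mH_i$, Lemma \ref{lem:trint} then places $n_H$ in $H'$, and intersecting over $\mH_i$ yields $n_H \in L_i$. The only difference is that you spell out two points the paper leaves implicit — the identification $\mH_j = \mH_{L_j}$ via Lemmas \ref{lem:HiisHTi} and \ref{lem:HLMisHM}, and the upgrade of transversality from the localized arrangement $\mH_L$ to the ambient $\mH$ using $L \subset H \cap H'$ — both of which are handled correctly.
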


\begin{proof}
	Let \(H_i, H_j \in \mH_L\) with \(H_i\in \mH_i\) and \(H_j \in \mH_j\), by Lemma \ref{lem:transvinter} \(H_i \pitchfork H_j\) and by Lemma \ref{lem:trint}  \(n_{H_j} \in H_i\). 
\end{proof}

\begin{lemma} \label{lem:dirsum}
	For every \(1 \leq i \leq k\) we have
	\[  L_i^{\perp} \cap \sum_{j \neq i} L_j^{\perp} = \{0\} .  \]
\end{lemma}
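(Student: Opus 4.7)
The plan is to use Proposition \ref{prop:AL}(i)--(ii) applied to each irreducible component \(L_i\) (which are themselves irreducible intersections, hence satisfy the hypotheses of Lemma \ref{lem:Lidec}), combined with Lemma \ref{lem:imageaj}. The proof will be a one-line eigenvector calculation with \(A_i = A_{L_i}\).

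First I would observe two facts about the action of \(A_i\) on the subspaces \(L_j^\perp\). By Lemma \ref{lem:Lidec} applied to the irreducible intersection \(L_i\), the subspace \(L_i^\perp\) is precisely the \(a_i\)-eigenspace of \(A_i\), with \(a_i \neq 0\). On the other hand, by Lemma \ref{lem:imageaj}, for every \(j \neq i\) we have \(L_j^\perp \subset L_i\); but \(L_i = \ker A_i\) by Lemma \ref{lem:Lidec} again, so \(A_i\) annihilates \(L_j^\perp\) for all \(j \neq i\).

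Now suppose \(v \in L_i^\perp \cap \sum_{j\neq i}L_j^\perp\) and write \(v = \sum_{j \neq i} v_j\) with \(v_j \in L_j^\perp\). Applying \(A_i\) to both sides and using the two facts above, the left-hand side yields \(a_i v\) while the right-hand side vanishes. Since \(a_i \neq 0\), we conclude \(v = 0\), which is the desired statement.

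There is no real obstacle here: all the heavy lifting was already done in Lemma \ref{lem:Lidec} (identification of \(L_i^\perp\) as a non-zero eigenspace and \(L_i\) as the kernel) and in Lemma \ref{lem:imageaj} (the containment \(L_j^\perp \subset L_i\) for \(j \neq i\)). The only thing worth double-checking is that each \(L_i\) does lie in \(\mL_{irr}(\mH)\) so that Lemma \ref{lem:Lidec} is applicable --- but this is immediate from the definition of irreducible components as centres of the irreducible factors \(\mH_i\) of the irreducible decomposition of \(\mH_L\).
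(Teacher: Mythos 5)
Your proof is correct and rests on exactly the same two inputs as the paper's: Lemma \ref{lem:imageaj} (giving \(L_j^{\perp}\subset L_i=\ker A_i\) for \(j\neq i\)) and Lemma \ref{lem:Lidec} (giving \(L_i\cap L_i^{\perp}=\{0\}\), which your eigenvalue computation \(a_iv=0\) simply re-derives). The paper states the conclusion directly from these two containments without the explicit application of \(A_i\), but the argument is essentially identical.
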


\begin{proof}
	By Lemma \ref{lem:imageaj} we have \(\sum_{j\neq i} L_j^{\perp} \subset L_i\) and by Lemma \ref{lem:Lidec} \(L_i \cap L_i^{\perp} = \{0\}\). 
\end{proof}

Lemmas \ref{lem:oplus}, \ref{lem:ALagrees} and \ref{lem:scalmult} that follow prove Proposition \ref{prop:AL}-\((ii)\).

\begin{lemma}\label{lem:oplus}
	We have 
	\[L^{\perp} = L_1^{\perp}\oplus \ldots \oplus L_k^{\perp} . \]
\end{lemma}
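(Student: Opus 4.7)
The plan is straightforward and mainly bookkeeping, using what has already been established.

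First I would unpack the definitions on the right-hand side. Since $\mH_L = \mH_1 \uplus \ldots \uplus \mH_k$ is a u-plus decomposition, in particular it is a disjoint union, and by Lemma \ref{lem:HiisHTi} we have $\mH_i = \mH_{L_i}$. Therefore
\[
L^{\perp} = \Span\{n_H : H \in \mH_L\} = \sum_{i=1}^{k} \Span\{n_H : H \in \mH_i\} = \sum_{i=1}^{k} L_i^{\perp},
\]
which gives the sum decomposition (not yet direct).

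The remaining task is to upgrade this sum to a direct sum. The standard linear algebra criterion is that a sum $V_1 + \ldots + V_k$ of subspaces is direct if and only if for each $i$,
\[
V_i \cap \sum_{j \neq i} V_j = \{0\}.
\]
But this is precisely the content of Lemma \ref{lem:dirsum}, which was proved as an immediate consequence of Lemma \ref{lem:imageaj} (the inclusion $\sum_{j \neq i} L_j^{\perp} \subset L_i$) combined with Lemma \ref{lem:Lidec} (which gives $L_i \cap L_i^{\perp} = \{0\}$). Invoking Lemma \ref{lem:dirsum} for each $i$ therefore completes the proof.

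There is no real obstacle here; the lemma is essentially a corollary of the two preceding lemmas, and the only point that requires a moment's thought is identifying $\Span\{n_H : H \in \mH_i\}$ with $L_i^{\perp}$, which follows from the definition of the normal subspace combined with $\mH_i = \mH_{L_i}$.
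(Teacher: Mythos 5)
Your proof is correct and follows the same route as the paper: the identity $L^{\perp}=\sum_i L_i^{\perp}$ is immediate from the definitions (the paper simply says "by definition," whereas you spell out the identification $\mH_i=\mH_{L_i}$), and directness is exactly Lemma \ref{lem:dirsum}. No gaps.
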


\begin{proof}
	By definition \(L^{\perp} = \sum_{i=1}^k L_i^{\perp}\). The sum is direct because of Lemma \ref{lem:dirsum}.
\end{proof}

\begin{lemma} \label{lem:ALagrees}
	\(A_L\) agrees with \(A_j\) on \(L_j^{\perp}\).
\end{lemma}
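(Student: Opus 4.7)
The plan is to decompose $A_L$ as the sum of the residues at the irreducible components and show that all terms but one vanish on $L_j^{\perp}$. Since $\mH_L = \mH_1 \uplus \cdots \uplus \mH_k$ is a disjoint union, from Definition \ref{def:residue} we get
\begin{equation*}
A_L = \sum_{H \in \mH_L} A_H = \sum_{i=1}^{k} \sum_{H \in \mH_i} A_H = \sum_{i=1}^{k} A_i.
\end{equation*}
It therefore suffices to show that $A_i$ vanishes on $L_j^{\perp}$ whenever $i \neq j$.

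For this I would apply the already established Lemma \ref{lem:imageaj}, which gives $L_j^{\perp} \subset L_i$ for every $i \neq j$. On the other hand, each irreducible component $L_i$ lies in $\mL_{irr}(\mH)$, so Lemma \ref{lem:Lidec} applies and yields $\ker A_i = \ker A_{L_i} = L_i$. Combining these two facts, $A_i$ vanishes on $L_j^{\perp}$ for all $i \neq j$.

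Feeding this into the decomposition above gives
\begin{equation*}
A_L\big|_{L_j^{\perp}} = A_j\big|_{L_j^{\perp}} + \sum_{i \neq j} A_i\big|_{L_j^{\perp}} = A_j\big|_{L_j^{\perp}},
\end{equation*}
which is the required statement. There is no real obstacle here: the lemma is a direct consequence of Lemma \ref{lem:imageaj} (which identifies $L_j^{\perp}$ with a subspace of the kernel of each $A_i$ for $i\neq j$) together with the identification $\ker A_{L_i} = L_i$ from Lemma \ref{lem:Lidec}, and the additivity of the residues over the u-plus decomposition of $\mH_L$.
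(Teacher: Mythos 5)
Your proof is correct and follows the paper's argument essentially verbatim: decompose \(A_L=\sum_i A_i\), use Lemma \ref{lem:imageaj} to place \(L_j^{\perp}\) inside \(L_i\) for \(i\neq j\), and conclude via \(L_i\subset\ker A_i\). The only cosmetic difference is that you cite the full equality \(\ker A_i=L_i\) from Lemma \ref{lem:Lidec}, whereas the paper only needs (and uses) the trivial inclusion \(L_i\subset\ker A_i\).
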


\begin{proof}
	By definition \(A_L = A_1 + \ldots + A_k\). By Lemma \ref{lem:imageaj} we have \(L_j^{\perp} \subset L_i\) for \(i \neq j\). Since \(L_i \subset \ker A_i\), the statement follows.
\end{proof}

\begin{lemma} \label{lem:scalmult}
	\(A_L\) acts on \(L_j^{\perp}\) by scalar multiplication by \(a_j\).
\end{lemma}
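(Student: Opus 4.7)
The plan is to reduce this immediately to the two preceding results. By Lemma \ref{lem:ALagrees} the action of $A_L$ on $L_j^{\perp}$ coincides with the action of $A_j = A_{L_j}$ on $L_j^{\perp}$, so it is enough to show that $A_j$ acts on $L_j^{\perp}$ by scalar multiplication by $a_j = a_{L_j}$.

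For this I would apply Lemma \ref{lem:Lidec} directly to the irreducible intersection $L_j \in \mL_{irr}(\mH)$. That lemma precisely asserts that $V = L_j \oplus L_j^{\perp}$, with $L_j = \ker A_{L_j}$ and $L_j^{\perp}$ equal to the $a_{L_j}$-eigenspace of $A_{L_j}$. In particular $A_j = A_{L_j}$ acts on $L_j^{\perp}$ as multiplication by $a_{L_j} = a_j$, which combined with Lemma \ref{lem:ALagrees} yields the claim.

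There is essentially no obstacle here: the hard work has already been absorbed into Lemma \ref{lem:Lidec} (whose proof uses the irreducibility of $\mH_{L_j}/L_j$, the non-zero weights assumption, and the commutation relations) and Lemma \ref{lem:ALagrees} (which uses Lemma \ref{lem:imageaj}, i.e., the reducibility of intersections $H_i \cap H_j$ for hyperplanes in different irreducible components of $\mH_L$). The only thing to verify is that $L_j$ is indeed an element of $\mL_{irr}(\mH)$, which is built into the definition of irreducible component.
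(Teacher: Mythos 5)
Your proposal is correct and is exactly the paper's argument: the paper's proof of this lemma reads ``Follows from Lemmas \ref{lem:ALagrees} and \ref{lem:Lidec},'' which is precisely the reduction you carry out (agreement of $A_L$ with $A_j$ on $L_j^{\perp}$, then the eigenspace description of $L_j^{\perp}$ from Lemma \ref{lem:Lidec} applied to the irreducible intersection $L_j$). Nothing is missing.
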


\begin{proof}
	Follows from Lemmas \ref{lem:ALagrees} and \ref{lem:Lidec}.
\end{proof}

Lemmas \ref{lem:kerAL}, \ref{lem:imgAL}, \ref{lem:0int} and \ref{lem:VisLoplusLper} that follow prove Proposition \ref{prop:AL}-\((i)\).

\begin{lemma}\label{lem:kerAL}
	\(\ker A_L = L \).
\end{lemma}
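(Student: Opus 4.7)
The plan is to deduce this kernel identity by combining the image description that is already available from Lemmas \ref{lem:oplus}, \ref{lem:ALagrees} and \ref{lem:scalmult} with a dimension count. The containment $L \subset \ker A_L$ is immediate: every $H \in \mH_L$ contains $L$, so by assumption \textbf{(T)} we have $L \subset \ker A_H$ for each such $H$, and summing gives $A_L|_L = 0$. The task is therefore reduced to verifying that $\dim \ker A_L = \dim L$, which we will do by computing $\img A_L$.

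First I would observe that $\img A_L \subset L^{\perp}$. Indeed, for $H \in \mH_L$ the endomorphism $A_H$ has rank one with image $\C \cdot n_H$, so $\img A_H \subset L^{\perp}$ by the definition of the normal subspace, and summing over $H \in \mH_L$ yields the claim. In the other direction, restrict $A_L$ to $L^{\perp}$ and use the block decomposition from Lemma \ref{lem:oplus}, namely
\begin{equation*}
L^{\perp} = L_1^{\perp} \oplus \ldots \oplus L_k^{\perp}.
\end{equation*}
By Lemma \ref{lem:scalmult} (which itself uses Lemma \ref{lem:ALagrees}), $A_L$ acts on each summand $L_j^{\perp}$ as scalar multiplication by $a_{L_j}$, and by the Non-Zero Weights Assumptions \ref{ass:nz} these scalars are all non-zero. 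Hence $A_L|_{L^{\perp}}$ is a diagonalizable invertible endomorphism of $L^{\perp}$, so $L^{\perp} \subset \img A_L$ and therefore $\img A_L = L^{\perp}$.

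To conclude, I would invoke the dimension formula from Lemma \ref{lem:HLdec}: writing $n_j = \codim L_j$, the identity $\dim L + n_1 + \ldots + n_k = \dim V$ combined with $\dim L_j^{\perp} = n_j$ (a consequence of $V = L_j \oplus L_j^{\perp}$ in Lemma \ref{lem:Lidec}) gives $\dim L^{\perp} = \dim V - \dim L$. By rank-nullity,
\begin{equation*}
\dim \ker A_L = \dim V - \dim \img A_L = \dim V - \dim L^{\perp} = \dim L.
\end{equation*}
Since $L \subset \ker A_L$ and the dimensions agree, the two subspaces coincide.

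There is no real obstacle here: the entire content of the lemma is already encoded in the direct sum decomposition of $L^{\perp}$ and the fact that the eigenvalues $a_{L_j}$ are non-zero, both of which are supplied by the preceding lemmas. The only thing to be careful about is to use $a_{L_j} \neq 0$ (not merely $a_L \neq 0$), which is precisely why Non-Zero Weights must be imposed at every irreducible component of $L$, not only at $L$ itself.
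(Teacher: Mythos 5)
Your proof is correct. The containment $L \subset \ker A_L$ is handled the same way as in the paper, but for the reverse inclusion you take a different route: you compute $\img A_L = L^{\perp}$ (using that $A_L$ acts on each summand of $L^{\perp} = \oplus_j L_j^{\perp}$ by the non-zero scalar $a_{L_j}$, via Lemmas \ref{lem:oplus}, \ref{lem:ALagrees}, \ref{lem:scalmult}) and then conclude by rank–nullity together with the dimension identity $\codim L = \sum_j \codim L_j$ coming from the u-plus decomposition. The paper instead argues directly on kernels: for $v \in \ker A_L$ it writes $0 = \sum_i A_i v$ with $A_i v \in L_i^{\perp}$, uses the directness of the sum (Lemma \ref{lem:dirsum}) to force each $A_i v = 0$, and then $\ker A_i = L_i$ gives $v \in \bigcap_i L_i = L$. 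Both arguments rest on the same two pillars — the direct sum $L^{\perp} = \oplus_j L_j^{\perp}$ and the invertibility of $A_{L_j}$ transverse to $L_j$ — but your version has the side benefit of establishing $\img A_L = L^{\perp}$ (the paper's Lemma \ref{lem:imgAL}) and essentially also $V = L \oplus L^{\perp}$ in the same stroke, at the cost of invoking the scalar-action Lemma \ref{lem:scalmult} and the dimension bookkeeping earlier than the paper does; since \ref{lem:scalmult} precedes \ref{lem:kerAL} in the paper's logical order, there is no circularity. Your closing remark about needing $a_{L_j} \neq 0$ for every irreducible component, not just $a_L \neq 0$, is exactly the right point of caution.
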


\begin{proof}
	Clearly \(L \subset \ker A_L\). To show the reverse inclusion let us take some \(v \in \ker A_L\), so
	\[A_L v = \sum_{i=1}^{k} A_i v = 0 .\]
	By Lemma \ref{lem:Lidec} \(A_i v \in \img A_i = L_i^{\perp}\). By Lemma \ref{lem:dirsum}  \(A_i v = 0\) for all \(i\). By Lemma \ref{lem:Lidec} \(v \in \ker A_i = L_i\) for all \(i\). We conclude that 
	\[v \in \bigcap_{i=1}^k L_i = L . \qedhere \]
\end{proof}

\begin{lemma}\label{lem:imgAL}
	\(\img A_L = L^{\perp} \).
\end{lemma}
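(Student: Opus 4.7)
The plan is to establish the two inclusions separately, exploiting the direct sum decomposition of $L^{\perp}$ and the fact that $A_L$ acts as a non-zero scalar on each $L_j^{\perp}$.

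First, I would show the easy inclusion $\img A_L \subset L^{\perp}$. By definition $A_L = \sum_{H \in \mH_L} A_H$, and by the torsion free assumption together with Lemma \ref{lem:rk1}, each $A_H$ has $\img A_H = \C \cdot n_H$. Since $n_H \in L^{\perp}$ for every $H \in \mH_L$, we obtain $\img A_L \subset \sum_H \C \cdot n_H = L^{\perp}$.

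For the reverse inclusion $L^{\perp} \subset \img A_L$, I would use the decomposition $L^{\perp} = L_1^{\perp} \oplus \ldots \oplus L_k^{\perp}$ established in Lemma \ref{lem:oplus}. Pick any $v \in L^{\perp}$ and write $v = v_1 + \ldots + v_k$ with $v_j \in L_j^{\perp}$. By Lemma \ref{lem:scalmult}, $A_L$ acts on $L_j^{\perp}$ by scalar multiplication by $a_j = a_{L_j}$. The Non-Zero Weights Assumption \ref{ass:nz} guarantees $a_j \neq 0$ for each irreducible component $L_j \in \mL_{irr}(\mH)$, so one can set $w_j = a_j^{-1} v_j \in L_j^{\perp}$ and get $A_L w_j = v_j$. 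Summing over $j$ gives $v = A_L(w_1 + \ldots + w_k) \in \img A_L$.

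This argument is essentially bookkeeping on top of Lemmas \ref{lem:oplus}, \ref{lem:ALagrees} and \ref{lem:scalmult}, which already do the real work; no further obstacle is expected. Alternatively, one could invoke $\ker A_L = L$ from Lemma \ref{lem:kerAL} together with $\dim L^{\perp} = n - \dim L$ (which in turn requires the direct sum $V = L \oplus L^{\perp}$ of Proposition \ref{prop:AL}-(i)), but since that decomposition is only proved in the subsequent Lemmas \ref{lem:0int} and \ref{lem:VisLoplusLper}, the cleaner approach is the direct construction of preimages outlined above.
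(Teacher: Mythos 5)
Your proof is correct and follows essentially the same route as the paper: the inclusion $\img A_L \subset L^{\perp}$ is immediate from the definition of $L^{\perp}$ as the span of the normals $n_H$, and the reverse inclusion is obtained exactly as in the paper by decomposing $v=v_1+\ldots+v_k$ via Lemma \ref{lem:oplus} and taking the preimage $\sum_j a_j^{-1}v_j$ using Lemma \ref{lem:scalmult} and the non-vanishing of the weights $a_{L_j}$. Your remark that the dimension-count alternative should be avoided because $V=L\oplus L^{\perp}$ is only established afterwards is also accurate.
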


\begin{proof}
	By definition \(\img A_L \subset L^{\perp}\). To show the reverse inclusion let us take some \(v \in L^{\perp}\). By Lemma \ref{lem:oplus} we have \(v = v_1 + \ldots + v_k\) with \(v_i \in L_i^{\perp}\). If we set \(u= \sum_{i=1}^{k}a_i^{-1}v_i\) then it follows from Lemma \ref{lem:scalmult} that \(A_L u = v \in \img A_L\).
\end{proof}

\begin{lemma}\label{lem:0int}
	\(L \cap L^{\perp} = \{0\}\).
\end{lemma}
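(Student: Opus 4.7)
The plan is to combine the previously established lemmas about the kernel, image, and action of $A_L$ to conclude that $A_L$ acts invertibly on $L^{\perp}$, which immediately rules out a non-trivial intersection with $\ker A_L = L$.

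More precisely, first I would invoke Lemma \ref{lem:kerAL} to identify $L = \ker A_L$, so that any $v \in L \cap L^{\perp}$ automatically satisfies $A_L v = 0$. Then I would use Lemma \ref{lem:oplus}, which writes $L^{\perp} = L_1^{\perp} \oplus \ldots \oplus L_k^{\perp}$, together with Lemma \ref{lem:scalmult}, which says that $A_L$ acts on each summand $L_j^{\perp}$ by scalar multiplication by the weight $a_{L_j}$. Since the Non-Zero Weights Assumptions \ref{ass:nz} guarantee $a_{L_j} \neq 0$ for every irreducible component $L_j$, the restriction $A_L|_{L^{\perp}}$ is a direct sum of non-zero scalar multiples of the identity and therefore invertible.

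Writing any $v \in L \cap L^{\perp}$ as $v = v_1 + \ldots + v_k$ with $v_j \in L_j^{\perp}$, the condition $A_L v = 0$ becomes $\sum_j a_{L_j} v_j = 0$, which by the direct sum decomposition forces $a_{L_j} v_j = 0$ and hence $v_j = 0$ for every $j$. Thus $v = 0$, proving $L \cap L^{\perp} = \{0\}$.

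There is no real obstacle here: the statement is essentially the clean corollary of the structure theorems already proved, and the argument is a two-line application of invertibility on $L^{\perp}$ combined with $L = \ker A_L$. The only subtlety to keep in mind is that the non-vanishing of the weights is indispensable; it is precisely the \textbf{(NZ)} hypothesis that makes $A_L|_{L^{\perp}}$ invertible and allows the quick conclusion.
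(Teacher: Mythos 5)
Your proof is correct and follows essentially the same route as the paper: decompose $v \in L \cap L^{\perp}$ as $\sum_j v_j$ with $v_j \in L_j^{\perp}$, use that $A_L$ acts by the non-zero scalar $a_{L_j}$ on each summand, and conclude from $A_L v = 0$ and the directness of the sum that each $v_j$ vanishes. The only cosmetic difference is that you invoke the full Lemma \ref{lem:kerAL} where the paper only needs the trivial inclusion $L \subset \ker A_L$; this is harmless since that lemma is already established at this point.
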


\begin{proof}
	Let \(v \in L^{\perp}\) and write \(v = v_1 + \ldots + v_k\) with \(v_i \in L_i^{\perp}\). By Lemma \ref{lem:scalmult} \(A_L v = \sum_{i=1}^{k}a_i v_i\). If \(v\) also lies in \(L \subset \ker A_L\) then \(\sum_{i=1}^{k} a_i v_i = 0\) and  Lemma \ref{lem:dirsum} implies that \(a_i v_i = 0\) for all \(i\). Since \(a_i \neq 0\) we get that \(v_i =0\) for all \(i\) hence \(v=0\).
\end{proof}

The next result generalizes Lemma \ref{lem:Lidec}.

\begin{lemma}\label{lem:VisLoplusLper}
	\(V = L \oplus L^{\perp}\).
\end{lemma}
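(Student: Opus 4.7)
The plan is to combine the three preceding lemmas by a dimension count. From Lemma \ref{lem:kerAL} we have $\ker A_L = L$ and from Lemma \ref{lem:imgAL} we have $\img A_L = L^{\perp}$. Applying the rank--nullity theorem to the endomorphism $A_L \in \End V$ therefore gives
\begin{equation*}
\dim V \;=\; \dim \ker A_L + \dim \img A_L \;=\; \dim L + \dim L^{\perp} .
\end{equation*}
On the other hand, Lemma \ref{lem:0int} asserts that $L \cap L^{\perp} = \{0\}$. A subspace sum with zero intersection whose dimensions add up to $\dim V$ must equal $V$, so we conclude $V = L \oplus L^{\perp}$.

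There is no real obstacle here; the work has all been done in Lemmas \ref{lem:kerAL}, \ref{lem:imgAL} and \ref{lem:0int}, and the present statement is essentially a one-line bookkeeping consequence. One could also bypass rank--nullity by observing directly that for any $v \in V$ one has $A_L v \in \img A_L = L^{\perp}$, so after applying Lemma \ref{lem:scalmult} and Lemma \ref{lem:oplus} one can write $A_L v = \sum_i a_i v_i$ with $v_i \in L_i^{\perp}$; setting $u = \sum_i a_i^{-1} v_i \in L^{\perp}$ gives $A_L(v - u) = 0$, hence $v - u \in \ker A_L = L$ and $v = (v-u) + u \in L + L^{\perp}$. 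Combined with $L \cap L^{\perp} = \{0\}$ this yields the direct sum decomposition.

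Once this is in hand, part (iii) of Proposition \ref{prop:AL} follows from part (ii) applied to each irreducible component $L_i$: one has $V = L_i \oplus L_i^{\perp}$, and Lemma \ref{lem:imageaj} gives $L_j^{\perp} \subset L_i$ for $j \neq i$, so $L \oplus \bigoplus_{j\neq i} L_j^{\perp} \subset L_i$, with equality by a dimension count using part (ii).
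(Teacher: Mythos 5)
Your main argument is correct, and it takes a mildly different route from the paper's. The paper also reduces to a dimension count via Lemma \ref{lem:0int}, but it computes \(\dim L^{\perp}\) by summing over the irreducible components: \(\dim L^{\perp} = \sum_i \dim L_i^{\perp} = \sum_i \codim L_i = \codim L\), the last step using the rank additivity of the u-plus decomposition (Equation \eqref{eq:sumranks}). You instead apply rank--nullity directly to \(A_L\), using the identifications \(\ker A_L = L\) and \(\img A_L = L^{\perp}\) already established in Lemmas \ref{lem:kerAL} and \ref{lem:imgAL}. Your version is shorter and avoids re-invoking the combinatorial rank formula; the paper's version has the mild advantage of recording the identity \(\dim L^{\perp} = \codim L\) explicitly, which is reused in the proof of Lemma \ref{lem:linearirreddecCn}. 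Either way the content is the same and the logical order of the lemmas supports your citations.

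One small slip in your optional ``bypass'' argument: with \(A_L v = \sum_i a_i v_i\) and \(u = \sum_i a_i^{-1} v_i\), Lemma \ref{lem:scalmult} gives \(A_L u = \sum_i v_i\), not \(\sum_i a_i v_i\), so \(A_L(v-u) \neq 0\) in general. You want either \(u = \sum_i v_i\), or (as in the paper's proof of Lemma \ref{lem:imgAL}) to decompose \(A_L v = \sum_i w_i\) with \(w_i \in L_i^{\perp}\) and set \(u = \sum_i a_i^{-1} w_i\). Since this aside is not needed for your main proof, it does not affect the verdict.
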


\begin{proof}
	By Lemma \ref{lem:0int} it is enough to show that \(\dim L + \dim L^{\perp} = \dim V\).
	By Lemma \ref{lem:oplus} \(\dim L^{\perp} = \sum_{i=1}^{k} \dim L_i^{\perp}\). By Lemma \ref{lem:Lidec} \(\dim L_i^{\perp} = \codim L_i\). On the other hand, since \(\mH_L= \uplus_{i=1}^k \mH_i\) is a u-plus decomposition, \(\codim L = \sum_{i=1}^{k} \codim L_i\) (see Equation \eqref{eq:sumranks}). We conclude that \(\dim L^{\perp} = \codim L\) and the lemma follows.
\end{proof}

Next Lemma \ref{lem:linearirreddecCn} proves Proposition \ref{prop:AL}-\((iii)\).

\begin{lemma} \label{lem:linearirreddecCn}
	For every \(1\leq i \leq k\) we have
	\begin{equation*} \label{eq:irredidec}
	L_i = L \oplus \left(\oplus_{j \neq i} (L_j)^{\perp} \right) .
	\end{equation*}
\end{lemma}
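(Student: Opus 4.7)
The plan is to prove the equality by showing containment of the right-hand side in $L_i$ and then matching dimensions, using the direct sum decompositions already established.

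First I would verify that the sum $L + \sum_{j \neq i} L_j^{\perp}$ is direct and contained in $L_i$. Containment in $L_i$ is immediate: $L \subset L_i$ since $L$ is the intersection of all the $L_j$, and for $j \neq i$ we have $L_j^{\perp} \subset L_i$ by Lemma \ref{lem:imageaj}. Directness of the sum follows because $L \cap L^{\perp} = \{0\}$ by Lemma \ref{lem:0int}, while the $L_j^{\perp}$ are themselves in direct sum by Lemma \ref{lem:oplus}; hence
\[
L \oplus \Bigl(\bigoplus_{j \neq i} L_j^{\perp}\Bigr) \subset L_i.
\]

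To conclude equality it suffices to check that both sides have the same dimension. Using Lemma \ref{lem:Lidec} applied to $L_i$, we have $\dim L_i = \dim V - \dim L_i^{\perp}$. On the other hand, combining Lemmas \ref{lem:oplus} and \ref{lem:VisLoplusLper},
\[
\dim L + \sum_{j \neq i} \dim L_j^{\perp} = \dim L + \dim L^{\perp} - \dim L_i^{\perp} = \dim V - \dim L_i^{\perp},
\]
which matches $\dim L_i$, so the inclusion above is an equality.

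There is essentially no obstacle here: all the substantive work has been packaged into the preceding lemmas (most importantly the direct sum decompositions $V = L \oplus L^{\perp}$ and $L^{\perp} = \bigoplus_j L_j^{\perp}$, and the inclusion $L_j^{\perp} \subset L_i$ for $j \neq i$), and the present statement is a clean dimension-count consequence of them.
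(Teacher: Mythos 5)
Your proof is correct and follows essentially the same route as the paper's: establish the inclusion $L \oplus (\oplus_{j\neq i} L_j^{\perp}) \subset L_i$ via Lemma \ref{lem:imageaj}, verify directness from Lemmas \ref{lem:0int} and \ref{lem:oplus} (the paper cites Lemmas \ref{lem:dirsum} and \ref{lem:0int}, which carry the same content), and conclude by the identical dimension count $\dim L + \sum_{j\neq i}\dim L_j^{\perp} = \dim V - \dim L_i^{\perp} = \dim L_i$.
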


\begin{proof}
	By definition \(L \subset L_i\) and by Lemma \ref{lem:imageaj} \(\sum_{j \neq i} L_j^{\perp} \subset L_i\), therefore
	\begin{equation} \label{eq:sumaux}
		L + \sum_{j \neq i} L_j^{\perp} \subset L_i . 
	\end{equation}
	Lemmas \ref{lem:dirsum} and \ref{lem:0int} imply that the left hand side of Equation \eqref{eq:sumaux} is a direct sum. On the other hand, since \(\dim L_j^{\perp} = \codim L_j\) and \(\dim L + \sum_{i=1}^{k} \dim L_i^{\perp} = \dim V \), we have that
	\[\dim L + \sum_{j \neq i} \dim L_j^{\perp} = \dim V - \dim L_i^{\perp} = \dim L_i .\]
	We conclude that equality holds in Equation \eqref{eq:sumaux} 
	and the lemma follows.
\end{proof}

\subsection{The connection \(\nabla^L\) and its product decomposition}\label{sect:nablaLsplit}

\begin{definition} \label{def:localizationconection}
	Let \(\nabla\) be a flat torsion free standard connection given by \(\{A_H, \,\ H \in \mH\}\). The \textbf{localization of \(\nabla\) at} \(L \in \mL(\mH)\) is defined as \(\nabla^L= d- \Omega^L\) with
	\[\Omega^L = \sum_{H \in \mH_L} A_H \frac{dh}{h} . \]
\end{definition}

\begin{remark} \label{rmk:localizationconection}
	It follows from Proposition \ref{prop:ftfstcon} that \(\nabla^L\) is flat and torsion free. 
\end{remark}

In this section we upgrade the product decomposition \eqref{eq:HLdec} to an affine splitting for \(\nabla^L\). Our main tool for doing so is Proposition \ref{prop:AL}.
We begin by recalling the definition of a product of two affine connections on a product manifold.

\subsubsection{{\large Products of affine connections}}

\begin{notation}
	Let \(X=X_1 \times X_2\) be a product manifold with projection maps \(\pi_i: X \to X_i\) to its factors.
	If \(v\) is a vector field on \(X_1\), say, then there is a unique vector field \(\tilde{v}\) on \(X\) such that
	\( (D\pi_1) \tilde{v} = v\) and \((D\pi_2)\tilde{v}=0\); and similarly for vector fields on \(X_2\). We say that \(\tilde{v}\) is the \textbf{lift} of \(v\) via the projection maps \(\pi_i\).
\end{notation}
	 
\begin{definition}\label{def:prodconect}
Let \((X_i, \nabla^i)\) for \(i=1,2\) be two (complex) manifolds endowed with (holomorphic) connections \(\nabla^i\) on \(TX_i\). The \textbf{product connection} \(\nabla\) on \(T(X_1 \times X_2)\)
is defined by the requirement
\begin{equation}\label{eq:defnablaprod}
	\nabla_{\tilde{v}_1+\tilde{v}_2}(\tilde{w}_1+\tilde{w}_2) = \widetilde{\nabla^1_{v_1} w_1} + \widetilde{\nabla^2_{v_2}w_2} 
\end{equation}
where \(v_i,w_i\) are vector fields in \(X_i\) and \(\tilde{v}_i, \tilde{w}_i\) are their lifts to \(X = X_1 \times X_2\) via the projection maps \(\pi_i: X_1 \times X_2 \to X_i\). We write this as
\[(X, \nabla) = (X_1, \nabla^1) \times (X_2, \nabla^2) . \]	
\end{definition}

Alternatively, the product connection \(\nabla\) is the direct sum of the pull-back connections \(\pi_1^*\nabla^1 \oplus \pi_2^*\nabla^2\) under the natural isomorphism between \(TX\) and \(\pi_1^*(TX_1) \oplus \pi_2^*(TX_2)\).

\begin{remark}\label{rmk:prodconect}
	Some observations are in order.
	
	\begin{itemize}
		\item[(i)] Every vector field on \(X_1 \times X_2\) can be locally written as a sum of vector fields \(f_1 \tilde{v}_1 + f_2 \tilde{v}_2\) for functions \(f_i\). It follows that Equation \eqref{eq:defnablaprod} together with Leibniz's rule uniquely determine \(\nabla\).
		
		\item[(ii)] The product connection \(\nabla\) is flat (torsion free) if and only if both \(\nabla^i\) are flat (torsion free).
		
		Under the natural bijection between flat torsion free connections and affine structures given by Lemma \ref{lem:affineconnectioncorresp}, the product of connections in Definition \ref{def:prodconect} corresponds to the obvious product of affine manifolds given by taking product affine atlases.

		\item[(iii)]  In the particular case where \((V_i, \mH_i)\) are arrangements and \(X_i=V_i^{\circ}\) are  endowed with standard connections \(\nabla^i\), the product connection \(\nabla\) on the product hyperplane complement \((V_1 \times V_2)^{\circ}\) is also standard. In an obvious notation 
		\[
		A_{H_1 \times V_2} = 
		\begin{pmatrix}
		A_{H_1} & 0 \\
		0 & 0
		\end{pmatrix}, \qquad 
		A_{V_1 \times H_2} = 
		\begin{pmatrix}
		0 & 0 \\
		0 & A_{H_2}
		\end{pmatrix} . 
		 \]
	\end{itemize}
\end{remark}

\subsubsection{{\large Affine splitting of \(\nabla^L\)}}\label{sect:nablaL}

Fix \(L \in \mL(\mH)\) and let
\[\mH_L = \mH_1 \uplus \ldots \uplus \mH_k \]
be the irreducible decomposition of the localized arrangement. The irreducible components of \(L\) are
\[L_i = \bigcap_{H \in \mH_i} H .\]

\begin{proposition}\label{prop:nablaLsplit}
	Suppose that the Non-Zero Weights Assumptions \ref{ass:nz} are satisfied. Then we can choose linear coordinates \(x_1, \ldots, x_n\) such that
	\begin{equation}\label{eq:nablaLaffprod}
	\left(\C^n, \, \mH_L, \, \nabla^L\right) = \left(\C^{\dim L}, \, \emptyset, \, d \right) \times \left(\C^{n_1}, \, \bar{\mH}_1, \, \bar{\nabla}^1\right) \times \ldots \times \left(\C^{n_k}, \, \bar{\mH}_k, \, \bar{\nabla}^k\right)	.
	\end{equation}
	The factors \((\C^{n_i}, \bar{\mH}_i)\) are isomorphic to the essential irreducible arrangements \((V/L_i,\, \mH_i/L_i)\) and \(\bar{\nabla}^i\) are  standard connections.	
\end{proposition}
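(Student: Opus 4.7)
The plan is to reduce the statement to Proposition \ref{prop:AL} by choosing linear coordinates adapted to the direct sum decomposition it provides. Apply Proposition \ref{prop:AL} to the subspace $L$ to obtain $V = L \oplus L_1^{\perp} \oplus \cdots \oplus L_k^{\perp}$ together with the identities $L_i = L \oplus \bigl(\bigoplus_{j \neq i} L_j^{\perp}\bigr)$. Choose a basis of $V$ adapted to this splitting, with disjoint index sets $I_0, I_1, \ldots, I_k$ such that $\{e_s, s \in I_0\}$ is a basis of $L$ and $\{e_s, s \in I_i\}$ is a basis of $L_i^{\perp}$, so $|I_0| = \dim L$ and $|I_i| = n_i = \codim L_i$. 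Let $x_1, \ldots, x_n$ be the dual linear coordinates.

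The span of $\{x_s, s \in I_j\}$ is the annihilator of $L \oplus \bigoplus_{j' \neq j} L_{j'}^{\perp} = L_j$, which equals $W(\mH_j)$. Hence the coordinate choice agrees with the one provided by Lemma \ref{lem:HLdec} and produces the product $(\C^n, \mH_L) = (\C^{\dim L}, \emptyset) \times \prod_{i=1}^k (\C^{n_i}, \bar{\mH}_i)$, with $(\C^{n_i}, \bar{\mH}_i) \equiv (V/L_i, \mH_i/L_i)$ essential and irreducible.

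Next I upgrade this to an affine splitting of $\nabla^L$. Fix $H \in \mH_i$. Its defining equation $h$ lies in $W(\mH_i) = \Span\{x_s, s \in I_i\}$, so $dh/h$ only involves the coordinates indexed by $I_i$. By Proposition \ref{prop:AL} the image $\C \cdot n_H$ is contained in $L_i^{\perp}$, and the kernel $H$ contains $L_i = L \oplus \bigoplus_{j \neq i} L_j^{\perp}$. Consequently, in the chosen basis $A_H$ is block-diagonal with a single non-zero block $\bar{A}_H$ in the rows and columns indexed by $I_i$, representing an endomorphism of $L_i^{\perp} \cong V/L_i \cong \C^{n_i}$. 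Summing over $H \in \mH_L = \bigsqcup_i \mH_i$ yields
\[
\Omega^L = \sum_{i=1}^k \sum_{H \in \mH_i} A_H \frac{dh}{h} = \sum_{i=1}^k \bar{\Omega}^i,
\]
where each $\bar{\Omega}^i = \sum_{\bar H \in \bar{\mH}_i} \bar{A}_{\bar H}\, d\bar h / \bar h$ involves only the coordinates and matrix entries in the $i$-th block. By Remark \ref{rmk:prodconect}, this is precisely the statement that $\nabla^L$ splits as the trivial connection $d$ on $\C^{\dim L}$ times standard connections $\bar{\nabla}^i = d - \bar{\Omega}^i$ on the factors $(\C^{n_i}, \bar{\mH}_i)$, as desired.

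The argument is essentially a linear-algebra translation of Proposition \ref{prop:AL} into coordinates; there is no real obstacle, since the two required features---the arrangement product structure and the block-diagonal form of each residue---follow simultaneously from parts (i)--(iii) of that proposition, the key point being that the subspaces $L_i^{\perp}$ serve both as the complementary summands to $L_i$ and as the images of the residues $A_H$ for $H \in \mH_i$.
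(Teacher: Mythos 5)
Your proof is correct and follows essentially the same route as the paper's: both apply Proposition \ref{prop:AL} to get the splitting $V = L \oplus L_1^{\perp}\oplus\cdots\oplus L_k^{\perp}$, choose adapted linear coordinates, and observe that each residue $A_H$ with $H\in\mH_i$ is block-diagonal (kernel containing $L_i$, image in $L_i^{\perp}$) while $dh/h$ involves only the $I_i$-coordinates. The only cosmetic difference is that you justify $h\in\Span\{x_s,\,s\in I_i\}$ via $W(\mH_i)=\Ann(L_i)$, whereas the paper deduces it directly from $L_i\subset H$ and the direct sum decomposition; these are equivalent.
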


\begin{notation}
	Equation \eqref{eq:nablaLaffprod} means that we have a product of arrangements and also that the connection on the hyperplane complement is a product.
	More precisely, Equation \eqref{eq:nablaLaffprod} means that 
	\begin{equation}\label{eq:HLprod}
	\left(\C^n, \, \mH_L\right) = \left(\C^{\dim L}, \emptyset \right) \times \left(\C^{n_1}, \, \bar{\mH}_1\right) \times \ldots \times \left(\C^{n_k}, \, \bar{\mH}_k\right)
	\end{equation}
	and
	\begin{equation}\label{eq:HLprod2}
	\left((\C^n)^{\circ}, \, \nabla^L\right) = \left(\C^{\dim L}, d \right) \times \left((\C^{n_1})^{\circ}, \, \bar{\nabla}_1\right) \times \ldots \times \left((\C^{n_k})^{\circ}, \, \bar{\nabla}_k\right)
	\end{equation}
	hold. The products \eqref{eq:HLprod} and \eqref{eq:HLprod2} are understood according to Definitions \ref{def:prodarr} and \ref{def:prodconect}.
\end{notation}

\begin{proof}[Proof of Proposition \ref{prop:nablaLsplit}]
	By Proposition \ref{prop:AL} we have direct sum decompositions
	\begin{equation}\label{eq:dirsumProp}
	V = L \oplus L_1^{\perp} \oplus \ldots \oplus L_k^{\perp} 	
	\end{equation}
	\begin{equation}\label{eq:dirsumProp2}
	L_i = L \oplus \left(\oplus_{j \neq i} L_j^{\perp} \right) \, \mbox{ for } 1 \leq i \leq k . 	
	\end{equation}
	Let \(n_i= r(\mH_i) = \codim L_i\)
	so that \(n_1+\ldots+n_k=n-d\) where \(d=\dim L\).
	
	Fix a decomposition of the index set 
	\[\{1, \ldots, n\} = I_0 \cup I_1 \cup \ldots \cup I_k\] 
	as given by Equations \eqref{eq:indexset} and \eqref{eq:I0}. Let \((x_1, \ldots, x_n)\) 
	be linear coordinates on \(V\) such that the coordinate vector fields \(\p_{x_j}\) span the subspaces \(L\) and \(L_i^{\perp}\) for all \(1\leq i\leq k\). More precisely,
	\begin{equation}\label{eq:xcoord}
	L = \Span \{\p_{x_j}, \,\, j \in I_0 \} \, \mbox{ and }
	L^{\perp}_i = \Span \{\p_{x_j}, \,\, j \in I_i\} \,\ \mbox{ for } 1 \leq i \leq k .
	\end{equation}
	In particular, it follows that \(L\) is the common zero set of  \(x_1, \ldots, x_{n-d}\) and if \(h\) is a defining linear equation for \(H \in \mH_L\) then it depends only on the first \(n-d\) coordinates \(h=h(x_1, \ldots, x_{n-d})\). 
	
	If \(H \in \mH_i\) then \(L_i \subset H\). Equation \eqref{eq:dirsumProp2} implies that
	\(h=h(x_j, \, j \in I_i )\) and we write \(\bar{H}\) for the hyperplane in \(\C^{n_i}\) defined by \(h\).  Same as in the proof of Lemma \ref{lem:HLdec}, we have arrangements
	\begin{equation}
	\bar{\mH}_i = \{\bar{H}, \, H \in \mH_i \}
	\end{equation}
	such that Equation \eqref{eq:HLprod} holds
	and we have isomorphisms 
	\begin{equation}\label{eq:isos}
	(\C^{n_i}, \, \bar{\mH}_i) \equiv (V/L_i, \, \mH_i/L_i)	
	\end{equation}
	given by the coordinates \(x_j\) with \(j \in I_i\).

	We have to show that the connection \(\nabla^L\) splits accordingly.
	If \(H \in \mH_i \subset \mH_L\) then \(A_H\) vanishes on \(L_i\), because \(L_i \subset H= \ker A_H\), and it preserves \(L_i^{\perp}\), because \( \img A_H = \C \cdot n_H \subset L_i^{\perp}\). We conclude that \(A_H \in \End \C^n\) corresponds under the direct sum decomposition \eqref{eq:dirsumProp} to an endomorphism \(\bar{A}_{\bar{H}}\) of \(L_i^{\perp} \cong \C^{n_i}\).
	We define
	\begin{equation}\label{eq:barnablai}
	\bar{\nabla}^i = d - \sum_{\bar{H} \in \bar{\mH}_i} \bar{A}_{\bar{H}} \frac{d \bar{h}}{\bar{h}} 
	\end{equation}
	where \(\bar{A}_{\bar{H}}\) is the restriction of \(A_H\) to \(L_i^{\perp} \cong \C^{n_i}\) and
	\(\bar{h}\) denotes \(h\) thought as a linear function on \(\C^{n_i}\).
	Equation \eqref{eq:HLprod2} follows from the definitions and Remark \ref{rmk:prodconect}.
\end{proof}

\begin{remark}
	Note that if \(L=T(\mH)\) then \(\nabla^L = \nabla\). Proposition \ref{prop:nablaLsplit} asserts that flat torsion free standard connections respect the decomposition of arrangements into irreducible factors, in the sense that in suitable linear coordinates the connection splits accordingly. 
\end{remark}

\begin{remark}\label{rmk:orth}
	If \(\nabla^L\) preserves a Hermitian form \(\inn\) and the weights \(a_{L_i}\) are non-integer then the factors in Equation \eqref{eq:nablaLaffprod} are pairwise orthogonal. This can be seen by restricting the connection to complex lines that go through the origin of a \(\C^{n_i}\) factor but keep the other components fixed. Holonomy around a standard loop encircling the origin in such a line acts by scalar multiplication by \(\exp(2\pi i a_{L_i})\) on the \(\C^{n_i}\) factor and by the identity on the other factors. 
\end{remark}

\subsection{The quotient connection \(\nabla^{V/L}\)}\label{sect:quotconect}

Suppose that \(\{A_H, \, H \in \mH\} \subset \End V\) satisfy the Non-Zero Weights Assumptions \ref{ass:nz}. Since \(a_H = \tr A_H\) is non-zero, the image of \(A_H\) is a one-dimensional subspace \(\C \cdot n_H\) complementary to \(H\). We say that \(n_H\) is a normal vector to \(H\). Given \(L \in \mL(\mH)\) we have defined \(L^{\perp}\) as the subspace of \(V\) spanned by all normals \(n_H\) to hyperplanes in \(\mH_L\). By definition, \(L^{\perp}\) is preserved by all \(A_H\) for \(H \in \mH_L\).

By Proposition \ref{prop:AL} we have a direct sum decomposition \(L \oplus L^{\perp} = V\). The quotient projection map \(V \to V/L\) restricted to \(L^{\perp}\) gives us a natural isomorphism
\begin{equation}\label{eq:quotientiso}
	(L^{\perp}, \, \bar{\mH}_L) \xrightarrow{\sim} (V/L, \, \mH_L/L) .
\end{equation}
where \(\bar{\mH}_L = \{H \cap L^{\perp}, \, H \in \mH_L\}\).
Given a hyperplane \(H/L\) of \(\mH_L/L\) we define \(A_{H/L}\) as the endomorphism of \(V/L\) obtained by restricting \(A_H\) to \(L^{\perp}\) together with the identification \(L^{\perp} \cong V/L\) given by the quotient projection map.

\begin{definition}\label{def:quotconnect}
	Suppose that the Non-Zero Weight Assumptions \ref{ass:nz} are satisfied and let \(L \in \mL(\mH)\).
	We define the \textbf{quotient connection} \(\nabla^{V/L}\) to be the standard connection on \(V/L\) with simple poles at \(\mH_L/L\) determined by the residues \(\{A_{H/L}, \, H \in \mH_L\} \subset \End (V/L)\).
\end{definition}

\begin{remark}
	Note that \(\tr A_{H/L} = \tr A_H\), so the weight of \(\nabla^{V/L}\) at \(H/L\) is equal to the weight of \(\nabla\) at \(H\).
\end{remark}

\begin{lemma}\label{lem:quotconnect}
	The quotient connection \(\nabla^{V/L}\) is flat and torsion free. If \(M/L\) is an irreducible subspace of \(\mH_L/L\) then \(M \in \mL_{irr}(\mH)\) and the weight of \(\nabla^{V/L}\) at \(M/L\) is equal to the weight of \(\nabla\) at \(M\). In particular,  \(\{A_{H/L}, \, H \in \mH_L\}\) satisfies the Non-Zero Weights Assumptions \ref{ass:nz}. 
\end{lemma}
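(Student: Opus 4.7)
The plan is to view $\nabla^{V/L}$ as the descent of $\nabla^L$ to $V/L$, exploiting the fact that $L$ lies in $H = \ker A_H$ for every $H \in \mH_L$. I would first observe that each $A_H$ with $H \in \mH_L$ descends to an endomorphism $A_{H/L}$ of $V/L$ and each defining linear form $h$ descends to a linear form $\bar h$ on $V/L$, so $\nabla^{V/L} = d - \sum_{H \in \mH_L} A_{H/L}\, d\bar h/\bar h$ is itself a standard connection in the sense of Definition \ref{def:stnd}. Flatness and torsion freeness then follow from Proposition \ref{prop:ftfstcon}, since the commutation relations \textbf{(F)} and the kernel inclusions \textbf{(T)} for $\{A_H\}_{H \in \mH_L}$ descend verbatim to $\{A_{H/L}\}$ via the quotient map. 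As a sanity check, this is also visible from Proposition \ref{prop:nablaLsplit}: $\nabla^{V/L}$ is naturally identified with the product $\bar\nabla^1 \times \cdots \times \bar\nabla^k$, which is flat and torsion free by Remark \ref{rmk:prodconect}.

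For the statements about $M/L$, I would first identify the localization $(\mH_L/L)_{M/L}$. Since $L \subset M$, the members of $\mH_L$ containing $M$ are exactly those of $\mH_M$, so $(\mH_L/L)_{M/L}$ is the quotient arrangement $\mH_M/L$. Because $L \subset T(\mH_M) = M$, the defining equations of $\mH_M$ already live in $(V/L)^* \subset V^*$, which makes irreducibility of $\mH_M/L$ on $V/L$ equivalent to irreducibility of $\mH_M$ on $V$. Hence $M/L \in \mL_{irr}(\mH_L/L)$ forces $M \in \mL_{irr}(\mH_L)$, and Lemma \ref{lem:irredHL} then gives $M \in \mL_{irr}(\mH)$. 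For the weight equality, $A_{M/L} = \sum_{H \in \mH_M} A_{H/L}$ is the descent of $A_M$; Proposition \ref{prop:AL}(i) applied to the irreducible $M$ gives $\ker A_M = M \supset L$, so $A_M$ vanishes on $L$ and $\tr A_{M/L} = \tr A_M$. Combined with $\codim(M/L) = \codim M$, this yields $a_{M/L} = a_M$, and the Non-Zero Weights condition follows immediately since $a_M \neq 0$ by hypothesis.

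The step I expect to require the most care is the equivalence of irreducibility between $\mH_M$ and $\mH_M/L$. Both directions are a matter of unpacking definitions, relying on the fact that $W(\mH_M) \subset (V/L)^* \subset V^*$ so that any u-plus decomposition of $\mH_M$ inside $V^*$ descends to, and lifts from, a decomposition inside $(V/L)^*$. It is an elementary point, but it is the one place in the argument where we are not simply pushing forward structures directly from $V$ to $V/L$; everything else is a bookkeeping consequence of results already established in the section.
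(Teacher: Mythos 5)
Your proof is correct and follows essentially the same route as the paper's: descend the Basic Assumptions to $V/L$ and invoke Proposition \ref{prop:ftfstcon} for flatness and torsion freeness, pass from $M/L\in\mL_{irr}(\mH_L/L)$ to $M\in\mL_{irr}(\mH)$ via Lemma \ref{lem:irredHL}, and match traces and codimensions to equate the weights. The extra details you supply (the identification of $W(\mH_M)$ inside $(V/L)^*$ and the trace computation using $A_M|_L=0$) are exactly the points the paper's terse proof leaves implicit, and they check out.
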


\begin{proof}
	It is clear that \(\{A_{H/L}, \, H \in \mH_L\}\) satisfies the Basic Assumptions \ref{ass:basic} because \(\{A_H, \, H \in \mH_L\}\) does. Proposition \ref{prop:ftfstcon} implies that \(\nabla^{V/L}\) is flat and torsion free. If \(M/L \in \mL_{irr}(\mH_L/L)\) then Lemma \ref{lem:irredHL} implies that \(M \in \mL_{irr}(\mH)\). Since \(\codim_V M = \codim_{V/L} M/L\) and \(\tr A_{H/L} = \tr A_H\) for every \(H \in \mH_L\), the statement follows.
\end{proof}

\begin{remark}\label{rmk:quotconnec}
	We denote by \(\bar{\nabla}\) the connection on \(L^{\perp}\) with simple poles at \(\bar{\mH}_L\) and residues \((A_H)|_{L^{\perp}}\) at \(\bar{H}=H \cap L^{\perp}\). In other words, the connection \(\bar{\nabla}\) corresponds to \(\nabla^{V/L}\) under the isomorphism \eqref{eq:quotientiso}. 
	With this notation, Proposition \ref{prop:nablaLsplit} shows that
	\[(V, \mH_L, \nabla^L) = (L, \emptyset, d) \times (L^{\perp}, \bar{\mH}_L, \bar{\nabla}) \]
	and
	\[(L^{\perp}, \bar{\mH}_L, \bar{\nabla}) = (L_1^{\perp}, \bar{\mH}_{1}, \bar{\nabla}^1) \times \ldots \times (L^{\perp}_k, \bar{\mH}_{k}, \bar{\nabla}^k)\]
	where \(\bar{\nabla}^i\) correspond to the quotient connections \(\nabla^{V/L_i}\) under projection.
\end{remark}

\subsection{Euler vector fields}\label{sect:eulervf}

\begin{notation}
	Let \(\xi\) be a vector field on a manifold \(M\) and let \(\nabla\) be a connection on \(TM\). The covariant derivative \(\nabla \xi\) can be interpreted as a section of \(\End TM\). Given \(v \in TM\), we have
	\[(\nabla {\xi}) (v) = \nabla_v \xi .\]
\end{notation}

\begin{notation}
	Let \(V\) be a complex vector space.
	There is a unique vector field \(e_V\) such that \(e_V(0)=0\) and \(d e_V\) is the identity section of \(\End TV\), where \(d\) is the Euclidean connection. In linear coordinates we have
	\[ e_V = \sum_{i=1}^{n} x_i \frac{\p}{\p x_i} . \]
	We refer to \(e_V\) as the usual Euler vector field of \(V\). 
\end{notation}

\begin{definition}\label{def:angle}
	Let \(L \in \mL_{irr}(\mH)\). The \textbf{angle} at \(L\) is defined as
	\[\alpha_L = 1-a_L . \]
\end{definition}

\begin{lemma}\label{lem:eulstndcon}
	Let \((V, \mH)\) be an arrangement equipped with a flat torsion free standard connection such that the Non-Zero Weight Assumptions \ref{ass:nz} are satisfied. Let
	\[\mH = \mH_1 \uplus \ldots \uplus \mH_k\]
	be its irreducible decomposition. Write \(T=T(\mH)\), \(T_i=T(\mH_i)\), \(a_i=a_{T_i}\) and \(\alpha_i=\alpha_{T_i}\).
	Suppose that \(\alpha_i \neq 0\) for all \(i\). Then the vector field
	\begin{equation}\label{eq:euler}
	e = e_T + \alpha_1^{-1} e_{T_1^{\perp}} + \ldots + \alpha_k^{-1} e_{T_k^{\perp}} 
	\end{equation}
	satisfies \(\nabla e = \Id_V\).
\end{lemma}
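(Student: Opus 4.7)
The plan is a direct computation using the structural results about residues $A_L$ from Proposition \ref{prop:AL}. First I would unpack what $\nabla e = \Id_V$ means: since $\nabla = d - \Omega$, the claim is that at every point $x \in V$ and for every tangent vector $v$,
\[
(de)(v) - \Omega(v) \cdot e(x) = v.
\]
So I would compute the two terms on the left hand side separately and check that they combine to $v$.

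For the first term, I would use Proposition \ref{prop:AL} applied to $L = T(\mH)$, which gives the direct sum decomposition $V = T \oplus T_1^{\perp} \oplus \ldots \oplus T_k^{\perp}$. The vector field $e_T$ (and likewise each $e_{T_i^{\perp}}$) is extended to $V$ so that it depends only on the relevant direct summand, which means its Euclidean covariant derivative is the projection $\pi_T$ onto $T$ (respectively $\pi_{T_i^\perp}$ onto $T_i^{\perp}$). Therefore $de = \pi_T + \sum_{i=1}^k \alpha_i^{-1}\pi_{T_i^{\perp}}$ acting on tangent vectors.

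For the second term, the key observation is that if $H \in \mH_i$ then $T_j \subset H$ for all $j \neq i$, so $h$ depends only on the $T_i^{\perp}$ summand, and $A_H$ vanishes on $T$ and on $T_j^{\perp}$ for $j \neq i$. Combined with the identity $A_H(x) = h(x) \cdot n_H$ from Lemma \ref{lem:rk1} (valid because $h$ is linear), this yields $A_H \cdot e(x) = \alpha_i^{-1} h(x)\, n_H$ for $H \in \mH_i$. The factors $h(x)$ and $1/h(x)$ then cancel inside $\Omega(v) \cdot e(x)$, giving
\[
\Omega(v)\cdot e(x) = \sum_{i=1}^{k} \alpha_i^{-1} \sum_{H \in \mH_i} dh(v)\, n_H = \sum_{i=1}^{k} \alpha_i^{-1} A_{T_i}(v),
\]
using that $\mH_{T_i} = \mH_i$ (Lemma \ref{lem:HiisHTi}) and the definition of the residue $A_{T_i}$.

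To finish, I would invoke Proposition \ref{prop:AL} once more, this time for the irreducible subspace $T_i$: it gives $\ker A_{T_i} = T_i \supset T \oplus \bigoplus_{j \neq i} T_j^{\perp}$ and $A_{T_i}$ acts on $T_i^{\perp}$ by the scalar $a_i$. Hence $A_{T_i}(v) = a_i \, \pi_{T_i^{\perp}}(v)$ and subtracting from $de(v)$ yields
\[
(de)(v) - \Omega(v) \cdot e(x) = \pi_T(v) + \sum_{i=1}^{k} \alpha_i^{-1}(1 - a_i)\, \pi_{T_i^{\perp}}(v) = \pi_T(v) + \sum_{i=1}^{k} \pi_{T_i^{\perp}}(v) = v,
\]
since $1 - a_i = \alpha_i$ by Definition \ref{def:angle} and the projections sum to the identity. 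There is no real obstacle here; the only subtlety is keeping track of which summand each $A_H$ acts on, which is precisely what Proposition \ref{prop:AL} is designed to control, so the assumption that every $\alpha_i$ is non-zero enters exactly to allow division in the ansatz \eqref{eq:euler}.
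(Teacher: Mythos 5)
Your proof is correct and rests on the same core computation as the paper's: the identity $A_H(e)\,dh/h = \alpha_i^{-1}A_H$ for $H \in \mH_i$ and the fact that the residue at the centre of an essential irreducible factor acts as the scalar $a_i$ on $T_i^{\perp}$ and kills the complementary summands. The only organizational difference is that the paper first reduces to the essential irreducible case via Proposition \ref{prop:nablaLsplit}, whereas you do the bookkeeping across all factors at once using Proposition \ref{prop:AL}; both are fine.
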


\begin{proof}
	It follows from Proposition \ref{prop:nablaLsplit} applied to \(L=T\) that it is enough to consider the case when \((V, \mH)\) is essential and irreducible.
		
	We compute \(\nabla e_{V}\) following \cite[Proof of Proposition 2.2]{CHL}. Clearly \(d e_{V} = \Id_{V}\). On the other hand
	\[A_H (e_{V}) = dh(e_V) \cdot n_H = h \cdot n_H ,\]
	so \(A_H(e_{V})dh/h = A_H\) and
	\[\nabla e_{V} = \Id_{V} - \sum_H A_H . \]
	Since \(\mH\) is irreducible and essential, \(\sum_{H\in \mH} A_H = a_0 \Id_{V}\) where 
	\[a_0=(1/\dim V) \sum_{H\in \mH} a_H \neq 1\] 
	by assumption. It follows that \(\nabla e_{V} = (1-a_0) \Id_{V}\) and therefore
	\[\nabla \left( (1-a_0)^{-1} e_{V} \right) = \Id_{ V} . \qedhere \]
\end{proof}

\begin{remark}\label{rmk:etangent}
	It follows from Equation \eqref{eq:euler} that \(e\) is tangent to all hyperplanes in the arrangement. By taking intersections, we see that if \(x\) belongs to \(L \in \mL(\mH)\) then \(e(x) \in L\).
\end{remark}

\begin{definition}
	We call the vector field \(e\) of Lemma \ref{lem:eulstndcon} the Euler vector field for \(\nabla\). We will prove next that it is uniquely characterized by the properties \(e(0)=0\) and \(\nabla e =\) identity.
\end{definition}

\begin{lemma} \label{lem:eulvfuniq}
	Suppose that the hypothesis of Lemma \ref{lem:eulstndcon} hold. Furthermore, assume that \(a_i \notin \Z\) for all \(i=1, \ldots, k\).
	If \(s\) is a holomorphic vector field with \(\nabla s=0\), i.e. \(s\) is parallel, then \(s\) is a constant vector field and \(s\equiv s_0 \in T\).
\end{lemma}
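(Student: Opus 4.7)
The plan is to reduce to the essential irreducible case via Proposition \ref{prop:nablaLsplit} and then exploit the Euler vector field of Lemma \ref{lem:eulstndcon} through a simple spectral argument.

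Applying Proposition \ref{prop:nablaLsplit} with \(L = T(\mH)\), in suitable linear coordinates we obtain
\[
(V, \nabla) = (T, d) \times (\C^{n_1}, \bar{\nabla}^1) \times \ldots \times (\C^{n_k}, \bar{\nabla}^k)
\]
with each factor essential and irreducible. A direct coordinate check shows that any \(\nabla\)-parallel holomorphic vector field \(s\) decomposes uniquely as \(s = s_0 + s_1 + \ldots + s_k\), where \(s_0\) is the lift of a \(d\)-parallel vector field on \(T\) (hence a constant element of \(T\)), and each \(s_i\) with \(i \geq 1\) is the lift of a \(\bar{\nabla}^i\)-parallel vector field on \(\C^{n_i}\). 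The point is that the product connection has no cross terms: if \(s_i\) depended on a coordinate from a different factor the corresponding covariant derivative, landing purely in the \(i\)-th summand of the natural splitting of \(TV\), would be non-zero.

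It remains to show each \(s_i\) with \(i \geq 1\) vanishes. By the essential irreducible case of Lemma \ref{lem:eulstndcon}, the Euler vector field of \(\bar{\nabla}^i\) is \(\bar e_i = \alpha_i^{-1} e_{\C^{n_i}}\) with \(\bar{\nabla}^i \bar e_i = \Id\). Combining this with \(\bar{\nabla}^i s_i = 0\) and the torsion-free identity \([X,Y] = \bar{\nabla}^i_X Y - \bar{\nabla}^i_Y X\) gives
\[
[\bar e_i, s_i] = \bar{\nabla}^i_{\bar e_i} s_i - \bar{\nabla}^i_{s_i} \bar e_i = -s_i.
\]
Writing \(s_i = \sum_j s_{i,j}\, \p/\p x_{i,j}\) in linear coordinates on \(\C^{n_i}\) and expanding the bracket component-wise, this equation forces \(e_{\C^{n_i}}(s_{i,j}) = a_i\, s_{i,j}\) for every \(j\).

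The finishing step is purely spectral. The usual Euler vector field \(e_{\C^{n_i}}\) acts diagonally on the Taylor expansion at the origin of any holomorphic function on \(\C^{n_i}\): a homogeneous polynomial of degree \(m\) is an eigenfunction with eigenvalue \(m\), so the spectrum of \(e_{\C^{n_i}}\) on holomorphic functions at the origin is \(\Z_{\geq 0}\). Since \(a_i \notin \Z\) by hypothesis, the equation \(e_{\C^{n_i}}(s_{i,j}) = a_i s_{i,j}\) has only the zero solution, giving \(s_i \equiv 0\). Combining, \(s \equiv s_0 \in T\). No step presents a serious obstacle; the only care required is to apply the eigenvalue analysis to convergent Taylor expansions rather than polynomials, which is immediate because the Euler operator preserves the grading by degree.
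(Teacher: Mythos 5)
Your proposal is correct and follows essentially the same route as the paper: reduce to the essential irreducible factors via the product decomposition of Proposition \ref{prop:nablaLsplit}, use torsion-freeness to get \([\bar e_i, s_i] = -s_i\), and conclude from the non-integrality of \(a_i\) that each transversal component vanishes. Your final spectral step is exactly the content of the paper's Lemma \ref{lem:homvf} (homogeneous components of degree \(\lambda+1\) force \(\lambda\in\Z\)), just written out explicitly.
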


\begin{proof}
	We decompose \(s= s_T + s_1+\ldots+s_k\) according to the splitting \(V= T \oplus T_1^{\perp} \oplus \ldots \oplus T_k^{\perp}\). Each of the components \(s_i\) must be parallel, so it is enough to show that in the essential and irreducible case parallel fields must vanish. Same as before, we consider an irreducible and essential arrangement endowed with a flat standard connection \((V, \mH, \nabla)\).
	
	We compute
	\begin{equation*}
	[e, s] = \nabla_e s - \nabla_s e 
	= - s .
	\end{equation*}
	Equivalently, \([e_{V} , s ] = (a_0-1) s\). Since \(a_0 \notin \Z\), the statement follows from Lemma \ref{lem:homvf}. 
\end{proof}

\begin{lemma} \label{lem:homvf}
	Let \(s\) be a non-zero holomorphic vector field on \(V\) that satisfies \([e_V, s] = \lambda s\) where \(e_V\) is the  usual Euler vector field of \(V\), then \(\lambda \in \Z\) and the components of \(s\) are homogeneous polynomials of degree \(\lambda+1\).
\end{lemma}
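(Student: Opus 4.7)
The plan is to translate the Lie bracket equation into coordinates and read off the condition on each homogeneous piece of the Taylor expansion of $s$.

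First I would write $s = \sum_{i=1}^n s_i(x) \frac{\p}{\p x_i}$ with $s_i$ holomorphic on $V = \C^n$, and compute the Lie bracket with $e_V = \sum_j x_j \p/\p x_j$ by evaluating both sides on the coordinate functions $x_k$. Since $e_V(s_k) - s(x_k) = [e_V,s](x_k) = \lambda s_k$ and $s(x_k) = s_k$, this yields the scalar equation
\begin{equation*}
e_V(s_k) \;=\; (\lambda+1)\, s_k \qquad \text{for every } k = 1,\dots,n.
\end{equation*}

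Next I would expand each $s_k$ as a Taylor series $s_k = \sum_{\beta \in \Z_{\geq 0}^n} c_{k,\beta}\, x^\beta$. Because $e_V$ acts on the monomial $x^\beta$ as multiplication by $|\beta|$, the equation above forces $(|\beta| - \lambda - 1)\, c_{k,\beta} = 0$ for every multi-index $\beta$. Hence $c_{k,\beta} = 0$ unless $|\beta| = \lambda + 1$. Since $s$ is assumed non-zero, there exists at least one index $(k,\beta)$ with $c_{k,\beta} \neq 0$, which forces $\lambda + 1 = |\beta|$ to be a non-negative integer; in particular $\lambda \in \Z$. Every $s_k$ is then a homogeneous polynomial of degree $\lambda+1$, completing the proof.

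There is no substantive obstacle here: the argument is simply the standard fact that $e_V$ is the grading operator on polynomial vector fields, and the Lie bracket with $e_V$ shifts the grading by $-1$ (since $[e_V, x^\beta \p/\p x_k] = (|\beta|-1)\, x^\beta \p/\p x_k$), so $\lambda$-eigenvectors of $\operatorname{ad}(e_V)$ are exactly the vector fields whose coefficients are homogeneous of degree $\lambda+1$. The only point requiring a comment is that holomorphicity on all of $V$ is needed to pass from "all Taylor coefficients outside degree $\lambda+1$ vanish" to "$s_k$ is a polynomial", which is immediate from the identity theorem.
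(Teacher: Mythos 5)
Your proof is correct and follows essentially the same route as the paper: both reduce the bracket equation to the scalar identity \(e_V(s_k)=(\lambda+1)s_k\) for each component and then read off homogeneity from the Taylor expansion (the paper computes the bracket via the Leibniz rule and \([e_V,\p_{x_i}]=-\p_{x_i}\) rather than by evaluating on coordinate functions, but this is an immaterial difference). Your explicit remark that non-vanishing of \(s\) forces \(\lambda+1\in\Z_{\geq 0}\) is a welcome spelling-out of a step the paper leaves implicit.
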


\begin{proof}
	Write \( s = \sum_i f_i \p_{x_i}\) with \(f_i\) holomorphic functions. We compute \([e_V, s]\) using the Leibniz rule \([X, fY] = X(f)Y + f [X, Y]\) together with \([e_V, \p_{x_i}] = - \p_{x_i}\), we obtain
	\[[e_V, s] = \sum_i \left(e_V(f_i) -f_i \right) \p_{x_i} .  \]
	The equation \([e_V, s] = \lambda s\) implies \(e_V(f_i)=(\lambda+1)f_i\) and it follows that \(f_i\) are homogeneous polynomials of degree \(\lambda+1\). 
\end{proof}

\begin{corollary}\label{cor:euniqueness}
	In the setting of Lemma \ref{lem:eulvfuniq}, if \(\tilde{e}\) is another holomorphic vector field on \(V\) that satisfies \(\nabla \tilde{e} = \Id_{V}\) then \(\tilde{e} = e + s\) where \(s\) is a constant vector field tangent to the centre \(T\). In particular, if \(\tilde{e}(0)=0\) (or if \(T=0\)) then \(\tilde{e}=e\).
\end{corollary}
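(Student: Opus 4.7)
The proof is short and direct, built entirely on Lemma \ref{lem:eulvfuniq} and the linearity of $\nabla$. The plan is to consider the difference $s := \tilde{e} - e$ and show that it is a parallel holomorphic vector field, so that Lemma \ref{lem:eulvfuniq} forces it to be a constant vector with values in $T$.

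First I would compute
\begin{equation*}
\nabla s \;=\; \nabla \tilde{e} - \nabla e \;=\; \Id_V - \Id_V \;=\; 0,
\end{equation*}
using the assumption $\nabla \tilde{e} = \Id_V$ and the defining property of the Euler vector field $e$ from Lemma \ref{lem:eulstndcon}. Thus $s$ is a holomorphic vector field with $\nabla s = 0$.

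Next I would apply Lemma \ref{lem:eulvfuniq}: since the hypotheses of Lemma \ref{lem:eulvfuniq} are in force (the non-integer assumption $a_i \notin \Z$ on the weights of the irreducible factors), every parallel holomorphic vector field on $V$ is necessarily a constant vector field with values in the centre $T$. Therefore $s \equiv s_0$ for some $s_0 \in T$, giving $\tilde{e} = e + s_0$ as claimed.

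For the "in particular" clause I would argue as follows. From formula \eqref{eq:euler} the Euler vector field $e$ satisfies $e(0)=0$, so evaluating $\tilde{e} = e + s_0$ at the origin gives $\tilde{e}(0) = s_0$; hence the assumption $\tilde{e}(0)=0$ forces $s_0 = 0$ and $\tilde{e} = e$. Alternatively, if the centre is trivial, $T = \{0\}$, then $s_0 \in T$ automatically vanishes. There is no serious obstacle here: the whole argument is a one-line consequence of linearity of $\nabla$ together with the already established uniqueness of parallel vector fields.
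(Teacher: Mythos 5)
Your proof is correct and is exactly the paper's argument: the paper also forms the difference $\tilde{e}-e$, observes it is parallel by linearity of $\nabla$, and invokes Lemma \ref{lem:eulvfuniq} to conclude it is a constant vector field tangent to $T$. Your additional remarks on the ``in particular'' clause (using $e(0)=0$) are a routine spelling-out of what the paper leaves implicit.
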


\begin{proof}
	The difference \(\tilde{e}-e\) is parallel, apply Lemma \ref{lem:eulvfuniq}.
\end{proof}

\section{Local product decomposition}\label{sec:locprod}

Throughout this section we work under the hypothesis of Theorems \ref{PRODTHM} and \ref{thm:locprod}. In particular, there is a  positive definite Hermitian form \(\inn\) which is preserved by \(\nabla\). Our goal is to establish the Local Product Decomposition Theorem \ref{thm:locprod}. The set-up is as follows.

\begin{notation}
	We fix a non-zero intersection \(L \in \mL(\mH)\)
	and \(x \in L^{\circ}\).
	Let \(U\) be a neighbourhood of \(x\) such that \(U \cap H = \emptyset\) for all \(H \in \mH \setminus \mH_L\) and write \(U^{\circ} = U \setminus \mH_L\).
	We reset our original coordinates by making a translation
	\begin{equation}\label{eq:translation}
	(z_1, \ldots, z_n) \mapsto (z_1, \ldots, z_n) - x	
	\end{equation}
	and identify \(U\) with a neighbourhood of the origin.	
\end{notation}

\begin{remark}
	If \(H \in \mH_L\) then its defining equation \(h\) is a linear function of \((z_1, \ldots, z_n)\). If \(H \in \mH \setminus \mH_L\) then its defining equation \(h\) is an affine linear function of \((z_1, \ldots, z_n)\) which is non-vanishing over \(U\). 
\end{remark}

\begin{definition}
	The \textbf{model connection} at \(x\) is defined as \(\nabla^L = d- \Omega^L\) with 
	\begin{equation}
	\Omega^L= \sum_{H \in \mH_L} A_H \frac{dh}{h} .
	\end{equation}
	In other words, \(\nabla^L\) is the localization of \(\nabla\) at \(L\) introduced in Definition \ref{def:localizationconection}. 
\end{definition}

\begin{remark}
	We have two flat connections: the original \(\nabla = d- \Omega\) and the model \(\nabla^L = d - \Omega^L\). Their connection matrices satisfy
	\begin{equation}\label{eq:omegaL}
	\Omega = \Omega^L + \hol,
	\end{equation}
	where \(\hol\) denotes a matrix of holomorphic \(1\)-forms on \(U\). The connection \(\nabla^L\) is standard with respect to our newly defined coordinates \eqref{eq:translation}. However, our original connection \(\nabla\) is not. 
\end{remark}

By Proposition \ref{prop:nablaLsplit} we have an affine splitting of \(\nabla^L\) given by Equation \eqref{eq:nablaLaffprod}. The main work in this section is to carry this affine splitting from \(\nabla^L\) to \(\nabla\) over \(U^{\circ}\). The statement is as follows.

\begin{proposition}[Local affine splitting]\label{prop:locaffprod}
	There are holomorphic coordinates \((y_1, \ldots, y_n)\) centred at \(x \in L^{\circ}\) in which the connection \(\nabla\) is standard. Furthermore, the
	affine structure defined by \(\nabla\) over \(U^{\circ}\) splits as a product of the form 
	\begin{equation}\label{eq:affinesplit}
	\left(U^{\circ}, \, \nabla \right) \subset \left(\C^{\dim L}, d \right) \times \left((\C^{n_1})^{\circ}, \bar{\nabla}^1\right) \times \ldots \times \left((\C^{n_k})^{\circ}, \bar{\nabla}^k\right)
	\end{equation}
	where the terms \(\left((\C^{n_i})^{\circ}, \bar{\nabla}^i\right)\) are as in Proposition \ref{prop:nablaLsplit}.
\end{proposition}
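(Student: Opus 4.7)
The plan is to follow the four-step strategy outlined at the end of the Introduction. The setup compares two flat torsion-free meromorphic connections on the neighbourhood $U$ of $x$: the original $\nabla = d - \Omega$ and its model $\nabla^L = d - \Omega^L$, whose difference $\Omega - \Omega^L$ is holomorphic on $U$. By Proposition \ref{prop:nablaLsplit} the model connection already admits the desired affine product decomposition in the original linear coordinates, so the real content is transplanting this splitting from $\nabla^L$ to $\nabla$.

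First I would construct a holomorphic gauge transformation $G : U \to GL(n, \C)$ intertwining $\nabla^L$ and $\nabla$. On the hyperplane complement $U^\circ = U \setminus \mH_L$, both connections are flat and holomorphic, so this reduces to matching their monodromy representations. The connection $\nabla$ preserves $\inn$ by hypothesis; the model $\nabla^L$ preserves a natural Hermitian form built from the product structure via Proposition \ref{prop:nablaLsplit} and Remark \ref{rmk:orth}, so both local monodromies are unitary. Around each $H \in \mH_L$ the local monodromy is conjugate to $\exp(2\pi i A_H)$ for the common residue $A_H$, hence the two holonomies on $U^\circ$ are conjugate and a standard argument produces $G$ on $U^\circ$. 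To extend $G$ holomorphically across each $H \in \mH_L$, one uses the non-integer assumption $a_H \notin \Z$: after restricting to a normal disk to $H^\circ$, the two connections become rank-one logarithmic connections whose residues have eigenvalues $0$ and $a_H$, and no two eigenvalues differ by a nonzero integer, so a standard Levelt-type normal form theorem forces any such gauge to extend holomorphically.

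Second, I transport the product structure through $G$. Let $\mD_0, \mD_1, \ldots, \mD_k$ be the $\nabla^L$-parallel distributions on $U$ corresponding to the factors $\C^{\dim L}$ and $\C^{n_i}$ of Equation \eqref{eq:nablaLaffprod}. Their images $G(\mD_i)$ are $\nabla$-parallel on $U$, hence integrable by torsion-freeness, and so define foliations of $U$ with holomorphic leaves. Denote by $F_i$ the leaf of the $i$-th foliation through $x$. Third, I would produce a local dilation vector field $e_x$ on $U$ satisfying $\nabla e_x = \Id$ and $e_x(x) = 0$: the positivity $1 - a_L > 0$ implies the Non-Zero Weights Assumptions \ref{ass:nz}, so Lemma \ref{lem:eulstndcon} provides the Euler field $e_L$ for $\nabla^L$, and the non-integer condition together with Lemma \ref{lem:eulvfuniq} pins down the corresponding $e_x$ for $\nabla$ uniquely once $e_x(x) = 0$ is imposed. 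By construction $e_x$ is tangent to every leaf $F_i$ and its linear part $De_x(x)$ coincides with $De_L(0)$, which by Equation \eqref{eq:euler} is diagonal with eigenvalues $1$ on $L$ and $\alpha_{L_i}^{-1}$ on each $L_i^\perp$.

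Finally, I apply the Poincaré–Dulac linearization theorem to $e_x$. The eigenvalues of $De_x(x)$ are the positive real numbers $1$ and $\alpha_{L_i}^{-1} = (1 - a_{L_i})^{-1}$, so they lie in the Poincaré domain; the non-integer conditions on the weights $a_{L_i}$ at irreducible intersections translate into the absence of any integer resonance relation among them, and the theorem produces holomorphic coordinates $(y_1, \ldots, y_n)$ centred at $x$ in which $e_x$ has the linear form \eqref{eq:euler}. In these coordinates the flow of $e_x$ is a one-parameter group of linear dilations preserving $\nabla$, and combined with $\nabla e_x = \Id$ and flat torsion-freeness this forces $\nabla$ to be of the standard form \eqref{eq:thmconect} with poles precisely along the coordinate hyperplanes inherited from the decomposition of $e_x$. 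The decomposition of $e_x$ into the Euler fields of the individual factors, together with the $\nabla$-parallel foliations of step two, then yields the affine product splitting \eqref{eq:affinesplit}.

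The main obstacle I expect is the Poincaré–Dulac step: one must verify that the non-integer condition $a_L \notin \Z$ at irreducible intersections indeed rules out every resonance $\lambda_i = \sum_j m_j \lambda_j$ with $m_j \in \Z_{\geq 0}$ and $\sum m_j \geq 2$ among the eigenvalues of $De_x(x)$, and one must check that Poincaré–Dulac linearization is compatible with both the foliations $F_i$ and the polar structure of $\nabla$. The holomorphic extension of $G$ across $\mH_L$ via the non-integer condition is the other delicate point, and both uses of the non-integer hypothesis are precisely where this assumption in Theorem \ref{PRODTHM} enters essentially.
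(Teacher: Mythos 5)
Your outline matches the paper's four-step strategy, but two of the steps contain genuine gaps.

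First, the conjugacy of the holonomies of \(\nabla\) and \(\nabla^L\) on \(U^{\circ}\) does not follow from the fact that the local monodromies around each individual \(H \in \mH_L\) are conjugate to \(\exp(2\pi i A_H)\). Two representations of \(\pi_1(U^{\circ})\) whose images of each standard generator are pairwise conjugate need not be conjugate as representations, so this step is where the real work lies. The paper instead deforms \(\nabla\) through the family \(\nabla^{\lambda} = d - \varphi_{\lambda}^*\Omega\), observes that all \(\Hol_p^{\lambda}\) for \(\lambda>0\) are conjugate to \(\Hol_p\) and converge to \(\Hol_p^L\) as \(\lambda \to 0\), and then invokes the closedness of the conjugation orbit of a semisimple representation to conclude. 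Relatedly, your claim that \(\nabla^L\) is unitary cannot be extracted from Remark \ref{rmk:orth}, which is conditional on \(\nabla^L\) already preserving a Hermitian form; in the paper the unitarity of \(\nabla^L\) is a \emph{consequence} of the conjugacy of holonomies, not an input to it.

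Second, the Poincar\'e--Dulac step as you propose it — linearising \(e_x\) on all of \(U\) at once — fails in general, for exactly the reason you flag as your main worry. The eigenvalues of the linear part are \(1\) (on the \(L\)-direction) and \(\alpha_{L_i}^{-1}\), and the non-integer condition \(a_{L_i} \notin \Z\) does \emph{not} exclude resonances among these: for instance \(\alpha_{L_i} = 1/2\) gives the eigenvalue \(2 = 1+1\), a resonance with the tangential eigenvalue. The paper avoids this by reversing the order of operations: it first establishes the product decomposition of the affine structure (Proposition \ref{prodaffine}) purely from the parallel integrable foliations, flatness and torsion-freeness (showing the Christoffel symbols are basic), and only then linearises the \emph{restriction} of \(e_x\) to each transversal leaf, where all eigenvalues of the linear part coincide and equal \((1-a_{L_i})^{-1} \neq 0\), so the only resonant monomials are linear and Poincar\'e--Dulac applies without further hypotheses. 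Your final step of concluding that \(\nabla\) is standard from invariance under the linearised dilation also needs the holomorphy of the residues (Lemma \ref{lem:looij}), which the paper checks leaf by leaf.
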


\begin{note}
	Similar local affine splitting results as in Proposition \ref{prop:locaffprod} are proved in \cite[Section 2.5]{CHL} for Dunkl (but not necessarily unitary) connections. 
\end{note}

The proof of of Proposition \ref{prop:locaffprod} is as follows. First we use the unitary assumption of Theorem \ref{PRODTHM} to construct a gauge equivalence \(G\) between \(\nabla\) and \(\nabla^L\) on \(U^{\circ}\). We use the non-integer condition on the weights of \(\nabla\) at hyperplanes (\(a_H \notin \Z\) for all \(H \in \mH\)) to show that \(G\) extends holomorphically across \(\mH\). This together with the affine product decomposition of \(\nabla^L\) given by Proposition \ref{prop:nablaLsplit} give us a direct sum decomposition of \(TU\) into \(\nabla\)-parallel distributions. We integrate these distributions and show that \(\nabla\) is a product of its restrictions to the leaves through \(x\). The main point is to identify the affine structures on these leaves with standard ones. The key for doing so is to introduce a local dilation vector field \(e_x\) which preserves \(\nabla\) and vanishes at the point \(x\). After linearising \(e_x\), the connection \(\nabla\) becomes standard and the affine splitting expressed by Equation \eqref{eq:affinesplit} follows. The orthogonality of the factors in Equation \eqref{eq:affinesplit} with respect to the parallel inner product is a consequence of the non-integer assumptions of Theorem \ref{PRODTHM}. 

\begin{remark}
	The first and only place where the existence of a parallel positive definite Hermitian form \(\inn\) is used occurs in showing that the holonomies of \(\nabla\) and \(\nabla^L\) are conjugate close to \(x\) (Lemma \ref{lem:sameholonomy}). Our proof only requires that the holonomy of \(\nabla\) on the neighbourhood \(U^{\circ}\) is semi-simple.
\end{remark}

\subsection{Step 1: Gauge Equivalence}\label{sect:step1}

The first main result of this section is Lemma \ref{lem:sameholonomy} which guarantees that the holonomies of \(\nabla\) and the model connection \(\nabla^L\) are conjugate close to \(x\). As a result, we obtain a gauge transformation \(G:U^{\circ} \to GL(n, \C)\) between the two connections, see Definition \ref{def:G}. The non-integer assumption of Theorem \ref{PRODTHM} in the weaker form \(a_H \notin \Z\) for every \(H \in \mH\) together with well-known results for flat logarithmic connections from Appendix \ref{app:logconn} allow us to extend \(G\) holomorphically across the hyperplanes, see Lemma \ref{gauge}.

\subsubsection{{\large Local holonomy}}

Fix affine linear coordinates \((x_1, \ldots, x_n)\) centred at \(x \in L^{\circ}\) and trivialize the tangent bundle \(TU\) by means of the coordinate frame \(\p_{x_1}, \ldots, \p_{x_n}\).

\begin{notation}
	Fix \(p \in U^{\circ}\). We denote by
	\[ \Hol_p, \,\ \Hol^L_p : \pi_1(U^{\circ}, p) \to GL(n, \C)  \]
	the holonomy representations of the two flat connections \(\nabla, \nabla^L\). The main goal of this section is to prove Lemma \ref{lem:sameholonomy}, which shows that \(\Hol_p^L\) and \(\Hol_p\) are conjugate.
\end{notation}

\begin{definition}
	For \(0< \lambda \leq 1\) we let \(\varphi_{\lambda} : U \to \lambda U\) be the diffeomorphism given by scalar multiplication by \(\lambda\), that is \(\varphi_{\lambda}(q) = \lambda q\). We define a one-parameter family of connections given by
	\begin{equation}\label{eq:nablalambda}
		\nabla^{\lambda} = d - \varphi_{\lambda}^* \Omega .
	\end{equation}
	Equation \eqref{eq:nablalambda} is meant with respect to the fixed frame \(\p_{x_1}, \ldots, \p_{x_n}\).
\end{definition}

\begin{lemma}
	The connections \(\nabla^{\lambda}\) given by Equation \eqref{eq:nablalambda} are flat.
\end{lemma}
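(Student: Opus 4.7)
The plan is to reduce the flatness of $\nabla^{\lambda}$ to the flatness of $\nabla$ via the naturality of pullback under $d$ and $\wedge$. Concretely, with respect to the fixed coordinate frame $\partial_{x_1},\ldots,\partial_{x_n}$ used to trivialize $TU$, the connection $\nabla = d - \Omega$ has curvature (as an $\End(\C^n)$-valued $2$-form) equal to
\[
F_{\nabla} \;=\; -\,d\Omega \;+\; \Omega \wedge \Omega,
\]
as one checks by computing $\nabla^2 s = d(ds - \Omega s) - \Omega\wedge(ds-\Omega s)$ for any local section $s$. Flatness of $\nabla$ (which we know from Proposition \ref{prop:ftfstcon} and the Basic Assumptions \ref{ass:basic}) is thus the Maurer--Cartan type identity
\[
d\Omega \;=\; \Omega \wedge \Omega.
\]

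The first step is to write down the curvature of $\nabla^{\lambda} = d - \varphi_{\lambda}^{*}\Omega$ in the same trivialization. Repeating the computation above with $\Omega$ replaced by $\varphi_{\lambda}^{*}\Omega$ gives
\[
F_{\nabla^{\lambda}} \;=\; -\,d(\varphi_{\lambda}^{*}\Omega) \;+\; (\varphi_{\lambda}^{*}\Omega) \wedge (\varphi_{\lambda}^{*}\Omega).
\]
The second step is to invoke the standard compatibilities $d \circ \varphi_{\lambda}^{*} = \varphi_{\lambda}^{*} \circ d$ and $\varphi_{\lambda}^{*}(\alpha \wedge \beta) = \varphi_{\lambda}^{*}\alpha \wedge \varphi_{\lambda}^{*}\beta$, which are valid componentwise for matrix-valued forms. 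These give
\[
F_{\nabla^{\lambda}} \;=\; \varphi_{\lambda}^{*}\bigl(-d\Omega + \Omega \wedge \Omega\bigr) \;=\; \varphi_{\lambda}^{*} F_{\nabla} \;=\; 0,
\]
which is the flatness of $\nabla^{\lambda}$.

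There is no substantive obstacle here: the statement is just the functoriality of curvature under the scaling map $\varphi_{\lambda}$, and it works for any $0 < \lambda \leq 1$ (indeed for any smooth map whose image lies in $U$) because the coefficients of $\Omega$ are holomorphic on $U^{\circ}$. The only minor point worth flagging is that $\varphi_{\lambda}^{*}\Omega$ still has poles along $\mH$, so the identity $F_{\nabla^{\lambda}} = 0$ is interpreted on $U^{\circ}$, which is where $\nabla^{\lambda}$ is defined as a flat connection in the first place.
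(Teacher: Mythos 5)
Your proof is correct and follows essentially the same route as the paper: flatness of $\nabla^{\lambda}$ reduces to the identity $d\Omega = \Omega\wedge\Omega$ being preserved under $\varphi_{\lambda}^{*}$, since pullback commutes with $d$ and $\wedge$. The paper additionally remarks that in this particular case both terms $d\Omega$ and $\Omega\wedge\Omega$ vanish separately, but this is a cosmetic difference.
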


\begin{proof}
	Pull-back commutes with the exterior derivative and wedge product, so the flat condition \(d\Omega=\Omega\wedge\Omega\) is preserved. Indeed, both terms \(d\Omega\) and \(\Omega\wedge\Omega\) vanish in our particular case.
\end{proof}

\begin{lemma}\label{lem:pathconnections}
	As \(\lambda \to 0\), the connections
	\(\varphi^*_{\lambda}\Omega\) converge to \(\Omega^L\) smoothly on compact subsets of \(U^{\circ}\). 
\end{lemma}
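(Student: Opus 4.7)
The plan is to decompose $\Omega$ into two pieces, one which is $\varphi_\lambda^*$-invariant and one which vanishes in the limit, and then verify each claim by a direct computation in coordinates.

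First, recall that after the translation \eqref{eq:translation}, the point $x$ sits at the origin and $U$ is a neighbourhood of $0$ disjoint from every $H \in \mH \setminus \mH_L$. For each $H \in \mH_L$ we have $x \in L \subset H$, so the (originally linear) defining equation $h$ remains a \emph{linear} function of $(z_1,\dots,z_n)$ in the new coordinates. For each $H \in \mH \setminus \mH_L$ the defining equation $h$ becomes an affine linear function with $h(0)\neq 0$, and in particular $h$ is non-vanishing on $U$ (after possibly shrinking $U$). Writing
\[
\eta \;=\; \sum_{H \in \mH \setminus \mH_L} A_H\,\frac{dh}{h},
\]
we therefore have the decomposition $\Omega = \Omega^L + \eta$, where $\eta$ is a matrix-valued \emph{holomorphic} $1$-form on $U$ (this is the $(\mathrm{hol.})$ term in Equation \eqref{eq:omegaL}).

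Next I would show $\varphi_\lambda^*\Omega^L = \Omega^L$. For $H \in \mH_L$ the linearity of $h$ in the centred coordinates gives $\varphi_\lambda^*h = h\circ\varphi_\lambda = \lambda h$, whence $\varphi_\lambda^*(dh) = \lambda\,dh$ and
\[
\varphi_\lambda^*\!\left(A_H\,\frac{dh}{h}\right) \;=\; A_H\,\frac{\lambda\,dh}{\lambda\,h} \;=\; A_H\,\frac{dh}{h}.
\]
Summing over $H\in\mH_L$ yields $\varphi_\lambda^*\Omega^L=\Omega^L$ for every $\lambda \in (0,1]$.

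Finally, I would show $\varphi_\lambda^*\eta \to 0$ smoothly on compact subsets of $U$ as $\lambda \to 0$. Writing $\eta = \sum_{i=1}^n \eta_i(z)\,dz_i$ with each $\eta_i$ a holomorphic $\mathrm{End}(V)$-valued function on $U$, the identity $\varphi_\lambda^*(dz_i)=\lambda\,dz_i$ gives
\[
\varphi_\lambda^*\eta \;=\; \lambda \sum_{i=1}^n \eta_i(\lambda z)\,dz_i.
\]
On any compact $K \subset U$ the functions $\eta_i$ are bounded on $\lambda K \subset U$ uniformly in $\lambda \in (0,1]$, and the same holds for all their derivatives by Cauchy estimates applied on a slightly larger compact. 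The prefactor $\lambda$ then forces $\varphi_\lambda^*\eta \to 0$ in $C^\infty(K)$. Combining the two steps,
\[
\varphi_\lambda^*\Omega \;=\; \Omega^L + \varphi_\lambda^*\eta \;\longrightarrow\; \Omega^L
\]
smoothly on compact subsets of $U^\circ$ (in fact of $U$), which is the claim. There is essentially no obstacle here: the result is forced by the homogeneity of $\Omega^L$ and the holomorphic regularity of the remainder $\eta$.
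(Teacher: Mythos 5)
Your proposal is correct and follows exactly the paper's argument: decompose $\Omega = \Omega^L + (\mathrm{hol.})$ as in Equation \eqref{eq:omegaL}, observe $\varphi_\lambda^*\Omega^L = \Omega^L$ by homogeneity of the linear defining equations, and note that the pull-back of the holomorphic remainder tends to zero smoothly. You have simply filled in the coordinate computation and Cauchy estimates that the paper leaves implicit.
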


\begin{proof}
	Note that \(\varphi_{\lambda}^* \Omega^L = \Omega^L\), so by Equation \eqref{eq:omegaL} we have \(\varphi_{\lambda}^* \Omega - \Omega^L = \varphi_{\lambda}^* \hol\) which converges smoothly to zero as \(\lambda \to 0\).
\end{proof}

Lemma \ref{lem:pathconnections} implies that 
\[ \{\nabla^{\lambda}, \hspace{2mm} 0 \leq \lambda \leq 1 \}\] 
is a smooth path of connections  with endpoints \(\nabla^0=\nabla^L\) and \(\nabla^1=\nabla\).
We denote the holonomy representation of \(\nabla^{\lambda}\) by
\begin{equation}\label{eq:hollambda}
	\Hol^{\lambda}_p: \pi_1(U^{\circ}, p) \to GL(n, \C)	.
\end{equation}

\begin{lemma}\label{lem:pathreps}
	The holonomies \eqref{eq:hollambda} give a smooth path of representations
	with \(\Hol_p^1 = \Hol_p\) and \(\Hol_p^0=\Hol^L_p\).
\end{lemma}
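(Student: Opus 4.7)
The endpoint identifications are essentially tautological. At $\lambda=1$ the map $\varphi_1$ is the identity, so $\nabla^1 = d - \Omega = \nabla$ and hence $\Hol_p^1 = \Hol_p$. At $\lambda=0$ the map $\varphi_0$ collapses $U$ to the origin, but Lemma \ref{lem:pathconnections} tells us exactly that $\varphi_\lambda^*\Omega$ extends smoothly over $\lambda=0$ with limit $\Omega^L$ on compact subsets of $U^{\circ}$; consequently $\nabla^0 = d-\Omega^L = \nabla^L$ and $\Hol_p^0 = \Hol_p^L$.

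For the smoothness statement I would fix a piecewise-smooth loop $\gamma:[0,1]\to U^{\circ}$ based at $p$ and recall that parallel transport for $\nabla^\lambda$ along $\gamma$, in the fixed coordinate frame $\p_{x_1},\ldots,\p_{x_n}$, is the time-$1$ solution of the linear ODE
\begin{equation*}
\frac{dM}{dt}(t) = (\varphi_\lambda^*\Omega)\bigl(\gamma'(t)\bigr)\, M(t), \qquad M(0)=I,
\end{equation*}
with values in $GL(n,\C)$. By Lemma \ref{lem:pathconnections} the coefficient matrix $(\varphi_\lambda^*\Omega)(\gamma'(t))$ is jointly smooth in $(\lambda,t)\in[0,1]\times[0,1]$ (including at $\lambda=0$). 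Standard smooth dependence of solutions to linear ODEs on parameters then yields that $M(1) = \Hol_p^\lambda([\gamma])$ is smooth in $\lambda\in[0,1]$. Since this is true for every homotopy class $[\gamma]\in\pi_1(U^{\circ},p)$, the assignment $\lambda\mapsto \Hol_p^\lambda$ is a smooth path of representations into $GL(n,\C)$.

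The only delicate point is the boundary value $\lambda=0$, which is the reason we needed Lemma \ref{lem:pathconnections} rather than just the obvious smoothness on $(0,1]$; once the integrand is smooth on the compact parameter interval, the standard ODE parameter-dependence theorem does the rest. There is nothing to verify about the representation property itself, since each $\Hol_p^\lambda$ is the holonomy of a flat connection (flatness of the whole family $\nabla^\lambda$ is noted just above the statement).
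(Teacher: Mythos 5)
Your proof is correct and follows essentially the same route as the paper, which simply fixes generators of $\pi_1(U^{\circ},p)$ and invokes Lemma \ref{lem:pathconnections}; your added detail (writing parallel transport as the solution of a linear ODE whose coefficient matrix is jointly smooth in $(\lambda,t)$ up to $\lambda=0$, then citing smooth dependence on parameters) is exactly the content the paper leaves implicit.
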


\begin{proof}
	Follows from Lemma \ref{lem:pathconnections} after fixing a set of generators for \(\pi_1(U^{\circ}, p)\).
\end{proof}

\begin{lemma}\label{lem:conjugate}
	If \(\lambda>0\) then the representation \(\Hol_p^{\lambda}\) is conjugate to  \(\Hol_p\).
\end{lemma}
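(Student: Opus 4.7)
The plan is to exhibit \(\nabla^{\lambda}\) as the pullback of \(\nabla\) under the diffeomorphism \(\varphi_{\lambda}\) and then read off the conjugacy of the holonomy representations.

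First I would observe that, since \(\mH_L\) is a linear arrangement through the origin and (after possibly shrinking) \(U\) is a convex neighbourhood of \(0\), we have \(\varphi_{\lambda}(U^{\circ}) = \lambda U^{\circ} \subset U^{\circ}\) for every \(\lambda \in (0,1]\), so \(\varphi_{\lambda}\) restricts to a diffeomorphism of \(U^{\circ}\) onto \(\lambda U^{\circ}\). Using the Euclidean trivialization \(\p_{x_1},\ldots,\p_{x_n}\) of \(TU\), I would then check the key identity: if \(s:\lambda U^{\circ}\to \C^n\) is a \(\nabla\)-parallel section then \(\varphi_{\lambda}^{*}s=s\circ\varphi_{\lambda}\) is a \(\nabla^{\lambda}\)-parallel section on \(U^{\circ}\). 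Indeed, \(d(\varphi_{\lambda}^{*}s)-(\varphi_{\lambda}^{*}\Omega)(\varphi_{\lambda}^{*}s)=\varphi_{\lambda}^{*}(ds-\Omega s)=\varphi_{\lambda}^{*}(\nabla s)=0\). In short, \(\nabla^{\lambda}\) is nothing but the pullback \(\varphi_{\lambda}^{*}\nabla\) written in the global frame.

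Next I would translate this into a statement about holonomies. For any loop \(\gamma\) in \(U^{\circ}\) based at \(p\), the image \(\varphi_{\lambda}\circ\gamma\) is a loop in \(\lambda U^{\circ}\subset U^{\circ}\) based at \(\lambda p\), and the parallel transport interpretation of the previous paragraph yields
\[
\Hol_p^{\lambda}(\gamma) \;=\; \Hol_{\lambda p}(\varphi_{\lambda}\circ\gamma),
\]
where both sides are computed with respect to the same Euclidean trivialization. The homothety \(r_t(q)=(1-t(1-\lambda))q\) is a deformation retraction of \(U^{\circ}\) onto \(\lambda U^{\circ}\), because positive scalings preserve \(\mH_L\); hence the inclusion \(\lambda U^{\circ}\hookrightarrow U^{\circ}\) induces an isomorphism on \(\pi_1\). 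Together with the diffeomorphism \(\varphi_{\lambda\,*}\colon \pi_1(U^{\circ},p)\xrightarrow{\sim}\pi_1(\lambda U^{\circ},\lambda p)\), we obtain that the loops \(\varphi_{\lambda}\circ\gamma\) realize every element of \(\pi_1(U^{\circ},\lambda p)\).

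Finally I would invoke the standard change-of-basepoint formula: parallel transport of \(\nabla\) along any path from \(p\) to \(\lambda p\) (say the straight line \(t\mapsto ((1-t)+t\lambda)p\), which lies in \(U^{\circ}\)) is a linear isomorphism \(P\colon \C^n\to \C^n\) that conjugates \(\Hol_{\lambda p}\) to the corresponding representation of \(\pi_1(U^{\circ},p)\) based at \(p\). Combining the two displays gives \(\Hol_p^{\lambda}=P^{-1}\Hol_p P\) (more precisely, after identifying the two copies of \(\pi_1(U^{\circ},p)\) through the composite isomorphism just constructed). The main thing to be careful about is to keep track of the two different identifications in play, namely the fibrewise trivialization by \(\p_{x_i}\) (which is what makes sense of ``the same'' \(GL(n,\C)\) on both sides) and the topological isomorphism on fundamental groups; no deep ingredient is required beyond the naturality of holonomy under pullback by diffeomorphisms.
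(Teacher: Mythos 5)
Your proof is correct and follows essentially the same route as the paper: the identity \(\Hol_p^{\lambda}(c)=\Hol_{\lambda p}(\varphi_{\lambda}\circ c)\) coming from viewing \(\nabla^{\lambda}\) as the pullback \(\varphi_{\lambda}^{*}\nabla\), combined with conjugation by the parallel transport \(T_{\lambda}\) of \(\nabla\) along the radial segment from \(p\) to \(\lambda p\), is exactly the paper's argument. You merely make explicit two points the paper leaves implicit (that parallel sections pull back to parallel sections, and that the scaling retraction identifies the fundamental groups), which is fine.
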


\begin{proof}
	Let \(c\) be a loop in \(U^{\circ}\) based at \(p\). For \(\lambda>0\) write \(\lambda c\) for the loop \(\varphi_{\lambda} \circ c\) based at \(\lambda p\). Let \(\ell_{\lambda}\) be the straight segment from \(p\) to \(\lambda p\) and write
	\[T_{\lambda} \in GL(n, \C)\]
	for the parallel transport with respect to \(\nabla\) along \(\ell_{\lambda}\).
	The curve \(c\) is homotopic to \(\ell_{\lambda}^{-1} \cdot (\lambda c) \cdot \ell_{\lambda}\),  therefore
	\begin{equation} \label{eq:hol1}
	\Hol_p(c) =  T_{\lambda}^{-1} \cdot \Hol_{\lambda p}(\lambda c) \cdot T_{\lambda} .
	\end{equation}
	
	On the other hand, it follows from the definition of \(\nabla^{\lambda}\) that
	\begin{equation} \label{eq:hol2}
	\Hol_{\lambda p}(\lambda c) = \Hol^{\lambda}_{p}(c) .
	\end{equation}
	The lemma follows from Equations \eqref{eq:hol1} and \eqref{eq:hol2}. 
\end{proof}

\begin{lemma}\label{lem:closedorbit}
	The orbit of the representation \(\Hol_p: \pi_1(U^{\circ}, p) \to GL(n, \C)\) under the action of \(GL(n, \C)\) by conjugation is closed.
\end{lemma}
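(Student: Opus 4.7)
The plan is to exploit the unitary hypothesis on \(\nabla\) to show that the holonomy representation \(\Hol_p\) is completely reducible, and then to invoke the classical geometric-invariant-theoretic fact that, for any finitely generated group \(\Gamma\), a representation \(\rho : \Gamma \to GL(n,\C)\) has closed \(GL(n,\C)\)-conjugation orbit in \(\Hom(\Gamma, GL(n,\C))\) if and only if \(\rho\) is semisimple (see e.g.\ Richardson's paper on conjugacy classes of \(n\)-tuples, or the treatment of character varieties in Lubotzky--Magid).

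First I would observe that, since \(\nabla\) preserves the positive definite Hermitian form \(\inn\) and parallel transport is an isometry, every \(\Hol_p(\gamma)\) preserves the value \(\inn_p\) of the form on the fibre \(T_pU \cong \C^n\). Choosing a basis of \(T_pU\) in which \(\inn_p\) becomes the standard Hermitian form and conjugating by the corresponding element of \(GL(n,\C)\), one gets that \(\Hol_p\) is \(GL(n,\C)\)-conjugate to a representation taking values in the compact group \(U(n)\). Since the property of having a closed \(GL(n,\C)\)-orbit is itself conjugation-invariant, it suffices to work with a unitary representative.

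Second, any unitary representation is completely reducible: the \(\inn_p\)-orthogonal complement of an invariant subspace is again invariant and one splits off direct summands inductively. Hence \(\Hol_p\) is a semisimple representation of the finitely generated group \(\pi_1(U^{\circ}, p)\). (Finite generation holds because \(U\) is a small ball, so \(U^{\circ}\) is homotopy equivalent to the complement of the finitely many hyperplanes of \(\mH_L\) in a contractible set, whose fundamental group is generated by meridians around these hyperplanes.)

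Finally, applying the cited GIT principle yields immediately that the \(GL(n,\C)\)-orbit of \(\Hol_p\) is closed. I do not anticipate any genuine obstacle: the entire content of the lemma is the translation from the differential-geometric unitary hypothesis to algebraic semisimplicity; the closedness of the orbit is then a direct citation. If a self-contained argument is preferred to invoking the GIT result, one can argue instead by the Peter--Weyl/averaging idea directly: any element in the closure of the orbit is a limit of conjugates of a unitary representation, and by compactness of \(U(n)\) (after passing to a subsequence of the conjugating elements modulo left multiplication by unitary matrices using the \(QR\)-type Iwasawa decomposition \(GL(n,\C) = U(n)\cdot B\)) one shows that the limit is itself conjugate to the original unitary representation, which gives closedness.
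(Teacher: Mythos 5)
Your proposal is correct and follows essentially the same route as the paper: the parallel positive definite Hermitian form makes \(\Hol_p\) semi-simple, and the closedness of the orbit then follows from the standard fact (cited in the paper as \cite[Theorem 1.7]{LM}) that semi-simple representations of finitely generated groups have closed conjugation orbits in \(\Hom(\Gamma, GL(n,\C))\). The extra remarks on finite generation of \(\pi_1(U^{\circ},p)\) and the alternative Iwasawa-decomposition argument are fine but not needed.
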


\begin{proof}
	Since \(\Hol_p\) preserves a positive definite Hermitian inner product on \(T_p\C^n\), it is semi-simple. It is a general fact that semi-simple representations of a finitely generated group \(\Gamma\) have closed orbits in \(\Hom(\Gamma, GL(n, \C))\) under the action by conjugation, see \cite[Theorem 1.7]{LM}.
\end{proof}

\begin{lemma} \label{lem:sameholonomy}
	Fix \(p \in U^{\circ}\). The holonomy representations 
	\[ \Hol_p, \,\ \Hol^L_p : \pi_1(U^{\circ}, p) \to GL(n, \C)  \]
	of the two flat connections \(\nabla, \nabla^L\) are conjugate.
\end{lemma}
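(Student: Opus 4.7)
The plan is to combine the three preceding lemmas into a short closure argument in the representation variety $\Hom(\pi_1(U^{\circ},p), GL(n,\C))$. Concretely, I would argue that $\Hol_p^L$ lies in the closure of the $GL(n,\C)$-conjugacy orbit of $\Hol_p$, and then invoke closedness of that orbit to conclude the two representations are actually conjugate.

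More precisely, first I would fix a finite set of generators $\gamma_1,\ldots,\gamma_m$ of $\pi_1(U^{\circ},p)$, which lets us view each representation as a point of $GL(n,\C)^m$. By Lemma \ref{lem:pathreps}, the assignment $\lambda \mapsto (\Hol_p^{\lambda}(\gamma_1),\ldots,\Hol_p^{\lambda}(\gamma_m))$ is a smooth (in particular continuous) path in $GL(n,\C)^m$ with $\lambda=1$ giving $\Hol_p$ and $\lambda=0$ giving $\Hol_p^L$. By Lemma \ref{lem:conjugate}, for every $\lambda \in (0,1]$ the representation $\Hol_p^{\lambda}$ lies in the $GL(n,\C)$-orbit
\[
\mathcal{O} = \{ g \cdot \Hol_p \cdot g^{-1} : g \in GL(n,\C) \}.
\]
Letting $\lambda \to 0^+$ along the path, continuity forces $\Hol_p^L \in \overline{\mathcal{O}}$.

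Finally, Lemma \ref{lem:closedorbit} says that $\mathcal{O}$ is closed in $\Hom(\pi_1(U^{\circ},p), GL(n,\C))$, so $\overline{\mathcal{O}} = \mathcal{O}$ and hence $\Hol_p^L \in \mathcal{O}$, which is exactly the statement that $\Hol_p^L$ and $\Hol_p$ are conjugate. The only subtlety worth double-checking is that the conjugating elements $T_{\lambda}$ used in Lemma \ref{lem:conjugate} need not converge as $\lambda \to 0$ (a priori they could escape to infinity, which is exactly why we need the closed-orbit argument rather than a naive passage to the limit); the semisimplicity of $\Hol_p$ coming from the existence of the parallel positive definite Hermitian form $\inn$ is what rules this out via \cite[Theorem 1.7]{LM}. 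No new ingredient beyond Lemmas \ref{lem:pathreps}, \ref{lem:conjugate}, and \ref{lem:closedorbit} should be needed.
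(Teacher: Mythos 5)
Your proof is correct and follows exactly the route of the paper, which simply cites Lemmas \ref{lem:pathreps}, \ref{lem:conjugate} and \ref{lem:closedorbit}; you have merely spelled out the closure argument that the paper leaves implicit. The remark about the conjugating elements $T_{\lambda}$ possibly escaping to infinity correctly identifies why the closed-orbit lemma (and hence semisimplicity of the unitary holonomy) is the essential ingredient.
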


\begin{proof}	
	Follows from Lemmas \ref{lem:pathreps}, \ref{lem:conjugate} and \ref{lem:closedorbit}.
\end{proof}

\begin{remark}\label{rmk:localizationunitary}
	Lemma \ref{lem:sameholonomy} implies that the connection \(\nabla^L\) admits a parallel positive definite Hermitian inner product.
\end{remark}

\subsubsection{{\large The gauge transformation}}

Let \(\Omega\) and \(\Omega^L\) be the connection matrices of \(\nabla\) and \(\nabla^L\) with respect to the coordinate frame \(\p_{x_1}, \ldots, \p_{x_n}\).

\begin{notation}
	We denote the entries of \(\Omega\) by \(\omega_{ij}\), thus  \(\omega_{ij}\) are holomorphic \(1\)-forms on \(U^{\circ}\). The matrix \(\Omega\) defines a Pfaffian system of linear differential equations
	\begin{equation}\label{eq:pfaffianeq}
	df_i = \sum_{j=1}^{n} \omega_{ij} f_j .
	\end{equation}
	A solution of \eqref{eq:pfaffianeq} is an \(n\)-tuple of holomorphic functions \(f_i\). We arrange them as a column vector \(f=(f_1, \ldots, f_n)^t\) and write \eqref{eq:pfaffianeq} more shortly as \(df=\Omega f\). 
\end{notation}

\begin{definition}
	A \textbf{fundamental solution} is an invertible matrix-valued function \(X\) which solves
	\begin{equation}\label{fundsol}
	dX = \Omega X .
	\end{equation}
	The columns of \(X\) are pointwise linearly independent vector-valued holomorphic functions whose components solve
	Equation \eqref{eq:pfaffianeq}. In other words, the columns of \(X\) form a frame of parallel vector fields for \(\nabla=d-\Omega\).
\end{definition}

\begin{lemma}
	Let \(p \in U^{\circ}\) and let \(W \subset U^{\circ}\) be a simply connected neighbourhood of \(p\). Then there is a fundamental solution of Equation \eqref{fundsol} defined on \(W\).
\end{lemma}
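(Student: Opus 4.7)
The plan is to show that the matrix Pfaffian system $dX = \Omega X$ is integrable on $U^{\circ}$ and then use simple connectedness of $W$ to patch local solutions into a global one.

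First I would check the integrability condition. Since $\Omega = \sum_{H \in \mH_L} A_H\, dh/h$ is a sum of closed holomorphic $1$-forms, we have $d\Omega = 0$ on $U^{\circ}$. By Proposition \ref{prop:ftfstcon}, flatness of $\nabla$ is equivalent to $\Omega \wedge \Omega = 0$ (see also Proposition \ref{prop:comm}). Therefore the classical Frobenius integrability condition $d\Omega = \Omega \wedge \Omega$ holds on the hyperplane complement $U^{\circ}$.

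Next I would invoke the standard local existence theorem for integrable linear Pfaffian systems with holomorphic coefficients: around every point $q \in U^{\circ}$ and for every choice of invertible initial value $X_0 \in GL(n,\C)$, there is a unique holomorphic solution $X$ to $dX = \Omega X$ defined in a small open neighbourhood of $q$ with $X(q) = X_0$. Geometrically, this is nothing but the construction of a local parallel frame for the flat connection $\nabla = d - \Omega$.

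Finally I would globalise. Fix $X_0 \in GL(n,\C)$ and, for each $q \in W$, choose a path $\gamma$ from $p$ to $q$ in $W$; define $X(q)$ by parallel transporting the columns of $X_0$ along $\gamma$ using $\nabla$. Because $\nabla$ is flat and $W$ is simply connected, any two such paths are homotopic rel endpoints inside $W$, and the holonomy along a nullhomotopic loop is the identity; hence $X(q)$ does not depend on $\gamma$. This yields a holomorphic map $X : W \to GL(n,\C)$ (the linearity of parallel transport together with $X(p)=X_0$ invertible gives invertibility everywhere) satisfying $dX = \Omega X$ by construction, i.e.\ a fundamental solution.

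The only genuine input is the integrability identity $d\Omega = \Omega \wedge \Omega$, which we already have for free from flatness; after that, the argument is the standard correspondence between flat connections on simply connected domains and global parallel frames, so no real obstacle is expected.
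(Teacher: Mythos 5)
Your proof is correct and follows essentially the same route as the paper, which simply cites the integrability condition $d\Omega=\Omega\wedge\Omega$, the Frobenius theorem, and analytic continuation over the simply connected set $W$. (Minor notational point: the $\Omega$ in Equation \eqref{fundsol} is the connection matrix of $\nabla$, not of the model $\nabla^L$, but it is still a sum of closed logarithmic forms plus holomorphic terms, so your verification of $d\Omega=0$ and $\Omega\wedge\Omega=0$ goes through unchanged.)
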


\begin{proof}
	Follows from the integrability condition \(d\Omega=\Omega\wedge\Omega\) together with Frobenius Theorem and analytic continuation, see \cite[Theorem 1.1]{NY}
\end{proof}

\begin{note}
	If \(X\) is a fundamental solution of Equation \eqref{fundsol} then any other fundamental solution of the system \eqref{fundsol} is given by \(XC\) where \(C\) is a constant invertible matrix, and any vector solution of Equation \eqref{eq:pfaffianeq} equals \(X v\) for some \(v \in \C^n\). Solutions of Equations \eqref{eq:pfaffianeq} and \eqref{fundsol} are uniquely determined by their value at a point, i.e. the initial condition \(f(p)\) or \(X(p)\).
\end{note}
 
\begin{notation}
	Let 
	\[\gamma: [0,1] \to U^{\circ}\] 
	be a path. We can pull-back the connection form \(\Omega\) to obtain \(\gamma^*\Omega\), a matrix of one forms on the interval. We slightly abuse notation and denote by \(X(t)\) a matrix valued solution of the linear system of ODEs
	\begin{equation}\label{eq:ODE}
		dX = (\gamma^*\Omega) X
	\end{equation}
	in the interval. It is a basic fact from the theory of Ordinary Differential Equations that given any \(X(0) \in GL(n, \C)\) there is a unique solution \(X:[0,1] \to GL(n, \C)\) to equation \eqref{eq:ODE} with initial value \(X(0)\). Alternatively, we can think of \(X(t)\) as the analytic continuation along \(\gamma\) of the local fundamental solution of Equation \eqref{fundsol} with value \(X(0)\) at \(p\), see \cite[Section 1.2]{NY}.
\end{notation}

\begin{definition}
	A fundamental solution of \(dX=\Omega X\) along a path \(\gamma:[0,1] \to U^{\circ}\) is a matrix-valued function \(X:[0,1] \to GL(n, \C)\) which solves Equation \eqref{eq:ODE}. 
\end{definition}

\begin{note}
	The column vectors of a fundamental solution \(X\!(t)\) of \(dX=\Omega X\) along \(\gamma\) are parallel vector fields along \(\gamma\). If \(\{v_1, \ldots, v_n\}\) form a basis of \(T_p\C^n\) given by the columns of \(X(0)\) then we can use \(\nabla\) to parallel transport the basis along \(\gamma(t)\) to get vector-valued functions \(v_i(t): [0,1] \to \C^n\) giving the columns of \(X(t)=(v_1(t), \ldots, v_n(t))\). Moreover, if \(\gamma\) is a closed loop \(c\) based at \(p \in U^{\circ}\) then we have the relation
	\[v_i(1) = \Hol_p(c) v_i(0) \]
	or equivalently \(\Hol_p(c) = X(1) \cdot X(0)^{-1}\).
\end{note}

\begin{definition}\label{def:G}
	Let \(q \in U^{\circ}\) and let \(\gamma: [0,1] \to U^{\circ}\) be a smooth path with \(\gamma(0)=p\) and \(\gamma(1)=q\). Let \(X, X^L\) be the fundamental solutions of
	\begin{equation}\label{eq:localfundsol}
		dX = \Omega X, \qquad dX^L = \Omega^L X^L
	\end{equation}
	analytically continued along \(\gamma\) with initial conditions \(X(0) = \Id \) and \(X^L(0)=A\). Here  \(A \in GL(n, \C)\) is such that 
	\begin{equation}\label{eq:conjugacy}
		\Hol_p(c) = A^{-1} \cdot \Hol^L_p(c) \cdot A 
	\end{equation}
	for any loop \(c\) based at \(p\), its existence is guaranteed by Lemma \ref{lem:sameholonomy}.  
	We define the \textbf{gauge transformation} \(G(q)\) by the relation
	\begin{equation}\label{eq:G}
		X^L(q) = G(q) \cdot X(q) .
	\end{equation}
\end{definition}

\begin{lemma}
	The above definition of \(G(q)\) does not depend on the choice of path \(\gamma\).	
\end{lemma}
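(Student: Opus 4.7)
The plan is to show that if $\gamma_1,\gamma_2$ are two smooth paths in $U^{\circ}$ from $p$ to $q$, then the corresponding matrices $G_1(q), G_2(q)$ obtained from Definition~\ref{def:G} coincide. The whole argument is just bookkeeping of parallel transports combined with the conjugacy relation \eqref{eq:conjugacy}.

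The first step is to translate the definition of $G(q)$ into parallel transport. Given a path $\gamma_j$ from $p$ to $q$, let $P_{\gamma_j}$ and $P^L_{\gamma_j}$ denote the parallel transport maps $T_p\C^n \to T_q \C^n$ for $\nabla$ and $\nabla^L$, expressed as matrices in the frame $\p_{x_1},\ldots,\p_{x_n}$. Since the $i$-th column of a fundamental solution is the parallel transport of its initial $i$-th column, one reads off directly from \eqref{eq:localfundsol} that $P_{\gamma_j} = X_j(q)$ and $P^L_{\gamma_j} = X_j^L(q)\cdot A^{-1}$. Consequently the gauge matrix computed along $\gamma_j$ is
\[
G_j(q) = X_j^L(q)\cdot X_j(q)^{-1} = P^L_{\gamma_j}\cdot A \cdot P_{\gamma_j}^{-1}.
\]

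The second step is to form the loop $c = \gamma_1 \cdot \gamma_2^{-1}$ based at $p$ and express both holonomies around $c$ in terms of these parallel transports, namely $\mathrm{Hol}_p(c) = P_{\gamma_2}^{-1} P_{\gamma_1}$ and $\mathrm{Hol}^L_p(c) = (P^L_{\gamma_2})^{-1} P^L_{\gamma_1}$. Substituting the formulas above,
\[
\mathrm{Hol}_p(c) = X_2(q)^{-1} X_1(q), \qquad
\mathrm{Hol}^L_p(c) = A\,X_2^L(q)^{-1} X_1^L(q)\,A^{-1}.
\]

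The third and final step is to invoke the conjugacy relation \eqref{eq:conjugacy} provided by Lemma~\ref{lem:sameholonomy}, which was set up precisely so that $A\cdot\mathrm{Hol}_p(c)\cdot A^{-1} = \mathrm{Hol}^L_p(c)$. Plugging in the two expressions above gives
\[
A\,X_2(q)^{-1} X_1(q)\,A^{-1} = A\,X_2^L(q)^{-1} X_1^L(q)\,A^{-1},
\]
hence $X_2^L(q)\,X_2(q)^{-1} = X_1^L(q)\,X_1(q)^{-1}$, i.e.\ $G_1(q) = G_2(q)$. There is no real obstacle to overcome here; the subtlety (already resolved) is that \eqref{eq:conjugacy} was established earlier from the semisimplicity of the holonomy, and this lemma simply packages that conjugacy into the statement that $G$ is a single-valued function on $U^\circ$.
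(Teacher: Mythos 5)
Your proof is correct and follows essentially the same route as the paper: both arguments reduce path-independence to the conjugacy relation \eqref{eq:conjugacy}, the only cosmetic difference being that you phrase everything in terms of parallel transport operators $P_{\gamma}$, $P^L_{\gamma}$ while the paper tracks how the fundamental solutions $X$, $X^L$ change when the path is modified by the loop $c=\gamma^{-1}\gamma'$ (both $X(q)$ and $X^L(q)$ get multiplied on the right by the same matrix $\Hol_p(c)$). The bookkeeping $P^L_{\gamma}=X^L(q)A^{-1}$ and the final cancellation are carried out correctly.
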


\begin{proof}
	Let \(\gamma'\) be another path connecting \(p\) to \(q\) and let \(X', (X^L)'\) be the fundamental solutions of Equations \eqref{eq:localfundsol} along \(\gamma'\). We want to show that
	\begin{equation}\label{eq:XX'}
		(X^L)'(q) \cdot (X')^{-1}(q) = X^L(q) \cdot X^{-1}(q) .
	\end{equation}
	
	Set \(c= \gamma^{-1} \gamma'\), so that \(c\) is a loop based at \(p\) and \(\gamma'\) is homotopic to \(\gamma \cdot c\). The values of \(X'(q)\) and \((X^L)'(q)\) are easily deduced as follows:
	\begin{itemize}
		\item[(i)] After we go along \(c\) the solution \(X'\) changes from \(\Id\) to \(\Hol_p(c)\). Since \(X\) is a fundamental solution of \(dX=\Omega X\) along \(\gamma\) with initial value \(\Id\) and any other solution is obtained by right multiplication by a constant matrix, we conclude that
		\[X'(q) = X(q) \cdot \Hol_p(c) .\]
		\item[(ii)] 	After we go along \(c\) the solution \((X^L)'\) changes from \(A\) to \(\Hol_p(c)\cdot A\). Since \(X^L\) is a fundamental solution of \(dX^L=\Omega^L X^L\) along \(\gamma\) with initial value \(A\) and any other solution is obtained by right multiplication by a constant matrix, we conclude that
		\begin{align*}
			(X^L)'(q) &= X^L(q) \cdot A^{-1} \cdot \Hol^L_p(c) \cdot A \\
			&= X^L(q) \cdot  \Hol_p(c)
		\end{align*}
		where the second equality follows from Equation \eqref{eq:conjugacy}.
	\end{itemize}
	
	From (i) and (ii) we deduce that \(X'(q)\) and \((X^L)'(q)\) are obtained from \(X(q)\) and \(X^L(q)\) by right multiplication by the same matrix \(\Hol_p(c)\); therefore Equation \eqref{eq:XX'} holds.
\end{proof}

\begin{note}
	The \emph{monodromy matrix} of a fundamental solution of \(dX=\Omega X\) along a loop \(c\) is defined by
	\[M_X(c)= X(0)^{-1} \cdot X(1) .\]
	The monodromy matrices do depend on the initial condition. It is easy to check the following properties. 
	\begin{itemize}
		\item[(i)] If \(X(0)=\Id\) then \(M_X(c)=\Hol_p(c)\).
		\item[(ii)] If \(X\) is a fundamental solution of \(dX=\Omega X\) along \(c\) then any other fundamental solution is of the form \(X B\) where \(B\) is a constant matrix and 
		\[M_{XB}(c) = B^{-1} M_X(c) B .\]
	\end{itemize}
	We can restate Definition \ref{def:G} by saying that \(G\) is the ratio of two fundamental solutions \(G= X^L \cdot X^{-1}\) with initial conditions chosen so that the monodromies are the same.
\end{note}

\begin{lemma}
	The function \(G: U^{\circ} \to GL(n , \C)\) given by \eqref{eq:G} is holomorphic and satisfies the linear equation
	\begin{equation} \label{eq:dG}
		dG = \Omega^L G - G \Omega .
	\end{equation} 
\end{lemma}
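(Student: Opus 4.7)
The plan is to verify the two claims (holomorphy and the differential equation) by working locally, where $G$ can be written as an honest ratio of fundamental solutions. For any $q \in U^{\circ}$, choose a simply connected open neighbourhood $W \subset U^{\circ}$ containing both $q$ and a short piece of any path $\gamma$ from $p$ to $q$ used in Definition \ref{def:G}. On $W$, the Frobenius-type existence result for the integrable Pfaffian systems $dX = \Omega X$ and $dX^L = \Omega^L X^L$ provides holomorphic fundamental solutions $\widetilde{X}, \widetilde{X}^L \colon W \to GL(n, \C)$ whose values at $q$ agree with the analytic continuations defining $X(q)$ and $X^L(q)$ (use uniqueness of solutions along paths inside $W$). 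Thus on $W$ we have $G = \widetilde{X}^L \cdot \widetilde{X}^{-1}$, which is a holomorphic $GL(n, \C)$-valued function because $\widetilde{X}$ is pointwise invertible and matrix inversion and multiplication are holomorphic operations. Since $q$ was arbitrary, $G$ is holomorphic on $U^{\circ}$.

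For the differential equation, work on the same local patch $W$ and differentiate the identity $\widetilde{X}^L = G \cdot \widetilde{X}$:
\begin{equation*}
d\widetilde{X}^L = dG \cdot \widetilde{X} + G \cdot d\widetilde{X} .
\end{equation*}
Substituting the defining relations $d\widetilde{X}^L = \Omega^L \widetilde{X}^L = \Omega^L G \widetilde{X}$ and $d\widetilde{X} = \Omega \widetilde{X}$ gives
\begin{equation*}
\Omega^L G \widetilde{X} = dG \cdot \widetilde{X} + G \Omega \widetilde{X} ,
\end{equation*}
and since $\widetilde{X}$ takes values in $GL(n, \C)$ we may right-multiply by $\widetilde{X}^{-1}$ to conclude
\begin{equation*}
dG = \Omega^L G - G \Omega ,
\end{equation*}
which is Equation \eqref{eq:dG}. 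This identity is intrinsic (no choice of $W$), so it holds on all of $U^{\circ}$.

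There is no real obstacle here beyond bookkeeping: the previous lemma has already established that $G$ is path-independent (hence globally well-defined as a single-valued function on $U^{\circ}$), so the only remaining task is to pass from the pointwise, analytic-continuation description of $G$ to a local holomorphic description, which is immediate from Frobenius. The computation of $dG$ is then a one-line manipulation. A small point worth noting is that the argument only uses that $\Omega$ and $\Omega^L$ are holomorphic on $U^{\circ}$ (not their logarithmic structure), so this step of the proof is purely formal; the non-integer weight assumption of Theorem \ref{PRODTHM} will enter later, when one wishes to extend $G$ holomorphically across the hyperplanes of $\mH_L$.
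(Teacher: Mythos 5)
Your proposal is correct and follows essentially the same route as the paper: locally write $G = X^L \cdot X^{-1}$ for holomorphic fundamental solutions on a simply connected patch, deduce holomorphy from invertibility of $X$, and obtain \eqref{eq:dG} by differentiating the product (the paper differentiates $G = X^L X^{-1}$ directly rather than $X^L = GX$, which is the same one-line computation). Your closing remark that only holomorphy of $\Omega, \Omega^L$ on $U^{\circ}$ is used here, with the non-integer condition entering only for the extension across $\mH$, matches the paper's organization exactly.
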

	
\begin{proof}
	Locally on \(U^{\circ}\) we can write \(G = X^L \cdot X^{-1}\)
	where \(X, X^L\) are holomorphic solutions of \eqref{eq:localfundsol} and
	\begin{align*}
	dG &= (dX^L) X^{-1} - X^L X^{-1}(dX)X^{-1} \\
	&= \Omega^L G - G \Omega . \qedhere
	\end{align*}
\end{proof}

\begin{note}
	Multiplying Equation \eqref{eq:dG} on the right by \(G^{-1}\) we obtain the standard formula
	\begin{equation}\label{eq:gaugechange}
	\Omega^L = G \Omega G^{-1} + (dG )G^{-1}
	\end{equation}
	cf. \cite[Equation (1.6)]{NY}.
\end{note}

\begin{lemma}\label{gauge}
	The map \(G\) extends holomorphically over the whole neighbourhood \(U\) with values in \(GL(n, \C)\). In other words,
	\(\nabla\) and \(\nabla^L\) are holomorphically gauge equivalent.
\end{lemma}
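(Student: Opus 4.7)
The plan is to re-interpret $G$ as a flat section of an induced meromorphic connection on the trivial $\End(\C^n)$-bundle over $U$, and then invoke an extension result for flat logarithmic connections with non-resonant residues from Appendix \ref{app:logconn}. Concretely, I would introduce the connection $\widehat{\nabla}$ on $\End(\C^n) \times U$ defined by $\widehat{\nabla} T := dT - \Omega^L T + T\Omega$; then Equation \eqref{eq:dG} says precisely that $\widehat{\nabla} G = 0$ on $U^{\circ}$. Flatness of $\nabla$ and $\nabla^L$ makes $\widehat{\nabla}$ flat, and since $\Omega$ and $\Omega^L$ share the same poles on $U$ (simple poles along $\mH_L$ with common residue $A_H$ at $H$, the other hyperplanes of $\mH$ being absent in $U$), the connection $\widehat{\nabla}$ has simple logarithmic poles along $\mH_L$ with residue $\operatorname{ad}(A_H) : T \mapsto A_H T - T A_H$ at each $H \in \mH_L$.

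Next I would analyse these residues. Since $A_H$ is rank one with eigenvalues $\{a_H, 0, \ldots, 0\}$, the eigenvalues of $\operatorname{ad}(A_H)$ acting on $\End(\C^n)$ are $\{a_H, -a_H, 0\}$. The non-integer hypothesis $a_L \notin \Z$ of Theorem \ref{PRODTHM}, applied at the irreducible intersection $L = H \in \mL_{irr}(\mH)$, gives $a_H \notin \Z$; hence no non-zero eigenvalue of any residue $\operatorname{ad}(A_H)$ is an integer. To extend $G$ across a smooth point $q \in H^{\circ} \cap U$ (where $H$ is the only member of $\mH_L$ through $q$), I would work in local holomorphic coordinates $(w_1, \ldots, w_n)$ centred at $q$ with $H = \{w_1 = 0\}$; the equation $\widehat{\nabla} G = 0$ then reduces in the $dw_1$-direction to the regular singular ODE
\[
w_1 \partial_{w_1} G = \operatorname{ad}(A_H)\, G + w_1 \cdot (\text{holomorphic in } w_1, \ldots, w_n).
\]
By the standard Frobenius-type theory of such equations (Appendix \ref{app:logconn}), a solution $G$ holomorphic on the punctured neighbourhood has moderate growth in $w_1$ and therefore admits a one-sided Laurent expansion $G = \sum_{k \geq N} w_1^k G_k$ whose leading coefficient satisfies $(N - \operatorname{ad}(A_H)) G_N = 0$. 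Thus $N$ is an integer eigenvalue of $\operatorname{ad}(A_H)$, and the non-integer condition forces $N = 0$; hence $G$ extends holomorphically across $q$.

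Once this extension across the smooth part of $\mH_L$ is established, the remaining singular locus in $U$ (intersections of at least two members of $\mH_L$) has complex codimension at least $2$, so Hartogs' extension theorem promotes $G$ to a holomorphic map on all of $U$. To upgrade this to values in $GL(n, \C)$, I would apply the same argument to $G^{-1}$, which by the reverse gauge relation $d(G^{-1}) = \Omega G^{-1} - G^{-1} \Omega^L$ is a parallel section of the analogous logarithmic connection on $\End(\C^n)$ with residues $-\operatorname{ad}(A_H)$; the eigenvalues are the same up to sign, so the same extension argument yields $G^{-1}$ holomorphic on $U$, and $G$ therefore takes values in $GL(n, \C)$ throughout $U$. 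The main obstacle is entirely packaged in the Frobenius/moderate-growth lemma of Appendix \ref{app:logconn}; once that is in hand, the non-integer assumption does precisely the work needed to rule out negative Laurent tails and so to extend $G$ across the arrangement.
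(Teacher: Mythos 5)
Your proposal is correct and follows essentially the same route as the paper: reduce to extending across the smooth points of \(\mH_L\), use the non-integrality of \(a_H\) (hyperplanes being irreducible intersections) to kill the negative Laurent tail of \(G\), invoke Hartogs across the codimension-two locus, and run the symmetric argument for \(G^{-1}\) to conclude that \(G\) lands in \(GL(n,\C)\). The only organizational difference is that the paper first gauges \(\nabla\) to the exact Euler model \(\nabla^0=d-A_H\,dz_1/z_1\) via the Normal Form Theorem \ref{thm:normalform} and then applies Lemma \ref{lem:Ghol} to \(\tilde{G}=G_1\cdot G\), whereas you run the Laurent-coefficient argument (integer eigenvalue of \(\operatorname{ad}(A_H)\), whose spectrum is \(\{a_H,-a_H,0\}\)) directly on \(G\) as a flat section of the adjoint connection; this works, but tacitly requires re-establishing the moderate-growth estimate of Lemma \ref{lem:Gmero} in the slightly more general situation where both connection forms carry holomorphic perturbation terms.
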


\begin{proof}
	It suffices to show that both \(G\) and \(G^{-1}\) extend over \(U\) as holomorphic matrix-valued functions. Indeed, if this is established, then the extensions must satisfy \(G^{-1} \cdot G = \Id\) everywhere - because the identity holds on the dense set \(U^{\circ}\) - and therefore the extension of \(G\) must take values in \(GL(n, \C)\). We will prove that \(G\) extends holomorphically over \(U\), the argument for \(G^{-1}\) is the same.
	
	By Hartogs, it is enough to check that \(G\) extends in complex codimension one. Let \(H \in \mH_L\) and fix \(x \in H^{\circ}\).
	Let us consider a small polydisc \(D^n\) centred at \(x\) which intersects the hyperplanes of the arrangement only at \(H\). Take complex coordinates \((z_1, \ldots, z_n)\) on \(D^n\) with \(H=\{z_1=0\}\). We have three connections on \(D^n\) which we can write with respect to our fixed trivialization of \(TU\) as 
	\(\nabla= d- \Omega\), \(\nabla^L=d-\Omega^L\) and \(\nabla^0=d-\Omega^0\) with
	\[\Omega = A_H \frac{dz_1}{z_1} + \hol, \hspace{2mm} \Omega^L = A_H \frac{dz_1}{z_1} + \hol', \hspace{2mm} \Omega^0= A_H \frac{dz_1}{z_1} .\]
	The eigenvalues of the residue matrix \(A_H\) are \(a_H\) (with multiplicity \(1\)) and \(0\) (with multiplicity \(n-1\)). Since \(a_H\) is non-integer the non-resonance condition of the Normal Form Theorem \ref{thm:normalform} is satisfied, so we have a holomorphic gauge transformation \(G_1\) between \(\nabla\) and \(\nabla^0\) on \(D^n\). The composition (pointwise product) \(\tilde{G}= G_1 \cdot G\) defined on \(D^* \times D^{n-1}\) is a  gauge equivalence between \(\nabla^L\) and \(\nabla^0\). By Lemma \ref{lem:Ghol} the gauge transformation \(\tilde{G}\) extends holomorphically over \(D^n\). Since \(G_1\) has a holomorphic inverse over \(D^n\), we conclude that \(G=G_1^{-1}\tilde{G}\) is holomorphic on \(D^n\).	
\end{proof}

\subsection{Step 2: Foliations}\label{sect:foliation}

We use the gauge transformation \(G\) to define a frame of holomorphic vector fields close to \(x=0\). The members of the frame span integrable \(\nabla\)-parallel distributions. The main result is Proposition \ref{prodaffine}, which shows that \(\nabla\) is a product of the induced connections on the leaves through the origin.

\begin{definition}
	 We choose linear coordinates \((x_1, \ldots, x_n)\)  as provided by  Proposition \ref{prop:nablaLsplit} so that \(\nabla^L\) is a product. We refer to \((x_1, \ldots, x_n)\) as
	\textbf{adapted linear coordinates}.
\end{definition}

\subsubsection{{\large Parallel distributions and the \(s\)-frame}}

\begin{definition}\label{def:sframe}
	Let \((x_1, \ldots, x_n)\) be adapted linear coordinates and let \(G\) be the gauge transformation \eqref{eq:G}. Since \(G\) is holomorphic and invertible, we define the \textbf{\(s\)-frame} of holomorphic vector fields \((s_1, \ldots, s_n)\) on \(U\) by the equation
	\begin{equation}\label{eq:frame}
	\p_{x_i} = \sum_{j=1}^{n}G_{ji} s_j .
	\end{equation}
\end{definition}

\begin{note}
	The map \(G\) defines a section of \(\End T\C^n\) over \(U\) and its matrix representation \(G=(G_{ij})\) is taken with respect to the trivialization given by \(\p_{x_1}, \ldots, \p_{x_n}\), this is \(G\p_{x_i}=\sum_j G_{ji}\p_{x_j}\). Unwinding Equation \eqref{eq:frame} we get that \(G s_i = \p_{x_i}\) for all \(1 \leq i \leq n\). In other words, the \(s\)-frame is mapped under \(G\) to the coordinate frame of the adapted linear coordinates.
\end{note}

\begin{lemma}\label{lem:sframeconnection}
	The covariant derivatives \(\nabla s_i\) are given as follows.
	\begin{itemize}
		\item If \(i \in I_j\) for some \(1\leq j \leq k\) then \(\nabla s_i = -\sum_{m \in I_k} \Omega^L_{mi} s_m\).
		\item If \(i > n-d\) then  \(\nabla s_i = 0\).  
	\end{itemize}
\end{lemma}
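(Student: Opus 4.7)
The plan is to unwind the definition of the $s$-frame and then turn the gauge equivalence \eqref{eq:gaugechange} into a formula for $\nabla s_i$, after which the claim reduces to the block structure of $\Omega^L$ in the adapted coordinates.

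First I would interpret the defining equation \eqref{eq:frame} matricially. Write every vector field as a column vector with respect to the coordinate frame $\p_{x_1}, \ldots, \p_{x_n}$. Then \eqref{eq:frame} reads $\p_{x_i} = G s_i$ in matrix form, so the coordinate column of $s_i$ is the $i$-th column of $G^{-1}$. In other words, the matrix whose $i$-th column expresses $s_i$ in the coordinate frame is $G^{-1}$.

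Next I would compute $\nabla s_i$ directly. Recall $\nabla v = dv - \Omega v$ on column vectors. Applying this to $s_i = G^{-1}e_i$ gives
\[
\nabla s_i = (dG^{-1})e_i - \Omega G^{-1}e_i.
\]
Differentiating $GG^{-1}=\Id$ yields $dG^{-1} = -G^{-1}(dG)G^{-1}$, while the gauge change formula \eqref{eq:gaugechange} rearranges to $(dG)G^{-1} = \Omega^L - G\Omega G^{-1}$. Multiplying the latter on the left by $G^{-1}$ and the right side of the equation for $dG^{-1}$, one finds
\[
\Omega G^{-1} = G^{-1}\Omega^L + dG^{-1}.
\]
Substituting into the formula for $\nabla s_i$ cancels the $dG^{-1}$ terms and leaves
\[
\nabla s_i = -G^{-1}\Omega^L e_i = -\sum_{m=1}^n \Omega^L_{mi}\,(G^{-1}e_m) = -\sum_{m=1}^n \Omega^L_{mi}\, s_m.
\]

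Finally I would use Proposition \ref{prop:nablaLsplit}: in the adapted linear coordinates, $\nabla^L$ splits as a product according to the decomposition $\{1,\ldots,n\}=I_0\cup I_1\cup\ldots\cup I_k$. This block structure says precisely that $\Omega^L_{mi}=0$ whenever $m$ and $i$ lie in different blocks, and that $\Omega^L_{mi}=0$ for all $m$ whenever $i\in I_0$ (the trivial Euclidean factor, where $\nabla^L$ restricts to $d$). Plugging this into the identity above yields the two cases of the lemma, with the sum in the first case restricted to $m\in I_j$ (the index $I_k$ in the statement is a typographical slip for $I_j$). There is no substantive obstacle here; the content is a direct computation once the correspondence between $s_i$ and columns of $G^{-1}$ is identified.
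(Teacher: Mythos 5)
Your proposal is correct and follows essentially the same route as the paper: both arguments show that $\Omega^L$ is the connection matrix of $\nabla$ in the $s$-frame by combining the defining relation $s_i = G^{-1}\p_{x_i}$ with the gauge-change identity \eqref{eq:dG}, and then read off the two cases from the block structure of $\Omega^L$ supplied by Proposition \ref{prop:nablaLsplit}. You are also right that the index $I_k$ in the first bullet of the statement is a typo for $I_j$.
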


\begin{proof}
	Recall that the columns of \(\Omega\) are given by \(\nabla \p_{x_i} = - \sum_{j=1}^{n} \Omega_{ji} \p_{x_j}\). If we write \(\mathbf{f}\) for the coordinate fields arranged in a row \(\mathbf{f} = (\p_{x_1}, \ldots, \p_{x_n})\) then we can write more shortly \(\nabla \mathbf{f} = - \mathbf{f} \Omega\). 
	
	We claim that \(\Omega^L\) is the connection matrix of \(\nabla\) with respect to the frame \((s_1, \ldots, s_n)\). Indeed,
	if we write \(\mathbf{s}=(s_1, \ldots, s_n)\) then \(\mathbf{s}= \mathbf{f} G^{-1}\) and 
	\begin{align*}
	\nabla \mathbf{s} &= (\nabla \mathbf{f}) G^{-1} + \mathbf{f} (dG^{-1}) \\
	&= - \mathbf{f} \Omega G^{-1} - \mathbf{f} G^{-1}(dG)G^{-1}  .
	\end{align*}
	Using Equation \eqref{eq:dG} we replace \(dG=\Omega^L G - G \Omega\) to obtain
	\begin{align*}
		\nabla \mathbf{s} &= - \mathbf{f} \Omega G^{-1} - \mathbf{f} G^{-1} \Omega^L + \mathbf{f} \Omega G^{-1} \\
		&= - \mathbf{s} \Omega^L 
	\end{align*}
	and our claim follows. More explicitly, 
	\begin{equation}\label{eq:nablasi}
	\nabla s_i = - \sum_{j=1}^{n} \Omega^L_{ji} s_j .
	\end{equation}

	On the other hand, recall that
	\[ \Omega^L = \sum_{H \in \mH_L} A_H \frac{dh}{h} . \]
	By our choice of linear coordinates \(x=(x_1, \ldots, x_n)\) and by Proposition \ref{prop:nablaLsplit} we have
	\begin{equation*}
	\Omega^L = \begin{pmatrix}
	*_{(n-d)\times (n-d)} & 0_{(n-d)\times d} \\
	0_{d\times (n-d)} & 0_{d\times d}
	\end{pmatrix} .
	\end{equation*}
	Moreover, we can further decompose \(*_{(n-d)\times (n-d)}\) into \(n_i \times n_i\) blocks: the \((i,j)\)-entry of \(*_{(n-d) \times (n-d)}\) vanishes unless \(i,j\) belong to the same set \(I_m\) for some \(1\leq m \leq k\). The lemma follows because the \(i\)-column of \(\Omega^L\) determines \(\nabla s_i\) by Equation \eqref{eq:nablasi}.
\end{proof}

\begin{definition}\label{def:dist}
	We set \(\mL\) to be the distribution spanned by \(s_{n-d+1}, \ldots, s_n\). For \(i=1, \ldots, k\) we let \(\mL^{\perp}_i\) be the distribution spanned by \(s_j\) with \(j \in I_i \subset \{1, \ldots, n-d\}\).
\end{definition}

Since \(s_1, \ldots, s_n\) is a frame of holomorphic vector fields, the distributions \(\mL\) and \(\mL^{\perp}_i\) are smooth (holomorphic) on our neighbourhood \(U\). The decomposition
\begin{equation*}
T\C^n|_U = \mL \oplus \mL^{\perp}_1 \oplus \ldots \oplus \mL^{\perp}_k
\end{equation*}
follows automatically from the definition of the distributions in terms of a frame.

\begin{lemma}
	The distributions \(\mL\) and \(\mL^{\perp}_i\) are parallel with respect to \(\nabla\) on \(U^{\circ}\). The distributions \(\mL\) and \(\mL^{\perp}_i\) are integrable on the whole neighbourhood \(U\). 
\end{lemma}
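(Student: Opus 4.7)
The plan is to read off both properties directly from Lemma \ref{lem:sframeconnection}. For the parallel claim, recall that by construction of the \(s\)-frame the entries of the connection matrix of \(\nabla\) with respect to \((s_1, \ldots, s_n)\) are precisely the entries of \(\Omega^L\). Since \(\Omega^L\) has the block-diagonal shape produced by Proposition \ref{prop:nablaLsplit} (one \(d \times d\) zero block corresponding to \(L\) and blocks of size \(n_i \times n_i\) corresponding to the \(L_i^\perp\)), the covariant derivative \(\nabla s_i\) stays inside the span of the \(s_j\)'s whose indices lie in the same block as \(i\). Concretely: for \(i > n-d\) the formula gives \(\nabla s_i = 0\), so \(\mL\) is spanned by parallel sections and is therefore \(\nabla\)-parallel; and for \(i \in I_j\) (with \(1 \le j \le k\)), the formula gives \(\nabla s_i \in \mL^\perp_j\), so \(\mL^\perp_j\) is \(\nabla\)-parallel on \(U^\circ\).

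For integrability, I would combine parallelism with the torsion-free condition. If \(X\) and \(Y\) are holomorphic sections of one of the distributions \(\mL\) or \(\mL^\perp_j\) over \(U^\circ\), then \(\nabla_X Y\) and \(\nabla_Y X\) both lie again in the same distribution, and the torsion-free identity
\begin{equation*}
[X,Y] = \nabla_X Y - \nabla_Y X
\end{equation*}
shows that \([X,Y]\) is a section of the distribution too. Applying this to pairs \(s_i, s_j\) from the distinguished frame verifies the Frobenius involutivity condition on \(U^\circ\), and hence the distributions are integrable there.

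To promote integrability from \(U^\circ\) to the whole neighbourhood \(U\), I use that the \(s_i\) extend to a holomorphic frame on all of \(U\) (this is where Lemma \ref{gauge} enters), so the Lie brackets \([s_i, s_j]\) are holomorphic vector fields on \(U\). Writing these brackets in the \(s\)-frame gives coefficient functions that are holomorphic on \(U\) and which vanish on the dense subset \(U^\circ\) by the involutivity established above; the unique continuation principle for holomorphic functions then forces them to vanish on \(U\). Thus the distributions \(\mL\) and \(\mL^\perp_j\) are involutive on \(U\) and Frobenius produces holomorphic integral leaves through every point of \(U\). The only mildly delicate point is the extension across the hyperplanes, but Lemma \ref{gauge} has already done the analytic work by making the frame holomorphic across \(\mH\).
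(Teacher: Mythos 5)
Your proposal is correct and follows essentially the same route as the paper: parallelism is read off from the block structure of \(\Omega^L\) in the \(s\)-frame (Lemma \ref{lem:sframeconnection}), involutivity on \(U^{\circ}\) comes from torsion-freeness plus parallelism, and the extension to \(U\) uses density of \(U^{\circ}\) together with holomorphy of the frame. The paper phrases the last step simply as "by continuity", whereas you invoke the identity principle for the holomorphic bracket coefficients; these are the same argument.
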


\begin{proof}
	We have \(\nabla s_i = - \sum_{j=1}^{n} \Omega^L_{ji} s_j\) with:
	\begin{itemize}
		\item \(\Omega^L_{ji} = 0\) for \(i=n-d+1, \ldots, n\);
		\item if \(i \in I_j \subset \{1, \ldots, n-d\}\) then \(\Omega^L_{mi}=0\) for \(m \notin I_j\).
	\end{itemize}
	These imply that \(\nabla(\mL) \subset \mL\) and \(\nabla(\mL^{\perp}_i)\subset \mL^{\perp}_i\), i.e. the distributions are parallel.
	
	Let \(X, Y\) be holomorphic vector fields tangent to \(\mL\), we want to show that \([X, Y]\) is also tangent. The connection \(\nabla\) is a smooth torsion free connection on its regular part \(U^{\circ}\), so \([X,Y] = \nabla_X Y - \nabla_Y X\) and each term \(\nabla_X Y\) and \(\nabla_Y X\) is tangent to \(\mL\) because \(\mL\) is parallel. Therefore \([X,Y]\) is tangent to \(\mL\) on \(U^{\circ}\) and by continuity it is tangent to \(\mL\) over all \(U\). The same argument applies to \(\mL^{\perp}_i\).
\end{proof}

\subsubsection{{\large Holonomy at central loops}}

\begin{definition}
	Let \((x_1, \ldots, x_n)\) be adapted linear coordinates and let \(p \in U^{\circ}\). We can write \(p\) according to the direct sum decomposition \eqref{eq:dirsumProp} as \(p = p_1+\ldots+p_k + p_L\) with \(p_i \in L_i^{\perp}\) and \(p_L \in L\). For \(1\leq j \leq k\) we define the \textbf{central loop} \(c_{p,j}: [0,1] \to U^{\circ}\) based at \(p\) as follows:
	\begin{equation*}
		c_{p,j}(t) = e^{2\pi i t}p_j + \sum_{k \neq j} p_k + p_L .
	\end{equation*}
\end{definition}

\begin{remark}
	We assume that \(U\) is invariant under scalar multiplication by \(\lambda\) for all \(0\leq |\lambda|\leq1\).
\end{remark}

\begin{note}
	\(c_{p,j}\) is the standard loop around the origin contained in the complex line 
	\[C_{p,j} = \{(p_1, \ldots \lambda p_j, \ldots, p_k, p_L), \,\ \lambda \in \C \} .\]
	Note that the line \(C_{p,j}\) intersects \(\cup_{H \in \mH_L}H\) only when \(\lambda=0\) and the intersection point lies in \(L_j\).
\end{note}

\begin{lemma}
	The homotopy classes \([c_{p,j}]\) belong to the centre of \(\pi_1(U^{\circ}, p)\).
\end{lemma}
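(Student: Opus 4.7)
My plan is to exhibit an $S^1$-action on the neighbourhood $U^{\circ}$ whose orbit through the basepoint $p$ is precisely $c_{p,j}$, and then use it to construct an explicit null-homotopy of the commutator $\gamma \cdot c_{p,j} \cdot \gamma^{-1} \cdot c_{p,j}^{-1}$ for every loop $\gamma$ based at $p$. Concretely, working in the adapted linear coordinates provided by Proposition \ref{prop:nablaLsplit}, I will first shrink $U$ to a polydisc $U = U_0 \times U_1 \times \ldots \times U_k$ with $U_0 \subset L$ a polydisc and each $U_i \subset L_i^{\perp}$ an open ball centred at the origin, retaining the original property that $U \cap H = \emptyset$ for every $H \in \mH \setminus \mH_L$ (possible since $x \notin H$ for such $H$). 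By construction, for each $1 \le j \le k$ the map
\[
\sigma_t^{(j)}(y_1, \ldots, y_k, y_L) = (y_1, \ldots, e^{2\pi i t} y_j, \ldots, y_k, y_L)
\]
defines an $S^1$-action on $U$ which fixes the factors other than $U_j$.

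The key observation is that this action preserves $U^{\circ}$. Indeed, every $H \in \mH \setminus \mH_L$ is disjoint from $U$, and every $H \in \mH_L$ is, by the product decomposition of $\mH_L$, a pull-back of a linear hyperplane $\bar{H} \in \bar{\mH}_i$ from one of the irreducible factors $\C^{n_i}$. Since $\bar{H}$ is a linear hyperplane and $\sigma_t^{(j)}$ acts linearly on $\C^{n_j}$ (and trivially on the other factors), $\sigma_t^{(j)}(H) = H$ for all $H \in \mH_L$. Consequently $\sigma_t^{(j)}$ restricts to a homeomorphism of $U^{\circ}$.

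Given an arbitrary loop $\gamma: [0,1] \to U^{\circ}$ based at $p$, I then define
\[
F: [0,1] \times [0,1] \to U^{\circ}, \qquad F(s, t) = \sigma_t^{(j)}(\gamma(s)).
\]
A direct check gives $F(s,0) = F(s,1) = \gamma(s)$ and $F(0,t) = F(1,t) = \sigma_t^{(j)}(p) = c_{p,j}(t)$. Hence the restriction of $F$ to the boundary $\partial([0,1]^2)$ is the concatenation $\gamma \cdot c_{p,j} \cdot \gamma^{-1} \cdot c_{p,j}^{-1}$, and since $F$ extends continuously to the contractible square, this concatenation is null-homotopic in $U^{\circ}$. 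Therefore $[c_{p,j}][\gamma] = [\gamma][c_{p,j}]$ in $\pi_1(U^{\circ}, p)$, so $[c_{p,j}]$ lies in the centre.

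There is no genuine obstacle here once the product structure of $(\C^n, \mH_L)$ is in place; the only care needed is in the initial choice of $U$, ensuring simultaneously that it is a polydisc in the adapted coordinates (hence $\sigma_t^{(j)}$-invariant) and that it avoids the hyperplanes in $\mH \setminus \mH_L$. Both are open conditions satisfied on a sufficiently small neighbourhood of $x$.
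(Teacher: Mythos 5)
Your proof is correct and takes essentially the same route as the paper, whose one-line argument is that $\gamma$ and $c_{p,j}$ both lie on the $2$-torus obtained by the $S^1$-rotation of $\gamma$ around $L_j$; your map $F(s,t)=\sigma_t^{(j)}(\gamma(s))$ is exactly that torus, spelled out as a null-homotopy of the commutator. Your explicit choice of a product (hence rotation-invariant) neighbourhood just makes precise an invariance of $U^{\circ}$ that the paper leaves implicit.
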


\begin{proof}
	Let \(c\) be a loop based at \(p\). To prove that \(c\) commutes with \(c_{p,j}\) note that both lie in a \(2\)-torus in \(U^{\circ}\) which is obtained by an \(S^1\) rotation of \(c\) around \(L_j\).
\end{proof}

\begin{notation}
	Let \(M_{p,j}\) be the holonomy of \(\nabla\) along \(c_{p,j}\).
\end{notation}

\begin{lemma} \label{lem:holonomycentralloops1}
	At the point \(p\),
	the distribution \(\mL^{\perp}_j\) is equal to the \(\exp(2\pi i a_j)\)-eigenspace of \(M_{p,j}\)  and \(\mL \oplus_{k \neq j} \mL^{\perp}_k\)  agrees with the \(1\)-eigenspace of \(M_{p,j}\).
\end{lemma}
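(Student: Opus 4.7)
The plan is to reduce the holonomy computation for $\nabla$ to one for the model connection $\nabla^L$ by means of the gauge transformation $G$, and then exploit the product structure of $\nabla^L$ given by Proposition \ref{prop:nablaLsplit} together with the spectral information for $A_{L_j}$ from Proposition \ref{prop:AL}.

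First I would recall that by Lemma \ref{lem:sframeconnection} the connection matrix of $\nabla$ with respect to the frame $(s_1,\ldots,s_n)$ is exactly $\Omega^L$. Equivalently, the gauge transformation $G$ carries the $s$-frame to the adapted coordinate frame and intertwines $\nabla$ with $\nabla^L$. Consequently, for any loop $\gamma$ based at $p$, the holonomy $M_\gamma$ of $\nabla$ expressed in the basis $(s_1(p),\ldots,s_n(p))$ coincides with the holonomy $M^L_\gamma$ of $\nabla^L$ expressed in the basis $(\partial_{x_1},\ldots,\partial_{x_n})$. In particular, the eigenspace decomposition of $M_{p,j}$ in the $s$-frame matches that of $M^L_{p,j}$ in the coordinate frame; transporting the latter via the $s$-frame identification will yield the decomposition of $T_p$ into $\mL$ and $\mL_i^\perp$ as in Definition \ref{def:dist}.

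Next I would compute $M^L_{p,j}$ directly. The central loop $c_{p,j}$ lies in the complex line $C_{p,j}$, which, after writing $p=p_L+\sum_i p_i$ with $p_i\in L_i^\perp$ and $p_L\in L$, only moves the component $p_j\in L_j^\perp$. Using the splitting provided by Proposition \ref{prop:nablaLsplit}, I observe that for $H\in\mH_k$ with $k\neq j$ the defining equation $h$ depends only on coordinates transverse to $L_k$, and since $L_j^\perp\subset L_k$ by Lemma \ref{lem:imageaj} we have $dh=0$ along $C_{p,j}$; whereas for $H\in\mH_j$ the pullback of $dh/h$ along the line is $d\lambda/\lambda$. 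Therefore the restriction of $\nabla^L$ to $C_{p,j}$ reads
\begin{equation*}
\nabla^L\big|_{C_{p,j}}=d-A_{L_j}\,\frac{d\lambda}{\lambda},
\end{equation*}
and its holonomy around $\lambda=0$ equals $M^L_{p,j}=\exp(2\pi i A_{L_j})$.

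Finally I would read off the spectrum of $A_{L_j}$ from Proposition \ref{prop:AL}: we have $\ker A_{L_j}=L_j=L\oplus\bigoplus_{i\neq j}L_i^\perp$, and $A_{L_j}$ acts by the scalar $a_j$ on $L_j^\perp$. Hence $M^L_{p,j}$ has eigenvalue $\exp(2\pi i a_j)$ on $L_j^\perp$ and eigenvalue $1$ on the complementary direct sum. Pulling this back through the $s$-frame identification, the $\exp(2\pi i a_j)$-eigenspace of $M_{p,j}$ at $p$ is $\mathrm{span}\{s_i(p):i\in I_j\}=\mL_j^\perp|_p$ and the $1$-eigenspace is $\mathrm{span}\{s_i(p):i\notin I_j\}=\mL|_p\oplus\bigoplus_{k\neq j}\mL_k^\perp|_p$. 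The two eigenspaces exhaust $T_p\C^n$ and are distinct because the non-integer hypothesis $a_j\notin\Z$ (Theorem \ref{PRODTHM}) forces $\exp(2\pi i a_j)\neq 1$, so they are indeed the full generalized eigenspaces as claimed. The only subtle step is verifying the restriction formula for $\nabla^L$ on $C_{p,j}$, which however follows cleanly from the orthogonality relations $L_j^\perp\subset L_k$ for $k\neq j$ and the product form of $\Omega^L$.
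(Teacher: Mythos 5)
Your proof is correct and follows essentially the same route as the paper: both reduce to the fact that, in the $s$-frame, the connection matrix of $\nabla$ is $\Omega^L$, so the pull-back to $C_{p,j}$ is $d-A_{L_j}\,d\lambda/\lambda$ with holonomy $\exp(2\pi i A_{L_j})$, and then read off the eigenspaces from Proposition \ref{prop:AL}. Your intermediate verification of why only the $\mH_j$-terms of $\Omega^L$ survive along $C_{p,j}$ just spells out a step the paper states without comment.
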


\begin{proof}
	The pull-back of \(\nabla\) to \(C_{p,j}\) in the trivialization given by the frame \(\{s_1, \ldots, s_n\}\) is
	\[d - A_j \frac{d \lambda}{\lambda} ,\]
	where \(A_j = \sum_{H \in \mH_j} A_H\). The matrix representation of \(M_{p,j}\) w.r.t. the basis \(\{s_1(u), \ldots, s_n(u)\}\)  equals \(\exp(2\pi i A_j)\).  The matrix \(A_j\) acts by scalar multiplication by \(a_j\) on \(L_j^{\perp}\) and \(\ker A_j = L_j\). By our choice of adapted linear coordinates  the matrix \(A_j\) is diagonal and the statement follows. 
\end{proof}

\begin{notation}
	We define
	\[c_{p,0}(t) = e^{2\pi i t} p, \qquad C_{p,0}=\{\lambda p, \,\ \lambda \in \C\} . \]
	Clearly \(c_{p,0}\) is homotopic to the product of all \(c_{p,j}\) for \(1\leq j \leq k\).
\end{notation}

\begin{note}
	The \(1\)-eigenspace of holonomy around \(c_{p,0}\) equals \(\mL\) at the point \(p\), it is the common fixed locus fo all \(M_{p,j}\).
\end{note}

The next lemma identifies the restrictions of the distributions \(\mL^{\perp}_i\) to the irreducible components \(L_i\).

\begin{lemma}\label{lem:restrictiondist}
	\(\mL_i^{\perp}|_{L_i} = L_i^{\perp}\) and \( \left( \mL \oplus_{j \neq i} \mL^{\perp}_j \right)|_{L_i} = L_i\). In particular,
	\(\mL|_{L} = L\) and \(\mL^{\perp}_i|_{L} = L^{\perp}_i\).
\end{lemma}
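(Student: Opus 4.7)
The plan is to characterize the distributions $\mL^{\perp}_j$ as eigenspaces of the holonomies around central loops (Lemma \ref{lem:holonomycentralloops1}) and let these loops shrink transversely onto a point of $L_i$. Fix $p$ in the open stratum $L_i \setminus \bigcup_{H \in \mH_L \setminus \mH_i} H$ of $L_i$ within $\mH_L$ and choose $v \in L_i^{\perp}$ generic so that $h(v) \neq 0$ for every $H \in \mH_i$. For small $\lambda \neq 0$ the point $q_\lambda = p + \lambda v$ lies in $U^{\circ}$: Lemma \ref{lem:imageaj} gives $L_i^{\perp} \subset L_l \subset H$ whenever $H \in \mH_l$ with $l \neq i$, so $h(v) = 0$ and hence $h(q_\lambda) = h(p) \neq 0$, whereas for $H \in \mH_i$ we have $h(p) = 0$ and $h(q_\lambda) = \lambda h(v) \neq 0$.

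The central loop $c_{q_\lambda, i}$ lies on the line $\ell = \{p + \mu v : \mu \in \C\}$ and winds once around $\mu = 0$. A direct computation in adapted coordinates shows $\Omega^L|_\ell = A_{L_i}\,d\mu/\mu$ exactly (the $\mH_l$ terms with $l \neq i$ contribute nothing because $dh$ annihilates $v$, while for $\mH_i$ we use $h(p) = 0$), so $\Omega|_\ell = A_{L_i}\,d\mu/\mu + \hol$. The residue $A_{L_i}$ has eigenvalues $a_i$ on $L_i^{\perp}$ and $0$ on $L_i$; since $a_i \notin \Z$ these are non-resonant, and the Normal Form Theorem \ref{thm:normalform} produces a holomorphic gauge transformation $g$ on a disk of $\ell$ around $\mu = 0$, with $g(0) = \Id$, bringing $\nabla|_\ell$ to $d - A_{L_i}\,d\mu/\mu$. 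Consequently the matrix of $M_{q_\lambda, i}$ in the coordinate frame is $g(\lambda)^{-1} \exp(2\pi i A_{L_i}) g(\lambda)$, which converges to $\exp(2\pi i A_{L_i})$ as $\lambda \to 0$; its $\exp(2\pi i a_i)$- and $1$-eigenspaces converge respectively to $L_i^{\perp}$ and $L_i$. By Lemma \ref{lem:holonomycentralloops1} these eigenspaces equal $\mL^{\perp}_i|_{q_\lambda}$ and $\left(\mL \oplus \bigoplus_{j \neq i} \mL^{\perp}_j\right)|_{q_\lambda}$; since the distributions extend holomorphically across $\mH_L$ by Lemma \ref{gauge}, passing to the limit yields $\mL^{\perp}_i|_p = L_i^{\perp}$ and $\left(\mL \oplus \bigoplus_{j \neq i} \mL^{\perp}_j\right)|_p = L_i$, and continuity extends these identities to all $p \in L_i \cap U$.

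For the last claim take $p \in L$; then $L \subset L_i$ for every $i$, giving $\mL^{\perp}_i|_p = L_i^{\perp}$ at once, while $\mL|_p \subset L_i$ for every $i$ forces $\mL|_p \subset \bigcap_i L_i = L$, with equality following from $\dim \mL|_p = \dim L$. The main technical point is the normalisation $g(0) = \Id$ in the Normal Form Theorem: this is precisely what makes $M_{q_\lambda, i}$ converge in a fixed coordinate frame rather than merely up to conjugation, and is what allows the eigenspace limits to be computed explicitly as $L_i^{\perp}$ and $L_i$.
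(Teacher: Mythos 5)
Your proof is correct and follows essentially the same route as the paper's: shrink the central loop along the line through a point of \(L_i\) in a generic \(L_i^{\perp}\)-direction, show the monodromy matrices converge to \(\exp(2\pi i A_{L_i})\) in the fixed coordinate frame, and read off the eigenspaces via Lemma \ref{lem:holonomycentralloops1} together with continuity of the distributions. The one substantive difference is that where the paper invokes Deligne's Th\'eor\`eme 1.17 for the convergence \(M(\lambda) \to \exp(2\pi i A_{L_i})\), you derive it from the Normal Form Theorem \ref{thm:normalform} (normalising \(g(0)=\Id\), which is legitimate since \(g(0)\) necessarily commutes with \(A_{L_i}\)); this requires the non-resonance condition \(a_i \notin \Z\), which holds under the standing hypotheses, so your argument is a valid self-contained substitute.
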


\begin{proof}
	Let \(p \in U^{\circ}\). We write \(p = p_{L_i} + p_{L_i^{\perp}} \) with respect to the decomposition \(\C^n=L_i \oplus L_i^{\perp}\). We let \(p(\lambda)= p_{L_i} + \lambda \cdot p_{L_j^{\perp}}\) for \(\lambda \in \C\), so \(p(\lambda) \in C_{p,j}\).
	The pull-back of \(\nabla\) to \(C_{p,i}\) in the trivialization given by the frame \(\{\p_{x_1}, \ldots, \p_{x_n}\}\) equals
	\[d - \left( A_i \frac{d \lambda}{\lambda} + \hol \right) .\]
	Let \(M(\lambda)\) be the matrix representation of \(M_{p(\lambda),i}\) in the frame given by the coordinate vector fields \(\p_{x_i}\).
	We use that \(A_i\) is diagonal with \(L_i=\ker A_i\) and \(L_i^{\perp} =\) \(a_i\)-eigenspace of \(A_i\) together with the identity
	\[\lim_{\lambda \to 0} M(\lambda) = \exp(2\pi i A_i) , \]
	see \cite[Th\'eor\`eme 1.17]{Deligne}. The result follows from Lemma \ref{lem:holonomycentralloops1}.
\end{proof}

\subsubsection{{\large Leaves and product structure}}

We integrate the parallel distributions introduced in Definition \ref{def:dist}. Our goal is to show that \(\nabla\) is a product of its restriction to the leaves through \(x\), see  Note \ref{not:prodaffine} and Proposition \ref{prodaffine}.

Recall the following standard result.

\begin{lemma}[Frobenius Theorem]\label{productcoord}
	Let \( \mD_1 , \ldots ,\mD_{q}\) be integrable holomorphic distributions on a neighbourhood \(0 \in U \subset \C^n\) such that \(T\C^n = \mD_1 \oplus \ldots \oplus \mD_q\). Fix index sets with \(|I_i| = \dim \mD_i\) and \( \sqcup_i I_i = \{1, \ldots, n\}\). Then there is a holomorphic change of coordinates \(y=y(x)\), defined on a possibly smaller neighbourhood of the origin, such that the leaves of \(\mD_i\) are the level sets \(\{y_j=const. \,\ j \notin I_i \}\). 
	
	Moreover, any two coordinates \(y, z\) as above differ by composing with biholomorphisms on each of the factors. This is \(z_j = z_j(y_{k}, \,\ k \in  I_i)\) whenever \(j \in I_i\). 
\end{lemma}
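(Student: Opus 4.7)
The plan is to prove this by induction on $q$. The base case $q=1$ is the classical holomorphic Frobenius theorem: an integrable holomorphic distribution of rank $r$ in $\C^n$ admits, near any point, coordinates $(y_1,\ldots,y_n)$ in which it is spanned by $\p/\p y_j$ for $j$ in any chosen index set of size $r$. At the outset I would observe that although the hypothesis demands only individual integrability of the $\mD_i$, the stated conclusion forces every partial sum $\bigoplus_{i\in J}\mD_i$ to be involutive (being spanned by a subset of the $\p/\p y_j$), so joint integrability of partial sums is a tacit further assumption. In the setting where this lemma is applied, it is free of charge: each $\mD_i$ is parallel for the torsion-free connection $\nabla$, and the identity $[X,Y]=\nabla_X Y-\nabla_Y X$ confines Lie brackets of vector fields tangent to $\bigoplus_{i\in J}\mD_i$ back into the same sum.

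For the inductive step I would first apply Frobenius to $\mD_1$ alone to obtain coordinates $(u_j)_{j=1}^n$ near the origin with $\mD_1=\mathrm{span}\{\p/\p u_j:j\in I_1\}$. Let $N$ denote the leaf through $0$ of the integrable distribution $\mathcal{E}=\bigoplus_{i\geq 2}\mD_i$; since $T_0N=\mathcal{E}(0)$ is complementary to $\mD_1(0)$, the coordinates $(u_j)_{j\notin I_1}$ restrict to a biholomorphism of a neighbourhood of $0$ in $N$ with an open set in $\C^{n-|I_1|}$. The distributions $\mD_i|_N$ for $i\geq 2$ are integrable and their direct sum is $TN$, so by induction one obtains coordinates $(z_j)_{j\notin I_1}$ on $N$ simultaneously straightening them. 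I will then transport these off $N$ along the $\mD_1$-leaves: for $p$ close to $0$, let $q\in N$ be the unique intersection of $N$ with the $\mD_1$-leaf through $p$, and set $y_j(p):=z_j(q)$ for $j\notin I_1$ while keeping $y_j:=u_j$ for $j\in I_1$. By construction the fibres of $(y_j)_{j\notin I_1}$ are the $\mD_1$-leaves, and the $\mD_1$-invariance of each $\mD_i$-foliation for $i\geq 2$ (a consequence of integrability of $\mD_1\oplus\mD_i$) ensures that the fibres of $(y_j)_{j\notin I_i}$ are precisely the $\mD_i$-leaves.

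Uniqueness will be a direct combinatorial unpacking: if $(y_j)$ and $(z_j)$ both satisfy the conclusion, then for each $i$ both tuples $(y_j)_{j\notin I_i}$ and $(z_j)_{j\notin I_i}$ parametrise the leaf space of $\mD_i$, so $(z_j)_{j\notin I_i}$ is a function of $(y_j)_{j\notin I_i}$ alone. Because the $I_i$ partition $\{1,\ldots,n\}$, every index $j$ lies in a unique $I_{i_0}$, and intersecting the dependency conditions over all $i\neq i_0$ forces $z_j$ to depend only on $(y_k)_{k\in I_{i_0}}$, which is exactly the claimed factoring.

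The principal obstacle I anticipate is precisely the joint integrability issue: for merely individually integrable $\mD_i$, the simultaneous straightening fails in general — a non-involutive $\mD_1\oplus\mD_2$ already obstructs it — so the inductive step is legitimate only once that additional hypothesis is invoked. With joint integrability in hand, the iterated Frobenius construction and the book-keeping for uniqueness are routine.
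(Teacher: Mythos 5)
Your opening observation is a good one: as literally stated the lemma requires the partial sums \(\bigoplus_{i\in J}\mD_i\) to be involutive (for \(q\geq 3\) individual integrability is not enough, e.g. three line fields in \(\C^3\) with \(\mD_3\) spanned by \(\p_3+x_2\p_1\)), and in the paper's application this extra hypothesis is indeed free because the \(\mD_i\) are parallel for a torsion-free connection. Your uniqueness argument is also correct.

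The inductive step, however, has a genuine gap at the point where you keep \(y_j:=u_j\) for \(j\in I_1\). Choosing \((u_j)\) so that \(\mD_1=\Span\{\p_{u_j}:j\in I_1\}\) only constrains the span of \(\{du_j:j\notin I_1\}\); it leaves the differentials \(du_j\) for \(j\in I_1\) completely unconstrained on \(\mathcal{E}=\bigoplus_{i\geq 2}\mD_i\). For the level sets \(\{y_j=\mathrm{const},\ j\notin I_i\}\) to be the \(\mD_i\)-leaves you need \(du_j(\mD_i)=0\) for every \(j\in I_1\) and every \(i\geq 2\), i.e. \(\mathcal{E}=\Span\{\p_{u_j}:j\notin I_1\}\), and nothing in your construction arranges this. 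Concretely, in \(\C^2\) take \(\mD_1=\Span\{\p_1\}\), \(\mD_2=\Span\{\p_2+x_2\p_1\}\), \(I_1=\{1\}\), \(I_2=\{2\}\): your recipe returns \(y=(x_1,x_2)\), whose level sets \(\{x_1=c\}\) are not the \(\mD_2\)-leaves \(\{x_1-x_2^2/2=c\}\) — even though \(\mD_1\oplus\mD_2\) is trivially involutive and the \(\mD_2\)-foliation is invariant under the \(\mD_1\)-flow, so the invariance you invoke does not rescue the \(j\in I_1\) coordinates. (It does correctly handle the coordinates \(y_j\) with \(j\in I_k\), \(k\geq 2\), via \(dq_p(\mD_i(p))=\mD_i(q(p))\), which one can check by restricting the projection \(q\) to a leaf of \(\mD_1\oplus\mD_i\).) The fix is to begin the induction by simultaneously straightening the complementary pair \((\mD_1,\mathcal{E})\): take \(y_j\), \(j\in I_1\), to be coordinates pulled back from the \(\mD_1\)-leaf through \(0\) via the projection along the \(\mathcal{E}\)-leaves, so that they are constant on \(\mathcal{E}\), and only then push the inductively obtained coordinates on \(N\) out along the \(\mD_1\)-leaves. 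With that modification the induction closes.
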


\begin{notation}
	Fix index sets \(I_0 \cup I_1 \cup \ldots \cup I_k = \{1, \ldots, n\}\) given by Equations  \eqref{eq:indexset} and \eqref{eq:I0}. 
\end{notation}

\begin{lemma}\label{coordinatesFrob}
	There are complex coordinates \(y=y(x)\) such that the leaves of \(\mL\) are level sets of \(\{y_j=const. \,\ j \notin I_0\}\) and the leaves of \(\mL^{\perp}_i\) are levels sets of \(\{y_j=const. \,\ j \notin I_i \}\). Equivalently,
	\begin{equation}\label{eq:frob}
		\mL = \Span \{\p_{y_j}, \,\, j \in I_0\}, \qquad \mL^{\perp}_i = \Span \{\p_{y_j}, \,\, j \in I_i \} \,\, \mbox{ for } \,\, 1 \leq i \leq k .
	\end{equation}
	Moreover, if \(y'\) is another coordinate with this property then \(y_i'=y_i'(y_j, \,\ j \in I_m)\) for \(i \in I_m\).
\end{lemma}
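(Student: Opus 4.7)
The plan is simply to apply the Frobenius Theorem in the form stated as Lemma \ref{productcoord} to the family of distributions $\{\mL, \mL_1^{\perp}, \ldots, \mL_k^{\perp}\}$ on the neighbourhood $U$ of $x = 0$, together with the index sets $I_0, I_1, \ldots, I_k$ defined in Equations \eqref{eq:indexset} and \eqref{eq:I0}.

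First I would verify the hypotheses. The distributions $\mL$ and $\mL_i^{\perp}$ are holomorphic, since they are defined in terms of the holomorphic frame $(s_1, \ldots, s_n)$ (Definition \ref{def:sframe}). They are integrable on the whole of $U$ by the lemma following Definition \ref{def:dist}. The direct sum decomposition
\[
T\C^n|_U = \mL \oplus \mL_1^{\perp} \oplus \ldots \oplus \mL_k^{\perp}
\]
is also recorded there, being an immediate consequence of the fact that these distributions are jointly spanned by the members of a frame. Finally, the cardinalities of the index sets match the ranks of the distributions: by construction $|I_0| = d = \dim L = \dim \mL$ and $|I_i| = n_i = \codim L_i = \dim L_i^{\perp} = \dim \mL_i^{\perp}$ for $1 \leq i \leq k$, and the $I_j$ partition $\{1, \ldots, n\}$.

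Having checked these, Lemma \ref{productcoord} directly yields holomorphic coordinates $y = y(x)$ on a (possibly smaller) neighbourhood of the origin such that the leaves of $\mL$ are the level sets $\{y_j = const,\, j \notin I_0\}$ and the leaves of $\mL_i^{\perp}$ are the level sets $\{y_j = const,\, j \notin I_i\}$; equivalently Equation \eqref{eq:frob} holds. The uniqueness statement is also part of Lemma \ref{productcoord}: any two coordinate systems $y, y'$ with the stated property differ by a biholomorphism on each factor, so $y_i' = y_i'(y_j,\, j \in I_m)$ whenever $i \in I_m$. There is essentially no obstacle here, as all the real work went into establishing the properties of the distributions $\mL$ and $\mL_i^{\perp}$ in the previous subsection.
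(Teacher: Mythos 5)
Your proposal is correct and is exactly the paper's argument: the paper's proof is the one-line application of Lemma \ref{productcoord} with $q=k+1$, $\mD_1=\mL_1^{\perp},\ldots,\mD_k=\mL_k^{\perp}$, $\mD_{k+1}=\mL$. Your additional verification of the hypotheses (holomorphy, integrability, the direct sum decomposition, and the matching of $|I_j|$ with the ranks) is accurate and simply makes explicit what the paper leaves implicit.
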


\begin{proof}
	We apply lemma \ref{productcoord} with \(q=k+1\), \(\mD_1 = \mL^{\perp}_1, \ldots, \mD_k = \mL^{\perp}_k\) and \(\mD_{k+1} = \mL\).
\end{proof}

\begin{note}\label{not:prodaffine}
	If \(y=(y_1, \ldots, y_n)\) are coordinates as in Lemma \ref{coordinatesFrob} then we have
	projection maps
	\begin{align*}
		\pi_i : U &\to \C^{|I_i|} \\
		y &\mapsto (y_j, \,\ j \in I_i)
	\end{align*}
	which identify \(U\) with a product
	\[U \cong \mL_x \times (\mL^{\perp}_1)_x \times \ldots \times (\mL^{\perp}_k)_x \]
	where
	\[\mL_x = \{y_1=\ldots=y_{n-d} =0\}, \qquad (\mL^{\perp}_i)_x = \{y_j=0, \,\, j \notin I_i \} \]
	are identified with \(\C^d\) and \(\C^{n_i}\) via the projections \(\pi_0\) and \(\pi_i\).
	Since the distributions are parallel, its leaves inherit restricted connections on its regular part, we denote them by
	\[\left( (\mL^{\perp}_i)^{\circ}_x, \nabla|_{\mL^{\perp}_i} \right) . \]
\end{note}

The goal of this section is to show the next.

\begin{proposition}\label{prodaffine}
	We can choose coordinates \((y_1, \ldots, y_n)\) close to \(x\) such that Equation \eqref{eq:frob} is satisfied and moreover the following holds. 
	\begin{itemize}
		\item[(i)] The coordinate vector fields that span \(\mL\) are parallel, that is 
		\begin{equation}\label{eq:prodaff1}
			\nabla \p_{y_j} =0 \,\, \mbox{ for all } \,\,
			j \in I_0 .
		\end{equation}
		\item[(ii)] The connection \(\nabla\) on \(U^{\circ}\) equals the product of the induced connections on the leaves through \(x\). More explicitly,
		\begin{equation}\label{eq:prodaff2}
			\left(U^{\circ}, \, \nabla\right) = \left(\mL_x, \, d\right) \times \left((\mL^{\perp}_1)_x^{\circ}, \, \nabla|_{\mL^{\perp}_1} \right) 
			 \times \ldots \times \left((\mL^{\perp}_k)_x^{\circ}, \, \nabla|_{\mL^{\perp}_k} \right) 
		\end{equation}
		where \(d\) is the trivial connection on the first factor.
	\end{itemize}
\end{proposition}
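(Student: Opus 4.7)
The plan is to construct coordinates \((y_1,\dots,y_n)\) centered at \(x\) in which the coordinate fields with indices in \(I_0\) coincide with the parallel vector fields \(s_j\), verify that these coordinates still realize the Frobenius decomposition \eqref{eq:frob} for all of \(\mL, \mL_1^\perp,\dots,\mL_k^\perp\), and then extract the product structure of \(\nabla\) from parallelness of those distributions combined with torsion-freeness and flatness.

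I would start by constructing the coordinates. By Lemma \ref{lem:sframeconnection} each \(s_j\) with \(j\in I_0\) is parallel on \(U^{\circ}\) and holomorphic on \(U\); torsion-freeness on \(U^{\circ}\) yields \([s_i,s_j]=\nabla_{s_i}s_j-\nabla_{s_j}s_i=0\), which by holomorphicity extends to all of \(U\). Taking the transversal \(T=\{y_j=0:j\in I_0\}\) from Lemma \ref{coordinatesFrob} and simultaneously integrating the commuting family \((s_j)_{j\in I_0}\), I obtain a biholomorphism from a neighborhood of \((x,0)\in T\times\C^d\) to a neighborhood of \(x\) in \(U\), sending \((t,u)\) to the point reached by composing the time-\(u_k\) flows of \(s_{n-d+k}\) starting at \(t\). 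Using \((y_j)_{j\notin I_0}\) on \(T\) and \((u_j)_{j\in I_0}\) as new coordinates, I obtain \(\partial_{y_j}=s_j\) for \(j\in I_0\), which is item (i). Because the flow of a parallel vector field preserves any parallel distribution, the image of a leaf of \(\mL_i^\perp\) in \(T\) under the flow is an integral submanifold of \(\mL_i^\perp\) of full dimension, hence a leaf of \(\mL_i^\perp\); the orbits of the \(\C^d\)-action by the \(s_j\) are leaves of \(\mL\). A dimension count then shows that the new coordinate frame still satisfies \eqref{eq:frob}.

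Next I would prove item (ii). For \(j\in I_a\) and \(l\in I_b\) with \(a\neq b\) (possibly \(a\) or \(b\) zero), torsion-freeness gives \(\nabla_{\partial_{y_j}}\partial_{y_l}=\nabla_{\partial_{y_l}}\partial_{y_j}\); the left side is tangent to the parallel distribution of block \(b\) and the right to that of block \(a\). Since these distributions are complementary in the direct-sum decomposition \(TU=\mL\oplus\mL_1^\perp\oplus\cdots\oplus\mL_k^\perp\), both sides vanish. This kills all Christoffel symbols \(\Gamma_{jl}^m\) of \(\nabla\) with \(j,l\) in distinct blocks, and parallelness of \(\mL_i^\perp\) additionally forces \(\Gamma_{jl}^m=0\) when \(j,l\in I_i\) but \(m\notin I_i\). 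To see that the remaining \(\Gamma_{jl}^m\) with \(j,l,m\in I_i\) depend only on \((y_p)_{p\in I_i}\), I invoke flatness: for any \(p\notin I_i\), the identity \(R(\partial_{y_p},\partial_{y_j})\partial_{y_l}=0\) together with \([\partial_{y_p},\partial_{y_j}]=0\) and \(\nabla_{\partial_{y_p}}\partial_{y_l}=0\) reduces to \(\nabla_{\partial_{y_p}}\nabla_{\partial_{y_j}}\partial_{y_l}=0\). Expanding the inner covariant derivative and using \(\nabla_{\partial_{y_p}}\partial_{y_m}=0\) for \(m\in I_i\) gives \(\sum_{m\in I_i}\partial_{y_p}(\Gamma_{jl}^m)\,\partial_{y_m}=0\), hence \(\partial_{y_p}(\Gamma_{jl}^m)=0\). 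These conditions are exactly what is needed for \(\nabla\) to equal the product of the trivial connection on \(\mL_x\) with the restrictions \(\nabla|_{(\mL_i^\perp)_x}\) on the remaining factors, which is \eqref{eq:prodaff2}.

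The main obstacle I foresee is in the first step: checking that the simultaneous-flow coordinates respect the Frobenius product structure of \emph{all} the parallel distributions at once. The flows of the \(s_j\) do not fix individual leaves of \(\mL_i^\perp\) pointwise but only permute them, so one needs to verify that the new level sets still cut out leaves of the old parallel distributions. The key observation is that the flow of a parallel vector field sends leaves of any parallel distribution to leaves of the same distribution, a fact that follows from the identity \([X,Y]=\nabla_XY-\nabla_YX\) applied to parallel \(X\) and sections \(Y\) of the distribution in question.
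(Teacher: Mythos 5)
Your proof is correct and follows essentially the same route as the paper's: for item (ii) you use torsion-freeness to kill the mixed Christoffel symbols, parallelness of the distributions to get the block structure, and flatness via \(R(\p_{y_p},\p_{y_j})\p_{y_l}=0\) to show the surviving Christoffel symbols depend only on their own block's variables, which is exactly the content of Equation \eqref{vanishmixed}, Lemma \ref{independence}, and the ``basic'' condition needed to conclude as in Lemma \ref{lem:prodconect}. The only divergence is in item (i): the paper first shows the parallel fields \(s_j\) are basic (Lemma \ref{basicfields}) and then straightens them by a coordinate change in the last \(d\) variables alone (Lemma \ref{flatfactor}), whereas you build the coordinates directly by flowing a Frobenius transversal along the commuting \(s_j\) and verify that \eqref{eq:frob} survives because flows of parallel fields preserve parallel distributions --- both arguments are valid and rest on the same commutation \([s_i,s_j]=0\).
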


Before proving Proposition \ref{prodaffine} we make a slight digression on products of affine connections.

\begin{definition}\label{def:basic}
	Let \(X=X_1 \times X_2\) be a product manifold with projection maps \(\pi_i: X \to X_i\) to its factors and let \(v\) be a vector field on \(X\). We say that \(v\) is \textbf{basic} if we can write
	\[v= \tilde{v}_1+\tilde{v}_2\]
	where \(\tilde{v}_i\) are lifts of vector fields \(v_i\) on \(X_i\) via the projection maps \(\pi_i\), that is \((D\pi_1)\tilde{v}_1\) is a vector field on \(X_1\) and \((D\pi_2)\tilde{v}_1=0\); and similarly for \(\tilde{v}_2\).
\end{definition}

\begin{lemma}\label{lem:prodconect}
	Let \(X=X_1 \times X_2\) be a product manifold with projection maps \(\pi_i: X \to X_i\) to its factors and let \(\nabla\) be a torsion free connection on \(TX\). Suppose that the bundles \(\pi_i^*TX_i\) are parallel and that \(\nabla_v w\) is basic whenever \(v\) and \(w\) are basic. Then there are unique connections \(\nabla^i\) on \(TX_i\) such that \(\nabla\) is equal to their product.
\end{lemma}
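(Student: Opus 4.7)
The plan is to define $\nabla^1$ on $TX_1$ (and symmetrically $\nabla^2$) by the prescription that, for vector fields $v_1, w_1$ on $X_1$ with lifts $\tilde{v}_1, \tilde{w}_1 \in \pi_1^*TX_1$,
\[ \widetilde{\nabla^1_{v_1} w_1} := \nabla_{\tilde{v}_1} \tilde{w}_1. \]
Uniqueness of such a pair $(\nabla^1,\nabla^2)$ is immediate from this formula, since Definition \ref{def:prodconect} applied to basic lifts (with $\tilde{v}_2 = \tilde{w}_2 = 0$) forces exactly this identity. So only existence requires work.

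The first step for existence is to argue that the right-hand side is itself a lift of a vector field on $X_1$. Since $\tilde{w}_1$ takes values in the parallel sub-bundle $\pi_1^*TX_1$, so does $\nabla_{\tilde{v}_1}\tilde{w}_1$. By the basic-field hypothesis the same expression is also basic, hence decomposes as $\tilde{a}_1 + \tilde{a}_2$ with $\tilde{a}_i \in \pi_i^*TX_i$; combined with the previous observation this yields $\tilde{a}_2 = 0$ and $\nabla_{\tilde{v}_1}\tilde{w}_1 = \tilde{a}_1$ for a well-defined vector field $a_1$ on $X_1$, which we take to be $\nabla^1_{v_1} w_1$. The axioms of a connection for $\nabla^1$ (tensoriality in $v_1$ and Leibniz in $w_1$) are then inherited from those of $\nabla$, using that the lift of $f w_1$ for $f$ a function on $X_1$ is $(\pi_1^* f)\tilde{w}_1$ and that $\tilde{v}_1$ acts on lifted functions as $v_1$ does on $X_1$.

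The remaining task is to verify that $\nabla$ coincides with the product connection $\nabla^1 \times \nabla^2$. Expanding $\nabla_{\tilde{v}_1 + \tilde{v}_2}(\tilde{w}_1 + \tilde{w}_2)$ by linearity, the only missing ingredient is the vanishing of the cross terms $\nabla_{\tilde{v}_1}\tilde{w}_2$ and $\nabla_{\tilde{v}_2}\tilde{w}_1$. This is where the torsion-free assumption plays its role, and is the main subtlety of the argument: since lifts from different factors have vanishing Lie bracket $[\tilde{v}_1, \tilde{w}_2] = 0$, torsion-freeness of $\nabla$ gives $\nabla_{\tilde{v}_1}\tilde{w}_2 = \nabla_{\tilde{w}_2}\tilde{v}_1$. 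The left-hand side lies in $\pi_2^*TX_2$ by parallelism of that sub-bundle, while the right-hand side lies in $\pi_1^*TX_1$ for the same reason; since $\pi_1^*TX_1 \cap \pi_2^*TX_2 = 0$, both sides vanish and the product decomposition of $\nabla$ follows.
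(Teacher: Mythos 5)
Your proof is correct and follows essentially the same route as the paper's: define $\nabla^i$ by $\widetilde{\nabla^i_{v_i}w_i} = \nabla_{\tilde v_i}\tilde w_i$, and kill the cross terms using $[\tilde v_1,\tilde w_2]=0$, torsion-freeness, and the trivial intersection of the two parallel sub-bundles. The only difference is that you spell out in more detail the step the paper merely asserts, namely that $\nabla_{\tilde v_1}\tilde w_1$, being both basic and a section of $\pi_1^*TX_1$, is genuinely a lift of a vector field on $X_1$.
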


\begin{proof}
	Let \(v_i, w_i\) be vector fields on \(X_i\) for \(i=1,2\) with lifts \(\tilde{v}_i, \tilde{w}_i\) to \(X\). The vector field \(\nabla_{\tilde{v}_i}\tilde{w}_i\) is basic and tangent to \(\pi_i^*TX_i\). We define \(\nabla^i\) by the relation
	\begin{equation}\label{eq:nablafactors}
	\nabla_{\tilde{v_i}}\tilde{w}_i = \widetilde{\nabla^i_{v_i}w_i} . 
	\end{equation}
	It is straightforward to verify that Equation \eqref{eq:nablafactors} defines a connection \(\nabla^i\) on \(TX_i\).
	On the other hand, since the vector fields \(\tilde{v}_1\) and \(\tilde{w}_2\) commute with each other,  the parallel bundles  \(\pi_i^*TX_i\) have zero intersection and \(\nabla\) is torsion free, we have \(\nabla_{\tilde{v}_1}\tilde{w}_2=0\) and  \(\nabla_{\tilde{v}_2}\tilde{w}_1 =0\). We conclude that Equation \eqref{eq:defnablaprod} is satisfied and therefore \(\nabla\) equals the product of \(\nabla^1\) and \(\nabla^2\). The uniqueness of the connections \(\nabla^i\) is also clear from Equation \eqref{eq:nablafactors}.
\end{proof}

\begin{example}[{\cite[pg. 209]{KN}}]
	Let \(\C^2 = \C \times \C\) with standard complex coordinates \(x_1, x_2\). 
	Consider the affine connection given by
	\[\nabla_{\p_{x_1}} \p_{x_1} = x_2 \p_{x_1}, \,\ \nabla_{\p_{x_i}} \p_{x_j} =0 \,\ \mbox{otherwise} .\]
	This connection is torsion free. The distributions spanned by \(\p_{x_1}\) and \(\p_{x_2}\) are parallel. However, \(\nabla\) is not a product. If it were then its curvature would vanish, but \(R(\p_{x_2},\p_{x_1})\p_{x_1}=\p_{x_1}\).  Note that the vector field \(x_2\p_{x_1}\) is not basic.
\end{example}

Going back to the setting of Proposition \ref{prodaffine}, we have the following.

\begin{definition}
	Let \(y\) be coordinates which integrate the parallel distributions as stated in Equation \eqref{eq:frob}. A vector field \(Y\) can be uniquely decomposed into components 
	\(Y=Y_1+\ldots+Y_k+Y_{0}\) where
	\[Y_i = \sum_{j \in I_i} c_j(y) \frac{\p}{\p y_j}\]
	is tangent to either \(\mL\) if \(i=0\) or \(\mL^{\perp}_i\) if \(1\leq i \leq k\). The vector field \(Y\) is basic if for every component \(Y_i\) its coefficients satisfy \(c_j = c_j(y_{m}, \,\ m \in I_i) \).
\end{definition}

\begin{note}
	The coordinates \(y\) give us projections \(\pi_i: U \to \C^{|I_i|}\). Every vector field on \(\C^{|I_i|}\) has a unique lift under these projections to a vector field tangent to \(\mL\) if \(i=0\) or \(\mL^{\perp}_i\) if \(1 \leq i \leq k\). These lifts are precisely the basic vector fields tangent to the parallel distributions. Since any other coordinate system \(y'\) which simultaneously integrates \(\mL\) and all \(\mL^{\perp}_i\) differs from \(y\) by biholomorphisms on the factors, see Lemma \ref{coordinatesFrob}, the notion of a basic vector field is independent of the coordinates chosen.
\end{note}

\begin{lemma}
	Let \(y\) be coordinates which satisfy Equation \eqref{eq:frob}. If \(i, j\) belong to different index sets \(I_m\) then
	\begin{equation} \label{vanishmixed}
	\nabla_{\p_{y_j}} \p_{y_i} = \nabla_{\p_{y_i}} \p_{y_j} =0 .
	\end{equation}
\end{lemma}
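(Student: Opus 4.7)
The plan is to use three ingredients together: the parallelism of the distributions, the torsion-free property of $\nabla$, and the fact that the distributions have pairwise zero intersection.

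First I would fix $i \in I_m$ and $j \in I_{m'}$ with $m \neq m'$. By Equation \eqref{eq:frob}, the vector field $\partial_{y_j}$ is a section of the distribution $\mD_{m'}$, where I set $\mD_0 = \mL$ and $\mD_\ell = \mL^\perp_\ell$ for $1 \leq \ell \leq k$; similarly $\partial_{y_i}$ is a section of $\mD_m$. Since both distributions are parallel with respect to $\nabla$ on $U^\circ$, the covariant derivatives satisfy
\[
\nabla_{\partial_{y_i}} \partial_{y_j} \in \Gamma(\mD_{m'}), \qquad \nabla_{\partial_{y_j}} \partial_{y_i} \in \Gamma(\mD_{m}).
\]

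Next, since $[\partial_{y_i}, \partial_{y_j}] = 0$ and $\nabla$ is torsion free on $U^\circ$, we have
\[
\nabla_{\partial_{y_i}} \partial_{y_j} - \nabla_{\partial_{y_j}} \partial_{y_i} = [\partial_{y_i}, \partial_{y_j}] = 0,
\]
so the two covariant derivatives coincide. But the common value lies in $\mD_m \cap \mD_{m'} = \{0\}$ by the direct sum decomposition $T\C^n|_U = \mL \oplus \mL^\perp_1 \oplus \ldots \oplus \mL^\perp_k$. This forces both covariant derivatives to vanish on $U^\circ$, and hence by continuity (or holomorphicity) they vanish wherever defined.

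There is essentially no obstacle here: the argument is a routine combination of parallel-distributions with vanishing Lie bracket and torsion freeness, and the conclusion of Equation \eqref{vanishmixed} follows immediately. The only mild subtlety is that $\nabla$ a priori only makes sense on $U^\circ$, but this is harmless since $U^\circ$ is dense in $U$ and the coefficients we are computing are holomorphic.
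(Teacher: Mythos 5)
Your proof is correct and is essentially identical to the paper's argument: torsion-freeness plus the vanishing bracket of coordinate fields gives equality of the two covariant derivatives, and parallelism of the distributions together with their zero intersection forces both to vanish. No further comment is needed.
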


\begin{proof}
	Since \([\p_{y_i}, \p_{y_j}]=0\) and the connection is torsion free, we have \(\nabla_{\p_{y_i}} \p_{y_j} = \nabla_{\p_{y_j}} \p_{y_i}\). On the other hand, the covariant derivatives \(\nabla_{\p_{y_i}} \p_{y_j}\) and \(\nabla_{\p_{y_j}} \p_{y_i}\) must belong to different distributions \(\mL_j^{\perp}\) (or \(\mL\)) and \(\mL^{\perp}_i\) respectively which have zero intersection.
\end{proof}

\begin{lemma}\label{independence}
	Let \(y\) be coordinates which satisfy Equation \eqref{eq:frob}. Let \(i,j \in I_m \subset \{1, \ldots, n-d\}\) for some \(1 \leq m \leq k\). Write
	\[\nabla_{\p_{y_i}} \p_{y_j} = \sum_{l \in I_m} \Gamma^l_{ij} \p_{y_l} .\]
	Then \(\p_{y_s} \Gamma^l_{ij}=0\) for every \(s \notin I_m\). In other words, the vector field \(\nabla_{\p_{y_i}}\p_{y_j}\) is basic.
\end{lemma}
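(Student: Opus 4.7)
The plan is to exploit flatness of $\nabla$ on $U^{\circ}$ together with the mixed-vanishing identity \eqref{vanishmixed}. Since coordinate vector fields commute and $\nabla$ is flat on $U^{\circ}$, for every $s \notin I_m$ we have
\[
0 = R(\p_{y_s}, \p_{y_i})\p_{y_j} = \nabla_{\p_{y_s}}\nabla_{\p_{y_i}}\p_{y_j} - \nabla_{\p_{y_i}}\nabla_{\p_{y_s}}\p_{y_j}.
\]
The plan is to show that both terms on the right simplify drastically using \eqref{vanishmixed}.

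First I would handle the second term: because $s \notin I_m$ while $j \in I_m$, Equation \eqref{vanishmixed} gives $\nabla_{\p_{y_s}}\p_{y_j} = 0$, so $\nabla_{\p_{y_i}}\nabla_{\p_{y_s}}\p_{y_j} = 0$. For the first term I would expand using the Leibniz rule:
\[
\nabla_{\p_{y_s}}\nabla_{\p_{y_i}}\p_{y_j} = \sum_{l \in I_m} (\p_{y_s}\Gamma^l_{ij})\,\p_{y_l} + \sum_{l \in I_m} \Gamma^l_{ij}\,\nabla_{\p_{y_s}}\p_{y_l}.
\]
Since $s \notin I_m$ and $l \in I_m$, Equation \eqref{vanishmixed} again yields $\nabla_{\p_{y_s}}\p_{y_l} = 0$ for each $l \in I_m$, so the second sum vanishes.

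Combining the two simplifications on $U^{\circ}$, we obtain
\[
\sum_{l \in I_m} (\p_{y_s}\Gamma^l_{ij})\,\p_{y_l} = 0.
\]
Since $\{\p_{y_l}\}_{l \in I_m}$ is a linearly independent family, this forces $\p_{y_s}\Gamma^l_{ij} = 0$ on $U^{\circ}$ for every $l \in I_m$ and every $s \notin I_m$. By holomorphicity, this identity persists wherever the $\Gamma^l_{ij}$ are defined, which gives exactly the basic property claimed. There is no real obstacle here: the flatness of $\nabla$ plus the previously-established vanishing \eqref{vanishmixed} force the Christoffel symbols to be independent of the transverse variables, and the preservation of each distribution $\mathcal{L}^{\perp}_m$ under $\nabla$ ensures the decomposition $\nabla_{\p_{y_i}}\p_{y_j} = \sum_{l \in I_m}\Gamma^l_{ij}\p_{y_l}$ used above.
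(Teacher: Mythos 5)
Your proof is correct and is essentially the paper's argument: the paper likewise computes $0 = R(\p_{y_s},\p_{y_i})\p_{y_j} = \sum_{l\in I_m}\p_{y_s}(\Gamma^l_{ij})\p_{y_l}$ using flatness and Equation \eqref{vanishmixed}, merely omitting the Leibniz-rule expansion you spell out. The extra detail you supply is exactly what the paper's two-line proof leaves implicit.
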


\begin{proof}
	Let \(s \notin I_m\) then
	\begin{align*}
	0 &= R(\p_{y_s}, \p_{y_i}) \p_{y_j} \\
	&= \sum_{l \in I_m} \p_{y_s} (\Gamma^l_{ij}) \p_{y_l} .
	\end{align*}
	The first equality uses that \(\nabla\) is flat, the second equality uses Equation \eqref{vanishmixed}.
\end{proof}

Recall that the frame field \((s_1, \ldots, s_n)\) is defined  by the gauge transformation \(G\), see Equation \eqref{eq:frame}. The next lemma asserts that the parallel fields \(s_{n-d+1}, \ldots, s_n\) which span \(\mL\) are basic.

\begin{lemma} \label{basicfields}
	Let \(y\) be coordinates which satisfy Equation \eqref{eq:frob}. For \(i>n-d\) write
	\[s_i = \sum_{j>n-d} c_{ij}(y) \frac{\p}{\p y_j} . \]
	Then \(\p_{y_m} c_{ij} =0\) for \(1\leq m \leq n-d\).
\end{lemma}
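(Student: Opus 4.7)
The plan is to exploit that the fields $s_i$ for $i > n-d$ are parallel together with the already-established vanishing of mixed covariant derivatives between coordinate vector fields from different index groups (Equation \eqref{vanishmixed}).

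First, I would recall from Lemma \ref{lem:sframeconnection} that when $i > n-d$ the vector field $s_i$ is parallel with respect to $\nabla$, that is $\nabla s_i = 0$. Next, expand $s_i = \sum_{j > n-d} c_{ij}(y)\,\partial_{y_j}$ and differentiate along $\partial_{y_m}$ for an index $1 \leq m \leq n-d$. Using Leibniz,
\[
0 = \nabla_{\partial_{y_m}} s_i = \sum_{j > n-d} \bigl( (\partial_{y_m} c_{ij})\, \partial_{y_j} + c_{ij}\, \nabla_{\partial_{y_m}} \partial_{y_j}\bigr).
\]

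Now $m \in I_l$ for some $1 \leq l \leq k$ while $j \in I_0$, so $m$ and $j$ lie in different index sets. Equation \eqref{vanishmixed} then gives $\nabla_{\partial_{y_m}}\partial_{y_j} = 0$ for every such pair. Consequently the equation above collapses to
\[
\sum_{j > n-d} (\partial_{y_m} c_{ij})\, \partial_{y_j} = 0,
\]
and since $\{\partial_{y_j}\}_{j > n-d}$ is a frame for $\mL$, each coefficient vanishes: $\partial_{y_m} c_{ij} = 0$ for all $1 \leq m \leq n-d$ and all $j > n-d$. This is exactly the desired conclusion.

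There is no real obstacle; the statement is essentially a direct computation once Lemma \ref{lem:sframeconnection} (parallelism of $s_i$ for $i \in I_0$) and Equation \eqref{vanishmixed} (vanishing of mixed Christoffel-like terms between leaves of $\mL$ and leaves of $\mL^\perp_j$) are in hand. The only subtle point is verifying that $s_i$ has no component in the directions $\partial_{y_j}$ with $j \leq n-d$ so that the expansion $s_i = \sum_{j > n-d} c_{ij}\,\partial_{y_j}$ is legitimate, but this is immediate from $s_i \in \mL$ and $\mL = \Span\{\partial_{y_j} : j \in I_0\}$ given by Equation \eqref{eq:frob}.
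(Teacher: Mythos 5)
Your proof is correct and follows essentially the same route as the paper: use $\nabla s_i = 0$ from Lemma \ref{lem:sframeconnection}, expand $\nabla_{\p_{y_m}} s_i$ by Leibniz, and kill the terms $c_{ij}\nabla_{\p_{y_m}}\p_{y_j}$ via Equation \eqref{vanishmixed} to conclude that each $\p_{y_m} c_{ij}$ vanishes. No gaps.
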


\begin{proof}
	Let \(1\leq m\leq n-d\) then, using that \(\nabla s_i =0\),
	\begin{align*}
	0 &= \nabla_{\p_{y_m}} s_i \\
	&= \sum_{j>n-d}  (\p_{y_m} c_{ij}) \frac{\p}{\p y_j} ,
	\end{align*}
	where the second equality uses Equation \eqref{vanishmixed}.
\end{proof}

\begin{lemma}\label{flatfactor}
	We can choose coordinates \(y\) that satisfy Equation \eqref{eq:frob} and moreover
	\[ s_i = \frac{\p}{\p y_i}\]
	for all \(i>n-d\).
\end{lemma}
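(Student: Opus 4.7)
The plan is to invoke Lemma~\ref{coordinatesFrob} to obtain coordinates $y$ that integrate the parallel distributions $\mL,\mL^\perp_1,\ldots,\mL^\perp_k$, and then modify only the last $d$ coordinates, using the freedom provided by the last sentence of Lemma~\ref{coordinatesFrob} (which allows us to replace $(y_j)_{j\in I_0}$ by an arbitrary biholomorphism of $\C^d$), so that the parallel frame $s_{n-d+1},\ldots,s_n$ of $\mL$ becomes the coordinate frame. The central observation is that parallel holomorphic vector fields for a flat torsion-free connection commute pairwise; once this is in hand the result reduces to the classical simultaneous rectification theorem for commuting holomorphic vector fields.

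For the commutativity: since $\nabla s_i=\nabla s_j=0$ and $\nabla$ is torsion-free on $U^\circ$, we have $[s_i,s_j]=\nabla_{s_i}s_j-\nabla_{s_j}s_i=0$ on $U^\circ$; since the $s_i$ are holomorphic on $U$ and $U^\circ$ is dense, the bracket vanishes on all of $U$. Thus $s_{n-d+1},\ldots,s_n$ are $d$ pairwise commuting, pointwise linearly independent holomorphic vector fields on $U$, all tangent to $\mL$. Moreover, by Lemma~\ref{basicfields} the coefficients of each such $s_i$ in the frame $\{\p/\p y_j\}_{j\in I_0}$ depend only on $(y_j)_{j\in I_0}$, so each $s_i$ descends under the projection $\pi_0:(y_1,\ldots,y_n)\mapsto(y_{n-d+1},\ldots,y_n)$ to a holomorphic vector field $\bar s_i$ on a neighborhood $\bar U$ of the image of $x$ in $\C^d$. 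The $\bar s_i$ are still pairwise commuting and pointwise linearly independent, since the $s_i$ span $\mL$ everywhere on $U$.

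By the standard simultaneous rectification theorem there exist holomorphic coordinates $(\tilde y_{n-d+1},\ldots,\tilde y_n)$ near the image of $x$ in $\C^d$ in which $\bar s_i=\p/\p\tilde y_i$. Define new coordinates on a possibly smaller neighborhood of $x$ by $y'_j:=y_j$ for $j\notin I_0$ and $y'_i:=\tilde y_i\circ\pi_0$ for $i\in I_0$. Since this change of variables mixes only the $y_i$ with $i\in I_0$ among themselves, the uniqueness clause of Lemma~\ref{coordinatesFrob} guarantees that equation~\eqref{eq:frob} continues to hold for $y'$, and by construction $s_i=\p/\p y'_i$ for every $i>n-d$. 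The only non-elementary ingredient is the simultaneous rectification of commuting holomorphic vector fields, which is entirely classical, so no substantial obstacle is expected.
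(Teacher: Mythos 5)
Your proof is correct and follows essentially the same route as the paper: use Lemma \ref{basicfields} to see that the parallel fields $s_{n-d+1},\ldots,s_n$ descend to $\C^d$, check they commute because $\nabla$ is torsion free and they are parallel, and then rectify the commuting frame by a coordinate change in the last $d$ variables only. The only cosmetic difference is that you extend $[s_i,s_j]=0$ from $U^\circ$ to $U$ by density of $U^\circ$ and holomorphy, whereas the paper evaluates the bracket on a leaf of $\mL$ avoiding the singular set and uses the basic property of the coefficients; both are valid.
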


\begin{proof}
	Lemma \ref{basicfields} guarantees that
	\[s_i = \sum_{j>n-d} c_{ij}(y_{n-d+1}, \ldots, y_n) \frac{\p}{\p y_j} \,\ \mbox{ for } i=n-d+1, \ldots, n . \]
	If we equip \(\C^d\) with coordinates \((y_{n-d+1}, \ldots, y_n)\) then \(s_{n-d+1}, \ldots, s_n\) make a frame of holomorphic vector fields on a neighbourhood of the origin in \(\C^{d}\). Moreover, the vector fields commute \([s_i,s_j]=0\) as we now explain. Take a leaf of \(\mL\) distinct from \(\mL_0=L\). The vector fields \(s_i\) are tangent to this leaf and \([s_i, s_j]= \nabla_{s_i} s_j - \nabla_{s_j}s_i =0\) because \(s_i, s_j\) are parallel. 
	
	Since we have a frame of mutually commuting vector fields on \(\C^{d}\), we can change coordinates \((y_{n-d+1}, \ldots, y_n) \mapsto (y'_{n-d+1}, \ldots, y'_n)\) so that the frame is made of coordinate vector fields.
\end{proof}

We can now establish Proposition \ref{prodaffine}.

\begin{proof}[Proof of Proposition \ref{prodaffine}]
	We take coordinates \(y\) as provided by Lemma \ref{flatfactor}.
	In particular, since the vector fields \(s_j\) for \(j>n-d\) are parallel, Equation \eqref{eq:prodaff1} is satisfied.
	
	Let us write \(\nabla_{\p_{y_i}} \p_{y_j} = \sum_{m=1}^{n} \Gamma^m_{ij} \p_{y_m}\) for \(1\leq i,j \leq n\). 
	It follows from Lemmas \ref{flatfactor} and \ref{independence} that \(\Gamma^m_{ij}=0\) unless \(i,j,m\) belong to the same index set \(I_l \subset \{1, \ldots, n-d\}\) in which case  \(\Gamma^m_{ij}=\Gamma^m_{ij}(y_s, \,\ s \in I_l)\).
	In particular the vector fields \(\nabla_{\p_{y_i}} \p_{y_j}\) are basic, so Lemma \ref{lem:prodconect} implies that \(\nabla\) is a product and Equation \eqref{eq:prodaff2} is satisfied.
\end{proof}

\subsection{Step 3: Local dilation vector field}\label{sect:loceulervf}

Recall that \(e\) is the Euler vector field for \(\nabla\) given by Equation \eqref{eq:euler}.

\begin{lemma}\label{lem:parallelvf}
	There is a holomorphic vector field \(s\) on \(U\) such that \(\nabla s = 0\) and \(s(x)=e(x)\).
\end{lemma}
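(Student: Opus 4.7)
The plan is to build $s$ as a linear combination of the parallel vector fields already produced by the $s$-frame of Definition \ref{def:sframe}. Recall from Lemma \ref{lem:sframeconnection} that the last $d = \dim L$ members of this frame, namely $s_{n-d+1}, \ldots, s_n$, satisfy $\nabla s_i = 0$ on $U^\circ$, and they are holomorphic on all of $U$ by construction (coming from the gauge transformation $G$, which by Lemma \ref{gauge} extends holomorphically across the hyperplanes). By Definition \ref{def:dist} these $d$ vector fields span the distribution $\mL$ pointwise on $U$.

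The key point that makes the argument work is that $e(x) \in L$. Indeed, $x \in L^\circ \subset L$ and by Remark \ref{rmk:etangent} the Euler vector field $e$ is tangent to every stratum of the arrangement; so $e(x)$ lies in $L$. On the other hand, Lemma \ref{lem:restrictiondist} identifies the restriction $\mL|_L$ with $L$, in particular $\mL_x = L$. Since $\{s_i(x)\}_{i > n-d}$ is a basis of $\mL_x$, there exist unique scalars $\lambda_{n-d+1}, \ldots, \lambda_n \in \C$ with
\[
e(x) \;=\; \sum_{i > n-d} \lambda_i \, s_i(x) .
\]

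I then define
\[
s \;=\; \sum_{i > n-d} \lambda_i \, s_i .
\]
This vector field is holomorphic on all of $U$ because each $s_i$ is, it is $\nabla$-parallel on $U^\circ$ as a constant linear combination of parallel fields, and $s(x) = e(x)$ by the choice of the $\lambda_i$. This proves the lemma.

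There is no genuine obstacle here; the content of the statement is already packaged in the preceding results. The only subtlety worth double-checking is the compatibility $e(x) \in \mL_x$, which is what allows the initial-value problem for the parallel field to be solved holomorphically across the singular locus: the irreducible-component decomposition used to define $e$ matches, under $G$, the frame pieces $(s_i)_{i \in I_j}$, and the part of $e$ landing in the $T_j^\perp$-directions vanishes on $L$ precisely because $L \subset T_j$ holds for each irreducible component $L_j$ containing $L$.
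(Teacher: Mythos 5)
Your construction is exactly the paper's: take $s=\sum_{j>n-d}c_j s_j$ with constants chosen so that $s(x)=e(x)$, using Remark \ref{rmk:etangent} for $e(x)\in L$, Lemma \ref{lem:restrictiondist} for $\mL|_L=L$, and Lemma \ref{lem:sframeconnection} for the parallelism of the $s_j$. The proof is correct and follows the same route as the paper.
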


\begin{proof}
	Note that \(e(x)\) belongs to \(L\), see Remark \ref{rmk:etangent}. The distribution \(\mL\) spanned by the parallel sections \(s_{n-d+1}, \ldots s_n\) is equal \(TL\) along \(L\) (see Lemma \ref{lem:restrictiondist}) so we can take \(s= \sum_{j>n-d} c_j s_j\) for suitable \(c_j \in \C\). 
\end{proof}

\begin{definition}\label{def:localdilation}
The \textbf{local dilation vector field}  at \(x \in L^{\circ}\) is defined by
\begin{equation*}
e_x = e - s
\end{equation*}
where \(e\) is the Euler field for the standard connection \(\nabla\) given by Lemma \ref{lem:eulstndcon} and \(s\) is the local parallel vector field given by Lemma \ref{lem:parallelvf}. 
\end{definition}

\begin{remark}
	By its definition, it is clear that
	 \(\nabla e_x = \Id\)  and \(e_x(x)=0\).
\end{remark}

\begin{notation}
	Fix index sets \(I_0 \cup I_1 \cup \ldots \cup I_k = \{1, \ldots, n\}\) given by Equations \eqref{eq:indexset} and \eqref{eq:I0}. 
	We use coordinates \(y\) centred at \(x\) given by Proposition \ref{prodaffine}:
	\begin{itemize}
		\item \(\mL\) is spanned by \(\p_{y_{n-d+1}}, \ldots, \p_{y_n}\) and \(\nabla \p_{y_i}=0\) for \(i>n-d\);
		\item  \(\mL^{\perp}_i\) is spanned by \(\{\p_{y_j}, \, j \in I_i\}\) for \(i=1, \ldots, k\).
	\end{itemize}
\end{notation}

\begin{lemma}\label{lem:eulerbasic}
	The local dilation vector field
	\(e_x\) is basic and tangent to the leaves \((\mL)_{x}\), \((\mL^{\perp}_i)_{x}\).
\end{lemma}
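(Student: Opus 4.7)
The plan is to decompose $e_x$ along the product structure provided by Proposition \ref{prodaffine} and then use the defining equation $\nabla e_x = \Id$ together with the vanishing $e_x(x)=0$ to pin down each component.

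First I would split $e_x$ according to the $\nabla$-parallel direct sum $TU = \mL \oplus \mL^{\perp}_1 \oplus \ldots \oplus \mL^{\perp}_k$, writing
\begin{equation*}
e_x = e_x^{0} + e_x^{1} + \ldots + e_x^{k}, \qquad e_x^{0} \in \Gamma(\mL), \quad e_x^{i} \in \Gamma(\mL^{\perp}_i).
\end{equation*}
Because each of these distributions is $\nabla$-parallel, covariant differentiation preserves the splitting: $\nabla_v e_x^{i}$ lies in the same distribution as $e_x^{i}$. Hence the equation $\nabla_v e_x = v$ decomposes componentwise. For $v \in \mL^{\perp}_i$ (or $v \in \mL$), one reads off $\nabla_v e_x^{i} = v$ (respectively $\nabla_v e_x^{0} = v$) and $\nabla_v e_x^{j} = 0$ for $j$ outside this block.

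Next I would work in the adapted coordinates $y$ of Proposition \ref{prodaffine} and write $e_x^{i} = \sum_{j \in I_i} f_j^{i}(y)\,\partial_{y_j}$. The key input is the product form of $\nabla$: by Equation \eqref{vanishmixed} we have $\nabla_{\partial_{y_k}} \partial_{y_j}=0$ whenever $k$ and $j$ lie in different index sets $I_m$, and in particular $\nabla \partial_{y_j}=0$ for $j \in I_0$. Therefore, for $k \notin I_i$,
\begin{equation*}
\nabla_{\partial_{y_k}} e_x^{i} = \sum_{j \in I_i} \bigl(\partial_{y_k} f_j^{i}\bigr)\,\partial_{y_j}.
\end{equation*}
The vanishing statement from the previous paragraph forces $\partial_{y_k} f_j^{i} = 0$ for every $k \notin I_i$, so each $f_j^{i}$ is a function only of the coordinates $\{y_l : l \in I_i\}$. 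This is precisely the basic property for $e_x^{i}$ relative to the product $U \cong \mL_x \times (\mL^{\perp}_1)_x \times \ldots \times (\mL^{\perp}_k)_x$, and since each $e_x^{i}$ is basic, so is $e_x$.

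Finally, the condition $e_x(x)=0$ (i.e.\ $f_j^{i}(0)=0$) gives tangency. The leaf $(\mL^{\perp}_m)_x$ is cut out by $y_k=0$ for $k \notin I_m$; restricting $e_x^{i}$ with $i\neq m$ to this leaf evaluates $f_j^{i}$ at $y_l=0$ for all $l \in I_i$ (because $I_i \cap I_m = \emptyset$), hence yields $f_j^{i}(0)=0$. Thus $e_x|_{(\mL^{\perp}_m)_x} = e_x^{m}|_{(\mL^{\perp}_m)_x}$ is a section of $\mL^{\perp}_m$, proving tangency to $(\mL^{\perp}_m)_x$; the identical argument with $0$ in place of $m$ handles $(\mL)_x$. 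There is no real obstacle here beyond organizing the componentwise bookkeeping; all the technical content has already been packaged into the parallel splitting of $TU$ and the product form of $\nabla$.
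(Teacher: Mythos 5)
Your proof is correct and follows essentially the same route as the paper: both arguments expand $e_x$ in the adapted coordinates of Proposition \ref{prodaffine}, use the vanishing of the mixed covariant derivatives \eqref{vanishmixed} in the identity $\nabla_{\p_{y_k}} e_x = \p_{y_k}$ to kill the cross-block partial derivatives of the coefficients, and then deduce tangency from the basic property together with $e_x(x)=0$. The only cosmetic difference is that you first split $e_x$ into components via the parallelism of the distributions and then compute, whereas the paper computes directly on the full coefficient expansion; the content is identical.
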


\begin{proof}
	Write 
	\begin{equation}\label{expansione}
	e_x = \sum_{j=1}^{n} c_j \p_{y_j}
	\end{equation}
	where \(c_j \in \mO(U)\) are holomorphic functions vanishing at the origin. For \(0 \leq i \leq k\) we set
	\begin{equation*}
	e_i = \sum_{j \in I_i} c_j \p_{y_j}
	\end{equation*}
	This way
	\begin{equation}\label{eq:edec}
	e_x = e_1 + \ldots + e_k + e_{0}
	\end{equation}
	is the decomposition of \(e_x\) into components of the parallel bundles \(\mL^{\perp}_1, \ldots, \mL^{\perp}_k, \mL\).
	The statement that \(e_x\) is basic means that:
	\begin{itemize}
		\item[(i)] if \(j>n-d\) and \(i \leq n-d\) then \(\p c_j/ \p y_i =0\);
		\item[(ii)] if \(j \in I_m \subset \{1, \ldots, n-d\}\) and \(i \notin I_m\) then \(\p c_j /\p y_i = 0\). 
	\end{itemize}
	Let us first take \(i>n-d\), so \(\nabla_{\p_{y_i}}\p_{y_j} =0\) for every \(j\) (see Equation \eqref{vanishmixed}). Since \(\nabla e_x\) is the identity endomorphism of the tangent bundle, we have \(\p_{y_i} = \nabla_{\p_{y_i}} e_x\). Replacing \(e_x\) with Equation \eqref{expansione}  and using that \(\nabla_{\p_{y_i}}\p_{y_j} =0\) for every \(j\), we obtain
	\begin{equation*}
	\p_{y_i} = \nabla_{\p_{y_i}} \left(\sum_{j=1}^{n} c_j \p_{y_j}\right) = \sum_{j=1}^r \frac{\p c_j}{\p y_i} \p_{y_j} .
	\end{equation*}
	It follows that
	\begin{equation}\label{eq:e1}
	\frac{\p c_j}{\p y_i} = 0 \,\ \mbox{ when } j \leq n-d \mbox{ and } i> n-d ;	
	\end{equation}
	\begin{equation} \label{eq:e2}
	\frac{\p c_j}{\p y_i} = \delta_{ij} \,\ \mbox{ when } i, j > n-d .
	\end{equation}
	Consider now \(i \in I_m\), so \(\nabla_{\p_{y_i}} \p_{y_j} = 0\) for every \(j \notin I_m\) (see Equation \eqref{vanishmixed}). Same as before, we obtain
	\begin{equation*}
	\p_{y_i} = \nabla_{\p_{y_i}} e_x = \nabla_{\p_{y_i}} e_m +  \left(\sum_{j \notin I_m} \frac{\p c_j}{\p y_i} \p_{y_j} \right).
	\end{equation*}
	It follows that term in parenthesis must vanish, so
	\begin{equation} \label{eq:e3}
	\frac{\p c_j}{\p y_i} = 0 \,\ \mbox{ when } j \notin I_m \mbox{ and } i \in I_m .
	\end{equation}
	Items (i) and (ii) follow from equations \eqref{eq:e1}, \eqref{eq:e2} and \eqref{eq:e3}.
	
	The fact that \(e_x\) is tangent to the leaves through the origin can be proved as follows. The transversal components of \(e_x\) restricted to a leaf are independent of the coordinates that parametrize the leaf -because of the basic property just proved- and these components all vanish at the origin because \(c_j(0)=0\) for all \(j\) as \(e_x\) vanishes at \(x\).
\end{proof}

\begin{remark}
	It follows from Equations \eqref{eq:e2} and \eqref{eq:e3} in the proof of the Lemma \ref{lem:eulerbasic}  that the component of \(e_x\) tangent to \(\mL\) in the decomposition \ref{eq:edec} is given by
	\[e_{0} = \sum_{j>n-d} y_j \p_{y_j} . \]
\end{remark}

\subsubsection{{\large Comparison between the vector fields \(e_x\) and \(e_L\)}}

\begin{notation}
	Let \((x_1, \ldots, x_n)\) be adapted linear coordinates centred at \(x\). We write \(e_L\) for the Euler vector field of the model connection \(\nabla^L\) given by Lemma \ref{lem:eulstndcon}.
\end{notation}

\begin{note}
	In our neighbourhood \(U\) we have two vector fields:
	\begin{itemize}
		\item \(e_L\) is a linear vector field in the coordinates \((x_1, \ldots, x_n)\), it satisfies \(\nabla^L e_L =\) identity and \(e_L(0)=0\).
		\item \(e_x\) is a non-explicit holomorphic vector field defined only on the neighbourhood \(U\), it satisfies \(\nabla e_x =\) identity and \(e_x(0)=0\).
	\end{itemize}
\end{note}

The goal of this section is to prove Proposition \ref{prop:eigenvalues} which shows that \(e_x\) and \(e_L\) have the same linear part at \(0\). 

\begin{note}
It well known and easy to check that if \(X\) is a vector field on a manifold \(M\) vanishing at some point \(p \in M\) and \(\nabla\) is a connection on \(TM\), then the endomorphism of \(T_pM\) defined by \(\nabla X\) is independent of the choice \(\nabla\). Our connections \(\nabla, \nabla^L\) aren't smooth, still they differ by a holomorphic term so the same argument applies to yield the following.	
\end{note}

\begin{lemma} \label{lem:e0}
	\((\nabla^L e_x)|_0 = \) identity on \(T_0\C^n\).
\end{lemma}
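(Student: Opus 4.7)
The plan is to exploit the relation $\Omega = \Omega^L + \hol$ from Equation \eqref{eq:omegaL}, which says that although both singular connection matrices blow up along $\mH_L$, their difference is a matrix of holomorphic $1$-forms on the whole neighbourhood $U$. Writing both connections with respect to the coordinate frame of the linear coordinates $(x_1,\ldots,x_n)$, for any holomorphic vector field $s$ on $U^\circ$ one has
\[
\nabla^L s - \nabla s \;=\; (\Omega - \Omega^L)\cdot s \;=\; \hol\cdot s,
\]
and the right-hand side extends as a holomorphic section of $T\C^n$ over all of $U$.

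Apply this identity with $s = e_x$. Since by Definition \ref{def:localdilation} the vector field $e_x$ is holomorphic on $U$ and satisfies $e_x(0)=0$, the holomorphic section $\hol \cdot e_x$ vanishes at the origin: its value there is $\hol|_0 \cdot e_x(0) = 0$. On $U^\circ$ we also have the defining equation $\nabla e_x = \Id$, so
\[
\nabla^L e_x \;=\; \Id \,+\, \hol\cdot e_x \qquad \text{on } U^\circ.
\]
The right-hand side is holomorphic on the entire neighbourhood $U$ and evaluates to the identity at $0$. Thus $\nabla^L e_x$ extends continuously across the singular locus of $\nabla^L$ and $(\nabla^L e_x)|_0 = \Id$ on $T_0\C^n$.

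This is precisely the strategy outlined in the note preceding the statement: in the smooth case the value $(\nabla X)|_p$ at a zero $p$ of $X$ is independent of the connection because any two connections differ by a tensor, which kills the vector that vanishes at $p$. The only new ingredient needed here is the observation that even though $\nabla$ and $\nabla^L$ are individually singular at the origin, their difference is a genuine holomorphic tensor on $U$, so the same cancellation argument applies. There is no serious obstacle; the argument is essentially algebraic once Equation \eqref{eq:omegaL} and $e_x(0)=0$ are in hand.
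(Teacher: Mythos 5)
Your proof is correct and is essentially the paper's own argument: both rest on the observation that $\nabla$ and $\nabla^L$ differ by the holomorphic tensor $\hol$ from Equation \eqref{eq:omegaL}, which annihilates $e_x$ at the origin since $e_x(0)=0$, so $(\nabla^L e_x)|_0=(\nabla e_x)|_0=\Id$. Your extra remark that the identity $\nabla^L e_x = \Id + \hol\cdot e_x$ on $U^{\circ}$ exhibits the holomorphic extension across the singular locus is a slightly more explicit version of what the paper leaves implicit.
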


\begin{proof}
	Since \(e_x(0)=0\) we have
	\[\Id = (\nabla e_x)|_0 = (\nabla^L e_x)|_0 + \hol e_x(0) = (\nabla^L e_x)|_0 . \qedhere \]
\end{proof}

\begin{lemma}\label{lem:e1}
	Let \(X\) be a holomorphic vector field on \(U\), then \(\nabla^L X\) is holomorphic - regarded as a section of \(\End(T\C^n)\) over \(U\) - if and only if \(X\) is tangent to all hyperplanes in \(\mH_L\).
\end{lemma}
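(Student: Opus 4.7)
The plan is to reduce the statement to a pole-analysis of the matrix-valued 1-form $\Omega^L X$, then use the rank-one structure of the residues $A_H$ provided by Lemma \ref{lem:rk1}.

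First I would write, with respect to the trivialization $\partial_{x_1}, \ldots, \partial_{x_n}$,
\[
\nabla^L X = dX - \Omega^L \cdot X = dX - \sum_{H \in \mH_L} (A_H X)\,\frac{dh}{h},
\]
regarded as a column of $1$-forms, i.e.\ as a section of $\End(T\C^n)$ via the isomorphism $T^*\C^n \otimes T\C^n \cong \End(T\C^n)$. Since $X$ is holomorphic on $U$, so is $dX$. Hence $\nabla^L X$ is holomorphic on $U$ if and only if the sum $\Omega^L X$ is holomorphic on $U$. This is the form the statement really asserts.

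Next I would invoke Lemma \ref{lem:rk1} to rewrite each summand. Under the Non-Zero Weights Assumptions \ref{ass:nz} (implicit in our setting through Proposition \ref{prop:AL}), $A_H$ has rank one with $\ker A_H = H$ and $\img A_H = \C \cdot n_H$, and
\[
A_H X = dh(X)\, n_H .
\]
Consequently
\[
(A_H X)\,\frac{dh}{h} \;=\; \frac{dh(X)}{h}\; n_H \otimes dh,
\]
and since $n_H \otimes dh$ is a fixed non-zero holomorphic section of $T\C^n \otimes T^*\C^n$, holomorphy of this term is equivalent to holomorphy of the scalar function $dh(X)/h$ on $U$.

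For the ($\Leftarrow$) direction, suppose $X$ is tangent to every $H \in \mH_L$. Then $dh(X)$ vanishes along $H$, hence by the holomorphic factorization of functions vanishing on a smooth hypersurface, $h$ divides $dh(X)$ in $\mathcal{O}(U)$, so $dh(X)/h$ is holomorphic. Each summand is then holomorphic and $\nabla^L X$ is holomorphic.

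For the ($\Rightarrow$) direction, suppose $\nabla^L X$ is holomorphic and fix $H_0 \in \mH_L$. Pick a generic point $p \in H_0^\circ$ with $p \in U$, meaning $p \notin H$ for every other $H \in \mH_L$. Near $p$ all summands with $H \neq H_0$ are manifestly holomorphic, so the $H_0$-summand $(dh_0(X)/h_0)\, n_{H_0} \otimes dh_0$ must itself extend holomorphically across $H_0$ near $p$. Since $n_{H_0} \otimes dh_0$ is a nowhere zero holomorphic tensor, the meromorphic function $dh_0(X)/h_0$ has no pole along $H_0$ near $p$, i.e.\ $dh_0(X)$ vanishes along $H_0 \cap U$ in a neighborhood of $p$. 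By density of $H_0^\circ \cap U$ in $H_0 \cap U$ together with the identity principle, $dh_0(X) \equiv 0$ on $H_0 \cap U$, so $X$ is tangent to $H_0$. As $H_0$ was arbitrary, the conclusion follows. I do not foresee a genuine obstacle: the whole argument is residue-theoretic and hinges entirely on the rank-one normal form of $A_H$.
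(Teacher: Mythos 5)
Your proof is correct and follows essentially the same route as the paper: the forward direction uses the rank-one identity $A_H X = dh(X)\cdot n_H$ together with the divisibility $h \mid dh(X)$ for tangent fields, and the converse isolates a single hyperplane at a generic point of $H_0^{\circ}$ where all other summands are holomorphic and reads off the simple pole. The only cosmetic difference is that the paper states the converse in contrapositive form and writes the tangent-case summand as $f_{X,h}A_H$ rather than factoring out the scalar $dh(X)/h$ against the tensor $n_H\otimes dh$; the content is identical.
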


\begin{proof}
	Suppose \(X\) is tangent to all hyperplanes, then \(dh(X) = h f_{X, h}\) with \(f_{X, h} \in \mO(U)\). It follows that \(A_H(X) = dh(X) \cdot n_H = h f_{X, h} n_H\), so \(A_H(X)dh/h = f_{X, h} A_H\) and
	\begin{equation}\label{eq:holder}
	\nabla^L X = dX - \sum_{H \in \mH_L} f_{X,h} A_H
	\end{equation}
	is holomorphic.
	
	Conversely, if there is \(H \in \mH_L\) such that \(X\) is not tangent to \(H\) then we can find a point \(p \in H \setminus \cup_{H' \neq H} H'\) such that \(X_p \not \in T_p H\). In a small neighbourhood \(V\) of \(p\) all terms \(A_{H'}(X)dh'/h'\) are holomorphic for \(H'\neq H\) but the term \(A_H(X)dh/h\) has a simple pole at \(p\) since \(A_H(X_p)\neq 0\).
\end{proof}

\begin{lemma}\label{lem:e2}
	Let \(X\) be tangent to all hyperplanes and vanishing to second order at the origin, then \((\nabla^L X)|_0 = 0\).
\end{lemma}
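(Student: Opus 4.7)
The plan is to invoke the explicit formula from Lemma \ref{lem:e1} and then perform a direct order-of-vanishing check at the origin.

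Because $X$ is tangent to every $H \in \mH_L$, the holomorphic functions $f_{X,h}$ defined by $dh(X) = h\,f_{X,h}$ exist on $U$, and Equation \eqref{eq:holder} (from the proof of Lemma \ref{lem:e1}) gives the manifestly holomorphic expression
$$ \nabla^L X \;=\; dX \;-\; \sum_{H \in \mH_L} f_{X,h}\,A_H, $$
viewed as a section of $\End(T\C^n)$ on $U$. The key point is that this expression, unlike the original $d X - \Omega^L(X)$, has no meromorphic terms, so it can be evaluated pointwise at $0$ without any concern about cancellation between singular pieces.

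Now I would evaluate each term at the origin. The hypothesis that $X$ vanishes to second order at $0$ means that every first derivative of a component of $X$ vanishes at $0$, hence $(dX)|_0 = 0$ as an endomorphism of $T_0 \C^n$. For each $H \in \mH_L$, the defining equation $h$ is linear (after the translation \eqref{eq:translation}, the hyperplanes of $\mH_L$ still pass through the new origin $x$) and hence vanishes to order exactly $1$ at $0$, while $dh(X)$ inherits second-order vanishing from $X$. Therefore $f_{X,h} = dh(X)/h$ vanishes to order at least one at $0$; in particular $f_{X,h}(0) = 0$, and each summand $f_{X,h}(0) A_H$ is zero.

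No step here is a genuine obstacle: the whole content of the argument is that Lemma \ref{lem:e1} lets us convert the a priori meromorphic definition of $\nabla^L X$ into a holomorphic formula in which the two order-of-vanishing facts (for $dX$ and for $f_{X,h}$) can be applied independently. Putting them together yields $(\nabla^L X)|_0 = 0$, as required.
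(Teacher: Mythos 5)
Your proof is correct and follows the same route as the paper: both use the holomorphic formula $\nabla^L X = dX - \sum_{H \in \mH_L} f_{X,h} A_H$ from the proof of Lemma \ref{lem:e1} and then observe that second-order vanishing of $X$ forces $(dX)|_0 = 0$ and $f_{X,h}(0)=0$. Your write-up is just slightly more explicit about the two vanishing facts than the paper's one-line argument.
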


\begin{proof}
	With the notation above, we have \(dh(X) = f_{X, h} h\) with \(f_{x, h} \in \mO(U)\). Since \(X\) vanishes to second order at the origin we must have  \(f_{X, h}(0)=0 \) and Equation \eqref{eq:holder} vanishes when evaluated at zero. 
\end{proof}

\begin{remark}
	Lemmas \ref{lem:e1} and \ref{lem:e2} are stated for \(\nabla^L\). Since \(\nabla\) differs from \(\nabla^L\) by a holomorphic term, both lemmas also hold for \(\nabla\).
\end{remark}

\begin{notation}
	In adapted linear coordinates \((x_1, \ldots, x_n)\) we write
	\[ e_x = \sum_{i=1}^{n} f_i \frac{\p}{\p x_i} \]
	with holomorphic functions \(f_i \in \mO(U)\) vanishing at the origin. We write \(f_i = \ell_i + \hot\) where \(\ell_i\) is linear and \(\hot\) denotes higher order terms. We have
	\begin{equation} \label{eq:eorder}
	e_x = e_{\ell} + e_{\hot}, \qquad e_{\ell}= \sum_i \ell_i \frac{\p}{\p x_i} 
	\end{equation}
	and refer to \(e_{\ell}\) as the linear part of \(e_x\).
\end{notation}

\begin{lemma} \label{lem:eltangenthyper}
	\(e_{\ell}\) is tangent to all hyperplanes.
\end{lemma}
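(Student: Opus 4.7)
The plan is to derive tangency of $e_\ell$ from tangency of $e_x$ itself, which in turn follows from the holomorphicity of $\nabla^L e_x$.

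First, I would argue that $\nabla^L e_x$ extends holomorphically across $\mH_L$ on $U$. By construction $\nabla e_x = \Id$, which is a holomorphic section of $\End T\C^n$. By Equation \eqref{eq:omegaL}, the difference $\Omega - \Omega^L$ is a matrix of holomorphic $1$-forms on $U$, so
\[
\nabla^L e_x \;=\; \nabla e_x \;+\; (\Omega - \Omega^L)(e_x)
\]
is holomorphic on all of $U$ (including across the hyperplanes in $\mH_L$), since $e_x$ is itself holomorphic.

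Next, I would invoke Lemma \ref{lem:e1} with $X = e_x$: since $\nabla^L e_x$ is holomorphic on $U$, the vector field $e_x$ is tangent to every hyperplane $H \in \mH_L$. This is the main input.

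Finally, I would upgrade this to tangency of the linear part. Fix $H \in \mH_L$ with defining linear equation $h$; since $x \in L^{\circ} \subset H$, the function $h$ remains linear in the translated adapted coordinates centred at $x$. Tangency of $e_x$ to $H$ means $dh(e_x) = h \cdot f$ for some $f \in \mO(U)$. Writing $e_x = e_{\ell} + e_{\hot}$ and comparing homogeneous parts of degree $1$ at the origin, the linear part of $dh(e_x)$ is $dh(e_{\ell})$ (because $dh(e_{\hot})$ vanishes to order at least $2$), while the linear part of $h\cdot f$ is $f(0)\cdot h$. Thus $dh(e_{\ell}) = f(0)\, h$, so $e_{\ell}$ is tangent to $H$. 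Since $H \in \mH_L$ was arbitrary, this finishes the proof.

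I do not anticipate any real obstacle: once one notices the identity $\nabla^L e_x = \nabla e_x + (\Omega-\Omega^L)(e_x)$, both lemmas \ref{lem:e1} and the trivial extraction of linear parts do the work essentially for free.
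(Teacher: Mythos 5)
Your proof is correct and essentially identical to the paper's: the final step of equating linear parts in the identity \(dh(e_x)=h\cdot f\) is exactly the paper's computation. The only difference is that you spell out why \(e_x\) is tangent to the hyperplanes (via the holomorphicity of \(\nabla^L e_x = \nabla e_x + (\Omega-\Omega^L)(e_x)\) and Lemma \ref{lem:e1}), a fact the paper asserts by appealing to the remark that Lemma \ref{lem:e1} also holds for \(\nabla\); this is a welcome clarification, not a deviation.
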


\begin{proof}
	Let \(H=\{h=0\} \in \mH_L\), since \(e_x\) is tangent to \(H\) we have 
	\[e_x(h) = dh(e_x) = c_{X, h} h + \hot\] 
	with \(c_{X, h} \in \C\). On the other hand
	\[e_x(h) = \sum_i f_i \frac{\p h}{\p x_i} = \sum_i \frac{\p h}{\p x_i} \ell_i + \hot . \]
	Equating linear terms we get
	\[e_{\ell}(h) = \sum_i \frac{\p h}{\p x_i} \ell_i = c_{X, h} h . \]
	Thus \(e_{\ell}\) is tangent to \(H\).
\end{proof}

\begin{proposition}\label{prop:eigenvalues}
	The linear part of \(e_x\) is equal to \(e_L\), that is \(e_{\ell} = e_L\).
\end{proposition}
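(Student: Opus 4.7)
The plan is to verify that $e_\ell$ is a linear vector field on $\C^n$ vanishing at the origin and satisfying $\nabla^L e_\ell = \Id$ globally, and then to invoke the uniqueness of the Euler vector field of $\nabla^L$ (Corollary~\ref{cor:euniqueness}) to identify $e_\ell$ with $e_L$. The hypotheses of that corollary transfer from $\nabla$ to $\nabla^L$ because the irreducible components of $L = T(\mH_L)$ lie in $\mL_{irr}(\mH)$ by Lemma~\ref{lem:irredHL}, so the non-integer assumption of Theorem~\ref{PRODTHM} applies.

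First I would check the pointwise identity $(\nabla^L e_\ell)|_0 = \Id_{T_0\C^n}$. Writing $e_x = e_\ell + e_{\hot}$ with $e_{\hot}$ vanishing to second order at $0$, Lemma~\ref{lem:eltangenthyper} shows $e_\ell$ is tangent to every $H \in \mH_L$, and $e_x$ has the same property (as used implicitly in the proof of Lemma~\ref{lem:eltangenthyper}; this follows from $e_x = e - s$, since $e$ is tangent to all of $\mH$ by Remark~\ref{rmk:etangent} and the parallel holomorphic vector field $s$ is tangent to each $H \in \mH_L$ via the logarithmic normal form argument used in Lemma~\ref{gauge}, which invokes $a_H \notin \Z$). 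Hence $e_{\hot}$ is tangent to every hyperplane of $\mH_L$ and vanishes to second order, so Lemma~\ref{lem:e2} gives $(\nabla^L e_{\hot})|_0 = 0$, and combining with $(\nabla^L e_x)|_0 = \Id$ from Lemma~\ref{lem:e0} yields the pointwise identity.

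Next I would upgrade this to the global equality $\nabla^L e_\ell \equiv \Id$ on $\C^n$. Because $e_\ell$ is tangent to every $H \in \mH_L$, the formula from the proof of Lemma~\ref{lem:e1} rewrites
\[
\nabla^L e_\ell = d e_\ell - \sum_{H \in \mH_L} f_{e_\ell,h}\, A_H,
\]
where $f_{e_\ell,h} \in \mO(U)$ is characterized by $dh(e_\ell) = f_{e_\ell,h}\, h$. Since both $e_\ell$ and $h$ are linear, $dh(e_\ell)$ is a linear function divisible by the linear form $h$, which forces each $f_{e_\ell,h}$ to be a scalar constant; together with the constancy of $d e_\ell$ this exhibits $\nabla^L e_\ell$ as a \emph{constant} $(1,1)$-tensor on $\C^n$. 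The pointwise equality at the origin then propagates to a global one, and Corollary~\ref{cor:euniqueness} applied to $\nabla^L$ finishes the proof. The main subtlety is recognizing this constancy of $\nabla^L e_\ell$ rather than mere holomorphicity: that is what promotes a zeroth-order identity to a global one, and it depends crucially on the linearity of both $e_\ell$ and the defining forms $h$ of the arrangement.
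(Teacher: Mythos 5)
Your argument is correct and coincides with the paper's own proof: both establish $(\nabla^L e_{\hot})|_0=0$ via Lemmas~\ref{lem:e0}, \ref{lem:eltangenthyper} and \ref{lem:e2}, observe that $\nabla^L e_{\ell}$ is a \emph{constant} endomorphism because $e_{\ell}$ and the $h$ are linear, and conclude via Corollary~\ref{cor:euniqueness}. The only cosmetic point is your justification that $s$ is tangent to the hyperplanes of $\mH_L$ — the cleaner route is Lemma~\ref{lem:e1} (applied to $\nabla$, per the remark following Lemma~\ref{lem:e2}): since $\nabla s=0$ is in particular holomorphic, $s$ must be tangent to every $H\in\mH_L$, with no need to invoke the normal form argument of Lemma~\ref{gauge}.
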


\begin{proof}
	The covariant derivative \(\nabla^L e_{\ell}\) is a constant endomorphism with respect to the coordinate frame \(\p_{x_1}, \ldots, \p_{x_n}\) given by
	\[\nabla^L e_{\ell} = d(e_{\ell}) - \sum c_h A_H \]
	where \(dh(e_{\ell}) = c_h h\) with \(c_h \in \C\).
	
	By Lemma \ref{lem:e0} \( (\nabla^L e)|_0 = \) identity and \(e_x = e_{\ell} + e_{\hot}\). On the other hand, by Lemma \ref{lem:eltangenthyper} \(e_{\hot}\) is tangent to all hyperplanes since both \(e_x\) and \(e_{\ell}\) are. By Lemma \ref{lem:e2} \((\nabla^L e_{\hot})|_0 = 0\). We conclude that \(\nabla^L e_{\ell} =\) identity at the origin, but since \(\nabla^L e_{\ell}\) is a constant endomorphism, it is the identity everywhere. Since \(e_{\ell}\) and \(e_L\) both vanish at the origin, Corollary \ref{cor:euniqueness} implies that \(e_{\ell}=e_L\).
\end{proof}

\subsection{Step 4: Identification of the transversal leaves}\label{sect:identification}

It this section we prove that the induced connections on the transversal leaves are standard with respect to suitable coordinates, see Proposition \ref{prop:transvstnd}. The key idea is to linearise the local dilation vector field (see Definition \ref{def:localdilation}).

\subsubsection{{\large Linearisation}}

\begin{notation}
Let \(y'=(y'_1, \ldots, y'_n)\) be coordinates as in Proposition \ref{prodaffine}  centred at \(x \in L^{\circ}\). This means that \(\mL^{\perp}_i\) is the span of \(\p_{y'_j}\) for \(j \in I_i \subset \{1, \ldots, n-d\}\), \(\mL\) is the span of \(\p_{y'_j}\) for \(j>n-d\) and \(\nabla \p_{y'_j}=0\) for \(j>n-d\). Let \(e_x\) be the local dilation vector field vanishing at \(x\).
\end{notation}

\begin{notation}
	Recall that we write \(a_i=a_{L_i}\) for the weights of \(\nabla\) at the irreducible components \(L_i\) of \(L\).
\end{notation}

\begin{proposition}\label{prop:linvf}
	We can change coordinates on the \(\C^{n_i}\) factors, this is \(y' \mapsto y\) with \(y_j = y_j(y'_m, \,\ m \in I_i)\) when \(j \in I_i\), in such a way that
	\begin{equation}
	e_x = (1-a_1)^{-1} \sum_{j \in I_1}y_j \frac{\p}{\p y_j} + \ldots + (1-a_k)^{-1} \sum_{j \in I_k} y_j \frac{\p}{\p y_j} + \sum_{i>n-d} y_j \frac{\p}{\p y_j} .
	\end{equation}
\end{proposition}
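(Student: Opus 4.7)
The plan is to apply Poincar\'e's linearisation theorem to the restriction of $e_x$ to each transversal leaf $(\mL^{\perp}_i)_x$.

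By Proposition \ref{prodaffine}, in the coordinates $y'$ the connection $\nabla$ decomposes as a product along the leaves of $\mL, \mL^{\perp}_1, \ldots, \mL^{\perp}_k$. Lemma \ref{lem:eulerbasic} and the Remark immediately following its proof show that $e_x$ is basic in these coordinates and that its component tangent to $\mL$ is already in the desired form $\sum_{j > n-d} y'_j \p_{y'_j}$. Consequently, after setting $y_j := y'_j$ for $j > n-d$, it is enough to linearise, for each $1 \le i \le k$ separately, the restriction $(e_x)_i$ of $e_x$ to the leaf $(\mL^{\perp}_i)_x$ parametrized by $(y'_j)_{j \in I_i}$. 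The resulting object $(e_x)_i$ is a holomorphic vector field on a neighbourhood of the origin of $\C^{n_i}$ vanishing at $0$, and any biholomorphism linearising it will be of the permitted form $y_j = y_j(y'_m,\, m \in I_i)$.

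Next I identify the linear part of $(e_x)_i$ at the origin. By Proposition \ref{prop:eigenvalues}, the linear part of $e_x$ in adapted linear coordinates $(x_1, \ldots, x_n)$ coincides with the Euler field $e_L$ of the model connection $\nabla^L$, which by Lemma \ref{lem:eulstndcon} acts on the subspace $L_i^{\perp} \subset T_x\C^n$ as scalar multiplication by $(1-a_i)^{-1}$. By Lemma \ref{lem:restrictiondist}, the tangent space to the leaf $(\mL^{\perp}_i)_x$ at $x$ is exactly $L_i^{\perp}$, so the linear part of $(e_x)_i$ at the origin equals $\lambda_i \cdot \Id$ on $T_0\C^{n_i}$ with $\lambda_i := (1-a_i)^{-1}$; this holds in any linear coordinates on $\C^{n_i}$, since a scalar endomorphism is conjugation-invariant.

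Finally I invoke Poincar\'e's linearisation theorem. By the positivity hypothesis $1 - a_i > 0$ from Theorem \ref{PRODTHM}, the scalar $\lambda_i$ is a strictly positive real number, and all eigenvalues of the linear part of $(e_x)_i$ coincide with $\lambda_i$. Hence their convex hull is the singleton $\{\lambda_i\} \subset \{\RE z > 0\}$, which lies in the Poincar\'e domain. The only potential resonance $\lambda_i = m\lambda_i$ with integer $m \ge 2$ forces $\lambda_i = 0$, which is excluded. Poincar\'e's theorem therefore produces a holomorphic change of coordinates $(y'_j)_{j \in I_i} \mapsto (y_j)_{j \in I_i}$ fixing the origin in which $(e_x)_i = \lambda_i \sum_{j \in I_i} y_j \p_{y_j}$. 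Performing this change on each factor and assembling with the untouched flat part yields the formula claimed in the proposition. The only genuine (mild) obstacle is correctly identifying the linear part of $(e_x)_i$ as $\lambda_i \Id$; once that is established, both the Poincar\'e-domain condition and the absence of resonances are automatic consequences of the positivity assumption.
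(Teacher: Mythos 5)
Your proof is correct and follows essentially the same route as the paper: reduce to the transversal leaves via the basic decomposition of \(e_x\) (Lemma \ref{lem:eulerbasic}), identify the linear part as \((1-a_i)^{-1}\Id\) via Proposition \ref{prop:eigenvalues} and Lemma \ref{lem:restrictiondist}, and linearise by Poincar\'e--Dulac using that equal non-zero eigenvalues lie in the Poincar\'e domain and admit no resonances. The paper packages the last step as its Proposition \ref{poincaredulac}, which only needs the common eigenvalue to be non-zero rather than positive, but your use of positivity is harmless here.
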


\begin{proof}
	From Lemma \ref{lem:eulerbasic} it is enough to consider \(e_m =\) the restriction of \(e\) to \(\mL^{\perp}_m\) and show that there is a holomorphic change of coordinates in \(\C^{n_m}\) in which \(e_m\) is equal to \((1-a_m)^{-1}\) times the usual Euler vector field of \(\C^{n_m}\).
	
	By Proposition \ref{prop:eigenvalues} the linear part of \(e_m\) on \(T_0(\mL^{\perp}_m) = L^{\perp}_m\) is the same as that of \(e_L\) restricted to this subspace and that is \((1-a_m)^{-1}\) times the identity. By  Proposition \ref{poincaredulac}) we can linearise \(e_m\) and the statement follows.
\end{proof}

\begin{proposition}\label{poincaredulac}
	Let \(v\) be a germ of a holomorphic vector field on \((\C^n,0)\) with an isolated zero at the origin. If all the eigenvalues of the linear part of \(v\) are equal and non-zero then \(v\) is linearisable. The coordinates are unique up to linear transformation.
\end{proposition}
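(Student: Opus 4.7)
The plan is to recognize this as a direct consequence of the classical Poincar\'e linearisation theorem for holomorphic vector fields. Writing the common non-zero eigenvalue as $\lambda$, I would first verify the two standard hypotheses of that theorem. The \emph{non-resonance} condition requires $\lambda_i \neq \sum_j m_j \lambda_j$ for non-negative integers $m_j$ with $\sum_j m_j \geq 2$; here it reads $\lambda \neq (\sum_j m_j)\,\lambda$, i.e.\ $\sum_j m_j \neq 1$, which is automatic since $\sum_j m_j \geq 2$. The \emph{Poincar\'e domain} condition, that the convex hull of the eigenvalues in $\C$ avoids $0$, is trivial since the hull is the single point $\{\lambda\} \neq \{0\}$. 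The Poincar\'e theorem then produces a germ of a biholomorphism $\Phi: (\C^n, 0) \to (\C^n, 0)$ with $\Phi_* v = L$, where $L$ denotes the linear part of $v$.

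For uniqueness, suppose $\Phi_1, \Phi_2$ are two such linearising biholomorphisms. Then $\psi = \Phi_2 \circ \Phi_1^{-1}$ fixes the origin and satisfies $\psi_* L = L$, which in coordinates reads $D\psi(z) \cdot L(z) = L(\psi(z))$. Writing $L = \lambda \, \Id + N$ with $N$ nilpotent and expanding $\psi = \sum_{k \geq 1} \psi_k$ into its Taylor homogeneous components, the equation decomposes degree by degree into
\begin{equation*}
\bigl(\lambda(k-1)\, \Id + [N_v, \cdot \,]\bigr)\, \psi_k \;=\; 0,
\end{equation*}
where $N_v$ denotes the linear vector field associated with $N$ and $[N_v, \cdot\,]$ is a nilpotent operator on the finite-dimensional space of homogeneous polynomial vector fields of degree $k$. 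For $k \geq 2$ the scalar $\lambda(k-1)$ is non-zero, so $\lambda(k-1)\, \Id + [N_v, \cdot\,]$ is invertible, forcing $\psi_k = 0$. Hence $\psi$ is linear, which is the asserted uniqueness up to linear transformation. At $k=1$ the equation reduces to $[N_v, \psi_1] = 0$, i.e.\ $\psi_1$ commutes with $L$, as expected.

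The main obstacle is of course the convergence of the formal linearising series, which is the substantive content of the Poincar\'e linearisation theorem and requires the Poincar\'e domain hypothesis in order to produce geometric-type estimates on the denominators $\lambda_i - \sum_j m_j \lambda_j$. I would not reprove this but instead cite a standard reference (e.g.\ Arnold's book on ordinary differential equations or Il'yashenko--Yakovenko). Since the only use of this proposition in the paper is inside the proof of Proposition~\ref{prop:linvf}, where the conclusion is invoked as a black box to straighten out the local dilation field on each transversal leaf, a citation is entirely adequate.
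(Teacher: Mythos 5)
Your proposal is correct and follows essentially the same route as the paper: verify that the eigenvalues lie in the Poincar\'e domain and are non-resonant, then invoke the Poincar\'e--Dulac theorem (the paper phrases this as the normal form containing only resonant monomials, which the computation \(\sum_j d_j = 1\) forces to be linear). The only difference is cosmetic: you prove the uniqueness statement by the degree-by-degree homological argument, correctly accounting for a possible nilpotent part of the linear term, whereas the paper simply cites \cite[Remark 4.6]{Yak} for it.
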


\begin{proof}
	This follows from the Poincar\'e-Dulac theorem, see \cite[Theorem 5.5]{Yak} . The eigenvalues belong to the Poincar\'e domain (i.e. their convex hull in \(\C\) does not contain zero) hence after a change of coordinates we can write \(v = v_S + v_N \) as the sum of a linear diagonal vector field \(v_S\) (the semisimple part) with a polynomial vector field \(v_N\) (the nilpotent part). The components of \(v_N\) are sums of resonant monomials \(x_1^{d_1} \ldots x_n^{d_n}\) where \(d_j \in \Z_{\geq 0}\) solve
	\[\lambda_i = \sum \lambda_j d_j . \]
	In particular, if all \(\lambda_i=\lambda \neq 0\) we must have \(\sum_j d_j = 1\) which has \(d_i=1\) and \(d_j=0\) for \(j \neq i\) as the only solutions; the corresponding monomials are linear.  Finally, we recall that the coordinates linearising a non-resonant vector field are unique up to linear transformations, see \cite[Remark 4.6]{Yak}.
\end{proof}

\subsubsection{{\large Transversal leaves are standard}}

We begin with the following general statement, see Appendix \ref{app:logconn} for the definitions of a meromorphic connection with a logarithmic singularity and of residue.

\begin{lemma}\label{lem:looij}
	Let \(\nabla\) be a (flat, torsion free) meromorphic connection on \(T\C^n\) with logarithmic singularities at the smooth points of a hyperplane arrangement \(\mH\). Suppose that the residues of \(\nabla\) extend holomorphically over the hyperplanes of \(\mH\). If \(\nabla\) is invariant by scalar multiplication then it is standard.
\end{lemma}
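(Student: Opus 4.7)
The plan is to exploit the scaling invariance together with the logarithmic structure to force the connection one-form to be a sum of scalar-constant residues multiplied by logarithmic forms $dh/h$. I will work in the coordinate frame $\partial/\partial z_1,\ldots,\partial/\partial z_n$ and write $\nabla = d - \Omega$, where $\Omega$ is a matrix of meromorphic one-forms with logarithmic poles along $\mH$. The key observation is that the standard frame is compatible with scaling in the sense that the coordinate vector fields pull back to themselves under $\varphi_\lambda(z) = \lambda z$ (up to a scalar that cancels in the connection form), so the invariance hypothesis translates to $\varphi_\lambda^* \Omega = \Omega$ for every $\lambda \in \C^*$. Writing $\Omega = \sum_i \Omega_i\, dz_i$, this says each matrix-valued function $\Omega_i$ is homogeneous of degree $-1$.

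The first main step is to show the residues are constant matrices. Fix $H = \{h=0\} \in \mH$ and let $A_H : H \to \End(\C^n)$ denote the residue of $\Omega$ along $H$; by hypothesis $A_H$ extends holomorphically over all of $H$. Since $\varphi_\lambda^* \Omega = \Omega$ and $\varphi_\lambda^*(dh/h) = dh/h$, the function $A_H$ on $H$ is scaling-invariant, i.e.\ homogeneous of degree $0$. A holomorphic function on the vector space $H$ that is homogeneous of degree $0$ must be constant, so $A_H$ is a single element of $\End(\C^n)$.

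Next, consider the difference
\[
\tilde\Omega = \Omega - \sum_{H \in \mH} A_H \frac{dh}{h}.
\]
By construction, $\tilde\Omega$ is logarithmic along $\mH$ with vanishing residue along each $H \in \mH$, hence holomorphic on the complement of the codimension-two stratum of $\mH$. By Hartogs, $\tilde\Omega$ extends to a matrix of holomorphic one-forms on all of $\C^n$. Moreover $\tilde\Omega$ is still scaling-invariant, so writing $\tilde\Omega = \sum_i \tilde\Omega_i\, dz_i$, the components $\tilde\Omega_i$ are holomorphic on $\C^n$ and homogeneous of degree $-1$.

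The final step is the immediate observation that any holomorphic function on $\C^n$ homogeneous of degree $-1$ vanishes identically (its Taylor expansion at the origin contains only monomials of degree $\geq 0$). Thus $\tilde\Omega = 0$, which gives $\Omega = \sum_{H \in \mH} A_H\, dh/h$ with constant matrices $A_H$, proving that $\nabla$ is standard. I do not anticipate any obstacle here; the only delicate point is the Hartogs extension across the codimension-two strata, but this is immediate once the residues have been shown to be constant.
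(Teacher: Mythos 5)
Your proof is correct and follows essentially the same route as the paper's: both decompose \(\Omega\) into a residue part plus a holomorphic remainder and exploit the scaling invariance \(\varphi_\lambda^*\Omega=\Omega\) in the limit \(\lambda\to 0\) to conclude that the residues are constant and the remainder vanishes. The only cosmetic difference is that you establish constancy of the residues before subtracting them off, which makes the Hartogs extension of the remainder across the codimension-two strata explicit, whereas the paper carries out the whole argument in a single limit.
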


\begin{proof}
	Let \(\Omega\) be the connection matrix of \(\nabla\) with respect to the coordinate frame \(\p/\p z_1, \ldots, \p/\p z_n\). By the hypothesis on the residues we can write
	\[\Omega = \sum_{H \in \mH} A_H \frac{dh}{h} + \hol \]
	where \(A_H\) are holomorphic matrix-valued functions and \(\hol\) is a matrix of holomorphic \(1\)-forms.
	Let \(\varphi_{\lambda}(z)=\lambda z\) be scalar multiplication by \(\lambda\). The invariance of \(\nabla\) by scalar multiplication implies that \(\varphi_{\lambda}^* \Omega = \Omega\). Since \(\lim_{\lambda \to 0} \varphi_{\lambda}^* \hol =0\), by letting \(\lambda \to 0\) in the identity \(\Omega = \varphi_{\lambda}^* \Omega\) we deduce that
	\[\Omega = \sum_{H \in \mH} A_H(0) \frac{dh}{h} .\]
	Hence \(\nabla\) is standard.
\end{proof}

\begin{note}
	The statement of Lemma \ref{lem:looij} is contained in \cite[Proposition 2.2]{CHL}, except that we add the extra hypothesis of holomorphic residues.
\end{note}

Going back to our setting we have the next.

\begin{proposition} \label{prop:transvstnd}
	Let \((y_1, \ldots, y_n)\) be coordinates that linearise \(e_x\) as provided by Proposition \ref{prop:linvf} and let \(1 \leq m \leq k\). Then the induced connection on the leaf \((\mL^{\perp}_m)_{x}\) is standard with respect to the coordinates \((y_i, \,\ i \in I_m)\).
\end{proposition}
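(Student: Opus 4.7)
My plan is to verify the hypotheses of Lemma \ref{lem:looij} for the induced connection \(\nabla^m\) on the leaf \((\mL^{\perp}_m)_x\), which in the coordinates \((y_i)_{i \in I_m}\) is a copy of \(\C^{n_m}\). What is needed is that \(\nabla^m\) is a flat torsion-free meromorphic connection with logarithmic poles along a hyperplane arrangement, that its residues extend holomorphically, and that it is invariant under scalar multiplication.

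The scalar-multiplication invariance I would deduce from flow-invariance of \(\nabla\) itself. A short computation using that \(\nabla\) is flat and torsion-free yields the Lie derivative formula \((\mathcal{L}_{e_x}\nabla)(Y,Z) = \nabla_Y\nabla_Z e_x - \nabla_{\nabla_Y Z}e_x\); since \(\nabla e_x = \mathrm{Id}\) is a parallel tensor, this vanishes identically on \(U^{\circ}\). Hence the flow \(\varphi_t\) of \(e_x\) preserves \(\nabla\). By Lemma \ref{lem:eulerbasic} it preserves each leaf \((\mL^{\perp}_i)_x\), and by Proposition \ref{prop:linvf} its restriction to \((\mL^{\perp}_m)_x\) is scalar multiplication by \(e^{t/(1-a_m)}\), so \(\nabla^m\) is \(\C^*\)-invariant.

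To identify the singular locus of \(\nabla^m\) as a hyperplane arrangement, I would exploit the product decomposition of Proposition \ref{prodaffine}. In \(y\)-coordinates the singular set \(\mH \cap U\) must split as a union of cylinders \(\bigcup_i \pi_i^{-1}(W_i)\), where \(\pi_i\) denotes the projection onto the \(i\)-th factor \((\mL^{\perp}_i)_x\) and \(W_i\) the singular locus of \(\nabla^i\). Since each \(H \in \mH\) is irreducible, it equals \(\pi_{i(H)}^{-1}(\bar H)\) for a single factor index \(i(H)\) and irreducible hypersurface \(\bar H\). Comparing tangent spaces at the origin via Proposition \ref{prop:AL} --- using for \(H \in \mH_l\) that \(L_l \subset H\) while \(L^{\perp}_l \not\subset H\) (the latter because \(n_H \in L^{\perp}_l \setminus H\)) --- pins down \(i(H) = l\). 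The slice \(\bar H = H \cap (\mL^{\perp}_l)_x\) is then a smooth complex hypersurface through \(0 \in \C^{n_l}\), invariant under scalar multiplication; a graph argument (writing \(\bar H\) locally as \(\{z_1 = g(z_2, \ldots, z_{n_l})\}\) with \(g\) vanishing to second order at \(0\), then enforcing \(g(tz) = t g(z)\) by scalar invariance) forces \(g \equiv 0\), so \(\bar H\) is a linear hyperplane. Thus \(\nabla^m\) has logarithmic singularities along the linear arrangement \(\bar{\mH}_m := \{\bar H : H \in \mH_m\}\) in \(\C^{n_m}\), with holomorphic residues inherited from those of \(\nabla\) via the product structure. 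Extending \(\nabla^m\) from a neighbourhood of the origin to all of \(\C^{n_m}\) using its \(\C^*\)-invariance, Lemma \ref{lem:looij} then concludes that \(\nabla^m\) is standard in the coordinates \((y_i)_{i \in I_m}\).

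The main obstacle is the geometric identification of the singular locus as a hyperplane arrangement: the argument relies on combining the cylinder structure of \(\mH \cap U\) coming from the product splitting of \(\nabla\), the tangent-space analysis of Proposition \ref{prop:AL} to pair each \(H \in \mH_l\) with the correct factor index, and the rigidity forcing smooth scalar-invariant hypersurfaces through the origin to be linear.
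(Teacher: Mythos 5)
Your overall strategy is the same as the paper's: reduce to Lemma \ref{lem:looij} by showing that the induced connection on the leaf is scalar-multiplication invariant, has logarithmic poles along a linear arrangement in the linearising coordinates, and has holomorphically extendable residues. Your treatment of the first two points is actually more detailed than the paper's: the Lie-derivative computation \((\mathcal{L}_{e_x}\nabla)(Y,Z)=\nabla_Y\nabla_Z e_x-\nabla_{\nabla_Y Z}e_x=0\) is a clean justification of flow-invariance that the paper leaves implicit, and your cylinder-plus-graph argument pins down the linearity of the slices \(H\cap(\mL^{\perp}_m)_x\) (the paper simply asserts that a smooth scalar-invariant hypersurface through the origin is a hyperplane, which is the same rigidity).

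The one place where you fall short of a complete argument is the clause ``with holomorphic residues inherited from those of \(\nabla\) via the product structure.'' Lemma \ref{lem:looij} requires the residues to extend holomorphically over the \emph{whole} hyperplanes \(\bar H\), not just over their smooth strata. Restricting \(\nabla\) near a generic point of \(\bar H\) does give a logarithmic pole with holomorphic residue there, but extending that residue across the deeper strata of \(\bar{\mH}_m\) inside \(\bar H\) is a codimension-one extension problem within \(\bar H\), so neither Hartogs nor scalar invariance closes it for free. The paper gets this for free from Lemma \ref{lem:sframeconnection}: in the \(s\)-frame the connection matrix on the leaf is globally \(\Omega_s=\sum_{H\in\mH_m}\bar A_H\,dh/h\) with \emph{constant} residues \(\bar A_H\), and the change to the \(y\)-frame is a holomorphic gauge transformation \(\Phi\), so \(\Omega_y=\Phi\Omega_s\Phi^{-1}+(d\Phi)\Phi^{-1}\) manifestly has residues \(\Phi\bar A_H\Phi^{-1}\) holomorphic on all of \(\bar H\). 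You should route this step through the \(s\)-frame (or an equivalent global frame of the parallel subbundle \(\mL^{\perp}_m\)); with that insertion your proof is complete and coincides with the paper's.
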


\begin{proof}
	On the tangent bundle of the leaf we have two frames: the \(s\)-frame
	\(\{s_j, \,\ j \in I_m\}\) and the \(y\)-frame \(\{\p_{y_j}, \, j \in I_m\}\). The connection on the tangent bundle of the leaf written with respect to the  \(s\)-frame is \(\bar{\nabla}^m = d - \Omega_s\). By Lemma \ref{lem:sframeconnection}
	\begin{equation} \label{eq:indlog}
	\Omega_s =
	\sum_{H \in \mH_m} \bar{A}_H \frac{dh}{h} , 
	\end{equation}
	where \(\bar{A}_H\) are constant \(n_m \times n_m\) matrices and \(h\) is a linear function of \(x_j\) for \(j \in I_m\). On the other hand \((x_j, \,\ j \in I_m)\) are coordinates on a neighbourhood of the origin in the leaf, as follows from the fact that \(T_0(\mL^{\perp}_m)_0=L^{\perp}_m\). So we can write \(x_i=x_i(y_j, \,\ j \in I_m)\) for \(i \in I_m\) and \(h=h(y_j, \,\ j \in I_m)\) are defining equations of the hypersurfaces \(H \cap (\mL^{\perp}_m)_0\). The intersections \(H \cap (\mL^{\perp}_m)_0\) are smooth hypersurfaces of the leaf invariant under scalar multiplication \(y \mapsto \lambda y\), hence must be hyperplanes in \(y\)-coordinates. We can write \(h(y) = \ell(y) e^u\) for some holomorphic \(u(y)\) and linear \(\ell(y)\), so
	\begin{equation}\label{eq:indlog2}
	\frac{dh}{h} = \frac{d \ell}{\ell} + du .
	\end{equation}
	On the other hand, with respect to the  \(y\)-frame we have \(\nabla = d - \Omega_y\)
	where
	\begin{equation} \label{eq:indlog3}
	\Omega_y = \Phi \Omega_s \Phi^{-1} + (d\Phi)\Phi^{-1}
	\end{equation}
	and \(\Phi\) is the holomorphic gauge transformation from the \(y\)-frame to the \(s\)-frame. It follows from Equation \eqref{eq:indlog}, \eqref{eq:indlog2} and \eqref{eq:indlog3} that
	\begin{equation}\label{eq:indlog4}
		\Omega_y = \sum_{H \in \mH_m} \tilde{A}_H \frac{d\ell}{\ell} + \hol 
	\end{equation}
	where (hol.) is a matrix of holomorphic \(1\)-forms and \(\tilde{A}_H = \Phi \bar{A} \Phi^{-1}\). 
	The connection \(\bar{\nabla}^m = d - \Omega_y\) is invariant under scalar multiplication \(y \mapsto \lambda y\) and Equation \eqref{eq:indlog4} implies that it has holomorphic residues. Hence, by Lemma \ref{lem:looij} we conclude that
	\begin{equation} \label{eq:Omegay}
		\Omega_y = \sum_{H \in \mH_m} \tilde{A}_H(0) \frac{d\ell}{\ell}
	\end{equation}
	which shows that the connection is standard.
\end{proof}

\begin{remark}
	It follows from Equation \eqref{eq:Omegay} that the induced connection on the leaf \((\mL^{\perp}_m)_0\) agrees with the quotient connection \((\C^n/L_m, \mH_{L_m}/L_m, \nabla^{\C^n/L_m})\).
\end{remark}

We can now finish the proof of Proposition \ref{prop:locaffprod}.

\begin{proof}[Proof of Proposition \ref{prop:locaffprod}]
	We take coordinates that linearise \(e_x\) as provided by Proposition \ref{prop:linvf}.
	From Proposition \ref{prodaffine}
	we know that the affine structure is a product \((\mL^{\perp}_1)_0 \times \ldots \times (\mL^{\perp}_k)_0 \times \mL_0\). The affine structure on \(\mL_0\) is the one of \(\C^d\) and the transverse structures \((\mL^{\perp}_i)_0\) are standard as follows from Proposition \ref{prop:transvstnd}.
\end{proof}

\subsection{Proof of Theorem \ref{thm:locprod}}\label{sect:pflocprod}

\begin{lemma}\label{lem:orthogfact}
	The factors in the product decomposition \eqref{eq:affinesplit} are pairwise orthogonal with respect to the parallel Hermitian form \(\inn\).
\end{lemma}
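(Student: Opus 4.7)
The plan is to use the non-integer hypothesis on the weights together with unitarity of the holonomy around the central loops introduced in Section \ref{sect:foliation}. Since the Hermitian form $\langle \cdot, \cdot \rangle$ is parallel for $\nabla$ and the distributions $\mathcal{L}, \mathcal{L}_1^{\perp}, \ldots, \mathcal{L}_k^{\perp}$ tangent to the factors of \eqref{eq:affinesplit} are $\nabla$-parallel, the function $q \mapsto \langle v(q), w(q) \rangle$ is locally constant on $U^{\circ}$ whenever $v, w$ are parallel sections of two of these distributions. In particular, it suffices to verify pairwise orthogonality at a single point $p \in U^{\circ}$.

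Fix such a $p$. For each $1 \leq j \leq k$ consider the central loop $c_{p,j}$ of Section \ref{sect:foliation} and let $M_{p,j} \in \Aut(T_p\C^n)$ be its holonomy. Because $\nabla$ preserves $\langle \cdot, \cdot \rangle$, the operator $M_{p,j}$ is unitary with respect to $\langle \cdot, \cdot \rangle|_p$. By Lemma \ref{lem:holonomycentralloops1}, the subspace $\mathcal{L}_j^{\perp}|_p$ is the $\exp(2\pi i a_j)$-eigenspace of $M_{p,j}$, while $\mathcal{L}|_p \oplus \bigoplus_{i \neq j} \mathcal{L}_i^{\perp}|_p$ is its $1$-eigenspace. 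The non-integer assumption of Theorem \ref{PRODTHM} applied to the irreducible components $L_j \in \mathcal{L}_{irr}(\mathcal{H})$ gives $a_j \notin \Z$, so $\exp(2\pi i a_j) \neq 1$.

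Eigenspaces of a unitary operator corresponding to distinct eigenvalues on the unit circle are orthogonal: if $M v = \lambda v$ and $M w = \mu w$ with $\lambda \overline{\mu} \neq 1$, then $\langle v, w \rangle = \langle M v, M w \rangle = \lambda \overline{\mu} \langle v, w \rangle$ forces $\langle v, w \rangle = 0$. Applying this to $M_{p,j}$ we conclude that at the point $p$ the subspace $\mathcal{L}_j^{\perp}|_p$ is orthogonal to both $\mathcal{L}|_p$ and to $\mathcal{L}_i^{\perp}|_p$ for every $i \neq j$. Varying $j$ yields pairwise orthogonality of all factors at $p$, and by the parallel argument of the first paragraph this orthogonality propagates to every point of $U^{\circ}$. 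Since this is essentially the argument sketched in Remark \ref{rmk:orth}, the only delicate point to verify is that the relevant holonomies act with the claimed eigenvalues on our distributions, which is exactly the content of Lemma \ref{lem:holonomycentralloops1}.
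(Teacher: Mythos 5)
Your proof is correct and follows essentially the same route as the paper: both reduce to a single base point via parallelism, invoke Lemma \ref{lem:holonomycentralloops1} to identify \(\mL^{\perp}_j|_p\) and its complement as the \(\exp(2\pi i a_j)\)- and \(1\)-eigenspaces of the central-loop holonomy, and use unitarity plus \(a_j \notin \Z\) to force the inner product to vanish. No gaps.
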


\begin{proof}
	Let \(\langle \cdot, \cdot \rangle\) be the parallel Hermitian inner product. We claim that for every \(1\leq m \leq k\) the distribution \(\mL^{\perp}_m\) is orthogonal to \(\mL\oplus \sum_{i \neq m} \mL^{\perp}_i\), as follows. Let \(p \in U^{\circ}\), take a complex affine line  in the coordinates \((y_1, \ldots, y_n)\) that goes through \(p\) and has all coordinates \(y_j\) constant except when \(j\in I_m\) and  projects to the \(\C^{n_m}\) factor as a complex line through the origin. This complex line \(\ell\) intersects the hyperplanes only at its origin. By Lemma \ref{lem:holonomycentralloops1}, the holonomy around the central loop (based at \(p\)) in the line \(\ell\) is equal to the identity on \(\left( \mL\oplus \sum_{i \neq m} \mL^{\perp}_i \right)(p)\) and multiplication by \(\exp(2\pi i a_m)\) on \(\mL^{\perp}_m (p)\). Take vectors \(v \in \mL^{\perp}_m (p)\) and \(w \in \left( \mL\oplus \sum_{i \neq m} \mL^{\perp}_i \right)(p)\), since holonomy preserves the inner product we have
	\begin{equation*}
	\langle v, w \rangle = \exp(2\pi i a_m) \langle v, w \rangle .
	\end{equation*}
	Because \(a_m \notin \Z\), we must have \(\langle v, w \rangle =0 \). The claim follows and we conclude that the direct sum
	\[\mL \oplus \mL^{\perp}_1\oplus \ldots \oplus \mL^{\perp}_k\]
	is orthogonal.
\end{proof}

We establish the main result of the section,  a cornerstone in our proof of Theorem \ref{PRODTHM}.

\begin{proof}[Proof of Theorem \ref{thm:locprod}]
	We take complex coordinates \((y_1, \ldots, y_n)\) as in Proposition \ref{prop:locaffprod}, so we have an affine product as given by Equation \eqref{eq:affinesplit}. By Lemma \ref{lem:orthogfact} the factors are pairwise orthogonal. 
	We conclude that in the \((y_1, \ldots, y_n)\) coordinates the metric \(g\) is holomorphically isometric to the product
	\begin{equation}\label{eq:metlocprod}
	\C^d \times ((\C^{n_1})^{\circ}, g^{\perp}_1) \times \ldots \times ((\C^{n_k})^{\circ}, g^{\perp}_k)	
	\end{equation}
	where \(g^{\perp}_i\) are flat K\"ahler metrics on the arrangement complements \((\C^{n_i})^{\circ}\) which are parallel for the connections \(\bar{\nabla}^i\).
	Finally, we recall that the product 
	\[((\C^{n-d})^{\circ}, \bar{\nabla}) = \left((\C^{n_1})^{\circ}, \bar{\nabla}^1\right) \times \ldots \times \left((\C^{n_k})^{\circ}, \bar{\nabla}^k\right)\]
	is naturally identified with the quotient connection \(\nabla^{\C^n/L}\) on the complement of the quotient arrangement \(\mH_L/L\) (see Remark \ref{rmk:quotconnec}). Equation \eqref{eq:lpt} follows and so does the theorem.
\end{proof}

\begin{remark}
	In our proof of Theorem \ref{PRODTHM} we have established Equation \eqref{eq:metlocprod} which is more refined than the product decomposition expressed by Equation \eqref{eq:lpt}. The factors \(((\C^{n_i})^{\circ}, g^{\perp}_i)\) can not be further decomposed into products, as follows from our Irreducible Holonomy Theorem \ref{thm:irredhol}.
\end{remark}

We finish by proving Corollary \ref{cor:locprod}.

\begin{proof}[Proof of Corollary \ref{cor:locprod}]
	In this case \(\nabla^{\C^n/H}\) is a one-dimensional standard connection so it is of the form
	\[\nabla = d - a_H\frac{dz}{z} \]
	with \(\alpha_H=1-a_H>0\) and it is equal to the Levi-Civita connection of the \(2\)-cone \(\C_{\alpha_H}\). The corollary follows from Theorem \ref{thm:locprod}.
\end{proof}

\section{Metric completion}\label{sect:MC}

During this whole section we work under the hypothesis and setting of Theorem \ref{PRODTHM}. Moreover, from now on we shall assume that \(\mH\) is essential and irreducible. There is no loss of generality in doing so because of the next.

\begin{lemma}
	In order to prove Theorem \ref{PRODTHM} it is enough to assume that \(\mH\) is essential and irreducible.
\end{lemma}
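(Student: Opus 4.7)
The plan is to reduce the general case to the essential irreducible one by invoking the product decomposition results of Section \ref{sect:affst}. First, I would apply Proposition \ref{prop:nablaLsplit} to the intersection \(L = T(\mH)\), for which \(\mH_L = \mH\) and therefore \(\nabla^L = \nabla\). This yields adapted linear coordinates \((x_1,\ldots,x_n)\) in which
\[
(\C^n, \mH, \nabla) \;=\; (\C^d, \emptyset, d) \,\times\, (\C^{n_1}, \bar{\mH}_1, \bar{\nabla}^1) \,\times\, \cdots \,\times\, (\C^{n_k}, \bar{\mH}_k, \bar{\nabla}^k),
\]
where \(d = \dim T(\mH)\) and each \((\C^{n_i}, \bar{\mH}_i)\) is essential and irreducible. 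Each \(\bar{\nabla}^i\) is automatically flat, torsion free and standard (by construction), and the irreducible intersections of \(\mH\) are precisely the disjoint union of the irreducible intersections of the factors \(\bar{\mH}_i\) (together with no irreducible intersection for the empty factor), so the non-integer and positivity numerical conditions of Theorem \ref{PRODTHM} descend to each \(\bar{\mH}_i\).

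Next, I would use Remark \ref{rmk:orth}: since \(\nabla\) preserves the K\"ahler metric \(g\) and the weights at all irreducible intersections are non-integer, the above affine factors are pairwise orthogonal with respect to \(g\). Consequently \(g\) decomposes as
\[
g \;=\; g_0 \,\oplus\, g_1 \,\oplus\, \cdots \,\oplus\, g_k
\]
where \(g_0\) is a flat K\"ahler metric on \(\C^d\) (necessarily the Euclidean metric of a Hermitian inner product, up to a linear change of coordinates) and each \(g_i\) is a flat K\"ahler metric on \((\C^{n_i})^{\circ}\) parallel for \(\bar{\nabla}^i\). Thus each factor \((\C^{n_i}, \bar{\mH}_i, \bar{\nabla}^i, g_i)\) fits the hypotheses of Theorem \ref{PRODTHM} for an essential irreducible arrangement.

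Granting Theorem \ref{PRODTHM} in the essential irreducible case, the metric completion of each \(((\C^{n_i})^{\circ}, g_i)\) is \(\C^{n_i}\) endowed with a polyhedral cone metric; the completion of \((\C^d, g_0)\) is itself (a Euclidean space is a polyhedral cone). Since metric completion commutes with finite Cartesian products of length spaces, the completion of \(((\C^n)^{\circ}, g)\) is the product of these completions, which is naturally \(\C^n\) equipped with the product metric. The only remaining point is that a finite product of polyhedral cones is again a polyhedral cone: this follows directly from the definitions, since one may take the product of the fixed simplicial cone decompositions of the factors and further subdivide into simplicial cones as needed, and the Euler vector fields of the factors assemble into an Euler field for the product. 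The main (mild) obstacle is merely checking that the inductive hypothesis genuinely applies to each \(\bar{\nabla}^i\), which is a bookkeeping matter about weights and irreducibility already done in Section \ref{sect:irredarr} and Section \ref{sect:nablaL}.
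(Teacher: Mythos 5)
Your proposal is correct and follows essentially the same route as the paper: apply Proposition \ref{prop:nablaLsplit} with \(L = T(\mH)\) to split \(\nabla\) into essential irreducible factors, invoke Remark \ref{rmk:orth} for orthogonality of the factors, and conclude using that metric completion commutes with products and that a product of polyhedral cones is a polyhedral cone. The extra bookkeeping you supply (weights and irreducible intersections descending to each factor) is consistent with the paper's setup and does not change the argument.
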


\begin{proof}
	Proposition \ref{prop:nablaLsplit} applied to \(\nabla=\nabla^{T(\mH)}\) shows that \(\nabla\) is a product connection and the factors are pairwise orthogonal because of the non-integer assumptions, see Remark \ref{rmk:orth}. Further, we note that the metric completion of a product is the product of the metric completions of its factors and, by definition of polyhedral cone, the product of polyhedral cones is a polyhedral cone.
\end{proof}

\subsection{The flat K\"ahler cone metric \(g\) on \((\C^n)^{\circ}\)}\label{sect:riemcone}

We show that the flat K\"ahler metric \(g\) on \((\C^n)^{\circ}\) is a Riemannian cone. As we will see, this is a direct consequence of the existence of an Euler vector field and does not rely on any of the results from Section \ref{sec:locprod}. We begin by recalling some standard conventions regarding real and complex tangent bundles.

\begin{notation}
	We identify \((T\R^{2n}, J)\) with \(T^{1,0}\C^n\) via
	\begin{equation}\label{eq:identificationT}
		\begin{aligned}
		T\R^{2n} &\xrightarrow{\sim} T^{1,0}\C^n \\
		v \,\, &\mapsto \,\, \frac{1}{2} \left(v - iJv \right)= v^{1,0} .
		\end{aligned}
	\end{equation}
	Note that \(v = 2 \RE(v^{1,0})\) and \((Jv)^{1,0}=i \cdot v^{1,0}\).
\end{notation}

\begin{notation}
	On \((\C^n)^{\circ}\) we have a torsion free flat holomorphic connection \(\nabla\) and a parallel positive definite Hermitian inner product \(\inn\) in \(T^{1,0}\C^n\).
	Using the identification \eqref{eq:identificationT}, we consider \(\nabla\) and \(\inn\) as structures on \((T\R^{2n}, J)\) and we denote them by the same symbol.
	We write
	\begin{equation}
		2\inn = g - i \omega
	\end{equation}
	where \(g\) is a flat K\"ahler metric on \((\C^n)^{\circ}\) and
	\begin{equation}
		\omega(\cdot, \cdot) = g(J \cdot, \cdot)
	\end{equation}
	is its K\"ahler form. We identify \(\nabla\) with the Levi-Civita connection of \(g\). The K\"ahler form \(\omega\) and the complex structure \(J\) are parallel with respect to \(\nabla\).
\end{notation}

\begin{notation}
	Since we are assuming that \(\mH\) is essential and irreducible, the Euler vector field for \(\nabla\) is given by
	\begin{equation}\label{eq:erecall}
		e = \frac{1}{\alpha_0} \sum_{i=1}^{n} z_i \frac{\p}{\p z_i} 	
	\end{equation}
	and \(\nabla e\) is the identity endomorphism of \(T^{1,0}\C^n\). Here, we recall that \(\alpha_0 = 1- a_0\) where \(a_0\) is the weight at the origin of \(\nabla\).
\end{notation}

\begin{definition}
	The real Euler vector field \(e_r\) is defined as \(e_r^{1,0} = e\), equivalently \(e_r = 2 \RE(e)\).
	The Reeb vector field \(e_s\) is defined as
	\(e_s=Je_r\).
\end{definition}

\begin{note}
	It follows that
	\begin{equation}
		e = \frac{1}{2} \left(e_r - ie_s\right)
	\end{equation}
	and 
	\begin{equation}
		\nabla e_r = \Id, \hspace{2mm} \nabla e_s = J .
	\end{equation}
	Equation \eqref{eq:erecall} implies that \(e_r\) and \(e_s\) are equal to the standard real Euler and Reeb vector fields of \(\C^n\) multiplied by \((1/\alpha_0)\).
\end{note}

\begin{definition}
	We denote the norm squared of the real Euler vector field of \(\nabla\) by
	\begin{equation}\label{eq:norme2}
	r^2= 2 \langle e, e \rangle = g(e_r, e_r) .	
	\end{equation}
	By definition, it is a smooth positive function on \((\C^n)^{\circ}\)
\end{definition}

\begin{lemma}
	The function \(r\) is \(\alpha_0\)-homogeneous with respect to standard scalar multiplication, that is
	\begin{equation}\label{eq:rhomog}
		r(\lambda z) = |\lambda|^{\alpha_0} r(z)
	\end{equation}
	for all \(\lambda \in \C^*\) and \(z \in (\C^n)^{\circ}\).
\end{lemma}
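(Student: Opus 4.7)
The plan is to exploit the two defining relations $\nabla e_r = \Id$ and $\nabla e_s = J$, which together imply that the flow of $e_r$ rescales $g$ and the flow of $e_s$ preserves $g$. Since $e_r$ and $e_s$ generate the $\C^*$-action by scalar multiplication on $\C^n$, this will yield the homogeneity of $r$.

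First, I compute the Lie derivative of $g$ along $e_r$. Using the standard formula
\[
(\mathcal{L}_{e_r} g)(Y,Z) = g(\nabla_Y e_r, Z) + g(Y, \nabla_Z e_r)
\]
together with $\nabla e_r = \Id$, I obtain $\mathcal{L}_{e_r} g = 2g$. The analogous computation for $e_s = J e_r$ gives
\[
(\mathcal{L}_{e_s} g)(Y,Z) = g(JY, Z) + g(Y, JZ) = \omega(Y,Z) - \omega(Y,Z) = 0,
\]
where I used $g(Y, JZ) = g(JY, J^2 Z) = -g(JY, Z)$ by $J$-invariance of $g$. Next, I identify the flows. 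From the explicit expression \eqref{eq:erecall} one computes that $e_r$ equals $(1/\alpha_0)$ times the usual real radial vector field, so its flow is $\phi_t(z) = e^{t/\alpha_0} z$; similarly the flow of $e_s$ is $\psi_\theta(z) = e^{i\theta/\alpha_0} z$. Both flows preserve $(\C^n)^\circ$ since $\mH$ is a union of linear hyperplanes, hence invariant under the $\C^*$-action by scalar multiplication.

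Integrating $\mathcal{L}_{e_r}g = 2g$ gives $\phi_t^* g = e^{2t} g$, and integrating $\mathcal{L}_{e_s}g = 0$ gives $\psi_\theta^* g = g$. Applying these to the definition $r^2 = g(e_r, e_r)$ and using that $D\phi_t \cdot e_r = e_r$ and $D\psi_\theta \cdot e_r = e_r$ (the flows commute with their generators, and $[e_r, e_s] = 0$), I obtain
\[
r^2 \circ \phi_t = e^{2t} r^2, \qquad r^2 \circ \psi_\theta = r^2.
\]
Given $\lambda \in \C^*$, write $\lambda = \mu e^{i\theta}$ with $\mu > 0$, and set $t = \alpha_0 \log \mu$. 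Then $\lambda z = \psi_{\alpha_0 \theta}(\phi_t(z))$, and the two relations above combine to give
\[
r^2(\lambda z) = e^{2t} r^2(z) = \mu^{2\alpha_0} r^2(z) = |\lambda|^{2\alpha_0} r^2(z),
\]
from which Equation \eqref{eq:rhomog} follows.

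The argument is essentially mechanical once the two Lie derivative computations are in place; the only point requiring a brief justification is the commutativity of $e_r$ and $e_s$ (so that each generator is invariant under the other's flow), which follows at once from the explicit linear form of $e$ in \eqref{eq:erecall}.
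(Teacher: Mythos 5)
Your proof is correct and follows essentially the same route as the paper: both exploit $\nabla e_r=\Id$ and $\nabla e_s=J$ and integrate along the flows of $e_r$ and $e_s$, which are the radial and rotational parts of scalar multiplication. The only difference is cosmetic — the paper differentiates $r^2$ directly via $e_r(r^2)=2g(\nabla_{e_r}e_r,e_r)=2r^2$ and $e_s(r^2)=2g(e_s,e_r)=0$, whereas you take the Lie derivative of $g$ and then invoke $D\phi_t e_r=e_r$ and $[e_r,e_s]=0$, which is a correct but slightly longer repackaging of the same computation.
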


\begin{proof}
	Note that by Equation \eqref{eq:norme2} we have
	\[e_r (r^2) = 2 g(\nabla_{e_r}e_r, e_r) = 2r^2\]
	and
	\[e_s (r^2) = 2 g(\nabla_{e_s}e_r, e_r) = g (e_s, e_r) = 0 . \]
	The first equation implies that \(r^2(\lambda z)=\lambda^{2\alpha_0} r^2(z)\) for every \(\lambda>0\). The second implies that \(r^2\) is invariant under scalar multiplication by complex units. The statement follows.
\end{proof}

\begin{lemma}\label{lem:gradr}
	\(e_r = \grad (r^2/2)\) and \(\grad r\) has unit norm.
\end{lemma}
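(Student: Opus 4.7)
The plan is to exploit the defining relation $\nabla e_r = \Id$ (equivalently $\nabla_X e_r = X$ for every tangent vector $X$) together with the fact that $\nabla$ is the Levi-Civita connection of $g$, so it is metric-compatible. Everything should follow from a one-line computation of $X(r^{2})$.

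First I would verify the identity $e_r = \grad(r^{2}/2)$ by computing, for an arbitrary vector field $X$ on $(\C^{n})^{\circ}$,
\begin{equation*}
X(r^{2}) \;=\; X\bigl(g(e_r,e_r)\bigr) \;=\; 2\,g(\nabla_X e_r,\,e_r) \;=\; 2\,g(X,\,e_r),
\end{equation*}
where the second equality uses metric compatibility of $\nabla$ and the third uses $\nabla_X e_r = X$. Dividing by $2$ gives $X(r^{2}/2) = g(e_r, X)$, which by definition of the gradient means $\grad(r^{2}/2) = e_r$.

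For the second statement, I would use the chain rule in the form $\grad(r^{2}/2) = r\,\grad r$, valid on the open set where $r>0$, which in our situation is all of $(\C^{n})^{\circ}$ by Equation \eqref{eq:rhomog} (here $\alpha_0>0$ by the positivity assumption, and away from the origin $e_r$ is nonvanishing). Combining with the previous step yields $\grad r = e_r/r$, so
\begin{equation*}
g(\grad r, \grad r) \;=\; \frac{1}{r^{2}}\,g(e_r,e_r) \;=\; \frac{r^{2}}{r^{2}} \;=\; 1.
\end{equation*}

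There is no real obstacle here; the whole lemma is a formal consequence of $\nabla e_r = \Id$ and the metric property of $\nabla$. The only thing worth being careful about is that the argument takes place on $(\C^{n})^{\circ}$, where $r>0$ and $e_r$ is nonzero, so dividing by $r$ is legitimate and $\grad r$ is well-defined as a smooth vector field there.
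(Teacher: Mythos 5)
Your proof is correct and follows essentially the same route as the paper: both derive $e_r=\grad(r^2/2)$ from the one-line computation $v(r^2/2)=g(\nabla_v e_r,e_r)=g(v,e_r)$ using $\nabla e_r=\Id$ and metric compatibility, and then conclude $\grad r=r^{-1}e_r$ has unit norm. No gaps.
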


\begin{proof}
	Let \(v\) be a real tangent vector, the identity
	\[ (d(r^2/2))(v) = v (r^2/2) = g (\nabla_v e_r, e_r) = g(v, e_r) \]
	implies that \(e_r=\grad (r^2/2)\). We conclude that \(\grad r = r^{-1} e_r\) and therefore \(|\grad r| =1\).
\end{proof}

\begin{remark}
	It follows that \(r\) is a `distance function' on the Riemannian manifold \(((\C^n)^{\circ}, g)\). In particular, the integral curves of \(\nabla r\) are unit speed parametrized geodesics. See \cite{Pet}.
\end{remark}

\begin{definition}
	We let
	\[S^{\circ}=\{r=1\}\cap (\C^n)^{\circ} .\] 
\end{definition}

\begin{remark}
	Lemma \ref{lem:gradr} implies that \(S^{\circ}\) is a smooth real hypersurface of \((\C^n)^{\circ}\).
\end{remark}

\begin{definition}
	We introduce a diffeormorphism
	\begin{equation}\label{eq:polarcoord}
	\begin{aligned}
	F \, : \, (0, + \infty) \times S^{\circ} &\to (\C^n)^{\circ} \\
	(t, z) \,\, &\mapsto \,\, t^{1/\alpha_0}z . 
	\end{aligned} 
	\end{equation}
		
\end{definition}

\begin{note}\label{not:polcoord}
	By its definition, the map \(F\)  pulls-back the function \(r\) to the coordinate \(t\).
	We can compare the map defined by Equation \eqref{eq:polarcoord} with the standard
	one when \(n=1\) 
	\[(0, + \infty) \times S^1 \to \C^*\]
	where \(S^1=\R/2\pi\Z\) given by \(	(t, \theta) \,\, \mapsto \,\, z =t^{1/\alpha}e^{i\theta} \) which
	 pulls-back the metric of a \(2\)-cone defined in complex coordinates by \((i/2)\dd |z|^{2\alpha}\) into its polar form \(dt^2 + \alpha^2 t^2 d\theta^2\).
\end{note}

\begin{lemma} \label{lem:kahlercone2}
	The pull-back of \(g\) under the map \eqref{eq:polarcoord} equals  
	\begin{equation}\label{eq:gcone}
	dt^2 + t^2 g_{S^{\circ}}	
	\end{equation}
	where \(g_{S^{\circ}}\) is the pull-back of \(g\) under the inclusion map \(S^{\circ} \subset (\C^n)^{\circ}\). 
	In other words, the metric \(g\) is a Riemannian cone.
\end{lemma}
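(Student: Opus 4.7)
\bigskip

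\noindent
\textbf{Proof proposal.} The strategy is to identify the map $F$ with the flow of the real Euler vector field $e_r$ reparametrised by arc length of $r$, and then use two well-understood properties established earlier: (a) $r$ is a distance function on $((\C^n)^\circ,g)$ by Lemma \ref{lem:gradr}, so $\grad r$ is a unit vector field whose integral curves are unit-speed geodesics orthogonal to the level sets of $r$; and (b) the Lie derivative $\mathcal{L}_{e_r} g$ is conformal, which will give the correct $t^2$-scaling in the transverse direction.

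First I would compute $F_*(\p_t)$. Along a radial ray $\gamma_z(t) = t^{1/\alpha_0} z$ with $z \in S^{\circ}$, a direct calculation in Cartesian coordinates shows
\[
\gamma_z'(t) \;=\; \tfrac{1}{\alpha_0}\, t^{1/\alpha_0-1}\, z \;=\; t^{-1}\, e_r\bigl(\gamma_z(t)\bigr),
\]
because in the identification $e_r = (1/\alpha_0)\sum_i (x_i\p_{x_i}+y_i\p_{y_i})$. On the other hand $F^*r = t$ yields $r(\gamma_z(t)) = t$, so from $e_r = \grad(r^2/2) = r \grad r$ (Lemma \ref{lem:gradr}) we conclude $F_*(\p_t) = \grad r$. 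Since $|\grad r|_g = 1$, this produces the $dt^2$ summand and also shows that $\p_t$ is $F^*g$-orthogonal to every vector tangent to the slices $\{t = \mathrm{const}\}$, because such slices map into level sets of $r$ and $\grad r$ is orthogonal to them.

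Next I would identify the transverse factor. Let $\phi_s$ denote the flow of $e_r$. Using $\nabla e_r = \Id$ and the Koszul identity for the Levi-Civita connection,
\[
(\mathcal{L}_{e_r} g)(X,Y) \;=\; g(\nabla_X e_r, Y) + g(X, \nabla_Y e_r) \;=\; 2\, g(X,Y),
\]
so $\phi_s^* g = e^{2s}\, g$. The scaling formula for $r$ (Equation \eqref{eq:rhomog}) together with $e_r = (1/\alpha_0)\cdot$(standard Euler) shows $\phi_s(z) = e^{s/\alpha_0} z$, and therefore for $z \in S^\circ$ and $t > 0$ we have $F(t,z) = \phi_{\log t}(z)$. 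For vectors $v, w$ tangent to the slice $\{t\} \times S^\circ$ at $(t,z)$ this gives
\[
(F^*g)(v,w) \;=\; g\bigl(F_*v, F_*w\bigr) \;=\; \bigl(\phi_{\log t}^* g\bigr)(v,w) \;=\; t^{2}\, g_{S^{\circ}}(v,w).
\]
Combining the radial and transverse pieces with the orthogonality established in the previous paragraph yields $F^* g = dt^2 + t^2 g_{S^{\circ}}$.

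No step here is a serious obstacle: the only care required is the bookkeeping between the complex/real identifications (the factor $1/\alpha_0$ in $e_r$ versus the standard Euler field, and the resulting homogeneity exponent in \eqref{eq:rhomog}), and the verification that the three ingredients $F_*(\p_t) = \grad r$, $|\grad r|=1$, and $\mathcal{L}_{e_r}g = 2g$ combine compatibly. These are all direct consequences of $\nabla e_r = \Id$ together with the parallelism of $g$ under $\nabla$.
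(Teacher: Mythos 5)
Your proposal is correct and follows essentially the same route as the paper: the paper also identifies $DF(\p_t)=\grad r$, uses orthogonality of $\grad r$ to $TS^\circ$ to match the metrics on the unit slice, and then invokes degree-$2$ homogeneity of both sides under $t\p_t$ — which is exactly the statement $\mathcal{L}_{e_r}g=2g$ that you verify explicitly via $\nabla e_r=\Id$ and the flow $\phi_s^*g=e^{2s}g$. Your write-up just spells out that homogeneity step in more detail; no gap.
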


\begin{proof}
	Note that \(DF(\p/\p t) = \grad r\).
	Since \(\grad r\) is orthogonal to \(TS^{\circ}\), we conclude that  \(dt^2+t^2g_{S^{\circ}}\) agrees with \(F^*g\) when restricted to \(\{1\}\times S^{\circ}\). On the other hand, both metrics have homogeneous degree \(2\) with respect to \(t\p_t\), hence they must agree everywhere. 
\end{proof}

\begin{remark}
	It follows from Equation \eqref{eq:gcone} that the rays \(t \mapsto t^{1/\alpha_0} z\), where \(z \in S^{\circ}\) is fixed, are unit speed minimizing geodesics.
\end{remark}

\begin{lemma}
	\(g=\Hess (r^2/2)\) and 
	\begin{equation}\label{eq:omega}
		\omega = \frac{i}{2} \dd r^2 .
	\end{equation}
\end{lemma}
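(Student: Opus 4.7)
The plan is to derive both identities directly from the structural equations $\nabla e_r = \Id$ and $\nabla e_s = J$ together with the fact that $g$ and $J$ are parallel under $\nabla$. Everything will reduce to an elementary calculation in Riemannian/Kähler geometry.

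For the first identity $g = \Hess(r^2/2)$, recall that the Hessian of a smooth function $f$ on a Riemannian manifold is defined by $\Hess(f)(X,Y) = g(\nabla_X \grad f, Y)$. By Lemma \ref{lem:gradr} we have $\grad(r^2/2) = e_r$, and by the defining property of the Euler vector field $\nabla_X e_r = X$ for every tangent vector $X$. Thus $\Hess(r^2/2)(X,Y) = g(\nabla_X e_r, Y) = g(X,Y)$. No further work is needed here.

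For the identity $\omega = \frac{i}{2}\dd r^2$, I would first compute $d(r^2/2)$ and $d^c(r^2/2)$ as one-forms. From $e_r = \grad(r^2/2)$ one has $d(r^2/2)(X) = g(e_r, X)$, and then
\[
d^c(r^2/2)(X) = -d(r^2/2)(JX) = -g(e_r, JX) = g(Je_r, X) = g(e_s, X),
\]
using the $J$-invariance of $g$. Next, evaluate $dd^c(r^2/2)$ on vector fields $X,Y$ by the invariant formula for the exterior derivative of a one-form $\alpha = g(e_s, \cdot)$:
\[
dd^c(r^2/2)(X,Y) = X(g(e_s, Y)) - Y(g(e_s, X)) - g(e_s, [X,Y]).
\]
Expanding the first two terms by $\nabla g = 0$ and using $[X,Y] = \nabla_X Y - \nabla_Y X$ (torsion-freeness), the $g(e_s, \nabla_X Y)$ and $g(e_s, \nabla_Y X)$ terms cancel the bracket contribution and one is left with $g(\nabla_X e_s, Y) - g(\nabla_Y e_s, X) = g(JX, Y) - g(JY, X)$. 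The $J$-invariance of $g$ gives $g(JY, X) = -g(JX, Y)$, so $dd^c(r^2/2)(X,Y) = 2g(JX, Y) = 2\omega(X,Y)$.

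Finally, with the standard convention $d^c = i(\bar\partial - \partial)$ one has $dd^c = 2i\,\partial\bar\partial$, so
\[
\omega = \tfrac{1}{2} dd^c(r^2/2) = i\,\partial\bar\partial(r^2/2) = \tfrac{i}{2}\,\partial\bar\partial r^2 = \tfrac{i}{2}\dd r^2,
\]
as required. The only potential pitfall is bookkeeping the Kähler sign/normalization conventions for $d^c$; no nontrivial analysis across the singular set $\mH$ is needed since the statement is a pointwise identity on the smooth locus $(\C^n)^\circ$ where $g$, $\omega$, and $r$ are smooth.
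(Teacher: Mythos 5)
Your proof is correct and follows essentially the same route as the paper's: both identities are reduced to the structural equations $\nabla e_r = \Id$, $\nabla e_s = J$ together with $\nabla g = \nabla J = 0$, the only cosmetic difference being that you expand $dd^c(r^2/2)$ via the Lie-bracket formula for $d$ of a one-form while the paper uses the equivalent covariant-derivative formula $d\eta(X,Y)=(\nabla_X\eta)(Y)-(\nabla_Y\eta)(X)$ for $\eta=Jd(r^2/2)$. The sign conventions you use ($d^c f(X)=-df(JX)$, $dd^c=2i\dd$) are consistent and match the paper's $-2i\dd = dJd$.
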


\begin{note}
These formulas hold in great generality for Riemannian and K\"ahler cones.	
\end{note}

\begin{proof}
	By definition, \(\mbox{Hess}(f)(X,Y ) = (\nabla df) (X,Y) = XY(f) - \nabla_XY(f)\). In our case
	\[\mbox{Hess}(r^2/2)(X, Y) = X(g(Y, e_r)) - g (\nabla_XY, e_r) = g(X, Y) . \]
	
	Let \(J\) act one one-forms by \((J\alpha)(X)=\alpha(JX)\) and set \(\eta= Jd(r^2/2)\), so \(\eta(X)=g(JX, e_r)\). This way
	\[(\nabla_X \eta)(Y) = X(g(JY, e_r)) - \eta(\nabla_XY) = g(JY, X) = \omega(Y, X) \]
	Recall that if \(\eta\) is a one-form then \(d\eta (X, Y) = (\nabla_X \eta)(Y) - (\nabla_Y \eta)(X) \). We get that
	\[dJd(r^2/2) (X, Y) =2\omega(Y, X) .\]
	Since \(-2i\dd=dJd\), we conclude that Equation \eqref{eq:omega} holds.
\end{proof}

\begin{lemma}\label{lem:lines}
	Let \(\ell \subset (\C^n)^{\circ}\) be a complex line through the origin. Then the restriction of \(g\) to \(\ell\) equals the metric of a \(2\)-cone of total angle \(2\pi\alpha_0\) with vertex at the origin. Moreover, \(\ell\) is totally geodesic
\end{lemma}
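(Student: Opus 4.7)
The plan is to exploit the fact that both the real and imaginary parts of the complex Euler field \(e\) are tangent to \(\ell\), together with the \(\alpha_0\)-homogeneity of the radius function \(r\) and the K\"ahler potential identity \(\omega = (i/2)\dd r^{2}\) from Equation~\eqref{eq:omega}. The hypothesis \(\ell\subset(\C^n)^{\circ}\) forces \(\ell\cap(\C^n)^{\circ} = \ell\setminus\{0\}\), since a line through the origin that meets some \(H\in\mH\) away from \(0\) is contained in that \(H\).

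First I would fix a point \(v\in\ell\cap S^{\circ}\) (so \(r(v)=1\)) and use \(w\mapsto wv\) to identify \(\ell\) with \(\C\). From the explicit formula \(e = (1/\alpha_0)\sum z_i\,\p/\p z_i\) one reads off \(e|_{\ell} = (1/\alpha_0)\,w\,\p/\p w\); hence \(e_r = 2\RE(e)\) is tangent to \(\ell\), and so is \(e_s = J e_r\) because \(\ell\) is a complex submanifold. At every point of \(\ell\setminus\{0\}\) the pair \((e_r, e_s)\) frames the real tangent space \(T\ell\).

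Next, applying the \(\alpha_0\)-homogeneity \eqref{eq:rhomog} along \(\ell\) gives \(r(wv)^{2} = |w|^{2\alpha_0}\), and pulling back \eqref{eq:omega} yields
\[
\omega\bigl|_{\ell\setminus\{0\}} \;=\; \frac{i}{2}\,\dd\bigl(|w|^{2\alpha_0}\bigr) \;=\; \frac{i}{2}\,\alpha_0^{2}\,|w|^{2\alpha_0-2}\,dw\wedge d\bar w.
\]
After a holomorphic rescaling \(w\mapsto cw\) this matches the K\"ahler form associated to the line element \(|w|^{\alpha_0-1}|dw|\) of Equation~\eqref{eq:1dimmet}, i.e.\ the metric of a \(2\)-cone of total angle \(2\pi\alpha_0\) with vertex at the origin.

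For the totally geodesic assertion, I would take arbitrary smooth vector fields \(X, Y\) tangent to \(\ell\setminus\{0\}\), write \(Y = h\,e_r + k\,e_s\) using the frame above, and expand using \(\nabla_X e_r = X\) and \(\nabla_X e_s = JX\):
\[
\nabla_X Y \;=\; X(h)\,e_r + X(k)\,e_s + h\,X + k\,JX.
\]
Every term lies in \(T\ell\): the first two by tangency of \(e_r, e_s\), the third by hypothesis, and the fourth because the complex structure \(J\) preserves \(T\ell\). Hence \(\nabla_X Y\in T\ell\), which is the totally geodesic condition. I anticipate no substantial obstacle; the only mild cosmetic subtlety is the overall scalar \(\alpha_0^{2}\) in the K\"ahler form, absorbed by the holomorphic reparametrization of \(\ell\) and reflecting the freedom in choosing~\(v\).
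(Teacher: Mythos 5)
Your proof is correct. For the first assertion you argue exactly as the paper does: the restriction of the potential \(r^2\) to \(\ell\) is a potential for the restricted K\"ahler form, and the homogeneity \eqref{eq:rhomog} identifies it with \(|w|^{2\alpha_0}\) up to a scale that is absorbed by reparametrizing \(\ell\); your bookkeeping of the constant \(\alpha_0^2\) is fine and immaterial to the cone angle. For the totally geodesic assertion your route differs from the paper's. The paper passes to flat coordinates via the developing map, observes that any holomorphic vector field with \(\nabla v = \Id\) is the standard Euler field up to a constant translation, and concludes that the \(e\)-invariant line \(\ell\) develops onto a piece of a complex affine line. You instead verify the vanishing of the second fundamental form directly: since \(e_r, e_s\) frame \(T\ell\) away from the origin and satisfy \(\nabla e_r=\Id\), \(\nabla e_s=J\), the expansion \(\nabla_X(he_r+ke_s)=X(h)e_r+X(k)e_s+hX+kJX\) manifestly stays tangent to the complex submanifold \(\ell\). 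This is a legitimate and somewhat more self-contained argument --- it avoids invoking the developing map and the uniqueness statement for Euler fields, at the cost of being specific to submanifolds framed by \(e_r,e_s\) rather than showing the stronger fact that \(\ell\) is affine in flat coordinates.
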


\begin{proof}
	The function \(r^2|_{\ell}\) is a potential for the restricted metric. The statement that \(g|_{\ell}\) is a \(2\)-cone of angle \(2\pi\alpha_0\) follows from Equation \eqref{eq:rhomog} (see also Note \ref{not:polcoord}). 
	
	Up to translations by constants, the standard Euler vector field \(e_{\C^n}\) is the only whose derivative equals the identity endomorphism. 
	We conclude that in flat coordinates, the Euler vector field \(e\) is mapped to \(e_{\C^n} + c\), where \(c\) is a constant vector field. Since \(\ell\) is invariant by \(e\), the image of \(\ell\) is part of a complex line.
\end{proof}

\begin{note}
	In particular, it follows from Lemma \ref{lem:lines} that the circles
	\[\{e^{2\pi i t} z \,\, \mbox{with} \,\, z \in S^{\circ}, \,\, 0 \leq t \leq 1 \}\]
	have length \(2\pi\alpha_0\).	
\end{note}

\subsection{Extension of \(r^2\) to \(\C^n\)}\label{sect:r2}

We begin with a preliminary lemma which is needed for our inductive argument.

\begin{lemma}\label{lem:induction}
	Suppose that \(\nabla\) satisfies the hypothesis of Theorem \ref{PRODTHM} and let \(L \in \mL(\mH)\). Then the quotient connection \(\nabla^{\C^n/L}\) also satisfies the hypothesis of Theorem \ref{PRODTHM}, i.e. it is flat torsion free unitary and satisfies the positivity and non-integer conditions. 
\end{lemma}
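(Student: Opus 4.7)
The plan is to verify the four hypotheses of Theorem \ref{PRODTHM} for $\nabla^{\C^n/L}$ one by one: flat and torsion-free, unitary, non-integer weights at irreducible intersections, and positivity. Lemma \ref{lem:quotconnect} already gives flatness and torsion-freeness, and identifies weights at irreducible intersections of $\mH_L/L$ with weights of $\nabla$ at the corresponding irreducible intersections of $\mH$ containing $L$ (via Lemma \ref{lem:irredHL}). Since the hypotheses of Theorem \ref{PRODTHM} confer the non-integer and positivity conditions at \emph{every} irreducible intersection of $\mH$, these two conditions are inherited by $\nabla^{\C^n/L}$ automatically.

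The substantive part is unitarity. I would apply Theorem \ref{thm:locprod} at any point $x \in L^{\circ}$ to obtain a flat K\"ahler metric $g^{\perp}$ on a neighborhood $V_0$ of $0$ in $(\C^{n-d})^{\circ} = (V/L)^{\circ}$ that is parallel for $\nabla^{\C^n/L}$. Since $\nabla^{\C^n/L}$ is flat, $g^{\perp}$ extends by parallel transport to a single-valued global parallel positive definite Hermitian form on $(V/L)^{\circ}$ precisely when the holonomy of $\nabla^{\C^n/L}$ preserves the value of $g^{\perp}$ at the base point.

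To verify this, I would examine the generators of $\pi_1((V/L)^{\circ})$, which are meridians $\bar\gamma_H$ around the hyperplanes $H/L$ with $H \in \mH_L$; the holonomy along such a meridian is conjugate to $\exp(2\pi i A_{H/L})$, which equals $\exp(2\pi i A_H|_{L^{\perp}})$ by Definition \ref{def:quotconnect} combined with Proposition \ref{prop:AL} (which shows that $A_H$ preserves the splitting $\C^n = L \oplus L^{\perp}$ for $H \in \mH_L$). I would choose $\bar\gamma_H$ to lift, via the slice $x + L^{\perp}$, to a small meridian $\gamma_H$ around $H$ contained in $(\C^n)^{\circ}$ near $x$; this is possible because $x \in L^{\circ}$ lies off every hyperplane in $\mH \setminus \mH_L$, so the slice stays in the regular part for a neighborhood of $x$. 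The $\nabla$-monodromy of $\gamma_H$ is $\exp(2\pi i A_H)$, which preserves the parallel form $\inn$, and the restriction of $\inn$ to $L^{\perp}$ at $x$ is precisely $g^{\perp}$ at $0$ under the slice-quotient identification. Hence each generator of the holonomy of $\nabla^{\C^n/L}$ preserves $g^{\perp}_0$, and the global extension exists.

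The main obstacle is the careful bookkeeping of base points and conjugation matrices between $(\C^n)^{\circ}$ and $(V/L)^{\circ}$ when transferring monodromies across the slice identification. Once one fixes a base point in $V_0$ lifting to a point in the slice $x + L^{\perp}$ near $x$, the argument is routine, and amounts to combining the structural statement that $A_H$ respects the decomposition $L \oplus L^{\perp}$ (Proposition \ref{prop:AL}) with the fact that $\exp(2\pi i A_H)$ appears as an unbroken block in both the holonomy of $\nabla$ and, restricted to $L^{\perp}$, the holonomy of $\nabla^{\C^n/L}$.
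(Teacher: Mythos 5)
Your proof is correct. The treatment of flatness, torsion-freeness, and the non-integer and positivity conditions is identical to the paper's: both rest on Lemma \ref{lem:quotconnect}, which identifies the weights of \(\nabla^{\C^n/L}\) at irreducible intersections of \(\mH_L/L\) with a subset of \(\{a_M,\ M\in\mL_{irr}(\mH)\}\).

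For unitarity you take a heavier route than the paper. The paper invokes only Lemma \ref{lem:sameholonomy}: the holonomy of \(\nabla\) on a punctured neighbourhood of \(x\in L^{\circ}\) is conjugate to that of the localization \(\nabla^L\), hence \(\nabla^L\) carries a parallel positive definite form (Remark \ref{rmk:localizationunitary}); restricting to the \(\nabla^L\)-parallel sub-bundle \(L^{\perp}\) and using the conical identification of \(\nabla^{V/L}\) with \(\bar\nabla\) (Remark \ref{rmk:quotconnec}) gives unitarity of the quotient connection directly. You instead invoke the full Local Product Theorem \ref{thm:locprod} (whose proof itself begins with Lemma \ref{lem:sameholonomy}) to produce the parallel form \(g^{\perp}\) locally, and then propagate it. That is legitimate — there is no circularity, since Section \ref{sec:locprod} is independent of this lemma — but it uses more than is needed. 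Your propagation step is also sound, though the clean way to phrase it is that the quotient arrangement \(\mH_L/L\) is central, so \((V/L)^{\circ}\) deformation retracts onto any small punctured conical neighbourhood of \(0\); hence \(\pi_1(V_0^{\circ})\to\pi_1((V/L)^{\circ})\) is an isomorphism and a locally parallel form is automatically globally parallel. Your meridian-by-meridian check achieves the same thing, with the minor caveat that the monodromy of \(\gamma_H\) in a fixed frame is only \emph{conjugate} to \(\exp(2\pi i A_H)\), not equal to it — but this does not affect your argument, since all you use is that the transverse block of the local \(\nabla\)-holonomy preserves the transverse component of the product metric.
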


\begin{proof}
	By Lemma \ref{lem:quotconnect} the connection \(\nabla^{\C^n/L}\) is flat and torsion free. It follows from Lemma \ref{lem:sameholonomy} that \(\nabla^{\C^n/L}\) is unitary. Lemma \ref{lem:quotconnect} implies that the set of weights of \(\nabla^{\C^n/L}\) at irreducible intersections of \(\mH_L/L\) is a subset of the weights of \(\nabla\), i.e. \(\{a_M, \, M \in \mL_{irr}(\mH)\}\). In particular, since \(\nabla\) satisfies the positivity and non-integer conditions then so does the quotient connection \(\nabla^{\C^n/L}\).
\end{proof}

Our main application of the Local Product Theorem \ref{thm:locprod} and its proof is the next.

\begin{lemma}\label{lem:extensionr2}
	The function
	\(r^2\) extends continuously over the whole \(\C^n\) by taking strictly positive values outside the origin.
\end{lemma}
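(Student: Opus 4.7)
The plan is to prove the lemma by induction on the complex dimension \(n\). The base case \(n=1\) is immediate: in our setting the only possibility is \(\nabla = d - a \frac{dz}{z}\) with \(\alpha_0 = 1 - a > 0\), and the flat K\"ahler cone metric \eqref{eq:1dimmet} gives \(r^2 = \alpha_0^{-2}|z|^{2\alpha_0}\), which extends continuously to \(\C\) with \(r^2(0) = 0\) and \(r^2 > 0\) elsewhere.

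For the inductive step, fix \(x \in \C^n \setminus \{0\}\) and let \(L \in \mL(\mH)\) be the stratum with \(x \in L^\circ\). If \(L = \C^n\) then \(x\) lies in the regular part and \(r^2\) is smooth and positive at \(x\) since \(e_r(x) \neq 0\). Otherwise \(1 \leq d = \dim L < n\), and Theorem \ref{thm:locprod} together with the coordinates from Proposition \ref{prop:locaffprod} and Proposition \ref{prop:linvf} produce adapted holomorphic coordinates \((y_1, \ldots, y_n)\) on a neighbourhood \(U\) of \(x\) in which \((U,g)\) is an orthogonal holomorphic product
\[
(U, g) \;\cong\; \C^d \;\times\; \left((\C^{n_1})^\circ, g^\perp_1\right) \;\times\; \cdots \;\times\; \left((\C^{n_k})^\circ, g^\perp_k\right),
\]
where each \(g^\perp_i\) is parallel for the quotient standard connection \(\bar{\nabla}^i = \nabla^{\C^n/L_i}\) on \(\C^{n_i}\), and by Proposition \ref{prop:nablaLsplit} each factor arrangement is essential and irreducible, with \(n_i \leq n - d < n\).

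Next, write \(e_r = e_x + s\), where \(e_x\) is the local dilation of Definition \ref{def:localdilation} and \(s := e_r - e_x\) is the resulting \(\nabla\)-parallel local field with \(s(x) = e_r(x) \in L\) (Remark \ref{rmk:etangent}). Since the parallel distribution \(\mL\) integrates to the \(L\)-leaves (Lemma \ref{lem:restrictiondist}), \(s\) is a constant vector field tangent to the \(\C^d\) factor. By Lemma \ref{lem:eulerbasic} and Proposition \ref{prop:linvf}, in adapted coordinates \(e_x\) splits basically as \(e_{x,0} + e_{x,1} + \cdots + e_{x,k}\), and Corollary \ref{cor:euniqueness} identifies each transversal component \(e_{x,i}\) with the Euler vector field of \(\bar{\nabla}^i\) on \(\C^{n_i}\). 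Using orthogonality of the factors,
\[
r^2 \;=\; g_{\mathrm{Eucl}}\bigl(e_{x,0}+s,\, e_{x,0}+s\bigr) \;+\; \sum_{i=1}^{k} g^\perp_i(e_{x,i},\, e_{x,i}).
\]
The Euclidean term is smooth on all of \(U\). Each term \(g^\perp_i(e_{x,i}, e_{x,i})\) is precisely the squared-norm-of-Euler-field function for the triple \((\C^{n_i}, \bar{\nabla}^i, g^\perp_i)\); by Lemma \ref{lem:induction} this triple satisfies the hypotheses of Theorem \ref{PRODTHM}, so the inductive hypothesis (applicable since \(n_i < n\)) extends it continuously to \(\C^{n_i}\). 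Thus \(r^2\) extends continuously to \(U\), with value at \(x\) equal to \(|s(x)|^2 = g(e_r(x), e_r(x)) > 0\).

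The remaining case \(x = 0\) is handled via the homogeneity \eqref{eq:rhomog}. The positivity hypothesis of Theorem \ref{PRODTHM} applied to \(\{0\} \in \mL_{irr}(\mH)\) gives \(\alpha_0 > 0\), and by what has just been proved \(r\) is continuous, hence bounded, on the compact set \(\{|z|=1\}\); therefore \(r^2(z) \leq C|z|^{2\alpha_0} \to 0\) as \(z \to 0\), so setting \(r^2(0) = 0\) yields the required continuous extension on all of \(\C^n\), strictly positive outside the origin. The main obstacle in the proof is the identification inside the inductive step of the transversal components of \(e_x\) with the Euler fields of the quotient connections \(\bar{\nabla}^i\); this is what allows the inductive hypothesis to be invoked and relies critically on the linearisation Proposition \ref{prop:linvf} and the uniqueness statement in Corollary \ref{cor:euniqueness}.
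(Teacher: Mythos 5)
Your proof is correct and follows essentially the same route as the paper's: induction on dimension, the decomposition \(e = e_x + s\) into the local dilation field and a parallel field, orthogonality of the factors from Theorem \ref{thm:locprod}, identification of the transversal components of \(e_x\) with the Euler fields of the quotient connections so that Lemma \ref{lem:induction} lets the inductive hypothesis apply, and homogeneity to handle the origin. The only quibble is notational: you write \(e_r = e_x + s\) with \(e_x\) the holomorphic local dilation field of Definition \ref{def:localdilation}, whereas that identity holds for the complex Euler field \(e\) (with \(r^2 = 2\langle e,e\rangle\)); this does not affect the argument.
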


\begin{proof}
	We proceed by induction on the dimension, the case \(n=1\) is clear.
	
	Let \(0 \neq x \in L^{\circ}\). Recall the local dilation vector field
	\(e_x = e - s \) defined in Section \ref{sect:loceulervf}.
	Clearly,
	\begin{equation}\label{eq:extensionr}
	\frac{1}{2}r^2 = \langle e, e \rangle = \langle e_x, e_x \rangle + \langle e_x, s \rangle + \langle s, e_x \rangle + \langle s, s \rangle .
	\end{equation}
	Take local coordinates \((y_1, \ldots, y_n)\) as in the proof of Theorem \ref{thm:locprod}. In these coordinates the vector field \(e_x\) is linear and
	we can write
	\begin{equation}\label{eq:orthsum}
	e_x = e^{\parallel} + e^{\perp}	
	\end{equation}
	with
	\[e^{\parallel} = \sum_{j>\codim L} y_j \p_{y_j}\]
	and
	\[e^{\perp} = e_1 + \ldots + e_k\,\, \mbox{ where } \,\, e_i = \frac{1}{\alpha_{L_i}} \sum_{j \in I_i} y_j \frac{\p}{\p y_j} ,\]
	see Proposition \ref{prop:linvf}. Note the following:
	\begin{itemize}
		\item The sum \eqref{eq:orthsum} is orthogonal and \(e^{\perp}\) is the Euler vector field for \(\nabla^{\C^n/L}\).  By Lemma \ref{lem:induction} the inductive hypothesis apply  on \(\nabla^{\C^n/L}\), hence \(\langle e_x, e_x \rangle\) is a sum of continuous functions. 
		\item \(s = \sum_{j>\codim L} c_j \p_{y_j}\) with \(c_j \in \C\) and \(\langle s, s \rangle\) equals a constant \(c>0\).
		\item The term
		\[\langle e_x, s \rangle + \langle s, e_x \rangle = \langle e^{\parallel}, s \rangle_{\C^d} + \langle s, e^{\parallel} \rangle_{\C^d}\]
		is a linear function of the variables \(y_j, \bar{y}_j\) for \(j>\codim L\).
	\end{itemize}
	It follows from Equation \eqref{eq:extensionr} together with these three observations that \(r^2\) is continuous close to \(x\) and that \(r(x)=c>0\). This shows that \(r^2\) is continuous and positive outside the origin. From the homogeneity Equation \eqref{eq:rhomog} we deduce that
	\[r^2 = e^u |z|^{2\alpha_0}\]
	where \(u\) is a continuous function on the Euclidean sphere. Since \(\alpha_0>0\), we conclude that \(r^2\) is continuous at \(0\)
	 and the lemma follows.
\end{proof}

\begin{remark}
	For \(n=1\) we have \(r^2=|z|^{2\alpha}\), which is locally H\"older continuous. Our proof of Lemma \ref{lem:extensionr2} shows that \(r^2\) is H\"older continuous on bounded subsets of \(\C^n\).
\end{remark}

\subsection{Length spaces and metric cones}\label{sect:lengthspaces}

We present some background material on metric geometry, our main references are \cite{BBI} and \cite{BH}.
\subsubsection{{\large Length spaces}}

An introduction to length spaces can be found in \cite[Chapter 2]{BBI}  and  \cite[Section 1.3]{BH}. We recall the definition. 

\begin{definition}[{\cite[Definition 3.1]{BH}}]
	Let $(X,d)$ be a metric space. We say that $d$ is  a \emph{length metric} or \emph{intrinsic metric} if the distance between every pair of points $x,y\in X$ is equal to the infimum of the lengths of rectifiable curves\footnote{The definition of rectifiable curves is given in \cite[Definition 1.18]{BH}.} joining them. If $d$ is a length metric then $(X,d)$ is called a \emph{length space}. 
\end{definition}

We have the following useful statement.
\begin{lemma}[{\cite[Exercise 3.6(3)]{BH}}]\label{pathcompl} 
	Let $(X,d)$ be a length space, then its metric completion is also a length space.
\end{lemma}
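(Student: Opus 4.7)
The plan is to verify the non-trivial inequality in the definition of a length space: denoting by $d_I$ the length distance on $\bar{X}$ induced by $\bar{d}$ (i.e. the infimum of $\bar{d}$-lengths of rectifiable curves), one has tautologically $\bar{d} \leq d_I$ by triangle inequality along any curve, so the task is to show that for every $\bar{x},\bar{y} \in \bar{X}$ and every $\varepsilon > 0$ there is a rectifiable curve in $\bar{X}$ from $\bar{x}$ to $\bar{y}$ of length at most $\bar{d}(\bar{x},\bar{y}) + \varepsilon$. Note that $X$ embeds isometrically and densely in $\bar{X}$, and any rectifiable curve in $X$ remains a rectifiable curve of the same length in $\bar{X}$, so the length-space property in $X$ will be imported to $\bar{X}$.

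First, I would choose approximating sequences $(x_n), (y_n) \subset X$ with $x_n \to \bar{x}$ and $y_n \to \bar{y}$ in $\bar{X}$, passing to subsequences so that $\bar{d}(x_n, x_{n+1}), \bar{d}(y_n, y_{n+1}) < \varepsilon \cdot 2^{-n-3}$ and $d(x_0, y_0) \leq \bar{d}(\bar{x},\bar{y}) + \varepsilon/4$. Next, using that $(X,d)$ is a length space, I pick for each $n$ a rectifiable curve $\alpha_n$ in $X$ from $x_{n+1}$ to $x_n$ with length at most $d(x_n, x_{n+1}) + \varepsilon \cdot 2^{-n-3}$, and similarly $\beta_n$ from $y_n$ to $y_{n+1}$, together with a rectifiable curve $\gamma$ from $x_0$ to $y_0$ of length at most $d(x_0,y_0) + \varepsilon/4$.

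Then I would assemble these pieces into a single path $\eta : [0,1] \to \bar{X}$: parametrize the infinite reverse concatenation $\ldots, \alpha_2, \alpha_1, \alpha_0$ on a nested sequence of subintervals accumulating at $0$ (placing $x_n$ at parameter $2^{-n-2}$), then run $\gamma$ on a middle interval, then the $\beta_n$'s on intervals accumulating at $1$. Continuity of $\eta$ at each interior node is automatic from matching endpoints; continuity at the endpoints $0 \mapsto \bar{x}$ and $1 \mapsto \bar{y}$ follows because the tail length of the concatenation tends to $0$, so $\eta(t)$ stays within $\bar{d}$-distance comparable to this tail from $x_n$ (resp.\ $y_n$), and $x_n \to \bar{x}$, $y_n \to \bar{y}$. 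Finally, summing the two geometric series and the single long piece gives $L(\eta) \leq \bar{d}(\bar{x},\bar{y}) + \varepsilon$ after adjusting constants, completing the proof.

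The only mild subtlety (the would-be obstacle, though it is not serious) is precisely this last continuity check at the accumulation endpoints $0$ and $1$: one must be careful that the telescoping concatenation, while of finite total length, defines a genuinely continuous map into $\bar{X}$ at the endpoints where infinitely many pieces accumulate. This is handled by the elementary observation that on $[0, 2^{-n-2}]$ the path $\eta$ lies in the $\bar{d}$-ball of radius (tail length) around $x_n$, which in turn is within $\bar{d}(x_n, \bar{x}) \to 0$ of $\bar{x}$.
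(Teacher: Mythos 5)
Your proof is correct. The paper offers no argument of its own here — it simply cites \cite[Exercise 3.6(3)]{BH} — and your construction (telescoping concatenations of $\varepsilon 2^{-n}$-almost-geodesics accumulating at the two completion points, with the continuity check at the accumulation parameters) is exactly the standard solution to that exercise. The bookkeeping gives $\bar{d}(\bar{x},\bar{y}) + \tfrac{3}{2}\varepsilon$ with your stated constants, but as you note this is fixed by starting from $\varepsilon/2$, so there is no gap.
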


A length space is called a \emph{geodesic space} if for any two points there exists a shortest path the joins them. 
\begin{lemma}[{\cite[Theorem 2.5.23]{BBI}}]\label{geodesicspace} 
	Any complete locally compact length space is a geodesic space.
\end{lemma}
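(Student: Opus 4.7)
The statement is the classical Hopf--Rinow--Cohn--Vossen theorem for length spaces, so the plan is to follow the standard two-step route: first prove that closed bounded subsets of $X$ are compact, then use Arzel\`a--Ascoli to extract a limit of a minimizing sequence of curves.

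For the first step, fix a basepoint $x_0 \in X$ and consider
\[
R = \{r \geq 0 \,:\, \bar{B}(x_0, r) \text{ is compact}\} \subset [0, +\infty).
\]
Local compactness gives $R \neq \emptyset$, and my goal is to show $R = [0, +\infty)$ by proving it is both open and closed in $[0, +\infty)$. For openness, if $r_0 \in R$, cover the compact set $\bar{B}(x_0, r_0)$ by finitely many open balls whose closures are compact (possible by local compactness at each of its points), and conclude that $\bar{B}(x_0, r_0 + \epsilon)$ has compact closure for some $\epsilon>0$. For closedness, suppose $r_n \nearrow r$ with each $\bar{B}(x_0, r_n) \in R$; given a sequence $y_k \in \bar{B}(x_0, r)$, I would exploit the length space hypothesis to find a curve of length $\leq d(x_0, y_k) + 1/k$ from $x_0$ to $y_k$, pick a point $z_k$ on it with $d(x_0, z_k) \leq r - 1/k$ and $d(z_k, y_k) \leq 1/k + $ a small error, extract a convergent subsequence $z_k \to z$ from the compact ball $\bar{B}(x_0, r-1/k)$ (with a diagonal argument in $k$), and use completeness of $X$ to conclude that the corresponding subsequence of $y_k$ is Cauchy, hence convergent in $\bar{B}(x_0, r)$.

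For the second step, given $x, y \in X$, use the length space property to pick curves $\gamma_n \colon [0,1] \to X$ from $x$ to $y$ with $\mathrm{length}(\gamma_n) = L_n \to d(x,y)$, reparametrized so that each has constant speed $L_n$. Then the family $\{\gamma_n\}$ is uniformly Lipschitz (with Lipschitz constants $L_n$ bounded by say $d(x,y)+1$), and all $\gamma_n$ take values in the compact set $\bar{B}(x, d(x,y)+1)$ provided by Step~1. Arzel\`a--Ascoli yields a subsequence converging uniformly to a continuous curve $\gamma \colon [0,1] \to X$ from $x$ to $y$. The standard lower semicontinuity of length under uniform convergence then gives
\[
\mathrm{length}(\gamma) \leq \liminf_n \mathrm{length}(\gamma_n) = d(x,y),
\]
and since the reverse inequality is automatic, $\gamma$ is a shortest path.

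The main obstacle is the closedness of $R$ in the Hopf--Rinow step: one must simultaneously invoke the length metric hypothesis (to push $y_k$ back onto a compact ball via interpolating points), local compactness (to get a convergent subsequence of those interpolating points), and completeness (to upgrade the Cauchy property of $y_k$ to actual convergence). The other ingredients---openness of $R$ and lower semicontinuity of length under uniform limits---are essentially formal and cause no trouble. Since the statement is cited verbatim from \cite[Theorem 2.5.23]{BBI}, the proof can alternatively be dispatched with a reference, and the outline above is the one present in that reference.
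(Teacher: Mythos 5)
Your proposal is correct and is exactly the Hopf--Rinow--Cohn--Vossen argument from the cited reference \cite[Theorem 2.5.23 and 2.5.28]{BBI}; the paper itself offers no proof beyond that citation, so there is nothing to compare beyond noting that you have reproduced the intended argument. Both the compactness-of-balls step (openness/closedness of the set $R$, with the length property and completeness entering in the closedness part) and the Arzel\`a--Ascoli extraction with lower semicontinuity of length are the standard route and are sound as sketched.
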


\begin{remark} 
	If $(X,d)$ is any metric space and $Y\subset X$ is a subspace, then $d$ defines a metric on $Y$ which we call \emph{restricted metric}. In case $(X,d)$ is a length space, the restricted metric is often not a length metric. However, in case any two points $y_1,y_2\in Y$ can be joined by a rectifiable path lying in $Y$ we get a new length metric on $Y$. It is called the \emph{induced metric}. For more details see \cite[Section 2.3]{BBI}.
\end{remark}

For every Riemannian manifold $(X,g)$ one defines a length metric on it.
\begin{proposition}\label{lengthfromflat} 
	Let $(X,g)$ be a connected Riemannian manifold. Given $x,y\in X$ let $d(x,y)$ be the infimum of the Riemannian length of piecewise continuously differentiable paths $c:[0,1]\to X$ such that $c(0)=x$ and $c(1)=y$. Then
	\begin{enumerate}
		\item $d$ is a metric on $X$.
		\item The topology on $X$ defined by this distance is the same as the given manifold topology of $X$.
		\item $(X,d)$ is a length space.
		\item The Riemannian length of any piecewise continuously differentiable curve in $(X,g)$ is equal to the length in the metric $(X,d)$.
	\end{enumerate}
\end{proposition}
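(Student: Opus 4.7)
The plan is to prove the four statements in order, using a standard local comparison argument in normal (or arbitrary) coordinate charts as the workhorse.

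For (1), finiteness of $d(x,y)$ is immediate: since $X$ is connected and smooth, any two points are joined by a piecewise smooth path, and such a path has finite Riemannian length. Symmetry follows by reversing the parametrization (reversed $C^1$ paths have the same length), and the triangle inequality follows by concatenation of paths: given paths $c_{xy}$ from $x$ to $y$ and $c_{yz}$ from $y$ to $z$, their concatenation (reparametrized piecewise on $[0,1]$) is piecewise $C^1$ and has length equal to the sum. The one nontrivial point is positivity: $d(x,y)>0$ when $x\ne y$. Here I would fix a chart $(U,\varphi)$ around $x$ with $\varphi(x)=0$ and $\overline{B_{\varepsilon}(0)}\subset\varphi(U)$ compact, and compare $g$ on $\varphi^{-1}(B_\varepsilon(0))$ with the Euclidean metric: by compactness there exist constants $0<\lambda\le\Lambda$ such that $\lambda|v|_{\mathrm{eucl}}\le |v|_g\le\Lambda|v|_{\mathrm{eucl}}$ on this set. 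This forces every path starting at $x$ and leaving $\varphi^{-1}(B_\varepsilon(0))$ to have Riemannian length $\ge\lambda\varepsilon$, and paths confined to the chart can be estimated directly by Euclidean arclength. Taking $y\ne x$ inside the chart gives $d(x,y)\ge\lambda|\varphi(y)|>0$.

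For (2), the same two-sided comparison immediately gives that a $d$-ball around $x$ is squeezed between two Euclidean balls in the chart: $\{d(x,\cdot)<\lambda\varepsilon\}\cap\varphi^{-1}(B_\varepsilon)$ contains $\varphi^{-1}(B_{\varepsilon/\Lambda \cdot\lambda\varepsilon/\Lambda})$-type balls (constants chosen carefully), and is contained in $\varphi^{-1}(B_{\varepsilon})$ for sufficiently small $\varepsilon$. This shows the two topologies have the same neighborhood bases at each point, hence agree. For (3), the length-space property is essentially tautological: by definition $d(x,y)$ is the infimum of Riemannian lengths of piecewise $C^1$ paths joining $x$ to $y$, and such paths are rectifiable in $(X,d)$ with $d$-length bounded above by the Riemannian length (since subdivision sums along the path are bounded by the Riemannian lengths of subarcs, as just shown for pairs of points via (1)). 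Hence the infimum of $d$-lengths of rectifiable paths from $x$ to $y$ is $\le d(x,y)$; the reverse inequality is immediate because $d(x,y)$ itself is bounded above by the $d$-length of any rectifiable path joining them.

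For (4), I would show that for any piecewise $C^1$ curve $c:[0,1]\to X$, its $d$-length $L_d(c)$ equals its Riemannian length $L_g(c)$. The inequality $L_d(c)\le L_g(c)$ follows from subdivision: for any partition $0=t_0<\cdots<t_N=1$, $\sum d(c(t_{i-1}),c(t_i))\le\sum L_g(c|_{[t_{i-1},t_i]})=L_g(c)$, and taking the supremum over partitions gives $L_d(c)\le L_g(c)$. For the reverse inequality, I would use the local comparison again: in a chart around $c(t_0)$, for $t$ slightly larger than $t_0$, $d(c(t_0),c(t))\ge\lambda|\varphi(c(t))-\varphi(c(t_0))|$, and dividing by $t-t_0$ and passing to the limit recovers the Riemannian speed up to the lower constant $\lambda$. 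A more efficient route uses the fact, following from (1)--(3), that for a $C^1$ curve the metric derivative $|c'|_d(t):=\lim_{s\to t}d(c(s),c(t))/|s-t|$ equals the Riemannian speed $|c'(t)|_g$ almost everywhere (via the chart comparison at each point, letting $\varepsilon\to 0$ so that $\lambda,\Lambda\to 1$), and integrating yields $L_d(c)=\int_0^1|c'|_d\,dt=\int_0^1|c'(t)|_g\,dt=L_g(c)$. The main technical obstacle is precisely this last step, proving that $|c'|_d(t)=|c'(t)|_g$ pointwise a.e.; everything else reduces cleanly to the two-sided chart comparison.
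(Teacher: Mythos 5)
The paper does not actually prove this proposition: its ``proof'' consists of citing \cite[Proposition 3.18]{BH} for items (1)--(3) and \cite[Remark 3.21]{BH} for item (4). Your argument is precisely the standard chart-comparison proof that underlies that citation, and it is correct in outline. Positivity of $d$, the equivalence of topologies, and the length-space property all reduce cleanly to the two-sided bound $\lambda|v|_{\mathrm{eucl}}\le|v|_g\le\Lambda|v|_{\mathrm{eucl}}$ on a relatively compact chart, exactly as you describe (the constants in your ball inclusions for (2) are garbled, but the squeeze $\varphi^{-1}(B_{r})\subset\{d(x,\cdot)<\Lambda r\}\subset\{d(x,\cdot)<\lambda\varepsilon\}\subset\varphi^{-1}(B_\varepsilon)$ is the right statement). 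The only step that genuinely requires care is the one you flag yourself in (4): to get $L_d(c)\ge L_g(c)$ you need, at each parameter $t_0$, coordinates adapted to $c(t_0)$ (e.g.\ with $g_{ij}(c(t_0))=\delta_{ij}$) so that the comparison constants can be taken arbitrarily close to $1$ near that point; this gives $|c'|_d(t_0)=|c'(t_0)|_g$, and one then invokes the general fact that the length of a Lipschitz curve in a metric space is the integral of its metric speed. Note also that the lower bound $d(c(s),c(t))\ge(1-\delta)\,|\varphi(c(s))-\varphi(c(t))|$ requires the small argument that competitor paths leaving the adapted chart are automatically too long, which your treatment of (1) already supplies. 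With those details filled in, the proof is complete and self-contained, which is arguably more informative than the paper's bare citation.
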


\begin{proof} 
	The first three items is \cite[Proposition 3.18]{BH}, the fourth item is \cite[Remark 3.21]{BH}.
\end{proof}

\subsubsection{{\large Metric cones}}

In this subsection we recall standard facts concerning  metric cones, following \cite[Section 3.6]{BBI} and \cite[Chapter 1.5]{BH}.

\begin{definition}[{\cite[Definition 3.6.16]{BBI}}]\label{euclideancones} 
	Let $\Sigma$ be a metric space. Consider the topological cone $C=[0,\infty)\times \Sigma / \sim$ where $(0, x)\sim(0, y)$ for every $x, y\in \Sigma$. Let us equip $C$ with the metric defined by the rule of cosines; i.e., for
	any $a, b\in [0,\infty)$ and $x,y\in \Sigma$  we have
	$$|(a,x),(b,y)|^2_C=a^2+b^2-2ab\cdot\cos|x,y|_\Sigma\;\; \mathrm{if}\,\, |x,y|_{\Sigma}\le \pi,$$
	$$|(a,x),(b,y)|_C=a+b\;\; \mathrm{if}\,\, |x,y|_{\Sigma}\ge \pi.$$
	The obtained space $C$ is called the \emph{(Euclidean) metric cone over $\Sigma$}. All the pairs of the type $(0, x)$ correspond to one point in $C$ which is called the tip (or vertex) of the cone. A metric space which can be obtained in this way is called a \emph{(Euclidean) metric cone}.
\end{definition}

\begin{remark}\label{conecomple} 
	Let $\Sigma$ be a metric space and let $C_\Sigma$ be the metric cone over $\Sigma$. Then the metric cone $C_{\overline \Sigma}$ over the metric completion $\overline \Sigma$ of $\Sigma$ is isometric to the metric completion  $\overline C_\Sigma$ of $C_\Sigma$.
\end{remark}

\begin{lemma}\label{lenghcone} 
	Let $\Sigma$ be a length space and $C$ be the metric cone over $\Sigma$ with vertex $0$. 
	\begin{enumerate}
		\item $C$ is a length space. 
		\item For any $r>0$ let $B(r)\subset C$ be the metric ball of radius $r$ centred at $0$. Then the restriction of the metric from $C$ to $B(r)$ is an intrinsic metric on $B(r)$.
		\item If $C$ is a geodesic space, then any geodesic that joins $(a,x)$ with $(b,y)$ lies in the ball $B(\max(a,b))$.
	\end{enumerate}
\end{lemma}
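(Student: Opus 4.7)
The plan is to prove each of the three assertions by reducing to elementary calculations in the Euclidean plane, using the rule-of-cosines definition of the cone metric.

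For part (1), I would fix $p=(a,x)$ and $q=(b,y)$ in $C$ and show that for every $\varepsilon>0$ there is a rectifiable path from $p$ to $q$ of length at most $|p,q|_C+\varepsilon$. When $d_\Sigma(x,y)\ge\pi$ the concatenation of the two radial rays through the vertex already achieves the distance $a+b$, so assume $\theta:=d_\Sigma(x,y)<\pi$. Since $\Sigma$ is a length space, choose a rectifiable curve $\gamma:[0,1]\to\Sigma$ from $x$ to $y$ of length $\le\theta+\varepsilon$, parameterized proportionally to arclength. In the Euclidean plane consider the straight segment from $A=(a,0)$ (polar coordinates) to $B=(b,\theta+\varepsilon)$ and let $t\mapsto(r(t),\phi(t))$ be its polar representation. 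Define $\tilde\gamma(t)=(r(t),\gamma(t))\in C$. A direct computation (the same one that yields the rule of cosines) shows that the length of $\tilde\gamma$ in $C$ is bounded above by the Euclidean length of the planar segment, which tends to $|p,q|_C$ as $\varepsilon\to 0$.

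For part (2), I would observe that the approximating paths built in part (1) can be kept inside $B(r)$ whenever both endpoints lie there. In the case $\theta\ge\pi$ the concatenation through the vertex is automatically contained in $B(\max(a,b))\subset B(r)$. In the case $\theta<\pi$ the key Euclidean fact is that the squared distance from the origin, being a convex quadratic along a straight segment, attains its maximum at one of the endpoints; hence $r(t)\le\max(a,b)$ throughout, and the lifted path $\tilde\gamma$ stays inside $B(\max(a,b))$. Taking $\varepsilon\to 0$ exhibits paths in $B(r)$ with lengths approaching the cone distance, which is exactly the statement that the restricted metric is intrinsic.

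For part (3), let $\gamma$ be a geodesic in $C$ from $p=(a,x)$ to $q=(b,y)$ and let $(c,z)$ be any point on it, so that $|p,(c,z)|_C+|(c,z),q|_C=|p,q|_C$. Set $\theta_1=\min(d_\Sigma(x,z),\pi)$, $\theta_2=\min(d_\Sigma(z,y),\pi)$, $\theta=\min(d_\Sigma(x,y),\pi)$, and place in the Euclidean plane $A=(a,0)$, $P=(c\cos\theta_1,c\sin\theta_1)$, $B=(b\cos(\theta_1+\theta_2),b\sin(\theta_1+\theta_2))$, so that $|AP|=|p,(c,z)|_C$ and $|PB|=|(c,z),q|_C$. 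The triangle inequality in $\Sigma$ gives $\theta\le\theta_1+\theta_2$; combined with monotonicity of cosine on $[0,\pi]$ and the definition of the cone metric this yields $|AB|\le|p,q|_C$. Together with the planar triangle inequality $|AP|+|PB|\ge|AB|$ and the geodesic equality assumption, all of these must be equalities, forcing $\theta_1+\theta_2\le\pi$ and $P$ lying on the Euclidean segment $\overline{AB}$. Applying once more the convex-quadratic fact used in part (2), we conclude $|P|\le\max(|A|,|B|)$, i.e.\ $c\le\max(a,b)$, as required.

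The only mildly technical point is the length estimate in part (1): one must verify that the lift $\tilde\gamma(t)=(r(t),\gamma(t))$ has length at most that of the planar segment, which requires comparing infinitesimal arclengths. This is standard and amounts to noting that replacing $\theta$ by the larger number $\theta+\varepsilon$ and $\gamma$ by an arclength-proportional parameterization makes $|\tilde\gamma'(t)|_C\le|(r'(t),r(t)\phi'(t))|$ in the Euclidean sense; the rest of the argument is formal. Alternatively, one can cite \cite[Chapter I.5]{BH} or \cite[Section 3.6]{BBI} directly, as the lemma is a textbook result about Euclidean cones over length spaces.
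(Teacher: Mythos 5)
Your parts (1) and (2) follow essentially the paper's route: the paper proves (2) by embedding the metric cone over a path $\gamma_\varepsilon\subset\Sigma$ of length $d_\Sigma(x,y)+\varepsilon<\pi$ into $C$ as a distance non-increasing image of a plane sector and taking the straight segment there, which is exactly your lifted segment $\tilde\gamma(t)=(r(t),\gamma(t))$; for (1) the paper simply cites \cite[Theorem 3.6.17]{BBI}. Your observation that the planar segment stays in $B(\max(a,b))$ because the squared distance to the origin is convex along a line is the same point the paper makes.

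For part (3) the paper only cites \cite[Corollary 5.11]{BH}, and your attempted direct proof has a gap. First, the inequality you need to close the squeeze is $|AB|\ge|p,q|_C$, not $|AB|\le|p,q|_C$ as written: the chain must read $|p,q|_C=|AP|+|PB|\ge|AB|\ge|p,q|_C$. More seriously, that inequality only follows from $\theta\le\theta_1+\theta_2$ and monotonicity of cosine when $\theta_1+\theta_2\le\pi$. When $\theta_1+\theta_2>\pi$ your comparison point $B$ sits at polar angle $\theta_1+\theta_2>\pi$ from $A$, and $|AB|^2=a^2+b^2-2ab\cos(\theta_1+\theta_2)$ can be strictly smaller than $|p,q|_C^2$: take $a=b=1$, $\theta_1=\theta_2=3\pi/4$ and $d_\Sigma(x,y)\ge\pi$, so that $|AB|=\sqrt2<2=|p,q|_C$. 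Hence you cannot conclude ``forcing $\theta_1+\theta_2\le\pi$'' from the equality analysis, since that analysis presupposes $\theta_1+\theta_2\le\pi$. The case $\theta_1+\theta_2>\pi$ needs a separate argument --- for instance, the function $c\mapsto d_C(p,(c,z))+d_C((c,z),q)$ is convex, equals $a+b$ at $c=0$ and has nonnegative derivative there because $\cos\theta_1+\cos\theta_2\le0$, so equality with $d_C(p,q)\le a+b$ pins down $c$ --- or one should simply invoke \cite[Corollary 5.11]{BH}, as the paper does and as you offer in your closing sentence.
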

\begin{proof} 1. This is a sub-statement of \cite[Theorem 3.6.17]{BBI}.
	
	2. Let $(a,x), (b,y)$ be points in $C$ with $a,b\le r$. We need to find paths in $B(r)$ of length arbitrary close to $d_C((a,x), (b,y))$. In case $d_{\Sigma}(x,y)\geq \pi$ we choose the path composed of two radial segments $[(a,x), 0]$ and $[0, (b,y)]$. In case  $d_{\Sigma}(x,y)< \pi$ take a path $\gamma_{\varepsilon}\subset\Sigma$ of length $d_{\Sigma}(x,y)+\varepsilon<\pi$. The metric cone $C(\gamma_{\varepsilon})$ over $\gamma$ embeds in $C$ by a distance non-increasing embedding. Since $C(\gamma)$ is a plane sector with angle $<\pi$ we can join points $(a,x), (b,y)$ in it by a geodesic segment  $\widetilde\gamma_{\varepsilon}$ that lies on distance $\le r$ from $0$. The images of $\widetilde\gamma_{\varepsilon}$ in $C$ give us the desired paths.  
	
	3. This follows from \cite[Corollary 5.11]{BH}.  
\end{proof}

The following remark is a consequence of the definitions.
\begin{remark}\label{addonepoint} 
	Let $(M,g)$ be a connected Riemannian manifold. Consider the Riemannian cone $(0, \infty) \times M$ with Riemannian metric $dt^2 + t^2 g$. Let $d_g$ be the  corresponding length metric on $(0, \infty) \times M$. Then the resulting metric space is isometric to the complement to the vertex of the metric cone over $(M,g)$.   
\end{remark}

\subsection{Characterization of polyhedral spaces}\label{sect:LePe}

It turns out that compact (and locally compact) polyhedral spaces (see Definition \ref{def:polspace}) have a useful characterisation, based on the notion of \emph{(Euclidean) metric cones}. 

\begin{theorem}[{\cite[Theorem 1.1]{LePe}}]\label{LebedevaPetrunin} 
	A compact length space $X$ is polyhedral if and only if a neighbourhood of each point $x\in X$ with the restricted metric admits an open isometric embedding into a metric cone which sends $x$ to the vertex of the cone.
\end{theorem}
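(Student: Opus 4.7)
The forward implication is essentially tautological from the definition of a polyhedral space. If $X$ is compact and polyhedral and $x \in X$, then for sufficiently small $\varepsilon$ the metric ball $B(x, \varepsilon)$ is contained in the closed star of $x$ in any simplicial decomposition realising the metric. This closed star is, by inspection, isometric to a neighbourhood of the vertex in the Euclidean metric cone $C(\Sigma_x)$ over the spherical polyhedral link $\Sigma_x$ of $x$. The restriction of the embedding to $B(x, \varepsilon)$ gives the desired isometric embedding with $x \mapsto $ vertex.

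The converse is the substantive direction. The plan is induction on dimension $n = \dim X$ (well-defined locally, and constant by connectedness or a covering argument). For $n = 0$ the statement is vacuous and for $n = 1$ the local cone at each point is a finite star of half-lines, so compactness forces $X$ to be a finite metric graph, which is polyhedral. For the inductive step, fix $x \in X$ and let $C_x = C(\Sigma_x)$ be the metric cone in which a neighbourhood $U_x$ of $x$ embeds. The link $\Sigma_x$ is a compact length space of strictly smaller dimension. At each $\sigma \in \Sigma_x$, translating along the radial direction determined by $\sigma$ inside $U_x$ produces points whose Euclidean cone neighbourhoods in $X$ split off a Euclidean factor transverse to the link; the complementary factor realises a spherical cone neighbourhood of $\sigma$ inside $\Sigma_x$. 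One verifies that having local spherical cone neighbourhoods is equivalent, via the tangent cone construction, to having local Euclidean cone neighbourhoods, so the inductive hypothesis applies to $\Sigma_x$ and gives a spherical polyhedral structure on it. Consequently $C_x$ is a Euclidean polyhedral cone, and $x$ has a polyhedral neighbourhood in $X$.

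Finally, cover $X$ by finitely many such polyhedral neighbourhoods using compactness and glue the resulting triangulations by common subdivision, exploiting that any isometry between Euclidean polyhedral spaces is piecewise linear on a fine enough refinement. The main obstacle is precisely this coherent gluing step: one must produce a \emph{single} global simplicial structure compatible with the many local cone descriptions, ensuring both that simplices match on overlaps and that finitely many simplices meet at each point. Lebedeva--Petrunin handle this by introducing a stratification of $X$ by the isometry type of the local tangent cone and building the triangulation inductively along strata of increasing dimension; verifying that the strata themselves admit polyhedral triangulations compatible with incidence relations is the most delicate part of the argument.
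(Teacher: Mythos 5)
This statement is not proved in the paper at all: it is quoted verbatim from Lebedeva--Petrunin \cite{LePe} and used as a black box (the authors say explicitly that this theorem and the remark following it are the only facts about polyhedral spaces they need at that stage). So there is no internal proof to compare against; the only question is whether your sketch stands on its own as a proof of the cited result.

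It does not. The forward direction is fine, and your inductive framework for the converse (pass to the link, argue that it satisfies the spherical analogue of the hypothesis, conclude by induction that the link is spherical polyhedral, hence each point acquires a polyhedral cone neighbourhood) does mirror the strategy of \cite{LePe} at a high level. But two essential steps are asserted rather than proved. First, the claim that a point $\sigma$ of the link $\Sigma_x$ inherits, from the Euclidean cone neighbourhoods of nearby points of $X$ on the ray through $\sigma$, an \emph{isometric embedding of a metric neighbourhood} of $\sigma$ into a spherical cone with $\sigma$ at the tip is nontrivial: the theorem's hypothesis is about actual isometric embeddings, not about tangent cones (which are blow-up limits), and the passage between the Euclidean and spherical versions of the statement is itself a lemma that \cite{LePe} must formulate and prove in parallel with the main induction. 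Second, and decisively, the globalization step --- producing a single triangulation of $X$ by Euclidean simplices from the finite cover by polyhedral neighbourhoods --- is the entire mathematical content of the theorem, and you do not carry it out: you identify it as ``the most delicate part'' and describe in one sentence how the original authors handle it. A compact length space each of whose points has a neighbourhood isometric to an open subset of a polyhedral space is not obviously globally polyhedral; making the local triangulations compatible on overlaps and verifying local finiteness of the resulting complex is precisely what has to be established. As written, the proposal is an outline that defers the essential difficulty back to the source it is meant to prove.
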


\begin{note}
	We note that in \cite{LePe} metric cones are called Euclidean cones.
\end{note}

\begin{remark}\label{locompact} 
	As is pointed out in \cite[Section 5]{LePe}, the theorem holds for complete locally compact length spaces as well.
\end{remark}

For the time being Theorem \ref{LebedevaPetrunin} and Remark \ref{locompact} will be the only things that we need to know about polyhedral spaces. Indeed, our approach to proving that the flat metric on $\mathbb C^n\setminus \mathcal H$ extends to a polyhedral metric on $\mathbb C^n$ is as follows. First, we extend the metric to a complete length metric on $\mathbb C^n$, and then we prove that it satisfies the condition of Theorem \ref{LebedevaPetrunin} (using local dilations and distance functions $r_x$).

\subsection{General facts on metric extensions}\label{sect:metricexten}

In this section we consider triples $(X,X^\circ, d^\circ)$, where
$X$ is a locally compact topological space, $X^\circ$ is an open dense subset and $d^\circ$ is a metric on $X^\circ$ that induces on $X^\circ$ the same topology as the restricted topology of $X$. Lemma \ref{metricextension} spells out conditions on the triple that guarantee that $d^\circ$ extends to a complete metric on $X$.

\begin{lemma}\label{continuity} 
	Let $(X,X^\circ, d^\circ)$ be as above. The function $d^\circ: X^\circ \times X^\circ\to [0,\infty)$ extends as a continuous function $d$ to $X\times X$ if and only if for every $x\in X\setminus X^\circ$ and every $\varepsilon>0$ there is a neighbourhood $U_{x,\varepsilon}$ of $x$ in $X$ such that $\mathrm{diam}(U_{x,\varepsilon}\cap X^\circ, d^\circ)<\varepsilon$.
\end{lemma}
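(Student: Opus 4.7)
The plan is to establish the two implications separately, and then for the harder converse direction to reformulate the diameter hypothesis as a form of uniform continuity of $d^\circ$ relative to the ambient topology on $X\times X$, from which a standard extension-by-density argument produces the continuous extension $d$.

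For the forward direction, suppose a continuous extension $d$ exists. Fix $x \in X \setminus X^\circ$ and pick any net $(u_\alpha)$ in $X^\circ$ with $u_\alpha \to x$ (which exists since $X^\circ$ is dense in $X$); then $d(u_\alpha, u_\alpha) = d^\circ(u_\alpha, u_\alpha) = 0$ for every $\alpha$, so by continuity $d(x,x) = 0$. Continuity of $d$ at $(x,x)$ then yields an open neighbourhood $U_{x,\varepsilon}$ of $x$ such that $d(u,v) < \varepsilon$ for all $(u,v) \in U_{x,\varepsilon} \times U_{x,\varepsilon}$, and in particular $\mathrm{diam}(U_{x,\varepsilon} \cap X^\circ, d^\circ) \leq \varepsilon$; shrinking $\varepsilon$ slightly yields the strict inequality.

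For the converse direction, the key claim I will prove is: for every $(x_0, y_0) \in X \times X$ and every $\varepsilon>0$ there is an open neighbourhood $W$ of $(x_0,y_0)$ in $X\times X$ such that
\begin{equation*}
|d^\circ(u,v) - d^\circ(u',v')| < \varepsilon \qquad \text{for all } (u,v),(u',v') \in W \cap (X^\circ \times X^\circ).
\end{equation*}
To see this, for each of $x_0, y_0$ I will produce a neighbourhood in $X$ whose intersection with $X^\circ$ has $d^\circ$-diameter less than $\varepsilon/2$: if the point lies in $X \setminus X^\circ$ this is the hypothesis, and if it lies in $X^\circ$ it suffices to take a $d^\circ$-open ball of radius $\varepsilon/4$, which is open in $X$ because $X^\circ$ is open in $X$ and the restricted topology on $X^\circ$ is the one induced by $d^\circ$. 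Taking $W$ to be the product of these two neighbourhoods and combining two triangle inequalities $|d^\circ(u,v) - d^\circ(u',v')| \leq d^\circ(u,u') + d^\circ(v,v')$ establishes the claim.

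With this uniform continuity property in hand, the extension is built by the standard density argument: for any net $(u_\alpha, v_\alpha) \in X^\circ \times X^\circ$ converging to $(x,y)$ in $X\times X$, the real numbers $d^\circ(u_\alpha, v_\alpha)$ form a Cauchy net in $\mathbb{R}$ (by the claim), hence converge; independence of the chosen net follows by interleaving two nets into a common refinement, and the resulting value $d(x,y)$ agrees with $d^\circ(x,y)$ on $X^\circ \times X^\circ$ by continuity of $d^\circ$ as a metric on $X^\circ$. Continuity of $d$ at an arbitrary point $(x_0, y_0)$ is then read off from the same neighbourhood $W$: for any $(x,y) \in W$, approximating $(x_0,y_0)$ and $(x,y)$ by nets in $X^\circ \times X^\circ$ lying inside $W$ and passing to the limit in the inequality of the claim yields $|d(x,y) - d(x_0, y_0)| \leq \varepsilon$. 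The main obstacle is simply bookkeeping: one must uniformly treat the four cases depending on whether $x_0$ and $y_0$ lie in $X^\circ$ or in its complement, and the observation that small $d^\circ$-balls around points of $X^\circ$ are open in $X$ is the key ingredient that lets these cases be handled on the same footing as the hypothesis.
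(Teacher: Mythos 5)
Your proof is correct and follows essentially the same route as the paper's: verify that the oscillation of $d^\circ$ on product neighbourhoods of any point of $X\times X$ can be made arbitrarily small (using the hypothesis at points of $X\setminus X^\circ$ and small $d^\circ$-balls, which are open in $X$, at points of $X^\circ$), then extend by density. The paper merely states this oscillation criterion and leaves the standard extension argument implicit, whereas you spell it out with nets; the content is the same.
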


\begin{proof}
	The only if direction is clear, let's prove the if direction. 
	Take a point $(x,y)\in X\times X$. We need to show that for any $\varepsilon>0$ there is a neighbourhood $U_{(x,y),\varepsilon}\in X\times X$ such that for any two pairs of points $(x_1,y_1),\, (x_2,y_2)\in U_{(x,y),\varepsilon}\cap X^\circ\times X^\circ$ we have $|d^\circ(x_1,y_1)-d^\circ(x_2,y_2)|<\varepsilon$. We can assume that $x,y\in X\setminus X^\circ$. 
	
	By assumptions of the lemma, for any $\varepsilon>0$ we can choose neighbourhoods $U_{x,\varepsilon}, U_{y,\varepsilon}$ whose intersections with $X^\circ$ have diameter less than $\varepsilon/2$. Then for any two pairs of points $(x_1,y_1),\, (x_2,y_2)$ as above, using the triangle inequality,  we get $|d^\circ(x_1,y_1)-d^\circ(x_2,y_2)|<\varepsilon$. Hence, we can choose $U_{x,\varepsilon}\times U_{y,\varepsilon}$ as the desired neighbourhood of $(x,y)$. 
\end{proof}

\begin{lemma}\label{metricextension}
	Let $(X,X^\circ, d^\circ)$ be as above. Suppose that the following conditions are satisfied.
	\begin{enumerate}
		\item The condition of Lemma \ref{continuity} holds, and so the function $d^\circ: X^\circ \times X^\circ\to [0,\infty)$ extends as a continuous function $d$ to $X\times X$.
		\item For any $x,y\in X\setminus X^\circ$ with $x\ne y$ we have $d(x,y)>0$.
		\item For some point $o\in X$ the function $d_o(y)=d(o,y)$ is proper on $X$.
	\end{enumerate}
	Then $d$ is a metric on $X$ inducing the original topology. Moreover, $(X,d)$ is complete, and so isometric to the metric completion of $(X^\circ, d^\circ)$.
\end{lemma}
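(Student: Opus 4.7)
The plan is to verify in order that (a) $d$ is a bona fide metric, (b) the $d$-topology coincides with the given topology of $X$, (c) $(X,d)$ is complete, and (d) $(X,d)$ therefore realises the abstract metric completion of $(X^\circ, d^\circ)$. All of these should fall out of the three hypotheses together with the density of $X^\circ$ and the continuity extension already provided by hypothesis (1).

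First I would check the metric axioms. Symmetry and the triangle inequality pass from $X^\circ\times X^\circ$ to $X\times X$ by continuity of $d$ and density of $X^\circ$. For $d(x,x)=0$, pick $x_n\in X^\circ$ with $x_n\to x$ in $X$; then $d(x_n,x_n)=0$ and continuity gives $d(x,x)=0$. The only nontrivial part is $d(x,y)>0$ whenever $x\neq y$. Hypothesis (2) handles the case $x,y\in X\setminus X^\circ$, and the case $x,y\in X^\circ$ is immediate because $d^\circ$ is a metric there. The remaining case is $x\in X\setminus X^\circ$, $y\in X^\circ$ (or vice versa): if $d(x,y)=0$, pick $x_n\in X^\circ$ with $x_n\to x$ in $X$, so by continuity $d^\circ(x_n,y)=d(x_n,y)\to 0$; since $d^\circ$ induces the subspace topology on $X^\circ$, $x_n\to y$ in $X$, forcing $x=y$ by the (locally compact, hence Hausdorff) topology of $X$, a contradiction.

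Next I would establish the topology statement. One direction is free: balls $B_d(x,\varepsilon)$ are open in $X$'s topology because $d$ is continuous. For the other direction I would use properness (hypothesis (3)) to turn the local question into a compactness argument. Fix $x\in X$ and an open neighbourhood $U\ni x$. Choose $R>d(o,x)$ large; the set $K:=\{y\in X:d(o,y)\leq R\}$ is compact, so $K\setminus U$ is compact and $d(x,\cdot)$ attains a strictly positive minimum $\delta_1$ on it. For $y\notin K$ the triangle inequality gives $d(x,y)\geq R-d(o,x)$. Taking $\delta=\min(\delta_1,R-d(o,x))$ yields $B_d(x,\delta)\subset U$, so $d$-balls are a neighbourhood base at $x$.

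For completeness, if $\{x_n\}$ is $d$-Cauchy then $\{d(o,x_n)\}$ is bounded, hence by properness $\{x_n\}$ is contained in a compact set and admits a convergent subsequence $x_{n_k}\to x$ in $X$'s topology, which by step (b) is $d$-convergence; a Cauchy sequence with a convergent subsequence converges, so $x_n\to x$. Finally, $X^\circ$ is dense in $X$ and the restriction of $d$ to $X^\circ\times X^\circ$ is $d^\circ$ by the very definition of the extension, so $(X,d)$ is a complete metric space in which $(X^\circ,d^\circ)$ sits isometrically as a dense subset; this is exactly the characterising property of the metric completion. The main obstacle is the third case of positive definiteness (showing $d(x,y)>0$ when one point is singular and the other regular), since that is the only place where one must combine the extension hypothesis (1) with the compatibility of $d^\circ$ with the subspace topology on $X^\circ$; everything else is a standard compactness-plus-properness argument.
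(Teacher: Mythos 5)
Your proof is correct and follows essentially the same route as the paper: metric axioms by continuity plus density of $X^\circ$, the topology comparison via properness of $d_o$ and a compactness/minimum argument, completeness via properness, and the completion statement from density. The only divergence is the mixed case of positive definiteness, where you argue by contradiction through convergent sequences and uniqueness of limits (note that local compactness does not by itself imply Hausdorff, so you are implicitly using the Hausdorffness of $X$), whereas the paper shows $d(x,y)>\varepsilon$ directly by trapping $y$ in a $d^\circ$-ball whose closure avoids $X\setminus X^\circ$; both arguments are sound here.
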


\begin{proof} 
	Since $d$ is a continuous function on $X\times X$, $X^\circ$ is dense in $X$ and $d^\circ$ satisfies the triangle inequality on $X^\circ$, it follows that $d$ satisfies the triangle inequality on $X$. For the same reason $d$ is symmetric. To see that $d(x,y)>0$ for any $x\ne y\in X$ it is enough to consider the case $x\in X\setminus X^\circ, y\in  X^\circ$. Take an open neighbourhood $U_y$ of $y$ in $X$ whose closure is disjoint from $X\setminus X^\circ$. For some $\varepsilon>0$ a ball $B_\varepsilon(y,d^\circ)$ lies in $U_y$. Since $d$ is continuous, the function $d_y$ is greater than $\varepsilon$ on $X\setminus B_\varepsilon(y,d^\circ)$ and we deduce that $d(x,y)>\varepsilon$. Hence $d$ is a metric on $X$. 
	
	Let's show that $d$ induces the original topology on $X$. In one direction, since $d$ is continuous, balls $B_x(r)$ are open in the original topology of $X$. In the opposite direction let $U$ be open in $X$ and take $x\in U$. We need to show that $B_x(\varepsilon)\subset U$ for some $\varepsilon>0$. Since $d_o$ is proper by condition 3, the closed ball $\overline B_o(2d(x,o))$ is compact, so the set $\overline B_o(2d(x,o))\setminus U$ is also compact. Since the function $d_x$ is continuous it attains minimum $m$ on this set, which is different from $0$ by condition 2. Hence we can take $\varepsilon=\min(d(x,o),m)$.

	
	To see that $(X,d)$ is a complete metric space, take any Cauchy sequence $y_i$ in it. Since $y_i$ is Cauchy, for some $r$ it lies in the closed ball $\overline B_o(r)$. Since $d_o$ is proper, $\overline B_o(r)$ is compact and so $y_i$ has a convergent subsequence that converges to some $y$. But since $d$ induces the original topology on $X$, $y$ is the limit of $y_i$. 
	
	Finally $(X,d)$ is a metric completion of $(X^\circ, d^\circ)$ since $X^\circ$ is dense in $X$.
\end{proof}

\subsection{Metric completion of \(((\C^n)^{\circ}, d_g^{\circ})\)}\label{sect:completion}

Recall that we are in the setting of Theorem \ref{PRODTHM}. The flat K\"ahler metric on the arrangement complement \(\C^n \setminus \mH =(\C^n)^{\circ}\) is denoted by $g$  and $d_g^\circ$ is the induced Riemannian length metric. 

\begin{definition}[Standard $r$-neighbourhood] 
	By Theorem \ref{thm:locprod}, for any point $x\in \mathbb C^n$ there exists an open neighbourhood $U$ with local coordinates $(y_1,\ldots,y_n)$ for which the connection is standard. We have a smooth function on \(U^{\circ}\) given by 
	\begin{equation}\label{eq:defrx}
	r_x^2=g(e_x, e_x)	
	\end{equation}
	where \(e_x\) is the local dilation vector field. The function \(r_x\) extends continuously over \(U\) by Lemma \ref{lem:extensionr2}. Let $r(U)>0$ be the minimum of $r_x$ on $\partial U$. Then for any $r\in (0,r(U))$ we denote by $\mathcal B_x(r)$ the subset of $U$ where $r_x<r(U)$. We call $\mathcal B_x(r)$ the \emph{standard $r$-neighbourhood} of $x$. 
	
	For a point $y\in \mathcal B_x(r)$ and $t\in (0,1)$ we denote by $t\cdot y$ the point obtained by the corresponding homothety of $\mathcal B_x(r)$.
\end{definition}

\begin{lemma}\label{extendingB} 
	For any $x\in \mathbb C^n$ consider its standard neighbourhood $\mathcal B_x(r)$.  
	\begin{itemize}
		\item[(1)] Let $y\in \mathcal B_x(r)\setminus \mathcal H$, $t\in (0,1)$. Then $d_g^\circ(y,t\cdot y)=(1-t)r_x(y)$.
		
		\item[(2)] $\mathrm {diam}(\mathcal B_x(r)\setminus \mathcal H, d_g^\circ)\le 2r$. 
		\item[(3)] The metric $d_g^\circ$ extends as a continuous function $d_g$ to $\mathbb C^n\times \mathbb C^n$, hence defining on $\mathbb C^n$ a pseudometric.
		
		\item[(4)] For any point $y\in \mathcal B_x(r)$ we have $ d_g(x,y)=r_x(y)$. Furthermore, for any $y\notin \mathcal B_x(r)$ we have $d_{g}(x,y)\ge r$.
	\end{itemize}
\end{lemma}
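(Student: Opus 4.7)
The strategy exploits the local Riemannian cone structure around $x$ induced by the local dilation field $e_x$. Setting $e_{x,r} := 2\RE(e_x)$, the same computation as in Lemma \ref{lem:gradr}, applied using $\nabla e_x = \Id$ and $r_x^2 = g(e_{x,r}, e_{x,r})$, gives $e_{x,r} = \grad(r_x^2/2)$ and $|\grad r_x|_g = 1$ on $U \setminus \mH$. Consequently $e_{x,r}(r_x) = r_x$, so if $\phi_s$ denotes the flow of $e_{x,r}$ then $r_x \circ \phi_s = e^s r_x$, and the unit-speed radial geodesics along $\grad r_x$ coincide with the homothety curves $s \mapsto e^s \cdot y$. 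For part (1), the radial segment from $y$ to $t \cdot y$ realizes a path of length $(1-t) r_x(y)$, while the $1$-Lipschitz property of $r_x$ on $U \setminus \mH$ (combined with the routine observation that any path leaving $\mathcal{B}_x(r)$ must recross $\{r_x = r\}$, accumulating length $\geq 2(r - r_x(y)) \geq (1-t) r_x(y)$) yields the matching lower bound.

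For part (2), I would use $\mathcal{L}_{e_{x,r}} g = 2 g$, an immediate consequence of $\nabla e_x = \Id$, to conclude that $\phi_s^* g = e^{2s} g$. Replaying the proof of Lemma \ref{lem:kahlercone2}, this endows $\mathcal{B}_x(r) \setminus \mH$ with the structure of an open Riemannian cone over the link $\Sigma = \{r_x = \rho_0\} \cap (\mathcal{B}_x(r) \setminus \mH)$ for any fixed $\rho_0 \in (0,r)$. Since $\mH$ has real codimension two, $\mathcal{B}_x(r) \setminus \mH$ is connected, and hence so is $\Sigma$. Given $y_1, y_2 \in \mathcal{B}_x(r) \setminus \mH$ with $r_x(y_i) = t_i$ and radial projections $z_i \in \Sigma$, I would build a path that dilates $y_1$ inward to radius $\epsilon$, traverses a fixed rectifiable curve along the rescaled link between the points at radius $\epsilon$ corresponding to $z_1, z_2$ (of length comparable to $\epsilon \cdot d_\Sigma(z_1, z_2)$), and then dilates outward to $y_2$. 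The total length is $t_1 + t_2 - 2\epsilon + O(\epsilon)$, which tends to $t_1 + t_2 < 2r$ as $\epsilon \to 0^+$, yielding the diameter bound.

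Parts (3) and (4) then follow quickly. For (3), letting $r \to 0$ in (2) exhibits a neighborhood basis of every $x \in \mH$ whose regular part has vanishing $d_g^\circ$-diameter, verifying the hypothesis of Lemma \ref{continuity}; symmetry and the triangle inequality for $d_g$ then propagate from $d_g^\circ$ by density and continuity, so $d_g$ is a pseudometric. For (4), letting $t \to 0^+$ in part (1) and using that $t \cdot y \to x$ together with continuity of $d_g$ from (3) yields $d_g(x,y) = r_x(y)$ for $y \in \mathcal{B}_x(r) \setminus \mH$; joint continuity of $d_g$ and of the extended $r_x$ (from Lemma \ref{lem:extensionr2}), combined with density of $\mathcal{B}_x(r) \setminus \mH$ in $\mathcal{B}_x(r)$, extends this to all of $\mathcal{B}_x(r)$. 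For $y \notin \mathcal{B}_x(r)$, any admissible path from $x$ to $y$ must reach a point where $r_x \geq r$ (on $\{r_x = r\}$ if it stays in $U$, or on $\partial U$ where $r_x \geq r(U) > r$), so the $1$-Lipschitz property of the continuous extension of $r_x$ forces its length to be at least $r$.

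The main obstacle is part (2): the path construction requires passing arbitrarily close to the vertex $x$, which itself may lie on $\mH$ and thus be genuinely singular for $g$. The key technical point is that $\Sigma$, being a connected smooth Riemannian manifold, has finite intrinsic distance between any two fixed points even though it is typically geodesically incomplete, inheriting its own cone-type singularities from the quotient arrangement $\mH_L/L$ supplied by Theorem \ref{thm:locprod}.
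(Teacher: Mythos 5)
Your proposal is correct and follows essentially the same route as the paper: part (1) via the radial rays of the local cone structure and the $1$-Lipschitz property of $r_x$, part (2) by contracting a connecting path toward $x$ by homotheties (your link-based construction is the paper's contraction of an arbitrary finite-length path $\gamma\subset\mathcal B_x(r)\setminus\mH$, just written in polar form), and parts (3), (4) exactly as in the text via Lemma \ref{continuity} and continuity of $r_x$. One intermediate inequality in your part (1) is false as written: for a path that exits $\mathcal B_x(r)$ you claim length at least $2(r-r_x(y))\ge (1-t)\,r_x(y)$, which fails when $r_x(y)$ is close to $r$ (e.g. $r_x(y)=0.99\,r$ and $t=0.1$). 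The correct accounting is that the outgoing and returning portions have lengths at least $r-r_x(y)$ and $r-t\,r_x(y)$ respectively, whose sum $2r-(1+t)\,r_x(y)$ does exceed $(1-t)\,r_x(y)$; more simply, every competing path must fully cross the annulus $\{t\,r_x(y)\le r_x\le r_x(y)\}$, where the $1$-Lipschitz property of $r_x$ already yields the lower bound $(1-t)\,r_x(y)$, which is how the paper argues.
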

\begin{proof} (1) The distance $d_g^\circ$ between $y$ and $t\cdot y$ is the infimum of lengths of all paths in $\mathbb C^n\setminus \mathcal H$ that join $y$ with $t\cdot y$. Each such path should intersect completely the annulus $(B_x(r_x(y))\setminus B_x(t\,r_x(y)))\setminus \mathcal H$. Recalling now that (by Lemma \ref{lem:kahlercone2}) the annulus is a part of a Riemannian cone, we obtain the statement.
	
	(2) Let $y,z\in \mathcal B_x(r) \setminus \mathcal H$ be two points. Consider any path $\gamma\subset B_x(r) \setminus \mH$ that joins $y$ and $z$. For $t\in (0,1)$ we can take the path $\gamma_t$ composed of $3$ parts, $[y,t\cdot y]$, $t\cdot \gamma$, $[t\cdot z,z]$, where $t\cdot$ denotes the action by homotheties on $\mathcal B_x(r)$. Using claim (1) we get $|\gamma_t|<2r+t|\gamma|$. This implies   claim (2).
	
	(3) It suffices to apply Lemma \ref{continuity} to the triple $(\mathbb C^n,\mathbb C^n\setminus \mathcal H, d_g^\circ)$, using  (2).
	
	(4) By claim (1), for $t\in (0,1)$ and $y\in \mathcal B_r(x)\setminus \mathcal H$ we have $d_g^\circ(t\cdot y,y)=r_x(y)(1-t)$. Hence the equality $ d_g(x,y)=r_x(y)$ holds, since $d_g$ is a continuous pseudometric on $\mathbb C^n$ and $r_x$ is continuous in $\mathcal B_r(x)$.
	
	If $y\in \mathbb C^n\setminus (\mathcal H \cup \mathcal B_r(x))$ we use again that fact that $d_g^\circ$ is a length metric and any path that joins $y$ with a point in $\mathcal B_x(\varepsilon)$ intersects the annulus $\mathcal B_x(r)\setminus\mathcal B_x(\varepsilon)$ and so has length at least $r-\varepsilon$. 
\end{proof}

\begin{corollary}\label{completedmetric} 
	The triple $(\mathbb C^n,\mathbb C^n\setminus \mathcal H, d_g^\circ)$ satisfies the conditions of Lemma \ref{metricextension}. In particular, the continuous extension $d_g$ of $d_g^\circ$ to $\mathbb C^n$  is a complete metric on $\mathbb C^n$ inducing its usual topology. 
\end{corollary}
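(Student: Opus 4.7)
The plan is to verify each of the three hypotheses of Lemma \ref{metricextension} for the triple \((\mathbb{C}^n, \mathbb{C}^n \setminus \mathcal{H}, d_g^{\circ})\), drawing almost everything from Lemma \ref{extendingB} plus the global properties of the radius function \(r\) established in Section \ref{sect:riemcone} and Lemma \ref{lem:extensionr2}.

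First, condition (1) of Lemma \ref{metricextension} is literally Lemma \ref{extendingB}(3): the diameter bound \(\operatorname{diam}(\mathcal{B}_x(r) \setminus \mathcal{H}, d_g^{\circ}) \leq 2r\) together with the fact that the standard neighbourhoods \(\mathcal{B}_x(r)\) form a basis of neighbourhoods of \(x\) in \(\mathbb{C}^n\) verifies the hypothesis of Lemma \ref{continuity}, so \(d_g^{\circ}\) extends continuously to \(d_g : \mathbb{C}^n \times \mathbb{C}^n \to \mathbb{R}_{\geq 0}\).

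Next, for condition (2), let \(x \neq y\) be two points of \(\mathcal{H}\). Since \(r_x\) is continuous on \(U_x\) and \(r_x(x) = 0\) with \(r_x > 0\) elsewhere in \(U_x\), we may choose \(r > 0\) small enough that \(y \notin \mathcal{B}_x(r)\). Then Lemma \ref{extendingB}(4) gives \(d_g(x, y) \geq r > 0\). (For \(x \in \mathcal{H}\) and \(y \in \mathbb{C}^n \setminus \mathcal{H}\) the same argument works, or one can invoke the fact that \(d_g\) already restricts to the honest metric \(d_g^{\circ}\) on the complement.)

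Finally, for condition (3), I take \(o = 0\) and claim \(d_g(0, \cdot) \equiv r\). On \((\mathbb{C}^n)^{\circ}\) Lemma \ref{lem:gradr} says \(|\nabla r| \equiv 1\), so \(r\) is \(1\)-Lipschitz with respect to \(d_g^{\circ}\); combined with \(r(0) = 0\) (which holds by continuity, using Lemma \ref{lem:extensionr2} and the homogeneity \(r(\lambda z) = |\lambda|^{\alpha_0} r(z)\)) this gives \(r(y) \leq d_g(0, y)\) for every \(y\). The reverse inequality follows because the radial curve \(t \mapsto t^{1/\alpha_0} y\) for \(t \in (0, 1]\) is a unit speed geodesic of length \(r(y)\) lying in \((\mathbb{C}^n)^{\circ}\) away from \(t = 0\) (provided \(y \in (\mathbb{C}^n)^{\circ}\); the general case follows by approximating \(y\) by points of \((\mathbb{C}^n)^{\circ}\) and using continuity of both \(d_g\) and \(r\)). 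Thus \(d_g(0, \cdot) = r\), and since the homogeneity \eqref{eq:rhomog} together with continuity of \(r\) on the compact unit Euclidean sphere forces \(r \to \infty\) as \(|z| \to \infty\) (and \(r > 0\) on \(\mathbb{C}^n \setminus \{0\}\)), the function \(r\) is proper.

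There is no serious obstacle; the only point that requires any care is the identification \(d_g(0, \cdot) = r\), where one must be mindful that \(0 \in \mathcal{H}\) lies outside \((\mathbb{C}^n)^{\circ}\) and so the Lipschitz estimate must be combined with an approximation argument to conclude at the singular endpoint. With Lemma \ref{metricextension} in hand, the conclusions that \(d_g\) is a complete metric on \(\mathbb{C}^n\) inducing the usual topology, and that \((\mathbb{C}^n, d_g)\) is the metric completion of \(((\mathbb{C}^n)^{\circ}, d_g^{\circ})\), are automatic.
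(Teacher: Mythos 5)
Your proof is correct and follows essentially the same route as the paper: verify the three hypotheses of Lemma \ref{metricextension} using Lemma \ref{extendingB} for conditions (1) and (2), and the properness of the radius function for condition (3). The only cosmetic difference is in condition (3), where the paper simply observes that \(\mathcal{B}_0(r)\) is a standard neighbourhood for \emph{every} \(r>0\) (the local dilation field at the origin being the global Euler field), so that \(d_g(0,\cdot)=r_0\) follows at once from Lemma \ref{extendingB}(4), whereas you re-derive this identity by hand via the Lipschitz bound \(|\nabla r|=1\) and the radial geodesics — both arguments are valid.
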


\begin{proof} 
	First, we  need to show that conditions 1, 2 and 3 of Lemma \ref{metricextension} hold.
	
	1. This holds by Lemma \ref{extendingB} (2). 
	
	2. Let us show that for any $x,\, y\in \mathbb C^n$ we have $d_{g}(x,y)>0$. It suffices to consider the case $x,y\in \mathcal H$.
	Choose $r>0$ such that the standard ball $\mathcal B_r(x)$ doesn't contain $y$. Then by Lemma \ref{extendingB} (4) $d_{g}(x,y)\ge r$.
	
	3. We choose $o$ as $0$. The function $d_g(0,.)$ is equal  $r_0$ on $\mathbb C^n$ by Lemma \ref{extendingB} (4), since $\mathcal B_0(r)$ is a standard ball for all $r$. At the same time $r_0$ is proper on $\mathbb C^n$. 
\end{proof}

\begin{corollary}\label{HEucone} 
	The metric space $(\mathbb C^n,d_g)$ is a metric cone and a geodesic space.
\end{corollary}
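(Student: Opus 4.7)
The plan is to chain together the metric completion result just established with standard facts about length spaces and metric cones. First, I would invoke Lemma \ref{pathcompl}: since $((\C^n)^{\circ}, d_g^{\circ})$ is a length space (being the Riemannian length metric of $g$, via Proposition \ref{lengthfromflat}), and Corollary \ref{completedmetric} identifies $(\C^n, d_g)$ as its metric completion, it follows that $d_g$ is itself a length metric. The space is complete (again Corollary \ref{completedmetric}) and locally compact (since $d_g$ induces the usual topology on $\C^n$), so Lemma \ref{geodesicspace} immediately yields that $(\C^n, d_g)$ is a geodesic space.

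For the cone statement, I would build on Lemma \ref{lem:kahlercone2}. That lemma, together with the diffeomorphism $F$ from Equation \eqref{eq:polarcoord}, identifies $((\C^n)^{\circ}, g)$ with the Riemannian cone $((0, \infty) \times S^{\circ},\, dt^2 + t^2 g_{S^{\circ}})$, where $S^{\circ} = \{r=1\} \cap (\C^n)^{\circ}$. Passing to the induced length metric and applying Remark \ref{addonepoint}, the space $((\C^n)^{\circ}, d_g^{\circ})$ is isometric to the complement of the vertex in the metric cone over $(S^{\circ}, g_{S^{\circ}})$. Let $(S, d_S)$ denote the metric completion of $(S^{\circ}, g_{S^{\circ}})$; by Remark \ref{conecomple}, the metric completion of the cone over $S^{\circ}$ is the cone over $S$. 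Since metric completions are unique up to isometry, and Corollary \ref{completedmetric} identifies $(\C^n, d_g)$ as the completion of $((\C^n)^{\circ}, d_g^{\circ})$, we conclude that $(\C^n, d_g)$ is isometric to the metric cone over $(S, d_S)$, with the origin $0 \in \C^n$ playing the role of the vertex.

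The only subtlety — and essentially the only step requiring any care — is checking that the Riemannian length metric on the product $(0, \infty) \times S^{\circ}$ with metric $dt^2 + t^2 g_{S^{\circ}}$ really does agree with the induced length metric coming from the embedding into the Euclidean cone over $S^{\circ}$. This is exactly the content of Remark \ref{addonepoint}, so it can be cited rather than re-proved. I do not anticipate any real obstacle here: all the heavy lifting (continuity of $r^2$, positivity away from the origin, properness) was done in Lemma \ref{lem:extensionr2} and Corollary \ref{completedmetric}; the present corollary is essentially a formal bookkeeping consequence of those results combined with the cited metric-geometry lemmas.
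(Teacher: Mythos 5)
Your proof is correct and follows essentially the same route as the paper: the geodesic-space claim via Lemma \ref{pathcompl}, Corollary \ref{completedmetric}, and Lemma \ref{geodesicspace}, and the cone claim by combining Lemma \ref{lem:kahlercone2} with Remarks \ref{addonepoint} and \ref{conecomple} and the uniqueness of metric completions. The only cosmetic difference is that the paper explicitly adjoins the vertex to form an intermediate space $X'$ before completing, whereas you pass directly through the observation that the completion of the cone minus its vertex equals the completion of the full cone — the same argument in substance.
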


\begin{proof} 
	Consider $X'=(\mathbb C^n\setminus \mathcal H)\cup 0$ and extend the metric $d_g^\circ$ to this space by declaring $d_g^\circ(0,x)=r_0(x)$. By Lemma \ref{lem:kahlercone2} the metric space  $(\mathbb C^n\setminus \mathcal H, d_g^\circ)$ is a Riemannian cone, and so $X'$ is a metric cone by Remark \ref{addonepoint}. Now, the metric completions of $X'$ and $\mathbb C^n\setminus \mathcal H$ coincide and both are isometric to $(\mathbb C^n,d_g)$ by Corollary \ref{completedmetric}. 
	At the same time, since $X'$ is a metric cone, its metric completion is a metric cone as well by Remark \ref{conecomple}.
	
	$(\mathbb C^n,d_g)$ is a completion of a length space, so it is a length space by Lemma \ref{pathcompl}. Since $d_g$ defines on $\mathbb C^n$ its usual topology, \((\C^n, d_g)\) is locally compact, and so we can apply Lemma \ref{geodesicspace} to conclude that $(\mathbb C^n,d_g)$ is a geodesic space.
\end{proof}

\subsection{Proof of Theorem \ref{PRODTHM}}\label{sect:pfthm1}

The next result finishes the proof of Theorem \ref{PRODTHM}.

\begin{theorem}\label{polyhedralast} 
	The metric $d_g$ of $\mathbb C^n$ is polyhedral. 
\end{theorem}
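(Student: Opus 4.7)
The plan is to argue by induction on the complex dimension $n$, invoking the Lebedeva--Petrunin characterisation of polyhedral spaces (Theorem \ref{LebedevaPetrunin} together with Remark \ref{locompact}). The base case $n=1$ is immediate: $\C_{\alpha_0}$ is a $2$-cone, which is polyhedral. For the inductive step, note that by Corollaries \ref{completedmetric} and \ref{HEucone} the space $(\C^n,d_g)$ is already a complete locally compact length space. It therefore suffices to verify that every $x\in\C^n$ has a neighbourhood admitting an open isometric embedding into a metric cone sending $x$ to the vertex.

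At $x=0$ there is nothing to check, since Corollary \ref{HEucone} exhibits $(\C^n,d_g)$ itself as a metric cone with vertex at the origin. So I would fix $x\neq 0$ and let $L\in\mL(\mH)$ be the unique intersection with $x\in L^\circ$, setting $d=\dim L\geq 1$. The Local Product Theorem \ref{thm:locprod} supplies a holomorphic isometry identifying a neighbourhood of $x$ with $V_1\times(V_2\cap(\C^{n-d})^\circ,g^{\perp})$, where $V_1\subset\C^d$ is a Euclidean ball around $0$, $V_2\subset\C^{n-d}$ is a ball around $\bar 0$, and $g^{\perp}$ is parallel for the quotient connection $\nabla^{\C^n/L}$. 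By Lemma \ref{lem:induction} this quotient connection satisfies the hypotheses of Theorem \ref{PRODTHM} in dimension $n-d<n$, so the inductive hypothesis realises the metric completion of $((\C^{n-d})^\circ,g^{\perp})$ as a polyhedral cone metric $(\C^{n-d},d_{g^{\perp}})$ with vertex $\bar 0$, and in particular as a metric cone.

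Passing to metric completions on both sides, a small neighbourhood of $x$ in $(\C^n,d_g)$ is then isometric to a neighbourhood of $(0,\bar 0)$ in the product metric space $(\C^d,g_{\mathrm{eucl}})\times(\C^{n-d},d_{g^{\perp}})$. The latter is itself a metric cone with vertex $(0,\bar 0)$: the product of two Euclidean metric cones over $\Sigma_1$ and $\Sigma_2$ is the Euclidean metric cone over the spherical join $\Sigma_1*\Sigma_2$. This furnishes the isometric embedding of a neighbourhood of $x$ into a metric cone with $x$ sent to the vertex, so the Lebedeva--Petrunin criterion closes the induction.

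The main obstacle is making rigorous the sentence ``passing to metric completions, the neighbourhood becomes isometric to a neighbourhood in the product''. Theorem \ref{thm:locprod} produces an isometry only on the complement of $\mH$, so two ancillary points must be settled carefully: (i) the restriction of $d_g$ to a sufficiently small open set around $x$ coincides with its induced length metric, which follows because $(\C^n,d_g)$ is geodesic (Corollary \ref{HEucone}) and short geodesics between points of a standard $r$-neighbourhood remain inside such a neighbourhood by the diameter bound in Lemma \ref{extendingB}(2); and (ii) the completion of a Riemannian product length space $(X^\circ\times Y^\circ,\,g_X+g_Y)$ is the product of the completions, equipped with the Pythagorean length metric $\sqrt{d_X^2+d_Y^2}$. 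Both are standard once stated, and the product-of-cones identification with a cone over a spherical join is classical (see Bridson--Haefliger); their verification is the only real work beyond the inductive skeleton above.
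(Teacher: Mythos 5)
Your proposal is correct in outline and rests on the same two pillars as the paper's proof: the Lebedeva--Petrunin criterion (Theorem \ref{LebedevaPetrunin} with Remark \ref{locompact}) applied at every point, with Corollary \ref{HEucone} handling the vertex and the local model of Theorem \ref{thm:locprod} handling $x\neq 0$. Where you differ is in how the isometric embedding into a metric cone is produced at $x\neq 0$. The paper maps the standard neighbourhood $\mathcal B_x(r)$ directly into $(\C^n,d_{g_L})$, the completion of the metric parallel for the \emph{localized} connection $\nabla^L$ (a metric cone by Corollary \ref{HEucone}), and verifies distance-preservation by a geodesic-comparison argument: shortest geodesics in the cone between points of $\varphi(\mathcal B_x(r/2))$ stay inside that set by Lemma \ref{lenghcone}(3), pull back under $\varphi$, and any shorter competitor would be too short to leave $\mathcal B_x(r)$, a contradiction. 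You instead pass through the product decomposition $\C^d\times(\C^{n-d},d_{g^\perp})$, complete the Riemannian product isometry, and identify the product of cones as the cone over a spherical join. These target cones are in fact the same space (Proposition \ref{prop:nablaLsplit} plus Lemma \ref{lem:orthogfact}), so the difference is one of bookkeeping: your route trades the paper's geodesic argument for the two ancillary facts (i) and (ii) you list, which carry essentially the same content and the same amount of work. The paper's closing remark explicitly records your inductive variant as an alternative.

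Two small points. First, the induction you set up is not actually needed: the only property of the transversal factor you use is that its completion is a metric cone, and that is Corollary \ref{HEucone} applied to the quotient connection (legitimate by Lemma \ref{lem:induction}), valid in every dimension without any inductive hypothesis; the polyhedrality of the transversal factor plays no role in verifying the Lebedeva--Petrunin criterion. Second, in your point (i) the diameter bound of Lemma \ref{extendingB}(2) alone does not force short geodesics to stay in the neighbourhood; you also need the annulus estimate implicit in Lemma \ref{extendingB}(4), namely that any path from a point of $\mathcal B_x(\varepsilon)$ leaving $\mathcal B_x(r)$ has length at least $r-\varepsilon$. This is exactly the estimate the paper's contradiction argument uses, so it is available, but it should be cited rather than the diameter bound.
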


\begin{proof} 
	According to Theorem \ref{LebedevaPetrunin} and Remark \ref{locompact} we need to show that each point in $(\mathbb C^n,d_g)$ has a neighbourhood that admits an open isometric embedding to a metric cone which sends $x$ to the vertex of the cone. Let $\mathcal B_x(r)$ be a standard neighbourhood of $x$. We will prove that $(\mathcal B_x(r/2),d_g)$ admits an isometric embedding into a metric cone. For $x=0$ this is just Corollary 
	\ref{HEucone} since $(\mathbb C^n,d_g)$ is a metric cone. So we can take $x\ne 0$.
	
	Let $L \in \mL(\mH)$ be the subspace of smallest dimension on which $x$ lies and $\mathcal H_L$ be the corresponding arrangement.
	Recall that by definition of  $\mathcal B_x(r)$, the standard local coordinates $y_i$ defined  in a neighbourhood of $x$ give us a holomorphic embedding $\varphi: \mathcal B_x(r)\to (\mathbb C^n,\mathcal H_L)$, where $\varphi$ sends $x$ to $0$ and $\mathcal{B}_x(r)\cap \mathcal H$ to $\mathcal H_L$. Furthermore, $\varphi$ is a Riemannian isometry on $\mathcal B_x(r)\setminus \mathcal H$, with respect to the flat K\"ahler metrics $g$ on $\mathcal B_x(r)\setminus \mathcal H$ and $g_L$ on $\mathbb C^n\setminus \mathcal H_L$. Note also that $(\mathbb C^n,g_L)$ is a metric cone and a geodesic space by Corollary \ref{HEucone}.  
	
	Let us show that $\varphi$ defines an isometric embedding $\varphi: (\mathcal B_x(r/2), d_g)\to (\mathbb C^n, d_{g_L})$. Take $y,z\in \mathcal B_x(r/2)$. Since $(\mathbb C^n, d_{g_L})$ is a geodesic space and a metric cone, by Lemma \ref{lenghcone} (3) a shortest geodesic $\gamma$ that joins $\varphi(y)$ and $\varphi(z)$ lies in $\varphi(\mathcal B_x(r/2))$. We claim that $\varphi^{-1}(\gamma)$ is a shortest geodesic that joins $y$ and $z$. Indeed, suppose there is a shorter geodesic $\gamma'$. Since $\varphi$ preserves the lengths of paths, $|\varphi^{-1}(\gamma)|=|\gamma|$. Furthermore, $|\gamma|<r$ since the $r/2$-ball in $(\mathbb C^n, d_{g_L})$ has diameter $\le r$. We see that $|\gamma'|<r$ and so $\gamma'$  can not leave $\mathcal B_x(r)$. But then $\varphi(\gamma')$ is a geodesic in $(\mathbb C^n,d_{g_L})$ shorter than $\gamma$,  a contradiction. We conclude that $d_g(y,z)=|\gamma|=d_{g_L}(\varphi(y),\varphi(z))$. It follows that $\varphi$ is an isometric embedding.
\end{proof}

\begin{remark} Let's comment on our usage of Theorem \ref{LebedevaPetrunin} in the proof of Theorem \ref{polyhedralast}. Note first that Theorem \ref{polyhedralast} can be proven by  induction on dimension $n$ as follows. Suppose the theorem is proven for dimension $\le n-1$. Then every point $x\in \mathbb C^n\setminus 0$ has a neighbourhood that can be isometrically embedded into $\mathbb C^l\times (\mathbb C^{n-l}, \mathcal{H}_L/L)$ with $l\ge 1$, where the first factor is Euclidean and the second is polyhedral by induction. So $x$ has a polyhedral neighbourhood in $\mathbb C^n$. Hence, to make the induction step we only need to prove that a neighbourhood of $0$ can be triangulated as well. This is where we appeal to Theorem \ref{LebedevaPetrunin} in an essential way. Such approach also shows that we are not bound to use Remark \ref{locompact} and can work directly with compact spaces. Namely, to perform the induction step we can cut out from $\mathbb C^n$ a neighbourhood of $0$ with polyhedral boundary and apply  Theorem \ref{LebedevaPetrunin} directly to it. 
\end{remark}

\section{PK metrics on complex manifolds}\label{sect:PK}

We recall the definition from the introduction.

\begin{definition}\label{def:PK}
	Let \(X\) be a complex manifold.
	We say that \(g\) is a \textbf{PK  metric on} \(X\) if the following holds:
	\begin{itemize}
		\item[(i)] \(g\) is a polyhedral  metric on \(X\) inducing its topology;
		\item[(ii)] \(g\) is a K\"ahler metric on the regular part \(X^{\circ}\) of the polyhedral manifold $(X,g)$.
	\end{itemize}
	In case $(X,g)$ is a polyhedral cone we say that $g$ is a \textbf{PK cone metric}.
\end{definition}

\begin{remark} 
	We note that the set of metric singularities of the polyhedral manifold $(\mathbb C^n,d_g)$ from Theorem \ref{PRODTHM} coincides with the hyperplane arrangement $\mathcal H$. Indeed, the set of singularities is closed and by Corollary \ref{cor:locprod} at a generic point of $\mathcal H$ the metric is locally a product of a $2$-cone with $\mathbb C^{n-1}$, so it is singular. For this reason this metric on $\mathbb C^n$ is polyhedral K\"ahler according to the above definition. 
\end{remark}


In this section we establish a number of foundational results for these objects which lead into Theorems \ref{thm:PK1}, \ref{thm:PK2} and \ref{thm:PK3}. More precisely,  these three theorems are a direct consequence of our next results as follows.

\begin{proof}[Proof of Theorem \ref{thm:PK1}]
	The metric singular set \(X^s\) is a complex hypersurface because of Theorem \ref{singanalitic} item (4). The tangent cone of the metric at smooth points of \(X^s\) is holomorphically isometric to \(\C^{n-1} \times \C_{\alpha}\) because of Corollary \ref{smoothpoint}.
\end{proof}

\begin{proof}[Proof of Theorem \ref{thm:PK2}]
	The existence of global complex coordinates linearising the Euler vector field follows directly
	from Proposition \ref{prop:PKcones}.
\end{proof}

\begin{proof}[Proof of Theorem \ref{thm:PK3}]
	The fact that the polyhedral stratification given by the singularities of the metric agrees with the ordinary complex stratification
	\[\C^n = \bigcup_{L \in \mL(\mH)} L^{\circ} \]
	follows from Proposition \ref{prop:PKstrat} item (3).
	The Levi-Civita is a standard connection because of Proposition \ref{prop:PKstandard}.
\end{proof}

{\bf PK manifolds} vs {\bf PK metrics on complex manifolds.} We would like to comment on a difference between the two notions. According to the definition given in \cite{Pan} a PK manifold is a piecewise linear manifold with a compatible choice of Euclidean metric on each simplex, so that the holonomy of the metric is unitary, and all faces of real codimension 2 have complex direction. An important motivation of such a definition was a result of Cheeger \cite[Theorem 3]{Cheeger}. Thanks to his theorem, an interesting class of PK manifolds is given by (Euclidean) polyhedral metrics with conical angles $\le 2\pi$ on $\CP^n$ (see \cite[Proposition 2.3]{Pan} and \cite{Petrunin}). Such polyhedral metrics on $\CP^n$ always have unitary holonomy, because in polyhedral world the above curvature condition is very strong and similar to non-negativity of curvature operator, thus all harmonic $L^2$-forms are parallel.

By definition, a PK manifold $X$ has a complex structure in real codimension $2$ (outside metric singularities $X^s$). However, a priory, just by definition, a PK manifold  doesn't need to be a complex manifold, and it is a fairly non-trivial task to prove that the complex structure on $X^\circ$ extends to the subset of metric singularities. In \cite[Theorem 1.5]{Pan} this is done for real dimension $\le 4$. The forthcoming paper \cite{PanVer} will prove this extension result in all dimensions for a more general class of polyhedral K\"ahler spaces (see Definition \ref{def:PKspaces}). Thanks to adopting Definition \ref{def:PK} here, we land ourselves directly into complex geometry and don't need to deal with extending complex structure to the whole polyhedral space. We focus more closely on the holomorphic aspect of the story.

\subsection{Types of simplicial complexes and polyhedral spaces}\label{sect:simplicialcomplex}

\subsubsection{{\large Pseudomanifolds}}

In what follows we will need to consider some natural subclasses of polyhedral spaces. For this reason we recall some standard definition concerning simplicial complexes. 

We recall first that the \emph{link} of a simplex $\Delta$ in a simplicial complex $\cal K$ is the union of all the simplexes in $\Delta'\subset \cal K$ disjoint from $\Delta$, such that both $\Delta$ and $\Delta'$ are faces of some simplex of $\cal K$.

\begin{definition}
	A locally finite simplicial complex $\cal K$ is called a   pseudomanifold of dimension $n$ if the following holds.
	
	\begin{enumerate}	
		\item $\cal K$ is {\bf pure}, i.e., $|{\cal K}|$ is the union of all $n$-simplices.
		
		\item {\bf No branching:} every $(n-1)$-simplex is a face of exactly two $n$-simplices (for $n > 1$).
		
		\item $\cal K$ {\bf is strongly connected}: any two $n$-dimensional simplices can be joined by a chain of $n$-dimensional simplices in which each pair of neighbouring simplices have a common $(n-1)$-dimensional face.
	\end{enumerate}
	
	A pseudomanifold is called \emph{normal} if the link of each simplex of codimension at least two is a pseudomanifold.
\end{definition}

\begin{remark} A pseudomanifold is connected by definition. For this reason the link of each simplex in a normal pseudomanifold is connected as well.
\end{remark}

\subsubsection{{\large Polyhedral spaces}}

There exist several equivalent definitions of polyhedral spaces, for a thorough treatment see \cite[Section 3.2 and Definition 3.2.4]{BBI}. We will use the next definition, following \cite[Section 3.4]{AKP}, with a difference that we consider not only compact spaces but locally compact as well.  

\begin{definition}[{\cite[Definition 3.4.1]{AKP}}]\label{def:polspace}
	A complete length space $X$ is called a \emph{Euclidean (spherical) polyhedral space} if it admits a locally finite triangulation such that each simplex is isometric in restricted metric to a simplex in Euclidean space (a unit sphere). 	
\end{definition}

We expand the above terminology for simplicial complexes to polyhedral spaces.

\begin{definition}
	Let $X$ be a polyhedral space with a triangulation $\tau$. We call $X$ a topological (or PL) manifold or (normal)
	\textbf{polyhedral pseudomanifold} if the corresponding simplicial  complex is a toplological (or PL) manifold or (normal) pseudomanifold.
\end{definition}

\begin{remark} 
	Euclidean polyhedral and spherical PL manifolds are also called  Euclidean and spherical cone manifolds or just cone manifolds, e.g. see \cite[Chapter 3]{CHK}.
\end{remark}

We will often call Euclidean polyhedral spaces just polyhedral spaces. Any Euclidean polyhedral space can be constructed as follows. One takes a locally finite simplicial complex $X$ and prescribes the lengths of all the edges, in such a way that for any simplex $\Delta$ of the complex there exists a Euclidean simplex with edges of the prescribed lengths. In other words, $X$ is glued from a collection of Euclidean simplexes by identifying their faces isomterically. This gives rise to a length metric on the whole complex; for the full definition see  \cite[Definition 3.1.12]{BBI}.  The distance between $x,y\in X$ is  the infimum of the lengths of all broken geodesics in $X$ that join $x$ and $y$.  

\begin{remark}\label{regularpoints} 
	For polyhedral spaces that are topological manifolds (or pseudomanifolds) of dimension $n$ we define the set of \emph{regular points} \(X^{\circ}\) as points that have a neighbourhood isometric to a Euclidean $n$-ball. The remaining points \(X^s\) are called metric singularities, and they form a closed subset of dimension at most $n-2$.
\end{remark} 

\subsection{Polyhedral cones and stratifications}\label{sect:polyhedral cones}

\subsubsection{{\large Polyhedral cones}}

\begin{definition} We say that a metric space $X$ is a (Euclidean) polyhedral cone if it is isometric to a metric cone over a compact spherical polyhedral space $S$.
\end{definition}

One can think of this definition as follows. We take a finite simplicial decomposition of a spherical polyhedral space $S$ into geodesic spherical simplices, associate to each such simplex a unique Euclidean simplicial cone, and identify the faces of these cones in the same way as the faces of spherical simplices are identified in $S$.  

A polyhedral cone $C$ is called \emph{essential (or prime)} if it is not isometric to a direct product $\mathbb R^k\times C'$, where $C'$ is a polyhedral cone. We have the following \emph{unique factorisation} result for polyhedral cones. 

\begin{theorem}\label{undec} Let $C$ be a polyhedral cone. Then there exists a unique maximal factorisation of $C$ into direct product $\mathbb R^k\times C'$, where $C'$ is an essential polyhedral cone. This decomposition is maximal in the sense that for any other decomposition $C=\mathbb R^l\times C''$, we have $\mathbb R^l\subset \mathbb R^k$ and $C''=C'\times \mathbb R^{k-l}$.
\end{theorem}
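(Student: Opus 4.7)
I would prove the theorem in two stages: existence is by a straightforward maximality argument, and uniqueness by characterising the Euclidean factor intrinsically.

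\textbf{Existence.} Let $k$ be the supremum of integers $l$ for which $C$ admits an isometric splitting $C \cong \mathbb R^l \times D$ as polyhedral cones. Since $\dim C = n < \infty$, this supremum is attained by some $C \cong \mathbb R^k \times C'$, and $C'$ is essential by maximality: any further splitting $C' \cong \mathbb R \times C'''$ would produce $C \cong \mathbb R^{k+1}\times C'''$, contradicting the choice of $k$.

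\textbf{Uniqueness.} I would pass to the spherical link: writing $C$ as the metric cone over its compact spherical polyhedral link $S$, product splittings $C \cong \mathbb R^l \times D$ correspond bijectively to spherical join decompositions $S \cong \mathbb{S}^{l-1} * S_D$, with $D$ essential iff $S_D$ is not a spherical suspension. To pin down the Euclidean factor intrinsically, I would define
\[
E \,=\, \{\, x \in C \,:\, \text{the ray from the vertex $o$ through $x$ extends to a complete minimising geodesic line in }C\,\}.
\]
In the splitting $C \cong \mathbb R^k \times C'$ with $C'$ essential, a short computation using the spherical-join metric on the link of $o$ shows that $E = \mathbb R^k \times \{o'\}$: lines in the $\mathbb R^k$ direction are manifestly minimising, and a line with a nontrivial component in $C'$ would require a pair of antipodal points (at spherical distance $\pi$) in the link of $o'$ in $C'$, which is ruled out by essentiality of $C'$ (such a pair would force an $\mathbb R$-splitting of $C'$). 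Consequently $E$ is intrinsically a linear subspace of $C$ of dimension $k$, independent of the chosen decomposition. Given a second decomposition $C \cong \mathbb R^l \times C''$, the factor $\mathbb R^l \times \{o''\}$ lies inside $E = \mathbb R^k$, hence $\mathbb R^l \subset \mathbb R^k$; taking the orthogonal complement inside $\mathbb R^k$ gives $\mathbb R^k = \mathbb R^l \times \mathbb R^{k-l}$, and comparing the two expressions for $C$ yields $C'' \cong \mathbb R^{k-l} \times C'$.

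\textbf{Main obstacle.} The technical heart of the proof is the implicit claim that an essential polyhedral cone has no pair of antipodal points in its spherical link --- equivalently, that every complete minimising geodesic line through the vertex of a polyhedral cone yields a genuine $\mathbb R$-splitting (a polyhedral analogue of the Cheeger--Gromoll splitting theorem). My approach would be to exploit the flat connection on the regular part: parallel translation along the line defines a parallel unit vector field that, by the polyhedral structure, extends across codimension-$2$ metric singularities to a global parallel vector field; integration of this field furnishes the desired Euclidean factor. This step essentially invokes the splitting result for vector fields parallel in codimension $2$ promised in Section~\ref{sect:parallelvf}.
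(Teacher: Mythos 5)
Your uniqueness argument has a genuine gap: the claim that an essential polyhedral cone admits no complete minimising geodesic line through its vertex (equivalently, no pair of points at spherical distance $\geq\pi$ in its link) is false. Take the $2$-cone $C_\alpha$ of total angle $2\pi\alpha$ with $\alpha>1$: it is essential, yet its link is a circle of length $2\pi\alpha>2\pi$, which contains pairs of points at intrinsic distance exactly $\pi$ in every direction; by the cone distance formula of Definition \ref{euclideancones} the concatenation of the two corresponding rays is a minimising line through the vertex. Consequently your set $E$ is \emph{not} equal to $\mathbb R^k\times\{o'\}$ in general (for $C=C_{2}$ one has $k=0$ but $E=C$), and the intrinsic characterisation of the Euclidean factor on which the whole uniqueness argument rests breaks down. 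Polyhedral cones carry no curvature bound of either sign, so no Cheeger--Gromoll-type splitting is available from the existence of a line alone.

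The repair you sketch in the last paragraph does not close the gap. Parallel transport along the line only gives a parallel field \emph{along the line}; to extend it to all of $C^\circ$ you need the holonomy to fix that direction, and even when it does (as for $C_2$, whose holonomy is trivial) the resulting field is parallel on $C^\circ$ but need not be \emph{parallel in codimension $2$}, i.e.\ tangent to the codimension-$2$ singular strata --- which is precisely the hypothesis of Corollary \ref{coneJrotation}. In the $C_2$ example the nonzero parallel field fails condition (ii) at the vertex and indeed no splitting exists. (Moreover, Lemma \ref{nofield} and Corollary \ref{coneJrotation} are only proved for \emph{normal pseudomanifolds}, whereas Theorem \ref{undec} concerns arbitrary polyhedral cones.) The paper sidesteps all of this by deducing the statement directly from the de Rham decomposition theorem of Foertsch--Lytchak for geodesic metric spaces, equivalently from McMullen's unique factorisation theorem for spherical cone manifolds applied to the unit sphere of the cone; there the Euclidean factor is characterised not by lines through the vertex but by the genuinely different (and more robust) machinery of affine functions.
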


\begin{proof} The statement follows directly from the general de Rham decomposition theorem \cite[Theorem 1.1]{FL} which holds for all finite dimensional geodesic metric spaces. The statement is also equivalent to theorem \cite[Theorem 5.1]{McMullen} on spherical cone manifolds, where the equivalence is obtained by taking the unit sphere in the polyhedral cone.
\end{proof}  

{\bf Tangent cones.} Let $X$ be a polyhedral space. For any point $x\in X$ there exists a canonical polyhedral cone $C_x$ in which a small neighbourhood of $X$ embeds isometrically. 
To construct $C_x$  take a polyhedral decomposition of  $X$ such that $x$ is a vertex, take the simplicial star of $x$ and extend each simplex in the star to a simplicial cone. This cone is called the {\it tangent cone} of $X$ at $x$. An $\varepsilon$-neighbourhood of $x$ that admits an isometric embedding into $C_x$ is called a {\bf conic neighbourhood}.

{\bf Cylindrical $\varepsilon$-neighbourhoods.} Let $X$ be a polyhedral space and $x\in X$ be a point. Consider a conic $2\varepsilon$-neighbourhood of $x$. Consider the unique direct product decomposition $C_x=\mathbb R^m\times C'$, where $C'$ is an essential cone. Take a direct product of an $\varepsilon$-ball in $\mathbb R^m$ with  an $\varepsilon$-ball in $C'$. This product is isometric to a neighbourhood of $x$ in $X$ that we will call a \emph{cylindrical $\varepsilon$-neighbourhood}.

\subsubsection{{\large Stratifications}}

In this section we restrict to \emph{pure} polyhedral spaces i.e. spaces that are unions of simplexes of fixed dimension $n$. Thanks to uniqueness Theorem \ref{undec} the following is well defined.

\begin{definition} Let $X$ be a pure polyhedral space of dimension $n$. We say that a point $x\in X$ is a \emph{metric singularity of codimension} $k\ge 1$ if the tangent cone $C_x$ of $X$ is a product of $\mathbb R^{n-k}\times C_x'$ where $C_x'$ is an essential polyhedral cone of dimension $k$. A point $x$ is called \emph{regular} if it has a neighbourhood isometric to a Euclidean $n$-ball. The set of regular points is denoted $X^\circ$.
\end{definition}

\begin{definition} Let $X$ be a pure polyhedral $n$-space. 
	We denote by $X^s$ the complement to the set of regular points. We denote by $X_k^s$ the subspace of all metric singularities of codimension at least $k$. 
	An \emph{open stratum of singularities} is a connected component of $X_k^s\setminus X_{k+1}^s$ for some $k$.
\end{definition}

\begin{remark}\label{closedstarta} Note that $X_k^s$ is a closed subset of $X$. Indeed any metric singularity of codimension $m$ has a neighbourhood such that all points in the neighbourhood have codimension at most $m$. Hence a sequence from $X_k^s$ converging in $X$ can't converge to a point in $X\setminus X_k^s$.
\end{remark}

\begin{lemma}\label{subcomplex} Let $X$ be a pure polyhedral space of dimension $n$. Let $X_k^s$ be the subset of metric singularities of codimension at least $k$. Then the following holds.
	
	\begin{enumerate}
		\item $X_k^s$ is a subcomplex of any polyhedral triangulation $\tau$ of $X$. In particular it is a polyhedral space of dimension $\le n-k$.
		
		\item If $X$ is an essential polyhedral cone then its only metric singularity of codimension $n$ is the vertex.

		\item Each connected component of $X_k^s\setminus X_{k+1}^s$ (i.e. an open stratum of singularities) is a flat Riemannian manifold with respect to the induced metric.
		
		\item In case $X$ is a pseudomanifold, $X^s$ has dimension at most $n-2$ and $X^\circ=X\setminus X^s$ is a connected flat Riemannian $n$-manifold.
	\end{enumerate}
	
\end{lemma}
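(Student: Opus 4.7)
The plan is to work throughout with a fixed polyhedral triangulation $\tau$ of $X$ and to exploit the local product structure near the interior of each simplex. For part (1), I will use the standard fact that a neighbourhood of any point in an open simplex $\Delta^\circ$ is isometric to $\Delta^\circ \times C(\mathrm{link}(\Delta))$, where $C(\mathrm{link}(\Delta))$ is the Euclidean cone over the link of $\Delta$. Consequently the tangent cone at any interior point of $\Delta$ is $\mathbb{R}^{\dim \Delta} \times C(\mathrm{link}(\Delta))$, and by the uniqueness part of Theorem~\ref{undec} its essential dimension $k(\Delta)$ depends only on $\Delta$. It then remains to show $k(\Delta') \geq k(\Delta)$ whenever $\Delta'$ is a face of $\Delta$. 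I would argue this by choosing $x \in (\Delta')^\circ$, embedding a conic neighbourhood of $x$ isometrically into $C_x$, and locating a nearby point $y \in \Delta^\circ$ inside this model; by iterating the tangent cone construction, $C_y$ arises as the tangent cone of $C_x$ at the image of $y$, which picks up at least one extra Euclidean direction along the radial ray into the essential factor of $C_x$, so the essential dimension of $C_y$ is at most that of $C_x$. Combined, this gives $X^s_k = \bigcup_{k(\Delta) \geq k} \overline{\Delta}$ as a subcomplex of $\tau$, and its dimension is at most $n-k$ since $k(\Delta) \geq k$ forces $\dim \Delta \leq n-k$.

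For (2), the vertex $o$ of an essential $n$-dimensional cone $X$ has $C_o = X$ by the very definition of tangent cone at a cone point, and this is essential of full dimension $n$. Any other point $x \neq o$ lies on a radial geodesic from $o$, and its tangent cone splits off a Euclidean factor along this direction, so its essential dimension is strictly less than $n$. For (3), I would analyse $\Sigma := X^s_k \setminus X^s_{k+1}$ as follows. At $x \in \Sigma$ the tangent cone has the form $C_x = \mathbb{R}^{n-k} \times C'$ with $C'$ essential of dimension $k$. In a conic neighbourhood isometric to a ball in $C_x$, a point $(v,w) \in \mathbb{R}^{n-k} \times C'$ has tangent cone $\mathbb{R}^{n-k} \times T_w(C')$: if $w$ is the vertex of $C'$ this equals $C_x$ and the point lies in $\Sigma$, while if $w$ is off the vertex then $T_w(C')$ splits off a radial Euclidean line, the essential dimension drops to at most $k-1$, and the point falls outside $X^s_k$. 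Thus $\Sigma$ intersected with this neighbourhood is isometric to an open subset of $\mathbb{R}^{n-k}$, so each component of $\Sigma$ is a flat Riemannian $(n-k)$-manifold.

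Finally, for (4) I use the pseudomanifold hypotheses directly. The no-branching axiom says each codimension-$1$ simplex is a face of exactly two top-dimensional simplices, so the tangent cone at an interior point of such a face is two Euclidean half-spaces of dimension $n$ glued along their common boundary hyperplane, which is isometric to $\mathbb{R}^n$; such points are therefore regular, and $X^s \subset X^s_2$ has dimension at most $n-2$. For path-connectedness of $X^\circ$, strong connectedness of $\tau$ provides a chain of top-dimensional simplices joining any two $n$-simplices through shared codimension-$1$ faces, and paths may be routed across the interiors of these faces while remaining inside $X^\circ$ since those interiors are regular; that $X^\circ$ is a flat Riemannian $n$-manifold is then immediate from the definition of a regular point. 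The main obstacle in the whole proof will be the monotonicity step $k(\Delta') \geq k(\Delta)$ in part (1): it demands a careful unpacking of how the essential/Euclidean factorisation behaves under iterated tangent cones, and relies crucially on the uniqueness statement of Theorem~\ref{undec}.
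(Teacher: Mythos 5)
Your proof is correct and follows essentially the same route as the paper: the tangent cone at an interior point of a $d$-simplex splits off $\mathbb{R}^d$, which gives both the subcomplex property and the dimension bound in (1), and parts (2)–(4) match the paper's arguments (radial splitting off the vertex, identification of the stratum with the Euclidean factor of the tangent cone, and the no-branching plus strong-connectedness axioms). Your monotonicity step $k(\Delta')\ge k(\Delta)$ is exactly the content of the paper's Remark~\ref{closedstarta} (closedness of $X^s_k$), which the paper proves separately and then cites, so this is a presentational rather than a substantive difference.
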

\begin{proof} 
	1. Let $\Delta^\circ$ be an open $(n-l)$-simplex of $\tau$ that intersects $X_k^s$. The tangent cones of all points  of $\Delta^\circ$ are isomorphic and are isometric to $\mathbb R^{n-m}\times C_{\Delta}^{m}$, where $m\le l$ and $C_{\Delta}^{m}$ is an essential polyhedral cone. For this reason
	$k\le m\le l$ and $\Delta^{\circ}$ belongs entirely to $X_k^s$. Since $X_k^s$ is closed by Remark \ref{closedstarta}, it is a union of closures of all such simplexes $\Delta^\circ$.
	
	2. Let $o$ be the vertex of the cone $X$ and take a decomposition $\tau$ of \(X\) into simplicial cones (centred at $o$). Any point $y\ne o$ lies in the interior of a simplicial cone of dimension $l>0$. Hence it is a metric singularity of codimension at most $n-l$.
	
	3. Take a point $x$ on a connected component of $X_k^s\setminus X_{k+1}^s$. We will show that the intersection of $X_k^s\setminus X_{k+1}^s$ with a small $\varepsilon$-neighbourhood of $x$ in $X$ is a flat open $(n-k)$-ball. The claim will clearly follow.

	Since $x$ is a metric singularity of codimension $k$ its tangent cone $C_x$ decomposes as a product $\mathbb R^{n-k}\times C_x'$, where $C_x'$ is an essential cone. By claim 2 all points in $C_x'$ apart from $x$ are metric singularities of codimension at most $k-1$. It follows that $\mathbb R^{n-k}$ coincides with all metric singularities of codimension $n-k$ in $C_x$. Hence it suffice to take an $\varepsilon$-neighbourhood of $x$ in $X$ that embeds isometrically into $C_x$.
	
	4. By definition of a pseudomanifold each $(n-1)$-simplex of a triangulation of $X$ is a face of two $n$-simplexes. Hence all metric singularities of $X$ lie in the union of simplexes of dimension at most $n-2$. It follows that \(X^s\) has dimension at most \(n-2\) and $X^\circ$ is connected.
\end{proof}

\begin{definition}[Conical anlges]\label{integerconeangle} Let $X$ be a normal polyhedral pseudomanifold of dimension $n$ and let $Y^\circ$ be an open stratum of singularities of codimension $2$, i.e., a connected component of $X^s\setminus X^s_3$. Since $X$ is normal, the link of each $n-2$-simplex is a circle. For this reason, for each point $y\in Y^\circ$ the tangent cone is isometric to a product $\mathbb R^{n-2}\times C_\alpha$, where $C_\alpha$ is a two-dimensional cone with angle $2\pi\alpha$. The angle $2\pi\alpha$ is independent of the choice of $y\in Y^\circ$ and we call it \emph{the conical angle of $X$ at $Y^\circ$}. In case $\alpha$ is in integer, we say that the conical angle at $Y^\circ$ is integer.
\end{definition}

\subsection{Parallel vector fields on polyhedral pseudomanifolds}\label{sect:parallelvf}

\begin{definition} Let $X$ be a polyhedral pseudomanifold of dimension $n$. We say that a vector field $v$ defined on $X^\circ$ is \emph{parallel in codimension $k\ge 1$} if the following holds:
	\begin{itemize}
		
		\item[(i)] $v$ is parallel on $X^\circ$;
		
		\item[(ii)]  for any point $x\in X^s\setminus X^s_{k+1}$ and a standard cylindrical neighbourhood $B_\varepsilon\times C_\varepsilon$ of $x$, the field $v$ is tangent to the $B_\varepsilon$ fibres on $B_\varepsilon\times C_\varepsilon^\circ$.
	\end{itemize} 
	
	Let $Y^\circ$ be an open stratum of singularities. The field is called \emph{parallel to $Y^\circ$} if condition (ii) holds for one (and hence for all) point $y\in Y^\circ$.
	
	The field is called \emph{fully parallel} if is it parallel in codimension $n$, i.e. it is parallel to all open strata. 
\end{definition}

\begin{note} According to this definition a fully parallel vector field on a polyhedral pseudomanifold of dimension $n$ with a singularity of codimension $n$ has to vanish. Indeed, such a field vanishes in a neighbourhood of the codimension \(n\) singularity. Also for $k=1$ condition (ii) is empty since $X^s_{2}=X^s$. 
\end{note}

The main claim of this section is the following result.

\begin{proposition}\label{fullyparallel} Let $X$ be a normal polyhedral pseudomanifold with a vector field $v$ parallel in codimension 2. Then $v$ is fully parallel. 
\end{proposition}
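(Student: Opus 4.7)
The plan is to proceed by induction on the codimension $k \geq 2$, showing that if $v$ is parallel in codimension $k$ then it is also parallel in codimension $k+1$. The base case $k=2$ is the hypothesis. For the inductive step, pick $y$ in an open stratum $Y^\circ$ of codimension $k+1$ and use Theorem \ref{undec} to write its tangent cone uniquely as
\[
C_y \;=\; \mathbb{R}^{n-k-1} \times C',
\]
with $C'$ an essential polyhedral cone of dimension $k+1$. Via the isometric embedding of a cylindrical neighbourhood of $y$ into $C_y$, the parallel field $v$ lifts to a parallel vector field on $C_y^{\circ} = \mathbb{R}^{n-k-1} \times (C')^\circ$, and being parallel there it is translation invariant; thus it decomposes uniquely as $v = v_1 + v_2$ with $v_1$ tangent to $\mathbb{R}^{n-k-1}$ and $v_2$ tangent to $C'$. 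Parallelism to $Y^\circ$ is exactly the assertion $v_2 = 0$, so this is what remains to show.

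To apply the induction hypothesis, observe that by Lemma \ref{subcomplex}(2) every non-vertex point $p$ of the essential cone $C'$ lies in a stratum of codimension $m \leq k$ in $C'$; the corresponding point $(0,p) \in C_y$ has the same codimension $m \leq k$ in $X$. The inductive hypothesis therefore forces $v$ to be tangent at $(0,p)$ to the Euclidean factor of $T_{(0,p)}C_y = \mathbb{R}^{n-k-1} \times T_pC'$, which means $v_2$ is tangent to the Euclidean factor of $T_p C'$. Consequently, $v_2$ is a parallel vector field on $(C')^\circ$ which is itself parallel in codimension $k = \dim C' - 1$ in the sense of the definition, as a vector field on the pseudomanifold $C'$ (note that normality of $X$ passes to $C'$, whose link is a link in $X$).

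The proof is completed by the following key claim: \emph{an essential polyhedral cone admits no non-zero parallel vector field on its regular part which is tangent to the Euclidean factor at every non-vertex singular stratum.} Assuming $v_2 \neq 0$, the field $v_2$ has constant non-zero norm on $(C')^\circ$ (parallelism), and the tangency hypothesis at all non-vertex strata guarantees that the one-dimensional parallel distribution $\mathbb{R} v_2$ and its parallel orthogonal complement extend coherently across every stratum of codimension $\leq k$, i.e. across the whole of $C' \setminus \{0\}$. Integrating, a typical integral curve of $v_2$ through a regular point is a complete geodesic (bi-infinite, as the flow is by isometries of constant speed and never runs into the vertex for generic initial point). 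The existence of such a line in the geodesic space $C'$, combined with the de~Rham-type decomposition for geodesic metric spaces of Foertsch-Lytchak that underlies Theorem \ref{undec}, forces a metric splitting $C' \cong \mathbb{R} \cdot v_2 \times C''$ for some polyhedral cone $C''$ of dimension $k$, contradicting the essentialness of $C'$. Hence $v_2 = 0$.

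The main obstacle I expect is the final splitting step: one must verify carefully that the parallel distribution $\mathbb{R} v_2$ and its orthogonal complement, defined a priori only on $(C')^\circ$, really propagate across the lower-codimension singular strata to yield an honest isometric product decomposition of the whole cone $C'$, rather than merely a local one. Once that extension is controlled, the uniqueness part of Theorem \ref{undec} supplies the contradiction with essentialness immediately.
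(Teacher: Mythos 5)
Your inductive reduction is exactly the paper's: pass to the tangent cone at a point of the deepest stratum not yet handled, split off the Euclidean factor, use the inductive hypothesis to see that the transversal component $v_2$ is parallel to all positive-dimensional strata of the essential cone $C'$, and reduce everything to the claim that an essential normal polyhedral cone of dimension $\geq 3$ carries no non-zero field of this kind (this is Lemma \ref{nofield} in the paper). The gap is in your proof of that claim. You propose to produce a bi-infinite geodesic as an integral curve of $v_2$ and then invoke the Foertsch--Lytchak de Rham theorem to split off an $\mathbb{R}$-factor. But \cite{FL} is a \emph{uniqueness} statement for product decompositions; it is not a splitting theorem, and ``contains a line $\Rightarrow$ splits'' is simply false for polyhedral cones without a curvature hypothesis: a $2$-cone of total angle greater than $2\pi$ contains plenty of lines avoiding the vertex yet is essential. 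Splitting theorems of Cheeger--Gromoll/Toponogov type need nonnegative curvature, which your cones do not have (angles may exceed $2\pi$). Moreover it is not even clear that the orbit of $v_2$ is globally minimizing rather than merely locally geodesic. So the final step, which you yourself flag as the main obstacle, cannot be repaired along the route you indicate.

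The paper's argument avoids splitting altogether. One first shows (after deleting the finitely many radial rays tangent to $v_2$) that the flow $\varphi^t$ of $v_2$ is complete on the regular part: a trajectory cannot hit a positive-dimensional stratum because $v_2$ is parallel to it, and cannot reach the vertex by the elementary estimate $d(0,\varphi^t(x))^2=(r(x)\sin\alpha)^2+(r(x)\cos\alpha-t)^2>0$. Then — and this is where normality and $\dim C'\ge 3$ are used, via Lemma \ref{intrinsicrestriction} — the restricted metric on the complement of the singular set and of those rays is intrinsic, so each $\varphi^t$ extends from this dense subset to a genuine isometry of $C'$. Finally, an isometry of an \emph{essential} cone must fix the vertex (the vertex is the unique codimension-$\dim C'$ metric singularity by Lemma \ref{subcomplex}(2)), hence preserves the distance to the vertex; but a non-zero constant vector field on a simplicial cone whose flow preserves distance to the apex cannot exist. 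If you replace your ``line plus de Rham'' step with ``the flow extends to global isometries, which must fix the vertex of an essential cone,'' your outline becomes the paper's proof.
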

\begin{proof} Let's assume that $v$ is parallel in codimension $k-1\ge 2$, and  deduce that $v$ is  parallel in codimension $k$. Then the proposition follows by induction on $k$.
	
	For any point $x$ in $X^s_{k}\setminus X^s_{k+1}$ we need to show that condition (ii) holds. Let $C$ be the tangent cone of $X$ at $x$. The  vector field $v$ on $X^\circ$ gives rise to a vector field $u$ on $C$, and by our assumption $u$ is parallel on $C$ in codimension $k-1$. We will show that $u$ is fully parallel on $C$.
	
	Consider the maximal decomposition $C=\mathbb R^k\times C'$ such that $C'$ is an essential cone. Let us decompose $u$ (on $C^\circ$) into the sum of two parallel fields 
	$$u=u_{\mathbb R^k}+u_{C'},$$ 
	where $u_{\mathbb R^k}$ is tangent in $C^\circ$ to $\mathbb R^k$-fibres and $u_{C'},$ is tangent to $C'$-fibres. By definition $u_{\mathbb R^k}$ is fully parallel on $C$, so in order to show that $u$ is fully parallel, it's enough to show that $u_{C'}=0$. This follows from Lemma \ref{nofield} applied to the restriction of $u_{C'}$ to the essential $n-k$ dimensional polyhedral cone $(0,C')$.
\end{proof}

Before stating Lemma \ref{nofield} we prove a simple statement explaining the importance of the condition of normality.

\begin{lemma}\label{intrinsicrestriction} Suppose $X$ is a polyhedral normal pseudomanifold with metric $d_g$. Then the restriction of $d_g$ to $X^\circ$ is an intrinsic metric. 
\end{lemma}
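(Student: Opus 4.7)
The plan is to establish the inequality $d^\circ\le d_g|_{X^\circ\times X^\circ}$, where $d^\circ$ denotes the induced intrinsic metric on $X^\circ$; the reverse inequality is automatic. Concretely, for any $x,y\in X^\circ$, any rectifiable $\gamma:[0,1]\to X$ from $x$ to $y$, and any $\varepsilon>0$, I will construct a rectifiable path in $X^\circ$ from $x$ to $y$ of length at most $L(\gamma)+\varepsilon$. I would argue by induction on $n=\dim X$, treating Euclidean and spherical normal polyhedral pseudomanifolds together since the argument at the vertex of a cone invokes its spherical link; the cases $n\le 1$ are immediate because then $X^s=\emptyset$.

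The standard reduction is to a local statement: since $\gamma([0,1])$ is compact, I pick a partition $0=t_0<\cdots<t_N=1$ and cylindrical neighbourhoods $U_j=B_{\delta_j}\times (C'_j)_{\delta_j}$ (with $C'_j$ an essential polyhedral cone of dimension $k_j$) such that $\gamma([t_{j-1},t_j])\subset U_j$, and first perturb the intermediate break points into $X^\circ$ at negligible cost (using that $X^\circ$ is open and dense and that $d_g$ induces the manifold topology). Since the Euclidean factor $B_{\delta_j}$ contributes no singularities, the proof reduces to: \emph{for every normal essential polyhedral cone $C$ of dimension $k\ge 2$, every rectifiable $\eta$ in $C$ with endpoints in $C^\circ$, and every $\delta>0$, there is a rectifiable $\tilde\eta$ in $C^\circ$ with the same endpoints and $L(\tilde\eta)\le L(\eta)+\delta$.} Whenever $k<n$, this is the lemma for the smaller-dimensional normal polyhedral pseudomanifold $C$ and follows from the outer inductive hypothesis; only the case $k=n$ requires fresh work.

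For $k=n$ I cover $\eta([0,1])$ by conic neighbourhoods in $C$: at every non-vertex point the essential factor of the tangent cone has dimension strictly less than $k$, so that portion is handled by the inductive hypothesis; the new ingredient is a detour around the vertex $0\in C$. In the base case $k=2$, normality forces the link of the vertex to be a connected $1$-dimensional pseudomanifold, hence a circle, so $C=C_\alpha$ for some $\alpha>0$ with only singularity at the vertex. Approximate $\eta$ by a piecewise linear path with length within $\delta/2$ of $L(\eta)$; since a PL path meets the vertex at only finitely many isolated times, reroute each crossing along an arc of $\{r=\rho\}$ of length at most $\pi\alpha\rho$, and choose $\rho$ so small that the total added length is under $\delta/2$. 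For $k>2$ the analogous detour uses the spherical link $L$ of $0\in C$, which is a compact connected normal spherical pseudomanifold of dimension $k-1$; by the spherical case of the inductive hypothesis, any two points of $L^\circ$ can be joined by a rectifiable path in $L^\circ$ of length close to their intrinsic link-distance, and a sub-arc of $\eta$ entering $\{r\le\rho\}$ at $p$ and exiting at $q$ can be replaced by the two radial segments $[p,\rho\hat p]$, $[\rho\hat q,q]$ together with a near-minimizing path in the scaled regular part $\rho\cdot L^\circ$, at a total extra cost of at most $\rho\,\mathrm{diam}(L)+\delta'$, which is arbitrarily small for $\rho$ small.

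The main technical obstacle will be the base-case detour estimate: one must control both the number of vertex crossings of the PL approximation and the arc-length cost of each detour, which rests on the density of PL paths among rectifiable paths in a polyhedral space together with the elementary bound $\pi\alpha\rho$. Once this is in hand, the assembly of local modifications along the finite cover, and the inductive pass to higher-dimensional strata via the unified Euclidean-spherical framework, are routine bookkeeping.
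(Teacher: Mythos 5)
Your argument is correct in outline, but it takes a genuinely different route from the paper's. The paper does not work with an arbitrary rectifiable curve at all: it replaces it by a shortest geodesic $\gamma$ realizing $d_g(x,y)$ (available since $X$ is a complete locally compact length space), covers $\gamma$ by conic neighbourhoods centred at points $z_i\in\gamma$, and then uses that each sub-segment $z_iz_{i+1}$ is a \emph{radial} geodesic from the conic centre $z_i$. Pureness lets one tilt each such segment to a generic radial direction whose ray lies in $X^\circ$ away from $z_i$, and normality (connectedness of the regular part of the link $S_i$) supplies the short arcs in $U_i^\circ$ joining consecutive perturbed segments. This sidesteps induction on dimension and any counting of excursions. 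Your proof instead handles arbitrary rectifiable paths, inducts on dimension through the Euclidean/spherical link hierarchy, and pushes the path off each essential cone vertex by an explicit detour along $\{r=\rho\}$; normality enters at the same point, as connectedness (and normality) of the vertex link. What your route buys is a more self-contained local statement that simultaneously controls the intrinsic geometry of the links; what it costs is precisely the bookkeeping you flag, and two items there deserve emphasis. First, the PL approximation is not only a base-case device: for $k>2$ a general rectifiable path can make on the order of $L(\eta)/\rho$ excursions into $\{r\le\rho\}$, each detour costing up to $\rho\,\mathrm{diam}(L)$, so without first reducing to a piecewise geodesic (which visits the vertex at most once per segment) the total detour cost need not tend to $0$ with $\rho$. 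Second, in the cylindrical reduction $U_j=B_{\delta_j}\times(C'_j)_{\delta_j}$ one should recombine the perturbed $C'$-component with the $B$-component via arc-length parametrizations, so that the new length is bounded by $\bigl(L_B^2+(L_{C'}+\varepsilon)^2\bigr)^{1/2}$ rather than by the sum $L_B+L_{C'}+\varepsilon$. Both points are fixable exactly along the lines you indicate, so I record them only as the places where the "routine bookkeeping" genuinely requires care.
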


\begin{proof} Take any two points $x,y\in X^\circ$ and join them by a shortest geodesic $\gamma$ such that $l(\gamma)=d_g(x,y)$. To prove the lemma, we will show that there is a path $\gamma'\in X^\circ$ that joins $x$ with $y$ and has length arbitrary close to that of $\gamma$.
	
	For each point $z\in \gamma$ let's find a conic neighbourhood of $z$ in $X$. Take a finite subcover of $\gamma$ by such neighbourhoods $U_i$ and let $x=z_1\ldots z_k=y\subset \gamma$ be the centres of the corresponding neighbourhoods. Then each segment $z_iz_{i+1}$ is geodesic and since $X$ is pure we can perturb it slightly to a geodesic segment $z_iz_{i+1}'$ that lies entirely in $X^\circ$ with exception of $z_i$, keeping $d_g(z_{i+1}', z_{i+1})<\varepsilon/2k$. Next, take a point $z_i''$ on  $z_iz_{i+1}'$  on distance $\varepsilon/2k$ from $z_i$. Since $X$ is normal, the link of each $z_i$ is a spherical polyhedral pseudomanifold $S_i$, in particular $S_i^\circ$ is connected. Since $U_i$ is conic, it follows that points $z_i'$ and $z_i''$ can be connected by a short curve of length at most $\varepsilon/k$ lying in $U_i^\circ$. By combining together the constructed short curves and segments $z_i''z_{i+1}'$ we get the desired curve $\gamma'$ of length at most $l(\gamma)+\varepsilon$. \end{proof}

\begin{lemma}\label{nofield} Let $C$ be a normal polyhedral pseudomanifold of dimension $n\ge 3$. Suppose $C$ is an essential polyhedral cone. Then any vector field $u$ on $C^\circ$, parallel to all strata of positive dimension is $0$.
\end{lemma}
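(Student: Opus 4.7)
The approach is proof by contradiction: suppose $u\not\equiv 0$, so that its constant norm on the connected regular part $C^\circ$ is a positive number $c>0$. The plan is to construct, from $u$, a global isometry of $C$ that must move every point by exactly $ct$, and then to exploit the fact that any isometry of an essential polyhedral cone must fix its vertex, leading to $ct = 0$.

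First I would introduce the Euler vector field $e_r$ of the cone, characterised on $C^\circ$ by $\nabla e_r=\mathrm{Id}$, and the auxiliary function $f:=g(u,e_r)$. Using $\nabla u=0$ a direct computation gives $df(\cdot)=g(u,\cdot)$, hence $\nabla f=u$ and $|\nabla f|\equiv c$. Thus $f$ is $c$-Lipschitz and its integral curves (which coincide with those of $u$) are minimising geodesics of constant speed $c$, so the local flow $\phi_t$ of $u$ satisfies $d(x,\phi_t(x))=ct$ whenever defined. Next, $u$ being parallel is automatically Killing, so $\phi_t$ acts by isometries on its domain. The tangency of $u$ to the Euclidean factor of each positive-dimensional stratum $Y^\circ$ implies that in the cylindrical local splitting $\mathbb R^k\times C'$ about a point of $Y^\circ$ the flow is translation in the $\mathbb R^k$-factor, so $\phi_t$ preserves every positive-dimensional stratum setwise and extends continuously across it.

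By Lemma \ref{intrinsicrestriction} the restricted metric on $C^\circ$ is intrinsic with completion $C$, so the local isometry $\phi_t$, defined on a dense open subset of $C$, extends uniquely to a global isometry $\bar\phi_t\colon C\to C$. The isometry $\bar\phi_t$ preserves the polyhedral stratification of $C$; since $C$ is essential, Lemma \ref{subcomplex}(2) tells us that the vertex $o$ is the unique codimension-$n$ stratum, so $\bar\phi_t(o)=o$. For any $x\in C^\circ$ in the domain of $\phi_t$ the combination of $d(x,\bar\phi_t(x))=ct$ with $d(\bar\phi_t(x),o)=d(x,o)$ and the triangle inequality yields $ct\leq 2\,d(x,o)$. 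Since $f$ is a non-constant $1$-homogeneous $c$-Lipschitz function on $C$ vanishing at $o$, I can choose a point $x$ with $f(x)\geq 0$; for such $x$ the forward integral curve never reaches $o$, because $f(\phi_t(x))=f(x)+c^2t$ grows without bound while $|f|\leq c\,d(\cdot,o)$ would force $f\to 0$ at $o$. Hence $\phi_t(x)$ is defined for all $t>0$, and letting $t\to\infty$ in $ct\leq 2\,d(x,o)$ yields the contradiction, forcing $u\equiv 0$.

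The main obstacle will be the extension of the local isometric flow to a genuine global isometry of $C$: the domain $D_t$ of $\phi_t$ is a priori only an open subset of $C^\circ$ (inward-radial integral curves can in principle reach $o$ in finite time), and verifying that this domain is dense enough in $C$ for continuous extension to produce a bijective isometry fixing the vertex requires the interplay between the homogeneity of $u$ under the cone dilations, the identification of $D_t$ via the function $f$, and Lemma \ref{intrinsicrestriction}.
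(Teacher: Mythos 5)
Your overall strategy coincides with the paper's (the flow of $u$ acts by isometries of the regular part, extends to an isometry of the essential cone $C$, hence fixes the vertex, hence contradiction), and your use of the potential $f=g(u,e_r)$ with $\nabla f=u$ is a nice addition: it gives forward completeness of the trajectory from a point with $f(x)\ge 0$ and the identity $d(x,\phi_t(x))=ct$, so your endgame $ct\le 2\,d(x,o)$ is arguably slicker than the paper's (which restricts $u$ to a single simplicial cone and observes that a constant field whose flow preserves $d(\cdot,o)$ must vanish).

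However, the step you yourself flag as ``the main obstacle'' is a genuine gap, and it is precisely where the real work of the paper's proof lies. Your contradiction needs $d(\phi_t(x),o)=d(x,o)$, i.e.\ a globally distance-preserving extension $\bar\phi_t$ of the flow that fixes $o$. The flow is \emph{not} defined on all of $C^\circ$: on each simplicial cone of a decomposition of $C$ there may be a radial ray tangent to $u$, and the inward trajectories along such rays reach $o$ in finite time. So the domain of $\phi_t$ is only $C^\circ$ minus a finite union of rays $\overline{l}$, and to conclude that $\phi_t$ preserves the restricted distance on this domain (which is what permits the extension to all of $C$ by density) one must know that the restriction of $d_g$ to $C^\circ\setminus\overline{l}$ is still an intrinsic metric. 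That is exactly the point where the hypothesis $n\ge 3$ enters (a segment does not locally separate a Euclidean ball of dimension $\ge 3$), and your written argument never uses $n\ge 3$ --- a sure sign that something essential is missing. The paper supplies precisely the three missing ingredients: identification of the exceptional set $\overline{l}$; completeness of the flow on $C^\circ\setminus\overline{l}$, via the estimate $d(0,\varphi^t(x))^2=(r(x)\sin\alpha)^2+(r(x)\cos\alpha-t)^2\ge(r(x)\sin\alpha)^2>0$ for trajectories transverse to the rays; and the intrinsicness of the restricted metric on $C^\circ\setminus\overline{l}$ by the argument of Lemma \ref{intrinsicrestriction}. Once these are filled in, the rest of your argument goes through (after the harmless replacement of $u$ by $-u$, if needed, to guarantee a point with $f(x)\ge 0$).
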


\begin{proof} We will assume by contradiction that $u$ is non-zero.
	
	Recall that $C$ can be glued from a finite number of simplicial cones, which we denote by $C_1,\ldots, C_m$. On the interior of each simplicial cone $u$ restricts as a parallel vector field. Hence, each $C_i$ contains at most one radial ray $l_i$ passing through $0$ and tangent to $u$. Let us denote the (possibly empty) union of such $l_i$'s that don't lie entirely in the singular locus $C^s$ as $\overline l$. Our first claim is that the flow $\varphi^t$ of the vector field $u$ on the flat manifold $C^\circ\setminus \overline l$ is defined for all time $t$.
	
	{\it Proof of the claim.} Take any point $x\in C^\circ\setminus \overline l$ and assume by contradiction that the geodesic $\varphi^{t}(x)$ hits $C^s\cup \overline l$ at some time $t_0>0$. Let $y\in C^s\cup \overline l$ be the corresponding limit point $\varphi^{t_0}(x)$. 
	
	Let's show first that $y$ has to be the vertex $0$ of $C$. Indeed, if $y\ne 0$ it can not lie in $\overline l$, since by definition $\overline l$ is the union of rays whose non-zero points lie in $C^\circ$, and so a trajectory that hits such a ray at some point $y$ has to coincide with the ray.  At the same time $y$ can't lie on $C^s\setminus 0$ since $u$ is parallel to all positive-dimensional strata.
	
	We conclude that $y=0$. To rule this out, it suffices to consider the distance function $d(0, \varphi^t(x))$. Let $l_x$ be the ray of $C$ passing through $x$. Since $l_x$ is not tangent to $u$, it intersects $l_x$ at non-zero angle $\alpha$. 
	Then 
	$$d(0,\varphi^t(x))^2=(r(x)\sin(\alpha))^2+(r(x)\cos(\alpha)-t)^2\ge (r(x)\sin(\alpha))^2>0.$$ 
	Hence the flow $\varphi^t$ is defined for all times. 
	
	Let $d_g$ be the polyhedral metric on $C$. Since $C$ is a normal pseudomanifold and $n\ge 3$, by exactly the same argument as in the proof of Lemma \ref{intrinsicrestriction} we prove that the restriction of $d_g$ to $C^\circ \setminus \overline l$ is an intrinsic metric\footnote{This is the moment when we use condition $n\ge 3$, indeed a segment in a Euclidean space of dimension $n\ge 3$ doesn't separate points in its neighbourhood.}. 
 Clearly, $\varphi^t$ is an isometry on $(C^\circ \setminus \overline l, d_g)$, and since this subspace is dense in $(C,d_g)$, we see that $\varphi^t$ extends to $C$ as a distance preserving map. The same holds for $\varphi^{-t}$. Since $\varphi^t\circ\varphi^{-t}$ is the identical map, we conclude that $\varphi^t$ is an isometry of $C$. 
	
	Since  $\varphi^t$ is an isometry for all $t$, and $C$ is essential, it follows that  $\varphi^t$ fixes $0$ and so preserves distance to $0$. Let's deduce that $u=0$. Restrict $u$ to any simplicial cone $C_i\subset C$. We get a constant vector field on $C_i$ whose flow preserves distance to the vertex of the cone. Hence $u$ is zero, a contradiction.
\end{proof}

\begin{corollary}\label{coneJrotation} Let $X$ be a normal polyhedral pseudomanifold that is a polyhedral cone. Suppose $v$ is a non-zero vector field  parallel in codimension 2. Then $X$ decomposes as a direct product $\mathbb R\times X'$ so that $v$ is tangent to the $\mathbb R$-direction.
\end{corollary}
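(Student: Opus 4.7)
The plan is to combine Proposition \ref{fullyparallel} with the unique factorization of polyhedral cones (Theorem \ref{undec}) and then use the definition of fully parallel at the deepest singular stratum to force the transversal component of $v$ to vanish.

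First I would invoke Proposition \ref{fullyparallel}: since $X$ is a normal polyhedral pseudomanifold and $v$ is parallel in codimension $2$, it is in fact fully parallel. Next, apply Theorem \ref{undec} to write $X = \mathbb{R}^k \times X'$ with $X'$ an essential polyhedral cone. Decompose $v$ on $X^\circ = \mathbb{R}^k \times (X')^\circ$ as $v = v_1 + v_2$, where $v_1$ is tangent to the $\mathbb{R}^k$-factor and $v_2$ is tangent to $X'$; both summands are parallel on $X^\circ$.

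The key step is to show $v_2 \equiv 0$. If $\dim X' = 0$ this is vacuous, so assume $\dim X' \geq 1$ (the case $\dim X' = 1$ is ruled out since $\mathbb{R}$ is not essential). By Lemma \ref{subcomplex}(2) the vertex $o$ of $X'$ is the unique metric singularity of $X'$ of codimension $\dim X'$, so the stratum $\mathbb{R}^k \times \{o\}$ is an open stratum of singularities of $X$. Around any point $x = (p,o)$ of this stratum, a cylindrical neighborhood in $X$ has the form $B_\varepsilon \times C_\varepsilon$, where $B_\varepsilon \subset \mathbb{R}^k$ is an open ball and $C_\varepsilon$ is a conic neighborhood of $o$ in $X'$. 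Since $v$ is fully parallel, condition (ii) of the definition forces $v$ to be tangent to the $B_\varepsilon$-fibres on $(B_\varepsilon \times C_\varepsilon)^\circ$, i.e.\ $v_2 = 0$ there; as $v_2$ is parallel on the connected flat manifold $X^\circ$ (Lemma \ref{subcomplex}(4)), we conclude $v_2 \equiv 0$ on all of $X^\circ$.

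Thus $v = v_1$ comes from a constant (hence non-zero) vector field on $\mathbb{R}^k$. Choosing a line $\mathbb{R} \subset \mathbb{R}^k$ containing $v_1$ and a complementary $\mathbb{R}^{k-1}$, we get the isometric splitting
\[
X \;=\; \mathbb{R} \times \bigl(\mathbb{R}^{k-1} \times X'\bigr)
\]
with $v$ tangent to the $\mathbb{R}$-factor, as required. The only delicate point in this outline is already hidden inside Proposition \ref{fullyparallel}, whose proof required normality to propagate parallelism across successive codimensions; granted that, the present corollary is an immediate structural consequence.
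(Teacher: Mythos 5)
Your argument is correct and is essentially the paper's proof: both pass to the maximal decomposition $X=\mathbb R^k\times X'$ from Theorem \ref{undec}, upgrade $v$ to fully parallel via Proposition \ref{fullyparallel}, and use condition (ii) at a point of $\mathbb R^k\times\{o\}$ to kill the $X'$-component. You merely spell out the steps the paper compresses into "the statement clearly follows" (propagating $v_2=0$ by parallelism on the connected $X^\circ$ and splitting off the line spanned by $v_1$).
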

\begin{proof} $X$ can't be an essential cone since $v\ne 0$. Consider the maximal decomposition of $X$, $X=\mathbb R^k\times Y$, where $Y$ is an essential cone. Take $x\in (\mathbb R^k,0)$. By Proposition \ref{fullyparallel} in a cylindrical neighbourhood of $x$ the vector $v$ is tangent to $\mathbb R^k$-fibres. The statement clearly follows.
\end{proof}

%
%
%
%
%
%
%
%

\subsection{Polyhedral K\"ahler spaces}\label{sect:polKahlerspaces}

In this section we discuss the notion of  polyhedral K\"ahler spaces, which is a natural generalisation of the notion of PK manifolds \cite{Pan}. Such spaces are the central object of the forthcoming paper \cite{PanVer}.

\begin{definition}\label{def:Complexdirections} 
	Let $X$ be a polyhedral pseudomanifold of complex dimension $n$ and let $J$ be a flat orthogonal complex structure defined on $X^\circ$. We say that the metric singularities of $X$ have \emph{complex directions} if the following conditions hold.  
	
	\item[(a)] Metric singularities of $X$ have even codimension.
	\item[(b)] Let $\tau$ be any polyhedral triangulation of $X$.
	Let $\Delta^{2n}$ be any simplex of dimension $2n$ in $\tau$ and $\Delta^{2n-2k}\subset \Delta^{2n}$ be a sub-simplex. Suppose that $\Delta^{2n-2k}$ lies in the subset $X^s_{2k}$ of singularities of codimension at least $2k$. Then $\Delta^{2n-2k}$ is a part of a complex subspace $\mathbb C^{n-k}$ with respect to the flat complex structure induced on $\Delta^{2n}$ by $g$.  
\end{definition}

\begin{remark}  
	Definition \ref{def:Complexdirections} can be rephrased in the following way. Let $x\in X^s$ be a metric singularity lying on an open stratum of singularities $Y^\circ$. Take a cylindrical neighbourhood $U$ of $x$ that embeds in the tangent cone of $X$ at $x$ and take any parallel vector field on $U^\circ$ that is parallel to the stratum $Y^\circ$. Then $Jv$ must be parallel to $Y^\circ$  as well.  
\end{remark}

Now we can define polyhedral K\"ahler spaces. 

\begin{definition}\label{def:PKspaces} 
	Let $X$ be an orientable polyhedral pseudomanifold of complex dimension $n$ that admits a parallel orthogonal complex structure $J$ on its regular locus $X^\circ$. The pair $(X,J)$ is called a \textbf{polyhedral K\"ahler space} if all singularities of $X$ have complex directions. 
\end{definition}

\begin{remark} For an orientable polyhedral pseudomanifold $X$ existence of a parallel orthogonal complex structure $J$ on $X^\circ$ depends only on the image of the holonomy representation $\mathrm{Hol}:\pi_1(X^\circ)\to SO(2n)$. Such a parallel $J$ exists if and only if the image $\mathrm{Hol}(\pi_1(X^\circ))$ is contained in a subgroup of $SO(2n)$ conjugate to $U(n)$.
\end{remark}

Even though Definition \ref{def:PKspaces} requires all metric singularities to have even codimension and have complex directions, in practice the following weaker condition is sufficient.

\begin{lemma}\label{onlyinteger} Let $X$ be a normal polyhedral pseudomanifold and $J$ be a parallel orthogonal complex structure on $X^\circ$. Let $Y^\circ_1,\ldots, Y^\circ_m$ be open strata of metric singularities of complex codimension $1$ with integer conical angle\footnote{See Definition \ref{integerconeangle}}. If these strata have complex direction with respect to $J$ then all singularities of $X$ have complex direction and $(X,J)$ is a polyhedral K\"ahler space.
\end{lemma}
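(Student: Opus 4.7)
The plan is to show that every open stratum of singularities has complex direction, which together with the resulting parity of codimensions gives the definition of a polyhedral K\"ahler space. I would reduce the problem to the codimension $2$ case and then use Proposition \ref{fullyparallel} to propagate the property to higher codimension strata.

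\emph{Codimension $2$ strata.} Let $Z^\circ$ be an open stratum of real codimension $2$ with conical angle $2\pi\alpha$. When $\alpha \in \Z$ the hypothesis gives the complex direction directly. When $\alpha \notin \Z$, a loop around $Z^\circ$ at a regular point $z$ produces a holonomy $R = \mathrm{Id}_{T_zZ^\circ} \oplus R_\alpha$ with respect to the orthogonal splitting $T_zX = T_zZ^\circ \oplus N_z$, and $R_\alpha - \mathrm{Id}$ is invertible on the transverse $2$-plane. Since $J$ is parallel it commutes with $R$, hence with $R - \mathrm{Id}$, and therefore preserves $\ker(R - \mathrm{Id}) = T_zZ^\circ$. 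Thus $Z^\circ$ has complex direction.

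\emph{Higher codimension strata.} Let $Y^\circ$ be an open stratum of codimension $k > 2$ and fix $y \in Y^\circ$. The tangent cone $C_y$ of $X$ at $y$ is a normal polyhedral pseudomanifold that splits as $\R^{2n-k} \times C'$ with $C'$ an essential polyhedral cone of dimension $k$, and every codimension $2$ stratum of $C_y$ takes the form $\R^{2n-k} \times \tilde Z$ with $\tilde Z$ a cone of real codimension $2$ in $C'$. By the previous step, the tangent plane of every such stratum is $J$-invariant. In particular, $J$, initially defined in a cylindrical neighbourhood of $y$ in $C_y$, commutes with every local holonomy generated by loops around these codimension $2$ strata, so it extends consistently by parallel transport to a parallel orthogonal complex structure on all of $C_y^\circ$. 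Given any $v_0 \in T_yY^\circ$, let $v$ be the constant parallel vector field on $C_y^\circ$ tangent to the $\R^{2n-k}$ factor with $v(y) = v_0$; then $v$ is trivially parallel in codimension $2$, and so is $Jv$, because $J$ preserves the tangent plane of every codimension $2$ stratum of $C_y$.

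Proposition \ref{fullyparallel} then ensures that $Jv$ is fully parallel on $C_y$, and in particular parallel to the top codimension stratum through $y$, namely $\R^{2n-k} \times \{0\}$; this forces $Jv(y) \in \R^{2n-k} = T_yY^\circ$. Since $v_0$ was arbitrary, $J$ preserves $T_yY^\circ$, so $Y^\circ$ has complex direction; the existence of an orthogonal complex structure on $T_yY^\circ$ also forces $2n - k$ to be even, i.e.\ $k$ to be even. Every singular stratum thus has even codimension and complex direction, so $(X, J)$ is a polyhedral K\"ahler space. The main technical point is the extension of $J$ to the whole regular part of $C_y$, which is essential to invoke Proposition \ref{fullyparallel} globally on the tangent cone; this is made possible precisely because the codimension $2$ case has already been settled.
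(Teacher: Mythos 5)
Your proof is correct and follows essentially the same route as the paper's: settle the codimension-$2$ strata first, then pass to the tangent cone $\R^{2n-k}\times C'$ at a higher-codimension point and use Proposition \ref{fullyparallel} (the paper invokes it through Corollary \ref{coneJrotation}, which is an immediate consequence) to conclude that $Jv$ stays tangent to the $\R^{2n-k}$ factor. The only cosmetic difference is that you prove the non-integer codimension-$2$ case directly via the holonomy eigenspace argument, whereas the paper cites \cite[Lemma 3.2]{Pan} for the same fact.
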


\begin{proof} By \cite[Lemma 3.2]{Pan} all open strata of codimension $2$ singularities with non-integer conical angles have complex direction. It follows that \emph{all} open strata of codimension $2$ in $(X,J)$ have complex direction.
	
	Let $x\in X^s$ be a metric singularity of codimension larger than $2$ lying on an open stratum of singularities $Y^\circ$. Let $U$ be a small neighbourhood of $x$ and $v$ be a parallel vector field in $U^\circ$ that is parallel to $Y^\circ$. We need to show that $Jv$ is parallel to $Y^\circ$ as well. 
	
	Take the tangent cone $C$ to $X$ at $x$. This is a  normal polyhedral manifold  with $J$ on $C^\circ$. Consider the maximal decomposition $C=\mathbb R^m\times C'$ where $m$ is the real dimension of $Y^\circ$. Then 
	the field $v$ on $U$ gives rise to a parallel vector field on $C$ that is parallel to $(\mathbb R^m,0)$ and is in particular parallel to all strata of $C$.
	
	All open strata of metric singularities of codimension $2$ on $C$ have complex direction, since this is so for $(X,J)$. It follows that the field $Ju$ is parallel to them. Hence we can apply to $Ju$ Corollary \ref{coneJrotation} to conclude that $Ju$ is also parallel to $(\mathbb R^m,0)$. This proves that $Jv$ is parallel to $Y^\circ$.
\end{proof}

\subsection{Singularities of PK metrics are analytic subsets}\label{sect:singularitiesPK}

The goal of this section is to prove the next.

\begin{theorem}[Analyticity of singularities]\label{singanalitic} Let $(X, g)$ be a complex manifold with a PK metric as in Definition \ref{def:PK}. Then the following statements hold.
	
	\item[(1)] For each $k$ the set $X^s_{2k-1}\setminus X^s_{2k}$ of singularities of codimension $2k-1$ is empty.  
	\item[(2)] For each $k$ the set of singularities $X^s_{2k}$
	of codimension at least $2k$ is a complex analytic subset of codimension at least $k$. Moreover all connected components of $X^s_{2k-2}\setminus X^s_{2k}$ are analytic $n-k+1$ dimensional subsets of $X\setminus X^s_{2k}$.
	
	\item[(3)] Connected components of $X^s_{2k-2}\setminus X^s_{2k}$ are smooth.  
	
	\item[(4)] $X^s$ is a complex hypersurface in $X$.
\end{theorem}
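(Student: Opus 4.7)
The plan is to prove the four claims simultaneously, taking as central input the fact that at every $x \in X$ the tangent cone $C_x$ carries the structure of a PK cone on $\C^n$ in the sense of Definition \ref{def:PK}. To obtain this, I fix a holomorphic chart centred at $x$ identifying a neighbourhood of $x$ with a neighbourhood of $0 \in \C^n$, pull back $g$, and consider the family of rescaled metrics $g_\lambda = \lambda^{-2} \varphi_\lambda^* g$ with $\varphi_\lambda(z) = \lambda z$. Because $\varphi_\lambda$ is holomorphic and $g$ is K\"ahler on the regular locus, each $g_\lambda$ is K\"ahler on its regular locus with respect to the standard complex structure on $\C^n$. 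Letting $\lambda \to 0$, the $g_\lambda$ converge to a K\"ahler cone metric $g_\infty$ on $\C^n$ isometric to the tangent cone $C_x$, whose polyhedral structure agrees with the tangent polyhedral structure of $X$ at $x$. In particular $C_x$ is a complex manifold, so its polyhedral singularities all have even real codimension; transferring this through the local cylindrical isometry around $x$ yields claim (1).

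Now I fix an open stratum $Y^\circ \subset X$ of codimension $2k$ and $x \in Y^\circ$. The cylindrical decomposition of the tangent cone reads $C_x = \R^{m} \times C'$ with $m = 2n-2k$ and $C'$ essential, and the $\R^m$-direction is tangent to $Y^\circ$ at $x$. Since $C_x \cong \C^n$ is a PK cone, its holonomy on the regular locus preserves the parallel complex structure $J$; by the standard holonomy argument for codimension-$2$ strata of non-integer angle this forces complex direction, and then Lemma \ref{onlyinteger} together with Corollary \ref{coneJrotation} propagates complex direction to integer-angle codimension-$2$ strata and to all higher-codimension strata. Consequently the $\R^m$-factor is a complex subspace of $\C^n$ of complex dimension $n-k$, giving $J$-invariance of $T_xY^\circ$. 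Being a smooth real submanifold (flat in the local product chart) with $J$-invariant tangent spaces, $Y^\circ$ is then a smooth complex submanifold of $X$ of pure complex codimension $k$ by the Newlander--Nirenberg theorem. This establishes claim (3) and the smoothness part of (2).

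The closed-analyticity statement in (2) I obtain by descending induction on $k$, from $k = n$ down to $k = 1$. The base case $k = n$ is immediate: $X^s_{2n}$ is a closed polyhedral subcomplex of complex dimension $0$, hence a discrete set. For the inductive step, assuming $X^s_{2k}$ is a complex analytic subset of $X$ of complex codimension $\geq k$, the set $X^s_{2k-2} \setminus X^s_{2k}$ is locally a finite union of smooth complex submanifolds of $X \setminus X^s_{2k}$ of pure complex codimension $k-1$ and hence a complex analytic subset of $X \setminus X^s_{2k}$. Since $X^s_{2k-2}$ is closed in $X$ and $X^s_{2k}$ has strictly larger codimension than this analytic set, the Remmert--Stein extension theorem identifies $X^s_{2k-2}$ as a complex analytic subset of $X$ of codimension $\geq k-1$. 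Claim (4) is then an immediate consequence of the $k=1$ case of claim (2) combined with claim (3): $X^s = X^s_2$ is complex analytic, and its top-dimensional components $X^s \setminus X^s_4$ are smooth complex hypersurfaces, so $X^s$ is a complex hypersurface of $X$.

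The main obstacle I anticipate is the very first step: verifying carefully that the rescaling procedure produces a PK cone on $\C^n$ in the precise sense of Definition \ref{def:PK}, that the required convergence of $g_\lambda$ on the regular locus is strong enough to preserve the K\"ahler condition in the limit, and that the resulting standard complex structure on $\C^n$ is consistent with the ambient complex structure induced from $X$. Once this identification is in place, the remaining arguments are a fairly direct combination of holonomy considerations, Lemma \ref{onlyinteger}, Corollary \ref{coneJrotation}, Newlander--Nirenberg, and Remmert--Stein as above.
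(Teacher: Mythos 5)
There are two genuine gaps, plus a structural circularity that undermines the opening step. First, your treatment of claim (1) inverts the logic of Lemma \ref{onlyinteger}: that lemma takes as a \emph{hypothesis} that the codimension-$2$ strata with integer conical angle have complex direction and only then propagates complex direction to higher-codimension strata; it does not "propagate complex direction to integer-angle codimension-$2$ strata" as you assert. The integer-angle case is exactly the hard part, and the paper handles it separately (Lemma \ref{lemma:complexdirections}) by analysing the degree-$m$ developing map of the tangent cone $\R^{2n-2}\times C_{2\pi m}$, producing via averaging over the $\Z_m$ deck group a continuous function holomorphic off its zero set and vanishing on $C^s$, and then invoking Rado's and Riemann's extension theorems to conclude the branch locus is a complex subspace. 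Your proposal contains no substitute for this argument; instead you write "$C_x$ is a complex manifold, so its polyhedral singularities all have even real codimension," which is precisely claim (1) applied to $C_x$ and hence circular. Relatedly, the paper obtains the complex structure on the tangent cone (Lemma \ref{conemb}) as a \emph{consequence} of part (2) of this theorem — analyticity of $X^s$ is needed to extend the homothetic contractions holomorphically — so taking "$C_x$ is a PK cone on $\C^n$" as the starting input reverses the actual order of dependence, and your rescaling construction does not close this loop (you flag this yourself as the main obstacle, correctly).

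Second, your proof of claim (4) does not prove claim (4). Showing that $X^s$ is analytic and that its codimension-one part is a smooth hypersurface does not exclude irreducible components of $X^s$ of complex codimension $\geq 2$. The paper rules these out by a separate argument (Proposition \ref{stratprop}(3)): at a suitable point $z$ of such a component the tangent cone splits as $\C^{n-k}\times X'$ where $X'$ is a polyhedral cone singular only at its vertex, hence isometric to $\C^k/\Gamma$ for a finite group $\Gamma$ acting freely on $\C^k\setminus 0$, contradicting the smoothness of the complex manifold $X$. This step is absent from your proposal. On the positive side, your descending Remmert--Stein induction for the closed-analyticity statement in (2) matches the paper's, and deriving smoothness of the open strata from $J$-invariance of their tangent spaces (no Newlander--Nirenberg needed, since $J$ is already integrable on $X$) is a reasonable variant of the paper's limiting-family-of-holomorphic-embeddings argument — but both rest on the complex-directions property whose proof is the missing core.
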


 We will first prove claims (1), (2). Then in Proposition  \ref{stratprop} we conclude the proof of claims (3) and (4). First, we need a collection of preliminary results.

\begin{lemma}\label{lemma:complexdirections} Let $X$ be a complex manifold with a PK metric $g$. Then $(X,g)$ is a polyhedral K\"ahler space, i.e., all metric singularities of $X$ have complex directions.   
\end{lemma}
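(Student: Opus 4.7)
My plan is to verify the hypotheses of Lemma \ref{onlyinteger}. Three items need checking: (a) $X$ is a normal polyhedral pseudomanifold; (b) there is a parallel $g$-orthogonal complex structure on $X^\circ$; and (c) every open stratum of real codimension $2$ with integer conical angle has complex direction with respect to the given complex structure. Item (a) is immediate since $X$ is a topological manifold, so the link of every simplex in any triangulation is a topological sphere, giving a normal pseudomanifold. Item (b) holds by taking the restriction to $X^\circ$ of the ambient complex structure $J_X$ of $X$: this restriction is $g$-orthogonal because $g$ is Hermitian with respect to $J_X$, and it is parallel with respect to the Levi-Civita connection of $g|_{X^\circ}$ because $g$ is K\"ahler on $X^\circ$.

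The substantive content is (c). Fix $x \in Y^\circ$ on a codimension $2$ stratum of integer cone angle $2\pi k$, $k \geq 2$ (since $k = 1$ yields no metric singularity). In local complex coordinates on $X$ near $x$, the flatness of $g|_{X^\circ}$ gives Ricci-flatness, and the K\"ahler identity $\operatorname{Ric} = -i\partial\bar\partial \log \det(g_{i\bar j}) = 0$ on $X^\circ$ forces $u := \log \det(g_{i\bar j})$ to be pluriharmonic on $X^\circ$ near $x$. Locally $u = 2\operatorname{Re}(h)$ for an a priori multi-valued holomorphic $h$; I set $f = e^h$, so $|f|^2 = \det(g_{i\bar j})$ on the punctured neighborhood. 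Comparison with the local model $\R^{2n-2} \times C_k$, in which $\det(g_{i\bar j})$ behaves like $|z|^{2(k-1)}$ in a transverse complex coordinate $z$, shows that the monodromy of $f$ around a small loop encircling $Y^\circ$ equals $\exp\bigl(2\pi i (k-1)\bigr) = 1$; hence $f$ is in fact single-valued on the punctured neighborhood.

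In the integer angle case the metric tensor $g$ extends smoothly, as a possibly degenerate Hermitian form, across $Y^\circ$, because the $2$-cone metric $k^2 |z|^{2(k-1)} |dz|^2$ on $C_k$ is smooth for integer $k$. Consequently $\det(g_{i\bar j})$ is bounded on a neighborhood of $Y^\circ$ and vanishes along $Y^\circ$, so $|f|$ is bounded there. Riemann's removable singularity theorem for bounded holomorphic functions across a closed set of real codimension $2$ then yields a holomorphic extension $\tilde f$ of $f$ to a full neighborhood of $x$ in $X$, necessarily vanishing on $Y^\circ$. Thus $Y^\circ$ is locally contained in the complex analytic hypersurface $\{\tilde f = 0\}$; matching real dimensions ($2n-2$) forces $Y^\circ$ to coincide locally with the smooth locus of $\{\tilde f = 0\}$. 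In particular $Y^\circ$ is locally complex analytic and $T_x Y^\circ$ is $J_X$-invariant, which is the required complex direction condition.

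The main obstacle I anticipate is making rigorous the regularity claim that $\det(g_{i\bar j})$ is bounded and continuous in local complex coordinates on $X$ near $Y^\circ$ in the integer angle case, together with the coordinate-independence of the monodromy computation for $f$. Both reduce to smoothness of the local polyhedral isometry identifying $X$ near $Y^\circ$ with $\R^{2n-2} \times C_k$, which should follow from a regularity statement for isometries between flat K\"ahler manifolds with degenerate metric tensors of integer order vanishing along a codimension $2$ submanifold.
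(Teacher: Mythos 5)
Your reduction to Lemma \ref{onlyinteger} and items (a), (b) match the paper, and the restriction to integer cone angles $2\pi k$ is the right move. The problem is in (c): the argument is circular. Both the monodromy computation for $f$ and the boundedness of $\det(g_{i\bar j})$ are read off from the local model $\R^{2n-2}\times C_k$ \emph{as if} the model were realized in ambient holomorphic coordinates with $Y^\circ=\{z=0\}$ a complex hypersurface and the transverse $2$-cone sitting in a complex line. But the identification of a neighbourhood of $x$ with $\R^{2n-2}\times C_k$ is only an isometry of metric spaces; whether it is compatible with the ambient complex structure of $X$ — equivalently, whether $Y^\circ$ is a complex submanifold and the cone factor points in a complex direction — is precisely the assertion of the lemma. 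You flag this at the end as a ``regularity statement for isometries \ldots which should follow,'' but it does not follow from any general regularity principle: without it you have no control on how $\det(g_{i\bar j})$, computed in the complex coordinates of $X$, behaves near $Y^\circ$, so neither the period $\oint *\,d\log\det(g_{i\bar j})=2\pi(k-1)$ nor the boundedness of $|f|$ is justified. A secondary but real issue is the appeal to a removable-singularity theorem across $Y^\circ$: at this stage $Y^\circ$ is only known to be a codimension-$2$ topological stratum of the polyhedral structure, not an analytic set (Theorem \ref{singanalitic} is proved \emph{after}, and using, this lemma), so Theorem \ref{Riemann} does not apply to it, and one would need separate control of its Hausdorff measure in the ambient coordinates — which is again the same regularity question.

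For contrast, the paper avoids the metric tensor entirely: it passes to the tangent cone $C=\R^{2n-2}\times C'$ at $x$ and uses the developing map $\dev\colon C^\circ\to\C^n$, which is holomorphic on the regular part by construction (it is built from local flat charts, not from the unknown embedding of $Y^\circ$ in $X$). To extend $\dev$ across $C^s$ via the Riemann extension theorem one must first know $C^s$ lies in a proper analytic set; this is achieved by averaging an arbitrary non-invariant holomorphic function over the $\Z_m$ deck group of $\dev$ to produce a continuous function, holomorphic off its zero set and vanishing on $C^s$, and then invoking Rado's theorem. Once $\dev$ is holomorphic on all of $C$, its ramification locus $H$ is automatically a complex subspace, which is the complex-direction statement. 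If you want to salvage your $\det(g_{i\bar j})$ idea, you would have to first establish the analyticity of $Y^\circ$ by some such independent device — at which point the determinant computation becomes unnecessary.
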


\begin{proof} 
	Since the flat metric $g$ is K\"ahler on the subspace $X^\circ$ of regular points, the complex structure $J$ is parallel on $X^\circ$. So according to Lemma \ref{onlyinteger} we need to show that each open stratum of singularities of complex codimension $1$ with integer conical angle has complex direction. Let $Y^\circ$ be such a stratum and let $x\in Y^\circ$ be a point. Consider the tangent cone $C$ to $X$ at $x$. It is isometric to the direct product $\mathbb R^{2n-2}\times C'$, where $C'$ is a $2$-cone with conical angle $2\pi m$ for some integer $m$. 
	
	Consider the developing map $\mathrm{dev}:C\to \mathbb R^{2n}=\mathbb R^{2n-2}\times \mathbb R^2$ (of degree $m$ on the second factor) to the Euclidean space $\mathbb R^{2n}$. The map $\mathrm{dev}$ ramifies along an Euclidean subspace $H\subset \mathbb R^{2n}$ of dimension $2n-2$. The complex structure on $X$ induces a parallel complex structure $J_C$ on $C^\circ$. Therefore, we can choose a parallel complex structure $J$ on $\mathbb R^{2n}$ so that $(\mathbb R^{2n},J)=\mathbb C^n$ and the map $\mathrm{dev}: C^\circ\to \mathbb C^n$ is holomorphic. 
	
	Let's show that the map $\mathrm{dev}: C\to \mathbb C^n$ is holomorphic on the whole $C$. Since the map $\mathrm{dev}$ is continuous, by the Riemann extension Theorem \ref{Riemann}  it is enough to show that $C^s$ is contained in a proper analytic subset of $C$. Indeed, in such case the pull-back functions $\mathrm{dev}^*z_i$ will be holomorphic on $C$ and the subspace $H$ will be complex.  Let's take a point $y\in C^s$, and take its open neighbourhood $U$ in $C$, invariant under the action of the group $\mathbb Z_m=\mathbb Z/m\mathbb Z$ of deck transformations. Take any holomorphic function $f$ in $U$ that is not invariant under $\mathbb Z_m$ (such functions exist, since we can take a function that takes different values at two points of an orbit of the $\mathbb Z_m$ action). Consider now a new function 
	$$\widetilde f=f-\frac{1}{m}\sum_{h\in \mathbb Z_m}h^*(f).$$
	Since $\mathbb Z_m$ is acting holomorphically on $U^{\circ}$, this function is holomorphic on $U^\circ$ and continuous on $U$. Moreover, by construction $\widetilde f$ vanishes on $C^s\cap U$. It follows by Rado's Theorem \ref{Rado} that $\widetilde f$ is analytic, and so $C^s\cap U$ is contained in a proper analytic subset of $U$.
\end{proof}

\begin{theorem}[{Riemann extension, \cite[pg. 38]{FG}}]\label{Riemann}
	Let \(D\) be a domain in \(\C^n\) and let \(A \subset D\) be a proper analytic subset. If \(f\) is a holomorphic function on \(D \setminus A\) that is bounded in a neighbourhood of every point in \(A\), then \(f\) can be holomorphically extended to \(D\).
\end{theorem}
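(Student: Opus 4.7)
The proof will be a local argument, reducing to the one-variable removable singularity theorem combined with the Cauchy integral formula. Since holomorphicity is a local property and the desired extension is unique (as \(D \setminus A\) is dense in \(D\)), it suffices to prove that \(f\) extends holomorphically to some neighborhood of each point \(p \in A\). I will fix such a point, translate so that \(p = 0\), and work in a small polydisc around the origin.

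The first step is to reduce to the case where \(A\) is (locally) a hypersurface. Any proper analytic subset \(A \subset D\) is, in a neighborhood of \(p\), contained in the zero set of a single non-identically-zero holomorphic function \(h\); one can take, for instance, a non-trivial element of the ideal sheaf of \(A\) at \(p\). Replacing \(A\) by the larger set \(V = \{h = 0\}\) only enlarges the set on which we need to extend and keeps \(f\) holomorphic and bounded on \(D \setminus V\), so we may assume \(A = V\) is a hypersurface. After a linear change of coordinates and shrinking, the Weierstrass preparation theorem lets me replace \(h\) by a Weierstrass polynomial in \(z_n\), so that for each fixed \(z' = (z_1, \ldots, z_{n-1})\) in a small polydisc \(\Delta'\), the slice \(V \cap (\{z'\} \times \C)\) consists of finitely many points, and moreover there exists \(r > 0\) such that \(V \cap (\Delta' \times \{|z_n| = r\}) = \emptyset\).

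The second step is to write down the extension explicitly by the Cauchy integral formula. Define, for \((z', z_n) \in \Delta' \times \{|z_n| < r\}\),
\begin{equation*}
\tilde{f}(z', z_n) = \frac{1}{2\pi i} \int_{|w|=r} \frac{f(z', w)}{w - z_n}\, dw.
\end{equation*}
The integrand is holomorphic in \((z', z_n)\) jointly (since \(V\) does not meet the cycle of integration), so \(\tilde{f}\) is holomorphic on \(\Delta' \times \{|z_n| < r\}\). For each fixed \(z' \in \Delta'\), the one-variable slice \(w \mapsto f(z', w)\) is bounded and holomorphic on \(\{|w| < r\}\) minus finitely many points, hence extends holomorphically across those points by the classical one-variable Riemann removable singularity theorem; the Cauchy integral above computes this extension. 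In particular \(\tilde{f}(z', z_n) = f(z', z_n)\) whenever \((z', z_n) \notin V\), so \(\tilde{f}\) is the desired extension on a neighborhood of \(0\).

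The only delicate point is the uniform choice of the radius \(r\), and this is where the use of Weierstrass preparation is essential: without rewriting \(h\) as a polynomial in \(z_n\) monic after a generic linear change of coordinates, one has no guarantee that \(V\) does not accumulate at the boundary of small polydiscs and stay off the chosen integration cycle. Once this reduction is in place the rest is essentially formal. Patching the local extensions across different points \(p \in A\) is immediate from uniqueness, since any two holomorphic extensions of \(f\) to an open subset of \(D\) must agree on the dense open set where \(f\) is already defined.
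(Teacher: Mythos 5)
The paper does not prove this statement: it is quoted as a classical result with a citation to Fritzsche--Grauert, so there is no internal proof to compare against. Your argument is correct and is the standard one found in such references: reduce to the case where \(A\) is contained in a hypersurface \(\{h=0\}\) via a local defining function (using that a proper analytic subset of a domain has empty interior, so some defining function is not identically zero), normalise \(h\) by a linear change of coordinates and Weierstrass preparation so that the singular set misses the cycle \(\{|z_n|=r\}\) uniformly for \(z'\in\Delta'\), and combine the one-variable removable singularity theorem on slices with the Cauchy integral to obtain a jointly holomorphic extension that agrees with \(f\) off the hypersurface. The only cosmetic imprecision is the phrase ``bounded \ldots on \(\{|w|<r\}\) minus finitely many points'': the hypothesis only gives boundedness of \(f(z',\cdot)\) near each puncture, but that is exactly what the one-variable theorem requires, so nothing is lost.
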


\begin{theorem}[{Rado, \cite[pg. 302]{Chirka}}]\label{Rado} Let $D$ be a domain in $\mathbb  C^n$, and let $f$ be a function continuous in $D$ and holomorphic everywhere outside its zero set. Then f is holomorphic  in $D$.
\end{theorem}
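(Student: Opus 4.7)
The plan is to reduce the $n$-variable statement to one complex variable by invoking Hartogs' theorem on separate analyticity, and then to prove the one-variable case via subharmonic function theory. For the reduction, restrict $f$ to each complex line $\ell \subset \mathbb{C}^n$: the restriction $f|_\ell$ is continuous on $\ell \cap D$ and holomorphic on $(\ell \cap D)\setminus (f|_\ell)^{-1}(0)$, so it satisfies the hypotheses of the one-variable Rado theorem. Granting that theorem, $f|_\ell$ is holomorphic on $\ell\cap D$ for every such $\ell$, making $f$ separately holomorphic; combined with continuity (or directly by Hartogs' theorem on separate analyticity), this yields $f$ holomorphic on $D$.

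For one complex variable, the central observation is that $u = \log|f|$ is subharmonic on $D$: it is upper semicontinuous because $f$ is continuous and $\log$ takes value $-\infty$ at $0$; on the open set $D \setminus Z$ (where $Z = f^{-1}(0)$) it is harmonic, being locally $\operatorname{Re}(\log f)$ on a nonvanishing branch; and the sub-mean value inequality at points of $Z$ is automatic since $u = -\infty$ there. To prove holomorphicity near a zero $z_0 \in Z$, the plan is to shrink to a closed disk $\overline{\Delta} \subset D$ centered at $z_0$ on whose boundary $f$ is nowhere zero (such a disk exists unless $f$ is identically zero in a neighbourhood of $z_0$, in which case the conclusion is trivial). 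Let $h$ be the harmonic extension to $\Delta$ of $\log|f||_{\partial\Delta}$ and $\tilde h$ its harmonic conjugate on the simply connected $\Delta$; then $F = \exp(h + i\tilde h)$ is a nowhere-vanishing holomorphic function on $\Delta$ with $|F| \geq |f|$ on $\Delta$ and $|F| = |f|$ on $\partial\Delta$. The ratio $\phi = f/F$ is holomorphic on $\Delta \setminus Z$, extends continuously to $\Delta$ by zero on $Z$, satisfies $|\phi| \leq 1$, and has $|\phi| = 1$ on $\partial\Delta$.

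The main obstacle is the concluding step: showing that $\phi$ extends holomorphically across $Z$. Since $Z$ may be an arbitrary closed set of positive two-dimensional measure, the classical Riemann removable singularity theorem does not apply on boundedness alone; one must exploit the crucial fact that $\phi$ vanishes identically on $Z$. A clean route is to note that $\log|\phi|$ is subharmonic on $\Delta$ with boundary values zero, forcing $\log|\phi| \leq 0$ on $\Delta$; dually, $1 - |\phi|^{2}$ is superharmonic on $\Delta$ (harmonic on $\Delta \setminus Z$ and satisfying the super-mean value inequality automatically at points of $Z$ where it attains its upper bound $1$) and vanishes on $\partial\Delta$. Combining these with the fact that $\log|F\phi| = \log|f|$ coincides on $\partial\Delta$ with the prescribed harmonic majorant $h$, one identifies $f$ throughout $\Delta$ with its Cauchy integral
\[
\tilde f(z) \;=\; \frac{1}{2\pi i}\int_{\partial\Delta} \frac{f(\zeta)}{\zeta - z}\,d\zeta,
\]
which is holomorphic on $\Delta$ and has the same boundary values as $f$; the difference $f - \tilde f$ is continuous on $\overline{\Delta}$, vanishes on $\partial\Delta$, is holomorphic on $\Delta \setminus Z$, and must vanish identically by a maximum-principle argument applied to $\log|f - \tilde f|$ (which is subharmonic on $\Delta$, equals $-\infty$ on $Z$, and is bounded above by the boundary data). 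This yields $f = \tilde f$ on $\Delta$, proving the one-variable case and, via the reduction above, the full theorem.
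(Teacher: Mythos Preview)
The paper does not prove Rado's theorem; it is merely stated with a citation to Chirka and then invoked. So there is no ``paper's own proof'' to compare against, and the question is whether your argument stands on its own.

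The reduction to one variable via restriction to complex lines and Hartogs' theorem is fine. The construction of the auxiliary function $F$ and the normalized $\phi = f/F$ is also correct, and your observation that $\log|f|$ (and hence $\log|\phi|$) is subharmonic is the right starting point. However, the concluding Cauchy-integral step has a genuine gap. You assert that $f - \tilde f$ vanishes on $\partial\Delta$ and that $\log|f-\tilde f|$ is subharmonic on $\Delta$; neither is justified. The Cauchy integral of a merely continuous boundary function does \emph{not} in general reproduce that function as its boundary values (Plemelj's formulas show the inside limit is $\tfrac12 f + \text{p.v.}$, not $f$), so you cannot claim $f - \tilde f \to 0$ on $\partial\Delta$ without already knowing $f$ is holomorphic. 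Moreover, $\log|f - \tilde f|$ is subharmonic on $\Delta\setminus Z$, but at points of $Z$ you have $f=0$ while $\tilde f$ need not vanish, so the ``automatic'' sub-mean-value inequality you used for $\log|f|$ (relying on the value being $-\infty$) does not apply to $\log|f-\tilde f|$. The argument is therefore circular at this point.

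A clean way to finish is to exploit the subharmonicity of $\log|f|$ more fully: since $\log|f|$ is subharmonic and not identically $-\infty$, its $-\infty$ set $Z$ is \emph{polar}. Closed polar sets are removable for bounded harmonic functions, so $\mathrm{Re}\,f$ and $\mathrm{Im}\,f$ (locally bounded, harmonic on $D\setminus Z$) extend harmonically across $Z$; the extensions agree with $f$ by continuity and density, and the Cauchy--Riemann equations persist by continuity of derivatives of the (smooth) harmonic extensions. This replaces your Cauchy-integral step with a standard removability fact. Also, your claim that a disk with $f\neq 0$ on the boundary exists should be justified by first noting $Z$ has measure zero (from subharmonicity), whence almost every radius works.
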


Here is the remaining ingredient for proving Theorem \ref{singanalitic} (1-2).

\begin{theorem}[{Remmert-Stein, \cite[pg. 150]{FG}}]\label{Remmert-Stein}  Assume that $D\subset \mathbb C^n$ is a domain, $K\subset D$ is  an $n-d$-dimensional analytic subset, and $A$ an analytic subset of $D\setminus K$ all components of which have dimension $>n-d$. Then the closure $\overline A$ in $D$ is an analytic set in $D$.
\end{theorem}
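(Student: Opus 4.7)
The plan is to prove analyticity of $\overline A$ locally near each point $p \in K$ (away from $K$ there is nothing to prove, since $\overline A$ agrees with $A$ there), and to proceed by induction on $d = \codim_D K$. A first reduction, done by the inductive hypothesis applied to $(A, K \setminus K_{\mathrm{sing}})$ inside $D \setminus K_{\mathrm{sing}}$ (where $K_{\mathrm{sing}}$ has strictly smaller dimension), lets us assume $K$ is a smooth complex submanifold near $p$. After shrinking $D$ and choosing local holomorphic coordinates, I may assume $D = \Delta^n$ is a polydisk centred at $0$, that $K = \{z_{n-d+1} = \cdots = z_n = 0\} \cap D$, and, by treating irreducible components one at a time, that $A$ is irreducible of pure dimension $k$ with $k \geq n-d+1$.

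Next I would choose a generic projection $\pi : \C^n \to \C^k$ along a linear subspace complementary to a generic $k$-plane tangent to $A$ at regular points. For a sufficiently small polydisk $D' = \Delta^k \times \Delta^{n-k} \subset D$ around $0$, the genericity guarantees that $\overline A \cap (\Delta^k \times \partial \Delta^{n-k}) = \emptyset$, so that $\pi|_A : A \cap D' \to \Delta^k$ is a proper, finite-to-one holomorphic map of generic degree $s$. The image $\pi(K)$ is then an analytic subset of $\Delta^k$ of dimension $n-d < k$, hence a proper analytic subset. Over the complement $\Delta^k \setminus \pi(K)$ the fibres $\pi^{-1}(w) \cap A$ consist of $s$ points with well-defined (but multi-valued) $(n-k)$-tuples of coordinates $\zeta_1(w), \ldots, \zeta_s(w) \in \Delta^{n-k}$, and the elementary symmetric functions
\[
\sigma_\alpha(w) \;=\; \sum_{j=1}^{s} \zeta_j(w)^{\alpha}, \qquad \alpha \in \Z_{\geq 0}^{n-k},
\]
are single-valued holomorphic functions on $\Delta^k \setminus \pi(K)$. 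Because each $\zeta_j$ lies in the bounded set $\overline{\Delta^{n-k}}$, these symmetric functions are \emph{bounded}, and therefore Theorem \ref{Riemann} extends each $\sigma_\alpha$ holomorphically across $\pi(K)$ to all of $\Delta^k$.

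From the extended symmetric functions I would build, for each coordinate direction $1 \leq j \leq n-k$, a monic pseudo-polynomial $P_j(w, t) \in \mathcal{O}(\Delta^k)[t]$ of degree $s$ whose roots for $w \notin \pi(K)$ are exactly the $j$-th coordinates of $\pi^{-1}(w) \cap A$. The common zero locus
\[
\widetilde A \;=\; \bigl\{(w,z) \in D' \,:\, P_j(w, z_j) = 0 \ \text{for all } j \bigr\}
\]
is a closed analytic subset of $D'$, agrees with $A$ on $D' \setminus K$ by construction, and contains $\overline{A \cap D'}$ by continuity; a dimension count (using that $\dim \widetilde A \leq k = \dim A$) then forces $\widetilde A = \overline{A \cap D'}$. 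This proves analyticity of $\overline A$ near $p$, and since $p \in K$ was arbitrary, on all of $D$. The main obstacle is Step 2: ensuring that the $\zeta_j$ remain in a bounded polydisk as $w$ approaches $\pi(K)$, i.e.\ that no branch of $A$ ``escapes'' along $K$. This boundedness is precisely the properness of $\pi|_A$ on a neighbourhood of $0$, which demands a careful simultaneous choice of polydisk size and projection direction; once it is secured the Riemann extension step is automatic.
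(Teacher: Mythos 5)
The paper does not actually prove this statement: it is the classical Remmert--Stein extension theorem, imported from [FG, p.~150] and used as a black box (e.g.\ in the proof of Theorem \ref{singanalitic}), so your proposal can only be judged on its own merits. Your outline is the standard textbook strategy --- reduce to smooth $K$, project $A$ to a finite branched cover of $\Delta^k$, form the symmetric functions of the fibre coordinates, extend them across $\pi(K)$ by Riemann extension using boundedness, and recover $\overline A$ from the resulting pseudopolynomials --- but it has a genuine gap exactly where the theorem is hard. The properness of $\pi|_{A\cap D'}$, i.e.\ the existence of a polydisk $D'=\Delta^k\times\Delta^{n-k}$ with $\overline A\cap(\Delta^k\times\partial\Delta^{n-k})=\emptyset$, is not a routine matter of ``a careful simultaneous choice of polydisk size and projection direction'': it is the entire content of the theorem, and it is the only place where the hypothesis $\dim A>\dim K$ is consumed. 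That hypothesis is essential --- for $\dim A=\dim K$ the statement is false (take $K=\{0\}\subset\C$ and $A=\{1/m:\ m\ge 1\}$) --- and a priori nothing prevents $\overline A$ from accumulating on $K$ so wildly that, say, a complementary $(n-k)$-plane through $p$ meets $\overline A$ in a countable set whose polyradii are dense near $0$, defeating every choice of bidisk. The classical proofs spend most of their effort precisely on ruling this out; deferring it leaves the proof without its core.

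There is a second, repairable, error in the reconstruction step. The common zero locus $\widetilde A$ of the coordinatewise pseudopolynomials $P_j(w,z_j)$ is in general strictly larger than $\overline{A\cap D'}$: it contains all ``mixed'' points whose $j$-th coordinate comes from the $\sigma(j)$-th point of the fibre for varying permutations $\sigma$. For instance, with $A=\{z_1=z_2=w\}\cup\{z_1=z_2=-w\}$ in $\C\times\C^2$ one gets $P_1=P_2=t^2-w^2$ and $\widetilde A$ has four sheets while $A$ has two; both are pure $1$-dimensional, so your dimension count does not force $\widetilde A=\overline{A\cap D'}$. The standard fixes are to use pseudopolynomials in generic linear combinations of the fibre coordinates, or to note that $\widetilde A$ is pure $k$-dimensional and finite over $\Delta^k$ and take the union of those irreducible components of $\widetilde A$ that meet $A$. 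Finally, your reduction to smooth $K$ has the induction backwards --- the smooth case is the main lemma, applied to $K_{\mathrm{reg}}$, while the inductive hypothesis on $\dim K$ handles $K_{\mathrm{sing}}$ --- but that is cosmetic.
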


\begin{proof}[Proof of Theorem \ref{singanalitic}: (1), (2)] 
	
	(1) This follows from Lemma \ref{lemma:complexdirections}.
	
	(2) We prove the statement by induction. For $2k=2n+2$ the  subset $X^s_{2k}\subset X$ is empty, $X^s_{2k-2}\setminus X^s_{2k}=X^s_{2n}$ is a discreet union of points, so it is an analytic subset of complex codimension $n$. Suppose now the statement is proven for $2k\le 2n+2$ and let us prove it for $2k-2$. We know by induction that $X^s_{2k}$ is an analytic subset of $X$ complex codimension at least $k$. We will prove the same claim for $X^s_{2k-2}$ by applying Theorem  \ref{Remmert-Stein}. For this it is enough to show that each connected component of $X^s_{2k-2}\setminus X^s_{2k}$ in $X\setminus X^s_{2k}$ is an analytic subset of complex dimension $n-k+1$. 
	
	Let $Y^\circ$ be a connected component of $X^s_{2k-2}\setminus X^s_{2k}$, i.e. an open stratum of codimension $2k-2$ singularities. Let $x\in Y^\circ$ be a point. It has an open neighbourhood $U\subset X\setminus X_{2k}^s$ which is a metric product $B_\varepsilon\times C_\varepsilon$,  where $B_\varepsilon$ is a radius $\varepsilon$ ball in $\mathbb R^{2n-2k+2}$ and $C_\varepsilon$ is a radius $\varepsilon$-ball centred at the vertex of a polyhedral cone. We denote by $\varphi: B_\varepsilon\times C_\varepsilon\to X$ the isometric embedding. Our goal is to show that $\varphi(B_\varepsilon,0)$ is a complex analytic subset of $U$ of dimension $n-k+1$.

	Since $Y^\circ$ has complex direction, for any regular point $z\in C_\varepsilon$ the ball $\varphi(B_{\varepsilon},z)$ is a complex submanifold in $U$. Consider the path $tz\in C_\varepsilon$ for $t\in [0,1]$. All the balls $(B_{\varepsilon},zt)$ are naturally identified with $B_{\varepsilon}$ and for $t>0$ the complex structure on $\varphi(B_{\varepsilon},zt)$ pulls back to the same linear complex structure $J$ in  $B_{\varepsilon}$. Now, we have a family of maps $\varphi: ((B_{\varepsilon},J),zt)\to U$ which are holomorphic for $t\in (0,1]$ and which depends continuously on $t\in [0,1]$. It follows that the map $\varphi: ((B_{\varepsilon},J),0)\to U$ is also holomorphic. So $\varphi(B_\varepsilon,0)$ is a complex analytic subset of $U$ of dimension $n-k+1$. We proved that $Y^\circ$ is complex analytic in $X\setminus X_{2k}^s$. This concludes the proof of (2).
\end{proof}

The first application of Theorem \ref{singanalitic} (2) is the following lemma.

\begin{lemma}\label{conemb}
	Let $(X,g)$ be a complex manifold with a $PK$ metric. For any point $x\in X$ the polyhedral tangent cone $C_x$ has a natural structure of complex manifold with a PK cone metric.
\end{lemma}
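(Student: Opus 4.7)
The plan is to first transport the complex structure from a conic neighbourhood of $x$ in $X$ to a neighbourhood of the vertex of $C_x$, and then to propagate it globally using the cone dilations. I would begin by choosing a conic neighbourhood $U\subset X$ of $x$ that embeds isometrically onto an open neighbourhood $W\ni 0$ in $C_x$. Since $U$ is an open subset of the complex manifold $X$, this identification equips $W$ with a complex manifold structure in which the restriction of $g_{C_x}$ is Kähler on $W\cap C_x^\circ$. The dilations $\mu_\lambda: C_x \to C_x$, $y \mapsto \lambda y$, for $\lambda\in(0,1]$ give a deformation retraction of $C_x^\circ$ onto $W\cap C_x^\circ$, so the holonomy of the flat metric on $C_x^\circ$ coincides with that of $W\cap C_x^\circ\subset X^\circ$; since the latter is contained in $U(n)$ (because $(X,g)$ is a polyhedral K\"ahler space by Lemma \ref{lemma:complexdirections}), the complex structure $J$ on $W\cap C_x^\circ$ extends by parallel transport to a globally defined parallel complex structure on $C_x^\circ$.

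Next, I would observe that each dilation $\mu_\lambda$ is an affine self-similarity of the polyhedral cone whose derivative on $C_x^\circ$ is the scalar $\lambda\cdot \mathrm{Id}$, hence automatically $J$-linear; so $\mu_\lambda$ is holomorphic on $C_x^\circ$. To extend the complex manifold structure from $W$ to all of $C_x$, for each $y\in C_x\setminus\{0\}$ I would pick $\lambda>0$ small enough that some open neighbourhood $V_y\ni y$ satisfies $\mu_\lambda(V_y)\subset W$, and define a chart on $V_y$ by composing $\mu_\lambda|_{V_y}$ with a holomorphic chart of $W$ near $\mu_\lambda(y)$. Together with the chart at the vertex inherited from $U$, this produces an atlas of continuous charts on $C_x$ whose transition maps are holomorphic on the regular locus, by the $J$-linearity of dilations.

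The main obstacle is verifying that the transition maps of this atlas are holomorphic across the singular locus $C_x^s$. Since $X^s$ is a complex analytic subset by Theorem \ref{singanalitic}, its image in each chart is locally contained in a proper analytic subvariety, and the transition functions are continuous everywhere and holomorphic off $C_x^s$; Rado's Theorem \ref{Rado}, applied exactly as in the proof of Lemma \ref{lemma:complexdirections}, promotes them to holomorphic maps on their full domains of definition, completing the construction of a complex manifold structure on $C_x$ for which the dilations $\mu_\lambda$ are biholomorphisms. Finally, the polyhedral cone metric $g_{C_x}$ is Kähler on $C_x^\circ$ by construction and induces the natural topology of $C_x$, so it is a PK cone metric on the complex manifold $C_x$ in the sense of Definition \ref{def:PK}.
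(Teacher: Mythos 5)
Your proposal is correct and follows essentially the same route as the paper: transport the complex structure from a conic neighbourhood of $x$ to a neighbourhood of the apex of $C_x$, propagate it by the cone homotheties (which are holomorphic on the regular locus because their derivative is a scalar in flat coordinates), and extend holomorphy across the singular set using its analyticity from Theorem \ref{singanalitic}. The only small slip is the citation: the extension of the continuous, off-$C_x^s$-holomorphic transition maps across the analytic set $C_x^s$ is the Riemann extension Theorem \ref{Riemann} (continuity gives local boundedness), not Rado's Theorem \ref{Rado}, which requires holomorphy outside the function's own zero set.
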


\begin{proof}
	By definition $x$ has an $\varepsilon$-neighbourhood $U_x(\varepsilon)\subset X$ that embeds isometrically to $C_x$. For each $c\in (0,1)$ there is a canonical homothetic contraction  $\varphi_c: U_x(\varepsilon)\to U_x(c\varepsilon)$ that restricts to a contraction by factor $c$ on each ray of the cone. Note that contractions $\varphi_c$ are holomorphic on $U_x(\varepsilon)^\circ$, and since they are continuous, and $U_x(\varepsilon)^s$ is analytic by Theorem \ref{singanalitic} (2),  they are holomorphic on the whole $U_x(\varepsilon)$. 
	
	Consider now an isometric embedding from $U_x(\varepsilon)$ to $C_x$. This isometry induces a complex structure on an $\varepsilon$-neighbourhood of the apex $o$ of $C_x$.  We can extend this complex structure to the whole cone by pulling it back from $U_o(\varepsilon)$ to $C_x$ by homothetic contractions with various contraction constants $c<1$. The resulting complex structure  on $C_x$ is independent of a choice of $c$ since the homotheties $\varphi_c$ are holomorphic.
\end{proof}

\begin{proposition}\label{PKconeproduct} 
	Let $(C,g)$ be a complex $n$-manifold with a PK cone metric $g$. Then $C$ admits a unique orthogonal and holomorphic direct product decomposition as  $(C,g)=\mathbb C^{n-k}\times (C',g')$, where $(C',g')$ is a complex manifold with a PK metric that is an essential polyhedral cone. 
\end{proposition}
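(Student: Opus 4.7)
The plan is to combine the polyhedral de Rham decomposition of $(C,g)$ (Theorem \ref{undec}) with the parallelism of the complex structure to show that the maximal flat factor is automatically $J$-invariant, hence has even real dimension and carries a standard complex structure. Concretely, write $(C,g)=(\R^m,g_{\mathrm{Eucl}})\times (C'',g'')$ where $(C'',g'')$ is an essential polyhedral cone, as provided by Theorem \ref{undec}; the main work is to identify $m$ with $2k$ and the Euclidean factor with $\C^k$ in a way compatible with the PK structure.

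First I would analyse parallel vector fields. A vector $v$ tangent to the Euclidean factor $\R^m$ gives a fully parallel vector field on $C$; by Lemma \ref{lemma:complexdirections} the PK cone $(C,g)$ is a polyhedral K\"ahler space, so every open codimension-$2$ stratum has complex direction. Since $J$ is parallel on $C^\circ$, the field $Jv$ is again parallel on $C^\circ$, and the complex-direction property forces $Jv$ to be tangent to every codimension-$2$ stratum, i.e.\ parallel in codimension $2$ in the sense of Section \ref{sect:parallelvf}. Proposition \ref{fullyparallel} then upgrades this to $Jv$ being fully parallel, and the argument in the proof of Proposition \ref{fullyparallel} (applying Lemma \ref{nofield} to the essential factor $C''$) shows that any fully parallel vector field on $C$ vanishes on the essential factor $C''$, hence is tangent to $\R^m$. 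Therefore $J$ preserves the subspace $T\R^m$ of parallel vector fields, so $m=2k$ is even, and $T\R^m$ together with $J$ is isomorphic as a complex vector space to $\C^k$.

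Once $J$-invariance of the factor $\R^m=\C^k$ is established, orthogonality of $T\R^m$ and $TC''$ is built into the de Rham decomposition, and $J$ descends to a parallel orthogonal complex structure $J'$ on $(C'')^{\circ}$; the induced metric $g'$ on $C''$ together with $J'$ thus makes $(C',g'):=(C'',g'',J')$ a complex manifold with a PK cone metric which, by construction, is an essential polyhedral cone. To see that the product decomposition $C=\C^k\times C'$ is holomorphic, I would note that the projections $\pi_1:C\to\C^k$ and $\pi_2:C\to C'$ are holomorphic on $C^{\circ}=C^{\circ}_1\times (C')^{\circ}$ (both factors inherit the K\"ahler structure from the splitting of $T^{1,0}$), continuous on all of $C$, and that $C^s$ is a proper complex analytic subset by Theorem \ref{singanalitic}; Riemann extension (Theorem \ref{Riemann}) then promotes the holomorphicity of $\pi_1$ and $\pi_2$ to all of $C$. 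Uniqueness of the decomposition follows from the uniqueness clause of Theorem \ref{undec} together with the observation that any orthogonal holomorphic splitting $C=\C^{n-k}\times (C',g')$ with $(C',g')$ essential produces the same maximal Euclidean factor at the polyhedral level.

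The main obstacle I expect is the step identifying $T\R^m$ as $J$-invariant, because it requires precisely the interaction between the two notions of ``parallel'' (in the polyhedral sense of Section \ref{sect:parallelvf} versus parallelism of $J$ on the smooth regular part), and it is here that the polyhedral K\"ahler hypothesis, through Lemma \ref{lemma:complexdirections} and Proposition \ref{fullyparallel}, does the essential work. Once this $J$-invariance is in hand, the remaining verifications — that the decomposition is orthogonal, holomorphic, and unique — follow rather formally from the structural results already established.
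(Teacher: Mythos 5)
Your proposal is correct and follows essentially the same route as the paper's proof: the unique polyhedral factorisation of Theorem \ref{undec}, the complex-directions property supplied by Lemma \ref{lemma:complexdirections} to make the flat factor $J$-invariant, and analyticity of $C^s$ (Theorem \ref{singanalitic}) plus Riemann extension to push holomorphicity from $C^{\circ}$ to all of $C$. The only variations are minor: you obtain $J(T\R^m)\subset T\R^m$ via the fully-parallel machinery (Proposition \ref{fullyparallel} and Lemma \ref{nofield}, i.e.\ the argument of Corollary \ref{coneJrotation}), whereas the paper applies the complex-directions property directly to the stratum $(\R^{2(n-k)},0)$ and packages the translations into a holomorphic $\C^{n-k}$-action, which in addition shows all $C'$-fibres are biholomorphic. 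One small ordering point: since $J$ a priori descends only to the regular part $(C'')^{\circ}$, you should first establish that the projection onto the flat factor is a holomorphic submersion (as the paper does) and only then conclude that the fibres $C'$ are complex submanifolds carrying a PK cone metric.
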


\begin{proof} 
	Since $(C,g)$ is a polyhedral cone, it admits a unique orthogonal product decomposition $\mathbb R^{2(n-k)}\times (C',g')$ where $(C',g')$ is an essential polyhedral cone (see Theorem \ref{undec}). We will show that this decomposition satisfies the necessary properties starting with constructing a holomorphic $\mathbb C^{n-k}$ action on $C$.
	
	First, we observe that $\mathbb R^{2(n-k)}$ is acting by isometries of $(C, g)$ by parallel translations in the first factor of $\mathbb R^{2(n-k)}\times (C',g')$. Let's show that this is a holomorphic action by $\mathbb C^{n-k}$.  For any $v\in \mathbb R^{2(n-k)}$ let $\varphi_v: C\to C$ be the translation by vector $v$. Clearly $\varphi_v$ sends $C^\circ$ to $C^\circ$ and is holomorphic on it. By Theorem \ref{singanalitic} (2) the complement $C\setminus C^\circ$ is an analytic subset, and since $\varphi_v$ is continuous on $C$, it is holomorphic on $C$ as well by the Riemann extension Theorem \ref{Riemann}. So, $\mathbb R^{2(n-k)}$ is acting on $C$ by biholomorphisms. Let now $d\varphi$ be the homomorphism from the Lie algebra of $\mathbb R^{2(n-k)}$ to the Lie algebra of holomorphic vector fields on $C$. By Lemma \ref{lemma:complexdirections}, the stratum $(\mathbb R^{2(n-k)},0)$ of $C$ has complex directions. It follows that the image $d\varphi(\mathbb R^{2(n-k)})$ is invariant under multiplication by $i$. 
	Hence a holomorphic action of $\mathbb C^{n-k}$ is constructed and now we can identify $C$ with $\mathbb C^{n-k}\times C'$.  
	
	Consider the projection $p_1$ from $C=\mathbb C^{n-k}\times C'$ to $(\mathbb C^{n-k},0)$. This is clearly a continuous map, and we will show that $p_1$ is holomorphic. To do this, it is enough to show that $p_1$ is holomorphic on $C^\circ$ (again we use that $C^s$ is analytic). 
	
	We have seen that the fibres $\mathbb C^{n-k}\cdot x'$ in the product $\mathbb C^{n-k}\times C'$ are holomorphic submanifolds of $C$. These submanifolds are flat in $C^{\circ}$ and since the $C'$-fibres are orthogonal to $\mathbb C^{n-k}$-fibres, in local flat holomorphic coordinates on $C^\circ$ the map $p_1$ is a (linear)  projection on a complex vector subspace along another complex vector subspace. Hence $p_1$ is a holomorphic map.
	
	Next, we observe that $p_1$ is a holomorphic submersion because each $\mathbb C^{n-k}$-fibre is projected by $p_1$ isomorphically to $\mathbb C^{n-k}$ (and so the differential of $p_1$ has rank $n-k$).
	
	We conclude that each $C'$-fibre is a smooth complex submanifold of $C'$, and $g$ restricts to a polyhedral cone metric on such fibres. Finally, all $C'$-fibres are biholomorphic to each other thanks to the $\mathbb C^{n-k}$ action and the product decomposition $C=\mathbb C^{n-k}\times C'$ is clearly holomorphic.
\end{proof}

The following  proposition finishes the proof of Theorem \ref{singanalitic}.

\begin{proposition}\label{stratprop} 
	Suppose that $(X,g)$ is a complex $n$-manifold with a $PK$  metric. Then the following holds.
	
	\begin{itemize}
		\item[(1)] Each open stratum of singularities $Y^\circ$ of $X^s$ is a smooth complex submanifold of $X$. 
		
		\item [(2)] Let $y\in Y^\circ$ be a point on a open stratum of singularities of complex codimension $k$. Then the tangent cone to $y$ is a direct metric and holomorphic product $\mathbb C^{n-k}\times X'$ where $X'$ is a complex manifold with an essential $PK$ cone metric.
		\item [(3)] $X^s$ is a complex hypersurface in $X$.
		
	\end{itemize} 
\end{proposition}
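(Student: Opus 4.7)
The plan is to prove (2) first, then deduce (1), and finally establish (3) using (2) together with Theorem \ref{singanalitic}. For (2), apply Lemma \ref{conemb} to give \(C_y\) the structure of a complex manifold with a PK cone metric, and use the definition of a complex codimension \(k\) open stratum to obtain a metric orthogonal splitting \(C_y = \R^{2n-2k} \times C'\) with \(C'\) an essential polyhedral cone of real dimension \(2k\). Proposition \ref{PKconeproduct} then upgrades this to a holomorphic and orthogonal decomposition \(C_y = \C^{n-k} \times C'\) in which \(C'\) is an essential PK cone on a complex \(k\)-manifold.

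For (1), a sufficiently small conic neighbourhood of \(y \in Y^\circ\) admits an isometric embedding into \(C_y = \C^{n-k} \times C'\) which, by (2), is also holomorphic. Since \(C'\) is essential, the unique point of metric codimension \(2k\) in \(C'\) is the vertex; hence the codimension-\(2k\) stratum of \(C_y\) through the origin is exactly the smooth complex submanifold \(\C^{n-k} \times \{0\}\). Pulling back via the embedding identifies \(Y^\circ\) near \(y\) with a smooth complex submanifold of \(X\) of complex codimension \(k\), proving (1).

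For (3), Theorem \ref{singanalitic} parts (1) and (2) already yield that \(X^s = X^s_2\) is a complex analytic subset of complex codimension at least one. It remains to prove purity of complex codimension one, which I would establish by showing that \(X^s\) coincides with the closure of its open codimension-two stratum \(X^s_2 \setminus X^s_4\); the latter is a complex hypersurface by Theorem \ref{singanalitic} (2) combined with part (1) of this proposition. For each \(y \in X^s\) it suffices to produce a sequence of codimension-two metric singularities converging to \(y\). Applying (2), this reduces to the following local claim about essential PK cones: \emph{if \(C'\) is a complex \(k\)-manifold with an essential PK cone metric, then \((C')^s_2 \setminus (C')^s_4\) accumulates at the vertex.} I would prove this by induction on \(k\); the base case \(k = 1\) is immediate since \(C' = \C_\alpha\) with \(\alpha \neq 1\) has the vertex itself as a codimension-two singularity.

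The main obstacle is the inductive step for \(k \geq 2\), which splits into two parts. First, I would rule out \((C')^s = \{0\}\) by a topological argument: otherwise \(C' \setminus \{0\}\) is a smooth flat Riemannian cone on a spherical space form \(L = S^{2k-1}/\Gamma\), and since \(C'\) is a smooth (complex) manifold at the vertex its metric link \(L\) is homeomorphic to \(S^{2k-1}\), forcing \(\Gamma\) to be trivial via the fundamental group of a sphere, and hence \(C' = \C^k\) with the standard flat metric, contradicting essentiality. Second, given \(y' \in (C')^s \setminus \{0\}\), its tangent cone has essential factor of complex dimension strictly less than \(k\), so the induction hypothesis produces codimension-two singularities accumulating at \(y'\); using invariance of \((C')^s\) under the \(\R_{>0}\)-dilations of the cone, scaling by \(t \to 0^+\) then yields codimension-two singularities accumulating at the vertex of \(C'\), completing the induction and hence (3).
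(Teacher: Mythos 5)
Your proof is correct. Parts (1) and (2) follow the paper's route exactly: Lemma \ref{conemb} plus Proposition \ref{PKconeproduct} give the holomorphic orthogonal splitting of the tangent cone, and the identification of the stratum with $\C^{n-k}\times\{0\}$ inside the cone gives smoothness.

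For part (3) you take a genuinely different route. The paper argues by contradiction on the irreducible components of the analytic set $X^s$ (known to be analytic from Theorem \ref{singanalitic}(2)): if some component $Z$ had complex codimension $k>1$, one picks a point $z\in Z$ lying in no other component and of metric codimension exactly $k$; near $z$ all metric singularities then sit in $\C^{n-k}\times\{0\}$ inside the tangent cone $\C^{n-k}\times X'$, so the essential factor $X'$ is singular only at its vertex, its link is a space form, and $X'\cong \C^k/\Gamma$ — contradicting that $X$ is a smooth complex manifold. You instead prove the stronger statement that the codimension-two metric singularities are dense in $X^s$, by induction on the dimension of essential PK cones, and then invoke Remmert--Stein via Theorem \ref{singanalitic}(2) to conclude purity. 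The two arguments share the same crux — an essential PK cone of complex dimension $\ge 2$ cannot be singular only at its vertex, ruled out because the link would be a simply connected space form — but you reach $\Gamma=1$ via $\pi_1$ of the link (using $C'\cong\C^k$ from Proposition \ref{prop:PKcones}) and then contradict essentiality, which is arguably cleaner than the paper's terse appeal to smoothness of $\C^k/\Gamma$; the price is the extra inductive bookkeeping (descending to the essential factor at a non-vertex singular point, whose complex codimension is at most $k-1$ by Lemma \ref{subcomplex}(2) and evenness of codimensions, and then flowing codimension-two singularities to the vertex by dilations). Your version yields the slightly stronger by-product that $X^s$ is the closure of its codimension-two stratum. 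Both arguments are sound.
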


\begin{proof} (1, 2) Both statements follow directly from Proposition 
	\ref{PKconeproduct} and Lemma \ref{conemb}.

	(3) By Theorem \ref{singanalitic} (2) we know that $X^s$ is an analytic  subset of $X$. Let us prove that all irreducible components of $X^s$ have dimension $n-1$. We can assume that $X$ is a cone and so it has only finite number of open strata of singularities. 
	
	 Assume by contradiction that $X^s$ contains an irreducible component $Z$ of codimension $k>1$. Let $Z'\subset Z$ be the subset of points that don't lie in other irreducible components of $X^s$. Note that $Z'$ doesn't contain singular points of codimension $l$ less than $k$. Indeed, by Theorem \ref{singanalitic} (2) in a neighbourhood of such a point $X^s$ has dimension at least $n-l$. Furthermore, open strata of metric singularities of complex coidmension larger than $k$ can't cover the entire  $Z'$. Hence there is a point $z\in Z'$ that is a metric singularity of complex codimension exactly $k$. 
	 
	 The tangent cone to $X$ at $z$ is a product $\mathbb C^{n-k}\times X'$. Note now that since $z$ lies in only one irreducible component of $X^s$, all the metric singularities of $\mathbb C^{n-k}\times X'$ lie in $\mathbb C^{n-k}\times 0$. So $X'$ is a polyhedral cone with only singularity at its vertex. The link of its vertex as a space form (i.e., a manifold of sectional curvature $1$), and so the cone $X'$ is isometric to $\mathbb C^k/\Gamma$ where $\Gamma$ is a finite group acting freely on $\mathbb C^k\setminus 0$. This contradicts to the fact that $X$ is a smooth complex manifold.
	
\end{proof}

\subsection{Global complex coordinates on cones}\label{sect:globalcxcoord}

The main result of this section is the following.

\begin{proposition}\label{prop:PKcones}
	Let \(X\) be a complex manifold and \(g\) a PK cone metric on it with vertex at \(x\). Then there are holomorphic coordinate functions \((z_1, \ldots, z_n)\) centred at \(x\) and positive numbers \(c_1, \ldots, c_n\) such that the Euler vector field of the cone is given by
	\begin{equation}\label{elinear}
	e = \sum_{i=1}^{n} c_i \frac{\p}{\p z_i} .
	\end{equation}
	Moreover, these coordinates are globally defined and give a biholomorphism  
	\begin{equation}\label{bihol}
	(z_1, \ldots, z_n): X \xrightarrow{\sim} \C^n 
	\end{equation}
	which is equivariant with respect to the dilation action \(p \mapsto \lambda \cdot p\) on \(X\) and 
	\begin{equation}\label{action}
	\lambda \cdot (z_1, \ldots, z_n) = (\lambda^{c_1}z_1, \ldots, \lambda^{c_n}z_n )
	\end{equation}
	on \(\C^n\). In particular, the complex hypersurface \(X^s \subset \C^n \) must be invariant under the action \eqref{action}.
\end{proposition}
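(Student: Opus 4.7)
The proof linearises the complex Euler vector field $e$ of the cone at the vertex $x$, and then propagates the resulting coordinates to all of $X$ via the flow of $e$.

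\textbf{Holomorphic extension and linearisation at $x$.} The metric cone structure furnishes a continuous $\R$-action $\varphi_t$ (real dilations) and an $\R$-action $\psi_\theta$ (Reeb rotations) on $X$; their restrictions to $X^\circ$ are holomorphic and generate $e_r = 2\RE e$ and $Je_r$ respectively. Since $X^s$ is a complex analytic subset by Theorem \ref{singanalitic}, the Riemann extension Theorem \ref{Riemann} shows each $\varphi_t$ and $\psi_\theta$ is holomorphic on $X$, so $e$ extends holomorphically to $X$ with unique zero at $x$. The derivative $d\varphi_t|_x = e^{tA}$ acts on $T_xX$ as the one-parameter group of complex-linear dilations associated with $A := (de)_x$. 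The absence of rotation in $\varphi_t$ and its contraction toward $x$ as $t \to -\infty$ force the eigenvalues $c_1, \ldots, c_n$ of $A$ to be real and strictly positive, placing them in the Poincar\'e domain. The Poincar\'e-Dulac theorem (Proposition \ref{poincaredulac}) then yields local holomorphic coordinates $(z_1, \ldots, z_n)$ near $x$ in which
\begin{equation*}
  e = \sum_{i=1}^n c_i z_i \frac{\partial}{\partial z_i} + \sum_{i,m} a_{i,m} z^m \frac{\partial}{\partial z_i},
\end{equation*}
where the correction runs over resonant multi-indices $m$ with $|m|\ge 2$ and $c_i = \sum_j m_j c_j$.

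\textbf{Eliminating the resonant correction and globalising.} The polyhedral K\"ahler structure eliminates the resonant terms: the Kähler potential $r^2$ is continuous through $x$, plurisubharmonic, and satisfies $\varphi_t^* r^2 = e^{2t} r^2$ globally. Writing $r^2$ as a weighted expansion with $\deg z_i = c_i$ and comparing with the ODE imposed by $\mathcal{L}_{e_r} r^2 = 2 r^2$, any nonzero resonant term in $e$ would force a logarithmic or mixed-weight component in $r^2$ near $x$, incompatible with its continuous PSH extension to $x$. Hence $e = \sum c_i z_i \partial_{z_i}$ on a neighbourhood $U$ of $x$, and $z_i \circ \Phi_s = e^{c_i s} z_i$ for the complex flow $\Phi_s$ of $e$. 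Every $p \in X$ lies in some $\Phi_s(U)$ because $\Phi_s$ contracts $X$ onto $x$ as $\RE(s) \to -\infty$, so $z_i(p) := e^{-c_i s} z_i(\Phi_s(p))$ is a well-defined holomorphic extension of $z_i$ to $X \setminus \{x\}$, and extends across $x$ by $z_i(x) = 0$ (and across $X^s$ by Riemann extension).

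\textbf{Biholomorphism, equivariance, and the main obstacle.} The global map $Z = (z_1, \ldots, z_n): X \to \C^n$ is holomorphic and equivariant for the action \eqref{action} with $\lambda = e^s$; it is a local biholomorphism at $x$ by construction, and this equivariance combined with the fact that the $\R_{>0}$-orbits of $U$ and of a neighbourhood of $0$ respectively sweep out $X$ and $\C^n$ upgrades $Z$ to a global biholomorphism. Invariance of $X^s$ under \eqref{action} is then immediate from $\varphi_t$ being a metric dilation. The main obstacle is the elimination of the Poincar\'e-Dulac resonant terms: for general positive real eigenvalues $c_i$ the classical normal form is only polynomial, and one must exploit the continuous scale-equivariant K\"ahler potential $r^2$ to force the linear case. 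The remaining parts of the argument are a routine combination of Riemann extension, Poincar\'e-Dulac, and a flow-based globalisation.
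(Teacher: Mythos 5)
Your overall strategy (linearise the complex Euler field $e$ at the vertex by Poincar\'e--Dulac, then propagate coordinates by the flow) is genuinely different from the paper's. The paper instead takes the closure of the Reeb flow inside the group of holomorphic isometries of a small metric ball fixing $x$ --- a compact torus $T^k$ by Lemma \ref{isomtery} --- and linearises that torus action by averaging (Lemma \ref{linearization}). That compactness is what makes the whole argument work: it forces $(de_s)_x$ to be diagonalisable with purely imaginary eigenvalues, hence $(de)_x$ diagonalisable with real positive eigenvalues, and it excludes all resonant terms, since a nontrivial resonant part would make the family $\{\psi_\theta\}$ unbounded near $x$, contradicting precompactness of the isometries of a ball.

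Your proposal has a genuine gap at exactly the two points where this compactness is needed. First, you assert that ``the absence of rotation in $\varphi_t$ and its contraction toward $x$'' force the eigenvalues of $(de)_x$ to be real, positive and semisimple; contraction only gives eigenvalues of positive real part, and nothing you write rules out non-real eigenvalues or a nontrivial Jordan block. Second, the elimination of the resonant correction via $r^2$ is not a proof: $r^2$ is merely continuous at $x$, so a ``weighted expansion'' of it is not available, and the existence of a continuous function satisfying $\varphi_t^*r^2=e^{2t}r^2$ does not by itself exclude resonances --- such a function can always be manufactured from a cross-section of the flow even when $e$ has a nonzero resonant part (e.g. $e=2cz_1\partial_{z_1}+cz_2\partial_{z_2}+az_2^2\partial_{z_1}$). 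Note also that Proposition \ref{poincaredulac} as stated in the paper covers only the case of equal eigenvalues, where no resonances occur, so it cannot be cited for general $c_i$. The repair is precisely the paper's observation: the Reeb flow preserves $g$ and $r$, hence generates a precompact group of isometries fixing $x$, while a resonant term (such as the one above, whose time-$\theta$ map contains $ia\theta z_2^2$) would make $\psi_\theta$ unbounded. Your globalisation of the coordinates by homogeneity of the flow is fine and matches the paper's final step.
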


Throughout the following subsections we fix a complex manifold \(X\) and a PK cone metric \(g\) on it with vertex at \(x\).

\subsubsection{{\large Radius function, K\"ahler form and unit sphere}}\label{sect:kahlerpotential}

\begin{definition}
	We define the \textbf{radius function}
	\begin{equation}
	\begin{aligned}
	r \,: X \,  &\to \, \, \R_{\geq 0} \\
	p \,\ &\mapsto \, r(p)=d(p,x)	
	\end{aligned}
	\end{equation}
	where \(d(p,x)\) denotes the distance to the vertex of the cone.
\end{definition}

\begin{notation}
	Let \(\tilde{X}^{\circ}\) be the universal cover of \(X^{\circ}\) endowed with the structure of a flat K\"ahler manifold given by the pull-back metric and complex structure.
\end{notation}

Fix \(p \in X^{\circ}\) and \(\tilde{p} \in \tilde{X}^{\circ}\) over it. We can map a neighbourhood \(\tilde{p} \in \tilde{U} \subset \tilde{X}\) holomorphically isometrically into an open subset of Euclidean \(\C^n\), say \(F: \tilde{U} \to \C^n\). Since \(\tilde{X}\) is simply connected, the map \(F\) can be \emph{uniquely} extended to a map
\[\hat{F} : \tilde{X}^{\circ} \to \C^n \]
which is holomorphic and locally isometric. The map \(F\) is uniquely determined
up to the action of \(\C^n \ltimes U(n)\) on \(\C^n\) by unitary affine transformations. We normalize the action by translations by requiring that
\begin{equation}\label{eq:normalizedevelop}
|F(\tilde{p})| = r(p)
\end{equation}
where \(|\cdot|\) denotes the Euclidean norm.

\begin{definition}\label{developingmap}
	The \textbf{developing map} is a holomorphic local isometry
	\begin{equation} \label{eq:devmap}
	\dev: \tilde{X}^{\circ} \to \C^n
	\end{equation}
	obtained by the above construction with the normalization given by Equation \eqref{eq:normalizedevelop}.
\end{definition}

\begin{remark}
	The map \eqref{eq:devmap} is defined up to compositions with  unitary linear transformations. We could kill this ambiguity by fixing the derivative of \(\dev\) at \(\tilde{p}\).
\end{remark}

\begin{remark}
	With our definitions the following holds.
	\begin{itemize}
		\item[(i)] The radius function \(r\) is continuous on \(X\) and smooth on the regular set \(X^{\circ}\).
		\item[(ii)] The map \eqref{eq:devmap} is equivariant with respect to the \(\R_{>0}\)-actions on \(\tilde{X}^{\circ}\) by lifted dilations of the cone and on \(\C^n\) by standard scalar multiplication.
		\item[(ii)] The Euclidean norm of the developing map \(|\dev(p)|\) is invariant under the action of \(\pi_1(X^{\circ})\) by deck transformations, therefore it descends to a function on \(X^{\circ}\) and it satisfies
		\begin{equation}\label{eq:moddevequalsr}
		\dev^* |z| = r .
		\end{equation}
	\end{itemize} 
\end{remark}

\begin{example}
	Consider the metric of a \(2\)-cone of total angle \(2\pi\alpha>0\) on \(\C\) with vertex at \(0\) given by the line element
	\(\alpha |z|^{\alpha-1}|dz|\). If we take the universal covering map \(\C \to \C^*\) given by \(\xi \mapsto z=\exp(\xi)\), then \(\dev(\xi) = \exp(\alpha \xi)\). More shortly, we can write \(\dev\) as a multivalued function on \(\C^*\) given by
	\(\dev = z^{\alpha}\). In most higher dimensional cases neither the developing map nor the radius function \(r\) are explicit in terms of complex coordinates.
\end{example}

If \(U \subset X^{\circ}\) is simply connected, then we can invert the covering map \(\tilde{X}^{\circ} \to X^{\circ}\) over \(U\) and compose with \(\dev\) to obtain a \textbf{branch of the developing map}
\[F: U \to \C^n\]
which is a holomorphic local isometry. Since \(\dev\) is equivariant with respect to the actions of \(\pi_1(X^{\circ})\) by deck transformations on \(\tilde{X}^{\circ}\) and by the holonomy representation \(\Hol(g): \pi_1(X^{\circ}) \to U(n)\) on \(\C^n\), it follows that any two branches of \(\dev\) differ by composition with a unitary linear transformation.

Recall that we write \(J\) for the complex structure of \(X\).

\begin{definition}
	On the regular part \(X^{\circ}\) we define the \textbf{K\"ahler form} of \((X, g)\) as
	\begin{equation}\label{kahlerform}
	\omega (\cdot, \cdot) = g (J\cdot, \cdot) .
	\end{equation}
\end{definition}

\begin{lemma}\label{lem:kahlerform}
	On \(X^{\circ}\) we have the identity
	\begin{equation}\label{eq:potential}
	\omega = \frac{i}{2} \dd r^2 .
	\end{equation}
	In other words, \(r^2\) is a \textbf{K\"ahler potential} for \(\omega\).
\end{lemma}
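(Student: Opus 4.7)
The plan is to pull the identity back to the universal cover and derive it from the analogous identity on flat Euclidean space via the developing map. Let $\pi: \tilde X^\circ \to X^\circ$ be the universal covering, so $\pi$ is a holomorphic local isometry for the metrics $\pi^*g$ on $\tilde X^\circ$ and $g$ on $X^\circ$. Since $\pi$ is in particular surjective and a local biholomorphism, it suffices to verify the pulled-back identity
\[
\pi^* \omega \;=\; \frac{i}{2} \dd\, \pi^* r^2
\]
on $\tilde X^\circ$, and then descend.

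Recall from Definition \ref{developingmap} the holomorphic local isometry $\dev: \tilde X^\circ \to \C^n$, where $\C^n$ carries its standard flat K\"ahler structure. Being a holomorphic local isometry, $\dev$ pulls the standard K\"ahler form $\omega_{\C^n} = \tfrac{i}{2}\dd |z|^2$ back to $\pi^*\omega$. On the other hand, Equation \eqref{eq:moddevequalsr} gives $\dev^* |z| = \pi^* r$, hence $\dev^* |z|^2 = \pi^* r^2$. Since $\p$ and $\bar\p$ commute with pullback under holomorphic maps, we compute
\[
\pi^*\omega \;=\; \dev^*\omega_{\C^n} \;=\; \dev^*\!\left(\tfrac{i}{2}\dd\, |z|^2\right) \;=\; \tfrac{i}{2}\dd\,(\dev^* |z|^2) \;=\; \tfrac{i}{2}\dd\,(\pi^* r^2) \;=\; \pi^*\!\left(\tfrac{i}{2}\dd\, r^2\right).
\]
Since $\pi$ is a local biholomorphism, Equation \eqref{eq:potential} follows on $X^\circ$.

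There is really no substantive obstacle: the only point requiring care is passing to the universal cover, which is necessary because the developing map is only well-defined globally on $\tilde X^\circ$ (and only defined up to unitary composition, which does not affect $|z|^2$ or $\omega_{\C^n}$). All of the input ingredients---the existence of a single-valued $\dev$ on $\tilde X^\circ$, the identity $\dev^*|z| = r$, and the fact that $\dev$ is a holomorphic isometry---are already established earlier in Section \ref{sect:kahlerpotential}, so the proof is essentially a one-line computation on the universal cover.
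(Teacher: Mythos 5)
Your proof is correct and follows essentially the same route as the paper: pull back to the universal cover, identify $\pi^*\omega$ with $\dev^*\omega_{\C^n}$, use $\dev^*|z|=r$ and the fact that $\dd$ commutes with holomorphic pullback. The only difference is that you spell out the descent from $\tilde X^\circ$ to $X^\circ$ more explicitly, which the paper leaves implicit.
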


\begin{proof}
	The pull-back of \(\omega\) to \(\tilde{X}^{\circ}\) is equal to the pull-back by \(\dev\) of the Euclidean K\"ahler form \(\omega_E\) in \(\C^n\). On the other hand, the squared Euclidean norm is a K\"ahler potential for \(\omega_E\). Since \(\dev\) is holomorphic, the pull-back \(\dev^*\) commutes with \(\dd\) and Equation \eqref{eq:potential} follows from Equation \eqref{eq:moddevequalsr}.
\end{proof}

\begin{remark}
	The function \(r^2\) is continuous on \(X\), and smooth plurisubharmonic (psh) on \(X^{\circ}\). Since the singular set \(X^s\) is a complex hypersurface (in particular it is pluripolar),  the function \(r^2\) is psh on the whole \(X\), see \cite[Theorem 5.24]{Demailly}. In particular, this implies that we can consider \(\omega\) as a closed positive current defined on all of \(X\), see \cite[Lemma 7.1]{KozNg}.
\end{remark}

\begin{definition}
	The unit sphere around the origin of \((X, g)\) is
	\begin{equation*}
	S = \{r=1\} \subset X  .
	\end{equation*}
\end{definition}

Since \((X,g)\) is a polyhedral cone, the restriction of \(g\) to \(S\) is a spherical polyhedral metric.

\begin{notation}
	We denote the regular part of \(S\) by
	\[S^{\circ}=S\cap X^{\circ} .\] 
	It is a smooth submanifold of \(X^{\circ}\) equipped with a smooth Riemannian metric \(g_{S^{\circ}}\) locally isometric to the round unit sphere \(S^{2n-1}(1)\).
\end{notation}

\subsubsection{{\large Euler vector field}}

\begin{definition}
	On the regular locus \(X^{\circ}\) we have a smooth (real) Euler vector field given by
	\begin{equation}
	e_r = r \p_r
	\end{equation}
	where \(\p_r\) is the gradient of \(r\). 
\end{definition}

\begin{note}
	Equivalently, \(e_r\) is the vector field generated by the one-parameter subgroup of diffeomorphisms \(\varphi_t\) of \(X^{\circ}\) given by \(\varphi_t(q) = e^{t} \cdot q\).
\end{note}

\begin{remark}
	Note that \(\p_r\) is a unit vector field on \(X^{\circ}\), so we have the identity
	\begin{equation}
	r^2 = g(e_r, e_r) .
	\end{equation}
\end{remark}

\begin{lemma}\label{lem:dever}
	Let \(U \subset X^{\circ}\) be simply connected and let \(F:U \to \C^n\) be a branch of the developing map. Then \(F\) maps
	\(U\cap S^{\circ}\) into \(S^{2n-1}(1)\) and the derivative of \(F\) maps the vector field \(e_r\) to the standard real Euler vector field of \(\C^n\).
\end{lemma}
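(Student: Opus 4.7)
The plan is straightforward since the statement is essentially a direct consequence of how the developing map was normalized together with the fact that the real Euler vector field of a Riemannian cone is the gradient of half the squared distance function.

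For the first assertion, I would simply invoke Equation \eqref{eq:moddevequalsr}, which records that \(\dev^{*}|z| = r\) after the normalization \(|\dev(\tilde p)|=r(p)\). Since any branch \(F\) of \(\dev\) on the simply connected \(U\) differs from \(\dev\) only by a unitary transformation (which preserves the Euclidean norm), the same identity \(|F| = r\) holds on \(U\). Therefore, if \(p \in U \cap S^{\circ}\) then \(|F(p)| = r(p) = 1\), so \(F(p) \in S^{2n-1}(1)\).

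For the second assertion, the key observation is that on the cone \((X^{\circ},g)\) one has \(e_r = \grad(r^2/2)\). Indeed, \(r\) is a distance function with \(|\p_r|=1\), so \(\grad(r^2/2) = r\,\p_r = e_r\); this is the same argument as in Lemma \ref{lem:gradr}, and moreover the standard real Euler vector field of \(\C^n\) is exactly \(\grad_{\text{Eucl}}(|z|^2/2)\). Since \(F\) is a holomorphic local isometry, its differential intertwines the Levi-Civita connections and hence maps gradients of smooth functions to gradients of their push-forwards. Applying this to the function \(r^2/2\), which satisfies \(r^2 = F^{*}|z|^2\) on \(U\) by the first part, yields
\[
DF (e_r) \;=\; DF\bigl(\grad (r^2/2)\bigr) \;=\; \grad_{\text{Eucl}}(|z|^2/2) \;=\; \sum_{i=1}^{n} \Bigl( x_i \tfrac{\p}{\p x_i} + y_i \tfrac{\p}{\p y_i}\Bigr),
\]
with \(z_i = x_i + iy_i\), which is the standard real Euler vector field of \(\C^n\).

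There is no real obstacle here: the content is bookkeeping with the normalization of \(\dev\) and the elementary identity \(\grad(r^2/2)=e_r\) on a Riemannian cone. The only mild point to note is that one should argue on \(U \cap S^{\circ} \subset X^{\circ}\) to stay in the smooth locus of the metric, but this is automatic since \(U \subset X^{\circ}\) by hypothesis.
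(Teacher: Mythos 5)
Your proof is correct. It rests on two legitimate inputs that precede the lemma in the paper: Equation \eqref{eq:moddevequalsr} (so that \(|F|=r\) on \(U\), since branches differ by unitary maps), and the identity \(e_r = r\,\grad r = \grad(r^2/2)\), which the paper has set up by declaring \(\p_r=\grad r\) a unit field on \(X^{\circ}\).

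The route is mildly different from the paper's. The paper's one-line proof uses the \emph{flow} characterization of \(e_r\): it generates the dilation action \(q\mapsto e^t q\), the standard Euler field of \(\C^n\) generates scalar multiplication, and \(\dev\) is equivariant for these two actions (Remark (ii) after Definition \ref{developingmap}), so \(DF\) carries one infinitesimal generator to the other. You instead use the \emph{gradient} characterization \(e_r=\grad(r^2/2)\) together with the fact that a Riemannian isometry intertwines gradients, applied to \(r^2=F^*|z|^2\). The two arguments are equivalent in content and both ultimately hinge on \(\dev^*|z|=r\); yours has the small advantage of not needing to integrate the vector fields, while the paper's makes the dilation-equivariance (which is used repeatedly elsewhere) do all the work. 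One cosmetic remark: the gradient identity \(DF(\grad u)=\grad(u\circ F^{-1})\) needs only that \(F\) is a Riemannian local isometry, not the intertwining of Levi-Civita connections, so that clause in your argument is superfluous.
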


\begin{proof}
	Follows from the fact that the developing map commutes the action by dilations of the cone with usual scalar multiplication on \(\C^n\).
\end{proof}

\begin{remark}
	A Sasakian manifold is a Riemannian manifold whose metric cone is K\"ahler, see \cite{Sparks}. By definition, \((S^{\circ}, g_{S^{\circ}})\) is Sasakian.
\end{remark}

\begin{definition}
	Recall that \(J\) denotes the complex structure of \(X\). We define the \textbf{Reeb vector field} on \(X^{\circ}\) to be given by
	\begin{equation}
	e_s = J e_r .
	\end{equation}
\end{definition}

\begin{lemma}\label{lem:deves}
	Let \(U \subset X^{\circ}\) be a simply connected open set and let \(F\) be a branch of \(\dev\) over \(U\). Then the derivative of \(F\) maps \(e_s\) to the standard Reeb vector field of \(\C^n\). In particular, \(e_s\)  is tangent to \(S^{\circ}\). 
\end{lemma}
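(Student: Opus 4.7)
The plan is to exploit two facts simultaneously: that $F$ is holomorphic, hence its derivative $DF$ commutes with the complex structures on source and target, and that we have already identified $DF(e_r)$ with the standard real Euler field on $\C^n$ in Lemma \ref{lem:dever}. Combining these should yield the statement almost immediately, the main content being an identification of the standard Reeb field.

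First I would recall the explicit description of the standard real Euler and Reeb vector fields on $\C^n$. In real coordinates $(x_1,y_1,\ldots,x_n,y_n)$ with $z_j=x_j+iy_j$, the standard real Euler field is $e_{\C^n}^{\R}=\sum_j(x_j\partial_{x_j}+y_j\partial_{y_j})$, and the standard Reeb field is $J_{\C^n}e_{\C^n}^{\R}=\sum_j(x_j\partial_{y_j}-y_j\partial_{x_j})$; this is the generator of the $S^1$-action $z\mapsto e^{it}z$, and is manifestly tangent to the unit sphere $S^{2n-1}(1)$ since it is orthogonal to $\grad |z|$.

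Next I would observe that a branch $F\colon U\to\C^n$ of the developing map is holomorphic and a local isometry, so its differential satisfies $DF\circ J=J_{\C^n}\circ DF$. Applying this to $e_s=Je_r$ and using Lemma \ref{lem:dever} gives
\[
DF(e_s)=DF(Je_r)=J_{\C^n}\, DF(e_r)=J_{\C^n}\,e_{\C^n}^{\R},
\]
which is exactly the standard Reeb field of $\C^n$. This is the first assertion.

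For the second assertion, note that by Lemma \ref{lem:dever} we have $F(U\cap S^{\circ})\subset S^{2n-1}(1)$, and $F$ is a local diffeomorphism from $U\cap S^{\circ}$ onto its image in $S^{2n-1}(1)$. Since $DF(e_s)$ is tangent to $S^{2n-1}(1)$ at every point of $F(U\cap S^{\circ})$, and $DF$ is an isomorphism on each tangent space, it follows that $e_s$ is tangent to $S^{\circ}$ at every point of $U\cap S^{\circ}$. Because $U$ was an arbitrary simply connected open subset of $X^{\circ}$, and every point of $S^{\circ}$ admits such a neighbourhood, $e_s$ is tangent to $S^{\circ}$ globally on its regular part. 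There is no serious obstacle in this argument; the only thing to be careful about is stating the computation in the correct order so that the holomorphicity of $F$ is used to transport $J$ through $DF$.
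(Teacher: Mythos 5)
Your argument is correct and is exactly the paper's proof: Lemma \ref{lem:dever} identifies $DF(e_r)$ with the standard Euler field, and the holomorphicity of $F$ transports $J$ through $DF$ to give $DF(e_s)=J_{\C^n}e_{\C^n}^{\R}$, from which tangency to $S^{\circ}$ follows. You have simply written out the details the paper leaves implicit.
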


\begin{proof}
	This follows from Lemma \ref{lem:dever} together with the fact that the derivative of \(F\) exchanges \(J\) with the standard complex structure of \(\C^n\).
\end{proof}


\begin{definition}
	The (complex) Euler vector field is defined as 
	\[e= \frac{1}{2} ( e_r - i e_s ) . \]
\end{definition}

\begin{lemma}\label{holomorphic} 
	The real Euler vector field \(e_r\) extends across \(X^s\) as a real holomorphic vector field on \(X\) with a single zero at the vertex of the cone.
\end{lemma}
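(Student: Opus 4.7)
The plan is to realise $e_r$ as the infinitesimal generator of the dilation flow on the polyhedral cone $X$ and to extend it across $X^s$ via Riemann's removable singularity Theorem \ref{Riemann}, using in an essential way that $X^s$ is a complex hypersurface (Theorem \ref{thm:PK1}, or equivalently Theorem \ref{singanalitic}(4)). As a preliminary I would observe that $e$ is a holomorphic $(1,0)$ vector field on $X^\circ$: by Lemmas \ref{lem:dever} and \ref{lem:deves}, any branch of the developing map $\dev$ pushes $e_r$ and $e_s$ to the standard real Euler and Reeb fields of $\C^n$, so $e=\tfrac{1}{2}(e_r-ie_s)$ corresponds under $\dev$ to the holomorphic vector field $\sum_i z_i\,\partial_{z_i}$. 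Equivalently, $e_r$ is a real-holomorphic vector field on $X^\circ$, its flow consisting of biholomorphisms.

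Rather than extending $e$ directly, I would extend the $\C^*$-action it generates and then differentiate. For each $\lambda>0$ the dilation $\varphi_\lambda:X\to X$, $p\mapsto\lambda\cdot p$, is continuous on the polyhedral cone $X$, preserves $X^s$ (since $X^s$ is metric-intrinsic, hence isometry-invariant up to scaling), and is a biholomorphism of $X^\circ$. In any local holomorphic chart on $X$, the coordinate functions of $\varphi_\lambda$ are bounded holomorphic on the complement of the complex analytic set $X^s$, so Riemann extension promotes $\varphi_\lambda$ to a biholomorphism of $X$. An analogous argument handles the Reeb flow $\Psi_\theta$ generated by $e_s$: it is an isometric $\R$-action on $X^\circ$, hence extends by metric completion to an isometric $\R$-action on $X$, and it preserves $X^s$ because each stratum of $X^s$ is dilation-invariant (so contains $e_r$ in its tangent space) and $J$-invariant (Lemma \ref{lemma:complexdirections}), forcing $e_s=Je_r$ to be tangent to it; a second application of Riemann extension makes $\Psi_\theta$ holomorphic on $X$. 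Combining dilations and Reeb rotations produces a jointly continuous action $\Psi:\C^*\times X\to X$ which is holomorphic on $\C^*\times X^\circ$. Since $\C^*\times X^s$ is an analytic subset of $\C^*\times X$, Riemann extension applied to the coordinate functions of $\Psi$ in local holomorphic charts on $X$ upgrades $\Psi$ to a holomorphic map on all of $\C^*\times X$. Differentiating at $\lambda=1$ yields a holomorphic $(1,0)$ vector field on $X$ whose restriction to $X^\circ$ equals $e$; its real part is $\tfrac{1}{2}e_r$, which therefore extends as a real-holomorphic vector field on $X$.

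For the single-zero claim, if $e_r(p)=0$ then $p$ is fixed by $\varphi_\lambda$ for every $\lambda>0$, so $r(p)=d(p,x)=\lambda d(p,x)$, forcing $r(p)=0$ and hence $p=x$; conversely $e_r(x)=0$ because the vertex is fixed by every dilation. The main technical obstacle I anticipate is the joint continuity of $\Psi$ on $\C^*\times X$ at points of $X^s$, which really amounts to checking that the Reeb flow extends continuously across the singular strata of $X^s$ in a jointly continuous way in $\lambda$. A clean route is to exploit that Reeb orbits on $X^\circ$ lie on the spheres $\{r=c\}$, which are themselves spherical polyhedral spaces on which the Reeb $\R$-action extends to the metric completion, and to combine this with the already-established continuous extension of the real dilations; one can then induct on the codimension of the stratum of $X^s$, using Hartogs to promote separate continuity in $\lambda$ and $p$ to joint continuity once holomorphicity in $p$ is in hand.
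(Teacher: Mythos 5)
Your proof is correct and rests on the same key mechanism as the paper's: the dilation flow consists of homeomorphisms of \(X\) that are holomorphic off the complex hypersurface \(X^s\) (Theorem \ref{singanalitic}), so Riemann extension promotes them to biholomorphisms of all of \(X\), whose generator is the extension of \(e_r\). Your extra work — extending the Reeb flow, assembling a joint \(\C^*\)-action, and invoking Hartogs to differentiate it — is a legitimate (if heavier) way of justifying the passage from "each \(\varphi_t\) is holomorphic" to "\(e_r\) is a real holomorphic vector field," a step the paper leaves implicit, and your argument for the single zero is exactly the intended one.
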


\begin{proof}
	The flow \(\{\varphi_t, \,\ t \in \R\}\)  of \(e_r\) is given by \(\varphi_t(p)=e^t \cdot p\) restricted to \(X^{\circ}\). Condition (ii) of Definition \ref{def:PK} implies that \(\varphi_t\) (for fixed \(t\)) is a biholomorphism of \(X^0\) (see Lemma \ref{lem:dever}). 
	On the other hand, \(\varphi_t\) is a homeomorphism of \(X\) because of Condition (i) of Definition \ref{def:PK}. It follows that \(\varphi_t\) is holomorphic on the whole \(X\) because it is continuous and holomorphic outside the complex hypersurface \(X^s\). 
\end{proof}

\begin{corollary}
	The Reeb vector field \(e_s\) and the complex Euler vector field extend across \(X^s\) as holomorphic vector fields with a single zero at \(x\). 
\end{corollary}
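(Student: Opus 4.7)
The plan is to deduce this from Lemma \ref{holomorphic} together with the basic fact that if a real vector field $v$ on a complex manifold $(X,J)$ is \emph{real holomorphic} (i.e.\ its flow consists of biholomorphisms), then $Jv$ is also real holomorphic and the complex vector field $\tfrac12(v - iJv)$ is a holomorphic section of $T^{1,0}X$.

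First I would note that by Lemma \ref{holomorphic}, $e_r$ extends to a smooth real vector field on all of $X$ whose flow $\varphi_t(p) = e^t \cdot p$ consists of biholomorphisms; equivalently, $\mathcal{L}_{e_r} J = 0$ on $X$. Since $J$ is a smooth tensor on the complex manifold $X$, it follows immediately that $e_s := J e_r$ is a smooth vector field defined on all of $X$, extending the previously defined Reeb field from $X^\circ$. The identity $\mathcal{L}_{J e_r} J = J \cdot \mathcal{L}_{e_r} J = 0$ then shows that $e_s$ is also real holomorphic on $X$.

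Consequently the complex vector field $e = \tfrac12(e_r - i e_s)$ is the $(1,0)$-part of the real holomorphic vector field $e_r$; under the standard identification $T^{1,0}X \cong (TX,J)$ it corresponds to a holomorphic section of $T^{1,0}X$, extending the complex Euler field from $X^\circ$. (Alternatively, one can observe that both $e_s$ and $e$ are already defined and holomorphic on $X^\circ$, are continuous on all of $X$ after setting them equal to $Je_r$ and $\tfrac12(e_r-iJe_r)$ respectively, and then apply the Riemann extension Theorem \ref{Riemann} using that $X^s$ is a complex analytic hypersurface by Theorem \ref{singanalitic}.)

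Finally, for the zero locus, since $e_s = Je_r$ and $J$ is pointwise invertible, $e_s$ vanishes at a point exactly when $e_r$ does; similarly $e$ vanishes exactly on the zero locus of $e_r$ since $e_r = 2\,\RE(e)$. Lemma \ref{holomorphic} asserts that $x$ is the unique zero of $e_r$, so the same holds for $e_s$ and $e$. There is no real obstacle here; this corollary is a direct formal consequence of Lemma \ref{holomorphic} combined with the compatibility of the complex structure $J$ with the extended real holomorphic flow.
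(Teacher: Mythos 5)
Your proposal is correct and follows the same route as the paper, whose proof is the one-line observation that the claim is immediate from Lemma \ref{holomorphic} together with smoothness of \(J\); your elaboration (real holomorphicity of \(Je_r\), identification of \(e\) as the \((1,0)\)-part, and the zero locus argument via invertibility of \(J\)) just spells out what the paper leaves implicit.
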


\begin{proof}
	Immediate from Lemma \ref{holomorphic} together with the fact that \(J\) is a smooth complex structure.
\end{proof}

\begin{lemma}\label{lem:isometric}
	The flow of \(e_s\) generates an action of \(\R\) on \(X\) by holomorphic isometries fixing the point \(x\).
\end{lemma}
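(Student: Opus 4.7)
The plan is to establish three properties in order: (i) the flow of $e_s$ is defined for all $t\in\R$ (completeness), (ii) it acts by isometries, and (iii) it fixes $x$. The last property is immediate from the previous corollary, which shows $e_s(x)=0$. For completeness, the key observation is that $e_s$ is tangent to the level sets $\{r=c\}$ of the radius function. Indeed, on $X^{\circ}$ the gradient of $r$ is $\partial_r=e_r/r$, and since $J$ is orthogonal with respect to $g$ we have
\[
e_s(r)=g(Je_r,\partial_r)=\tfrac{1}{r}g(Je_r,e_r)=0.
\]
Because $(X,g)$ is a polyhedral cone with compact link $S$, each sphere $\{r=c\}$ is compact. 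Since $e_s$ is a smooth (indeed holomorphic) vector field on $X$ tangent to these compact level sets away from $x$ and vanishing at $x$, its flow $\psi_t$ exists for all $t\in\R$ and preserves each sphere $\{r=c\}$ as well as the vertex.

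To show that $\psi_t$ acts by isometries, I would first verify this on the regular part $X^{\circ}$. Pick a simply connected open set $U\subset X^{\circ}$ and a branch $F:U\to\C^n$ of the developing map. By Lemma \ref{lem:deves}, $DF$ maps $e_s$ to the standard Reeb vector field of $\C^n$, whose flow is $z\mapsto e^{it}z$. Since this rotation is a Euclidean isometry and $F$ is a local isometry, it follows that $e_s$ is a Killing field on $X^{\circ}$; equivalently the Lie derivative $\mathcal{L}_{e_s}g$ vanishes on $X^{\circ}$, so $\psi_t$ preserves $g$ wherever it keeps points inside $X^{\circ}$.

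To conclude that $\psi_t$ is an isometry of the whole polyhedral metric space $(X,d_g)$, I would use continuity together with density of $X^{\circ}$. The flow $\psi_t$ is a homeomorphism of $X$ (as it is generated by a smooth vector field on the smooth manifold $X$), and $X^{\circ}$ is open and dense with $X^s$ a closed complex hypersurface. Moreover $\psi_t$ preserves $X^{\circ}$ and $X^s$ separately, since $X^s$ is characterised intrinsically as the singular set of the polyhedral metric and $\psi_t$ maps $X^{\circ}$ isometrically into itself (so singularities go to singularities). Hence for any two points $p,q\in X$ we can approximate $d_g(p,q)$ by the distance along paths in $X^{\circ}$, and since $\psi_t$ preserves lengths of such paths, we obtain $d_g(\psi_t(p),\psi_t(q))=d_g(p,q)$. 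Finally $\psi_t$ is holomorphic because its infinitesimal generator $e_s$ is a holomorphic vector field (established in the corollary preceding the lemma), so we get the claimed $\R$-action by holomorphic isometries fixing $x$.

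The main technical point to be careful about is the density argument for the isometry property: one needs to know that $\psi_t$ preserves $X^{\circ}$ (which follows from it being a homeomorphism respecting the closed set $X^s=X\setminus X^{\circ}$ of metric singularities), and that $d_g$ on $X$ is the length-metric extension of the Riemannian distance on $X^{\circ}$, so that isometry on $X^{\circ}$ propagates to isometry on $X$.
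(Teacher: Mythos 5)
Your proof follows essentially the same route as the paper: completeness via invariance of the compact level sets of \(r\), the Killing property on \(X^{\circ}\) via the developing map and Lemma \ref{lem:deves}, holomorphy and the fixed point from the extension result for \(e_s\), and a continuity/density argument to pass from \(X^{\circ}\) to all of \(X\).

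There is, however, one step whose justification as written is circular: you claim that \(\psi_t\) preserves \(X^{\circ}\) \emph{because} ``\(\psi_t\) maps \(X^{\circ}\) isometrically into itself (so singularities go to singularities).'' But the statement that \(\psi_t\) maps \(X^{\circ}\) into itself is exactly what needs to be proved at this point; the Killing property only tells you that \(\psi_t\) is a local isometry \emph{as long as the flow line stays in} \(X^{\circ}\), and a priori an integral curve of \(e_s\) starting at a regular point could hit \(X^s\). The paper's (non-circular) argument is that \(X^s\) is a complex hypersurface which is invariant under the dilations generated by \(e_r\) (dilations are homotheties of the polyhedral metric, so they carry metric singularities to metric singularities); hence \(e_r\) is tangent to \(X^s\), and since the tangent spaces of the complex hypersurface \(X^s\) at its smooth points are \(J\)-invariant, \(e_s=Je_r\) is also tangent to \(X^s\) there (and hence everywhere, by closedness of \(X^s\) and Theorem \ref{singanalitic}). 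Consequently the flow of \(e_s\) preserves \(X^s\) and therefore its complement \(X^{\circ}\). With this substitution the rest of your argument goes through.
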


\begin{proof}
	The vector field \(e_s\) is complete because its orbits lie on compact subsets \(r \equiv\) constant. Lemma \ref{holomorphic} implies that the flow \(\{\psi_t, \,\ t \in \R\}\)  of \(e_s\) is made of biholomorphisms fixing \(x\). The singular set \(X^s\) is invariant by \(e_s\) because \(X^s\) is a complex hypersurface, hence \(\psi_t\) preserves \(X^{\circ}\). Condition (ii) of Definition \ref{def:PK} implies that the \(\psi_t\) are isometries of \(X^{\circ}\) (see Lemma \ref{lem:deves}). By continuity, \(\psi_t\) are isometries of \((X, g)\).
\end{proof}

\subsubsection{{\large Proof of Proposition \ref{prop:PKcones}}}

We begin with two preliminary lemmas.

\begin{lemma}\label{isomtery} 
	Let $(X,g)$ be a complex manifold and $g$ a polyhedral  metric on it inducing its topology. Suppose we have a holomorphic isometric action of $\mathbb R$ on $(X,g)$ that fixes a point $x$. Then $\mathbb R$ is a dense subgroup of a torus $T^k$ that acts on a metric ball centred at \(x\) by isometries and biholomorphisms.
\end{lemma}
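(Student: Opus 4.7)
The plan is to linearise the $\mathbb{R}$-action at $x$ and extend the closure of its derivative in $U(n)$ to a torus action on a metric ball via the tangent cone $C_x$.

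First I would transport the action to the tangent cone. By Lemma \ref{conemb}, the tangent cone $C_x$ carries a canonical PK cone metric and a compatible complex structure such that a conic neighbourhood $U_x(\varepsilon)$ admits a holomorphic isometric embedding $\iota : U_x(\varepsilon) \hookrightarrow C_x$ sending $x$ to the vertex $o$ of $C_x$. Each $\psi_t$ is an isometry of $(X,g)$ fixing $x$, hence preserves distance from $x$ and restricts to a holomorphic isometry of $U_x(\varepsilon)$; via $\iota$ it corresponds to a holomorphic isometry of the $\varepsilon$-ball around $o$ in $C_x$ fixing $o$. Since isometries of the form $\iota \circ \psi_t \circ \iota^{-1}$ preserve distance from $o$, they map radial rays to radial rays, and extend uniquely to a holomorphic isometry $\Psi_t : C_x \to C_x$ by the scaling rule $\Psi_t(\lambda y) = \lambda\,\Psi_t(y)$; equivariance with respect to the holomorphic flow of the complex Euler field of $C_x$ ensures $\Psi_t$ is holomorphic everywhere.

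Next I would pass to the closure in $U(n)$. Linearisation at $o$ gives a continuous one-parameter homomorphism $\rho : \mathbb{R} \to U(T_oC_x) \cong U(n)$, $\rho(t) = d\Psi_t|_o$, whose closure $T^k \subset U(n)$ is a compact connected abelian Lie subgroup, i.e.\ a torus with $\rho(\mathbb{R})$ dense in it. The crucial step is to lift the torus itself to an action on $C_x$. Injectivity of the map $\Psi \mapsto d\Psi|_o$ on the set of holomorphic isometries of $C_x$ fixing $o$ follows from the developing map $\mathrm{dev} : \widetilde{C_x^\circ} \to \mathbb{C}^n$ (Definition \ref{developingmap}): any such $\Psi$ lifts to a holomorphic isometry of a neighbourhood of $0 \in \mathbb{C}^n$ fixing $0$, hence to a unitary linear map determined by $d\Psi|_o$, and $\Psi$ extends uniquely across the complex hypersurface $C_x^s$ by the Riemann extension Theorem \ref{Riemann}, applied as in Section \ref{sect:singularitiesPK}. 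For $\tau \in T^k$ with $\rho(t_n) \to \tau$, the family $\{\Psi_{t_n}\}$ is equicontinuous and uniformly bounded on each closed ball $\overline{B}_R(o) \subset C_x$, so by Arzel\`a--Ascoli a subsequence converges uniformly on compact sets to an isometry $\Psi_\tau$ of $C_x$ fixing $o$; the limit is holomorphic on $C_x^\circ$ by Montel-type convergence and extends holomorphically across $C_x^s$ by Theorem \ref{Riemann}. Injectivity makes the resulting map $\tau \mapsto \Psi_\tau$ a continuous homomorphism $T^k \to \mathrm{Aut}_{\mathrm{hol}, o}(C_x, g)$ extending $\rho$.

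Finally, the $T^k$-action on $C_x$ preserves the $\varepsilon$-ball around $o$, and pulling back via $\iota^{-1}$ produces the desired action of $T^k$ on the metric ball $U_x(\varepsilon) \subset X$ by holomorphic isometries, inside which $\mathbb{R}$ sits as a dense subgroup via $t \mapsto \Psi_t$. The main obstacle is the rigour of the Arzel\`a--Ascoli limit argument across the singular set of $C_x$, which is handled by combining the developing map on the regular part with the Riemann and Rado extension theorems already employed in Section \ref{sect:singularitiesPK}.
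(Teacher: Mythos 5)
Your overall strategy (pass to the tangent cone, linearise at the vertex, close up in a linear group, then transport the torus back) is much more elaborate than the paper's argument, and it has a genuine gap at its pivot point. You define $\rho(t)=d\Psi_t|_o$ and assert that it lands in $U(T_oC_x)\cong U(n)$, so that its closure is automatically a compact torus. But the PK metric is singular at the vertex $o$: it does not induce a Hermitian inner product on $T_oC_x$ (the potential $r^2$ is only continuous there), so ``$\Psi_t$ is an isometry'' does not give you ``$d\Psi_t|_o$ is unitary''. The compactness of the closure of $\{d\Psi_t|_o\}$ in $GL(n,\C)$ is exactly the non-trivial content of the lemma, and at the point where you invoke it you have not established it. It can be recovered — e.g.\ by first showing, via Arzel\`a--Ascoli on the proper metric space $C_x$, that the closure of $\{\Psi_t\}$ in the isometry group fixing $o$ is compact, and only then pushing forward by the (continuous, injective) derivative map to get a compact subgroup of $GL(n,\C)$, hence a Lie group preserving an averaged Hermitian form — but your write-up runs this in the opposite order: you use the already-declared torus $T^k\subset U(n)$ to organise the Arzel\`a--Ascoli limits, rather than using the limits to prove compactness. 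Relatedly, your injectivity argument via the developing map is off: $\mathrm{dev}$ is defined on the universal cover of the \emph{regular} part, $o$ is not in its domain, and an isometry fixing $o$ does not ``lift to a holomorphic isometry of a neighbourhood of $0\in\C^n$ fixing $0$''; what you actually get is a unitary affine map $A$ with $\mathrm{dev}\circ\tilde\Psi=A\circ\mathrm{dev}$, whose relation to $d\Psi|_o$ still has to be argued.

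For comparison, the paper's proof avoids the cone entirely: a sufficiently small metric ball $B$ centred at $x$ is preserved by the action and is biholomorphic to a bounded domain in $\C^n$; by H.~Cartan's theorem (\cite[Theorem 1.2, Chapter III]{Kobayashi}) the biholomorphism group of a bounded domain is a Lie group with \emph{compact} isotropy subgroups, so the closure of $\R$ in the isotropy group at $x$ is a compact connected abelian Lie group, i.e.\ a torus. That single citation supplies precisely the compactness and Lie-group structure that your argument leaves unjustified; if you import it, the detour through $C_x$, the scaling extension, and the limit construction all become unnecessary.
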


\begin{proof}
	Let \(B\) be a metric ball centred at \(x\), if the diameter of \(B\) is sufficiently small then \(B\) is biholomorphic to a bounded domain in \(\C^n\).
	Consider the closure of $\mathbb R$ in the group of holomorphic isometries of $(B,g)$ that fix $x$.  This is a torus because the group of biholomorphisms of a bounded domain in \(\C^n\) is a Lie group and its isotropy subgroups are compact, see \cite[Theorem 1.2 in Chapter III]{Kobayashi}. 
\end{proof}

\begin{lemma}\label{linearization} 
	Let $X$ be a complex manifold with a holomorphic $T^k$ action that has an isolated fixed point \(x\). Then a neighbourhood of \(x\) in $X$ is equivariantly conjugate to a ball in $\mathbb C^n$ equipped a standard action.  
\end{lemma}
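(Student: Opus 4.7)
The strategy is to apply the classical linearization theorem for holomorphic actions of compact Lie groups (Bochner's theorem), adapted to our abelian torus setting. The plan consists of three main steps: linearizing the isotropy representation, constructing an equivariant local biholomorphism by averaging, and identifying the resulting action as standard.

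First I would examine the isotropy representation $\rho: T^k \to GL(T_xX)$ given by the derivative of the $T^k$-action at $x$. Since $T^k$ is compact, by averaging any Hermitian inner product on $T_xX$ against Haar measure I obtain a $T^k$-invariant Hermitian inner product, so $\rho$ factors through $U(n)$. Since $T^k$ is abelian, its image lies in a maximal torus of $U(n)$ and hence can be simultaneously diagonalized: there exists a basis $e_1, \ldots, e_n$ of $T_xX$ and characters $\chi_1, \ldots, \chi_n : T^k \to U(1)$ such that $\rho(t) e_i = \chi_i(t) e_i$. Identify $T_xX$ with $\mathbb{C}^n$ via this basis.

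Next comes the key step, Bochner's averaging trick. Choose any local holomorphic chart $\psi: U \to \mathbb{C}^n$ defined on a small neighbourhood $U$ of $x$, with $\psi(x) = 0$ and $d\psi_x = \mathrm{id}$. Shrinking $U$ so that it lies in the domain of a slightly larger $T^k$-invariant neighbourhood, define
\begin{equation*}
\tilde\psi(p) = \int_{T^k} \rho(t)^{-1} \cdot \psi(t \cdot p) \, dt,
\end{equation*}
where $dt$ is normalized Haar measure on $T^k$. This integral is an average of holomorphic maps and is therefore holomorphic. By construction it satisfies the equivariance relation $\tilde\psi(t \cdot p) = \rho(t) \cdot \tilde\psi(p)$ for all $t \in T^k$. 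Moreover $d\tilde\psi_x$ is the average of $\rho(t)^{-1} \circ d\psi_x \circ d\rho(t)_x = \mathrm{id}$, hence equals $\mathrm{id}$. By the holomorphic inverse function theorem, $\tilde\psi$ is a biholomorphism from some possibly smaller neighbourhood of $x$ onto a neighbourhood of $0$ in $\mathbb{C}^n$, intertwining the $T^k$-action on $X$ with the linear diagonal action $t \cdot (z_1, \ldots, z_n) = (\chi_1(t) z_1, \ldots, \chi_n(t) z_n)$.

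Finally, the isolated fixed point hypothesis translates into a condition on the characters: the common fixed set of $T^k$ in $\mathbb{C}^n$ under the diagonal action is the intersection of the coordinate hyperplanes $\{z_i = 0\}$ over those $i$ for which $\chi_i \not\equiv 1$. For the fixed point to be isolated (i.e. equal to $\{0\}$), we need every $\chi_i$ to be nontrivial, which is precisely the \emph{standard} form of a torus action. The only minor technical obstacle is ensuring that $\tilde\psi$ is defined on a genuine neighbourhood of $x$: this requires shrinking $U$ to an open set $V \subset U$ such that $t \cdot V \subset U$ for all $t \in T^k$, which is possible because $T^k$ is compact and acts continuously fixing $x$. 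Composing $\tilde\psi$ with a suitable dilation of $\mathbb{C}^n$ then produces the desired equivariant identification with a ball.
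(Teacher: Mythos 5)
Your proof is correct and is essentially the same as the paper's: both linearise the isotropy representation and then apply Bochner's averaging trick, defining $\tilde\psi(p)=\int_{T^k}\rho(t)^{-1}\psi(t\cdot p)\,dt$ and invoking the inverse function theorem. Your additional remarks on diagonalising the characters and on the isolated-fixed-point condition forcing every $\chi_i$ to be nontrivial are correct refinements that the paper leaves implicit.
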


\begin{proof}
	Taking the derivative at the fixed point, we obtain a linear action of \(T^k\) on \(T_x X\) and we can identify \(T_xX\) with \(\C^n\) in such a way that this action is standard. Let \(\Phi: U \to \C^n\) be an (arbitrary) holomorphic map defined  in a neighbourhood \(U \subset X\) of \(x\) that sends \(x\) to \(0\) and induces the above identification between \(T_xX\) and \(T_0\C^n\cong \C^n\). Equip \(T^k\) with its invariant measure and define
	\begin{equation*}
	F(y) = \int_{T^k} t^{-1} \Phi(ty) dt .
	\end{equation*}
	It is easy to check that \(F\) is \(T^k\)-equivariant and its derivative at \(x\) is non-singular.
\end{proof}

\begin{proof}[Proof of Proposition \ref{prop:PKcones}]
	It follows from Lemma \ref{lem:isometric} together with Lemmas \ref{isomtery} and \ref{linearization} that we have local coordinates \((z_1, \ldots, z_n)\) centred at \(x\) which satisfy Equation \eqref{action}, in particular Equation \eqref{elinear} must hold. We have \(c_i>0\) for all \(i\) because 
	\[\lim_{\lambda \to 0} \lambda \cdot p = x\]
	for every \(p \in X\). At the same time, Equation \eqref{action} implies that each coordinate function \(z_i\) is homogeneous of degree \(c_i\) with respect to the dilation action of the cone. Therefore we can extend \((z_1, \ldots, z_n)\) by homogeneity to the whole \(X\) and the statement of the proposition follows. 
\end{proof}

\begin{remark}
	The coordinates \((z_1, \ldots, z_n)\) in Proposition \ref{prop:PKcones} are unique up to the action of a finite dimensional Lie group of biholomophisms of \(\C^n\) which preserve the vector field \(e\) given by Equation \eqref{elinear}, see \cite[Proposition 9]{Bryant}. 
\end{remark}

\begin{remark}
	The numbers \(c_i>0\) are uniquely defined, hence can be considered as invariants of the metric singularity at \(x\) given by the eigenvalues of the linearisation of \(e\) at \(x\).
\end{remark}

\begin{definition}
	A collection of smooth complex hypersurfaces \(D_i \subset X\) is said to be \textbf{arrangement-like} if for every point in \(X\) there are local complex coordinates which take the hypersurfaces \(D_i\) to hyperplanes.
\end{definition}

\begin{corollary}\label{cor:arrlike}
	Let \((X,g)\) be a complex manifold with a PK cone metric on it. Suppose that the singular set \(X^s\) is arrangement-like. Then we can take global complex coordinates \((z_1, \ldots, z_n)\) on \(X\) as in Proposition \ref{prop:PKcones} which map \(X^s\) to a hyperplane arrangement.
\end{corollary}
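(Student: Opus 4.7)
The plan is to upgrade the coordinates from Proposition \ref{prop:PKcones} by a weight-preserving polynomial automorphism of $\C^n$ so that $X^s$ becomes a union of hyperplanes. First I would observe that each irreducible component $D_i$ of $X^s$ is $\C^*$-invariant (the connected group $\C^*$ preserves each of the finitely many components) and passes through the vertex $0$ (since all weights $c_j$ are strictly positive). Because $D_i$ is smooth at $0$ by the arrangement-like hypothesis, a standard argument on $\C^*$-invariant analytic germs shows that $D_i$ is globally defined by an irreducible quasi-homogeneous polynomial $f_i$ of some weighted degree $d_i$, with $df_i(0) \neq 0$. Writing $f_i = \ell_i + g_i$ with $\ell_i$ the linear part, an elementary weight count shows that $\ell_i$ is a linear combination of the $z_j$ with $c_j = d_i$, and that $g_i$ is a polynomial (of weighted degree $d_i$ and ordinary order $\geq 2$) depending only on the $z_j$ with $c_j < d_i$.

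The key structural fact to extract from the arrangement-like hypothesis at $0$ is the following: whenever $\sum_i a_i \ell_i = 0$ is a linear relation among linear parts of components sharing a common weight $d_i = d$, one also has $\sum_i a_i g_i = 0$, so in fact $\sum_i a_i f_i = 0$. To prove this I would take local coordinates $y$ near $0$ in which $D_i = \{\lambda_i(y) = 0\}$ for linear $\lambda_i$, write $f_i(z) = u_i(z)\lambda_i(y(z))$ for holomorphic units $u_i$, and expand by weighted degree. The dependence $\sum_i a_i \ell_i = 0$ translates via $\ell_i = u_i(0)\,\lambda_i \circ dy(0)$ into the identity $\sum_i a_i u_i(0)\lambda_i \equiv 0$ as linear functionals on $\C^n$; pulling back to $z$ yields $\sum_i a_i (u_i(0)/u_i(z))\, f_i(z) \equiv 0$, and isolating the weighted-degree-$d$ part (each $f_i$ being pure of weighted degree $d$) gives $\sum_i a_i f_i \equiv 0$, as required.

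Given this relation, for each weighted degree $d$ I would choose a maximal linearly independent subset $J_d \subset \{i : d_i = d\}$ of the $\ell_i$, so that by the relation above each remaining $f_i$ with $d_i = d$ equals the corresponding linear combination of $\{f_{i'} : i' \in J_d\}$. I would then define a new coordinate system by taking $f_{i'}$ (for $i' \in J_d$) as the new coordinate in the weight-$d$ block for each such $i'$, and completing to a basis of $V_d^* = \Span(z_j : c_j = d)$ by linear functions in the remaining slots. The resulting polynomial map $\phi : \C^n \to \C^n$ is $\C^*$-equivariant and, once the variables are ordered by weight, block-triangular with invertible constant diagonal blocks; hence its Jacobian is invertible everywhere and $\phi$ is a polynomial automorphism of $\C^n$. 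In the new coordinates each $f_i$ is a linear function, so every $D_i$ is a hyperplane, while the Euler field $e$ retains its diagonal linear form from Proposition \ref{prop:PKcones}. The main obstacle is extracting the structural relation on the $g_i$ from arrangement-like at $0$, since this is precisely what excludes configurations whose tangent cone at $0$ is a reduced hyperplane arrangement yet are not globally linearizable, such as $D_1 = \{z_3 + z_1^2 = 0\}$, $D_2 = \{z_4 + z_2^2 = 0\}$, $D_3 = \{z_3 + z_4 + z_1 z_2 = 0\}$ in $\C^4$ with weights $(1,1,2,2)$.
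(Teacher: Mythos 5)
Your argument is correct, but it takes a genuinely different --- and far more computational --- route than the paper. The paper's proof is a two-line averaging trick: take the arrangement-like chart \(\Phi\) at the vertex and symmetrize it over the compact torus \(T^k\) exactly as in the proof of Lemma \ref{linearization}; since each component \(D_i\) of \(X^s\) is torus-invariant and the linearized torus action preserves the tangent hyperplane \(H_i = d\Phi(0)(T_0 D_i)\), every integrand \(t^{-1}\Phi(ty)\) with \(y \in D_i\) lies in the linear subspace \(H_i\), hence so does the average; the resulting equivariant chart still straightens \(X^s\) near the vertex and is then extended by homogeneity as in Proposition \ref{prop:PKcones}. You instead start from arbitrary linearizing coordinates and build the straightening by hand as a weight-triangular polynomial automorphism. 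The decisive input you isolate --- that a relation \(\sum_i a_i \ell_i = 0\) among linear parts of equal weighted degree forces \(\sum_i a_i f_i = 0\) --- is exactly what the arrangement-like hypothesis at the vertex contributes beyond the tangent cone being an arrangement, and your derivation (extracting the weighted-degree-\(d\) part of \(\sum_i a_i (u_i(0)/u_i(z)) f_i(z) \equiv 0\)) is sound; your \(\C^4\) example correctly shows the condition is not automatic. Two points you should make explicit rather than cite as ``standard'': when the \(c_j\) are incommensurable the group generated by \(e\) is \(\R_{>0}\times T^k\) rather than \(\C^*\), so quasi-homogeneity of \(f_i\) should be obtained from the weighted-degree decomposition of an invariant defining germ (the lowest-weight part is again a defining germ, is a polynomial since all \(c_j>0\), and has nonvanishing differential at \(0\) because the higher-weight parts are multiples of it). The trade-off is clear: the paper's proof is shorter and is just Bochner linearization, while yours produces an explicit weight-preserving polynomial automorphism and pinpoints precisely which global consequence of the local hypothesis is being used.
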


\begin{proof}
	It suffices to take \(\Phi\) as in the proof of Lemma \ref{linearization} such that it maps \(X^s\) to a hyperplane arrangement, its symmetrization \(F\) will also have this property and the proof of Proposition \ref{prop:PKcones} goes through.
\end{proof}

\subsection{Submanifolds and induced metrics}\label{sect:sumfldandindmet}

In this section we analyse the situation when $(X,g)$ is a complex manifold with a PK metric and $X'\subset X$ is smooth complex submanifold such that the restriction $g|_{X'}$ is polyhedral as well. We start with the following case.

\begin{lemma}\label{smoothpoints} 
	Let $(X,g)$ be a complex manifold with a PK cone metric. Suppose $X^s$ is a smooth hypersurface in $X$. Then $X$ is a product of a Euclidean vector space with a $1$-dimensional cone.
\end{lemma}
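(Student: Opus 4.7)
The plan is to invoke the de Rham type decomposition from Proposition \ref{PKconeproduct} to write $(X,g)=\C^{n-k}\times (X',g')$, where $(X',g')$ is an essential PK cone of complex dimension $k$. The metric singular set of a product is $X^s=\C^{n-k}\times (X')^s$, so the hypothesis makes $(X')^s$ a smooth (possibly empty) complex hypersurface in $X'$, necessarily connected because it is a cone through the vertex. The statement reduces to showing $k\le 1$: if $k=0$ then $X=\C^n=\C^{n-1}\times\C$, while if $k=1$ then $X'$ is an essential PK cone of complex dimension $1$, hence a $2$-cone $C_\alpha$. Note also that for any $k\ge 1$ essentiality rules out $(X')^s=\emptyset$, since otherwise $X'$ would be a smooth flat K\"ahler cone, forcing $X'=\C^k$.

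To complete the proof I will derive a contradiction assuming $k\ge 2$. At a generic point $y\in (X')^s$, Proposition \ref{stratprop}(2) together with Lemma \ref{conemb} describe the metric on a conic neighbourhood $U_y$ of $y$ as a holomorphic and isometric product $\C^{k-1}\times C_\alpha$, with $C_\alpha$ an essential $2$-cone. This furnishes $k-1\ge 1$ linearly independent holomorphic parallel vector fields $v_1,\ldots,v_{k-1}$ on $U_y^\circ$ that are tangent to $(X')^s$ at its smooth points.

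The central step is to extend these local parallel fields to globally defined parallel holomorphic vector fields on $(X')^\circ$. Here I would use Proposition \ref{prop:PKcones} to identify $X'$ biholomorphically with $\C^k$, so $X'$ is simply connected. Any loop $\gamma\subset (X')^\circ$ bounds a disc in $X'$ which, after a generic perturbation, meets the smooth connected hypersurface $(X')^s$ transversely in finitely many points; thus $[\gamma]$ is a product of meridian classes around $(X')^s$. Monodromy invariance of the $v_i$ then reduces to a local calculation near a single meridian, and connectedness of $(X')^s$ combined with the tangent-cone product description propagates the distinguished $\C^{k-1}$-direction continuously along $(X')^s$, so meridian monodromy fixes each $v_i$. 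This produces globally defined, nowhere vanishing, parallel holomorphic vector fields on $(X')^\circ$ tangent to $(X')^s$.

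The real part of any such field is a non-zero real parallel vector field on $(X')^\circ$ tangent to the codimension-$2$ stratum of metric singularities of $X'$, i.e.\ parallel in codimension $2$ in the sense of Section \ref{sect:parallelvf}. Corollary \ref{coneJrotation} then splits an $\R$-factor off $X'$, contradicting essentiality and forcing $k\le 1$. The subtlest point, and the main obstacle I anticipate, is verifying rigorously that the local parallel vector fields extend consistently under parallel transport; once the transversality/meridian argument is made precise, the appeal to Corollary \ref{coneJrotation} concludes the proof.
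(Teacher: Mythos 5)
Your route is genuinely different from the paper's: the paper proves this lemma by induction on dimension, applying the statement to the tangent cones at non-vertex points of $X^s$ to conclude that \emph{every} non-vertex singular point has complex codimension one, then that $(X^s,g)$ is globally isometric to flat $\C^{n-1}$, and finally extending a parallel field of $X^s$ to $X\setminus\{o\}$ and flowing the vertex into the codimension-one stratum. You instead split off the Euclidean factor first via Proposition \ref{PKconeproduct} and aim at a contradiction with essentiality through Corollary \ref{coneJrotation}, with no induction. The reduction to $k\le 1$, the identification of the local model $\C^{k-1}\times C_\alpha$ at a generic point of $(X')^s$ (which needs Theorem \ref{singanalitic}(1)--(2) to see that the codimension-$\ge 2$ locus is a proper analytic subset), and the final appeal to Corollary \ref{coneJrotation} are all sound.

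The gap is in the extension step, exactly where you flag it, and it is not a technicality. Writing a loop as a product of meridian classes $\delta_j\mu_j\delta_j^{-1}$ based at $p$, invariance of $v_i(p)$ under $\mathrm{Hol}(\delta_j\mu_j\delta_j^{-1})$ requires that the parallel transport of $v_i(p)$ along the \emph{arbitrary} connecting path $\delta_j$ land in the distinguished $\C^{k-1}$-direction at the far end, where the small meridian $\mu_j$ acts. Your "propagation along $(X')^s$" only controls parallel transport along paths staying in a tubular neighbourhood of the codimension-one stratum; it says nothing about $\mathrm{Hol}(\delta_j)$ for paths wandering through $(X')^\circ$. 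But the statement that the distinguished direction is preserved by all of $\mathrm{Hol}(\pi_1((X')^\circ))$ is precisely the holonomy-invariant subspace whose existence yields the splitting — so as written the reduction "to a local calculation near a single meridian" assumes what is to be proved. Closing this requires either a topological input you do not supply (e.g.\ that $\pi_1$ of the tubular neighbourhood minus $(X')^s$ surjects onto $\pi_1((X')^\circ)$, or that the latter group is abelian so that all meridians have a common fixed subspace), or the paper's inductive mechanism, which makes the parallel field single-valued along the entire singular set before any monodromy question arises.
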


\begin{proof} 
	We will prove this statement by induction. For $n=1$ the statement is clear and for $n=2$ it follows from the classification result \cite[Theorem 1.7]{Pan}.  Assume that the statement holds for all such $(X,g)$ of dimension at most $n-1\ge 2$ and let's prove it in the case $\dim X=n$. 
	
	First, we analyse points on $X^s$ different from the vertex $o$ of $X$. Let $x$ be such a point. Since $x\ne o$,  $x$ is a metric singularity of codimension at most $n-1$. In particular, the tangent cone of $X$ at $x$ is isometric to a product $\mathbb C^l\times X'$ where $l>0$ and $X'$ is a complex manifold with a PK cone metric. Since locally $X^s=\mathbb C^l\times X'^s$, we see that $X'^s$ is a smooth hypersurface in $X'$, so $(X',g)$ satisfies the conditions of the lemma, and by induction it is a product of $\mathbb C^{n-l-1}$ with a one-dimensional cone.  We conclude that $x$ is a metric singularity of complex codimension $1$ in $X$.

	From what we proved it follows that $(X^s,g)$ is a flat manifold non-singular at all points except, perhaps, the vertex. However, $n\ge 3$ and $\dim X^s\ge 2$. So  $(X^s,g)$ is isometric to $\mathbb C^{n-1}$ by the same reasoning as in  the proof of  Proposition \ref{stratprop} (3).  
	
	It remains to show that the vertex $o$ of $X$ is a metric singularity of complex codimension $1$. Take on $(X^s,g)=\mathbb C^{n-1}$ a parallel vector field.  Extend it holomorphically to a flat vector field defined in  an open neighbourhood of a non-vertex point $x\in X^s$. Further, extend this field by parallel translation to $X\setminus o$. This can be done since $v$ is invariant under the holonomy around any loop encircling $X^s$ in $X\setminus X^s$. Hence, $v$ extends holomorphically to the whole $X$. In particular the flow of $v$ exists in a neighbourhood of $o$. The flow  is isometric and it moves $o$ to points of $X^s$ that are metric singularities of complex codimension $1$. So, $o$ is a metric singularity of complex codimension $1$, which implies the lemma.
\end{proof}

\begin{corollary}\label{smoothpoint} 
	Let $X$ be a complex manifold with a PK metric $g$. Any smooth point $x$ of  the complex hypersurface $X^{s}$ is a metric singularity of complex codimension $1$, i.e. its tangent cone is a product of a flat space with a $1$-dimensional cone. In particular $g|_{X^s}$ is flat in a small neighbourhood of $x$ in $X^s$.
\end{corollary}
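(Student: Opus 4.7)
The plan is to reduce the statement to Lemma \ref{smoothpoints} by passing to the tangent cone at $x$.

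First I would invoke Lemma \ref{conemb} to conclude that the tangent cone $C_x$ of $(X,g)$ at $x$ is itself a complex manifold equipped with a PK cone metric. Since by assumption $x$ is a smooth point of the analytic hypersurface $X^s$, on a small neighbourhood $U$ of $x$ in $X$ the singular set $X^s \cap U$ is a smooth complex submanifold of $U$ of complex codimension one. Under the local isometric embedding $U \hookrightarrow C_x$ provided by the definition of tangent cone, the singular set $X^s\cap U$ is sent into $C_x^s$. Using the $\R_{>0}$-action by dilations on $C_x$ (whose flow is generated by the real Euler field $e_r$ of the cone and which preserves $C_x^s$ since $C_x^s$ is a complex hypersurface invariant under biholomorphisms fixing the vertex), I would then propagate the local smoothness from the image of $U$ to all of $C_x \setminus \{o\}$, where $o$ denotes the vertex. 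Concretely, any point of $C_x^s \setminus \{o\}$ lies on the dilation orbit of a point in the image of $X^s\cap U$, hence $C_x^s$ is smooth away from $o$, and since $C_x^s$ is a complex analytic hypersurface (by Theorem \ref{singanalitic}(4) applied to $C_x$), smoothness at $o$ follows by a standard argument: if $o$ were a singular point of $C_x^s$, then in the linearising coordinates of Proposition \ref{prop:PKcones} the set $C_x^s$ would be a $(\C^*)$-invariant hypersurface with a non-smooth point at $0$, but a hypersurface smooth on $C_x \setminus \{o\}$ and invariant under the contracting dilation must also be smooth at $o$.

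Once $C_x^s$ is established as a smooth complex hypersurface in $C_x$, Lemma \ref{smoothpoints} applies directly: $(C_x,g_{C_x})$ decomposes as an orthogonal and holomorphic product $\C^{n-1} \times C'$ where $C'$ is a $1$-dimensional PK cone, i.e.\ a $2$-cone $\C_\alpha$. This exactly means that $x$ is a metric singularity of complex codimension $1$ with tangent cone $\C^{n-1}\times \C_\alpha$. The last assertion — that $g|_{X^s}$ is flat in a neighbourhood of $x$ in $X^s$ — then follows because under the local isometric embedding into the tangent cone $\C^{n-1}\times \C_\alpha$, the submanifold $X^s$ maps into the Euclidean factor $\C^{n-1}\times\{0\}$, on which the induced metric is flat.

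The main obstacle is the step of propagating smoothness of $C_x^s$ to the vertex $o$, i.e.\ ensuring that the tangent cone has the same kind of local structure at $o$ as $(X,g)$ has at $x$ in terms of its singular set. This is where the dilation-equivariance provided by Proposition \ref{prop:PKcones} is essential: a $(\C^*)$-invariant analytic hypersurface of $\C^n$ which is smooth away from the origin and whose smooth part is a linear subspace $\C^{n-1}\times\{0\}$ (by the local product structure at smooth points of $X^s$ transplanted to $C_x$) must coincide globally with that hyperplane, hence is smooth at $o$ as well. No further verification is required beyond this.
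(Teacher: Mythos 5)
Your overall strategy is exactly the paper's: show that the metric singular set of the tangent cone $C_x$ is a smooth hypersurface and then invoke Lemma \ref{smoothpoints}. The only place the write-up goes astray is the step you single out as the main obstacle, namely smoothness of $C_x^s$ at the vertex $o$. The general principle you appeal to --- that a $(\C^*)$-invariant analytic hypersurface which is smooth away from the origin must be smooth at the origin --- is false: $\{z_1^2=z_2^3\}\subset\C^2$ is invariant under the weighted action $(z_1,z_2)\mapsto(\lambda^{3}z_1,\lambda^{2}z_2)$ (and the tangent-cone action of Proposition \ref{prop:PKcones} is in general such a weighted action), is smooth away from $0$, and is singular at $0$; even for the unweighted action, $\{z_1z_2=z_3^2\}\subset\C^3$ does the same. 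Moreover, the parenthetical justification that the smooth part of $C_x^s$ is already a linear hyperplane ``by the local product structure at smooth points of $X^s$'' comes uncomfortably close to assuming the conclusion of the corollary. Fortunately no such argument is needed: by Lemma \ref{conemb} a neighbourhood of $o$ in $C_x$ is identified biholomorphically and isometrically with a neighbourhood $U$ of $x$ in $X$, and under this identification $C_x^s$ near $o$ is precisely the image of $X^s\cap U$, which is a smooth hypersurface through $x$ by hypothesis; so $C_x^s$ is smooth at $o$ for free, and your dilation argument handles every other point of $C_x^s$. With that correction the proof is complete and coincides with the paper's one-line proof.
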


\begin{proof} 
	It suffices to apply Lemma \ref{smoothpoints} to the tangent cone of $x$.
\end{proof}

\begin{lemma}\label{holball} Consider a complex domain $B\subset\mathbb C^n$ with an analytic subset $Y\subset B$. Suppose there is a metric $d$ on $B$ with respect to which it is isometric to the unit Euclidean ball $B_1\subset \mathbb C^{n}$. Suppose moreover that the restriction of $d$ to $B\setminus Y$ is K\"ahler. Then $(B, d)$ is holomorphically isometric to a unit ball in $\mathbb C^n$. 
\end{lemma}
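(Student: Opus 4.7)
I would promote the given metric isometry $\varphi\colon (B,d)\to (B_1,d_E)$ to a biholomorphism, after an orthogonal change of coordinates on the target. First observe that $d|_{B\setminus Y}$ is, by hypothesis, the length metric of a K\"ahler metric $g$ on $B\setminus Y$, and that $g$ must be flat because each point has a small neighbourhood that embeds isometrically via $\varphi$ into the Euclidean ball $B_1$. Locally, both $d$ and $d_E$ coincide with the Riemannian distances of $g$ and $g_E$, so the local form of the Myers--Steenrod theorem implies that $\varphi|_{B\setminus Y}$ is a smooth Riemannian isometry onto the open subset $B_1\setminus\varphi(Y)$.

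Next I would transport the complex structure. The pushforward $J':=d\varphi\circ J\circ d\varphi^{-1}$ is a smooth almost-complex structure on $B_1\setminus\varphi(Y)$, compatible with $g_E$ and parallel for its Levi--Civita connection, since $J$ is parallel on $(B\setminus Y,g)$ and $\varphi$ is a Riemannian isometry. The complement $B_1\setminus\varphi(Y)$ is connected because $\varphi(Y)$, being the bi-Lipschitz image of a proper complex analytic subset, has real codimension at least two in $B_1$. A parallel tensor on a connected open subset of Euclidean space is constant, so $J'$ is the restriction of a fixed linear complex structure on $\mathbb R^{2n}$ compatible with $g_E$. An orthogonal automorphism $L\in O(2n)$ takes $J'$ to the standard complex structure $J_E$ and preserves the round ball $B_1$; after replacing $\varphi$ by $L\circ\varphi$, we may assume that $\varphi\colon B\setminus Y\to B_1\setminus\varphi(Y)$ is holomorphic.

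The map $\varphi$ is continuous on all of $B$, bounded (with image in $B_1$), and holomorphic outside the proper analytic subset $Y$. Applying Riemann's extension Theorem \ref{Riemann} to each coordinate function of $\varphi$ produces a holomorphic extension on all of $B$, which by continuity coincides with the original $\varphi$. The resulting holomorphic map $\varphi\colon B\to B_1$ is a continuous bijection between equidimensional complex manifolds. By the classical fact that an injective holomorphic map has nowhere vanishing Jacobian determinant (otherwise invariance of domain for the homeomorphism $\varphi$ would fail at a degenerate point), $\varphi$ is a biholomorphism onto $B_1$. This identifies $(B,d)$ holomorphically and isometrically with the Euclidean unit ball in $\mathbb C^n$.

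The only delicate step is the second: one must upgrade ``locally parallel'' to ``globally constant'' for $J'$, and then realise the result as the standard complex structure on the same ball $B_1$. Both points rest on $Y$ having complex codimension at least one (hence real codimension at least two), which also underlies the applicability of Riemann extension and of the invariance-of-domain argument in the final step.
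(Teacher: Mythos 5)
Your proof is correct and follows essentially the same route as the paper's: compose the metric isometry with a linear orthogonal map so that it becomes holomorphic on $B\setminus Y$ (using that the flat K\"ahler structure makes $J$ parallel, hence constant under the developing isometry), then extend across the analytic set $Y$ by Riemann's extension theorem. The paper leaves the final biholomorphism step implicit; your explicit treatment of it is a harmless elaboration rather than a different approach.
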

\begin{proof} Choose a point $x\in B\setminus Y$ and choose an isometry $\varphi: B\to B_1$ such that $d\varphi$ is complex linear at $T_x$. Then by assumptions of the lemma  $\varphi$ is holomorphic on $B\setminus Y$. Since $\varphi$ is continuous on $B$ and $Y$ is an analytic subset, it follows that $\varphi$ is holomorphic on $B$.
\end{proof}

\begin{lemma}\label{pksubmanifold} Let $(X,g)$ be a complex  $n$-manifold with a PK metric and let $Y\subset X$ be a smooth complex $n-l$-submanifold. Suppose $Y$ is a polyhedral submanifold of $X$, i.e, for some triangulation $\tau$ of $X$, $Y$ is a subcomplex of $\tau$. Let $k\le l$ be minimal such that $Y\subset X^s_{2k}$. Then
	\begin{itemize}
		
		\item[(1)] If $y\in Y$ is a metric singularity of $g|_Y$ then $y\in X^s_{2k+2}$.
		
		\item[(2)] $g|_Y$ is a PK metric on $Y$.
	\end{itemize}
	
\end{lemma}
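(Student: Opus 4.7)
The plan is to deduce both statements from a careful analysis of the tangent cone of $(X,g)$ at a point $y \in Y \setminus X^s_{2k+2}$, using Proposition \ref{stratprop} combined with the essentiality observation from Lemma \ref{subcomplex}(2).

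\medskip

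\noindent\textbf{Step 1 (Part (1), by contrapositive).} I would prove that every $y \in Y \setminus X^s_{2k+2}$ is a regular point of the polyhedral metric $g|_Y$. Since $Y \subset X^s_{2k}$ by definition of $k$, such a $y$ lies in the open stratum $X^s_{2k} \setminus X^s_{2k+2}$. By Proposition \ref{stratprop}(2) the tangent cone $C_y$ decomposes holomorphically and metrically as $C_y = \mathbb{C}^{n-k} \times X'$ with $X'$ an essential PK cone of complex dimension $k$, so a conic neighbourhood of $y$ in $X$ is isometric to a product $B \times C'_\varepsilon$. By Lemma \ref{subcomplex}(2) applied to the essential cone $X'$, the only point of real codimension $2k$ in $C'_\varepsilon$ is its vertex, hence $X^s_{2k} \cap (B \times C'_\varepsilon) = B \times \{0\}$. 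Therefore the part of $Y$ contained in this neighbourhood lies entirely inside $B \times \{0\} \cong \mathbb{C}^{n-k}$.

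\medskip

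\noindent\textbf{Step 2 (Local flat model for $Y$ near $y$).} Near $y$ the submanifold $Y$ is a smooth complex submanifold of $\mathbb{C}^{n-k}$ of dimension $n-l$. Moreover, $Y$ is a subcomplex of a polyhedral triangulation of $X$, so near $y$ it is a finite union of Euclidean simplices sitting inside $B = \mathbb{C}^{n-k}$. A smooth manifold that is also a finite union of affine simplices must coincide with the tangent plane at each of its points, and since that tangent plane is a complex subspace of $\mathbb{C}^{n-k}$, $Y$ agrees locally at $y$ with a complex affine $(n-l)$-plane. The induced metric $g|_Y$ is therefore the restriction of the Euclidean metric on $\mathbb{C}^{n-k}$ to this affine subspace, i.e. it is flat and non-singular at $y$. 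This proves (1).

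\medskip

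\noindent\textbf{Step 3 (Part (2)).} The metric $g|_Y$ is polyhedral on $Y$ because $Y$ is a subcomplex of a triangulation of $X$, and it clearly induces the topology of $Y$ as a complex manifold. It remains to check that $g|_Y$ is K\"ahler on the regular part $Y^{\circ}$ of $(Y,g|_Y)$. By (1), every regular point of $g|_Y$ lies in $Y \setminus X^s_{2k+2}$, and by Step 2 at any such point $g|_Y$ is the restriction of a flat K\"ahler metric on $\mathbb{C}^{n-k}$ to a complex affine subspace; in particular the complex structure induced from $X$ is parallel with respect to $g|_Y$ and the form $\omega|_Y$ is closed. Thus $(Y,g|_Y)$ is K\"ahler on its regular locus, completing the proof that $g|_Y$ is a PK metric.

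\medskip

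\noindent\textbf{Main obstacle.} The delicate point is Step 2: reconciling the smoothness of $Y$ as a complex submanifold with its structure as a subcomplex of the triangulation, in order to conclude that $Y$ is locally affine inside the flat factor $\mathbb{C}^{n-k}$ of the tangent cone. Everything else reduces to bookkeeping on the singular stratification and applying the structural results of Section \ref{sect:singularitiesPK}.
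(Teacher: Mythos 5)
Your Steps 1 and 2 reproduce the paper's proof of part (1): pass to the tangent cone $C_y=\mathbb{C}^{n-k}\times C'$ via Proposition \ref{stratprop}(2), use Lemma \ref{subcomplex}(2) to see that the codimension-$2k$ singular set of $C_y$ is exactly $\mathbb{C}^{n-k}\times\{0\}$, conclude that $Y$ lands in the flat factor, and then use smoothness plus polyhedrality to see that $Y$ is locally affine there. This part is fine (you in fact spell out the ``smooth $+$ union of affine simplices $\Rightarrow$ affine'' step, which the paper leaves implicit).

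Step 3, however, contains a genuine gap. You write ``By (1), every regular point of $g|_Y$ lies in $Y\setminus X^s_{2k+2}$,'' but this is the \emph{converse} of statement (1). Part (1) says that singularities of $g|_Y$ lie in $Z=Y\cap X^s_{2k+2}$, i.e.\ that points of $Y\setminus X^s_{2k+2}$ are regular for $g|_Y$; it does not say that points of $Z$ are singular for $g|_Y$. Indeed they need not be: Remark \ref{rmk:6points} gives a PK cone on $\mathbb{C}^2$ whose restriction to a line $L_i$ of the arrangement is a flat $\mathbb{C}$, so the origin is a regular point of $g|_{L_i}$ even though it lies in $X^s_4$. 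At such a point your local model from Step 2 is unavailable, so you have verified the K\"ahler condition only on $Y\setminus X^s_{2k+2}$, which may be a proper subset of the regular locus $Y^{\circ}$ of the polyhedral metric $g|_Y$. To close the gap you must show that at a regular point $y\in Y^{\circ}\cap Z$ the flat metric $g|_Y$ is still K\"ahler; the paper does this by taking a flat $\varepsilon$-ball $B_y(\varepsilon)\subset Y$, observing that $Z\cap B_y(\varepsilon)$ is a proper analytic subset and that $g|_Y$ is K\"ahler off it, and invoking Lemma \ref{holball} to upgrade the isometry with a Euclidean ball to a holomorphic isometry. Some such extension-across-an-analytic-set argument is unavoidable here.
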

\begin{proof} (1) Let $C_y$ be the tangent cone of $y$ in $X$ and let $\varphi: U\to C_y$ be an isometric embedding of a neighbourhood $U$ of $y$ into $C_y$ with $\varphi(y)=0$.  Assume by contradiction that $y$ is a singularity of complex codimension $k$ in $X$. Then $C_y=\mathbb C^{n-k}\times C'$, where $C'$ is an essential cone. Note that $\varphi(Y\cap U)$ is a smooth complex submanifold contained in $(\mathbb C^{n-k},0)$ because this subspace coincides with the set of all metric singularities of codimension $k$ of $C$. Hence the restriction $g|_Y$ can't have a singularity at $y$, a contradiction.
	
	(2) We need to verify condition (ii)  of Definition \ref{def:PK}, i.e., for any point $y\in Y$ at which $g|_{Y}$ is flat it is also K\"ahler. By (1) all metric singularities of $g|_Y$ lie in $Z=X^s_{2k+2}\cap Y$. This is an analytic subset of $Y$ of codimension at least $1$. Now, take a flat $\varepsilon$-ball $B_y(\varepsilon)\subset Y$. Since $B_y(\varepsilon)\cap Z$ is an analytic subset, $g|_Y$ is flat on $B_y(\varepsilon)$ and K\"ahler on $B_y(\varepsilon)\setminus Z$, we can apply Lemma \ref{holball} to conclude that $g|_Y$ is flat and K\"ahler on $B_y(\varepsilon)$.
\end{proof}

\subsection{PK cone metrics singular at hyperplane arrangements}\label{sect:PKhyperplane}

In this section we show that if the singularities of a PK cone metric on \(\C^n\) make a hyperplane arrangement then the stratification given by the singularities of the polyhedral metric agrees with the usual complex stratification. We also show that the Levi-Civita connection is standard in complex coordinates that linearise the Euler vector field.

\subsubsection{{\large Stratification}}

\begin{proposition}\label{prop:PKstrat} 
	Consider $\mathbb C^n$ with a PK cone metric $g$ whose metric singularities form a hyperplane arrangement $\mathcal H$, and let $L\in \mL(\mH)$ be a subspace of dimension $l$. Then 
	\begin{itemize}
		\item[(1)]For any $x\in L^\circ=L\setminus \mH^L$ the tangent cone to $(\mathbb C^n,g)$ at $x$ is isomorphic to $\mathbb C^n$ with a PK metric $g_x$ singular at $\mathcal H_L$. 
		\item[(2)] The restriction of $g$ to $L$ is a PK cone metric. The singularities of $(L,g|_L)$ are given by a sub-arrangement of the induced arrangement $\mH^L$. 
		\item[(3)] $L^\circ$   is an open stratum of metric singularities of complex codimension $n-l$ in $\mathbb C^n$.
	\end{itemize}
\end{proposition}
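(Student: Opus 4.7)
The plan is to handle parts (1) and (3) through a single tangent-cone analysis at a point $x\in L^\circ$, and then to use (3) to apply Lemma \ref{pksubmanifold} in the proof of (2). For (1), after translating so that $x$ is the origin, each $H\in\mH_L$ becomes a linear hyperplane invariant under dilations centred at $x$, whereas each $H'\in\mH\setminus\mH_L$ satisfies $L\not\subset H'$, so $x\notin H'$ and $H'$ stays at positive distance from $x$. Hence the singular locus of $g$ in a small neighbourhood of $x$ is exactly $\bigcup_{H\in\mH_L}H$, and by Lemma \ref{conemb} the tangent cone $C_x$ inherits a PK cone metric whose singular set is this linear arrangement $\mH_L$. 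Since $C_x^s$ is arrangement-like, Corollary \ref{cor:arrlike} supplies global complex coordinates identifying $C_x$ with $(\C^n,g_x)$ singular at $\mH_L$.

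For (3), Proposition \ref{PKconeproduct} applied to the tangent cone above splits it orthogonally and holomorphically as $\C^l\times(\C^{n-l},g')$, where the Euclidean factor corresponds to the centre $T(\mH_L)=L$ and $g'$ is essential with singularities $\mH_L/L$. Thus every $x\in L^\circ$ is a metric singularity of complex codimension exactly $n-l$. Let $S$ be the connected component of $X^s_{2(n-l)}\setminus X^s_{2(n-l+1)}$ containing $x$; by Proposition \ref{stratprop} it is a smooth complex submanifold of dimension $l$, and since $L^\circ$ is connected (as the complement of a hyperplane arrangement in $L\cong\C^l$) we obtain $L^\circ\subset S$. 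The main obstacle is the reverse inclusion: at any $y\in L^\circ$ the tangent space $T_yS$ coincides with the flat $\C^l$-factor of the tangent cone at $y$, which equals $T_yL$, so $S$ and $L$ agree in a neighbourhood of $y$; then $S\cap L$ is non-empty, closed (since $L$ is closed) and open (by this local agreement) in $S$, and connectedness forces $S\subset L$. Because $L\setminus L^\circ\subset X^s_{2(n-l+1)}$ is disjoint from $S$, we conclude $S=L^\circ$.

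For (2), I would pass to coordinates from Corollary \ref{cor:arrlike} so that $\mH$ is a linear arrangement and $L$ is a linear subspace. The hyperplanes of $\mH$ are the irreducible components of $X^s$, which by Proposition \ref{prop:PKcones} is preserved by the real torus action generated by the imaginary part of the complex Euler vector field; hence each $H$, and therefore $L=\bigcap_{H\in\mH_L}H$, is invariant, so the cone structure restricts to $L$. To invoke Lemma \ref{pksubmanifold} I fix a polyhedral triangulation of $(\C^n,g)$ having $L$ as a subcomplex, obtained by triangulating the unit sphere of $g$ compatibly with all hyperplanes of $\mH$ and coning from the origin. Part (3) gives $L\subset X^s_{2(n-l)}$; then Lemma \ref{pksubmanifold}(2) ensures $g|_L$ is PK, and Lemma \ref{pksubmanifold}(1) forces its singular set into $X^s_{2(n-l+1)}\cap L\subset L\setminus L^\circ=\bigcup_{H\in\mH\setminus\mH_L}(L\cap H)$, exhibiting it as a subarrangement of the induced arrangement $\mH^L$.
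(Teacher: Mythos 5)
Your parts (1) and (2) are broadly in line with the paper (part (1) is essentially identical; for (2) the paper gets the polyhedral-submanifold property from Theorem \ref{singanalitic}(2) and Lemma \ref{subcomplex}(1) rather than by constructing a compatible triangulation of the unit sphere, and proves the sub-arrangement claim by induction on nested hyperplanes rather than via (3)). But your part (3) has a genuine gap, and since your (2) relies on (3), the gap infects the whole argument.

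The gap: you invoke Proposition \ref{PKconeproduct} on the tangent cone $C_x$ and assert that its maximal Euclidean factor ``corresponds to the centre $T(\mH_L)=L$.'' Proposition \ref{PKconeproduct} only gives \emph{some} maximal splitting $C_x=\C^{n-k}\times C'$ with $C'$ essential; identifying $n-k$ with $l$ is precisely the content of item (3) and does not follow from the splitting alone. One inclusion is accessible: since the singular set of the product is $\C^{n-k}\times (C')^s$ and must equal $\bigcup_{H\in\mH_L}H$, the factor $\C^{n-k}\times\{0\}$ lies in every $H\in\mH_L$, hence in $L$, giving $n-k\le l$ (equivalently, codimension at least $n-l$ — this is how the paper argues it inside the proof of (2)). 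The reverse inequality $n-k\ge l$ is the hard half: a priori the metric could be ``more singular'' along $L$ than a product structure permits, i.e.\ $C'$ could be an essential cone whose singular arrangement has positive-dimensional centre. Ruling this out is where the paper does real work: it first shows $g|_L$ is flat on $L^\circ$ (via Lemma \ref{pksubmanifold}), picks a point $y\in L$ of codimension exactly $n-l$ (possible because higher-codimension singularities form a proper analytic subset of $L$ by Theorem \ref{singanalitic}(2)), takes a parallel field on a conic neighbourhood $B_y$, and extends it to all of $\C^n$ using that $B_y\setminus\mH_L\hookrightarrow\C^n\setminus\mH_L$ induces an isomorphism on $\pi_1$, so that the holonomy obstruction vanishes; the resulting global $\C^l$-action by isometries forces the vertex to have codimension exactly $n-l$. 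Some argument of this kind (global extension of parallel fields, or an equivalent) is indispensable, and your proposal omits it entirely. The remainder of your (3) — the connectedness argument identifying the stratum $S$ with $L^\circ$ — is fine once the pointwise codimension statement is in place.
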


\begin{proof} 
	(1) The singular set of the tangent cone at \(x\) is arrangement-like, so the first item follows from Corollary \ref{cor:arrlike}.
	
	
	(2) Thanks to Lemma \ref{pksubmanifold} (2), to see that $g|_L$ is PK it is enough to show that $L$ is a polyhedral submanifold of $\mathbb C^n$. Note that any point $x\in L$ is a metric singularity of complex codimension at least  $n-l$. Indeed, if it were a metric singularity of complex codimension $n-m$ with $m>l$, by Proposition \ref{stratprop} (2) the arrangement $\mathcal H$ would  a product $\mathbb C^m\times \cal H'$ locally at $x$, which is absurd. By Theorem \ref{singanalitic} (2) the subspace ${(\C^{n})^s_{2n-2l}}$ of singularities of $(\mathbb C^n,g)$ of complex codimension at least $n-l$ is an analytic subvariety of dimension at most $l$ and so $L$ is its irreducible subvariety. Since ${(\mathbb C^{n})^s_{2n-2l}}$ is a polyhedral subspace of $(\mathbb C^n,g)$ by Lemma \ref{subcomplex} (1), we conclude that $L$ is a polyhedral submanifold.  
	
	The second statement is proven by induction by considering a nested sequence of hyperplanes. Indeed, let $H\subset \mathcal H$ be a hyperplane. By Corollary \ref{smoothpoint} the singularities of $g|_H$ are contained in the induced hyperplane arrangement $H_{\mathcal H}$. At the same time, the metric $g|_H$ is PK and so its singularities in $H$ form a hypersuface. So this is a sub-arrangement of $H_{\mathcal H}$, and we can proceed by induction.

	(3) Take $x\in L^\circ$. We saw in (2) that $x$ is a metric singularity of codimension $\ge n-l$. So we only need to show that it is of codimension $\le n-l$. We know from (2) that $g|_L$ is flat on $L^{\circ}$, in particular $g|_L$ is flat in a small neighbourhood of $x$.

	Using (1) we  pass to the tangent cone to $\mathbb C^n$ at $x$, so that the arrangement $\mathcal H$ is replaced by $\mathcal H_L$. Then
	$g|_{L}$ is flat on $L$, and we need to show that $x$ (the vertex of the cone) is a singularity of codimension exactly $n-l$ and not higher. Metric singularities of codimension $>n-l$ form a proper analytic subset of $L$ so we can take a point $y$ on $L$ that is a singularity of codimension $n-l$. Take a open conic neighbourhood $B_y$ centred at this point. Take a parallel vector field $v$ on $L\cap B_y$. This field can be extended to a parallel vector filed $v$ on $B_y$ since $B_y$ is conic. Observe that the inclusion $B_y\setminus \mathcal{H}_L\to \mathbb C^n\setminus \mathcal{H}_L$ is a homotopy equivalence. In particular the corresponding map $\pi_1(B_y\setminus \mathcal{H}_L)\to \pi_1(\mathbb C^n\setminus \mathcal{H}_L)$ is an isomorphism. It follows that the holonomy action of $\pi_1(\mathbb C^n\setminus \mathcal{H}_L)$ on $v$ is trivial, and so we can extend $v$ to a flat field on the whole space $\pi_1(\mathbb C^n\setminus \mathcal{H}_L)$. Finally, $v$ extends to generic points of hypersurface of $\mathcal H_L$ and hence to the whole $\mathbb C^n$. The resulting field is acting by global isometries of $\mathbb C^n$. It follows that $\mathbb C^l$ action on $L$ by parallel translations extends to an isometric $\mathbb C^l$ action on the whole $\mathbb C^n$. Hence indeed $x$ is a singularity of codimension $n-l$.
\end{proof}

\begin{remark}\label{rmk:6points}
	The singularities of \(g|_L\) might be a strict subset of the induced arrangement. 
	
	Associated to a  metric \(g_{FS}\) on \(\CP^1\) with constant curvature \(4\) and cone angles \(2\pi\alpha_i\) at points \(L_i \in \CP^1\) there is a PK cone metric \(g\) on \(\C^2\) with cone angles \(2\pi\alpha_i\) at the complex lines \(L_i\), see \cite[Section 3.3]{Pan}. If the area of \(g_{FS}\) is equal to \(\pi\) then the restriction of \(g\) to any complex line through the origin (in particular \(g|_{L_i}\)) is a flat \(\C\). It is not hard to construct examples of such metrics \(g_{FS}\); e.g. multiply by \(1/4\) the spherical metric constructed in the next Example \ref{ex:6pts}.
\end{remark}

\begin{example}\label{ex:6pts} Here is an example of a sphere of area $4\pi$ with four non-integer cone points. On a round sphere $S^2$ take two meridians that meet at angle $2\pi\theta<\pi$ at the poles and bound a digon $D_{\theta}$ with angles $\pi\theta$. On the equator take a segment of length $\pi$ that doesn't intersect $D_{\theta}$. Cut out $D_{\theta}$ from $S^2$, glue back the remaining shores, cut the sphere along the segment on the equator and glue in $D_{\theta}$.
As a result we get a sphere with four cone points of angles $2\pi(1-\theta), 2\pi(1-\theta), 2\pi(1+\theta), 2\pi(1+\theta)$. 
\end{example}

\subsubsection{{\large Levi-Civita connection is standard}}

\begin{proposition}\label{prop:PKstandard}
	Let \(g\) be a PK cone metric on \(\C^n\) with metric singularities at a hyperplane arrangement \(\mH\). Let \((z_1, \ldots, z_n)\) be complex coordinates which linearise the complex Euler vector field. Then the Levi-Civita connection of \(g\) is standard with respect to the trivialization given by the coordinate frame \(\p_{z_1}, \ldots, \p_{z_n}\).
\end{proposition}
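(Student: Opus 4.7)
The plan is to show that, in the global coordinates linearising $e$, the Levi-Civita connection form is meromorphic with logarithmic poles along $\mH$ and holomorphic residues, and then to exploit the dilation invariance of the metric to extract the standard form, in the spirit of Lemma \ref{lem:looij}.

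\textbf{Step 1 (Logarithmic structure with holomorphic residues).} First I would show that in the coordinate frame $\p_{z_1}, \ldots, \p_{z_n}$ the connection matrix $\Omega$ of $\nabla$ can be written as
\begin{equation*}
\Omega \;=\; \sum_{H \in \mH} A_H(z)\,\frac{dh}{h} \;+\; B(z),
\end{equation*}
with $A_H \in \mO(\C^n, \End(\C^n))$ and $B$ a holomorphic $\End(\C^n)$-valued $1$-form on $\C^n$. For this, Corollary \ref{smoothpoint} provides, near each smooth point of a hyperplane $H \in \mH$, a local holomorphic isometry between $g$ and the explicit product $\C^{n-1} \times \C_{\alpha_H}$, whose Levi-Civita connection has the standard logarithmic form with a single nontrivial residue. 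Expressing this model connection in the global coordinate frame $\p_{z_i}$ gives that $\Omega$ has at worst a simple pole along $H^{\circ}$ with holomorphic residue there. By Proposition \ref{prop:PKstrat}(3) the complement $\mH \setminus \bigcup_H H^{\circ}$ has complex codimension $\geq 2$ in $\C^n$, so Hartogs extension promotes the residues to holomorphic sections $A_H$ on all of $\C^n$ and the remainder $B$ to a holomorphic $1$-form on $\C^n$.

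\textbf{Step 2 (Dilation invariance).} Because $g$ is invariant under the one-parameter group of isometries generated by the real Euler vector field (i.e.\ under the weighted dilations $\varphi_\lambda(z)=(\lambda^{c_1}z_1,\ldots,\lambda^{c_n}z_n)$ for $\lambda>0$), its Levi-Civita connection $\nabla$ is $\varphi_\lambda$-invariant. Since $\varphi_\lambda$ maps the coordinate frame $\p_{z_i}$ to $\lambda^{c_i}\p_{z_i}$ and $P_\lambda := \mathrm{diag}(\lambda^{c_1},\ldots,\lambda^{c_n})$ is constant in $z$, this invariance translates into the identity $\varphi_\lambda^*\Omega = P_\lambda^{-1}\Omega\, P_\lambda$. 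Moreover, each $H \in \mH$ is preserved setwise by $\varphi_\lambda$ (the singular set of $g$ is preserved, and hyperplanes are its irreducible components by Theorem \ref{thm:PK1}), so every defining form $h$ is weighted homogeneous of some positive degree $d_H>0$, whence $\varphi_\lambda^*(dh/h)=dh/h$.

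\textbf{Step 3 (Killing the holomorphic correction).} Comparing entry by entry and using that $\varphi_\lambda^*\Omega_{kj}=\lambda^{c_j-c_k}\Omega_{kj}$ forces each scalar function $A_{H,kj}(z)$ and each coefficient of $B_{kj}$ in its $dz_\ell$ expansion to be weighted homogeneous of a prescribed real degree. Since the weights $c_i$ are strictly positive, the only holomorphic function on $\C^n$ of prescribed weighted degree $c_j-c_k$ is a polynomial; the residues $A_{H,kj}$ then have weighted degree $c_j-c_k$, while the coefficient of $dz_\ell$ in $B_{kj}$ has weighted degree $c_j-c_k-c_\ell$. Mimicking the end of the proof of Lemma \ref{lem:looij}, the limit $\lambda\to 0$ of the invariance identity applied to $B$ (which behaves as a $1$-form, producing an extra $\lambda^{c_\ell}$ factor in the $dz_\ell$ component) shows $B\equiv 0$; the same limit applied to each $A_{H,kj}$ shows it equals its value at the origin, so $A_H(z)=A_H(0)$ is constant. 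The resulting expression $\Omega=\sum_{H\in\mH}A_H(0)\,dh/h$ is standard.

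\textbf{Main obstacle.} The delicate point is Step 3, because unlike the case $c_i\equiv 1$ of Lemma \ref{lem:looij} the weighted scaling acts with different weights on different matrix entries and different $dz_\ell$ components. One must carefully track both the frame transformation $P_\lambda$ and the pullback of $1$-forms, and separate the contributions of $A_H\,dh/h$ (invariant up to the constant twist from $P_\lambda$) and of $B$ (which picks up extra $\lambda^{c_\ell}$ factors) before passing to the limit $\lambda\to 0$. Positivity of all $c_i$ (Theorem \ref{thm:PK2}) is essential to guarantee that $\varphi_\lambda(z)\to 0$ and that the weighted-homogeneous constraints admit only polynomial solutions, eliminating any nonstandard meromorphic or higher-order piece.
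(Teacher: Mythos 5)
Your overall strategy (logarithmic poles with holomorphic residues, then weighted dilation invariance to kill the holomorphic remainder and make the residues constant) is exactly the paper's, and your Steps 2--3 are fine -- indeed you track the conjugation by $P_\lambda$ more carefully than the paper does. The gap is in Step 1, in the sentence ``so Hartogs extension promotes the residues to holomorphic sections $A_H$ on all of $\C^n$ and the remainder $B$ to a holomorphic $1$-form.'' What Hartogs actually gives you is this: since $\bigl(\prod_{H}h\bigr)\Omega$ is holomorphic on the complement of the codimension-$2$ set $Z=\bigcup_{\codim L\geq 2}L$, it extends holomorphically to $\C^n$, so $\Omega$ is meromorphic with at worst simple poles along $\mH$. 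This does \emph{not} imply that the residue along $H$ extends holomorphically across $H\cap H'$, nor that the non-polar part is holomorphic: the form $\Omega=\frac{dz_1}{z_1z_2}$ satisfies $z_1z_2\,\Omega = dz_2$ holomorphic, yet its residue along $\{z_1=0\}$ is $1/z_2$. Nor can you apply Hartogs to $A_H$ directly as a function on $H$, because $H\setminus H^{\circ}$ has codimension \emph{one} in $H$.

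Controlling the connection near the codimension-$2$ strata is where the real work lies, and it is the content of the paper's Lemma \ref{lem:residuesextend}: one reduces by Hartogs \emph{on $H$} to extending the residue across a stratum $L^{\circ}$ with $\codim_{\C^n}L=2$, uses the local splitting $\C^{n-2}\times\C^2_{PK}$ from Proposition \ref{prop:PKstrat}, and then invokes the two-dimensional classification of PK cones (\cite[Theorem 1.7]{Pan}, \cite[Lemma 3.14]{Pan}) to check, case by case according to whether the transversal Euler field has equal eigenvalues, that the residue is in fact holomorphic there. Without this input (or some substitute, e.g.\ an a priori bound on the residue near $Z$), your Step 1 does not go through, and Step 3 cannot even be started since it presupposes the global decomposition $\Omega=\sum_H A_H\,dh/h + B$ with holomorphic $A_H$ and $B$.
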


We show first that the Levi-Civita connection of a PK metric on a complex manifold \(X\) has logarithmic singularities at the smooth locus of the complex hypersurface \(X^s\).
To recall the notion of a logarithmic connection, see Definition \ref{def:logcon}.

\begin{lemma}\label{residue}
	Let \(g\) be a PK metric on a complex manifold \(X\). Write \(Y=X^s\) for the complex hypersurface of metric singularities and \(Y^{\circ}\) for the smooth locus of \(Y\). Then the Levi-Civita connection of \(g\) has logarithmic singularities along \(Y^{\circ}\).
\end{lemma}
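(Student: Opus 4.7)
The plan is to reduce the statement to the local model at smooth points of \(Y\) already provided by Theorem \ref{thm:PK1} (equivalently Corollary \ref{smoothpoint}), and then read off the logarithmic pole from the known one-dimensional case. More precisely, I would start by fixing \(y \in Y^{\circ}\) and applying Corollary \ref{smoothpoint} to obtain an open neighbourhood \(U \ni y\) and holomorphic coordinates \((w_1,\ldots,w_{n-1},z)\) centred at \(y\), in which \(Y^{\circ}\cap U = \{z=0\}\), and in which \(g\) is holomorphically isometric to the product \(\C^{n-1}\times \C_{\alpha}\), where \(2\pi\alpha>0\) is the transverse cone angle of \(g\) along the stratum of \(Y^\circ\) containing \(y\).

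Next I would compute the Levi-Civita connection of this product in the coordinate frame \(\{\partial_{w_1},\ldots,\partial_{w_{n-1}},\partial_z\}\) of \(T^{1,0}X|_U\). The product structure reduces the calculation to two pieces: on the Euclidean factor the Levi-Civita connection is simply \(d\); and on \(\C_{\alpha}\), as was recorded in the introduction via Equations \eqref{eq:1dimnabla} and \eqref{eq:aalpha}, the Levi-Civita connection of the \(2\)-cone line element \(|z|^{\alpha-1}|dz|\) written in the frame \(\partial_z\) is the standard meromorphic connection
\[\nabla_{\C_{\alpha}} = d - a\,\frac{dz}{z}, \qquad a = 1-\alpha .\]
Consequently, on \(U\setminus Y^{\circ}\) we obtain
\[\nabla = d - A\,\frac{dz}{z},\]
where \(A\) is the constant endomorphism of \(T^{1,0}X|_U\) sending \(\partial_z \mapsto a\,\partial_z\) and \(\partial_{w_i}\mapsto 0\). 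In this particular holomorphic trivialization, the connection matrix \(A\,dz/z\) has a simple pole along \(\{z=0\}=Y^{\circ}\cap U\) with holomorphic residue \(A\), which is precisely the definition of a logarithmic singularity along \(Y^{\circ}\).

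Finally, to confirm that logarithmicity does not depend on the particular trivialization coming from the product model, I would invoke the standard fact that under a holomorphic change of frame \(G\) the connection matrix transforms as \(\Omega \mapsto G\Omega G^{-1} + (dG)G^{-1}\); since \(G\), \(G^{-1}\) and \(dG\) are holomorphic on \(U\), the order of the pole along the smooth hypersurface \(Y^{\circ}\cap U\) is preserved and the transformed residue remains holomorphic. Hence the Levi-Civita connection has logarithmic singularities along \(Y^{\circ}\) in every holomorphic trivialization of \(TX\) near \(y\), which is what the lemma claims. I do not anticipate a real obstacle: all the substantive work—namely establishing the holomorphic product decomposition at smooth points of \(Y\)—has already been done in Theorem \ref{thm:PK1}, and once that is available the lemma amounts to citing the one-dimensional computation from the introduction together with the gauge-covariance of the logarithmic condition.
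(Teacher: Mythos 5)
Your proof is correct and follows essentially the same route as the paper: the paper's own argument is precisely to invoke Corollary \ref{smoothpoint} for the local holomorphic isometry with \(\C^{n-1}\times\C_{\alpha}\) at smooth points of \(X^s\) and then observe that the Levi-Civita connection of that product is logarithmic. Your additional remarks — the explicit residue \(A(\partial_z)=(1-\alpha)\partial_z\), \(A(\partial_{w_i})=0\), and the gauge-covariance of the logarithmic condition under \(\Omega\mapsto G\Omega G^{-1}+(dG)G^{-1}\) — are just the details the paper leaves implicit (the latter is recorded in Appendix \ref{app:logconn}).
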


\begin{proof}
	The Levi-Civita connection of a PK metric on a complex manifold is always a holomorphic connection on the holomorphic tangent bundle of \(X^{\circ}\). The statement about logarithmic singularities is immediate from Corollary \ref{smoothpoint} and the fact that the Levi-Civita connection of \(\C_{\alpha} \times \C^{n-1}\) is logarithmic.
\end{proof}

In the context of Lemma \ref{residue} we have a residues\footnote{See Definition \ref{def:resind} for the definition of residue of a logarithmic connection.} \(A_i\) of \(\nabla\) at the connected components \(Y_i^{\circ}\) of \(Y^{\circ}\).  Note that \(\ker A_i = TY_i^{\circ}\) and  \(\tr A_i = 1-\alpha_i\) where \(2\pi\alpha_i\) is the angle at \(Y_i\). These \(A_i\) are holomorphic sections of \(TX|_{Y_i^{\circ}}\) and we would like to extend them to the whole \(Y_i\).

\begin{lemma}\label{lem:residuesextend}
	Let \(g\) be a PK metric on a complex manifold \(X\). Assume that every irreducible component \(Y_i\) of the metric singular \(Y=X^s\) set is smooth and that \(Y\) is arrangement-like. Then the residues of \(\nabla\) at \(Y_i^{\circ}\) extend across the higher codimension strata as holomorphic sections of \(\End(TX|_{Y_i})\).
\end{lemma}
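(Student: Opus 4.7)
The plan is to establish the stronger local statement that, in suitable holomorphic coordinates near any point $x\in Y_i$, the Levi-Civita connection $\nabla$ is standard, so that its residues at the local hyperplanes are \emph{constant} matrices and therefore extend trivially across the higher codimension strata of $Y_i$. The global extension then follows by patching together local data and applying Riemann's extension theorem at the end.

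The central mechanism is the local isometric identification of a neighbourhood of $x$ with a neighbourhood of the vertex in the tangent cone $(C_x,g_{tc})$. Following the proof of Lemma \ref{conemb}, I fix $\varepsilon>0$ small and take an isometric embedding $\varphi\colon U_x(\varepsilon)\hookrightarrow C_x$ sending $x$ to the vertex. Since $g_{tc}$ recovers both the complex structure and the Levi-Civita connection on the regular part of $C_x$, the map $\varphi$ is holomorphic off the analytic subset $U_x(\varepsilon)\cap Y$ and continuous across it, hence holomorphic on all of $U_x(\varepsilon)$ by Theorem \ref{Riemann}. The arrangement-like hypothesis on $Y$ passes to $(C_x)^s$, so Corollary \ref{cor:arrlike} supplies global coordinates $(z_1,\ldots,z_n)$ on $C_x$ linearising the Euler vector field and sending $(C_x)^s$ to a hyperplane arrangement $\mH_L$. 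Proposition \ref{prop:PKstandard} then presents the Levi-Civita of $C_x$ as
\[
\nabla^{tc}=d-\sum_{H\in\mH_L}A^{tc}_H\frac{dh}{h}
\]
with \emph{constant} residue matrices $A^{tc}_H$.

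Because $Y_i$ is smooth by hypothesis, $\varphi(Y_i\cap U_x(\varepsilon))$ is a connected smooth complex submanifold of the arrangement $(C_x)^s$, so it lies in a single hyperplane $H_i\in\mH_L$. Pulling back via $\varphi$ gives $\nabla=\varphi^{\ast}\nabla^{tc}$ on $U_x(\varepsilon)$, and in the frame pulled back from $(z_1,\ldots,z_n)$ the residue of $\nabla$ along $Y_i\cap U_x(\varepsilon)$ is the constant endomorphism $A^{tc}_{H_i}$, manifestly holomorphic (indeed constant) on the whole of $Y_i\cap U_x(\varepsilon)$, not merely on $Y_i^\circ$. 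Covering $Y_i$ by such neighbourhoods produces a candidate extension of $A_i$; because the residue of a meromorphic connection at a smooth hypersurface is intrinsic, these local extensions agree with the original $A_i$ on the dense open $Y_i^\circ$ and hence patch coherently on overlaps. The resulting section of $\End(TX)|_{Y_i}$ is continuous on $Y_i$ and holomorphic off the analytic subset $Y_i\setminus Y_i^\circ$, so a final application of Theorem \ref{Riemann} upgrades it to a holomorphic section, as required.

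The only subtle point I anticipate is the compatibility of the locally constant extensions arising from different base points, but this is automatic from agreement on the dense set $Y_i^\circ$ together with continuity. All the substantial input---the local isometry with a PK cone, the global linearisation, and the standard form of the Levi-Civita connection---is already in place through Lemma \ref{conemb}, Proposition \ref{prop:PKcones}, Corollary \ref{cor:arrlike} and Proposition \ref{prop:PKstandard}, so the remaining argument really is routine.
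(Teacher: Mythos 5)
Your argument is circular. The key step — invoking Proposition \ref{prop:PKstandard} to present the Levi-Civita connection of the tangent cone $C_x$ as a standard connection with constant residues — uses a result whose proof in the paper \emph{begins} with an appeal to Lemma \ref{lem:residuesextend}: one first needs to know that the residues extend holomorphically over all of each hyperplane $H$ before one can argue (via invariance under the torus action and boundedness) that they are constant and that the holomorphic remainder $\nabla-\nabla^0$ vanishes. So Proposition \ref{prop:PKstandard} is logically downstream of the lemma you are trying to prove, and you cannot use it here. The same objection applies to your implicit use of the conclusion "in suitable coordinates $\nabla$ is standard near $x$": establishing that is precisely the point of the residue-extension lemma, not something available as input.

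The paper's proof takes a different and non-circular route. By Hartogs it suffices to extend the residue across the codimension-one strata of $Y_i$, i.e.\ near a point $x$ on a codimension-two intersection $L$ of the arrangement. There the metric splits as $\C^2_{PK}\times\C^{n-2}$ with $\C^2_{PK}$ an essential two-dimensional PK cone, and the argument then leans on the explicit classification of such cones from \cite{Pan} (Theorem 1.7 and Lemma 3.14 there): either the Euler field has equal weights and scalar multiplication preserves the connection so the residues are literally constant along the lines, or the smoothness hypothesis on the $Y_i$ forces $\C^2_{PK}$ to be singular along two coordinate axes, in which case it is a product of $2$-cones or a pull-back of a constant metric under $(z,w)\mapsto(z^p,w^q)$, and in each case the connection agrees with that of a product of $2$-cones, whose residues visibly extend. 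If you want to repair your proof you would need to replace the appeal to Proposition \ref{prop:PKstandard} with an independent argument of this kind for the two-dimensional transversal factor.
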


\begin{proof}
	By Hartogs it is enough to extend in complex codimension one. Since the statement is local, there is no loss of generality in assuming that \(g\) is a cone and that we are in the setting of Proposition \ref{prop:PKstrat}. We let \(L \in \mL(\mH)\) be a codimension two intersection. According to item (3) of Proposition \ref{prop:PKstrat} and Proposition \ref{stratprop} the metric \(g\) is holomorphically isometric around \(x \in L^{\circ}\) to \(\C^2_{PK} \times \C^{n-2}\) where \(\C^2_{PK}\) is an essential PK cone metric on \(\C^2\). By \cite[Theorem 1.7]{Pan} we can find complex coordinates \((z,w)\) on the \(\C^2_{PK}\) factor such that its Euler vector field is given by \(\alpha^{-1}z\p_z + \beta^{-1}w\p_w\) for some positive numbers \(\alpha\) and \(\beta\) and the singular locus is a union of curves of the form \(c_1z^{\alpha}=c_2w^{\beta}\). If \(\alpha=\beta\) then the Levi-Civita connection of \(\C^2_{PK}\) is preserved by scalar multiplication, the singular locus is made of complex lines and the residues are constant along them; therefore they extend holomorphically across \(x\) and we are done. On the other hand, if \(\alpha \neq \beta\) the hypothesis that the singular locus is made of irreducible smooth hypersurfaces implies that \(\C^2_{PK}\) is singular along \(\{z=0\}\cup\{w=0\}\). By \cite[Lemma 3.14]{Pan} \(\C^2_{PK}\) is the product of two \(2\)-cones (in which case we are done) or is the pull-back of a constant metric in \(\C^2\) under the map \((z,w) \mapsto (z^p, z^q)\). In the last case the connection of \(\C^2_{PK}\) is independent of the constant metric on \(\C^2\), so it agrees with the connection of a product of two \(2\)-cones of angles \(2\pi p\) and \(2\pi q\); and we are done. 
\end{proof}

\begin{proof}[Proof of Proposition \ref{prop:PKstandard}]
	It follows from Lemma \ref{lem:residuesextend} that for each \(H \in \mH\) there is a residue \(A_H\), which is a holomorphic matrix-valued function defined on all of \(H\).
	
	On the other hand, in coordinates \((z_1, \ldots, z_n)\) the Levi-Civita connection  \(\nabla\) is invariant under the action
	\[\lambda \cdot (z_1, \ldots, z_n) = (\lambda^{c_1}z_1, \ldots, \lambda^{c_n}z_n) \]
	with \(c_1, \ldots, c_n >0\).
	The entries of \(A_H\) are holomorphic functions invariant under this action, therefore are bounded and hence constant. We conclude that \(A_H\) is a constant matrix for every \(H \in \mH\). Subtracting the standard connection \(\nabla^0=d-\sum_H A_H dh/h\) from \(\nabla\) gives a matrix of holomorphic \(1\)-forms invariant under the above action, hence \(\nabla-\nabla^0 \equiv 0\). 
\end{proof}

\section{Fubini-Study metrics}\label{sect:FS}

In this section we introduce Fubini-Study (FS) metrics as complex links regular PK cones, i.e. the quotient of the unit sphere of the cone by the free isometric \(S^1\)-action generated by the Reeb vector field. We analyse the metric singularities in terms of local models and give a formula for the total volume of complex links and unit spheres of regular PK cones, see Proposition \ref{prop:totalvolume}.

\subsection{Regular PK cones}\label{sect:regPKcone}

\begin{definition}\label{def:regularPK}
	Let  \(X\) be a complex manifold and \(g\) a PK cone metric on it. We say that \(g\) is  \textbf{regular} if there are complex coordinates \((z_1, \ldots, z_n)\) which identify \(X\) with \(\C^n\) and such that 
	\begin{equation}\label{eq:ealpha0}
	e = \frac{1}{\alpha_0} \sum_{i=1}^{n} z_i \frac{\p}{\p z_i}	
	\end{equation}
	for some \(\alpha_0>0\).
\end{definition}

In other words, \((X, g)\) is regular if the numbers \(c_i\) given by Proposition \ref{prop:PKcones} satisfy
\(c_1=\ldots=c_n\).  
The term regular is in accordance with the one used in the Sasakian case. Sasakian manifolds are divided into regular, quasi-regular and irregular types depending on whether the orbits of the Reeb vector field are closed of the same period, just closed or if there is at least one non-closed orbit respectively.
See \cite{Sparks}.

\subsubsection{{\large Linear coordinates}}

\begin{lemma}\label{cor:pkarrang}
	Let \(X\) be a complex manifold and \(g\) a PK cone metric on it with vertex at \(x\). Assume that \(X^s=\cup_{i \in I} Y_i\) is made of smooth hypersurfaces \(Y_i \subset X\) with pairwise transversal intersections at \(x\). Moreover, suppose that \(\{T_xY_i, \,\ i \in I\}\) is an essential and irreducible arrangement of \(T_xX\). 
	Then \(g\) is regular.
\end{lemma}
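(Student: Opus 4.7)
The plan is to reduce the regularity statement to the linear algebra fact that an endomorphism preserving every hyperplane of an essential irreducible arrangement must be a scalar (Lemma \ref{lem:automorphisms}).

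First, I would apply Proposition \ref{prop:PKcones} to obtain global complex coordinates $(z_1, \ldots, z_n)$ centred at $x$ with
\[
e = \sum_{i=1}^{n} c_i z_i \frac{\p}{\p z_i}, \qquad c_i > 0,
\]
such that the singular set $X^s$ is invariant under the induced $\C^*$-action $\lambda \cdot (z_1, \ldots, z_n) = (\lambda^{c_1}z_1, \ldots, \lambda^{c_n}z_n)$. In these coordinates the vertex $x$ corresponds to the origin, and the linearization of $e$ at $x$ is the diagonal endomorphism $A = \operatorname{diag}(c_1, \ldots, c_n) \in \End(T_xX)$. So the goal becomes proving $c_1 = \cdots = c_n$.

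Next, I would use that each $Y_i$ is an irreducible component of the analytic hypersurface $X^s = \bigcup_i Y_i$. Since the $\C^*$-action by dilations of the cone preserves $X^s$ and is connected, it must preserve each irreducible component $Y_i$ individually. Thus the real flow generated by $e$ leaves each smooth hypersurface $Y_i$ invariant, and differentiating at the fixed point $x$ yields
\[
A(T_xY_i) \subset T_xY_i \qquad \text{for every } i \in I.
\]

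By hypothesis, $\{T_xY_i, \, i \in I\}$ is an essential and irreducible arrangement of hyperplanes in $T_xX$. Lemma \ref{lem:automorphisms} then forces $A$ to be a scalar multiple of the identity, so there exists $\alpha_0 > 0$ with $c_1 = \cdots = c_n = \alpha_0^{-1}$. Substituting back gives exactly the form \eqref{eq:ealpha0}, proving regularity. The only potentially delicate point is justifying that each individual $Y_i$ (not merely the union) is preserved by the $\C^*$-action, but this is immediate from connectedness of $\C^*$ together with the fact that a continuous flow on a complex space permutes irreducible components.
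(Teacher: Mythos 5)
Your proof is correct and follows essentially the same route as the paper: the paper's proof is the one-line observation that the action generated by \(e\) preserves each member of the essential irreducible arrangement \(\{T_xY_i\}\), forcing \(c_1=\ldots=c_n\) (implicitly via Lemma \ref{lem:automorphisms}). Your version simply spells out the two details the paper leaves implicit — that connectedness of \(\C^*\) forces each individual \(Y_i\) to be preserved, and that differentiating the flow at the fixed point reduces the claim to Lemma \ref{lem:automorphisms} — both of which are the intended justifications.
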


\begin{proof}
	The \((\C^*)^k\)-action \eqref{action} generated by \(e\) on \(T_xX\) preserves each of the members of the essential irreducible arrangement \(\{T_xY_i, \,\ i \in I\}\), hence we must have \(c_1=\ldots=c_n\).
\end{proof}

\begin{lemma}\label{lem:regPK}
	Let \(g\) be a regular PK cone metric on a complex manifold \(X\) and suppose that the  singular set \(X^s\) is a union of smooth irreducible hypersurfaces. Then
	the coordinates \((z_1, \ldots, z_n)\) of Proposition \ref{prop:PKcones} that linearise the Euler field \(e\) are unique up to linear transformations and identify the singular set \(X^s\) with an arrangement \(\mH\) in \(\C^n\) made of linear hyperplanes. In these coordinates the Levi-Civita connection of \(g\) is standard.
\end{lemma}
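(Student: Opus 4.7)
The plan is to establish the three assertions of the lemma in order: first the uniqueness of the linearising coordinates up to \(GL(n,\C)\), second the identification of \(X^s\) with a linear hyperplane arrangement, and third the standard form of the Levi-Civita connection. The first two parts are essentially independent, while the third will follow by direct invocation of Proposition \ref{prop:PKstandard}.

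For uniqueness, I would start with two coordinate systems \((z_1, \ldots, z_n)\) and \((\tilde{z}_1, \ldots, \tilde{z}_n)\) centred at the vertex, both satisfying Equation \eqref{eq:ealpha0}. The transition map \(\phi: \C^n \to \C^n\) is then a biholomorphism fixing the origin and commuting with the vector field \(\alpha_0^{-1} \sum_i z_i \p/\p z_i\), i.e.\ with the standard \(\C^*\)-action by scalar multiplication. Equivariance forces \(\phi(\lambda z) = \lambda \phi(z)\) first for \(\lambda \in \R_{>0}\), and then by holomorphic continuation for all \(\lambda \in \C^*\), so \(\phi\) is homogeneous of degree one, hence linear.

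For the second part, I would use Lemma \ref{holomorphic} together with Lemma \ref{lem:isometric} to combine the flow of \(e_r\) and that of the Reeb field \(e_s = Je_r\) into a holomorphic \(\C^*\)-action on \(X\) acting by homothetic biholomorphisms, which therefore preserves the metric singular set \(X^s\) as a whole. Since \(\C^*\) is connected and the decomposition \(X^s = \bigcup_i Y_i\) into smooth irreducible analytic hypersurfaces is locally finite (as \(X^s\) is a complex hypersurface in a complex manifold, by Theorem \ref{singanalitic}), each irreducible component \(Y_i\) must be individually preserved by the \(\C^*\)-action. Passing to the linearising coordinates, where this action is standard scalar multiplication of \(\C^n\), each \(Y_i\) becomes a closed irreducible analytic hypersurface invariant under all scalings. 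Its reduced defining polynomial must then be homogeneous, and smoothness everywhere --- in particular at the origin --- forces that polynomial to have degree one. Hence each \(Y_i\) is a linear hyperplane of \(\C^n\), and \(X^s\) is identified with a linear arrangement \(\mH\).

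Once these two points are in hand, the third claim is immediate from Proposition \ref{prop:PKstandard}, applied to the PK cone metric on \(\C^n\) with singular locus \(\mH\) in the coordinates just constructed. The main obstacle I foresee is the argument that the \(\C^*\)-action preserves each irreducible component individually rather than merely permuting them: one must carefully invoke local finiteness of the irreducible decomposition of the analytic hypersurface \(X^s\) together with connectedness of \(\C^*\). Once this is handled cleanly, the remaining steps reduce to standard facts about homogeneous analytic hypersurfaces and a citation of Proposition \ref{prop:PKstandard}.
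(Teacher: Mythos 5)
Your proposal is correct and follows essentially the same route as the paper: the paper's proof likewise observes that a biholomorphism commuting with scalar multiplication equals its derivative at the origin (hence is linear), that a smooth scaling-invariant hypersurface must be a linear hyperplane, and then cites Proposition \ref{prop:PKstandard} for the last assertion. Your version simply fills in the details (the \(\C^*\)-equivariance of the transition map and the component-by-component invariance of \(X^s\)) that the paper leaves implicit.
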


\begin{proof}
	A biholomorphism \(F: \C^n \to \C^n\) that commutes with scalar multiplication \(F(\lambda x)=\lambda x\) must be equal to its derivative at the origin and therefore is linear; and a smooth hypersurface invariant under scalar multiplication must be a hyperplane through the origin. The last assertion follows from Proposition \ref{prop:PKstandard}.
\end{proof}

\begin{definition}\label{def:linearcoord}
	With the notation of Definition \ref{def:regularPK}, we say that \((z_1, \ldots, z_n)\) are \textbf{linear  coordinates for} \((X, g)\).
\end{definition}

\begin{corollary}\label{cor:stndLCirr}
	Suppose that \(g\) is a PK cone metric on \(\C^n\) singular at an essential irreducible arrangement. Then its Levi-Civita connection is standard with respect to
	linear coordinates. 
\end{corollary}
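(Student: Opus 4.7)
The plan is to verify the hypotheses of Lemma \ref{lem:regPK}, which will then yield the conclusion directly. The intermediate step is to show that $g$ is regular as a PK cone, for which Lemma \ref{cor:pkarrang} is the natural tool.

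First, I would locate the vertex $x$ of the PK cone inside $\C^n$. The dilation action generated by the real Euler vector field preserves the metric singular set $X^s = \mH$, and hence preserves the common intersection $T(\mH) = \bigcap_{H \in \mH} H$. By essentiality of $\mH$ we have $T(\mH) = \{0\}$, and since the only fixed point of the dilation action is the vertex, we conclude $x = 0 \in \C^n$.

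Next, I would apply Lemma \ref{cor:pkarrang}. The irreducible components of $X^s$ are the hyperplanes $H \in \mH$, each of which is smooth; any two distinct linear hyperplanes through the origin meet transversally there since $H_1 + H_2 = \C^n$; and because each $H$ is linear we have $T_0 H = H$, so the tangent arrangement $\{T_0 H, \, H \in \mH\}$ is $\mH$ itself, which is essential and irreducible by hypothesis. Lemma \ref{cor:pkarrang} therefore gives that $g$ is a regular PK cone.

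With regularity established, Lemma \ref{lem:regPK} applies directly: smoothness and irreducibility of the hypersurface components together with regularity produce global complex coordinates on $\C^n$ which linearise $e$ in the form $e = \alpha_0^{-1} \sum_i z_i \p/\p z_i$, in which $\mH$ is again a linear hyperplane arrangement, and in which the Levi-Civita connection is standard. Since any two such linear coordinate systems differ by an element of $GL(n,\C)$ (also part of Lemma \ref{lem:regPK}), and the standard form \eqref{eq:stcon} is preserved under linear changes of the coordinate frame, this gives the statement of the corollary. The argument is essentially bookkeeping of hypotheses and I do not foresee any serious obstacle; the substantive content has already been absorbed into Lemmas \ref{cor:pkarrang} and \ref{lem:regPK} and the structural results of Section \ref{sect:PKhyperplane}, in particular Proposition \ref{prop:PKstandard}.
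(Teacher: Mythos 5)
Your proposal is correct and follows the same route as the paper, which proves this corollary precisely by combining Lemma \ref{cor:pkarrang} (to get regularity) with Lemma \ref{lem:regPK} (to get linear coordinates in which the connection is standard, via Proposition \ref{prop:PKstandard}). Your additional remark locating the vertex at the origin via invariance of the singular set under the dilation flow is a reasonable piece of bookkeeping that the paper leaves implicit.
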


\begin{proof}
	Follows from Lemmas \ref{cor:pkarrang} and \ref{lem:regPK}.
\end{proof}

\begin{example}
	The singular set of a regular PK cone metric on \(\C^n\) does not need to be an irreducible arrangement. For example \(\C_{\alpha} \times \ldots \times \C_{\alpha}\) is regular. More generally, the product of any regular PK cone as considered in Definition \ref{def:regularPK} with the 2-cone \(\C_{\alpha_{0}}\) is also regular.
\end{example}

\subsubsection{{\large Homogeneity}}

Let \(g\) be a regular PK cone metric on \(\C^n\). We have linear coordinates in \(\C^n\) so that the complex Euler vector field is
given by Equation \eqref{eq:ealpha0}.

\begin{lemma}\label{lem:scalrmult}
	Standard scalar multiplication \((z_1, \ldots, z_n) \mapsto (\lambda z_1, \ldots, \lambda z_n) \) by \(\lambda \in \C^*\) scales \(g\) by a factor of \(|\lambda|^{2\alpha_0}\) (i.e. distances are scaled by \(|\lambda|^{\alpha_0}\)). In particular, multiplication by complex units preserves the metric.
\end{lemma}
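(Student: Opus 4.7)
The plan is to translate the scalar multiplication action on $\C^n$ into the flows of the real Euler and Reeb vector fields, and then invoke the general properties of these flows that were already established in Sections \ref{sect:riemcone} and \ref{sect:globalcxcoord}. I would first compute $e_r = 2\RE(e)$ and $e_s = Je_r$ in the linear coordinates of Definition \ref{def:linearcoord}. Starting from \eqref{eq:ealpha0} and writing $z_j = x_j + iy_j$, a direct calculation yields
\[
e_r = \frac{1}{\alpha_0}\sum_{j=1}^n \left(x_j\frac{\partial}{\partial x_j} + y_j\frac{\partial}{\partial y_j}\right),
\qquad
e_s = \frac{1}{\alpha_0}\sum_{j=1}^n\left(x_j\frac{\partial}{\partial y_j} - y_j\frac{\partial}{\partial x_j}\right),
\]
so $e_r$ and $e_s$ are $1/\alpha_0$ times the standard real Euler and rotation vector fields of $\C^n$.

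Next I would identify the flows explicitly. The time-$t$ flow of $e_r$ is $z \mapsto e^{t/\alpha_0}z$ and the time-$\theta$ flow of $e_s$ is $z \mapsto e^{i\theta/\alpha_0}z$. In particular, given $\lambda \in \C^*$, writing $\lambda = e^{s+i\theta}$ with $s = \log|\lambda|$ shows that standard scalar multiplication by $\lambda$ is the composition of the flow of $e_r$ at time $\alpha_0 s$ with the flow of $e_s$ at time $\alpha_0\theta$.

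To conclude, I would apply two facts proved earlier. First, from the Riemannian cone structure $g = dr^2 + r^2 g_{S^{\circ}}$ of Lemma \ref{lem:kahlercone2}, the flow of $e_r$ at time $t$ rescales $g$ by the constant factor $e^{2t}$; hence the flow at time $\alpha_0 s$ scales $g$ by $e^{2\alpha_0 s} = |\lambda|^{2\alpha_0}$. Second, by Lemma \ref{lem:isometric} the flow of $e_s$ acts by isometries, so multiplication by the unit $e^{i\theta}$ preserves $g$. Combining the two gives the claim; the particular case $|\lambda|=1$ yields the second assertion. The only subtle point, already settled in Section \ref{sect:globalcxcoord}, is that these flows extend holomorphically across the singular locus $\C^n \setminus (\C^n)^{\circ}$, so the scaling identity established on the regular part automatically holds as an identity of PK cone metrics on all of $\C^n$.
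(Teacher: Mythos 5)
Your proof is correct and follows essentially the same route as the paper: split $\lambda$ into modulus and phase, handle the unit part via Lemma \ref{lem:isometric} (isometric flow of $e_s$), and handle the positive real part by identifying scalar multiplication with the time-$\alpha_0\log|\lambda|$ flow of $e_r$, which dilates the cone metric by $e^{2t}$. The explicit coordinate computation of $e_r$ and $e_s$ is a harmless elaboration of what the paper states directly.
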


\begin{proof}
	If \(|\lambda|=1\) then Lemma \ref{lem:isometric} implies that scalar multiplication by \(\lambda\) preserves \(g\). Therefore we might assume that \(\lambda\) is a positive real number and let us write \(\lambda = \exp(t/\alpha_0)\) for some \(t \in \R\). The vector field \(e_r\) is equal to the real Euler vector field of \(\C^n\) multiplied by  \((1/\alpha_0)\), so the flow of \(e_r\) at time \(t\) is equal to scalar multiplication by \(\lambda = \exp(t/\alpha_0)\). On the other hand the flow of \(e_r\) at time \(t\) scales the metric \(g\) by a factor of \(\exp(2t)=\lambda^{2\alpha_0}\).
\end{proof}

\begin{lemma}\label{lem:regPKL}
	Let \(L \in \mL(\mH)\) then the induced metric \(g|_L\) is a regular PK cone metric on \(L\).
\end{lemma}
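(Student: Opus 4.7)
The plan is to exploit the fact that since $L$ is a linear subspace of $\C^n$ through the origin, it is invariant under the standard $\C^*$-action by scalar multiplication, and by Lemma \ref{lem:scalrmult} this action gives exactly the one-parameter family of homotheties that identify $(\C^n, g)$ as a cone. Restricted to $L$, it will provide the corresponding cone structure on $(L, g|_L)$ and determine its Euler vector field explicitly.

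First, Proposition \ref{prop:PKstrat}(2) already tells us that $g|_L$ is a PK cone metric on $L$ with vertex at the origin. In particular, $(L, g|_L)$ has its own complex Euler vector field $e_L = \frac{1}{2}(e_{L,r} - i J e_{L,r})$, where $e_{L,r}$ is uniquely characterized as the generator of the one-parameter group of dilations of $g|_L$ based at $0$, i.e.\ the flow $\varphi^L_t$ for which $(\varphi^L_t)^* g|_L = e^{2t} g|_L$.

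Now, by Lemma \ref{lem:scalrmult}, for $\mu = e^{t/\alpha_0} \in \R_{>0}$ the map $z \mapsto \mu z$ satisfies $(\mu \cdot)^* g = e^{2t} g$ on $\C^n$. Since $L$ is invariant under scalar multiplication, restricting to $L$ gives $(\mu \cdot)^* (g|_L) = e^{2t} g|_L$, so this flow on $L$ is precisely the dilation flow of $(L, g|_L)$. Its generating vector field on $\C^n$ is $e_r = \frac{1}{\alpha_0}\sum_{i=1}^{n}(z_i \partial_{z_i} + \bar z_i \partial_{\bar z_i})$, which is tangent to $L$; by uniqueness of the real Euler vector field we conclude $e_{L,r} = e_r|_L$ and hence $e_L = e|_L$.

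Finally, choosing linear coordinates $(w_1, \ldots, w_l)$ on $L$ induced by a basis of $L \subset \C^n$, the restriction of $e$ takes the form
\[
e_L \;=\; \frac{1}{\alpha_0} \sum_{j=1}^{l} w_j \frac{\partial}{\partial w_j},
\]
which is precisely the normal form of Definition \ref{def:regularPK} with the same parameter $\alpha_0$. Hence $g|_L$ is a regular PK cone metric on $L$. The argument is entirely formal once $g|_L$ is known to be PK and the $\C^*$-scaling symmetry is available, so no genuine obstacle arises beyond checking uniqueness of the Euler vector field.
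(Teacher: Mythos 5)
Your proof is correct and follows essentially the same route as the paper, which likewise deduces the result from Proposition \ref{prop:PKstrat}(2) together with the observation that the Euler vector field of \(g|_L\) is the restriction of \(e\) to \(L\). Your version merely spells out why that restriction identity holds (via the \(\C^*\)-invariance of \(L\) and Lemma \ref{lem:scalrmult}), which is a reasonable elaboration of the same idea.
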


\begin{proof}
	This is a consequence of item (2) in Proposition \ref{prop:PKstrat} together with the fact that the Euler vector field of \(g|_L\) is equal to the restriction of \(e\) to \(L\).
\end{proof}

\begin{lemma}\label{lem:linethrough0}
	Let \(\ell \subset \C^n\) be a complex line through the origin. Then the restriction of \(g\) to \(\ell\) is a totally geodesic \(2\)-cone of total angle \(2\pi\alpha_0\).
\end{lemma}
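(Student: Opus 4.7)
Proof plan.

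The plan is to establish the two claims separately: first that $g|_\ell$ is a $2$-cone of total angle $2\pi\alpha_0$, and then that $\ell$ is totally geodesic. Choose a linear coordinate $w$ on $\ell$ (say $w \mapsto w v_0$ for a fixed $v_0 \in \ell \setminus \{0\}$). By Lemma \ref{lem:scalrmult}, scalar multiplication by $\lambda \in \C^*$ preserves $\ell$ setwise and scales $g$ (and hence $g|_\ell$) by a factor of $|\lambda|^{2\alpha_0}$; in particular $g|_\ell$ is invariant under $S^1$-rotations $w \mapsto e^{i\theta} w$. These rotational and scaling symmetries force $g|_\ell = c \, |w|^{2(\alpha_0 - 1)} |dw|^2$ on $\ell \setminus X^s$ for some constant $c>0$; positivity of $c$ follows because $g$ is a genuine Riemannian metric on the regular part, and if $\ell \in \mL(\mH)$ then Proposition \ref{prop:PKstrat}(2) guarantees that $g|_\ell$ is itself a PK cone metric on $\ell$ hence non-degenerate. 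A direct computation (or comparison with the line element \eqref{eq:1dimmet}) then shows that the line element $\sqrt{c}\,|w|^{\alpha_0 - 1} |dw|$ defines a $2$-cone of total angle $2\pi\alpha_0$, the constant $c$ only amounting to an overall rescaling.

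For the totally geodesic statement, I would argue locally by the developing map, following the strategy of Lemma \ref{lem:lines}. Fix a regular point $p \in \ell^\circ$ and take a holomorphic isometry $F : U \to \C^n$ from a simply connected neighbourhood of $p$ to Euclidean $\C^n$. Under $F$, the Levi-Civita connection $\nabla$ corresponds to the flat Euclidean connection $d$, and the identity $\nabla e = \Id$ is preserved, so $F_\ast e$ is a holomorphic vector field on Euclidean $\C^n$ satisfying $d(F_\ast e) = \Id$. Up to translations, the only such field is the usual Euler vector field, hence $F_\ast e = e_{\C^n} + c$ for a constant vector $c \in \C^n$. The orbits of this affine vector field are the complex affine lines through $-c$; since $\ell$ is $e$-invariant, $F(\ell \cap U)$ is an open subset of such a complex line. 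Complex affine lines in Euclidean $\C^n$ are totally geodesic, and $F$ is an isometry, so $\ell \cap U$ is totally geodesic in $U$.

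This handles every regular point of $\ell$. To extend the property through the (isolated) singular points of $\ell$ — including the vertex $0$ — I would pass to tangent cones: at any $x \in \ell \cap X^s$ the tangent cone is again a PK cone on $\C^n$ with $\ell$ pulled back to a complex line through the origin, so the argument above applied to the regular part of the tangent cone shows that a geodesic of $g$ starting on $\ell$ tangent to $\ell$ cannot leave $\ell$ at $x$ either. Combining the two claims yields the statement.

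The main potential obstacle is justifying that the constant $c$ in step one is strictly positive when $\ell$ lies inside the singular arrangement (so that the naive symmetry argument takes place on an \emph{a priori} possibly degenerate restriction). As noted, this is handled by invoking Proposition \ref{prop:PKstrat}(2), which guarantees that $g|_\ell$ is itself a PK cone metric on $\ell$, hence non-degenerate on its regular part; no separate geometric analysis of those exceptional lines is needed.
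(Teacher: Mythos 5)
Your two-step strategy is essentially the paper's: the homogeneity/rotation symmetry from Lemma \ref{lem:scalrmult} (equivalently, the real and imaginary parts of \(e\) dilating and preserving \(g|_\ell\)) pins down the line element as \(c\,|w|^{\alpha_0-1}|dw|\), and the developing-map argument of Lemma \ref{lem:lines} gives the totally geodesic claim. Both halves are correct as written on \(\ell\setminus X^s\).

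The one genuine gap is your treatment of lines contained in the arrangement. Such a line satisfies \(\ell\subset H\) for some \(H\in\mH\), but it need not be an element of \(\mL(\mH)\) (e.g.\ a generic line inside a single hyperplane of \(\C_\alpha\times\C_\alpha\times\C_\alpha\)), so Proposition \ref{prop:PKstrat}(2) cannot be applied to \(\ell\) itself, and the parenthetical claim that the singular points of \(\ell\) are isolated fails: for such \(\ell\) \emph{every} point is a metric singularity of \(g\), so your "fix a regular point \(p\in\ell^\circ\)" has nothing to start from. The paper's remedy, which slots directly into your argument, is to pass to the minimal \(L\in\mL(\mH)\) containing \(\ell\): by Lemma \ref{lem:regPKL} the restriction \(g|_L\) is again a regular PK cone with the same angle \(2\pi\alpha_0\) at the origin (its Euler field being \(e|_L\)), and minimality forces \(\ell\setminus\{0\}\) to lie in the regular part of \((L,g|_L)\), so your symmetry and developing-map arguments apply verbatim there. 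To conclude that \(\ell\) is totally geodesic in \(\C^n\) (not just in \(L\)) you then also need that \(L^{\circ}\) is totally geodesic, which follows from the local product decomposition of Proposition \ref{prop:PKstrat}. With that reduction inserted, your proof is complete and coincides with the paper's.
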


\begin{proof}
	By Lemma \ref{lem:regPKL}, after restricting we might assume that \(\ell \setminus \{0\}\) is contained in \((\C^n)^{\circ}\).
	The vector field \(e\) is tangent to \(\ell\). I follows from Lemmas \ref{lem:dever} and \ref{lem:deves} that \(\ell\) is mapped under branches of the developing map to complex lines through the origin in Euclidean \(\C^n\), hence \(\ell\) is totally geodesic.
	Identify \(\ell\) with \(\C\) via the restriction of one of the coordinate functions, say \(z_1\), restricted to \(\ell\) and call this coordinate \(t\). 
	The real part \((1/\alpha_0) t \p_t\) dilates the restricted metric and its imaginary part preserves it, it follows that the restriction must be a \(2\)-cone of total angle \(2\pi\alpha_0\).
\end{proof}

\begin{definition}\label{def:angle0}
	We say that \(2\pi \alpha_0\) is the \textbf{angle at the origin} of \(g\).
\end{definition}

\begin{lemma}\label{lem:homogeneityr2}
	There is a continuous function \(\varphi\) defined on \(\C^n \setminus \{0\}\), smooth on \((\C^n)^{\circ}\), invariant under scalar multiplication \((z_1, \ldots, z_n) \mapsto (\lambda z_1, \ldots, \lambda z_n) \) for all \(\lambda \in \C^*\) and such that
	\begin{equation}\label{eq:phi}
	r^2 = |z|^{2\alpha_0} e^{\varphi}
	\end{equation}
	where \(|z|\) denotes the standard Euclidean norm.
\end{lemma}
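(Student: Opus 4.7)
\medskip

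The plan is to take logarithms and define
\[
\varphi(z) \;=\; \log r^{2}(z) \;-\; 2\alpha_{0}\log|z|
\]
on $\C^{n}\setminus\{0\}$, and then simply verify each of the three claimed properties (continuity, smoothness on the regular locus, and $\C^{*}$-invariance). Equation \eqref{eq:phi} is then tautological, and all the substance lies in checking that $\varphi$ is well-defined and has the stated regularity.

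First I would establish the basic regularity and positivity of $r^{2}$. Because $g$ is a PK cone metric in the sense of Definition \ref{def:PK}, the polyhedral distance $d_{g}$ is a bona fide metric inducing the usual topology of $\C^{n}$; in particular $r(z)=d_{g}(z,0)$ is continuous on $\C^{n}$ and strictly positive on $\C^{n}\setminus\{0\}$. On the regular locus $(\C^{n})^{\circ}$ the identity $r^{2}=2\langle e,e\rangle$ from Section \ref{sect:kahlerpotential} shows that $r^{2}$ is smooth there. Since $|z|^{2\alpha_{0}}$ is likewise continuous on $\C^{n}$, smooth and positive on $\C^{n}\setminus\{0\}$, the quotient $r^{2}/|z|^{2\alpha_{0}}$ is continuous and strictly positive on $\C^{n}\setminus\{0\}$ and smooth on $(\C^{n})^{\circ}$. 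Taking logarithms gives the asserted regularity of $\varphi$.

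Next I would verify the $\C^{*}$-invariance. By Lemma \ref{lem:scalrmult}, scalar multiplication by $\lambda\in\C^{*}$ rescales the metric by $|\lambda|^{2\alpha_{0}}$; equivalently it scales all distances by $|\lambda|^{\alpha_{0}}$, so
\[
r^{2}(\lambda z) \;=\; |\lambda|^{2\alpha_{0}}\,r^{2}(z).
\]
Combined with the obvious identity $|\lambda z|^{2\alpha_{0}}=|\lambda|^{2\alpha_{0}}|z|^{2\alpha_{0}}$, this yields $\varphi(\lambda z)=\varphi(z)$.

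There is no real obstacle here; the only mildly delicate point is continuity of $\varphi$ across the metric singular set $\mH\setminus\{0\}$, but this is already subsumed by the fact that $d_{g}$ is continuous as a metric inducing the topology of $\C^{n}$ (as opposed to working directly with the Riemannian norm $g(e_{r},e_{r})$ on $(\C^{n})^{\circ}$, which would require the much harder extension argument of Lemma \ref{lem:extensionr2}). Since we already know $(X,g)$ is a PK cone, this continuity is free.
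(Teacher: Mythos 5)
Your proposal is correct and follows essentially the same route as the paper: the paper's proof consists precisely of the homogeneity computation $r(\lambda z)=d(\lambda z,0)=|\lambda|^{\alpha_0}d(z,0)=|\lambda|^{\alpha_0}r(z)$ via Lemma \ref{lem:scalrmult}, with the continuity, positivity and smoothness of $r$ taken as already established for the radius function of a PK cone (Section \ref{sect:kahlerpotential}). Your write-up just makes those regularity inputs explicit.
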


\begin{proof}
	By Lemma \ref{lem:scalrmult} and since \(r\) measures the distance to the vertex, we deduce that
	\[r(\lambda z) = d( \lambda z, 0) = |\lambda|^{\alpha_0}d(z, 0) = |\lambda|^{\alpha_0}r(z) . \qedhere\]
\end{proof}

\begin{note}
	The function \(\varphi\) in Equation \eqref{eq:phi} is the pull-back of a function on projective space under the standard quotient map \(\C^n\setminus \{0\} \to \CP^{n-1}\).
\end{note}

\subsubsection{{\large Angles and weights}}

Let \(g\) be a regular PK cone metric on \(\C^n\) singular at a hyperplane arrangement \(\mH\) and let \((z_1, \ldots, z_n)\) be linear coordinates.
We have defined \(\alpha_0>0\) so that \(e = (1/\alpha_0) \sum_i z_i \p/\p z_i\) and the restriction of \(g\) to any complex line through \(0\) is a \(2\)-cone of angle \(2\pi\alpha_0\). On the other hand, by Proposition \ref{prop:PKstandard} the Levi-Civita connection \(\nabla\) of \(g\) is standard. 
Recall that the \textbf{angle at} \(H\) is \(2\pi\alpha_H\), where \(g\) is locally holomorphically isometric to \(\C_{\alpha_H} \times \C^{n-1}\) at points in \(H^{\circ}\). Equivalently, \(\alpha_H=1-a_H\) where \(a_H= \tr A_H\) and \(A_H\) is the residue of \(\nabla\) at \(H\). In this section we relate angles of the metric and weights of the connection at higher codimension strata.

\begin{lemma}\label{lem:alpha0}
	Let \((\C^n, g)\) be a regular PK cone metric on \(\C^n\) singular at a hyperplane arrangement \(\mH\) and let \(\nabla\) be its Levi-Civita connection. Suppose that \(\mH\) is essential and irreducible.
	Then \(\alpha_0=1-a_0\) where 
	\[a_0 = \frac{1}{n} \sum_{H \in \mH} a_H \]
	is the weight at the origin of \(\nabla\), as introduced in Definition \ref{def:weights}.
\end{lemma}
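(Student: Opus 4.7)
The plan is to compute $\nabla e$ in two different ways and compare. On one hand, the regular cone structure forces $\nabla e = \Id_{T^{1,0}\C^n}$; on the other hand, the explicit form $e = (1/\alpha_0)\sum_i z_i\,\partial/\partial z_i$ together with the fact that $\nabla$ is standard allows a direct calculation that produces the factor $(1-a_0)/\alpha_0$.

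First I would verify that $\nabla e = \Id$ on $T^{1,0}\C^n$ where $\nabla$ is the Levi--Civita connection of $g$ on $(\C^n)^{\circ}$. This is exactly the content of Section \ref{sect:riemcone}: the real Euler field $e_r$ of the cone equals $\grad(r^2/2)$ (Lemma \ref{lem:gradr}) and $g = \Hess(r^2/2)$, so $\nabla_X e_r = X$ for every real tangent vector $X$; identifying $(T\R^{2n},J)$ with $T^{1,0}\C^n$ via $v\mapsto v^{1,0}=(v-iJv)/2$ gives $\nabla e = \Id$ as an endomorphism of $T^{1,0}\C^n$.

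Next, by Corollary \ref{cor:stndLCirr} the Levi--Civita connection $\nabla$ is a standard connection in the linear coordinates $(z_1,\ldots,z_n)$, of the form $\nabla = d - \sum_{H\in\mH} A_H\,dh/h$, and since $\mH$ is essential and irreducible Corollary \ref{cor:essirrA} gives $\sum_{H\in\mH}A_H = a_0\,\Id_{\C^n}$ where $a_0=(1/n)\sum_H a_H$. Writing $e_V = \sum_{i=1}^n z_i\,\partial/\partial z_i$ for the usual Euler field of $\C^n$ and repeating the computation in the proof of Lemma \ref{lem:eulstndcon} (using $A_H(e_V)\,dh/h = A_H$ because $dh(e_V)=h$), I obtain
\begin{equation*}
\nabla e_V \;=\; \Id_{\C^n} - \sum_{H\in\mH} A_H \;=\; (1-a_0)\,\Id_{\C^n}.
\end{equation*}

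Finally, since $g$ is regular, Definition \ref{def:regularPK} gives $e = (1/\alpha_0)\,e_V$, so $\nabla e = (1-a_0)/\alpha_0 \cdot \Id$. Comparing with $\nabla e = \Id$ from the first step yields $\alpha_0 = 1-a_0$, as required. There is no real obstacle here: the only subtle point is the compatibility between the real identity $\nabla e_r = \Id$ coming from Riemannian cone geometry and the complex identity $\nabla e = \Id$ used in the standard-connection framework, but this is handled by the identification \eqref{eq:identificationT} already fixed in Section \ref{sect:riemcone}.
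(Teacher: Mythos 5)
Your proof is correct and follows essentially the same route as the paper's: compute \(\nabla\bigl(\sum_i z_i\,\p/\p z_i\bigr) = (1-a_0)\Id\) using that \(\nabla\) is standard and \(\sum_{H\in\mH}A_H = a_0\,\Id\) on an essential irreducible arrangement, then compare with \(\nabla e = \Id\) via \(e = (1/\alpha_0)\sum_i z_i\,\p/\p z_i\). The only cosmetic differences are that you cite Corollary \ref{cor:essirrA} where the paper rederives \(\sum_H A_H = a_0\,\Id\) in-line, and that you spell out the Riemannian-cone justification of \(\nabla e = \Id\) which the paper simply asserts.
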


\begin{proof}
	Let \(\nabla= d - \sum A_H dh/h\) and \(E = \sum z_i \p / \p z_i\), so
	\[\nabla E = \Id - \sum_{H \in \mH} A_H . \]
	At the same time, since \(\sum_{H \in \mH} A_H\) preserves all hyperplanes in \(\mH\) (Proposition \ref{prop:ftfstcon}) and \(\mH\) is essential and irreducible, \(\sum_{H \in \mH} A_H\) must be a constant multiple of the identity and the constant must equal \(a_0\) as can be seen by taking traces. Therefore \(\nabla E = (1-a_0) \Id\). On the other hand, since
	\(E=\alpha_0 e\) and \(\nabla e = \Id\), we get that
	\(\nabla E = \alpha_0 \Id\) and therefore \(\alpha_0 = 1-a_0\).
\end{proof}

\begin{definition}\label{def:angleatL}
	Let \(\mH\) be an arrangement of linear hyperplanes in \(\C^n\). (We do not assume that \(\mH\) is neither essential nor irreducible.) Let \(g\) be a PK cone metric on \(\C^n\) singular at \(\mH\)
	and let \(L \in \mL_{irr}(\mH)\) be an irreducible subspace of dimension \(d \geq 1\). By Proposition \ref{prop:PKstrat} the metric \(g\) is holomorphically isometric to \(\C^d \times \C^{n-d}_{PK}\)  close to points in \(L^{\circ}\),  where \(\C^{n-d}_{PK}\) is a regular PK thanks to Lemma \ref{cor:pkarrang}. We define the \textbf{angle at} \(L\) of \(g\) to be the angle at the origin of \(\C^{n-d}_{PK}\) and we denote it by \(\alpha_L\). 
\end{definition}

\begin{corollary}\label{cor:weightangle}
	Let \(L \in \mL_{irr}(\mH)\), then \(\alpha_L=1-a_L\) where
	\[a_L = \frac{1}{\codim L} \sum_{H \in \mH_L} a_H \]
	is the weight of \(\nabla\) at \(L\). 
\end{corollary}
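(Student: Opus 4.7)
The plan is to reduce the statement to Lemma \ref{lem:alpha0} applied to the transversal PK cone factor, and then identify its Levi-Civita connection with the quotient connection \(\nabla^{\C^n/L}\), so that the weights match via Lemma \ref{lem:quotconnect}.

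More precisely, let \(L \in \mL_{irr}(\mH)\) with \(d = \dim L\) and let \(x \in L^{\circ}\). By Definition \ref{def:angleatL} and Proposition \ref{prop:PKstrat}, a neighbourhood of \(x\) splits holomorphically and isometrically as \(\C^d \times \C^{n-d}_{PK}\), where \(\C^{n-d}_{PK}\) is a PK cone whose singular set is the quotient arrangement \(\mH_L/L\). Since \(L\) is irreducible, \(\mH_L/L\) is essential and irreducible, so by Lemma \ref{cor:pkarrang} the cone \(\C^{n-d}_{PK}\) is regular; its angle at the origin is by definition \(\alpha_L\). Applying Lemma \ref{lem:alpha0} to this transversal factor yields
\[
\alpha_L \;=\; 1 - a_0^{\perp},
\]
where \(a_0^{\perp}\) is the weight at the origin of the Levi--Civita connection of \(\C^{n-d}_{PK}\).

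The main step is to identify this Levi--Civita connection with the quotient connection \(\nabla^{\C^n/L}\). First, the Levi--Civita connection \(\nabla\) of \(g\) is standard in the sense of Definition \ref{def:stnd}: near the origin this is Proposition \ref{prop:PKstandard}; away from the origin one applies the same result to the tangent cone at \(x\), which is biholomorphic and isometric to the local product \(\C^d \times \C^{n-d}_{PK}\). Because the local holomorphic isometric splitting is unique, the corresponding decomposition of \(\nabla\) is the trivial connection \(d\) on the \(\C^d\) factor and a flat torsion free standard connection \(\bar{\nabla}\) on \(\C^{n-d}_{PK}\). The hyperplanes \(H \in \mH_L\) correspond to \(H/L\) in the transversal factor, and comparing residues via the identification of Remark \ref{rmk:quotconnec} shows that \(\bar{\nabla}\) is exactly the quotient connection \(\nabla^{\C^n/L}\).

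Once this identification is in hand, the computation closes immediately: by Lemma \ref{lem:quotconnect} the weight of \(\nabla^{\C^n/L}\) at each \(H/L\) equals the weight \(a_H\) of \(\nabla\) at \(H\), so
\[
a_0^{\perp} \;=\; \frac{1}{n-d} \sum_{H/L \in \mH_L/L} a_{H/L} \;=\; \frac{1}{\codim L} \sum_{H \in \mH_L} a_H \;=\; a_L,
\]
and therefore \(\alpha_L = 1 - a_L\). The only non-trivial point is the identification of the transversal Levi--Civita connection with the quotient connection, but this follows cleanly from the uniqueness of local product decompositions of PK metrics and the standardness provided by Proposition \ref{prop:PKstandard}; the remaining arithmetic is bookkeeping of residues.
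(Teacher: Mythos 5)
Your proof is correct and follows the paper's own argument: the paper disposes of this corollary by saying it follows from Lemma \ref{lem:alpha0} applied to the transversal PK cone \(\C^{n-d}_{PK}\), and you have simply filled in the bookkeeping (regularity of the transversal factor via Lemma \ref{cor:pkarrang}, and the matching of weights). The one remark worth making is that your identification of the transversal Levi--Civita connection with the quotient connection \(\nabla^{\C^n/L}\) via Lemma \ref{lem:quotconnect} implicitly relies on the Non-Zero Weights Assumptions (needed to form \(L^{\perp}\) and hence \(\nabla^{\C^n/L}\)); it is slightly cleaner, and avoids any such hypothesis, to note that the transverse cone angle \(2\pi\alpha_H\) is a local metric invariant, so the weight of the transversal connection at \(H/L\) is \(1-\alpha_H=a_H\) directly, which is all that Lemma \ref{lem:alpha0} requires.
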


\begin{proof}
	Follows from Lemma \ref{lem:alpha0} applied to the transversal PK cone \(\C^{n-d}_{PK}\).
\end{proof}

\begin{lemma}\label{lem:prod}
	Let \(g\) be a PK cone metric on \(\C^n\) singular at a hyperplane arrangement \(\mH\). Moreover, assume that \(\alpha_L  \notin \Z\) for every \(L \in \mL_{irr}(\mH)\). Let \(L\) be an intersection of members in \(\mH\). Then \(g\) splits close to points in \(L^{\circ}\) according to the irreducible decomposition of \(\mH_L\).
\end{lemma}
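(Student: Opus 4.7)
The plan is to reduce the statement to a direct application of the affine splitting result of Proposition \ref{prop:nablaLsplit}, combined with the orthogonality observation of Remark \ref{rmk:orth}. First I would invoke Proposition \ref{prop:PKstrat} together with Proposition \ref{stratprop}(2): a point $x \in L^{\circ}$ is a metric singularity of complex codimension $n-d$ (where $d = \dim L$), and its tangent cone is an orthogonal holomorphic product $\C^d \times X'$, where $X'$ is a PK cone of complex dimension $n-d$ whose singular set is the essential arrangement $\mH_L/L \subset \C^{n-d}$. Since a point of a PK manifold has a conic neighbourhood isometric to its tangent cone, it suffices to prove that $X'$ itself splits as a Riemannian and holomorphic product according to the irreducible decomposition of $\mH_L$.

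Next I would set up the connection-theoretic machinery on $X'$. By Proposition \ref{prop:PKcones} there exist global complex coordinates on $X'$ that linearise its complex Euler vector field, and by Proposition \ref{prop:PKstandard} the Levi-Civita connection $\nabla'$ of $X'$ is a standard connection with respect to the associated coordinate frame. Being the Levi-Civita of a K\"ahler metric, $\nabla'$ is flat and torsion free (so the Basic Assumptions \ref{ass:basic} hold by Proposition \ref{prop:ftfstcon}) and unitary, so it admits a parallel Hermitian form. I then check the Non-Zero Weights Assumption \ref{ass:nz}: by Lemma \ref{lem:irredHL} (applied to the quotient arrangement) the irreducible intersections of $\mH_L/L$ are precisely the subspaces $L_i/L$ with $L_i \in \mL_{irr}(\mH)$ and $L \subset L_i$, and by Corollary \ref{cor:weightangle} the corresponding weights satisfy $a_{L_i/L} = 1 - \alpha_{L_i}$. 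The hypothesis $\alpha_{L_i} \notin \Z$ therefore implies $a_{L_i/L} \neq 0$, as required.

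Finally, I would apply Proposition \ref{prop:nablaLsplit} to $\nabla'$ taking the subspace there to be $T(\mH_L/L) = \{0\}$ (so that the localization coincides with $\nabla'$ itself). This yields an affine product decomposition of $\nabla'$ whose factors are labelled by the irreducible components of $\mH_L$, realised as standard connections on the corresponding factors $\C^{n_i}$. Since $\nabla'$ is unitary and the non-integer condition on the weights holds, Remark \ref{rmk:orth} promotes this affine splitting to an orthogonal splitting of the parallel Hermitian form (the argument runs central loops around the origin of each factor and uses $\exp(2\pi i a_{L_i}) \neq 1$). Hence the K\"ahler metric on $X'$ is itself the Riemannian product of PK cone metrics on the factors, and combining with the tangential $\C^d$ yields the local splitting of $g$ near $x \in L^{\circ}$ claimed in the lemma.

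The potentially tricky point is simply lining up the conventions so that the hypotheses of Propositions \ref{prop:PKstandard} and \ref{prop:nablaLsplit} are applicable to the tangent cone data; once this bookkeeping is carried out, the irreducibility of the pieces and the orthogonality of factors follow by direct quotation, and the non-integer hypothesis is what makes Remark \ref{rmk:orth} available to pass from an affine to a metric product decomposition.
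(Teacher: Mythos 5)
Your proof is correct, but it takes a genuinely different route from the paper's. The paper's proof stays on the original $\C^n$: it checks that the Levi-Civita connection of $g$ is a flat torsion free unitary standard connection (Proposition \ref{prop:PKstandard}) with non-integer weights (Corollary \ref{cor:weightangle}) and automatic positivity, and then quotes the full Local Product Decomposition Theorem \ref{thm:locprod} --- i.e.\ Proposition \ref{prop:locaffprod} and Lemma \ref{lem:orthogfact}, whose proofs are the analytic content of Section \ref{sec:locprod} (gauge equivalence, foliations, Poincar\'e--Dulac). You instead exploit the polyhedral structure first: a conic neighbourhood of $x\in L^{\circ}$ is holomorphically isometric to the tangent cone (Lemma \ref{conemb}), which by Propositions \ref{prop:PKstrat} and \ref{stratprop} is $\C^d\times X'$ with $X'$ an essential PK cone singular at $\mH_L/L$; on that cone the Levi-Civita connection is globally standard and equal to its own localization, so only the purely algebraic splitting of Proposition \ref{prop:nablaLsplit} plus the holonomy argument of Remark \ref{rmk:orth} are needed. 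Your reduction buys a shorter argument that avoids the machinery of Section \ref{sec:locprod}, at the cost of using the polyhedral (conic neighbourhood) hypothesis in an essential way --- which is available here since $g$ is assumed PK, whereas Theorem \ref{thm:locprod} is designed for flat K\"ahler metrics on arrangement complements that are not yet known to be polyhedral. One small point of bookkeeping: condition \textbf{(NZ)} must be verified at \emph{every} irreducible intersection of $\mH_L/L$, not only at the irreducible components of $L$; by Lemma \ref{lem:irredHL} these are exactly the $M/L$ with $M\in\mL_{irr}(\mH)$, $L\subset M$, and each has weight $1-\alpha_M\neq 0$ by hypothesis, so your argument goes through as stated.
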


\begin{proof}
	The Levi-Civita connection \(\nabla\) of \(g\) is (flat torsion free) standard and has non-integer weights, so it satisfies the hypothesis of Theorem \ref{PRODTHM} (note that positivity is automatic since \(\alpha_L>0\)). The statement then follows from the Local Product Decomposition Theorem \ref{thm:locprod} (more precisely Proposition \ref{prop:locaffprod} and Lemma \ref{lem:orthogfact}).
\end{proof}

\subsection{Volume forms of PK cones and local integrability}\label{sect:VolformPKcone}

We show that the positivity assumption in Theorem \ref{PRODTHM} is equivalent to the local integrability (in the usual Lebesgue sense) of the function
\[ \prod_{H \in \mH} |h|^{2\alpha_H-2} . \]
We prove that the Riemannian volume form on the regular part of a PK cone metric on \(\C^n\) singular at a hyperplane arrangement \(\mH\) is up to a constant positive factor equal to
\[\left(\prod_{H \in \mH} |h|^{2\alpha_H-2}\right) dV_{\C^n}\]
where \(dV_{\C^n} = (1/n!) (\sum_i (i/2) dz_i \wedge d\bar{z}_i)^n \) is the Euclidean volume form and \((z_1, \ldots, z_n)\) are linear coordinates in which the Levi-Civita connection of the cone metric is standard. 

\subsubsection{{\large Positivity \(\iff\) local integrability}}

\begin{definition}
	A \textbf{weighted hyperplane arrangement} consists of an arrangement of complex linear hyperplanes \(\mH\) in \(\C^n\) together with \emph{weights} \(a_H \in \C\) indexed by the members of \(\mH\). 
\end{definition}

We re-interpret the positivity condition in Theorem \ref{PRODTHM} in terms of local integrability of a natural measure associated to a weighted hyperplane arrangement with real weights. The discussion that follows holds in the general setting of weighted hyperplane arrangements with real weights and it is metric independent.

Suppose that \(\mH\) is a hyperplane arrangement in \(\C^n\). Choose defining linear equations for the members of \(\mH\), as usual let us denote these by \(h\) where \(H=\{h=0\} \in \mH\).
Moreover, assume that for each \(H \in \mH\) we are given a real number \(a_H\).
For irreducible intersections \(L \in \mL_{irr}(\mH)\) we define
\[ a_L = (\codim L)^{-1} \sum_{H \in \mH_L} a_H  \]
where \(\mH_{L}\) is the set of hyperplanes in the arrangement that contain \(L\). In this setting, we have the next. 

\begin{lemma}\label{lem:positivityvolume}
	The positivity assumption \(a_L<1\) for every \(L \in \mL_{irr}(\mH)\) is equivalent to
	\begin{equation}\label{eq:volumedensity}
	\prod_{H \in \mH} |h|^{-2a_H}
	\end{equation}
	being a locally integrable function with respect to the usual Lebesgue measure.
\end{lemma}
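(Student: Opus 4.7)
\emph{Proof plan.} Local integrability is tested neighbourhood-by-neighbourhood, and away from $\bigcup_{H \in \mH} H$ the integrand is continuous, so I only need to analyse points lying on the arrangement. At $x \in L^\circ$ for some $L \in \mL(\mH)$, the hyperplanes $H \in \mH \setminus \mH_L$ do not pass through $x$, so their factors $|h|^{-2a_H}$ are bounded on a small ball around $x$. Translating $x$ to the origin and applying Lemma \ref{lem:HLdec}, I can put $\mH_L$ in product form $(\C^{\dim L},\emptyset) \times \prod_{i=1}^k (\C^{n_i}, \bar{\mH}_i)$ with each $\bar{\mH}_i$ essential and irreducible. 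The function $\prod_{H \in \mH_L}|h|^{-2a_H}$ is constant in the $\C^{\dim L}$ variables and factorises as $\prod_i \bigl(\prod_{\bar H \in \bar{\mH}_i}|\bar h|^{-2a_{\bar H}}\bigr)$, so Fubini reduces local integrability at $x$ to integrability at the origin of each essential irreducible piece $(\C^{n_i},\bar{\mH}_i)$. Since $\mL_{irr}(\mH_L) = \bigsqcup_i \mL_{irr}(\bar{\mH}_i)$ with matching weights, the numerical conditions are also cleanly bookkept by this reduction.

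The problem is therefore reduced to proving the equivalence for integrability at the origin of an essential irreducible arrangement, which I will establish by induction on $n$. For $n=1$, the statement reduces to the classical $\int_{|z|<\epsilon}|z|^{-2a}\,dV < \infty \iff a < 1$, exactly matching the single irreducible intersection $\{0\}$ of weight $a$. For the inductive step, pass to polar coordinates $z = r\zeta$ with $r > 0$ and $\zeta \in S^{2n-1}$. Every $h$ is linear, so $|h(r\zeta)| = r|h(\zeta)|$, and using $\sum_{H \in \mH} a_H = n a_0$ from Corollary \ref{cor:essirrA}, the integral factorises as
\[
\int_{|z|<\epsilon}\prod_{H \in \mH}|h|^{-2a_H}\,dV
= \left(\int_0^\epsilon r^{2n(1-a_0)-1}\,dr\right)\cdot\left(\int_{S^{2n-1}}\prod_{H \in \mH}|h(\zeta)|^{-2a_H}\,d\sigma\right).
\]
The radial integral converges if and only if $a_0 < 1$, i.e.\ the positivity condition at the deepest irreducible intersection $\{0\}$.

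It remains to handle the spherical integral. I claim its convergence is equivalent to local integrability of $\prod_H |h|^{-2a_H}$ at every $\zeta \in S^{2n-1}$ viewed as a point of $\C^n$: near such $\zeta \ne 0$, the Jacobian $r^{2n-1}$ relating $dV$ to $dr\,d\sigma$ is bounded above and away from zero, so the two notions of local integrability coincide on a neighbourhood of $\zeta$. Applying the paragraph-one reduction at each $\zeta \in L^\circ \cap S^{2n-1}$ (now with $L \ne \{0\}$, so every essential irreducible factor $\bar{\mH}_i$ lives in $\C^{n_i}$ with $n_i \le n-1$) and invoking the inductive hypothesis, spherical convergence holds if and only if $a_M < 1$ for every $M \in \mL_{irr}(\mH)$ with $M \ne \{0\}$. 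Combining with the radial condition $a_{\{0\}} < 1$, I obtain positivity at every irreducible intersection, as required; conversely, the strict necessity of each condition is visible from the same factorisation, since failure of any single $a_L < 1$ produces a divergent sub-integral.

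The main obstacle will be the spherical analysis, specifically the careful identification of local integrability on $S^{2n-1}$ near $\zeta$ with local integrability in $\C^n$ near $\zeta$, and the verification that the inductive hypothesis applies to the quotient arrangements $\mH_L/L$ (equivalently to the essential irreducible factors $\bar{\mH}_i$) with matching weights at all irreducible intersections. The algebraic bookkeeping is provided cleanly by Lemma \ref{lem:HLdec} together with the identification $\mL_{irr}(\mH_L) = \bigsqcup_i \mL_{irr}(\bar{\mH}_i)$, but the analytic comparison on the sphere must be made explicit.
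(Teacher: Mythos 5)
Your proposal is correct and follows essentially the same route as the paper: Fubini together with the product decomposition of $\mH_L$ reduces everything to irreducible intersections, and polar coordinates isolate the radial integral $\int_0^\epsilon r^{2c-1-2ca_L}\,dr$, whose convergence is exactly $a_L<1$. The only difference is organizational — you induct on the dimension of the essential irreducible factors and treat the spherical integral explicitly, whereas the paper inducts on $\codim L$ and leaves the spherical part implicit — but the underlying mechanism is identical.
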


\begin{proof}
	Let \(x \in H^{\circ}\) and take complex coordinates \((x_1, \ldots, x_n)\) centred at \(x\) such that \(H=\{x_1=0\}\). Let \(\rho= |x_1|\). By Fubini's Theorem, \eqref{eq:volumedensity} is 
	locally integrable around \(x\) if and only if
	\[\int_{0}^{1} \rho^{1-2a_H} d\rho < \infty \iff a_H < 1 .\]
	
	We proceed by induction on the codimension of \(L \in \mL(\mH)\).
	By Fubini's Theorem it is enough consider \(L \in \mL_{irr}(\mH)\). Let \(x \in L^{\circ}\) and take complex coordinates centred at \(x\) such that \(L = \{x_1=\ldots=x_c=0\} \) where \(c= \codim L\). Write \(\rho=|(x_1, \ldots, x_c)|\) where \(|\cdot|\) denotes the usual Euclidean norm. Same as before,
	local integrability of \eqref{eq:volumedensity} around \(x\) is equivalent to
	\[ \int_0^1 \rho^{2c-1-2ca_L}d\rho < \infty \iff a_L<1 . \qedhere \]
\end{proof}

\subsubsection{{\large Volume forms of PK cones}}

\begin{notation}
	We let \((z_1, \ldots, z_n)\) be complex linear coordinates in \(\C^n\) and let
	\[\Omega = \sum_{H \in \mH} A_H \frac{dh}{h} \]
	be the connection form of a standard connection \(\nabla=d-\Omega\) in the coordinate frame \(\p/\p z_1, \ldots, \p/\p z_n\).
\end{notation}

\begin{lemma}
	Let \(\Phi= dz_1 \wedge \ldots \wedge dz_n\), then
	\begin{equation}\label{eq:topnabla}
	\nabla \Phi = (\tr \Omega) \Phi = \left(\sum_{H \in \mH} a_H \frac{dh}{h} \right) \Phi 
	\end{equation}
	where \(a_H = \tr A_H\).
\end{lemma}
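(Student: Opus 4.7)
The plan is to treat $\Phi$ as a section of the determinant line bundle $\det T^*\C^n = \Lambda^n T^*\C^n$ and compute the induced connection by a direct Leibniz calculation. Both equalities in the statement will then be essentially formal.

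First I would unwind the dual connection. In the coordinate frame $\{\partial/\partial z_i\}$, by definition $\nabla \partial_{z_j} = -\sum_k \Omega_{kj}\,\partial_{z_k}$. The compatibility with the natural pairing $\langle dz_i, \partial_{z_j}\rangle = \delta_{ij}$ forces, for any vector $v$,
\[
0 = v\bigl(\delta_{ij}\bigr) = (\nabla_v dz_i)(\partial_{z_j}) + dz_i(\nabla_v \partial_{z_j}) = (\nabla_v dz_i)(\partial_{z_j}) - \Omega_{ij}(v),
\]
so that $\nabla dz_i = \sum_j \Omega_{ij}\otimes dz_j$ as an element of $\Omega^1 \otimes T^*\C^n$.

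Next I would apply the Leibniz rule for the induced connection on $\Lambda^n T^*\C^n$:
\[
\nabla \Phi = \sum_{i=1}^{n} dz_1 \wedge \cdots \wedge \nabla dz_i \wedge \cdots \wedge dz_n .
\]
Substituting the formula for $\nabla dz_i$, the only summand in $\nabla dz_i = \sum_j \Omega_{ij}\, dz_j$ that survives the wedge product with the remaining $dz_k$ ($k\neq i$) is the one with $j=i$, so that
\[
dz_1 \wedge \cdots \wedge \nabla dz_i \wedge \cdots \wedge dz_n = \Omega_{ii}\, \Phi .
\]
Summing over $i$ gives $\nabla \Phi = \bigl(\sum_i \Omega_{ii}\bigr)\Phi = (\tr \Omega)\,\Phi$, establishing the first equality.

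Finally, the second equality is immediate from linearity of the trace applied to $\Omega = \sum_{H\in\mH} A_H\, dh/h$: since $dh/h$ is a scalar-valued $1$-form, $\tr\Omega = \sum_H \tr(A_H)\, dh/h = \sum_H a_H\, dh/h$. There is no real obstacle here; the only point one needs to be careful about is matching the sign conventions for the dual connection on $T^*\C^n$ from $\nabla = d - \Omega$, which is handled in the first step above.
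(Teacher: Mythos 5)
Your proof is correct and follows the same route as the paper, which simply cites the general facts that the dual frame has connection form $-\Omega^t$ and hence the top exterior power has connection form $-\tr\Omega$; you have merely written out the Leibniz computation that verifies these facts, with the signs handled consistently with the paper's convention $\nabla\p_{z_j}=-\sum_k\Omega_{kj}\p_{z_k}$. Nothing to add.
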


\begin{proof}
	This is a general fact, the connection form on the cotangent bundle with respect to the dual frame \(dz_1, \ldots, dz_n\) has connection form \(-\Omega^t\) (the negative transpose). The induced connection  on the top exterior power has connection form equal to \(- \tr \Omega\) with respect to the trivializing section \(\Phi\).
\end{proof}

\begin{lemma}\label{lem:volform0}
	Suppose that \(a_H \in \R\) for all \(H \in \mH\). Then the real volume form
	\begin{equation}
	\mu= \left(\prod_{H \in \mH} |h|^{-2a_H}\right) i^{n^2} \Phi \wedge \bar{\Phi}
	\end{equation}
	is a flat section of \(K_{\C^n} \otimes \bar{K}_{\C^n}\).
\end{lemma}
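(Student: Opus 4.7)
The plan is to compute $\nabla \mu$ directly and show it vanishes, which reduces the statement to verifying that the logarithmic derivative of the density function $\prod_H |h|^{-2a_H}$ cancels the connection form on $K_{\C^n} \otimes \bar{K}_{\C^n}$ acting on the generating section $i^{n^2}\Phi \wedge \bar{\Phi}$.

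First I would set up notation. The connection $\nabla$ on $T\C^n$ induces dual and tensor product connections; in particular on $K_{\C^n} \otimes \bar{K}_{\C^n}$ the covariant derivative of the flat local frame $i^{n^2}\Phi \wedge \bar{\Phi}$ is determined by Equation \eqref{eq:topnabla} together with its complex conjugate: since the complex conjugate connection acts on $\bar{\Phi}$ by $\nabla \bar{\Phi} = \overline{(\tr\Omega)}\,\bar{\Phi}$, we get
\[
\nabla\bigl(i^{n^2}\Phi\wedge\bar\Phi\bigr) \;=\; \left(\sum_{H\in\mH} a_H\,\frac{dh}{h}\;+\;\sum_{H\in\mH} a_H\,\frac{d\bar h}{\bar h}\right) i^{n^2}\Phi\wedge\bar\Phi,
\]
where it is essential that $a_H=\tr A_H \in \R$, so $\overline{a_H}=a_H$.

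Next I would compute the differential of the density. Writing $|h|^{-2a_H} = (h\bar h)^{-a_H}$ as a smooth positive function on the complement of $\{h=0\}$ (using the principal branch of the real logarithm of $|h|^2>0$), one has
\[
d\bigl(|h|^{-2a_H}\bigr) \;=\; -a_H\,|h|^{-2a_H}\left(\frac{dh}{h}+\frac{d\bar h}{\bar h}\right),
\]
so that the logarithmic derivative of the full product is
\[
\frac{d\!\prod_{H\in\mH}|h|^{-2a_H}}{\prod_{H\in\mH}|h|^{-2a_H}} \;=\; -\sum_{H\in\mH} a_H\left(\frac{dh}{h}+\frac{d\bar h}{\bar h}\right).
\]

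Finally, applying the Leibniz rule to $\mu = \bigl(\prod_H |h|^{-2a_H}\bigr)\, i^{n^2}\Phi\wedge\bar\Phi$ and combining the two previous displays gives $\nabla\mu = 0$ on the hyperplane complement. This is the flatness claim. There is no serious obstacle; the only point requiring care is tracking that the connection on the anti-holomorphic factor $\bar K_{\C^n}$ is the complex conjugate of the one on $K_{\C^n}$ and that the reality of the weights $a_H$ is exactly what makes the holomorphic and anti-holomorphic contributions assemble into the derivative of the \emph{real} density $\prod_H |h|^{-2a_H}$.
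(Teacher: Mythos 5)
Your proof is correct and is essentially the computation the paper performs: the same appeal to Equation \eqref{eq:topnabla}, the Leibniz rule, and the logarithmic derivative of the density, with the reality of the $a_H$ providing the cancellation. The only cosmetic difference is that the paper packages the conjugate factor by writing $\mu = i^{n^2} s \otimes \bar{s}$ for a local parallel section $s = \bigl(\prod_H h^{-a_H}\bigr)\Phi$ of $K_{\C^n}$, whereas you differentiate the real density directly and track the conjugate connection on $\bar{K}_{\C^n}$ by hand.
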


\begin{proof}
	It is enough to check flatness locally.
	On simply connected subsets of \((\C^n)^{\circ}\) we can take branches of \(h^{-a_H}\) and define
	\[s = \left(\prod_{H \in \mH} h^{-a_H}\right) \Phi , \]
	so \(i^{n^2} s \otimes \bar{s} = \mu\). It follows from Equation \ref{eq:topnabla} together with Leibniz's rule that \(\nabla s =0\) and the statement follows.  
\end{proof}

\begin{corollary}\label{lem:volumeform}
	Suppose that the standard connection \(\nabla\) is the Levi-Civita connection of a flat K\"ahler metric on \((\C^n)^{\circ}\) then the volume form \(\omega^n\) of the K\"ahler metric equals \(\mu\) up to a constant factor. In particular, this is the case for the PK cone metrics of Theorem \ref{PRODTHM}.
\end{corollary}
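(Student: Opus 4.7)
The plan is to view both $\omega^n$ and $\mu$ as $\nabla$-flat sections of the real line bundle $K_{\C^n}\otimes\bar{K}_{\C^n}$ over $(\C^n)^{\circ}$, and then conclude that their ratio is a positive constant by a connectedness argument.

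First, I would check that $\omega^n$ is $\nabla$-parallel. Since $g$ is K\"ahler and $\nabla$ is its Levi-Civita connection, the complex structure $J$, the metric $g$, and hence the K\"ahler form $\omega(\cdot,\cdot)=g(J\cdot,\cdot)$ are all $\nabla$-parallel. Consequently $\omega^n$, viewed as an $(n,n)$-form (equivalently, a section of $K_{\C^n}\otimes\bar{K}_{\C^n}$), is parallel with respect to the connection induced by $\nabla$ on this line bundle. Meanwhile, Lemma~\ref{lem:volform0} tells us that $\mu$ is also a parallel section of the same bundle, where the reality of $\mu$ uses the hypothesis $a_H\in\R$, which is guaranteed here because $a_H=\tr A_H$ is real whenever the residues preserve a positive Hermitian inner product (this being the hypothesis of Theorem~\ref{PRODTHM}, and implicitly of the corollary since we are given that $\nabla$ is the Levi-Civita of a K\"ahler metric).

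Second, both sections are smooth, nowhere vanishing, and positive on the hyperplane complement $(\C^n)^{\circ}$, so the ratio $f=\omega^n/\mu$ is a well-defined positive smooth function on $(\C^n)^{\circ}$. Writing $\omega^n=f\cdot\mu$ and applying Leibniz's rule together with $\nabla\omega^n=0=\nabla\mu$ gives $df\otimes\mu=0$, and since $\mu$ is nowhere zero we obtain $df=0$. As $(\C^n)^{\circ}$ is connected (being the complement of a proper analytic subset of $\C^n$), we conclude that $f$ is a (positive) constant. This establishes the first sentence.

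For the final clause, the hypothesis of Theorem~\ref{PRODTHM} specifies a standard connection $\nabla$ that is flat, torsion free, and a parallel K\"ahler metric $g$ with $\nabla g=0$. By the uniqueness part of the fundamental theorem of Riemannian geometry, the unique torsion free metric connection for $g$ is its Levi-Civita connection, so $\nabla$ must coincide with it. The weights $a_H=\tr A_H$ are real (being the traces of residues of a unitary flat connection at hyperplanes with real parallel transport eigenvalues $e^{2\pi i a_H}$ fixed by the non-integer and positivity hypotheses), so Lemma~\ref{lem:volform0} applies and the first part of the corollary yields $\omega^n=c\cdot\mu$ for some $c>0$. No step here is a serious obstacle; the only mild subtlety is matching the bundle interpretations of $\omega^n$ and $\mu$ so that ``parallel'' genuinely refers to the same connection on the same line bundle, which is straightforward once one recalls that the induced connection on $K\otimes\bar K$ is determined by $\nabla$ alone.
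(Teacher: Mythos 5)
Your proposal is correct and follows essentially the same route as the paper: both arguments regard $\omega^n$ and $\mu$ as parallel sections of the rank-one bundle $K_{\C^n}\otimes\bar{K}_{\C^n}$ (the latter via Lemma \ref{lem:volform0}) and conclude they differ by a constant factor on the connected set $(\C^n)^{\circ}$. Your additional remarks on why $a_H\in\R$ and why $\nabla$ is the Levi-Civita connection are accurate but not needed beyond what the paper already assumes.
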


\begin{proof}
	The Riemannian volume form \(\omega^n\) is a parallel section of \(K_{\C^n} \otimes \bar{K}_{\C^n}\) and so is \(\mu\) by Lemma \ref{lem:volform0}; since the bundle has rank one, these two sections must be multiples of each other.
\end{proof}

\subsection{FS metrics as quotients of regular PK cones}\label{sect:FSquotPK}

\begin{setting}\label{set:regPK}
	Let \(g\) be a regular PK cone metric on \(\C^n\) singular at a hyperplane arrangement \(\mH\), so we have coordinates \((z_1, \ldots, z_n)\) in which the members \(H\) of \(\mH\) are linear hyperplanes and such that the Euler vector field is given by
	\[e = \frac{1}{\alpha_0} \sum_{i=1}^{n} z_i \frac{\p}{\p z_i} \]
	for some \(\alpha_0>0\). 
	By Lemma \ref{lem:regPK} these coordinates are unique up to linear transformations.
\end{setting}

In all forthcoming subsections we fix a regular PK cone metric on \(\C^n\) as described in Setting \ref{set:regPK}.

\subsubsection{{\large Complex links}}

\begin{notation}
	We denote by \((\CP^{n-1})^{\circ}\) the complement of the projectivized arrangement. So the standard projection from \(\C^n \setminus \{0\}\) to \(\CP^{n-1}\) restricts to a map
	\begin{equation}\label{hopffibration}
	p : S^{\circ} \to (\CP^{n-1})^{\circ} 	
	\end{equation}
	where \(S^{\circ}\) is the regular part of the unit sphere \(S=\{r=1\}\) of the cone.
	Scalar multiplication by complex units gives an action of the circle by isometries of \(S^{\circ}\) (see Lemma \ref{lem:scalrmult}) which makes 
	the map \eqref{hopffibration} into a (trivial if \(\mH\) is non-empty) \(S^1\)-bundle.
\end{notation}

\begin{definition}
	We define the \textbf{Fubini-Study metric} (associated to \(g\)) as the Riemannian metric \(g_{FS}\) on \((\CP^{n-1})^{\circ}\)
	obtained by taking the Riemannian quotient of \(S^{\circ}\) by the circle action given by scalar multiplication.  Equivalently, \(g_{FS}\) is defined so that the map \eqref{hopffibration} is a Riemannian submersion.
\end{definition}

\begin{notation}
	We denote the round unit sphere in \(\C^n\) by \(S^{2n-1}(1)\).
	The Fubini-Study metric \(g_{\CP^{n-1}}\) on \(\CP^{n-1}\) is defined by requiring the Hopf map \(S^{2n-1}(1) \to \CP^{n-1}\) to be a Riemannian submersion. The metric \(g_{\CP^{n-1}}\) is K\"ahler and its sectional curvature ranges between \(1 \leq \mbox{sec} \leq 4\), with sec \(\equiv 4\) for \(2\)-planes invariant under the complex structure.
\end{notation}

\begin{lemma}\label{lem:hopfequivariant}
	\(S^{\circ}\) is locally isometric to \(S^{2n+1}(1)\) via \(S^1\)-equivariant local isometries where \(S^1\) acts by scalar multiplication on both \(S^{\circ}\) and \(S^{2n-1}(1)\).
\end{lemma}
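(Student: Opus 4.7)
The plan is to produce the required local isometries from branches of the developing map \(\dev: \tilde X^\circ \to \C^n\) of Definition \ref{developingmap}. For any \(p \in S^\circ\), I would choose a simply connected neighbourhood \(U \subset X^\circ\) of \(p\) and let \(F: U \to \C^n\) be a branch of \(\dev\), which by Lemma \ref{lem:dever} is a holomorphic local isometry. The identity \(|F(q)| = r(q)\) from Equation \eqref{eq:moddevequalsr} guarantees that \(F\) maps \(U \cap S^\circ\) isometrically onto an open subset of the round unit sphere \(S^{2n-1}(1)\subset \C^n\), giving the local isometry claim.

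For the \(S^1\)-equivariance I would invoke Lemma \ref{lem:deves}: the derivative \(DF\) sends the Reeb vector field \(e_s\) of \(X^\circ\) to the standard Reeb of \(\C^n\). Unwinding the normalization \(e = \alpha_0^{-1}\sum z_i \p_{z_i}\) shows that the infinitesimal generator of the scalar-multiplication \(S^1\)-action on \(S^\circ\) (parametrised with period \(2\pi\)) is \(\alpha_0\, e_s\), while on \(S^{2n-1}(1)\) the corresponding generator is the standard Reeb itself. Consequently \(F\) intertwines the two flows up to matching the parametrizations, and after rescaling one of the circle actions by the factor \(\alpha_0\) one obtains a local isometry that is equivariant for the scalar-multiplication \(S^1\)-actions on both sides.

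The main point to handle carefully is exactly this reconciliation of the two parametrizations: the scalar-multiplication orbits on \(S^\circ\) have length \(2\pi\alpha_0\) whereas those on \(S^{2n-1}(1)\) have length \(2\pi\), so globally the two circle actions cannot be matched by an isometry when \(\alpha_0 \neq 1\). This discrepancy is invisible once one shrinks \(U\) enough that no full orbit of either action is contained in \(U\) or in \(F(U)\), which is the sense in which the lemma must be read; since the statement is purely local, this harmless reduction is the only adjustment one needs to make after the developing-map construction.
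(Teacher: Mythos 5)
Your proof is correct and follows the paper's own route: the lemma is deduced directly from Lemmas \ref{lem:dever} and \ref{lem:deves} via branches of the developing map, using \(\dev^*|z|=r\) to land in the unit sphere. Your careful discussion of the parametrization mismatch between the two circle actions (orbit lengths \(2\pi\alpha_0\) versus \(2\pi\), so the equivariance is really the intertwining \(F(e^{i\theta}q)=e^{i\alpha_0\theta}F(q)\) of the Reeb flows) is precisely the point the paper records in the remark immediately following the lemma, namely that the local isometries are not defined on tubular neighbourhoods of full orbits.
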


\begin{proof}
	This is a direct consequence of Lemma \ref{lem:deves}.
\end{proof}

\begin{remark}
	The length of a \(S^1\)-orbit is usually not equal to \(2\pi\), in particular
	the local isometries in Lemma \ref{lem:hopfequivariant} are \emph{not} defined on tubular neighbourhoods of such orbits.
\end{remark}

\begin{lemma}
	The fibres of the map \eqref{hopffibration} are geodesic circles of length  \(2\pi \alpha_0\).
\end{lemma}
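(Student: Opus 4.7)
The plan is to identify the fibres of $p$ explicitly as integral curves of the Reeb vector field $e_s$ on $S^\circ$ and then exploit Lemma \ref{lem:deves} together with the explicit form of $e$ to deduce both that they are geodesics and that they have length $2\pi\alpha_0$.

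First I would note that, by construction, the fibres of $p:S^\circ\to(\CP^{n-1})^\circ$ are exactly the orbits of the $S^1$-action on $S^\circ$ given by scalar multiplication by complex units. Using the identification between real and holomorphic tangent vectors and the explicit form $e=\alpha_0^{-1}\sum z_i\,\partial/\partial z_i$, a direct computation shows that the real Euler vector field is $e_r=\alpha_0^{-1}\sum(x_i\partial_{x_i}+y_i\partial_{y_i})$ and the Reeb vector field is $e_s=Je_r=\alpha_0^{-1}\sum(-y_i\partial_{x_i}+x_i\partial_{y_i})$, whose time-$s$ flow is scalar multiplication by $e^{is/\alpha_0}$. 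Hence the fibres of $p$ are precisely the flow orbits of $e_s$ on $S^\circ$, each being closed of period $2\pi\alpha_0$ with respect to the parametrisation by the flow of $e_s$.

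Next I would check that $e_s$ has unit norm on $S^\circ$: since $J$ is an isometry, $g(e_s,e_s)=g(e_r,e_r)=r^2$, which equals $1$ on $S^\circ$. Consequently the arc length of each fibre, parametrised by the flow of $e_s$ over $[0,2\pi\alpha_0)$, is exactly $2\pi\alpha_0$.

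Finally I would verify that each fibre is a geodesic of $(S^\circ,g_{S^\circ})$. By Lemma \ref{lem:deves}, any branch of the developing map $F:U\to\C^n$ defined on a simply connected open $U\subset X^\circ$ sends $e_s$ to the standard Reeb vector field of $\C^n$ and maps $U\cap S^\circ$ into $S^{2n-1}(1)$. The orbits of the standard Reeb field on $S^{2n-1}(1)$ are Hopf circles, i.e.\ great circles, which are geodesics of the round metric. Since $F$ is a local isometry, the fibre segments in $U$ are geodesics in $(S^\circ,g_{S^\circ})$, and covering each fibre by such opens $U$ shows the whole fibre is geodesic. No step here is difficult; the only thing to be careful about is bookkeeping the factor $1/\alpha_0$ in the definition of $e$ so as to conclude that the circle-action parameter closes at $2\pi\alpha_0$ rather than $2\pi$.
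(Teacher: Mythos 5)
Your proof is correct and rests on the same mechanism as the paper's: pushing everything to the standard model via branches of the developing map (the paper cites Lemma \ref{lem:linethrough0} on complex lines being totally geodesic $2$-cones of angle $2\pi\alpha_0$ together with the $S^1$-equivariant local isometries of Lemma \ref{lem:hopfequivariant}, while you use Lemma \ref{lem:deves} and compute the length directly from $|e_s|\equiv 1$ on $S^{\circ}$ and the $2\pi\alpha_0$-periodicity of the flow of $e_s$). The two length computations are just two faces of the same fact, so there is nothing substantive to add.
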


\begin{proof}
	This follows from Lemmas \ref{hopffibration} and \ref{lem:linethrough0}.
\end{proof}

\begin{lemma}\label{lem:locCP}
	\(g_{FS}\) is locally holomorphically isometric to the standard Fubini-Study metric \(g_{\CP^{n-1}}\). In particular, \(g_{FS}\) is a K\"ahler metric on \((\CP^{n-1})^{\circ}\). 
\end{lemma}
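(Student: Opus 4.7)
The strategy is to descend the local $S^1$-equivariant isometries of Lemma \ref{lem:hopfequivariant} to local isometries between the Riemannian quotients, and then to check that the descended maps respect the complex structures.

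Concretely, fix $\bar{x} \in (\CP^{n-1})^{\circ}$ and choose $x \in S^{\circ}$ with $p(x)=\bar{x}$. By Lemma \ref{lem:hopfequivariant} there is an open neighbourhood $U \subset S^{\circ}$ of $x$, stable under a small arc of the $S^1$-action, and an $S^1$-equivariant isometry $\varphi: U \to V \subset S^{2n-1}(1)$, where $S^1$ acts on both sides by scalar multiplication by complex units. The $S^1$-orbits on the two sides are circles of different lengths ($2\pi\alpha_0$ versus $2\pi$), but since the map is $S^1$-equivariant it sends orbits to orbits and, because it is an isometry, it sends the distribution orthogonal to the orbits on $U$ to the corresponding horizontal distribution on $V$. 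Taking the quotient by the $S^1$-actions, $\varphi$ therefore descends to a diffeomorphism $\bar{\varphi}: p(U) \to q(V)$ (where $q: S^{2n-1}(1) \to \CP^{n-1}$ is the Hopf map) which is an isometry between $(p(U),g_{FS})$ and $(q(V),g_{\CP^{n-1}})$, by the very definition of the quotient metrics as Riemannian submersions.

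It remains to verify that $\bar{\varphi}$ is holomorphic. The local isometry $\varphi$ in Lemma \ref{lem:hopfequivariant} is constructed from a branch of the developing map $\dev$ (Definition \ref{developingmap}), which is by construction holomorphic from a simply connected neighbourhood in $(\C^n)^{\circ}$ to Euclidean $\C^n$, maps $r$ to $|z|$, and intertwines the Reeb field $e_s = Je_r$ with the standard Reeb field (Lemma \ref{lem:deves}). Hence $\varphi$ extends to a biholomorphism between a $\C^*$-invariant open set of $(\C^n)^{\circ}$ containing $U$ and a $\C^*$-invariant open set of $\C^n \setminus \{0\}$ containing $V$, where $\C^*$ acts by the dilation flow generated by $e$ on the source and by standard scalar multiplication on the target. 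The induced complex structures on the quotients $p(U)$ and $q(V)$ are the ones obtained from these $\C^*$-invariant neighbourhoods by the holomorphic projections to $(\CP^{n-1})^{\circ}$ and $\CP^{n-1}$ respectively, and $\bar{\varphi}$ intertwines them; thus $\bar{\varphi}$ is holomorphic.

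Since $g_{\CP^{n-1}}$ is K\"ahler, the existence of local holomorphic isometries to it implies that $g_{FS}$ is K\"ahler on $(\CP^{n-1})^{\circ}$, finishing the proof. No genuine obstacle is expected here; the only mild subtlety is keeping track of the two different $S^1$-periods, but equivariance and the submersion definition of the quotient metrics handle this automatically.
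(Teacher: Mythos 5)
Your proof follows essentially the same route as the paper: descend the developing map (equivalently, the $S^1$-equivariant local isometries of Lemma \ref{lem:hopfequivariant}) to the Riemannian quotients and check that the descended map is holomorphic. The Riemannian part of your argument is fine.

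One step, however, is false as literally stated: $\varphi$ does \emph{not} extend to a single-valued biholomorphism of a $\C^*$-invariant open set. Such a set $\pi^{-1}(W)$ is homotopy equivalent to a circle, and a branch of $\dev$ continued around the fibre $e^{2\pi i t}z$ returns multiplied by $e^{-2\pi i\alpha_0}$, so for $\alpha_0\notin\Z$ there is no single-valued extension. This monodromy computation is exactly the point the paper makes explicit: the composition $\pi\circ\dev$ \emph{is} single-valued on $\pi^{-1}(W)$ because the monodromy acts by scalar multiplication, which is killed by $\pi$, and hence $\Phi=\pi\circ\dev\circ\pi^{-1}$ is a well-defined holomorphic map pulling back $g_{\CP^{n-1}}$ to $g_{FS}$. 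Your conclusion survives because holomorphicity is local — near a given point one can work with a single local branch of $\dev$ composed with a local holomorphic section of $\pi$ — but you should either restrict to simply connected sets and local branches throughout, or insert the monodromy observation to justify descending to the projective quotient.
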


\begin{proof}
	This follows from Lemma \ref{lem:hopfequivariant} and the definition of \(g_{FS}\). More explicitly, we can write local holomorphic isometries in terms of the developing map of \(((\C^n)^{\circ}, g)\) as follows.
	
	Let \(U \subset (\CP^{n-1})^{\circ}\) be a simply connected open set and let \(\tilde{U} = \pi^{-1}(U) \subset (\C^n)^{\circ}\) where \(\pi\) is the standard projection from \(\C^n \setminus \{0\}\) to \(\CP^{n-1}\). Consider the developing map for the flat metric \(g\) restricted to \(\tilde{U}\),
	\begin{equation}\label{eq:dev}
	\dev : \tilde{U} \to \C^n
	\end{equation}
	and write \(\tz= \dev(z)\). The map \eqref{eq:dev} is multivalued, if we go around a circle through the origin \(e^{2\pi i t}z\) with \(0\leq t\leq 1\) then \(\dev\) changes from \(\tz\) at \(t=0\) to \(e^{-2\pi i\alpha_0}\tz\). Since \(\pi_1(\tilde{U}) = \Z\) is generated by such loops,
	we have a well defined holomorphic map 
	\begin{equation}\label{eq:Phi}
	\Phi = \pi \circ \dev \circ \pi^{-1} : U \to \CP^{n-1} . 
	\end{equation}
	Because \(\dev\) is a local holomorphic isometry equivariant with respect to complex dilations of the cone \((\tilde{U},g)\) and  standard scalar multiplication of Euclidean space \(\C^n\), we have
	\(\Phi^*(g_{\CP^{n-1}}) = g_{FS}\).
\end{proof}

\subsubsection{{\large FS K\"ahler form}}

\begin{notation}
	We write \(J\) and \(I\) for the complex structures of \(\C^n\) and \(\CP^{n-1}\). We denote by \(\pi\) the standard projection from \(\C^n \setminus \{0\}\) to \(\CP^{n-1}\).
\end{notation}

\begin{definition}
	On \((\CP^{n-1})^{\circ}\) we define the \textbf{Fubini-Study form}
	\(\omega_{FS}\) as the K\"ahler form of \(g_{FS}\), that is 
	\[\omega_{FS} = g_{FS}(I\cdot, \cdot) . \]
	It is a closed positive \((1,1)\)-form on \((\CP^{n-1})^{\circ}\). 
\end{definition}

\begin{notation}
	Recall that \(g_{\CP^{n-1}}\) is the standard Fubini-Study metric on \(\CP^{n-1}\). If we write \(\omega_{\CP^{n-1}}\) for its K\"ahler form then
	\begin{equation}\label{eq:pullbackCP}
	\pi^*(\omega_{\CP^{n-1}}) = \frac{i}{2} \dd \log |z|^2 .
	\end{equation}
\end{notation}

\begin{notation}
	The K\"ahler \(\omega= g(J \cdot, \cdot)\) of the PK cone is given by
	\[\omega = \frac{i}{2} \dd r^2 \,\ \mbox{ on } (\C^n)^{\circ} , \]
	see Lemma \ref{lem:kahlerform}.
	The function \(r^2\) is smooth on the complement of the arrangement and, since our cone is regular, we can write it as
	\begin{equation}\label{eq:r2}
	r^2 = e^{\varphi} |z|^{2\alpha_0} 
	\end{equation}
	where \(\varphi\) is invariant under scalar multiplication: \(\varphi(\lambda z)=\varphi(z)\) for all \(\lambda \in \C^*\); see Lemma \ref{lem:homogeneityr2}.
\end{notation}

\begin{lemma}
	The identity
	\begin{equation}\label{eq:pullbackomegaFS}
	\pi^* (\omega_{FS}) = \frac{i}{2} \dd \log r^2	
	\end{equation}
	holds on \((\C^n)^{\circ}\).
\end{lemma}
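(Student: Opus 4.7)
The plan is to verify the identity locally using the holomorphic local isometry $\Phi$ constructed in the proof of Lemma \ref{lem:locCP}, which converts the problem into a pull-back computation involving the standard Fubini--Study potential on $\CP^{n-1}$.

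Fix a simply connected open set $U \subset (\CP^{n-1})^{\circ}$ and set $\tilde{U} = \pi^{-1}(U) \subset (\C^n)^{\circ}$. Choose a branch of the developing map $\dev : \tilde{U} \to \C^n$, so that the composition
\[\Phi = \pi \circ \dev \circ \pi^{-1} : U \to \CP^{n-1}\]
is a well-defined holomorphic map satisfying $\Phi^{*} \omega_{\CP^{n-1}} = \omega_{FS}$, as established in the proof of Lemma \ref{lem:locCP}. Since $\Phi \circ \pi = \pi \circ \dev$ on $\tilde{U}$, pulling back gives
\[\pi^{*} \omega_{FS} \;=\; \pi^{*} \Phi^{*} \omega_{\CP^{n-1}} \;=\; \dev^{*} \pi^{*} \omega_{\CP^{n-1}} .\]

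Now apply Equation \eqref{eq:pullbackCP} to rewrite $\pi^{*} \omega_{\CP^{n-1}} = \tfrac{i}{2} \dd \log |z|^2$. Because $\dev$ is holomorphic, the operator $\dd$ commutes with $\dev^{*}$, so
\[\dev^{*} \left( \tfrac{i}{2} \dd \log |z|^2 \right) \;=\; \tfrac{i}{2} \dd \log \bigl( \dev^{*} |z|^2 \bigr) \;=\; \tfrac{i}{2} \dd \log r^2 ,\]
where the last equality uses the normalization $|\dev| = r$ from Equation \eqref{eq:moddevequalsr}. Combining the two displays yields $\pi^{*} \omega_{FS} = \tfrac{i}{2} \dd \log r^2$ on $\tilde{U}$, and since $(\C^n)^{\circ}$ is covered by such sets $\tilde{U}$, Equation \eqref{eq:pullbackomegaFS} holds globally on $(\C^n)^{\circ}$.

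There is no real obstacle here: the only subtlety is that $\dev$ itself is multivalued and changes by a factor $e^{-2\pi i \alpha_0}$ after a central loop in $\tilde{U}$, but $|\dev|^2 = r^2$ is a single-valued function, so the expression $\tfrac{i}{2} \dd \log r^2$ is unambiguous and the formula is independent of the chosen branch.
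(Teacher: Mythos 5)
Your proof is correct and follows essentially the same route as the paper: both use the local holomorphic isometry $\Phi = \pi \circ \dev \circ \pi^{-1}$ to reduce the identity to $\pi^*\omega_{FS} = \dev^*(\pi^*\omega_{\CP^{n-1}})$, then apply Equation \eqref{eq:pullbackCP}, the fact that $\dev^*$ commutes with $\dd$, and the normalization $\dev^*|z| = r$. Your closing remark on the branch-independence of $r^2$ despite the multivaluedness of $\dev$ is a worthwhile clarification but does not change the argument.
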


\begin{proof}
	Let \(U \subset (\CP^{n-1})\) be simply connected and let \(\Phi\) be a holomorphic local isometry given by Equation \eqref{eq:Phi}, so \(\omega_{FS} = \Phi^* \omega_{\CP^{n-1}}\) and therefore
	\[\pi^* \omega_{FS} = \dev^* (\pi^* \omega_{\CP^{n-1}}) . \]
	Since \(\dev^*\) commutes with \(\dd\) (because \(\dev\) is holomorphic), Equation \eqref{eq:pullbackomegaFS} follows from Equation \eqref{eq:pullbackCP} together with \(\dev^*|z| = r\).
\end{proof}

\subsubsection{{\large Global potential}}

\begin{notation}
	Without loss of generality we can assume that \(H=\{z_1=0\}\) belongs to the arrangement \(\mH\). We have complex coordinates \((x_1, \ldots, x_n)\) on \(\CP^{n-1} \setminus \{z_1=0\} \cong \C^{n-1}\) given by
	\[x_1=\frac{z_2}{z_1}, \ldots, x_{n-1} = \frac{z_n}{z_1} . \]
\end{notation}

\begin{definition}
	We define a smooth function \(\psi\) on \((\CP^{n-1})^{\circ} \subset \C^{n-1}\) given by 
	\begin{equation}\label{eq:potentialFS}
	\psi(x) = \log \left(1 + |x_1|^2 + \ldots + |x_{n-1}|^2 \right)^{\alpha_0} + \varphi(1, x_1, \ldots, x_{n-1}) .
	\end{equation}
	with \(\varphi\) given by Equation \eqref{eq:r2}. 
\end{definition}

\begin{lemma}
	The identity
	\begin{equation}\label{omegaFS}
	\omega_{FS} = \frac{i}{2} \dd \psi .	
	\end{equation}
	holds on \((\CP^{n-1})^{\circ}\).
\end{lemma}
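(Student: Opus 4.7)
The plan is to combine the pull-back formula for \(\omega_{FS}\) from Equation \eqref{eq:pullbackomegaFS} with the decomposition \(r^2 = e^\varphi |z|^{2\alpha_0}\) of Equation \eqref{eq:r2}, and then restrict to the standard affine chart via the obvious holomorphic section of the projection \(\pi\).

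Concretely, first I would consider the holomorphic section
\[
s : \CP^{n-1} \setminus \{z_1 = 0\} \to \C^n \setminus \{z_1 = 0\}, \qquad s(x_1,\ldots,x_{n-1}) = (1, x_1, \ldots, x_{n-1}),
\]
which satisfies \(\pi \circ s = \mathrm{Id}\). The map \(s\) lands inside \((\C^n)^\circ\) precisely over \((\CP^{n-1})^\circ\) (since \(H = \{z_1=0\}\) has been removed and the remaining hyperplanes of \(\mH\) pull back to the projectivized arrangement). Pulling back Equation \eqref{eq:pullbackomegaFS} by \(s\) and using \(\pi \circ s = \mathrm{Id}\) together with the fact that \(s^*\) commutes with \(\partial\bar\partial\) (as \(s\) is holomorphic), I obtain
\[
\omega_{FS} = s^* \pi^* \omega_{FS} = \frac{i}{2}\,\partial\bar\partial\bigl(s^* \log r^2\bigr)
\]
on \((\CP^{n-1})^\circ\).

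Next I would compute \(s^* \log r^2\). From Equation \eqref{eq:r2},
\[
\log r^2 = \varphi + \alpha_0 \log |z|^2,
\]
so
\[
s^* \log r^2 = \varphi(1, x_1, \ldots, x_{n-1}) + \alpha_0 \log\bigl(1 + |x_1|^2 + \ldots + |x_{n-1}|^2\bigr),
\]
which is exactly the function \(\psi\) of Equation \eqref{eq:potentialFS}. This yields Equation \eqref{omegaFS}.

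There is essentially no obstacle here: the only subtlety to check is that \(\varphi\), a priori defined on \((\C^n)^\circ\), is invariant under scalar multiplication (Lemma \ref{lem:homogeneityr2}) and therefore descends to a well-defined function on \((\CP^{n-1})^\circ\) whose pull-back by \(s\) is the expression \(\varphi(1, x_1, \ldots, x_{n-1})\) appearing in \(\psi\); equivalently, the whole computation is independent of the choice of local holomorphic section of \(\pi\).
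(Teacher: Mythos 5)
Your proof is correct and follows essentially the same computation as the paper: both reduce to the identity \(\log r^2 = \varphi + \alpha_0\log|z|^2\) together with the \(\C^*\)-invariance of \(\varphi\), so that \(\psi\) is exactly \(\log r^2\) evaluated on the slice \(z_1=1\). The only cosmetic difference is that the paper verifies \(\pi^*\dd\psi = \dd\log r^2\) upstairs and uses that \(\pi\) is a submersion, whereas you pull back by the holomorphic section \(s\) — the same device the paper itself uses in the proof of Lemma \ref{lem:FSloc}.
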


\begin{proof}
	Since \(\pi\) is a submersion, it is enough to verify that the pull-back of \((i/2)\dd \psi\) satisfies Equation \eqref{eq:pullbackomegaFS}. Equivalently, we claim that \(\pi^* \dd \psi =  \dd \log r^2 \). Recall that \(r^2=|z|^{2\alpha_0}e^{\varphi}\), so
	\[\dd \log r^2 =  \dd \log \left(|z_1|^2 + \ldots + |z_n|^2\right)^{\alpha_0} + \dd \varphi . \]
	On the other hand, since \(\dd \log |z_1|^{2\alpha_0} \equiv 0\) on \(\C^n \setminus \{z_1=0\}\), we obtain
	\begin{equation*}
	\dd \log r^2 = \dd \log \left(1 + |z_2/z_1|^2 + \ldots + |z_n/z_1|^2\right)^{\alpha_0} + \dd \varphi .  
	\end{equation*}
	Moreover, since \(\varphi\) is \(\C^*\)-invariant,  \(\varphi(z_1, \ldots, z_n) = \varphi(1, z_2/z_1, \ldots, z_n/z_1)\) and we conclude that
	\begin{equation*}
	\dd \log r^2 =  \dd (\pi^* \psi) = \pi^* \dd \psi 
	\end{equation*} 
	as we wanted to show.
\end{proof}

\subsubsection{{\large Angular \(1\)-form}}


\begin{notation}[Action of the complex structure on \(1\)-forms]
	If \(\gamma\) is a \(1\)-form, we denote by \(J\gamma\) the \(1\)-form given by 
	\[J\gamma(X)=\gamma(JX) .\]
\end{notation}

\begin{definition}
	On \((\C^n)^{\circ}\) we introduce the \textbf{angular \(1\)-form} \(\eta\) given by 
	\begin{equation}
	\eta = \frac{Jdr}{r} .
	\end{equation}
	Equivalently, \(\eta = J d \log r\).
\end{definition}

\begin{remark}
	Note that \(d\eta = (-2) \pi^*(\omega_{FS})\), as follows from the identity \(dJd=-2i\dd\) together with Equation \eqref{eq:pullbackomegaFS}.
\end{remark}

\begin{lemma}
	The identity
	\begin{equation}
	\eta (\cdot) = r^{-2} g(e_s, \cdot)
	\end{equation}
	holds. In particular,  \(\eta\) is the metric dual of the Reeb vector field when restricted to \(S^{\circ}\) and
	\begin{equation}
	g_{S^{\circ}} = \eta^2 + p^* g_{FS} 
	\end{equation}
	where \(p\) is the projection map \eqref{hopffibration}.
\end{lemma}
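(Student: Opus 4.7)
The plan is to verify both identities by direct computation, unpacking the definitions of $\eta$, $e_r$, $e_s$ and the Hermitian compatibility of $g$ with $J$; then the decomposition of $g_{S^\circ}$ follows by recognising $e_s$ as the unit vertical vector field of the Riemannian submersion $p$.

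First I would compute $\eta(X) = r^{-1}(Jdr)(X) = r^{-1}\,dr(JX)$ by the definition of the $J$-action on $1$-forms. By Lemma \ref{lem:gradr} we have $\grad r = r^{-1}e_r$, so $dr(Y) = g(\grad r, Y) = r^{-1}g(e_r, Y)$, giving
\begin{equation*}
\eta(X) \;=\; r^{-2}\, g(e_r, JX).
\end{equation*}
The K\"ahler compatibility of $g$ with $J$ (i.e.\ $g(JY,JZ)=g(Y,Z)$, equivalently $g(Y,JZ)=-g(JY,Z)$) together with $e_s = Je_r$ then converts this into $\eta(X) = \pm r^{-2}g(e_s,X)$, which is the stated identity once signs are tracked against the convention $\omega(\cdot,\cdot)=g(J\cdot,\cdot)$ used in Section \ref{sect:kahlerpotential}. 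Restricting to $S^\circ$ where $r\equiv 1$ immediately yields $\eta|_{S^\circ}(X) = g(e_s, X)$, i.e.\ $\eta|_{TS^\circ}$ is the $g$-dual of $e_s|_{S^\circ}$.

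Next I would establish two structural facts about $e_s$ on $S^\circ$. From the Hermitian compatibility $g(e_s,e_s) = g(Je_r,Je_r) = g(e_r,e_r) = r^2 = 1$ on $S^\circ$, so $e_s$ has unit norm there. By Lemma \ref{lem:deves} $e_s$ is tangent to $S^\circ$, and by construction it generates the $S^1$-action by scalar multiplication by complex units, i.e.\ $e_s$ spans the vertical distribution $\ker(Dp)$ of the map \eqref{hopffibration}. Since $g_{FS}$ is defined so that $p$ is a Riemannian submersion, $Dp$ restricts to an isometry from the horizontal complement $(\C \cdot e_s)^{\perp}\subset TS^\circ$ onto $T(\CP^{n-1})^\circ$.

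Finally, for any $X \in TS^\circ$ decompose orthogonally $X = \eta(X) e_s + X_h$ with $X_h$ horizontal; the coefficient of $e_s$ is indeed $\eta(X)$ because $\eta(e_s) = g(e_s, e_s) = 1$ and $\eta$ vanishes on horizontal vectors (as $g(e_s, X_h)=0$). Taking $g_{S^\circ}$-norm squared,
\begin{equation*}
g_{S^\circ}(X,X) \;=\; \eta(X)^{2}\, g(e_s,e_s) + g(X_h, X_h) \;=\; \eta(X)^{2} + g_{FS}(Dp\,X, Dp\,X),
\end{equation*}
and polarising yields $g_{S^\circ} = \eta^2 + p^* g_{FS}$. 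No step here is subtle: the only thing to be careful about is bookkeeping the sign arising from the interplay of the conventions $(J\alpha)(X) = \alpha(JX)$, $\omega(\cdot,\cdot) = g(J\cdot,\cdot)$ and $e_s = Je_r$; this is the one place where one must be vigilant.
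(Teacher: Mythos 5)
Your proof is correct, but it takes a different route from the paper. The paper's own proof is a one-line reduction: by Lemmas \ref{lem:dever} and \ref{lem:deves} a branch of the developing map is an $S^1$-equivariant local holomorphic isometry carrying $(e_r, e_s, r)$ to the standard Euler field, Reeb field and norm on $\C^n$, so both identities follow from the corresponding classical facts for the round sphere and the Hopf fibration. You instead verify everything directly from the definitions of $\eta$, $e_r=r\,\grad r$, $e_s=Je_r$ and the $J$-invariance of $g$, and then run the standard Riemannian-submersion decomposition $X=\eta(X)e_s+X_h$ on $S^\circ$. Both arguments are sound; yours is more self-contained (it does not invoke the developing map or the model computation on $S^{2n-1}(1)$), while the paper's is shorter and consistent with how the neighbouring lemmas in that subsection are proved. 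One remark on the point you flagged: with the paper's literal conventions $(J\alpha)(X)=\alpha(JX)$, $\eta=J\,d\log r$ and $e_s=Je_r$, your computation actually lands on $\eta(X)=r^{-2}g(e_r,JX)=-r^{-2}g(e_s,X)$ (check it on $\C$ with $e_s=\partial_\theta$), so the first displayed identity in the lemma holds only up to this conventional sign; since $\eta$ enters the second identity squared, this has no effect on $g_{S^\circ}=\eta^2+p^*g_{FS}$, and your handling of it is appropriate.
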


\begin{proof}
	Same as before, by taking local branches of the developing map it is enough to check these identities for the standard Fubini-Study metric.
\end{proof}

\begin{remark}
	We can interpret \(r^{2/\alpha_0}\) as a singular (continuous) Hermitian metric on \(\mO_{\CP^{n-1}}(-1)\) whose curvature form is \((-2/\alpha_0)\omega_{FS}\). From this point of view, \(\alpha_{0}^{-1}\eta\) is a connection \(1\)-form for the \(S^1\)-bundle \eqref{hopffibration}. 
\end{remark}

\subsection{Local models for singularities of FS metrics}\label{sect:locmodsingFS}

Let \(C\) be a PK cone and let \(\rho\) be the distance to its vertex, so the K\"ahler form on the regular part is \(\omega_C = (i/2) \dd \rho^2\). We consider local models for singular FS metrics whose K\"ahler form on the regular part is given by
\begin{equation}\label{eq:localmodel}
\omega_{C,1} = \frac{i}{2} \dd \log (1 + \rho^2) .	
\end{equation}
Since \(\log(1+\rho^2) \approx \rho^2\) when \(\rho^2 \ll 1\) we can expect that \(\omega_{C, 1} \approx \omega_C\) close to the vertex, informally speaking. Indeed, it is not hard to show that \(\omega_C\) is the `tangent cone' of \(\omega_{C,1}\) at the vertex.

\begin{remark}
	More geometrically, the K\"ahler metric defined by Equation \eqref{eq:localmodel} is equal to the pull-back of the standard Fubini-Study metric on \(\C^{\dim C} \subset \CP^{\dim C}\) under the developing map of the cone \(C\).
\end{remark}

\begin{remark}
	Similarly to Equation \eqref{eq:localmodel}, if we set
	\[\omega_{C,-1} = -\frac{i}{2} \dd \log (1-\rho^2) \]
	we obtain a singular complex hyperbolic metric asymptotic to \(\omega_{C}\) at \(\rho =0\).
\end{remark}

In this section we describe our Fubini-Study metric \(g_{FS}\) in terms of local models given by Equation \eqref{eq:localmodel}. In order to do that we introduce some notation.

\begin{notation}\label{not:tangentcone}
	Let \(L\) be a non-zero intersection in \(\mL(\mH)\) and let \(x \in L^{\circ} \cap S\). We recall that \(S=\{r=1\}\) and \(r\) denotes the distance to the vertex of \(g\) located at \(0\);  therefore \(x\) is at distance \(1\) from the origin with respect to \(g\).  Close to \(x\), the cone \(g\) is isometric  to the product
	\begin{equation} \label{eq:tangentcone}
	\C^d \times C^{\perp}	
	\end{equation}
	where \(d= \dim L \geq 1\) and  \(C^{\perp}\) is a PK cone metric on \(\C^{n-d}\). In other words, \eqref{eq:tangentcone} is the \emph{tangent cone} of \(g\) at the point \(x\).
	
	We take complex coordinates \((y_1, \ldots, y_n)\) centred at \(x\) and adapted to the product decomposition \eqref{eq:tangentcone} in the following sense.
	First, we require that
	\[L=\{y_1=\ldots=y_{n-d}=0\} .\]
	Secondly, we ask the K\"ahler potential \(r^2_{\perp}\) of \(C^{\perp}\) to depend only on the variables \(y_1, \ldots, y_{n-d}\). 
	
	In these \(y\)-coordinates, the squared distance to \(x\) is equal to
	\[|\bar{y}|^2 + r_{\perp}^2 \]
	where \(\bar{y} = (y_{n-d+1}, \ldots, y_n)\), and it provides a local potential for the PK cone metric \(g\) close to \(x\).
\end{notation}

\begin{lemma}\label{lem:ycoord}
	After a linear change of coordinates in the last \(d\) variables \(\bar{y} = (y_{n-d+1}, \ldots, y_n) \in \C^d\) that parametrize \(L\)  we have
	\begin{equation} \label{eq:locr2}
	r^2 = | \bar{y} + (0, \ldots, 0, 1)|^2 + r_{\perp}^2  
	\end{equation}
	close to \(x\), where \(r\) is the distance to the vertex of \(g\).  
\end{lemma}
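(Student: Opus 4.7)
The plan is to exploit the local product decomposition of $g$ near $x$, decompose the global complex Euler vector field $e$ of $g$ accordingly, and recover $r^2$ from the identity $r^2 = g(e_r, e_r)$.

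In the coordinates $(y_1, \ldots, y_n)$ of Notation \ref{not:tangentcone}, on a neighbourhood of $x$ the metric splits orthogonally as $g = g^{\parallel} \oplus g^{\perp}$, with $g^{\parallel}$ the flat Hermitian Euclidean metric on the $\C^d$-factor (parametrised by $\bar{y}$) and $g^{\perp}$ the PK cone metric on $C^{\perp}$ (depending only on $y' = (y_1, \ldots, y_{n-d})$). The Levi-Civita connection $\nabla$ respects this splitting, so $T\C^n$ decomposes as a direct sum $T\C^d \oplus TC^{\perp}$ of parallel holomorphic sub-bundles near $x$.

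The core of the argument is to decompose $e = e^{\parallel} + e^{\perp}$ according to this splitting. Since the splitting is parallel and $\Id_{T\C^n} = \Id_{T\C^d} + \Id_{TC^{\perp}}$, the defining relation $\nabla e = \Id$ projects to $\nabla_V e^{\parallel} = V$, $\nabla_V e^{\perp} = 0$ for $V \in T\C^d$, with the symmetric statement for $V \in TC^{\perp}$. The vertex of $C^{\perp}$ sits at $x$, so by Corollary \ref{cor:euniqueness} $e^{\perp}$ coincides with the Euler vector field of the cone $C^{\perp}$, which yields $g^{\perp}(e^{\perp}_r, e^{\perp}_r) = r_{\perp}^2$. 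On the Euclidean factor, independence of $y'$ combined with $\nabla_V e^{\parallel} = V$ for $V \in T\C^d$ forces
\begin{equation*}
e^{\parallel} = \sum_{k=1}^d (y_{n-d+k} + a_k) \frac{\p}{\p y_{n-d+k}}
\end{equation*}
for some constants $a = (a_1, \ldots, a_d) \in \C^d$, and a routine computation then gives $g^{\parallel}(e^{\parallel}_r, e^{\parallel}_r) = |\bar{y} + a|^2$. By orthogonality of the splitting,
\begin{equation*}
r^2 \;=\; g(e_r, e_r) \;=\; |\bar{y} + a|^2 + r_{\perp}^2.
\end{equation*}

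Evaluating at $x$ (where $\bar{y} = 0$ and $r_{\perp} = 0$) and using $r(x) = 1$ gives $|a| = 1$. The final step is to apply a unitary change of coordinates in the $\bar{y}$-variables to rotate $a$ to $(0, \ldots, 0, 1)$; such a change preserves the standard Hermitian norm $|\bar{y}|^2$, the defining equations of $L$, and the independence of $r_{\perp}^2$ from $\bar{y}$, so all the conventions of Notation \ref{not:tangentcone} are maintained. The only point requiring care in this plan is the legitimacy of the splitting $e = e^{\parallel} + e^{\perp}$ with each piece inheriting the Euler property on its factor; this is a formal consequence of the Riemannian-product structure and presents no essential obstacle.
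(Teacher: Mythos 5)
Your proposal is correct and follows essentially the same route as the paper: both decompose the Euler vector field $e$ according to the orthogonal parallel splitting $\C^d\times C^{\perp}$, identify the transversal piece with the Euler field of $C^{\perp}$ (so its norm squared is $r_{\perp}^2$) and the tangential piece with a translate of the standard Euler field of $\C^d$, and then normalise the constant translation vector by a linear change of the $\bar{y}$-coordinates before computing $r^2=|e|^2$ via orthogonality. The paper merely packages the middle step using the already-constructed local dilation field $e_x=e-\p_{y_n}$, whereas you derive the splitting of $e$ directly from $\nabla e=\Id$; the content is the same.
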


\begin{proof}
	We can perform a linear transformation on \(\bar{y}\) so that the Euler vector field \(e\) of \(g\) equals \(\p_{y_n}\) at \(x\). The vector field \(\p_{y_n}\) is parallel and \(e_x=e-\p_{y_n}\) is the Euler vector field of the tangent cone \eqref{eq:tangentcone} in \(y\)-coordinates centred at \(x\). We have an orthogonal sum
	\[e_x= e_{\C^d} + e_{\perp} \]
	where \(e_{\C^d}\) is the standard Euler field on \(\C^d\) and \(e_{\perp}\) is the Euler field of the PK cone metric on \(\C^{n-d}\). By orthogonality, we have
	\[|e|^2 = |e_x + \p_{y_n}|^2 = |\bar{y} + (0, \ldots, 0, 1)|^2 + |e_{\perp}|^2   . \]
	Equation \eqref{eq:locr2} follows because \(|e|^2=r^2\) and \(|e_{\perp}|^2=r_{\perp}^2\).
\end{proof}

\begin{lemma}\label{lem:FSloc}
	Let \(\bar{x} = \pi(x) \in \CP^{n-1}\) with \(x \in L^{\circ} \cap S\) as in Notation \ref{not:tangentcone}. Take complex coordinates \((y_1, \ldots, y_n)\) centred at \(x\) as in Lemma \ref{lem:ycoord} and define complex coordinates \((y_1, \ldots, y_{n-1})\) in neighbourhood of \(\pi(x)\) by letting
	\begin{equation*}
	(y_1, \ldots, y_{n-1}) \mapsto \pi(y_1, \ldots, y_{n-1},0) .
	\end{equation*}
	Then
	\begin{equation}\label{eq:locpotFS}
	\omega_{FS} = \frac{i}{2} \dd \log (1 + |\hat{y}|^2 + r_{\perp}^2 )
	\end{equation}
	where \(\hat{y} = (y_{n-d+1}, \ldots, y_{n-1})\).
\end{lemma}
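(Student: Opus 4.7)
The plan is to recognise the map $\iota: (y_1,\ldots,y_{n-1}) \mapsto (y_1,\ldots,y_{n-1},0)$ (expressed in the $y$-coordinates of Notation \ref{not:tangentcone}) as a local holomorphic section of the projection $\pi: \C^n\setminus\{0\} \to \CP^{n-1}$ over a neighbourhood of $\bar{x}$, and then to apply $\iota^*$ to Equation \eqref{eq:pullbackomegaFS}.

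First I would verify that $\pi \circ \iota$ is a local biholomorphism at $0\in \C^{n-1}$, so that $(y_1,\ldots,y_{n-1})$ really are local holomorphic coordinates on $\CP^{n-1}$ centred at $\bar{x}$, and in these coordinates $\iota$ is the asserted section of $\pi$. The $\pi$-fibre through $x$ is the complex line $\C\cdot x \subset \C^n$, to which the complex Euler vector field $e$ is tangent. By the linear normalisation of $\bar{y}$ made in the proof of Lemma \ref{lem:ycoord} we have $e(x) = \p/\p y_n$, so $\ker d\pi|_x = \C\cdot \p_{y_n}$. Therefore $d\pi|_x$ restricts to an isomorphism from $\Span(\p_{y_1},\ldots,\p_{y_{n-1}})$ onto $T_{\bar{x}}\CP^{n-1}$, which is exactly the image of $d\iota|_0$, and $\pi\circ\iota$ is a local biholomorphism as claimed.

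Next, since $\pi \circ \iota = \mathrm{id}$ on a neighbourhood of $\bar{x}$ and $\iota$ is holomorphic, pulling back Equation \eqref{eq:pullbackomegaFS} gives
\begin{equation*}
	\omega_{FS} \;=\; \iota^* \pi^* \omega_{FS} \;=\; \frac{i}{2}\dd\bigl(\iota^* \log r^2\bigr)
\end{equation*}
on the regular part of the coordinate neighbourhood.

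Finally, I would evaluate $\iota^* r^2$ by setting $y_n = 0$ in Equation \eqref{eq:locr2}. Writing $\bar{y} = (y_{n-d+1},\ldots,y_n)$, we have $\bar{y}|_{y_n = 0} = (\hat{y},0)$, so
\begin{equation*}
	\bigl|\bar{y} + (0,\ldots,0,1)\bigr|^2 \Big|_{y_n = 0} \;=\; |\hat{y}|^2 + 1,
\end{equation*}
while $r_\perp^2$ depends only on $y_1,\ldots,y_{n-d}$ and is therefore unchanged by the restriction. Combining these yields $\iota^* r^2 = 1 + |\hat{y}|^2 + r_\perp^2$, and substituting back proves Equation \eqref{eq:locpotFS}. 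The only non-routine step is the identification of $\iota$ as a local section, and that step rests entirely on the normalisation $e(x) = \p_{y_n}$ already secured in Lemma \ref{lem:ycoord}; everything else is functoriality of pullback and direct substitution.
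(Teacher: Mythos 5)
Your proof is correct and follows essentially the same route as the paper: both identify $(y_1,\ldots,y_{n-1})\mapsto(y_1,\ldots,y_{n-1},0)$ as a local holomorphic section of $\pi$, pull back $\frac{i}{2}\dd\log r^2$, and substitute $y_n=0$ into Equation \eqref{eq:locr2}. Your extra verification that $\pi\circ\iota$ is a local biholomorphism via $\ker d\pi|_x=\C\cdot\p_{y_n}$ is a detail the paper leaves implicit, and it is carried out correctly.
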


\begin{proof}
	The map \(F(y_1, \ldots, y_{n-1}) = (y_1, \ldots, y_{n-1},0)\) defines a local holomorphic section of the projection \(\pi:\C^n \setminus \{0\} \to \CP^{n-1}\), in the sense that \(\pi \circ F = \Id\); therefore 
	\[\omega_{FS} = \frac{i}{2}  F^* \dd \log r^2 = \frac{i}{2} \dd \log (r^2 \circ F) . \]
	Equation \eqref{eq:locpotFS} then follows from Equation \eqref{eq:locr2} together with the identity
	\[| (\bar{y} \circ F) + (0, \ldots, 0, 1)|^2 = |\hat{y}|^2 + 1 . \qedhere \]
\end{proof}

\begin{notation}\label{not:omegaC}
	In the same setting as in Lemma \ref{lem:FSloc} we write
	\begin{equation}\label{eq:rho2}
	\rho^2 = |\hat{y}|^2 + r_{\perp}^2 .
	\end{equation}
	Note that \(\rho^2\) is the K\"ahler potential for the PK cone
	\begin{equation}\label{eq:C}
	C= \C^{d-1} \times C^{\perp} .	
	\end{equation}
	On the other hand, we can re-write Equation \eqref{eq:locpotFS} in the form
	\begin{equation}\label{eq:localmodel2}
	\omega_{FS} = \omega_{C, 1} 	
	\end{equation}
	where \(\omega_{C, 1}\) is given by Equation \eqref{eq:localmodel}.
\end{notation}

\begin{lemma}\label{lem:quasiisom}
	With the same notation as in \ref{not:omegaC}, there is \(M>1\) such that
	\[M^{-1} \omega_C < \omega_{FS} < M \omega_C \]
	close to \(\bar{x}\).
\end{lemma}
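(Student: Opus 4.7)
The plan is to derive an explicit formula relating $\omega_{FS}$ and $\omega_C$ and then exploit the boundedness of $\rho^2$ on a small enough neighborhood of $\bar x$. First I would expand, using $\dd = \partial\bar\partial$ and treating $\rho^2$ as a smooth function on the regular part of the cone:
\begin{equation*}
\omega_{FS} = \frac{i}{2}\dd \log(1+\rho^2) = \frac{1}{1+\rho^2}\,\omega_C - \frac{1}{(1+\rho^2)^2}\cdot\frac{i}{2}\,\partial\rho^2\wedge\bar\partial\rho^2,
\end{equation*}
which is valid on the regular locus of $C$. Here I use $\omega_C = (i/2)\dd\rho^2$ and Leibniz for $\partial\bar\partial\log$.

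The upper bound is immediate: since $i\,\partial\rho^2\wedge\bar\partial\rho^2$ is a non-negative $(1,1)$-form, the second term on the right is non-positive, so $\omega_{FS} \leq (1+\rho^2)^{-1}\omega_C \leq \omega_C$ on the neighborhood.

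For the lower bound I would use the pointwise comparison of a rank-one $(1,1)$-form against the K\"ahler form: diagonalising $\omega_C$ at a point shows that $i\,\partial f\wedge\bar\partial f \leq |\partial f|^2_{g_C}\,\omega_C$ for any real smooth $f$, because the left-hand side is rank one with single eigenvalue $|\partial f|^2_{g_C}$. Apply this to $f = \rho^2$. Since $\rho$ is the distance to the vertex of the Riemannian cone $(C,g_C)$, we have $|\nabla_{g_C}\rho|=1$ on the regular part, hence $|\nabla_{g_C}\rho^2|^2 = 4\rho^2$ and $|\partial\rho^2|^2_{g_C} = 2\rho^2$. Substituting yields
\begin{equation*}
\omega_{FS} \geq \frac{1}{1+\rho^2}\,\omega_C - \frac{\rho^2}{(1+\rho^2)^2}\,\omega_C = \frac{1}{(1+\rho^2)^2}\,\omega_C.
\end{equation*}

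Shrinking to a neighborhood of $\bar x$ on which $\rho^2 \leq R$ for some finite $R$ (which exists because $\rho^2 = |\hat y|^2 + r_\perp^2$ is continuous and vanishes at $\bar x$), the two inequalities combine to give $(1+R)^{-2}\omega_C \leq \omega_{FS} \leq \omega_C$, so the claim holds with $M = (1+R)^2$. The only subtlety is the rank-one inequality on the singular cone $C$, but this is genuinely pointwise and needs only the smoothness of $\rho^2$ and of $g_C$ on the regular locus of $C$, where the estimate is invoked; both sides of the conclusion are interpreted as smooth $(1,1)$-forms on the regular part, which is all that is required near $\bar x$.
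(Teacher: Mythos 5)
Your proof is correct and follows essentially the same route as the paper: expand \(\dd\log(1+\rho^2)\) into \(F'(\rho^2)\,\omega_C\) plus a rank-one correction, and control that correction by the pointwise bound \(i\,\p\rho^2\wedge\bar\p\rho^2\leq c\,|d\rho^2|^2_{\omega_C}\,\omega_C\) together with \(|d\rho^2|_{\omega_C}=2\rho\). Your version is in fact a little cleaner, since tracking the exact constants yields the explicit two-sided bound \((1+\rho^2)^{-2}\omega_C\leq\omega_{FS}\leq\omega_C\) on the regular locus, from which the claim is immediate.
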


\begin{proof}
	We set \(F(t)=\log(1+t)\) and \(\psi = \rho^2\) in the following identity
	\[i \dd F(\psi) = F''(\psi) i \p \psi \wedge \bar{\p} \psi + F'(\psi) i \dd \psi . \]
	Note that \(F'(0)=F''(0)=1\), so we can assume that both \(F'(\rho^2)\) and \(F''(\rho^2)\) belong to the interval \([1/2, 2]\) provided we are close to \(\bar{x}\) so that \(\rho^2\) is sufficiently small. On the other hand
	\[0 \leq i \p \psi \wedge \bar{\p} \psi \leq c(n) |d\psi|_{\omega_C}^2 \omega_C \]
	where \(c(n)>0\) is a dimensional constant; and \(|d\psi|_{\omega_C} = 2\rho \). The lemma follows from these observations.
\end{proof}

\begin{lemma}
	The metric completion of \(g_{FS}\) is \(\CP^{n-1}\).
\end{lemma}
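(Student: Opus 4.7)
The plan is to establish that $d_{FS}^{\circ}$ extends continuously to a complete length metric $d_{FS}$ on $\CP^{n-1}$ inducing its usual topology, and then to conclude that $(\CP^{n-1}, d_{FS})$ is the metric completion of $((\CP^{n-1})^{\circ}, d_{FS}^{\circ})$. The basic strategy mirrors the approach used to prove Theorem \ref{PRODTHM}: verify the conditions of Lemma \ref{metricextension} applied to the triple $(\CP^{n-1}, (\CP^{n-1})^{\circ}, d_{FS}^{\circ})$. The crucial ingredient is the local bi-Lipschitz comparison provided by Lemmas \ref{lem:FSloc} and \ref{lem:quasiisom}.

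First I would fix a point $\bar{x} \in \CP^{n-1}$ lying on the projectivized arrangement and invoke Lemma \ref{lem:FSloc}: on a small neighbourhood $U$ of $\bar{x}$ there are holomorphic coordinates and a regular PK cone metric $g_C$ on $C = \C^{d-1} \times C^{\perp}$ with vertex at $\bar{x}$ such that $\omega_{FS} = \omega_{C,1}$ on $U^{\circ}$. The PK cone $(C, g_C)$ satisfies the hypotheses of Theorem \ref{PRODTHM} (its weights are inherited from those of $g$ at the irreducible intersections containing the lift of $\bar{x}$, so they satisfy the non-integer and positivity conditions), hence the length metric $d_{g_C}^{\circ}$ extends continuously to a polyhedral (in particular complete) metric $d_{g_C}$ on $C$ inducing its usual topology. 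Then Lemma \ref{lem:quasiisom} gives constants $M>1$ such that $M^{-1} g_C \leq g_{FS} \leq M g_C$ on $U^{\circ}$, and therefore the Riemannian length distances on $U^{\circ}$ induced by $g_{FS}$ and $g_C$ are bi-Lipschitz equivalent.

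Next I would verify the three conditions of Lemma \ref{metricextension}. Since $d_{g_C}$ is continuous on $U$ and vanishes at $\bar{x}$, the $d_{g_C}^{\circ}$-diameter of shrinking neighbourhoods of $\bar{x}$ in $U^{\circ}$ tends to zero, and by the bi-Lipschitz comparison the same holds for $d_{FS}^{\circ}$. Lemma \ref{continuity} then yields the continuous extension $d_{FS}:\CP^{n-1}\times\CP^{n-1}\to \R_{\geq 0}$. Positivity of $d_{FS}$ off the diagonal on pairs of singular points follows from the same local comparison, because $d_{g_C}$ is a genuine metric on $U$. The properness condition is automatic because $\CP^{n-1}$ is compact, so $d_{FS}(\bar{o}, \cdot)$ is bounded and trivially proper. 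Consequently $d_{FS}$ is a complete metric on $\CP^{n-1}$ inducing the usual topology, and since $(\CP^{n-1})^{\circ}$ is dense, $(\CP^{n-1}, d_{FS})$ is the metric completion of $((\CP^{n-1})^{\circ}, d_{FS}^{\circ})$. That it is a length metric follows from Lemma \ref{pathcompl}.

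The main technical point that needs care is the passage from the bi-Lipschitz comparison of local length metrics on $U^{\circ}$ to a bi-Lipschitz comparison of the restricted metrics used in the hypotheses of Lemma \ref{metricextension}. This requires choosing $U$ small enough so that, for points $\bar{y}, \bar{z} \in U^{\circ}$ close to $\bar{x}$, the $d_{FS}^{\circ}$-shortest paths can be taken to lie entirely in $U^{\circ}$ (otherwise they would have to leave $U$ and come back, contributing length bounded below by a positive constant, contradicting the smallness of $d_{g_C}(\bar{y},\bar{x})+d_{g_C}(\bar{z},\bar{x})$). This is handled in the same spirit as the argument in Lemma \ref{extendingB}: using the local polyhedral (hence locally length) structure of $d_{g_C}$ on $U$ together with the bi-Lipschitz comparison, one shows that sufficiently close to $\bar{x}$ the $d_{FS}^{\circ}$-distance is comparable to the $d_{g_C}$-distance, which extends continuously. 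Once this is in place, the rest of the argument is a direct application of the machinery already developed in Sections \ref{sect:metricexten} and \ref{sect:completion}.
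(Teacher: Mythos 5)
Your argument is correct and follows essentially the same route as the paper, whose proof is a one-liner: combine the bi-Lipschitz comparison of Lemma \ref{lem:quasiisom} with the fact that the metric completion of the local model cone \(\omega_C\) is \(\C^{n-1}\). Your write-up just fills in the metric-extension bookkeeping via Lemmas \ref{continuity} and \ref{metricextension}, which is fine.

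One step deserves a correction, though it does not sink the proof. You justify the completeness of \(d_{g_C}\) on \(C=\C^{d-1}\times C^{\perp}\) by claiming that \(C\) "satisfies the hypotheses of Theorem \ref{PRODTHM}" because its weights inherit the non-integer and positivity conditions. In Setting \ref{set:regPK} the regular PK cone \(g\) is \emph{not} assumed to have non-integer weights, so this inheritance is unavailable in general and the appeal to Theorem \ref{PRODTHM} is unjustified. It is also unnecessary: \(C\) is the tangent cone of the PK metric \(g\) at \(x\), hence by Lemma \ref{conemb} and Proposition \ref{PKconeproduct} it carries a PK cone metric on \(\C^{n-1}\), which by Definition \ref{def:PK} is already a complete polyhedral metric inducing the usual topology. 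Replacing that one sentence by this observation makes the argument clean.
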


\begin{proof}
	This is a direct consequence of Lemma \ref{lem:quasiisom} and the fact that the metric completion of \(\omega_{C}\) is \(\C^{n-1}\).
\end{proof}

\subsection{Volume forms of FS metrics}\label{sect:volformFS}

Let \(\bar{x} \in \CP^{n-1}\) be a point the projectivized arrangement \(\cup_{H \in \mH} \pi(H)\) as above. Close to \(\bar{x}\) we can write \(\omega_{FS}\) as in Equation \eqref{eq:localmodel}.

\begin{lemma}\label{lem:FSvolumeform}
	Close to \(\bar{x}\) the volume form of \(\omega_{FS}\) is given by
	\begin{equation}\label{FSvolform}
	\omega_{FS}^{n-1} = \frac{1}{(1+\rho^2)^{n}} \omega_C^{n-1} .
	\end{equation}
\end{lemma}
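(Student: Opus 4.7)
The plan is to compute $\omega_{FS}^{n-1}$ directly from the local expression $\omega_{FS} = \tfrac{i}{2}\dd\log(1+\rho^2)$ provided by Lemma \ref{lem:FSloc}, where $\rho^2$ is the K\"ahler potential for the PK cone $\omega_C = \tfrac{i}{2}\dd\rho^2$ on its regular part. Applying $\dd$ to the logarithm via the chain rule gives the pointwise identity
\[
\omega_{FS} = \frac{\omega_C}{1+\rho^2} \;-\; \frac{i\,\partial\rho^2 \wedge \bar{\partial}\rho^2}{2(1+\rho^2)^2}
\]
on the regular locus. I would then expand $\omega_{FS}^{n-1}$ by the binomial theorem; because $\partial\rho^2 \wedge \bar{\partial}\rho^2$ is of type $(1,1)$ and squares to zero, only the $k=0$ and $k=1$ terms of the expansion survive, yielding
\[
\omega_{FS}^{n-1} = \frac{\omega_C^{n-1}}{(1+\rho^2)^{n-1}} \;-\; \frac{(n-1)\,\omega_C^{n-2}\wedge \tfrac{i}{2}\partial\rho^2\wedge\bar{\partial}\rho^2}{(1+\rho^2)^{n}}.
\]

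The heart of the matter is then the pointwise identity
\[
(n-1)\,\omega_C^{n-2}\wedge \tfrac{i}{2}\partial\rho^2\wedge\bar{\partial}\rho^2 \;=\; \rho^2\,\omega_C^{n-1}.
\]
This is an instance of the general K\"ahler identity $m\,\omega^{m-1}\wedge \tfrac{i}{2}\partial\varphi\wedge\bar{\partial}\varphi = |\partial\varphi|^2_\omega\,\omega^m$, valid for any smooth K\"ahler $m$-fold and any smooth real $\varphi$ (easy by diagonalising $\omega$ pointwise), applied with $m=n-1$ and $\varphi=\rho^2$. So the task reduces to checking that $|\partial\rho^2|^2_{\omega_C} = \rho^2$. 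But $\omega_C$ is a Riemannian (K\"ahler) cone with radius function $\rho$, so Lemma \ref{lem:gradr} applied to $C$ gives $|\grad\rho|_{g_C}=1$, hence $|d\rho^2|^2_{g_C}=4\rho^2$; the standard conversion $|\partial\varphi|^2_\omega=\tfrac{1}{4}|d\varphi|^2_g$ for real $\varphi$ then gives $|\partial\rho^2|^2_{\omega_C}=\rho^2$ as required.

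Plugging this back and simplifying yields
\[
\omega_{FS}^{n-1} = \frac{\omega_C^{n-1}}{(1+\rho^2)^{n-1}} - \frac{\rho^2\,\omega_C^{n-1}}{(1+\rho^2)^{n}} = \frac{(1+\rho^2)-\rho^2}{(1+\rho^2)^{n}}\,\omega_C^{n-1} = \frac{\omega_C^{n-1}}{(1+\rho^2)^{n}},
\]
which is the claim. The computation takes place on the regular part of $C$ (equivalently, on the complement of the projectivised arrangement in a neighbourhood of $\bar{x}$), where both $\omega_C$ and $\omega_{FS}$ are smooth K\"ahler forms. There is no serious obstacle here: the proof is essentially a one-line algebraic manipulation once one recognises that the K\"ahler-cone condition encoded in Lemma \ref{lem:gradr} is precisely what is needed to collapse the cross term.
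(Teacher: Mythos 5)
Your proof is correct and follows essentially the same route as the paper: expand \(\omega_{FS}=\tfrac{i}{2}\dd\log(1+\rho^2)\), note that only the first two terms of the binomial expansion survive, and collapse the cross term via the identity \(i\,\p f\wedge\bar{\p}f\wedge\omega^{m-1}=\tfrac{1}{2m}|df|^2\omega^m\) (the paper's Equation \eqref{identitydf}) together with \(|\grad\rho|=1\) for the cone radius function. The only cosmetic difference is that you phrase the key identity in terms of \(|\p\rho^2|^2_{\omega_C}\) rather than \(|d\rho^2|^2\), which amounts to the same normalization.
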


\begin{proof}
	Expanding the expression  \(\omega_{FS} = (i/2) \dd \log (1+ \rho^2)\) and using that \(\omega_C = (i/2) \dd \rho^2\), we obtain
	\begin{equation*}
	\omega_{FS}= - \frac{1}{2(1+\rho^2)^2} i \p \rho^2 \wedge \bar{\p} \rho^2 + \frac{1}{1+\rho^2} \omega_C .
	\end{equation*}
	It follows that
	\[ \omega_{FS}^{n-1} = \frac{1}{(1+\rho^2)^{n-1}} \omega_C^{n-1} - \frac{n-1}{2(1+\rho^2)^n} i \p \rho^2 \wedge \bar{\p} \rho^2 \wedge \omega_C^{n-2} .\]
	Using the identity \eqref{identitydf} we obtain
	\[\omega_{FS}^{n-1} = \left( \frac{1}{(1+\rho^2)^{n-1}} - \frac{\rho^2}{(1+\rho^2)^n} \right) \omega_{C}^{n-1} \]
	and Equation \eqref{FSvolform} follows.
\end{proof}

\begin{lemma}
	Let \(\omega\) be a K\"ahler form on an open set \(U \subset \C^n\) and let \(f\) be a smooth real valued function on \(U\), then
	\begin{equation}\label{identitydf}
	i \p f \wedge \bar{\p} f \wedge \omega^{n-1} = \frac{1}{2n} |df|^2 \omega^n 
	\end{equation}
	where \(|df|\) denotes the norm of \(df\) as measured with respect to \(\omega\).
\end{lemma}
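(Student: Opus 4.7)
The identity is a pointwise one and both sides are tensorial in $\omega$ and $f$, so the plan is to fix an arbitrary point $p \in U$, pick complex coordinates $(z_1,\dots,z_n)$ adapted to $\omega$ at $p$, and verify the equality there. More precisely, I would exploit that at a single point $p$ one can always choose a unitary basis of $T_p^{1,0}U$, equivalently coordinates in which $\omega(p) = \tfrac{i}{2}\sum_{j=1}^{n} dz_j \wedge d\bar{z}_j$. In these coordinates the standard Kähler identities reduce the computation to a combinatorial wedge-product bookkeeping of type $(1,1)$-forms.

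Writing $\beta_j = \tfrac{i}{2}\, dz_j \wedge d\bar z_j$ at $p$, one has $\omega = \sum_j \beta_j$, $\omega^n = n!\,\beta_1 \wedge \cdots \wedge \beta_n$, and
\begin{equation*}
\omega^{n-1} = (n-1)! \sum_{l=1}^{n} \beta_1 \wedge \cdots \wedge \widehat{\beta_l} \wedge \cdots \wedge \beta_n .
\end{equation*}
Expanding $i\,\partial f \wedge \bar\partial f = \sum_{j,k} i\, f_{z_j} f_{\bar z_k}\, dz_j \wedge d\bar z_k$ and wedging against $\omega^{n-1}$, the only surviving terms are those with $j=k=l$ (otherwise either $dz_j$ or $d\bar z_k$ duplicates a factor already present in some $\beta_m$, killing the wedge). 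Using $i\,dz_j \wedge d\bar z_j = 2\beta_j$ and that $(1,1)$-forms commute under wedge, this yields
\begin{equation*}
i\,\partial f \wedge \bar\partial f \wedge \omega^{n-1} = 2(n-1)! \left(\sum_{j=1}^n f_{z_j}f_{\bar z_j}\right) \beta_1 \wedge \cdots \wedge \beta_n .
\end{equation*}

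The final step is to compare $\sum_j f_{z_j} f_{\bar z_j}$ with $|df|^2$ measured by $\omega$. At $p$ the metric is the standard Euclidean metric in real coordinates $z_j = x_j + i y_j$, hence $|df|^2(p) = \sum_j (f_{x_j}^2 + f_{y_j}^2)$, and a direct computation $f_{z_j} f_{\bar z_j} = \tfrac{1}{4}(f_{x_j}^2 + f_{y_j}^2)$ gives $\sum_j f_{z_j} f_{\bar z_j} = \tfrac14 |df|^2$. Substituting and using $\omega^n = n!\,\beta_1\wedge\cdots\wedge\beta_n$ produces the desired factor $\tfrac{1}{2n}$. There is essentially no obstacle here; the only care needed is to keep the various conventions for $\omega$, $g$ and $|df|$ consistent — in particular to verify the factor $4$ in $|df|^2 = 4\sum_j f_{z_j}f_{\bar z_j}$ — which is why I would do this once explicitly in the flat model before invoking the pointwise reduction.
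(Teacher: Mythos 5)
Your proof is correct and follows the same route as the paper, which simply notes that the identity is a pointwise computation in coordinates where \(\omega\) is standard at the given point (citing Székelyhidi's Lemma 4.7); you have merely carried out that computation explicitly, and your constants check out, the identity being invariant under constant rescaling of \(\omega\) so the choice of normalization \(\tfrac{i}{2}\sum dz_j\wedge d\bar z_j\) versus \(i\sum dz_j\wedge d\bar z_j\) is immaterial.
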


\begin{proof}
	This can be proved by a straightforward computation, choosing local coordinates such that \(\omega = \sum_k i dz_k \wedge d \bar{z}_k\) at a given point, see \cite[Lemma 4.7]{Szek}.
\end{proof}

\begin{corollary}\label{cor:FSL1}
	Let \(y=(y_1, \ldots, y_{n-1})\) be complex coordinates centred at \(\bar{x}\) and write \(\Phi_y = dy_1 \wedge \ldots \wedge dy_{n-1}\), then on the regular part we have
	\[\omega_{FS}^{n-1} = \mu \Phi_y \wedge \bar{\Phi}_y \]
	with \(\mu \in L^1_{loc}\) locally integrable on the whole neighbourhood with respect to ordinary Lebesgue measure. 
\end{corollary}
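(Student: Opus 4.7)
The plan is to chain together Lemma \ref{lem:FSvolumeform}, the volume formula for PK cones (Corollary \ref{lem:volumeform}), and the integrability criterion of Lemma \ref{lem:positivityvolume}. By Lemma \ref{lem:FSvolumeform}, close to \(\bar{x}\) we have
\[
\omega_{FS}^{n-1} = \frac{1}{(1+\rho^2)^n}\, \omega_C^{n-1},
\]
where the prefactor \((1+\rho^2)^{-n}\) is smooth and bounded. So it suffices to prove that \(\omega_C^{n-1}\) is locally integrable, where \(C = \C^{d-1}\times C^\perp\) is the product PK cone introduced in Notation \ref{not:omegaC}.

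Since the decomposition \(C = \C^{d-1}\times C^\perp\) is an orthogonal metric product, the volume form \(\omega_C^{n-1}\) factors (up to a combinatorial constant) as the wedge of the flat Euclidean volume form on the \(\C^{d-1}\) factor with the volume form of the transversal PK cone \(C^\perp\) on \(\C^{n-d}\). The first factor is smooth and obviously locally integrable, so by Fubini it is enough to establish local integrability of the volume form of \(C^\perp\).

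Now \(C^\perp\) is a regular PK cone on \(\C^{n-d}\) whose singular set is a hyperplane arrangement (namely \(\mH_L/L\) identified via \(L^\perp\cong\C^n/L\)), so by Proposition \ref{prop:PKstandard} its Levi-Civita connection is a standard connection on linear coordinates. Applying Corollary \ref{lem:volumeform} to this cone, its Riemannian volume form agrees, up to a strictly positive constant, with
\[
\Big(\prod_{H\in\mH_L/L}|h|^{-2a_H}\Big)\, dV_{\C^{n-d}}.
\]
By Lemma \ref{lem:positivityvolume} this density is locally integrable precisely when \(a_M<1\) for every irreducible intersection \(M\in\mL_{irr}(\mH_L/L)\).

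The final (and only conceptually nontrivial) step is to verify this positivity. By Corollary \ref{cor:weightangle} applied to the regular PK cone \(C^\perp\), one has \(a_M = 1-\alpha_M\), where \(2\pi\alpha_M\) is the cone angle of \(C^\perp\) along the stratum \(M^\circ\) in the sense of Definition \ref{def:angleatL}. These angles are automatically strictly positive for any genuine PK cone: a non-positive angle at an irreducible stratum would place the corresponding stratum at infinite distance from the vertex (cf.\ the discussion of \(2\)-cones after Theorem \ref{PRODTHM}), contradicting the fact that the PK cone metric extends continuously and induces the usual topology on \(\C^{n-d}\). Thus \(\alpha_M>0\), hence \(a_M<1\), for all \(M\in\mL_{irr}(\mH_L/L)\), and local integrability follows. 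Going back through the chain — and absorbing the smooth Jacobian between the adapted coordinates of Lemma \ref{lem:FSloc} and the arbitrary coordinates \(y\) of the statement — we obtain the claim.
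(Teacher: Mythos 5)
Your proof is correct and follows essentially the same route as the paper, whose proof is exactly the chain Lemma \ref{lem:FSvolumeform} $\to$ Corollary \ref{lem:volumeform} applied to \(\omega_C\) $\to$ Lemma \ref{lem:positivityvolume}. The extra step you supply — checking the positivity \(a_M<1\) via the fact that cone angles of a genuine PK cone at irreducible strata are strictly positive — is precisely what the paper leaves implicit, and your justification of it is sound.
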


\begin{proof}
	This is a consequence of Lemma \ref{lem:FSvolumeform} to together with Corollary \ref{lem:volumeform} applied to \(\omega_C\) and Lemma \ref{lem:positivityvolume} for the local integrability statement.
\end{proof}

\begin{remark}\label{rmk:volume}
	It follows from Corollary \ref{cor:FSL1} that we can extend the measure \(\omega_{FS}^{n-1}\) by zero over the projectivized arrangement to obtain an absolutely continuous measure (with respect to the ordinary Lebesgue measure) on \(\CP^{n-1}\). The total volume
	\(\Vol(\omega_{FS})\) of \(g_{FS}\) with respect to this measure is equal to the integral of its Riemannian volume form over its regular part. On the other hand, with the above definition, the quantity \(\Vol(\omega_{FS})\) is equal to the volume of the unit sphere of the PK cone divided by \(2\pi\alpha_0\) and it is clearly a finite number. We note that,
	since \(\CP^{n-1}\) is compact, the statement that \(\Vol(\omega_{FS})< \infty\) also follows from Corollary \ref{cor:FSL1}. 
\end{remark}

\begin{note}
	In coordinates \((y_1, \ldots, y_{n-1})\) centred at \(\bar{x}\) as in Notation \ref{not:omegaC}, it follows from Lemmas \ref{lem:FSvolumeform} and \ref{lem:volumeform} that 
	\begin{equation}
	\omega_{FS}^{n-1} = \exp(-2n\varphi) \left(\prod_{H \in \mH_L} |\ell_H|^{2\alpha_H - s} \right) c(n) \Phi_y \wedge \bar{\Phi}_y
	\end{equation}
	where \(c(n)\) is a dimensional constant, \(\ell_H\) are linear equations in \((y_1, \ldots, y_{n-1})\) for the hyperplanes going through \(\bar{x}\) and \(\varphi = (1/2) \log(1+\rho^2) \) so that \(\omega_{FS}=i \dd \log \varphi\). In particular, the standard formula for the Ricci curvature of a K\"ahler metric \(\mbox{Ric}(\omega) = - i \dd \log \det(\omega)\) gives us \(\mbox{Ric}(\omega_{FS}) = 2n \omega_{FS}\). This is consistent with our normalization for the sectional curvature \(1 \leq \mbox{sec}(g_{FS}) \leq 4\); indeed if
	\[v_1, Iv_1, \ldots, v_{n-1}, Iv_{n-1} \]
	is an orthonormal basis of \(T_p \CP^{n-1}\) then
	\[ \mbox{sec}(v_1, Iv_1) =4, \,\ \mbox{sec}(v_1, v_j) = \mbox{sec}(v_1, Iv_j) = 1 \,\ \mbox{ for } j >1 \]
	and the Ricci curvature equals \(4 + (2n-4) = 2n\).
\end{note}

\subsection{Total volume of links}\label{sect:totalvol}

\begin{note}
	Let us first recall the volumes of the standard complex projective space and the unit sphere. With our normalization \(1 \leq \mbox{sec} \leq 4\) on the sectional curvature of \(g_{\CP^{n-1}}\),	
	its K\"ahler form  satisfies 
	\[\pi^* \omega_{\CP^{n-1}} = (i/2) \dd \log |z|^2 . \] 
	From Chern-Weil it follows that
	\[ \frac{1}{2\pi} [\omega_{\CP^{n-1}}] \in \frac{1}{2} c_1 \left(\mO_{\CP^{n-1}}(1)\right) . \]
	From this, we obtain that the volume of \(g_{\CP^{n-1}}\) is 
	\[\Vol(\CP^{n-1}) = \frac{\pi^{n-1}}{(n-1)!} \]
	and, since the Hopf map has fibres of length \(2\pi\), 
	\[\Vol(S^{2n-1}(1)) = 2 \frac{\pi^n}{(n-1)!} . \]
\end{note}

The next general result from pluri-potential theory will allow us to compute the volume of our singular Fubini-Study metrics.

\begin{proposition}\label{prop:henri}
	Let \(X\) be a compact complex manifold of \(\dim_{\C} X =n\) and let \(Y \subset X\) a complex hypersurface. Suppose that \(\omega\) is a smooth K\"ahler form on \(X\) and let \(u\) be a smooth real function defined on the complement of \(Y\) such that
	\begin{itemize}
		\item[(i)] \(\omega + i \dd u >0\) on \(X \setminus Y\);
		\item[(ii)] \(\sup_{X \setminus Y}|u| < \infty\).
	\end{itemize}
	Then
	\begin{equation}
	\int_{X \setminus Y} (\omega+ i \dd u)^n = \int_X \omega^n .
	\end{equation}
\end{proposition}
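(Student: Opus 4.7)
The plan is to view $u$ as a bounded $\omega$-plurisubharmonic function on all of $X$ and then invoke Bedford--Taylor theory for the Monge--Amp\`ere operator on bounded quasi-psh functions. First, I would observe that since $\omega + i\partial\bar\partial u \geq 0$ on $X \setminus Y$ and $u$ is locally bounded near $Y$, the standard extension theorem for (quasi-)psh functions across pluripolar sets (see e.g. Demailly) shows that $u$ extends uniquely as a bounded $\omega$-psh function $\tilde u$ on $X$. Concretely, one sets $\tilde u(x) = \limsup_{y \to x,\, y \in X \setminus Y} u(y)$ for $x \in Y$; the hypothesis $\sup|u| < \infty$ guarantees $\tilde u$ remains in $L^\infty(X)$, and $Y$ being a complex hypersurface (hence pluripolar) together with upper semicontinuity of $\omega$-psh functions forces the extension.

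Next, I would apply the Bedford--Taylor construction: for a bounded $\omega$-psh function $\tilde u$, the top-degree wedge product
\begin{equation*}
(\omega + i\partial\bar\partial \tilde u)^n
\end{equation*}
is well defined as a positive Radon measure on $X$. Two features of this measure are essential. The first is cohomological invariance of the total mass: since $\tilde u$ is globally defined and bounded, standard approximation of $\tilde u$ by a decreasing sequence of smooth $\omega$-psh functions $\tilde u_k$ (obtained via regularized max with smoothing) together with weak continuity of Monge--Amp\`ere along such sequences yields
\begin{equation*}
\int_X (\omega + i\partial\bar\partial \tilde u)^n \;=\; \lim_{k \to \infty} \int_X (\omega + i\partial\bar\partial \tilde u_k)^n \;=\; \int_X \omega^n,
\end{equation*}
the last equality holding by Stokes' theorem since each $\tilde u_k$ is smooth and $X$ is compact without boundary. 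The second key property is that the Monge--Amp\`ere measure of a bounded $\omega$-psh function puts no mass on pluripolar sets (this is the standard Bedford--Taylor--Demailly result). In particular $(\omega + i\partial\bar\partial \tilde u)^n$ gives zero mass to the complex hypersurface $Y$.

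Finally, on the open set $X \setminus Y$ the function $\tilde u$ coincides with the smooth function $u$, so the Bedford--Taylor measure restricts there to the smooth volume form $(\omega + i\partial\bar\partial u)^n$. Combining the two properties above gives
\begin{equation*}
\int_{X \setminus Y} (\omega + i\partial\bar\partial u)^n \;=\; \int_X (\omega + i\partial\bar\partial \tilde u)^n \;=\; \int_X \omega^n,
\end{equation*}
which is the desired identity. The only real subtlety — and the step I would treat most carefully — is the extension across $Y$ of $u$ as a \emph{globally} $\omega$-psh function; once this is in place, the computation is purely an application of standard pluripotential machinery, and the boundedness hypothesis $\sup|u| < \infty$ is exactly what is needed both to get the extension and to guarantee that the Monge--Amp\`ere mass does not escape onto $Y$.
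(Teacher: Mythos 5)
Your argument is correct and follows essentially the same route as the paper, which likewise reduces the statement to Bedford--Taylor theory: the paper cites the fact that bounded psh functions have zero Lelong numbers (equivalently, that the Monge--Amp\`ere measure of a bounded quasi-psh function charges no pluripolar set), which is exactly the key property you isolate after extending \(u\) across the pluripolar hypersurface \(Y\). The extension step you flag as the main subtlety is the standard one the paper also uses elsewhere (via Demailly's extension theorem for quasi-psh functions across pluripolar sets), so there is no gap.
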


\begin{proof}
	This is a consequence of Bedford-Taylor theory together with the fact that bounded psh functions have zero Lelong numbers, see \cite[Chapters 2 and 3]{GZ} and \cite[Section 7.1]{KozNg}.
\end{proof}

We recall the definition of volume of \(g_{FS}\) (see Remark \ref{rmk:volume}) as the integral of its Riemannian volume form over its regular part. With this in mind we have the following.
\begin{proposition}\label{prop:totalvolume}
	The volume of the complex and real links of the PK cone \((\C^n, g)\) are given by
	\begin{equation}
	\Vol(g_{FS}) = \alpha_{0}^{n-1} \Vol(\CP^{n-1}) \hspace{2mm} \mbox{ and } \hspace{2mm} \Vol(S) = \alpha_0^n \Vol(S^{2n-1}(1)) .
	\end{equation}
\end{proposition}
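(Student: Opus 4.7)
The plan is to reduce both volume computations to a single pluripotential-theoretic identity applied to the global K\"ahler potential for \(\omega_{FS}\) that was already constructed in Section \ref{sect:FSquotPK}.

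The first step is to write \(\omega_{FS}\) on the regular locus as a bounded perturbation of a multiple of the standard Fubini-Study form. Combining the identity \(\pi^{*}\omega_{FS} = \tfrac{i}{2}\dd\log r^{2}\) from Equation \eqref{eq:pullbackomegaFS} with the homogeneity formula \(r^{2} = e^{\varphi}|z|^{2\alpha_{0}}\) of Lemma \ref{lem:homogeneityr2}, and using that \(\varphi\) is \(\C^{*}\)-invariant and therefore descends to a function on \(\CP^{n-1}\), one obtains
\begin{equation*}
\omega_{FS} \;=\; \alpha_{0}\,\omega_{\CP^{n-1}} \;+\; \tfrac{i}{2}\dd\varphi
\end{equation*}
on \((\CP^{n-1})^{\circ}\). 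The continuity of \(\varphi\) on the whole of \(\CP^{n-1}\) (and in particular its boundedness) follows from the continuous extension of \(r^{2}\) across \(\mH\) established in Lemma \ref{lem:extensionr2}: by \(\C^{*}\)-invariance it suffices to check continuity on the unit sphere, where \(|z|\) is bounded away from zero.

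The second step is to apply Proposition \ref{prop:henri} with \(X=\CP^{n-1}\), \(Y\) the projectivized arrangement, \(\omega = \alpha_{0}\omega_{\CP^{n-1}}\), and \(u=\tfrac{1}{2}\varphi\). The hypotheses (i) and (ii) are precisely the positivity of \(\omega_{FS}\) on the regular part and the boundedness of \(\varphi\) just noted. The proposition then yields
\begin{equation*}
\int_{(\CP^{n-1})^{\circ}} \omega_{FS}^{\,n-1} \;=\; \int_{\CP^{n-1}} \bigl(\alpha_{0}\omega_{\CP^{n-1}}\bigr)^{n-1} \;=\; \alpha_{0}^{\,n-1}\int_{\CP^{n-1}}\omega_{\CP^{n-1}}^{\,n-1} .
\end{equation*}
Since the Riemannian volume form of any K\"ahler metric of complex dimension \(n-1\) is \(\omega^{n-1}/(n-1)!\), and since \(\Vol(g_{FS})\) is defined (Remark \ref{rmk:volume}) as the integral of the Riemannian volume form over the regular part, dividing by \((n-1)!\) gives \(\Vol(g_{FS})=\alpha_{0}^{\,n-1}\Vol(\CP^{n-1})\).

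For the link \(S\), the final step is essentially a Fubini argument along the circle fibration of Equation \eqref{hopffibration}. The map \(p\colon S^{\circ}\to (\CP^{n-1})^{\circ}\) is a Riemannian submersion with geodesic circle fibres of length \(2\pi\alpha_{0}\) (Lemma \ref{lem:linethrough0} and the discussion following it), and the singular locus of \(S\) has measure zero. Consequently
\begin{equation*}
\Vol(S) \;=\; \Vol(S^{\circ}) \;=\; 2\pi\alpha_{0}\cdot\Vol(g_{FS}) \;=\; \alpha_{0}^{\,n}\cdot 2\pi\Vol(\CP^{n-1}) \;=\; \alpha_{0}^{\,n}\Vol(S^{2n-1}(1)),
\end{equation*}
where the last equality uses the standard relation \(\Vol(S^{2n-1}(1))=2\pi\Vol(\CP^{n-1})\) coming from the Hopf fibration with fibres of length \(2\pi\). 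The only non-routine point in the whole argument is the continuous extension of \(\varphi\) across the projectivized arrangement, which is exactly where the positivity and non-integer hypotheses enter through Theorem \ref{PRODTHM} and its consequence Lemma \ref{lem:extensionr2}; once this is in hand, Proposition \ref{prop:henri} does all the work and the rest is a bookkeeping of constants.
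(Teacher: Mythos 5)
Your proof is correct and follows essentially the same route as the paper: decompose \(\omega_{FS}=\alpha_0\omega_{\CP^{n-1}}+i\dd u\) with \(u\) bounded and continuous (via the global potential of Section \ref{sect:FSquotPK} and the continuity of \(\varphi\)), apply Proposition \ref{prop:henri} to equate the total masses, and then recover \(\Vol(S)\) from the fibration with circle fibres of length \(2\pi\alpha_0\). Your explicit tracing of the continuity of \(\varphi\) back to Lemma \ref{lem:extensionr2} is a slightly more careful statement of what the paper asserts in one line, but the argument is the same.
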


\begin{proof}
	Since \(\Vol(S)=2\pi \alpha_0 \Vol(g_{FS})\), it is enough to show the formula for \(g_{FS}\). Let
	\[Y = \cup_{H \in \mH} \pi(H) \]
	be the union of the projectivized hyperplanes and write \(X=\CP^{n-1}\). By Corollary \ref{cor:FSL1} we know that
	\[\Vol(g_{FS}) = \frac{1}{(n-1)!} \int_{X \setminus Y} \omega_{FS}^{n-1} < \infty . \]
	On the other hand, it follows from Equations \eqref{eq:potentialFS} and \eqref{omegaFS} that
	\[\omega_{FS} = \alpha_0 \omega_{\CP^{n-1}} + i \dd u \]
	where \(u=\varphi(1, x_1, \ldots, x_{n-1})\) (in the notation of Equation \eqref{eq:potentialFS}) is a smooth function on \(X \setminus Y=(\CP^{n-1})^{\circ}\). Moreover, \(u\) extends continuously to the whole \(X\) because \(\varphi\) is the pull-back of a continuous function on \(\CP^{n-1}\). It follows from Proposition \ref{prop:henri} that
	
	\begin{equation}\label{eq:volumepf}
	\Vol(g_{FS}) = \frac{1}{(n-1)!} \int_X (\alpha_0 \omega_{\CP^{n-1}})^{n-1} = \alpha_0^{n-1} \Vol(\CP^{n-1}) . 	
	\end{equation}
	
	Alternatively, instead of appealing to Proposition \ref{prop:henri}, we can argue as follows.  The potential \((1/\alpha_0)\psi\) where \(\psi\) is given by Equation \eqref{eq:potentialFS} belongs to the Lelong class \(\mL^{+}(\C^{n-1})\) (as defined in \cite[pg. 95]{GZ}) and the statement in \cite[Proposition 3.34]{GZ} implies Equation \eqref{eq:volumepf}.
\end{proof}

\subsection{PK cones as lifts of FS metrics}\label{sect:PKliftsFS}

In this section we introduce the notion of a FS metric in \(\CP^{n-1}\) adapted to a projective arrangement (Definition \ref{def:FS}) and show that these metrics lift to PK cones metrics on \(\C^n\) (Lemma \ref{lem:lift}).

\begin{notation}
	Let \(\bmH\) be an arrangement of projective hyperplanes \(\bH \subset \CP^{n-1}\) and write \((\CP^{n-1})^{\circ}\) for the arrangement complement. Similarly to the case of linear arrangements in a vector space, we write \(\bL^{\circ}\) for the smooth part of an intersection:
	\[\bL^{\circ} =  \bL \setminus \bigcup_{\bH \in \bmH \setminus \bmH_{\bL}} \bH \]
	where \(\bmH_{\bL}\) is the set of all \(\bH \in \bmH\) such that \(\bL \subset \bH\).	
\end{notation}

\begin{notation}
	Suppose that for each \(\bH \in \bmH\) we are given a real number \(\alpha_H>0\). We can encode this data into a (formal) real divisor \(\bar{\Delta}=\sum_{\bH \in \bmH} a_H \bH\) where \(a_H=1-\alpha_H\).	
\end{notation}

\begin{definition}\label{def:FS}
	We say that \(\omega_{FS}\) is a \textbf{FS metric} adapted to the pair \((\CP^{n-1}, \bar{\Delta})\) if it satisfies the following:
	\begin{itemize}
		\item[(i)] \(\omega_{FS}\) is a smooth K\"ahler metric on \((\CP^{n-1})^{\circ}\) locally holomorphically isometric to \(\omega_{\CP^{n-1}}\);
		\item[(ii)] for every \(\bar{x} \in \bL^{\circ}\) we can find holomorphic coordinates \((y_1, \ldots, y_{n-1})\) centred at \(\bar{x}\) such that
		\begin{equation}\label{eq:model}
		\omega_{FS} = \frac{i}{2} i \dd \log \left(1+ \hat{r}^2 + \sum_{j> \codim \bL} |y_j|^2 \right)
		\end{equation}
		where \(\hat{r}^2\) is the K\"ahler potential of a PK cone metric on \(\C^{\codim \bL}\) which depends only on the first \(y_1, \ldots, y_{\codim \bL}\) variables.
	\end{itemize}
\end{definition}

\begin{remark}\label{rmk:linkFS}
	Lemma \ref{lem:FSloc} implies that
	the complex link of a regular PK cone metric on \(\C^n\) singular at a hyperplane arrangement is an adapted FS metric as in Definition \ref{def:FS}.
\end{remark}

\begin{lemma}\label{lem:cohomologyFS}
	Let \(\omega_{FS}\) be a Fubini-Study metric adapted to the pair \((\CP^{n-1}, \bar{\Delta})\) as in Definition \ref{def:FS}. Then there is a constant \(\alpha_0>0\) and continuous function \(u: \CP^{n-1} \to \R\) smooth on \((\CP^{n-1})^{\circ}\) such that
	\begin{equation}\label{eq:ddbarlemma}
	\omega_{FS} = \alpha_0 \omega_{\CP^{n-1}} + i \dd u .	
	\end{equation}
\end{lemma}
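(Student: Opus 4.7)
My plan is to treat $\omega_{FS}$ as a closed positive $(1,1)$-current on the compact K\"ahler manifold $\CP^{n-1}$, identify its cohomology class, and then apply the global $\dd$-lemma; continuity of the resulting potential will then drop out of the explicit local models provided by Definition \ref{def:FS}.

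First I will extend $\omega_{FS}$, initially defined only on $(\CP^{n-1})^\circ$, across the projectivized arrangement to a closed positive $(1,1)$-current on all of $\CP^{n-1}$. This is justified by the local model \eqref{eq:model}, which in a neighbourhood of any point $\bar x$ provides a continuous plurisubharmonic potential $\tfrac{1}{2}\log F_{\bar x}$ with $F_{\bar x}= 1+\hat r^2+\sum_{j>\codim\bL}|y_j|^2$; the local boundedness of these potentials is the classical hypothesis guaranteeing a trivial extension as a closed positive current. Once $\omega_{FS}$ is viewed as a current on $\CP^{n-1}$, it has a real cohomology class in $H^{1,1}(\CP^{n-1},\R)\cong\R$, so there is a unique $\alpha_0\in\R$ with $[\omega_{FS}]=\alpha_0[\omega_{\CP^{n-1}}]$. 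Pairing this identity against $[\omega_{\CP^{n-1}}]^{n-2}$ and using that $\omega_{FS}$ is a strictly positive smooth K\"ahler form on the non-empty open set $(\CP^{n-1})^\circ$ forces $\alpha_0>0$.

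Next, since $\omega_{FS}-\alpha_0\omega_{\CP^{n-1}}$ is a closed $(1,1)$-current of vanishing cohomology class on $\CP^{n-1}$, the global $\dd$-lemma on a compact K\"ahler manifold produces a distribution $u$, unique up to an additive constant, satisfying
\begin{equation*}
\omega_{FS}=\alpha_0\omega_{\CP^{n-1}}+i\dd u.
\end{equation*}
Positivity of $\omega_{FS}$ as a current makes $u$ quasi-psh, and elliptic regularity of $i\dd$ applied over $(\CP^{n-1})^\circ$, where the right-hand side is a smooth form, immediately yields that $u$ is smooth there.

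The main (but mild) remaining point is continuity of $u$ across the projectivized arrangement, which I expect to be the only delicate step. I would prove it locally in the coordinates of Definition \ref{def:FS} near a singular point $\bar x$. There I have $\omega_{FS}=i\dd\psi_1$ with the \emph{continuous} potential $\psi_1=\tfrac{1}{2}\log F_{\bar x}$, while by the Dolbeault-Grothendieck lemma $\omega_{\CP^{n-1}}=i\dd\psi_2$ for a \emph{smooth} local potential $\psi_2$. The distribution $u-(\psi_1-\alpha_0\psi_2)$ is annihilated by $i\dd$ and hence is pluriharmonic; a fortiori it is harmonic, hence smooth by elliptic regularity. Since $\psi_1-\alpha_0\psi_2$ is continuous on the chart, this forces $u$ to be continuous on a neighbourhood of $\bar x$, and patching these local statements completes the argument.
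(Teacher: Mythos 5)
Your proposal is correct and follows essentially the same route as the paper: view \(\omega_{FS}\) as a closed positive current with continuous local potentials from the models \eqref{eq:model}, read off \(\alpha_0\) from \(H^{1,1}(\CP^{n-1},\R)\cong\R\), solve \(i\dd u=\omega_{FS}-\alpha_0\omega_{\CP^{n-1}}\) by the \(\dd\)-lemma for currents, and get continuity of \(u\) because it differs from a continuous local potential by a pluriharmonic (hence smooth) distribution. The only addition is your explicit justification that \(\alpha_0>0\), which the paper leaves implicit.
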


\begin{proof}
	\(\omega_{FS}\) has continuous K\"ahler potentials and can be considered as a closed positive current, therefore there is \(\alpha_0>0\) such that \([\omega_{FS}]=\alpha_{0} [\omega_{\CP^{n-1}}]\) where the brackets \([\cdot]\) denotes cohomology of currents. There is a distribution such that \(i\dd u = \omega_{FS}-\alpha_{0} \omega_{\CP^{n-1}}\). On the other hand, by Equation \eqref{eq:model} we have continuous local solutions \(\tilde{u}\) of \(i\dd \tilde{u} = \omega_{FS}-\alpha_{0} \omega_{\CP^{n-1}} \); since \(u\) differs from \(\tilde{u}\) by a pluri-harmonic distribution (which are automatically smooth) we conclude that \(u\) is continuous.
\end{proof}

\begin{definition}
	Let \((z_1, \ldots, z_n)\) be linear complex coordinates on \(\C^n\), we define 
	\begin{equation}
	r^2 = |z|^{2\alpha_0}e^u
	\end{equation}
	where \(u\) is as in Lemma \ref{lem:cohomologyFS}, regarded as a \(\C^*\)-invariant function on \(\C^n\setminus \{0\}\).
\end{definition}

\begin{lemma}\label{lem:lift}
	The defined function \(r^2\) is a K\"ahler potential for a PK cone metric on \(\C^n\) singular at the arrangement \(\mH\) with K\"ahler form
	\[\omega = \frac{i}{2} \dd r^2 .\]
\end{lemma}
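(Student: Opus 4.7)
I will establish in three steps that $g := \tfrac{i}{2}\partial\bar\partial r^2$ defines a flat K\"ahler metric on $(\C^n)^\circ$, that $g$ is a Riemannian cone of radius $r$, and that its metric completion is a polyhedral cone on $\C^n$ singular at $\mH$.

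\textbf{Step 1 (Cone scaling and flatness via a lifted developing map).} The $\C^*$-invariance of $u$ gives $r^2(\lambda z)=|\lambda|^{2\alpha_0}r^2(z)$, so $e(r^2)=r^2$ for the holomorphic Euler field $e=\alpha_0^{-1}\sum z_i\partial_{z_i}$. For a simply connected $V\subset(\CP^{n-1})^\circ$, Definition~\ref{def:FS}(i) provides a biholomorphism $\Phi:V\to V'\subset\CP^{n-1}$ with $\Phi^*\omega_{\CP^{n-1}}=\omega_{FS}$. Set $U:=\pi^{-1}(V)\cap(\C^n)^\circ$ and pick any holomorphic lift $\Psi_0:U\to\C^n\setminus\{0\}$ of $\Phi\circ\pi$. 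The identity $(i/2)\partial\bar\partial\log|z|^2=\pi^*\omega_{\CP^{n-1}}$ on $\C^n\setminus\{0\}$ yields
\begin{equation*}
\tfrac{i}{2}\partial\bar\partial\log|\Psi_0|^2 = (\Phi\circ\pi)^*\omega_{\CP^{n-1}} = \pi^*\omega_{FS} = \tfrac{i}{2}\partial\bar\partial\log r^2
\end{equation*}
on $U$, so the pluriharmonic function $\log r^2-\log|\Psi_0|^2$ on the simply connected $U$ equals $2\,\Re F$ for some holomorphic $F$. The rescaled lift $\Psi:=e^F\Psi_0$ satisfies $|\Psi|^2=r^2$ and $\Psi^*\omega_{\mathrm{Eucl}}=\tfrac{i}{2}\partial\bar\partial r^2=g$, exhibiting $\Psi$ as a local holomorphic isometry from $(U,g)$ to Euclidean $\C^n$. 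In particular $g$ is positive definite and flat on $(\C^n)^\circ$, and the arguments of Section~\ref{sect:riemcone} then identify $r$ as the $g$-distance to $0$ and display $g$ as a Riemannian cone.

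\textbf{Step 2 (Tangent cone identification along the arrangement).} Fix $z_0\neq 0$ on a stratum $L^\circ$ with $\bar x=\pi(z_0)\in\bar L^\circ$. Equation~\eqref{eq:model} provides holomorphic coordinates near $\bar x$ in which $\omega_{FS}=\tfrac{i}{2}\partial\bar\partial\log(1+\hat r^2+|\hat y|^2)$, with $\hat r^2$ the K\"ahler potential of a PK cone $\hat C$ on $\C^{\codim\bar L}$ supplied by the adaptation data of Definition~\ref{def:FS}(ii). In the affine chart relative to a hyperplane $H_0\in\mH$ whose projective image avoids $\bar x$, the global formula reads $r^2=|\lambda|^{2\alpha_0}F(y)$ with $\log F$ a continuous local potential for $\omega_{FS}$. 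The $\partial\bar\partial$-lemma for continuous potentials on a polydisc gives $\log F-\log(1+\hat r^2+|\hat y|^2)=2\,\Re G$ for a holomorphic $G$, and after the biholomorphic fibre rescaling $w:=\lambda\,e^{G/\alpha_0}$,
\begin{equation*}
r^2=|w|^{2\alpha_0}\bigl(1+\hat r^2(y_\perp)+|y_\parallel|^2\bigr)
\end{equation*}
on a neighbourhood of $z_0=(w_0,0,0)$ with $w_0\neq 0$. Since $|w|^{2\alpha_0}$ is smooth and positive at $w_0$ and both $\hat r^2$ and $|y_\parallel|^2$ vanish at the origin in their respective coordinates, a standard rescaling argument identifies the tangent cone of $(\C^n,g)$ at $z_0$ with the direct product $\C^{\dim L}\times\hat C$: the $w$- and $y_\parallel$-directions contribute the Euclidean factor $\C^{\dim L}$, while the $y_\perp$-directions contribute the PK cone $\hat C$ on $\C^{\codim L}$.

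\textbf{Step 3 (Metric completion and Lebedeva--Petrunin).} By Step~1 and the continuity of $u$, the radius $r$ extends continuously to $\C^n$ with $r(0)=0$ and $r>0$ off the origin. The standard-neighbourhood machinery of Sections~\ref{sect:MC}--\ref{sect:pfthm1}---local dilation vector fields, the continuous-extension Lemmas~\ref{continuity}--\ref{metricextension}, and Corollary~\ref{HEucone}---then extends the Riemannian length metric of $g$ to a complete length metric $d_g$ on $\C^n$, which is a metric cone with vertex at $0$. At each point of $\C^n$ a neighbourhood embeds isometrically into a metric cone sending the point to the vertex: the point $0$ via the cone structure of $(\C^n,d_g)$ itself, points of $(\C^n)^\circ$ via the Euclidean map $\Psi$ of Step~1, and points of strata $L^\circ$ via the tangent cone $\C^{\dim L}\times\hat C$ of Step~2. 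Lebedeva--Petrunin's criterion (Theorem~\ref{LebedevaPetrunin}) then concludes that $d_g$ is polyhedral, finishing the proof.

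\textbf{Main difficulty.} The technical heart is Step~2: matching the global potential $r^2$ to the local PK cone model via the holomorphic fibre rescaling $e^{G/\alpha_0}$ produced by the $\partial\bar\partial$-lemma, and the rescaling argument that identifies the tangent cone at $z_0$ with the product $\C^{\dim L}\times\hat C$. Once this is in place, flatness in Step~1 is automatic from the lifted developing map, and the polyhedral conclusion in Step~3 is a direct transcription of Section~\ref{sect:pfthm1}.
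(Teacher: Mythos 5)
Your overall strategy matches the paper's: the warped-product computation for $\tfrac{i}{2}\dd r^2$, lifted local isometries to get flatness, the local models \eqref{eq:model} to get a local product of PK cones along the arrangement, and then polyhedrality (the paper completes the link $S$ to a spherical polyhedral space, while you run Lebedeva--Petrunin directly on the cone, as in Theorem \ref{polyhedralast}; both are fine). There is, however, a genuine error in Step~1: the set $U=\pi^{-1}(V)\cap(\C^n)^\circ$ is \emph{not} simply connected --- it is homotopy equivalent to $V\times S^1$ --- and the pluriharmonic function $\log r^2-\log|\Psi_0|^2$ is not of the form $2\,\RE F$ on $U$. Its conjugate differential has a nonzero period around the fibre circle: since $\Psi_0$ factors through $\pi$, the period of $J\,d\log|\Psi_0|^2$ along an $S^1$-orbit vanishes, whereas $e_r(\log r^2)=2$ gives period $\pm 4\pi\alpha_0$ for $J\,d\log r^2$; replacing $\Psi_0$ by another lift only shifts this by an integer multiple of $4\pi$, so no single-valued $F$ (hence no single-valued $\Psi$ with $|\Psi|^2=r^2$) exists on $U$ unless $\alpha_0\in\Z$. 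This is precisely the point where the paper inserts ``a cut on the $S^1$-fibres'' before lifting the local isometries. The repair is easy --- flatness is local, so construct $\Psi$ on genuinely simply connected subsets of $U$ (e.g.\ the complement of a cut, or small balls) --- but as written the step fails.

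Two smaller remarks. First, $g=\Psi^*\omega_{\mathrm{Eucl}}$ only gives $g\ge 0$; positive definiteness needs $D\Psi$ invertible, which requires a short extra check ($dF\ne0$ along the fibre direction because $e_r(2\RE F)=2$). The paper's formula $g=dr^2+r^2(\eta^2+\pi^*g_{FS})$ delivers positivity immediately and you may as well quote it. Second, in Step~2 the ``standard rescaling argument'' identifying the tangent cone should be upgraded to an actual local holomorphic isometry with $\C^{\dim L}\times\hat C$ (the reverse of the normalisation in Lemma \ref{lem:ycoord}), since Lebedeva--Petrunin needs an isometric embedding of a neighbourhood into a metric cone, not merely knowledge of the tangent cone; your coordinate change $w=\lambda e^{G/\alpha_0}$ already puts the potential in the right shape for this.
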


\begin{proof}
	The identity
	\[ \dd e^f = e^f \p f \wedge \bar{\p} f + e^f \dd f \]
	applied to \(f=\log r^2\) shows that
	\[g=\omega(\cdot, J \cdot) = dr^2 + r^2(\eta^2 + \pi^*g_{FS}) \]
	where \(\eta =Jd\log r \).
	In particular, \(\omega>0\) and \(g\) is a metric cone. Let \(S=\{r=1\}\) and \(S^{\circ}=S\cap (\C^n)^{\circ}\). After introducing a cut on the \(S^1\)-fibres of the projection \(S^{\circ} \to (\CP^{n-1})^{\circ}\) we can lift local isometries between \(g_{FS}\) and \(g_{\CP^{n-1}}\) to local isometries between \(S^{\circ}\) and \(S^{2n-1}(1)\). The local models \eqref{eq:model} for \(\omega_{FS}\) imply that \(g\) is locally a product of PK cones at points in \(\C^n \setminus \{0\}\). It follows that the metric completion of \(g_{S^{\circ}}\) is a polyhedral spherical metric on \(S\) and \(g\) is a PK cone metric on \(\C^n\).
\end{proof}

\begin{remark}
	Lemma \ref{lem:volumeform} tells us that the volume form of \(\omega\) as given by Lemma \ref{lem:lift} equals
	\begin{equation}\label{volform}
	\omega^n = \left( \prod_{H \in \mH} |h|^{-2a_H} \right) c(n) \Phi \wedge \bar{\Phi}	
	\end{equation}
	where \(c(n)\) is a dimensional constant and \(\Phi= dz_1 \wedge \ldots \wedge dz_n\).
	If we pull-back Equation \eqref{volform} by scalar multiplication, by homogeneity we find that \(\alpha_0\) in Equation \eqref{eq:ddbarlemma} equals
	\[\alpha_0 = 1 - \frac{1}{n} \left(\sum_{H \in \mH} a_H \right) . \]
\end{remark}

\section{Irreducible holonomy}\label{sect:irredhol}

The goal of this section is to establish Theorem \ref{thm:irredhol}. Our proof is by induction on the dimension. We consider the \emph{induced} connections at hyperplanes of the arrangement and reduce to the case of \(n=2\). In the case of \(n=2\) the result follows from the correspondence between PK cones and spherical metrics (\cite{Pan}) together with the characterization of co-axial monodromy given in \cite{Dey}. 

Section \ref{sec:triples} contains background material on hyperplane arrangements. Lemma \ref{lem:inducedirred} shows that if \(\mH\) is essential and irreducible then there is one hyperplane \(H_0\) such that the induced arrangement \(\mH^{H_0}\) is also essential and irreducible. Section \ref{sect:indcon} discusses induced connections along the lines of \cite{CHL}. In Section \ref{sect:pfirrhol} we prove Theorem \ref{thm:irredhol}.

\subsection{Deletion-restriction triples}\label{sec:triples}

Let \(\mH\) be a finite collection of linear hyperplanes in a finite dimensional complex vector space \(V\).

\begin{definition}
	Fix \(H_0 \in \mH\), the \textbf{deletion-restriction triple} \((\mH, \mH', \mH'')\) is defined as follows:
	\[\mH'= \mH - \{H_0\} \mbox{ and } \mH''= \{H \cap H_0, \,\ H \in \mH'\} .  \]
	In other words, \(\mH'\) is the arrangement obtained from \(\mH\) after deleting \(H_0\) and \(\mH''\) is the same as the induced arrangement \(\mH^{H_0}\).
\end{definition}

\begin{lemma}\label{lem:rankinducedarrang}
	\(T(\mH'')=T(\mH)\) and \(r(\mH'')=r(\mH)-1\).
\end{lemma}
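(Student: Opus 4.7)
The plan is to establish both equalities directly from the definitions, with the first equality being essentially a set-theoretic identity and the second a straightforward dimension count.

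For the first equality \(T(\mH'') = T(\mH)\), I would simply compute
\[
T(\mH'') = \bigcap_{H \in \mH'} (H \cap H_0) = H_0 \cap \bigcap_{H \in \mH \setminus \{H_0\}} H = \bigcap_{H \in \mH} H = T(\mH),
\]
being careful about the degenerate case \(\mH' = \emptyset\): here \(\mH = \{H_0\}\), the intersection over the empty family inside \(H_0\) gives \(T(\mH'') = H_0\), and \(T(\mH) = H_0\) as well, so the identity still holds. Note that this step implicitly uses that every element of \(\mH''\) is a hyperplane of \(H_0\): for \(H \neq H_0\), the intersection \(H \cap H_0\) has codimension \(2\) in \(V\) and hence codimension \(1\) in \(H_0\).

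For the rank, the key observation is that \(T(\mH) \subseteq H_0\) since \(H_0 \in \mH\). Combined with the first equality, this means \(T(\mH'')\) is viewed both as a subspace of \(H_0\) (when computing \(r(\mH'')\)) and as a subspace of \(V\) (when computing \(r(\mH)\)). Since \(\mH''\) is an arrangement in \(H_0\), its rank is
\[
r(\mH'') = \codim_{H_0} T(\mH'') = \dim H_0 - \dim T(\mH) = (\dim V - 1) - \dim T(\mH) = r(\mH) - 1.
\]

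There is no real obstacle here; the lemma is a bookkeeping fact that sets up notation for the forthcoming discussion of induced connections in Section \ref{sect:indcon}. The only thing to watch is the edge case \(\mH' = \emptyset\) (so \(\mH'' = \emptyset\)), which is handled correctly by the convention that the empty intersection inside \(H_0\) equals \(H_0\) itself, giving \(r(\mH'') = 0 = r(\mH) - 1\).
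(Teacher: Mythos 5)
Your proof is correct and follows essentially the same route as the paper: the first equality is the set-theoretic identity \(T(\mH'')=T(\mH')\cap H_0=T(\mH)\), and the rank computation is the same dimension count \(\codim_{H_0}T(\mH'')=\codim_V T(\mH)-1\). The explicit treatment of the edge case \(\mH'=\emptyset\) is a harmless addition the paper omits.
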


\begin{proof}
	Since the centre of an arrangement is the intersection of all its hyperplanes, we have
	\(T(\mH'')=T(\mH') \cap H_0=T(\mH)\). Furthermore, the rank of \(\mH''\) is given by
	\begin{align*}
	r(\mH'') = \codim_{H_0} T(\mH'') &= \codim_V T(\mH'') -1 \\
	&= \codim_V T(\mH) -1 = r(\mH)-1 . \qedhere
	\end{align*}
\end{proof}

\begin{notation}
	We denote by \(T, T'\) the respective centres \(T(\mH)\), \(T(\mH')\).
\end{notation}

\begin{definition}
	\(H_0\) is said to be a \textbf{separator} if \(T' \not\subset H_0\), or equivalently if the rank drops after deletion \(r(\mH')<r(\mH)\).
\end{definition}

\begin{lemma}\label{lem:noseparator}
	If \(H_0\) is a separator then \(\mH =  \mH' \uplus \{H_0\}\). In particular, irreducible arrangements with at least two hyperplanes have no separators.
\end{lemma}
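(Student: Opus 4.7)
The plan is to verify the criterion given by Lemma \ref{lem:ann} for $\mathcal{H}$ to split as a u-plus sum of $\mathcal{H}'$ and $\{H_0\}$, and then to argue the irreducibility consequence as a simple corollary.

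First I would observe that, by Lemma \ref{lem:ann} applied to the decomposition $\mathcal{H} = \mathcal{H}' \cup \{H_0\}$, we have
\begin{equation*}
\mathcal{H} = \mathcal{H}' \uplus \{H_0\} \iff T(\mathcal{H}') + T(\{H_0\}) = V,
\end{equation*}
that is, $T' + H_0 = V$. Now assume $H_0$ is a separator, i.e.\ $T' \not\subset H_0$. Since $H_0$ is a linear hyperplane in $V$, the subspace $T' + H_0$ strictly contains $H_0$ and is therefore all of $V$ (there is nothing properly between $H_0$ and $V$). Hence the criterion is satisfied and $\mathcal{H} = \mathcal{H}' \uplus \{H_0\}$, which is the first claim.

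For the second claim, suppose $\mathcal{H}$ is irreducible and contains at least two hyperplanes, and assume for contradiction that some $H_0 \in \mathcal{H}$ is a separator. Then $\mathcal{H}' = \mathcal{H} \setminus \{H_0\}$ is non-empty and, by what we just proved, $\mathcal{H} = \mathcal{H}' \uplus \{H_0\}$ is a u-plus decomposition into two non-empty subsets, contradicting the irreducibility of $\mathcal{H}$ (Definition \ref{def:irrarr}).

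I do not expect any real obstacle here: the only subtle point is making sure to invoke Lemma \ref{lem:ann} to reduce the statement ``$\mathcal{H}' \uplus \{H_0\}$'' to the transversality relation $T' + H_0 = V$, after which the argument is one line about hyperplanes.
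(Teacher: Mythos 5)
Your proof is correct and is essentially the paper's argument in dual form: the paper notes that the separator condition forces $W(\mH')\subsetneq W(\mH)$ and hence $W(\mH)=W(\mH')\oplus\C\cdot h_0$, while you verify the equivalent annihilator criterion $T'+H_0=V$ from Lemma \ref{lem:ann}. Since Lemma \ref{lem:ann} is precisely the statement that these two formulations coincide, the two proofs are the same one-line argument viewed on opposite sides of the duality.
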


\begin{proof}
	Recall that the span of the arrangement \(W(\mH)\) is the  subspace of \(V^*\) spanned by all linear equations defining hyperplanes in \(\mH\).
	If \(H_0\) is a separator then, by definition, \(W(\mH') \subsetneq W(\mH)\). Therefore \(W(\mH) = W(\mH') \oplus \,\ \C \cdot h_0\) and \(\mH = \mH' \uplus \{H_0\}\). 
\end{proof}

\begin{corollary}\label{cor:pan}
	If \((V, \mH)\) is essential, irreducible and \(\dim V \geq 2\) then \(\mH'\) is essential.
\end{corollary}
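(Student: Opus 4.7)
The plan is to combine the essentiality hypothesis with Lemma \ref{lem:noseparator} in the following direct way.

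First I would observe that the hypotheses force \(|\mH| \geq 2\). Indeed, since \((V,\mH)\) is essential we have \(r(\mH) = \dim V \geq 2\), and because \(r(\mH) = \dim W(\mH)\) is bounded above by the number of generators \(|\mH|\), we get \(|\mH|\geq 2\). In particular \(\mH' = \mH\setminus\{H_0\}\) is non-empty.

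Next I would invoke Lemma \ref{lem:noseparator}. Since \(\mH\) is irreducible and contains at least two hyperplanes, \(H_0\) cannot be a separator; otherwise the lemma would produce a non-trivial u-plus decomposition \(\mH = \mH' \uplus \{H_0\}\) contradicting irreducibility. By definition, \(H_0\) not being a separator is equivalent to \(T' \subset H_0\), where \(T' = T(\mH')\).

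Finally, I would compute
\[
T(\mH) \;=\; \bigcap_{H\in\mH} H \;=\; T' \cap H_0 \;=\; T',
\]
using \(T' \subset H_0\) in the last equality. Since \(\mH\) is essential, \(T(\mH) = \{0\}\), and therefore \(T(\mH') = T' = \{0\}\); that is, \(\mH'\) is essential, which is the claim. No step here looks like an obstacle: everything is a direct chase through the definitions once Lemma \ref{lem:noseparator} has been applied, and the only subtle point is verifying \(|\mH|\geq 2\) so that \(H_0\) cannot be the unique hyperplane of an (automatically irreducible) one-element arrangement.
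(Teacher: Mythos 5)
Your proof is correct and follows essentially the same route as the paper: apply Lemma \ref{lem:noseparator} to conclude \(H_0\) is not a separator, deduce \(T'\subset H_0\) hence \(T(\mH)=T'\), and conclude from essentiality. Your explicit check that \(|\mH|\geq 2\) (via \(r(\mH)=\dim V\geq 2\)) is a small detail the paper leaves implicit, but the argument is the same.
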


\begin{proof}
	Since \(\mH\) is irreducible, by Lemma \ref{lem:noseparator} \(H_0\) is not a separator. So \(T' \subset H_0\) and \(T=T'\). Moreover, since \(\mH\) is essential, \(T'=T=\{0\}\).
\end{proof}

\begin{definition}\label{def:projectionpi}
	For \(H \in \mH'\) we set \(\pi(H)=H \cap H_0\).
\end{definition}

\begin{note}
	By definition, \(\pi : \mH' \to \mH''\) is a surjection.
\end{note}

\begin{lemma}
	If \(H_0\) is a separator then \(\pi\) is a bijection.
\end{lemma}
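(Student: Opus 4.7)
The plan is to assume for contradiction that $\pi$ fails to be injective and derive a conflict with the u-plus decomposition forced by $H_0$ being a separator. The surjectivity of $\pi$ is already noted, so only injectivity needs argument.

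First, I would invoke Lemma \ref{lem:noseparator}: since $H_0$ is a separator, we have the u-plus decomposition
\[
\mathcal{H} = \mathcal{H}' \uplus \{H_0\},
\]
which gives $W(\mathcal{H}) = W(\mathcal{H}') \oplus \mathbb{C}\cdot h_0$. Now suppose $H_1, H_2 \in \mathcal{H}'$ are distinct with $\pi(H_1) = \pi(H_2)$, i.e.\ $H_1 \cap H_0 = H_2 \cap H_0 =: L$. Since $H_0, H_1, H_2$ are three distinct hyperplanes all containing $L$, we have $L \in \mL(\mH)$ with $\codim L = 2$.

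The key step is to transport the u-plus decomposition down to the localization. By Lemma \ref{lem:uplussubarrang} applied to the sub-arrangement $\mathcal{H}_L \subset \mathcal{H}$,
\[
\mathcal{H}_L = (\mathcal{H}_L \cap \mathcal{H}') \uplus (\mathcal{H}_L \cap \{H_0\}) = (\mathcal{H}_L \cap \mathcal{H}') \uplus \{H_0\},
\]
where the second equality uses $L \subset H_0$. Taking spans, this gives
\[
W(\mathcal{H}_L) = W(\mathcal{H}_L \cap \mathcal{H}') \oplus \mathbb{C}\cdot h_0.
\]
Since $L$ has codimension $2$ and $L$ is the common intersection of hyperplanes in $\mathcal{H}_L$, one has $\dim W(\mathcal{H}_L) = \codim L = 2$, and hence $\dim W(\mathcal{H}_L \cap \mathcal{H}') = 1$.

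This forces a contradiction: $H_1, H_2 \in \mathcal{H}_L \cap \mathcal{H}'$ are distinct hyperplanes, so their defining linear forms $h_1, h_2$ are linearly independent, giving $\dim W(\mathcal{H}_L \cap \mathcal{H}') \geq 2$. Hence no two distinct elements of $\mathcal{H}'$ can have the same image under $\pi$, and $\pi$ is a bijection. The only step requiring care is the correct application of Lemma \ref{lem:uplussubarrang} and the dimension count via $W(\mathcal{H}_L)$, but both are straightforward given the machinery already developed in Section \ref{sect:irredarr}; there is no serious obstacle.
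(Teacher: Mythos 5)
Your proof is correct. The only point you pass over quickly is that $T(\mathcal{H}_L)=L$ (needed for $\dim W(\mathcal{H}_L)=\codim L=2$), but this is immediate: $T(\mathcal{H}_L)\supseteq L$ by definition and $T(\mathcal{H}_L)\subseteq H_1\cap H_0=L$. Your route differs from the paper's, which argues more directly: if $H_1\cap H_0=H_2\cap H_0$ then this codimension-two subspace is contained in $H_1\cap H_2$ and hence equals it, so $T'\subseteq H_1\cap H_2\subseteq H_0$, contradicting the defining property $T'\not\subset H_0$ of a separator. You instead first pass to the consequence $\mathcal{H}=\mathcal{H}'\uplus\{H_0\}$ of Lemma \ref{lem:noseparator}, localize the u-plus decomposition at $L$ via Lemma \ref{lem:uplussubarrang}, and run a dimension count on spans. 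The paper's argument is shorter and uses the separator hypothesis in its raw form; yours is slightly longer but illustrates how u-plus decompositions restrict to localizations, a mechanism the paper reuses elsewhere (e.g.\ Lemma \ref{lem:transvinter}). Both are sound.
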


\begin{proof}
	Suppose \(H_1, H_2 \in \mH'\) are distinct hyperplanes such that \(\pi(H_1)=\pi(H_2)\). Then \(H_1 \cap H_2 \subset H_0\), which contradicts \(T' \not\subset H_0\).
\end{proof}

\begin{lemma}\label{lem:sep}
	If \(H_0\) is a separator then \(\mH'/T' \equiv \mH''/T\). In particular \(\mH'\) is irreducible if and only if \(\mH''\) is.
\end{lemma}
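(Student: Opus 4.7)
My plan is to construct an explicit isomorphism of arrangements induced by the inclusion $H_0 \hookrightarrow V$ followed by projection $V \to V/T'$. First I would observe that since $H_0$ is a separator, Lemma \ref{lem:noseparator} gives $\mH = \mH' \uplus \{H_0\}$, and hence $\dim T' = \dim T + 1$ (the rank drops by one upon deleting $H_0$). Since $T = T' \cap H_0$ and $T' \not\subset H_0$, the composition
\[
\varphi \,:\, H_0 \hookrightarrow V \twoheadrightarrow V/T'
\]
has kernel exactly $T$ and therefore descends to an injection $\bar{\varphi} : H_0/T \hookrightarrow V/T'$. A dimension count, using $\dim H_0/T = (n-1) - \dim T = n - \dim T' = \dim V/T'$, shows that $\bar{\varphi}$ is a linear isomorphism.

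Next I would verify that $\bar{\varphi}$ carries $\mH''/T$ bijectively onto $\mH'/T'$. For $H \in \mH'$, the induced hyperplane is $\pi(H) = H \cap H_0$, and its image in $V/T'$ under $\bar{\varphi}$ is $(H\cap H_0 + T')/T'$. Since $T' \subset H$ for every $H \in \mH'$, proving $\bar{\varphi}(\pi(H)/T) = H/T'$ amounts to the identity $H \cap H_0 + T' = H$. To check this, pick any $v \in T' \setminus H_0$ and let $h_0$ be a defining equation of $H_0$; for $x \in H$ the vector $t = \tfrac{h_0(x)}{h_0(v)} v$ lies in $T' \subset H$, satisfies $h_0(x - t) = 0$, and therefore $x = (x-t) + t$ with $x-t \in H \cap H_0$ and $t \in T'$. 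This gives the arrangement isomorphism $\mH'/T' \equiv \mH''/T$.

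For the irreducibility consequence I would recall the remark (following Definition \ref{def:irrarr}) that an arrangement is irreducible if and only if its quotient by its own centre is irreducible. Applying this to $\mH'$ and to $\mH''$ (using $T(\mH'') = T$ from Lemma \ref{lem:rankinducedarrang}), the equivalence $\mH'$ irreducible $\iff$ $\mH''$ irreducible follows immediately from the arrangement isomorphism established above.

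I do not anticipate any serious obstacle: the content is pure linear algebra and the only subtle point is the identity $H \cap H_0 + T' = H$, which is handled by the one-line splitting above using a vector $v \in T' \setminus H_0$ (guaranteed to exist precisely because $H_0$ is a separator).
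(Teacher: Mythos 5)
Your proof is correct and follows essentially the same route as the paper: both construct the canonical isomorphism between \(H_0/T\) and \(V/T'\) arising from \(V = T'+H_0\) with \(T'\cap H_0 = T\) (the paper defines it from \(V/T'\) to \(H_0/T\) via the splitting, you build the inverse via inclusion-then-projection). Your verification of the identity \(H\cap H_0 + T' = H\) correctly fills in the step the paper dismisses as "clearly takes hyperplanes to hyperplanes."
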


\begin{proof}
	By hypothesis \(V=T' + H_0\). If \(v \in V\) then we can write \(v=h+t\) with \(h \in H_0\) and \(t \in T'\). We define a linear map \(F: V/T' \to H_0/T\) by letting \(F[v]=[h]\). If \(v=h_i + t_i\) with \(h_i \in H_0\) and \(t_i \in T'\) for \(i=1,2\) then \((h_1-h_2)+(t_1-t_2)=0\). It follows that \(h_1-h_2\) and \(t_1-t_2\) belong to \(H_0 \cap T'=T\); hence \([h_1]=[h_2]\) and \(F\) is well defined. It is easy to see that \(F\) is a bijection and it clearly takes hyperplanes in \(\mH'/T'\) to hyperplanes in \(\mH''/T\).
\end{proof}

\begin{corollary} \label{cor:separator}
	If \(H_0\) is a separator and \(\mH''\) is irreducible then \(\mH'\) is irreducible and \(\mH=\mH' \uplus \{H_0\}\).
\end{corollary}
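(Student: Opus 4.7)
The plan is to observe that this corollary is an essentially immediate combination of the two preceding lemmas. First I would unpack the hypotheses: since $H_0$ is a separator by assumption, Lemma \ref{lem:noseparator} applies directly and delivers the decomposition $\mH = \mH' \uplus \{H_0\}$. That takes care of the second conclusion, so the remaining work is to establish irreducibility of $\mH'$.

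For the irreducibility claim, I would invoke Lemma \ref{lem:sep}. Since $H_0$ is a separator, that lemma gives an isomorphism of arrangements $\mH'/T' \equiv \mH''/T$ and, as a direct consequence, the equivalence ``$\mH'$ is irreducible if and only if $\mH''$ is''. (Implicitly this uses the observation recorded just after Definition \ref{def:irrarr} that irreducibility is invariant under quotienting by the centre, so that irreducibility of $\mH'$ is equivalent to irreducibility of $\mH'/T'$, and similarly for $\mH''$.) Since $\mH''$ is irreducible by hypothesis, we conclude $\mH'$ is irreducible.

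There is no genuine obstacle here: the statement is a bookkeeping consequence of the two lemmas, so the ``proof'' reduces to citing Lemma \ref{lem:noseparator} for the decomposition and Lemma \ref{lem:sep} for the irreducibility transfer. The only thing to be careful about is to verify that the separator hypothesis is actually being used in both invocations, which it is: without it, Lemma \ref{lem:noseparator} does not yield the u-plus splitting, and Lemma \ref{lem:sep} does not give the comparison between $\mH'/T'$ and $\mH''/T$.
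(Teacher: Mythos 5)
Your proof is correct and is exactly the paper's argument: the paper proves this corollary by citing Lemma \ref{lem:noseparator} for the u-plus decomposition and Lemma \ref{lem:sep} for transferring irreducibility from \(\mH''\) to \(\mH'\). Your extra remark about irreducibility being invariant under quotienting by the centre is a correct unpacking of the "in particular" clause of Lemma \ref{lem:sep}.
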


\begin{proof}
	Follows from Lemmas \ref{lem:sep} and \ref{lem:noseparator}.
\end{proof}

\begin{figure}
	\begin{center}
		\begin{minipage}{0.48\textwidth}
			\begin{center}
				Case (i)
			\end{center}
			\scalebox{0.7}{
				\begin{tikzpicture}[tdplot_main_coords,font=\sffamily]
				\draw[fill=blue,opacity=0.2] (-3,0,-3) -- (-3,0,3) -- (3,0,3) -- (3,0,-3) -- cycle;
				\draw[fill=blue,opacity=0.2] (0,-3,-3) -- (0,-3,3) -- (0,3,3) -- (0,3,-3) -- cycle;
				\draw[fill=blue,opacity=0.2] (-3,-3,-3) -- (3,3,-3) -- (3,3,3) -- (-3,-3,3) -- cycle;
				\draw[fill=red,opacity=0.2] (-3,-3,0) -- (-3,3,0) -- (3,3,0) -- (3,-3,0) -- cycle;
				\draw[thick](-3,0,0)--(3,0,0);
				\draw[thick] (0,-3,0) -- (0,3,0);
				\draw[thick] (-3,-3,0) -- (3,3,0);
				\draw[dashed] (0,0,-3) -- (0,0,3);
				\node[scale=1.5] at (.7,3.7,1.2) {\(H_0\)};
				\end{tikzpicture}
			}
		\end{minipage}	
		\begin{minipage}{0.48\textwidth}
			\begin{center}
				Case (ii)
			\end{center}
			\scalebox{0.7}{
				\begin{tikzpicture}[tdplot_main_coords,font=\sffamily]
				\draw[fill=blue,opacity=0.2] (-3,-3,3) -- (-3,3,3) -- (3,3,-3) -- (3,-3,-3) -- cycle;
				\draw[fill=blue,opacity=0.2] (-3,-3,3) -- (3,-3,3) -- (3,3,-3) -- (-3,3,-3) -- cycle;
				\draw[fill=blue,opacity=0.2] (3,-3,3) -- (-3,-3,0) -- (-3,3,-3) -- (3,3,0) -- cycle;
				\draw[fill=red,opacity=0.2] (-3,-3,0) -- (-3,3,0) -- (3,3,0) -- (3,-3,0) -- cycle;
				\draw[thick](-3,0,0)--(3,0,0);
				\draw[thick] (0,-3,0) -- (0,3,0);
				\draw[thick] (-3,-3,0) -- (3,3,0);
				\node[scale=1.5] at (.7,3.7,1.2) {\(H_0\)};
				\end{tikzpicture}
			}
		\end{minipage}
	\end{center}
	\caption{The two cases of Proposition \ref{prop:indirredsub}, here \(L''=\{0\}\). The three planes forming \(\mH'\) intersect along a line in Case (i) and at a point in Case (ii).}
	\label{fig:casesiandii}
\end{figure}
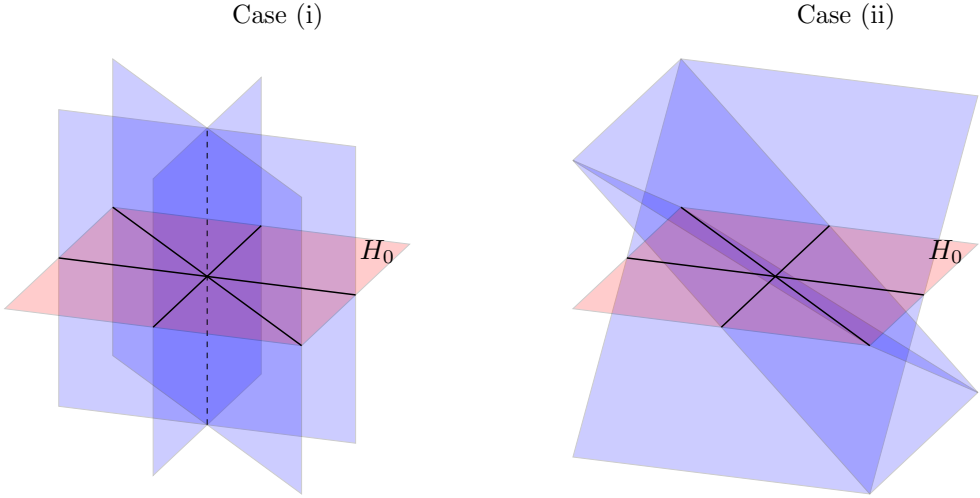

Our next result asserts that if we are given an essential arrangement \(\mH'\) that intersects a hyperplane \(H_0\) (which is not a member of \(\mH'\)) in an irreducible arrangement \(\mH''\) then adding the hyperplane \(H_0\) to the arrangement \(\mH'\) makes \(\mH' \cup \{H_0\}\) irreducible.

\begin{lemma}\label{lem:caseii}
	If \(\mH'\) is essential and \(\mH''\) is irreducible then \(\mH\) is irreducible.
\end{lemma}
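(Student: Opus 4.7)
The plan is to argue by contradiction, using the hypothesis on \(\mH''\) to force a contradiction from any non-trivial u-plus decomposition of \(\mH\). First I would note that essentiality of \(\mH\) is immediate: since \(\mH'\) is essential, \(T(\mH')=\{0\}\subset H_0\), so \(H_0\) is not a separator (in the sense of Section \ref{sec:triples}) and \(T(\mH)=T(\mH')\cap H_0=\{0\}\). So I may freely use that \(T(\mH_i)+T(\mH_j)=V\) for the constituents of any u-plus decomposition.

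Next I would assume \(\mH=\mA_1\uplus\mA_2\) with both \(\mA_i\) non-empty, and place \(H_0\in\mA_1\); set \(\mA_1'=\mA_1\setminus\{H_0\}\). A short preliminary step shows \(\mA_1'\ne\emptyset\): otherwise \(\mH=\{H_0\}\uplus\mH'\) would put \(h_0\notin W(\mH')\), contradicting \(W(\mH')=V^{*}\) (which is how essentiality of \(\mH'\) manifests). Now the u-plus decomposition \(\mH=\mA_1\uplus\mA_2\) gives a direct sum \(V=V_1\oplus V_2\) with \(V_1=T(\mA_2)\) and \(V_2=T(\mA_1)\) (by Lemma \ref{lem:ann} and essentiality of \(\mH\)), and since \(H_0\in\mA_1\) we have \(V_2\subset H_0\). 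Thus \(H_0=U_1\oplus V_2\), where \(U_1=V_1\cap H_0\) is a hyperplane of \(V_1\). The key observation is then the behaviour of the projection \(\pi:\mH'\to\mH''\) of Definition \ref{def:projectionpi}: every \(H\in\mA_1'\) contains \(V_2\), hence so does \(\pi(H)\); every \(H\in\mA_2\) contains \(V_1\), hence \(\pi(H)\supset U_1\).

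Setting \(\mB_1=\pi(\mA_1')\) and \(\mB_2=\pi(\mA_2)\), this observation says that the linear equations defining members of \(\mB_1\) lie in the annihilator of \(V_2\) inside \(H_0^{*}\), while those defining members of \(\mB_2\) lie in the annihilator of \(U_1\). Since \(V_2\) and \(U_1\) are complementary in \(H_0\), these two annihilators have trivial intersection, so \(W(\mB_1)\cap W(\mB_2)=\{0\}\). Because \(\pi\) is surjective onto \(\mH''\), we get \(\mH''=\mB_1\cup\mB_2\), and Lemma \ref{lem:ann} then upgrades this to \(\mH''=\mB_1\uplus\mB_2\), a u-plus decomposition with both pieces non-empty. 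This contradicts the irreducibility of \(\mH''\) and concludes the proof.

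The main obstacle—modest, but the real content—is showing that the abstract u-plus decomposition of \(\mH\) actually descends to one of \(\mH''\). This requires identifying the right complementary subspaces of \(H_0\) carrying the two sub-arrangements of \(\mH''\); everything else is bookkeeping. The argument uses essentiality of \(\mH'\) in two crucial places: to force \(\mH\) essential (so that \(V=V_1\oplus V_2\) is a direct sum and \(H_0\) splits cleanly) and to rule out the degenerate case \(\mA_1=\{H_0\}\). Irreducibility of \(\mH''\) then delivers the contradiction in one shot.
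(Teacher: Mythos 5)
Your proof is correct and follows essentially the same route as the paper: both arguments descend the hypothetical u-plus decomposition of \(\mH\) to one of \(\mH''\) (your \(\mB_1=\pi(\mA_1')\), \(\mB_2=\pi(\mA_2)\) correspond to the paper's \(\mH''_{T_1''}\), \(\mH''_{T_2''}\)), and both invoke essentiality of \(\mH'\) at the same spot, to rule out \(\mA_1=\{H_0\}\). The only cosmetic difference is that you verify the splitting of \(\mH''\) via annihilators in \(H_0^{*}\) rather than via a dimension count on the centres \(T_i''\), which are equivalent by Lemma \ref{lem:ann}.
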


\begin{proof}
	Suppose that \(\mH= \mH_1 \uplus \mH_2\) and write \(T_i = T(\mH_i)\) for the centres, so \(T_1 + T_2=V\).
	Since \(\mH\) is essential we have \(T_1 \cap T_2 = \{0\}\), so \(T_1 \oplus T_2 = V\). We want to show that either \(T_1=V\) or \(T_2=V\).  
	
	Let \(T_i''=T_i \cap H_0\). Clearly \(\mH'' = \mH''_{T_1''} \cup \mH''_{T_2''}\)
	and \(T_1'' \oplus T_2'' \subset H_0\). Since \(H_0 \in \mH\) we must have \(H_0 \in \mH_1\), say, so \(T_1'' = T_1\). On the other hand, since \(T_1+T_2=V\) and \(T_1 \subset H_0\), we must have \(T_2 \not\subset H_0\) and \(T_2'' \subsetneq T_2\). We conclude that \(\dim T''_1 = \dim T_1\) and \(\dim T_2'' = \dim T_2 -1\). Since \(\dim T_1 + \dim T_2 = \dim V\), we get that \(\dim T_1'' + \dim T_2 '' = \dim H_0\). Therefore \(T_1'' \oplus T_2'' = H_0 \) and \(\mH'' = \mH''_{T_1''} \uplus \mH''_{T_2''} \).
	
	Since \(\mH''\) is irreducible we must have either of the following two cases:
	\begin{itemize}
		\item \({T}''_2=H_0\). This implies that either \(T_2=H_0\) or \(T_2=V\). If \(T_2= H_0\) then \(\mH_2=\{H_0\}\) but this contradicts \(H_0 \in \mH_1\). We conclude that \(T_2=V\).
		
		\item \({T}''_1=H_0\). This implies that either \(T_1= H_0\) or \(T_1=V\). If \(T_1=H_0\) then \(\mH_1=\{H_0\}\) and \(\mH_2=\mH'\); hence  \(T_2=\{0\}\) (because \(\mH'\) is essential) which contradicts \(T_1+T_2=V\). We deduce that \(T_1=V\).\qedhere
	\end{itemize}
\end{proof}

The next result describes irreducible intersections of \(\mH''\), it is a special case of \cite[Lemma 2.1]{CHL}. 
\begin{proposition}\label{prop:indirredsub}
	Let \(L'' \in \mL_{irr}(\mH'')\) with \(L'' \subsetneq H_0\). Define \(L \in \mL(\mH') \) to be the common intersection of all \(H \in \mH'\) such that \(L'' \subset H\). Then the following dichotomy holds:
	\begin{itemize}
		\item[(i)] \(L'' \subsetneq L\) and the irreducible components of \(L'' \in \mL(\mH)\) are \(L\) and \(H_0\); 
		\item[(ii)] \(L=L'' \in \mL_{irr}(\mH)\).
	\end{itemize}
	In particular, \(L\) is always an irreducible intersection of \(\mH\).
\end{proposition}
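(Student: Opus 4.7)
The plan begins by pinning down the structure of $\mH_{L''}$. Set $\mathcal{G} = \mH'_L = \{H \in \mH' : L'' \subset H\}$, so $\mathcal{G}$ is exactly the collection of hyperplanes of $\mH'$ whose common intersection defines $L$, and $\mH_{L''} = \mathcal{G} \cup \{H_0\}$. The induced arrangement $\mathcal{G}^{H_0}$ coincides with $\mH''_{L''}$, and the hypothesis $L'' \in \mL_{irr}(\mH'')$ gives $T(\mathcal{G}^{H_0}) = L''$. Since $L \cap H_0 = \bigcap \mathcal{G}^{H_0} = T(\mathcal{G}^{H_0})$, one obtains $L \cap H_0 = L''$. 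This immediately yields the dichotomy: either $L \not\subset H_0$, in which case $L'' = L \cap H_0 \subsetneq L$ (case (i)); or $L \subset H_0$, in which case $L = L \cap H_0 = L''$ (case (ii)).

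For case (i), the assumption $L \not\subset H_0$ is equivalent to $h_0 \notin W(\mathcal{G}) = \Ann(L)$. Hence $W(\mathcal{G}) \cap \C h_0 = \{0\}$, so by Lemma \ref{lem:ann} we already have the u-plus decomposition $\mH_{L''} = \mathcal{G} \uplus \{H_0\}$. The heart of the case is to show $\mathcal{G}$ is irreducible, from which the irreducible decomposition of $\mH_{L''}$ is $\mathcal{G} \uplus \{H_0\}$ and its irreducible components have centres $T(\mathcal{G}) = L$ and $T(\{H_0\}) = H_0$. I argue by contradiction: if $\mathcal{G} = \mathcal{G}_1 \uplus \mathcal{G}_2$ with both factors non-empty, consider the restriction $\pi_{H_0} : V^* \to H_0^*$, whose kernel is $\C h_0$. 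Since $h_0 \notin W(\mathcal{G}) = W(\mathcal{G}_1) \oplus W(\mathcal{G}_2)$, the map $\pi_{H_0}|_{W(\mathcal{G})}$ is injective. This injectivity does two things at once: it forces $W(\mathcal{G}_1^{H_0}) + W(\mathcal{G}_2^{H_0}) = W(\mathcal{G}^{H_0})$ to be a direct sum, and it forces the sets $\mathcal{G}_1^{H_0}, \mathcal{G}_2^{H_0}$ to be disjoint (if $H_1 \cap H_0 = H_2 \cap H_0$ with $H_i \in \mathcal{G}_i$, then $h_1$ and $h_2$ differ by a multiple of $h_0$, which lies in $W(\mathcal{G})$ only if the multiple is zero, giving $H_1 = H_2$). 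Thus $\mH''_{L''} = \mathcal{G}^{H_0} = \mathcal{G}_1^{H_0} \uplus \mathcal{G}_2^{H_0}$ is a u-plus decomposition with non-empty factors, contradicting $L'' \in \mL_{irr}(\mH'')$.

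For case (ii) we have $L = L'' \subset H_0$ and $\mH_L = \mathcal{G} \cup \{H_0\}$. I pass to the quotient $V/L$: the arrangement $\mathcal{G}/L$ is essential in $V/L$, its restriction to the hyperplane $H_0/L$ is naturally identified with $\mathcal{G}^{H_0}/L = \mH''_{L''}/L$, and this restriction is essential and irreducible in $H_0/L$. Applying Lemma \ref{lem:caseii} to the deletion-restriction triple $(\mH_L/L, \mathcal{G}/L, \mathcal{G}^{H_0}/L)$ in $V/L$ immediately gives that $\mH_L/L$ is irreducible, hence $\mH_L$ is irreducible and $L = L'' \in \mL_{irr}(\mH)$. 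The ``in particular'' conclusion is then obtained by observing that in case (i) one has $H_0 \notin \mH_L$ (because $L \not\subset H_0$), so $\mH_L = \mathcal{G}$ is irreducible by the argument above, while in case (ii) the irreducibility of $\mH_L$ is exactly what was just shown.

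The principal obstacle will be the irreducibility step in case (i): transporting a putative u-plus decomposition of $\mathcal{G}$ through restriction to $H_0$ requires simultaneously verifying both the span decomposition and the set-theoretic disjointness of the restricted families of hyperplanes. Both assertions pivot on the injectivity of $\pi_{H_0}|_{W(\mathcal{G})}$, which is where the transversality hypothesis $L \not\subset H_0$ is used in an essential way; without it, distinct hyperplanes of $\mathcal{G}$ can collide after restriction and the argument fails.
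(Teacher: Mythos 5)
Your proof is correct and follows essentially the same route as the paper: the dichotomy comes from $L\cap H_0=L''$, case (ii) is handled by applying Lemma \ref{lem:caseii} to $\mH_L/L$, and case (i) exploits the transversality $L\not\subset H_0$. The only difference is that in case (i) you re-derive inline (via injectivity of restriction to $H_0^*$ on $W(\mathcal{G})$) what the paper obtains by citing Corollary \ref{cor:separator}, i.e.\ the separator machinery of Lemmas \ref{lem:noseparator} and \ref{lem:sep}.
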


\begin{note}
	By definition, \(L \cap H_0= L''\). The two cases \((i)\) and \((ii)\) are illustrated on Figure \ref{fig:casesiandii}.
\end{note}

\begin{proof}
	Case \((i)\). If \(L'' \subsetneq L\) then \(H_0\) is a separator for the arrangement \(\mH_{L''}\). Corollary \ref{cor:separator} implies two things: that \((\mH_{L''})_L = \mH_L\) (see Lemma \ref{lem:HLMisHM}) is irreducible and also that \(\mH_{L''} = \mH_L \uplus \{H_0\}\). In particular,  \(L \in \mL_{irr}(\mH)\) and Case \((i)\) follows.
	
	Case \((ii)\). We want to show that if \(L=L''\) then \(\mH_{L''}\) is irreducible; this follows from Lemma \ref{lem:caseii} applied to the arrangement \(\mH_{L}/L\).	
\end{proof}

The next result is taken from {\cite[Lemma 3.2.4]{OT2}}. For the sake of completeness we include its proof with a slightly simplified presentation.

\begin{proposition}\label{prop:OrlikTerao}
	If \(\mH'\) and \(\mH''\) are reducible then \(\mH\) is reducible.
\end{proposition}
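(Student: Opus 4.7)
The proof splits into three cases according to how $H_0$ sits with respect to the arrangement. If $H_0$ is a separator, Lemma~\ref{lem:noseparator} gives $\mH = \mH' \uplus \{H_0\}$, and any reducibility decomposition $\mH' = \mA_1 \uplus \mA_2$ from the hypothesis on $\mH'$ immediately yields $\mH = \mA_1 \uplus (\mA_2 \cup \{H_0\})$; this case uses only that $\mH'$ is reducible.

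Suppose henceforth that $H_0$ is not a separator, so $T(\mH') \subset H_0$ and consequently $h_0 \in W(\mH')$. Fix a u-plus decomposition $\mH' = \mA_1 \uplus \mA_2$ and write $h_0 = h_1 + h_2$ uniquely with $h_i \in W(\mA_i)$. If $h_2 = 0$ (which we may assume by relabelling), then $h_0 \in W(\mA_1)$ and $\mH = (\mA_1 \cup \{H_0\}) \uplus \mA_2$ solves the problem. The nontrivial case is when both $h_1, h_2$ are non-zero, and only here do we actually bring in the reducibility of $\mH''$. Fix $\mH'' = \mB_1 \uplus \mB_2$ and set $\tilde{\mB}_j = \{H \in \mH' : H \cap H_0 \in \mB_j\}$; both pieces are non-empty since the restriction map $\pi : \mH' \to \mH''$, $H \mapsto H \cap H_0$, is surjective. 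At the level of spans, $W(\tilde{\mB}_j)$ is contained in the preimage of $W(\mB_j)$ under the surjection $V^* \to V^*/\C h_0 = H_0^*$, which for brevity we also denote by $\pi$.

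The crux is the refined four-fold decomposition $W(\mH') = \bigoplus_{i,j} W(\mA_{ij})$, where $\mA_{ij} = \mA_i \cap \tilde{\mB}_j$. Indeed, $W(\mA_{i1}) \cap W(\mA_{i2}) \subset W(\tilde{\mB}_1) \cap W(\tilde{\mB}_2) \cap W(\mA_i) \subset \C h_0 \cap W(\mA_i) = 0$, the last equality because $h_{3-i} \neq 0$ prevents $h_0$ from lying in $W(\mA_i)$. Combining the resulting $W(\mA_i) = W(\mA_{i1}) \oplus W(\mA_{i2})$ with $W(\mH') = W(\mA_1) \oplus W(\mA_2)$ yields the four-fold direct sum, which in particular forces $W(\tilde{\mB}_1) \cap W(\tilde{\mB}_2) = 0$. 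Now decompose $h_0 = \beta_1 + \beta_2$ uniquely with $\beta_j \in W(\tilde{\mB}_j)$; applying $\pi$ and using $W(\mB_1) \oplus W(\mB_2) = W(\mH'')$ gives $\pi(\beta_j) = 0$, so each $\beta_j \in \C h_0 \cap W(\tilde{\mB}_j)$. Writing $\beta_j = c_j h_0$ with $c_1 + c_2 = 1$, having both $c_j \neq 0$ would place $h_0$ in $W(\tilde{\mB}_1) \cap W(\tilde{\mB}_2) = 0$, a contradiction with $h_0 \neq 0$. Hence exactly one $\beta_j$ vanishes, so $h_0 \in W(\tilde{\mB}_{3-j})$, and $\mH = (\tilde{\mB}_{3-j} \cup \{H_0\}) \uplus \tilde{\mB}_j$ is the desired u-plus decomposition. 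The main obstacle is setting up the four-fold direct sum and the pullback-via-$\pi$ argument that pins down the placement of $H_0$; once these are in place, everything else is routine bookkeeping.
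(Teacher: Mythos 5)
Your proof is correct. It is worth noting that it is essentially the annihilator-dual of the paper's argument, but organised quite differently. The paper works downstairs with the centres $T(\cdot)\subset V$: it isolates two sufficient conditions for reducibility of $\mH$ (\textbf{Case A}: $T(\mA_i)\subset H_0$ for some $i$; \textbf{Case B}: $T(\mB_i)\not\subset H_0$ for some $i$) and then proves by contradiction, via sum/intersection identities among the four centres $C_{ij}=T(\mA_i\cap\mB_j)$, that at least one case must occur. Under $W(\cdot)=\Ann T(\cdot)$ your cases match exactly — $T(\mA_i)\subset H_0$ is $h_0\in W(\mA_i)$, i.e.\ your ``$h_{3-i}=0$'' branch, and $T(\mB_j)\not\subset H_0$ is $h_0\notin W(\tilde{\mB}_j)$ — but you replace the contradiction argument by a direct one upstairs in $V^*$: you first establish the full four-fold direct sum $W(\mH')=\bigoplus_{i,j}W(\mA_{ij})$ (a slightly stronger structural fact than the specific identities the paper extracts from the $C_{ij}$), and then pin down the location of $h_0$ by uniqueness of its decomposition along $W(\tilde{\mB}_1)\oplus W(\tilde{\mB}_2)$ together with the fact that $\ker(V^*\to H_0^*)=\C h_0$. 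What your route buys is a constructive identification of which block absorbs $H_0$ and a cleaner logical flow (no proof by contradiction); what the paper's route buys is that it never needs to split off the separator case or the ``one of $h_1,h_2$ vanishes'' case, since both are swallowed by Case A/Case B. Two small points you should make explicit if this were written up: the directness of $W(\mA_i)=W(\mA_{i1})\oplus W(\mA_{i2})$ also needs the (trivial) spanning statement $W(\mA_i)=W(\mA_{i1})+W(\mA_{i2})$, which follows from $\mA_i=\mA_{i1}\cup\mA_{i2}$; and the claim $\C h_0\cap W(\mA_i)=0$ rests on the uniqueness of the decomposition $h_0=h_1+h_2$ in $W(\mA_1)\oplus W(\mA_2)$ — both are one-line checks, so the proof stands.
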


\begin{proof}	
	We are given the following:
	\begin{itemize}
		\item  \(\mH' = \mA_1 \uplus \mA_2\) with \(\mA_1, \mA_2\) non-empty;
		\item \(\mH'' = \mH_1 \uplus \mH_2\) with \(\mH_1, \mH_2\) non-empty.
	\end{itemize}
	Let \(\pi: \mH' \to \mH''\) be as in Definition \ref{def:projectionpi} and  denote
	\[ \mB_1= \pi^{-1}(\mH_1), \qquad \mB_2 = \pi^{-1}(\mH_2) . \]
	We consider the following cases.
	\begin{description}
		\item[Case A:] either \(T(\mA_1) \subset H_0\) or \(T(\mA_2) \subset H_0\).
		\item[Case B:] either \(T(\mB_1) \not\subset H_0\) or \(T(\mB_2) \not\subset H_0\).
	\end{description}
	
	We make the following.
	\begin{claim}\label{claim1}
		If \textbf{Case A} or \textbf{Case B} holds then \(\mH\) is reducible.
	\end{claim}
	\begin{itemize}[leftmargin=*]
		\item[] If \textbf{Case A} holds then either 
		\[\mH=(\mA_1 \cup \{H_0\}) \uplus \mA_2 \qquad \mbox{or} \qquad \mH=(\mA_2 \cup \{H_0\}) \uplus \mA_1 ,\] 
		as follows from \(T(\mA_1)+T(\mA_2)=V\). 
		\item[] If \textbf{Case B} holds then either 
		\[\mH= \mB_1 \uplus (\mB_2 \cup \{H_0\}) \qquad \mbox{or} \qquad \mH= \mB_2 \uplus (\mB_1 \cup \{H_0\}),\] 
		as follows from \(T(\mB_i) \cap H_0=T(\mH_i)\) together with
		\(T(\mH_1) + T(\mH_2) =H_0\). 
	\end{itemize}
	
	We have established Claim \ref{claim1} and proceed to the next.
	
	\begin{claim}\label{claim2}
		At least one of \textbf{Case A} or \textbf{Case B} must happen.
	\end{claim}
	We prove Claim \ref{claim2} by contradiction. Assume that \(T(\mA_i) \not\subset H_0\) and \(T(\mB_i) \subset H_0\). Define
	\[C_1 = T(\mA_1 \cap \mB_1), \hspace{2mm} C_2 = T(\mA_1 \cap \mB_2), \hspace{2mm} C_3 = T(\mA_2 \cap \mB_1), \hspace{2mm} C_4 = T(\mA_2 \cap \mB_2) . \]
	Using \(T(\mA_1)+T(\mA_2)=V\) together with \(C_1 \cap C_2= T(\mA_1)\) and \(C_3 \cap C_4 = T(\mA_2)\), we obtain
	\begin{equation}\label{eq:C1}
	C_1 \cap C_2 + C_3 \cap C_4 = V .
	\end{equation}
	Using \(T(\mB_1)+T(\mB_2)=H_0\) together with \(C_1 \cap C_3= T(\mB_1)\) and \(C_2 \cap C_4 = T(\mB_2)\), we obtain 
	\begin{equation}\label{eq:C2}
	C_1 \cap C_3 + C_2 \cap C_4 = H_0 .
	\end{equation}
	
	Our next assertion is that
	\begin{equation}\label{eq:C3}
	(C_1 + C_2) \cap (C_3 + C_4) = H_0 .
	\end{equation}
	Indeed, the inclusion \(H_0 \subset (C_1 + C_2) \cap (C_3 + C_4)\) follows immediately from Equation \eqref{eq:C2}. Conversely, let \(v \in (C_1 + C_2) \cap (C_3 + C_4)\). Write
	\[v = c_1 + c_2 = c_3 + c_4\]
	with \(c_i \in C_i\). It follows from Equation \eqref{eq:C1} that we can write
	\[c_1 - c_3 = c_4 - c_2 = c_{12} + c_{34} \]
	with \(c_{12} \in C_1 \cap C_2\) and \(c_{34} \in C_3 \cap C_4\). Therefore
	\[ v = (c_1-c_{12}) + (c_{12}+c_2) \]
	with \(c_1-c_{12} = c_{34} + c_3 \in C_1 \cap C_3\) and \(c_{12} + c_2 = c_4 - c_{34}  \in C_2 \cap C_4\). Equation \eqref{eq:C2} implies \(v \in H_0\) and establishes Equation \eqref{eq:C3}.
	
	Equation \eqref{eq:C3} implies that \(H_0 \subset C_1 + C_2\). If \(C_1+C_2=V\) then Equation \eqref{eq:C3} would imply \(C_3 + C_4 = H_0\) contradicting our assumption \(T(\mA_2) = C_3 \cap C_4 \not\subset H_0\). Therefore we must have \(C_1+C_2 = H_0\). Similarly, we must have \(C_3+C_4 = H_0\). In particular, \(C_1, \ldots, C_4\) are all contained in \(H_0\) but this contradicts Equation \eqref{eq:C1}. This finishes the proof of Claim \ref{claim2} and therefore of the lemma.  		
\end{proof}

\begin{definition} \label{def:generalposition}
	A collection of \(n+1\) linear hyperplanes \(\mH=\{H_1, \ldots, H_{n+1}\}\) in a complex vector space of dimension \(n\) is said to be in general position if the intersection of any \(n\)-tuple of distinct members of \(\mH\) is the origin.
\end{definition}

\begin{lemma}\label{lem:isost}
	Any arrangement as in Definition \ref{def:generalposition} is isomorphic to the standard arrangement \(\mH_{st}\) in \(\C^n\) given by
	\[H_1=\{z_1=0\}, \, \ldots, \, H_n=\{z_n=0\}, \, H_{n+1}=\{z_1+\ldots+z_n=0\} . \]
\end{lemma}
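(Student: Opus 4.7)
The plan is straightforward linear algebra: use the general position condition to extract a suitable basis of $V^*$ from the defining equations, then rescale.

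First, I would fix defining linear equations $h_1, \ldots, h_{n+1} \in V^*$ with $H_i = \{h_i = 0\}$. Applying Definition \ref{def:generalposition} to the $n$-tuple $\{H_1, \ldots, H_n\}$ gives $\bigcap_{i=1}^{n} \ker h_i = \{0\}$, so the annihilator of $\Span\{h_1, \ldots, h_n\}$ is zero. Hence $\{h_1, \ldots, h_n\}$ is a basis of $V^*$. This basis provides a linear isomorphism $\Phi: V \xrightarrow{\sim} \C^n$ via $\Phi(v) = (h_1(v), \ldots, h_n(v))$, under which $H_i$ becomes $\{z_i=0\}$ for $1 \leq i \leq n$.

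Next, I would analyse the image of $H_{n+1}$. Under $\Phi$, the linear form $h_{n+1}$ pulls back to an expression $c_1 z_1 + \ldots + c_n z_n$ with $c_i \in \C$. For each $1 \leq i \leq n$, apply Definition \ref{def:generalposition} to the $n$-tuple $\{H_1, \ldots, \widehat{H}_i, \ldots, H_{n+1}\}$: setting $z_j = 0$ for $j \neq i$ reduces the intersection to the line $\{z_j = 0, \, j \neq i\}$ cut by $c_i z_i = 0$, which equals $\{0\}$ if and only if $c_i \neq 0$. Hence all $c_i$ are non-zero.

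Finally, compose $\Phi$ with the diagonal rescaling $(z_1, \ldots, z_n) \mapsto (c_1 z_1, \ldots, c_n z_n)$. This preserves each coordinate hyperplane $\{z_i = 0\}$ and transforms $c_1 z_1 + \ldots + c_n z_n$ into $z_1 + \ldots + z_n$. The resulting composition is a linear isomorphism $V \xrightarrow{\sim} \C^n$ sending $\mH$ to $\mH_{st}$, which is the required isomorphism in the sense of Definition \ref{def:isoarrang}. There is no genuine obstacle here; the only point to check carefully is the iterated use of the general position hypothesis to conclude each $c_i \neq 0$.
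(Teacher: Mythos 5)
Your proof is correct and follows essentially the same route as the paper: choose coordinates making $H_1,\ldots,H_n$ the coordinate hyperplanes, use general position to see that every coefficient $c_i$ of $h_{n+1}$ is non-zero, and rescale. The paper states this more tersely, but your filling-in of the two applications of the general position hypothesis is exactly what is intended.
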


\begin{proof}
	We can take linear coordinates \(\tilde{z}_1, \ldots, \tilde{z}_n\) on \(V\) so that \(H_i=\{\tilde{z}_i=0\}\) for \(1 \leq i \leq n\). In these coordinates \(H_{n+1} = \{\sum_{i=1}^{n} c_i \tilde{z}_i = 0\}\) for some \(c_i \in \C\). The general position condition implies \(c_i \neq 0\) for all \(i\). The coordinates \(z_i= c_i \tilde{z}_i\) do the job.
\end{proof}

\begin{lemma}\label{lem:autSt}
	The identity component of the automorphism group of \(\mH_{st}\) is \(\C^*\).
\end{lemma}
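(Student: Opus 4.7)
The plan is to reduce the statement to the corollary proved just after Definition \ref{def:automorphimsH}, which asserts that $\operatorname{Aut}_0(\mathcal H)=\C^*$ if and only if $\mathcal H$ is essential and irreducible. Since the identity component of $\operatorname{Aut}(\mathcal H_{st})$ must preserve the discrete set $\mathcal H_{st}$ of hyperplanes element-wise, it is contained in $\operatorname{Aut}_0(\mathcal H_{st})$; conversely $\operatorname{Aut}_0(\mathcal H_{st})$ always contains the connected subgroup $\C^*$ of scalar matrices. Hence it suffices to show $\mathcal H_{st}$ is essential and irreducible, so that $\operatorname{Aut}_0(\mathcal H_{st})=\C^*$ is already connected and the two notions of identity component coincide.

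Essentiality is immediate: the first $n$ hyperplanes give $\{z_1=0\}\cap\ldots\cap\{z_n=0\}=\{0\}$, so $T(\mathcal H_{st})=\{0\}$. For irreducibility I will argue by contradiction using Lemma \ref{lem:ann}. Suppose $\mathcal H_{st}=\mathcal H_1\uplus\mathcal H_2$ with both pieces non-empty. Without loss of generality $H_{n+1}\in\mathcal H_1$, so there is a nontrivial partition $\{1,\ldots,n\}=I\sqcup J$ with $\{H_i\}_{i\in I}\subset\mathcal H_1$ and $\{H_j\}_{j\in J}\subset\mathcal H_2$. By Lemma \ref{lem:ann}, $W(\mathcal H_1)\cap W(\mathcal H_2)=\{0\}$. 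But $W(\mathcal H_2)=\operatorname{span}\{z_j:j\in J\}$ contains the element $\sum_{j\in J}z_j$, while the identity
\[
\sum_{j\in J} z_j \;=\; (z_1+\ldots+z_n)-\sum_{i\in I}z_i
\]
exhibits the same element as a linear combination of $h_{n+1}=z_1+\ldots+z_n$ and the $z_i$ with $i\in I$, all of which lie in $W(\mathcal H_1)$. If $J\neq\emptyset$ this is a nonzero vector in $W(\mathcal H_1)\cap W(\mathcal H_2)$, contradicting the direct sum. Hence $J=\emptyset$, so $\mathcal H_2=\emptyset$, contradicting our assumption of a nontrivial decomposition.

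With essentiality and irreducibility in hand, the Corollary after Definition \ref{def:automorphimsH} yields $\operatorname{Aut}_0(\mathcal H_{st})=\C^*$. As noted above, this connected group equals the identity component of $\operatorname{Aut}(\mathcal H_{st})$. There is no real obstacle here; the only subtle point is to observe that the identity component (in the Lie group sense) and the subgroup $\operatorname{Aut}_0$ preserving each hyperplane set-wise coincide precisely because we have shown the latter is already connected.
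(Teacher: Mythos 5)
Your proof is correct, but it runs in the opposite logical direction from the paper's. The paper proves the lemma by a direct two-line computation: the identity component preserves each hyperplane individually, a matrix preserving the coordinate hyperplanes is diagonal, and preserving \(\{z_1+\ldots+z_n=0\}\) then forces all diagonal entries to coincide. It then \emph{uses} this lemma in Corollary \ref{cor:starr} to conclude that \(\mH_{st}\) is irreducible (via the corollary after Definition \ref{def:automorphimsH}). You instead prove essentiality and irreducibility of \(\mH_{st}\) first — your span argument with \(\sum_{j\in J}z_j \in W(\mH_1)\cap W(\mH_2)\) via Lemma \ref{lem:ann} is a clean, correct combinatorial proof — and then invoke that same corollary in the other direction to get \(\Aut_0(\mH_{st})=\C^*\). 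There is no circularity, since the corollary rests only on Lemmas \ref{lem:automorphisms} and \ref{lem:HLdec} from Section \ref{sect:irredarr}; your preliminary observation that the identity component of \(\Aut(\mH_{st})\) lies in \(\Aut_0(\mH_{st})\) (because it acts trivially on the finite set of hyperplanes) and contains the connected group \(\C^*\) is also needed and correctly supplied. What your route buys is an independent proof of the irreducibility of \(\mH_{st}\), which in the paper's ordering is a downstream consequence of this lemma; what it costs is that it leans on the general machinery where a direct matrix computation suffices.
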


\begin{proof}
	A matrix which preserves the coordinate hyperplanes must be diagonal, if it also preserves \(\{z_1+ \ldots + z_n=0\}\) then all diagonal entries must be equal.
\end{proof}

\begin{corollary}\label{cor:starr}
	An arrangement \((V, \mH)\) with \(|\mH|= \dim V +1\) is essential and irreducible if and only if \(\mH \equiv \mH_{st} \).
\end{corollary}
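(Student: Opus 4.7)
The plan is to combine Lemma \ref{lem:isost} with the u-plus decomposition criterion from Lemma \ref{lem:ann} and reduce the isomorphism statement to verifying general position. The argument is elementary linear algebra; the only bookkeeping point is tracking which linear forms lie in the spans $W(\mA_i)$ of a putative decomposition.

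For the direction ($\Rightarrow$), assume $(V,\mH)$ is essential and irreducible with $|\mH|=n+1$. Essentiality forces the $n+1$ defining forms $h_1,\ldots,h_{n+1}$ to span $V^*$, so some $n$-subset is linearly independent; after relabelling I may take this subset to be $\{h_1,\ldots,h_n\}$ and write $h_{n+1}=\sum_{i=1}^n c_i h_i$. The crucial step is to rule out $c_k = 0$ for any $k$: if e.g.\ $c_1 = 0$, then with $\mA_1 = \{H_1\}$ and $\mA_2 = \{H_2,\ldots,H_{n+1}\}$ one has $W(\mA_1)=\C h_1$ and $W(\mA_2)\subset\Span\{h_2,\ldots,h_n\}$, so $W(\mA_1)\cap W(\mA_2)=0$ by linear independence of $h_1,\ldots,h_n$. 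Lemma \ref{lem:ann} then produces a nontrivial decomposition $\mH = \mA_1\uplus\mA_2$, contradicting irreducibility. Hence every $c_i\neq 0$, and a direct linear algebra check shows that removing any $h_k$ ($k\le n$) from $\{h_1,\ldots,h_{n+1}\}$ still leaves a linearly independent $n$-subset; removing $h_{n+1}$ is immediate. Thus any $n$ members of $\mH$ intersect only at the origin, so $\mH$ is in general position in the sense of Definition \ref{def:generalposition}, and Lemma \ref{lem:isost} gives $\mH\equiv\mH_{st}$.

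For the direction ($\Leftarrow$), essentiality of $\mH_{st}$ is immediate from the coordinate hyperplanes. For irreducibility, suppose $\mH_{st}=\mA_1\uplus\mA_2$ with both factors non-empty, and WLOG $H_{n+1}\in\mA_1$. Let $S=\{i\le n:H_i\in\mA_2\}$, which is non-empty. Then $W(\mA_2)=\Span\{z_i:i\in S\}$ while $\sum_{j\notin S}z_j\in W(\mA_1)$ and $z_1+\cdots+z_n\in W(\mA_1)$, so the identity
\[
\sum_{i\in S} z_i \;=\; (z_1+\cdots+z_n) \;-\; \sum_{j\notin S} z_j
\]
puts the non-zero form $\sum_{i\in S}z_i$ into $W(\mA_1)\cap W(\mA_2)$, contradicting Lemma \ref{lem:ann}. (Alternatively, one may cite Lemma \ref{lem:autSt} together with the corollary at the end of Section \ref{sect:irredarr} characterizing essential irreducible arrangements by $\Aut_0(\mH)=\C^*$.) There is no real obstacle in the argument; its content is entirely a rank count plus two applications of Lemma \ref{lem:ann}.
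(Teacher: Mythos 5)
Your proof is correct and follows essentially the same route as the paper: both directions reduce to showing general position by ruling out a u-plus decomposition that isolates a single hyperplane, and then invoking Lemma \ref{lem:isost}. The only cosmetic difference is in the converse, where you verify irreducibility of \(\mH_{st}\) by a direct computation with the spans \(W(\mA_i)\) instead of citing Lemma \ref{lem:autSt} as the paper does — a route you already mention as an alternative.
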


\begin{proof}
	If \(\mH \equiv \mH_{st}\) then \(\mH\) is clearly essential and it is irreducible because of Lemma \ref{lem:autSt}. On the other hand, if \(\mH\) is essential and irreducible and \(|\mH|=\dim V+ 1\) then the hyperplanes must be in general position because if \(\{H_1, \ldots, H_n\}\), say, intersect along a non-zero subspace
	then we would have \(\mH = \{H_1, \ldots, H_n\} \uplus \{H_{n+1}\}\); Lemma \ref{lem:isost} implies \(\mH\equiv \mH_{st}\). 
\end{proof}

\begin{corollary} \label{cor:standardsubarrangement}
	If \((V, \mH)\) has a subset of \(\dim V+1\) hyperplanes in general position then \(\mH\) is essential and irreducible.
\end{corollary}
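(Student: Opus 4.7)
The plan is to bootstrap from Corollary \ref{cor:starr}, which already establishes that an arrangement of exactly $\dim V + 1$ hyperplanes in general position is essential and irreducible. Write $\mA = \{H_1, \ldots, H_{n+1}\} \subset \mH$ for the given general position subset; by Corollary \ref{cor:starr}, $\mA$ is essential and irreducible, so in particular $T(\mA) = \{0\}$.

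First I would dispose of essentiality. Since $T(\mH) = \bigcap_{H \in \mH} H \subset \bigcap_{H \in \mA} H = T(\mA) = \{0\}$, we get $T(\mH) = \{0\}$ at once.

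For irreducibility, I would argue by contradiction: suppose $\mH = \mH_1 \uplus \mH_2$ with both pieces non-empty. Since $\mA$ is irreducible, Lemma \ref{lem:uniq0} forces $\mA \subset \mH_i$ for some $i$, say $i = 1$. Then
\[
T(\mH_1) \subset \bigcap_{H \in \mA} H = T(\mA) = \{0\},
\]
so $T(\mH_1) = \{0\}$. But by Lemma \ref{lem:ann}, a u-plus decomposition satisfies $T(\mH_1) + T(\mH_2) = V$, hence $T(\mH_2) = V$. As observed right after the definition of $T(\mH)$, a non-empty arrangement has centre strictly contained in each of its hyperplanes, so $T(\mH_2) = V$ forces $\mH_2 = \emptyset$, contradicting the assumption. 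Therefore $\mH$ admits no non-trivial u-plus decomposition, i.e.\ $\mH$ is irreducible.

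There is no real obstacle here: the result is a direct corollary of Corollary \ref{cor:starr} combined with the uniqueness observation in Lemma \ref{lem:uniq0} and the annihilator characterization of u-plus decompositions in Lemma \ref{lem:ann}. The only thing to be careful about is invoking the right lemma to pin $\mA$ inside a single factor of the decomposition, which is exactly what Lemma \ref{lem:uniq0} provides.
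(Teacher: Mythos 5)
Your proof is correct, but it takes a genuinely different route from the paper's. The paper proves this corollary in one line via the automorphism-group characterization: since any element of \(\Aut_0(\mH)\) must in particular preserve each hyperplane of the general-position subset, \(\Aut_0(\mH) \subset \Aut_0(\mH_{st}) = \C^*\) by Lemma \ref{lem:autSt}, and the corollary to Lemma \ref{lem:automorphisms} (essential and irreducible \(\iff \Aut_0 = \C^*\)) finishes it. You instead bootstrap from Corollary \ref{cor:starr} and run a direct decomposition argument: pin the irreducible subset \(\mA\) inside one factor of a hypothetical u-plus decomposition via Lemma \ref{lem:uniq0}, deduce \(T(\mH_1)=\{0\}\), and use Lemma \ref{lem:ann} to force the other factor to be empty. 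This is exactly the content of the paper's Lemma \ref{lem:ABirred} (irreducibility passes from a subarrangement to a larger one with the same centre), specialized to centres equal to \(\{0\}\); you could have cited that lemma and shortened the argument to two lines. Your version is more elementary and self-contained, at the cost of re-deriving an inheritance statement the paper already has; the paper's version is shorter but leans on the \(\Aut_0\) machinery. Both are valid.
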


\begin{proof}
	Lemma \ref{lem:autSt} implies that \(\Aut_0(\mH) = \C^*\).
\end{proof}

\begin{remark}
	The converse to Corollary \ref{cor:standardsubarrangement} does not hold in general, as the following example shows.
\end{remark}

\begin{example}
	Let \(\mH\) be the arrangement of \(n+2\) hyperplanes in \(\C^n\), \(n \geq 4\), given as follows:
	\[H_1=\{z_1=0\}, \ldots, H_{n-1}=\{z_{n-1}=0\}, \qquad H_n=\{z_1+\ldots+z_{n-1}=0\}\]
	\[H_{*}=\{z_n=0\}, \qquad H_0=\{z_1+z_2+z_n=0\} .\]
	It is easy to check that \(\mH\) is essential and irreducible but there is no subset of \(n+1\) hyperplanes in general position.
\end{example}

\begin{lemma}\label{lem:inducedirred}
	If \((V, \mH)\) is essential, irreducible and \(\dim V \geq 2\) then there is \(H_0 \in \mH\) such that the induced arrangement \(\mH''\) is also essential and irreducible.
\end{lemma}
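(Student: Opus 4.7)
The plan is to prove the lemma by induction on $|\mH|$ with $n=\dim V\ge 2$ fixed. First I dispatch the essential part: since $\mH$ is essential and irreducible with $|\mH|\ge 3$ (any two-element essential arrangement splits as $\{H_1\}\uplus\{H_2\}$ by Lemma \ref{lem:ann}), Lemma \ref{lem:noseparator} rules out separators, so $T(\mH\setminus\{H_0\})=T(\mH)=\{0\}$ for every $H_0\in\mH$. Lemma \ref{lem:rankinducedarrang} then gives $T(\mH^{H_0})=\{0\}$, so $\mH^{H_0}$ is automatically essential; the only content of the lemma is the irreducibility of $\mH^{H_0}$ for some choice of $H_0$.

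The base case of the induction is $|\mH|=n+1$. Here Corollary \ref{cor:starr} forces $\mH\equiv\mH_{st}$, and a direct coordinate computation shows that restricting $\mH_{st}$ in $\C^n$ to any of its hyperplanes, say $\{z_1=0\}$, recovers $\mH_{st}$ in $\C^{n-1}$ (the hyperplanes $\{z_i=0\}$ for $i\ge 2$ plus the restriction of $\{z_1+\cdots+z_n=0\}$, namely $\{z_2+\cdots+z_n=0\}$). For $n=2$ the restriction degenerates to the single hyperplane $\{0\}\subset\C$, which is essential and trivially irreducible.

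For the inductive step, $|\mH|\ge n+2$. I split into two cases depending on whether some single-hyperplane deletion from $\mH$ becomes reducible. If there exists $H_0\in\mH$ such that $\mH\setminus\{H_0\}$ is reducible, then since $\mH$ is irreducible the contrapositive of Proposition \ref{prop:OrlikTerao} forces $\mH^{H_0}$ to be irreducible and we are done. Otherwise $\mH_1:=\mH\setminus\{H_0\}$ is irreducible for every $H_0$, and by Corollary \ref{cor:pan} it is also essential, with $|\mH_1|=|\mH|-1\ge n+1$. Applying the inductive hypothesis to $\mH_1$ yields $H_1\in\mH_1$ such that $\mH_1^{H_1}$ is essential and irreducible in $H_1\cong\C^{n-1}$. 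Now
\[
\mH^{H_1}=\mH_1^{H_1}\cup\{H_0\cap H_1\}\supset\mH_1^{H_1},
\]
and both arrangements have centre $\{0\}$ in $H_1$ (they are both essential). Lemma \ref{lem:ABirred} then transfers irreducibility from $\mH_1^{H_1}$ to $\mH^{H_1}$, closing the induction.

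The main obstacle is the second case, where every single-hyperplane deletion from $\mH$ stays irreducible and Proposition \ref{prop:OrlikTerao} gives no direct information. The trick that unlocks this case is to reduce to a smaller arrangement via deletion, apply the induction hypothesis there, and then re-add the removed hyperplane using Lemma \ref{lem:ABirred}; this last step is crucial because adding a hyperplane to an essential irreducible sub-arrangement with the same centre automatically preserves irreducibility.
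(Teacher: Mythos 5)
Your proof is correct and follows essentially the same route as the paper: induction on \(|\mH|\), base case \(|\mH|=\dim V+1\) via Corollary \ref{cor:starr}, and the dichotomy between some deletion being reducible (handled by Proposition \ref{prop:OrlikTerao}) versus all deletions remaining irreducible (handled by the inductive hypothesis plus Lemma \ref{lem:ABirred}). The only cosmetic difference is that the essentiality of \(\mH''\) follows directly from Lemma \ref{lem:rankinducedarrang} without the detour through separators.
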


\begin{proof}
	We proceed by induction on the number of hyperplanes \(|\mH| \geq \dim V + 1\). If \(|\mH|= \dim V + 1\) then Corollary \ref{cor:starr} implies that \(\mH\) is isomorphic to the standard arrangement \(\mH_{st}\) of \(n+1\) hyperplanes in general position in \(\C^n\), for which the statement clearly holds.
	
	Suppose that \(|\mH|>\dim V + 1\) and take \(H_0 \in \mH\).
	By Corollary \ref{cor:pan} \(\mH'\) is essential.  If \(H_0\) is such that \(\mH'\) is irreducible then the inductive hypothesis applies to \(\mH'\); so there is \(H \in \mH'\) such that \((\mH')^H\) is essential irreducible and then \((\mH)^H\) is also essential irreducible (see Lemma \ref{lem:ABirred}). Therefore we might assume that \(H_0\) is such that \(\mH'\) is reducible, but in this case \(\mH''\) must be irreducible by Proposition \ref{prop:OrlikTerao}; and \(\mH''\) is automatically essential because \(\mH\) is. 
\end{proof}

\subsection{Induced connection}\label{sect:indcon}

Let \((V, \mH)\) be an arrangement equipped with a flat torsion free standard connection \(\nabla=d-\Omega\) as in Proposition \ref{prop:ftfstcon} i.e., satisfying the Basic Assumptions \ref{ass:basic}. Fix \(H_0 \in \mH\) and recall the deleted and induced arrangements
\[\mH'=\mH \setminus\{H_0\} \hspace{2mm} \mbox{ and } \hspace{2mm} \mH''=\{H \cap H_0, \,\ H \in \mH'\}.\]  
The complement of the induced arrangement \(\mH''\) equals \(H_0^{\circ} = H_0 \setminus \mH' \).

\subsubsection{{\large  The induced connection \(\nabla'\) on \(TV|_{H^{\circ}_0}\) }}

\begin{definition}
	We drop the \(A_{H_0}dh_0/h_0\) term from \(\Omega\) and define
	\begin{equation} \label{eq:omegaprime}
	\Omega' = \sum_{H \in \mH'} A_H \frac{dh}{h} ,
	\end{equation}
	a  one-form with values in \(\End V\) holomorphic on the complement of \(\mH'\). 
\end{definition}

\begin{notation}
	We denote by
	\[\imath_0 : H_0^{\circ} \to V \setminus \mH' \]
	the inclusion map into the complement of the deleted arrangement \(\mH'\). 
\end{notation}

\begin{definition}
	\(\nabla'= d - \imath_0^* \Omega'\) is the induced connection on \(TV|_{H_0^{\circ}}\).
\end{definition} 

\begin{notation}
	We denote hyperplanes in \(\mH''\) by \(H''\), so \(H'' = H \cap H_0\) for some \(H \in \mH'\). If \(h\) is a defining linear equation for \(H\) then \(h''=h|_{H_0}\) is a defining equation for \(H''\). If \(\tilde{H}=\{\tilde{h}=0\}\) is another hyperplane in \(\mH'\) with \(\tilde{H}\cap H_0=H''\) then \(\tilde{h}''= \tilde{h}|_{H_0}\) differs from \(h''\) by multiplication by a constant non-zero factor. In particular
	\[\imath_0^*  \left( \frac{dh}{h} \right) = \imath_0^*  \left( \frac{d\tilde{h}}{\tilde{h}} \right) \]
	and we denote this form unambiguously as \(dh''/h''\).
\end{notation}

\begin{remark}
	\(\nabla'\) is standard and
	Equation \eqref{eq:omegaprime} gives us its connection form: 
	\[\imath_0^*\Omega' = \sum_{H'' \in \mH''} A'_{H''} \frac{dh''}{h''} \hspace{2mm} \mbox{ with } \hspace{2mm} A'_{H''} = \sum_{\substack{H\in \mH'\\ H\cap H_0=H''}} A_H . \]
\end{remark}

Beware that \(A'_{H''} \in \End V\) and \(\nabla'\) is a connection on \(TV|_{H_0^{\circ}}\).

\begin{note}
	\(H''\) is also a codimension two intersection of \(\mL(\mH)\), as such it has a residue \(A_{H''}= \sum_{H \in \mH_{H''}} A_H\) and we see that \(A'_{H''} = A_{H''}-A_{H_0}\). In particular, since \(A_{H_0}\) vanishes on \(H_0\), the restrictions of \(A'_{H''}\) and \(A_{H''}\) to the hyperplane \(H_0\) agree:
	\begin{equation}\label{eq:restrictionAA'agree}
	(A'_{H''})|_{H_0} = (A_{H''})|_{H_0} .
	\end{equation}
\end{note}

\begin{lemma}[cf. Lemma \ref{lem:apendix1}]
	\(\nabla'\) is flat. 
\end{lemma}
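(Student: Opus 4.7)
The plan is to reduce flatness of $\nabla'$ to an algebraic identity on the residues and deduce it from the flatness of $\nabla$.

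First, since each $dh''/h''$ is closed, $d(\imath_0^*\Omega') = 0$ on $H_0^\circ$, so the flatness of $\nabla'$ is equivalent to $(\imath_0^*\Omega') \wedge (\imath_0^*\Omega') = 0$. The pulled-back connection form is itself of standard shape on $H_0$,
\begin{equation*}
\imath_0^* \Omega' = \sum_{H'' \in \mH''} A'_{H''}\,\frac{dh''}{h''},
\end{equation*}
with residues $A'_{H''} \in \End V$. The codimension-two residue analysis of Lemmas \ref{lem:cod2i}--\ref{lem:cod2ii} is purely algebraic in the endomorphism coefficients (it never uses that the $A_H$ act on the ambient vector space), so it applies verbatim inside $H_0$ and reduces the required vanishing to the commutation relations
\begin{equation*}
[A'_{L''}, A'_{H''}] = 0
\end{equation*}
for every $L'' \in \mL(\mH'')$ with $\codim_{H_0} L'' = 2$ and every $H'' \in (\mH'')_{L''}$.

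Fix such $L''$ and $H''$. Regarded as subspaces of $V$ they both lie in $H_0$, so $H_0 \in \mH_{L''}$ and $H_0 \in \mH_{H''}$, giving the decompositions
\begin{equation*}
A_{L''} = A_{H_0} + A'_{L''}, \qquad A_{H''} = A_{H_0} + A'_{H''}.
\end{equation*}
Since $L'' \subset H''$ in $\mL(\mH)$, Proposition \ref{prop:comm} applied to the flat connection $\nabla$ yields $[A_{L''}, A_{H''}] = 0$. Substituting the decompositions and expanding, this identity collapses to
\begin{equation*}
[A'_{L''}, A'_{H''}] + [A_{H_0}, A'_{H''}] - [A_{H_0}, A'_{L''}] = 0.
\end{equation*}

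To finish, the two mixed commutators vanish separately. Because $H_0 \supset H''$ in $\mL(\mH)$, Proposition \ref{prop:comm} gives $[A_{H''}, A_{H_0}] = 0$, whence $[A_{H_0}, A'_{H''}] = [A_{H_0}, A_{H''} - A_{H_0}] = 0$; the identical argument with $L''$ in place of $H''$ (using $H_0 \supset L''$) gives $[A_{H_0}, A'_{L''}] = 0$. Hence $[A'_{L''}, A'_{H''}] = 0$, which establishes the flatness of $\nabla'$. The only genuinely delicate point is the first step---verifying that the codimension-two reduction of Lemmas \ref{lem:cod2i}--\ref{lem:cod2ii} still applies when the residues act on $V$ rather than on $H_0$---but this is a formal observation, since those proofs manipulate only the logarithmic one-forms and the endomorphism coefficients and are insensitive to the rank of the underlying bundle.
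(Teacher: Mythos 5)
Your proof is correct, and it takes the route that the paper only mentions in passing (the Note after its own proof: ``Alternatively, \ldots one can check the commutation equations for the residues \(A'_{H''}\), this involves analysing codimension three intersections of \(\mL(\mH)\)''). The paper instead argues analytically: near a generic point of \(H_0\) it writes \(\Omega = A_1\,dz_1/z_1 + \sum_{j\ge 2}A_j\,dz_j\) with \(z_1=h_0\), splits \(A_1 = A_{H_0}+\tilde A_1\) with \(\tilde A_1\) vanishing on \(H_0\), and extracts \(\tilde\Omega\wedge\tilde\Omega=0\) from \(\Omega\wedge\Omega=0\) by comparing the terms not involving \(dz_1\), then restricts to \(H_0\). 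Your argument is purely algebraic and global. Two remarks confirming the points you flagged as delicate: first, Proposition \ref{prop:comm} (and Lemmas \ref{lem:cod2i}--\ref{lem:cod2ii}) are already stated for standard connections with an arbitrary fibre \(E\) and residues in \(\End E\), so applying them to \(\nabla'\) on the trivial bundle \(H_0\times V\) needs no extra justification; second, your identifications \(A'_{L''}=A_{L''}-A_{H_0}\) and \(A'_{H''}=A_{H''}-A_{H_0}\) are exactly right, since \(\mH_{L''}=\{H_0\}\cup\{H\in\mH' : L''\subset H\}\) and grouping the latter set by intersections with \(H_0\) recovers Definition \ref{def:residue} for \(\nabla'\). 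What your version buys is an explicit verification of the commutation relations for the induced residues \(A'_{H''}\) (information that is implicitly reused later, e.g.\ in Lemma \ref{lem:weightsinducedconnection}); what the paper's version buys is brevity, since it never has to name the codimension-three intersections of \(\mL(\mH)\).
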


\begin{proof}
	Clearly \(d (\imath_0^*\Omega')=0\), we will show that \(\imath_0^* \Omega' \wedge \imath_0^* \Omega' =0\).
	Let \(x \in H_0^{\circ}\) and take holomorphic coordinates \((z_1, \ldots, z_n)\) centred at \(x\) with \(z_1=h_0\). In these coordinates, close to \(x\) we have
	\[\Omega = A_1 \frac{dz_1}{z_1} + A_2 dz_2 + \ldots + A_n dz_n \]
	where \(A_1, \ldots, A_n\) are holomorphic functions with values in \(\End V\). Moreover, we have:
	\begin{itemize}
		\item[(i)] \(A_1 = A_{H_0} + \tilde{A}_1\) with \(\tilde{A}_1\) vanishing along \(H_0\);
		\item[(ii)] \(\Omega' = \tilde{A}_1 dz_1 + A_2 dz_2 + \ldots +A_n dz_n\). 
	\end{itemize}
	Define \(\tilde{\Omega}= A_2 dz_2 + \ldots + A_n dz_n\). It follows from (i) and (ii) that 
	\[\imath_0^*\Omega' = \imath_0^* \tilde{\Omega} = A_2(0, z_2, \ldots, z_n) dz_2 + \ldots + A_n(0, z_2, \ldots, z_n) dz_n . \] 
	On the other hand, we have
	\begin{equation}
	\Omega \wedge \Omega = \left( \sum_{j=2}^{n} [A_1, A_j] \frac{1}{z_1} dz_1 \wedge dz_j \right) + \tilde{\Omega} \wedge \tilde{\Omega}
	\end{equation}
	and \(\tilde{\Omega} \wedge \tilde{\Omega}\) only contains \(dz_i \wedge dz_j\) terms with \(i, j \geq 2\). The equation \(\Omega \wedge \Omega = 0\) implies \(\tilde{\Omega} \wedge \tilde{ \Omega} =0\), hence \(\imath_0^* \Omega' \wedge \imath_0^* \Omega' = \imath_0^* \tilde{\Omega} \wedge \imath_0^*\tilde{\Omega} = 0\) as we wanted to show.
\end{proof}

\begin{note}
	Alternatively, in order to show that \(\nabla'\) is flat, one can check the commutation equations for the residues \(A'_{H''}\), this involves analysing codimension three intersections of \(\mL(\mH)\).
\end{note}

\subsubsection{{\large The induced connection \(\nabla''\) on \(TH_0^{\circ}\)}}

\begin{lemma}\label{lem:indcon0}
	If \(A_{H_0}\) is non-zero then
	\(TH_0^{\circ}\) is a parallel subbundle of \((TV|_{H_0^{\circ}}, \nabla')\).
\end{lemma}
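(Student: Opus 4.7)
The plan is to reduce the statement to a purely linear-algebraic claim about the residues: for every hyperplane \(H'' \in \mH''\), the endomorphism \(A'_{H''} \in \End V\) preserves the subspace \(H_0 \subset V\). Once this is verified, parallelism of \(TH_0^{\circ}\) inside \((TV|_{H_0^{\circ}}, \nabla')\) is immediate. Indeed, for a local section \(v\) of \(TH_0^{\circ}\) and a tangent vector \(X \in TH_0^{\circ}\), the covariant derivative
\[
\nabla'_X v \;=\; dv(X) \;-\; (\imath_0^*\Omega')(X)\cdot v
\]
has its first term tangent to \(H_0\) because \(v\) is, and the second term is a sum of vectors of the form \(A'_{H''}(v)\cdot (dh''/h'')(X)\); invariance of \(H_0\) under each \(A'_{H''}\) then forces \(\nabla'_Xv\) to remain tangent to \(H_0\).

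The first step will be to pin down \(\ker A_{H_0}\). The hypothesis \(A_{H_0}\neq 0\) combined with the torsion free condition \emph{(T)} and Lemma \ref{lem:rk1} forces \(A_{H_0}\) to be a rank one endomorphism with kernel exactly equal to \(H_0\). The second step is to exploit flatness. For any \(H'' \in \mH''\) we have by construction \(H'' \subset H_0\), so \(H_0 \in \mH_{H''}\) and \(H''\) is a codimension two intersection of \(\mL(\mH)\); the commutation relation \eqref{eq:flatbracket} then yields \([A_{H''}, A_{H_0}]=0\). The elementary observation that commuting endomorphisms preserve each other's kernels (from \(B v=0\) and \([A,B]=0\) we get \(B(Av)=A(Bv)=0\)) shows that \(A_{H''}\) preserves \(\ker A_{H_0}= H_0\). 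Since \(A_{H_0}\) itself trivially preserves \(H_0\) (it annihilates it), the identity \(A'_{H''} = A_{H''} - A_{H_0}\) makes \(A'_{H''}\) preserve \(H_0\) as well.

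I do not foresee a substantial obstacle. The only mildly delicate point is keeping straight the relationship between the residue \(A_{H''}\) of \(\nabla\) at the codimension two stratum \(H''\in \mL(\mH)\) and the residue \(A'_{H''}\) of the induced connection \(\nabla'\) at the hyperplane \(H''\in \mH''\); this is transparent from the definitions, and consistent with the observation \((A'_{H''})|_{H_0} = (A_{H''})|_{H_0}\) recorded in Equation \eqref{eq:restrictionAA'agree}. The role of the assumption \(A_{H_0}\neq 0\) is precisely to make \(\ker A_{H_0}\) coincide with \(H_0\) and thereby give a sharp invariance statement; without this hypothesis the argument would merely produce the tautology that \(A_{H''}\) preserves all of \(V\).
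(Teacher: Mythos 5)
Your proof is correct and follows essentially the same route as the paper: reduce parallelism to the residues \(A'_{H''}\) preserving \(H_0\), use flatness to get \([A_{H''},A_{H_0}]=0\) so that \(A_{H''}\) preserves \(\ker A_{H_0}\), and invoke torsion-freeness together with \(A_{H_0}\neq 0\) to identify \(\ker A_{H_0}=H_0\). Your passage from \(A_{H''}\) to \(A'_{H''}\) via \(A'_{H''}=A_{H''}-A_{H_0}\) is just a slightly more explicit version of the paper's appeal to Equation \eqref{eq:restrictionAA'agree}.
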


\begin{proof}
	Note that \(TH_0\) is  a parallel subbundle of \((TV|_{H_0}, d)\) where \(d\) is the restricted Euclidean connection. In order to prove the lemma it is sufficient to check that the residues \(A'_{H''} \in \End V\) preserve the subspace \(H_0\).
	Because of Equation \eqref{eq:restrictionAA'agree}, it is enough to show that \(A_{H''}\) preserves \(H_0\). 
	
	By the flatness condition \textbf{(F)} in the Basic Assumptions \ref{ass:basic} we know that \(A_{H''}\) commutes with \(A_{H_0}\). In particular,
	\(A_{H''}\) preserves \(\ker A_{H_0}\).
	On the other hand, the torsion free condition \textbf{(T)} in the Basic Assumptions \ref{ass:basic} implies that \(H_0 \subset \ker A_{H_0}\). Since we are assuming that \(A_{H_0}\) is non-zero, we must have \(H_0=\ker A_{H_0}\) and the lemma follows.
\end{proof}

\begin{definition}\label{def:inducedconnection}
	\(\nabla''=\nabla'|_{TH_0^{\circ}}\) is the induced connection on \(TH_0^{\circ}\).
\end{definition}

\begin{notation}
	We write \(\nabla'' = d - \Omega''\) where \(\Omega''\) is the restriction of \(\imath_0^*\Omega'\) to \(TH_0\), that is
	\[\Omega'' = \sum_{H \in \mH'} (A_H)| _{H_0} \imath_0^* \left(\frac{dh}{h}\right) . \]	
\end{notation}

\begin{remark}\label{rmk:inducedresidues}
	\(\nabla''\) is standard and its connection form can be written as
	\[\Omega''  =  \sum_{H'' \in \mH''} A''_{H''} \frac{dh''}{h''} \hspace{2mm} \mbox{ with } \hspace{2mm} A''_{H''}= (A_{H''})|_{H_0} .  \] 
\end{remark}

\begin{figure}
	\begin{center}
		\scalebox{1}{
			\begin{tikzpicture}[tdplot_main_coords,font=\sffamily]
			\draw[fill=blue,opacity=0.2] (-3,0,-3) -- (-3,0,3) -- (3,0,3) -- (3,0,-3) -- cycle;
			\draw[fill=blue,opacity=0.1] (-3,-3,0) -- (-3,3,0) -- (3,3,0) -- (3,-3,0) -- cycle;
			\draw[thick](-3,0,0)--(3,0,0);
			\node[anchor=south west,align=center] (line) at (3,3,3) {\(H''\)};
			\draw[-latex] (line) to[out=180,in=75] (0,0,0.05);
			\node at (2,3,1) {\(H_0\)};
			\node at (0,0,2.3) {\(H\)};
			
			\node[scale=.9] at (-4,-5,-3.5) {\(A_{H''} = A_H + A_{H_0}\)};
			\node[scale=.9] at (-4,-5.5,-4) {\(A'_{H''} = A_H\)};
			\node[scale=.9] at (-4,-5.15,-4.5) {\(A''_{H''} = (A_{H})|_{H_0}\)};
			\end{tikzpicture}
		}
	\end{center}
	\caption{A reminder for the residues of the connections \(\nabla, \nabla', \nabla''\) at \(H''\). Here \(H''\) is a reducible intersection \(H \pitchfork H_0\).}
\end{figure}
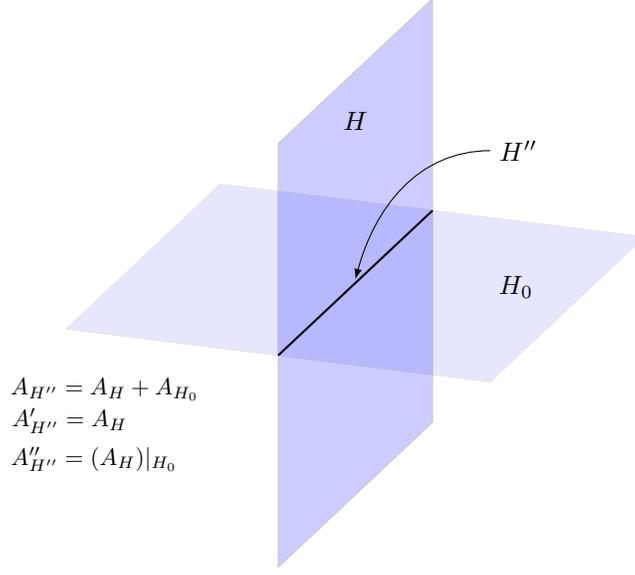

\begin{lemma}\label{lem:restcovder}
	Assume that \(A_{H_0}\) is non-zero.
	Let \(X, Y\) be two holomorphic vector fields defined on an open subset 
	\[U \subset V \setminus \mH' .\] 
	Suppose that \(X, Y\) are tangent to \(H_0\). Denote by \(X''= X|_{H_0}\) and \(Y''=Y|_{H_0}\). Then \(\nabla_X Y\) is holomorphic, tangent to \(H_0\) and
	\[(\nabla_X Y)|_{H_0} = \nabla''_{X''} Y'' . \]
\end{lemma}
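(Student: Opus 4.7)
The plan is to unpack $\nabla = d - \Omega$ by separating out the term involving $H_0$, namely $\Omega = A_{H_0}\,dh_0/h_0 + \Omega'$, and then to compute everything in local coordinates adapted to $H_0$. Choose holomorphic coordinates $(z_1,\ldots,z_n)$ near a point of $U \cap H_0$ with $z_1 = h_0$, so that $H_0 = \{z_1 = 0\}$ and the coordinate frame $\partial_{z_1},\ldots,\partial_{z_n}$ trivializes $TV$. Since $X,Y$ are tangent to $H_0$, their first components $X^1, Y^1$ vanish on $H_0$ and hence factor as $X^1 = h_0 X^1_0$, $Y^1 = h_0 Y^1_0$ with $X^1_0, Y^1_0$ holomorphic on $U$; in particular $dh_0(X)/h_0 = X^1_0$ extends holomorphically across $H_0$.

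Next I would verify the three pieces of the formula
\[
\nabla_X Y \;=\; d_X Y \;-\; \tfrac{dh_0(X)}{h_0}\,A_{H_0}(Y) \;-\; \Omega'(X)(Y).
\]
First, $d_X Y$ is just componentwise differentiation; the above factorizations give that its $\partial_{z_1}$-component is divisible by $z_1$, so it is tangent to $H_0$, and a direct check using $X^1|_{H_0}=0$ shows $(d_X Y)|_{H_0} = d_{X''}Y''$. Second, since $A_{H_0}$ is non-zero, the Basic Assumptions and Lemma \ref{lem:rk1} give $A_{H_0}(Y) = dh_0(Y)\cdot n_{H_0} = h_0\, Y^1_0\, n_{H_0}$, which vanishes on $H_0$; hence the middle term is a holomorphic section of $TV|_U$ whose restriction to $H_0$ is zero. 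Third, $\Omega'$ is holomorphic on $U\subset V\setminus \mH'$, so $\Omega'(X)(Y)$ is holomorphic on $U$; by Lemma \ref{lem:indcon0} (applicable since $A_{H_0}$ is non-zero so $H_0 = \ker A_{H_0}$) each residue $A'_{H''}$ preserves $H_0$, hence $\Omega'(X)|_{H_0}$ maps $TH_0$ into itself, and by Remark \ref{rmk:inducedresidues} its restriction to $TH_0$ is precisely $\Omega''(X'')$, evaluated on $Y''$.

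Assembling these three pieces gives immediately that $\nabla_X Y$ is holomorphic on $U$ (the apparent pole at $H_0$ is cancelled by the tangency of $X$), tangent to $H_0$ (each piece is), and
\[
(\nabla_X Y)|_{H_0} \;=\; d_{X''}Y'' \;-\; 0 \;-\; \Omega''(X'')(Y'') \;=\; \nabla''_{X''}Y''.
\]
I do not expect any serious obstacle: the argument is a straightforward computation in coordinates, and the only conceptually non-trivial input is Lemma \ref{lem:indcon0}, which guarantees that the endomorphism part $\Omega'(X)|_{H_0}$ restricts meaningfully to an endomorphism of $TH_0$. The one point that deserves a careful sentence in the write-up is why the tangency hypothesis on $X$ (not on $Y$) is what ensures holomorphic extension of $\nabla_X Y$ across $H_0$, whereas the tangency of $Y$ is what forces $A_{H_0}(Y)|_{H_0} = 0$ and thereby makes the restriction formula match $\nabla''$ rather than $\nabla'$.
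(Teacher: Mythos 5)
Your proposal is correct and follows essentially the same route as the paper: split off the $A_{H_0}\,dh_0/h_0$ term, use the tangency of $X$ and $Y$ to write $dh_0(X)=f_Xh_0$ and $A_{H_0}(Y)=f_Yh_0\,n_{H_0}$ so that the singular term becomes $h_0f_Xf_Y n_{H_0}$ (holomorphic, vanishing on $H_0$), and identify the restriction of the remaining $\Omega'$-term with $\Omega''(X'')(Y'')$ via Lemma \ref{lem:indcon0}. Your closing remark correctly isolates the two distinct roles of the tangency hypotheses, a point the paper leaves implicit.
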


\begin{proof}
	Observe that
	\begin{equation*} 
	\nabla_X Y = d_X Y - A_{H_0}(Y) \frac{dh_0(X)}{h_0} - (\mbox{hol.})	
	\end{equation*}
	where \((\mbox{hol.}) = (\imath_X \Omega')(Y)\)  denotes a holomorphic vector field given by contracting \(\Omega'\) with \(X\) and \(Y\). Since \(X, Y\) are tangent to \(H_0\) we have \(X(h_0) =f_X h_0\) and \(Y(h_0) = f_Y h_0\) for some holomorphic functions \(f_X, f_Y\). Since \(A_{H_0} = dh_0(Y) \cdot n_{H_0} = f_Y h_0 n_{H_0}\) we obtain
	\begin{equation}\label{eq:auxcovder} 
	\nabla_X Y = d_X Y - h_0 f_X f_Y n_{H_0} - (\imath_X \Omega')(Y)	
	\end{equation}
	which is clearly holomorphic. If we restrict to \(H_0\) then the middle term of the r.h.s. of Equation \ref{eq:auxcovder} vanishes and \((\nabla_X Y)|_{H_0} = \nabla'_{X''}Y'' = \nabla''_{X''}Y''\). 
\end{proof}

\begin{remark}
	One can use Lemma \ref{lem:restcovder} to define \(\nabla''\).
\end{remark}

\begin{lemma}\label{lem:indconectisftf}
	\(\nabla''\) is flat and torsion free.
\end{lemma}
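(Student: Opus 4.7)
The plan is to verify that $\nabla''$ satisfies the Basic Assumptions \ref{ass:basic} and then invoke Proposition \ref{prop:ftfstcon}, but it is actually more economical to deduce flatness and torsion-freeness directly by restriction arguments, using what has already been proved about $\nabla$ and $\nabla'$ together with the preparatory Lemma \ref{lem:restcovder}.

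\emph{Flatness.} The connection $\nabla'$ on $TV|_{H_0^\circ}$ has already been shown to be flat. By Lemma \ref{lem:indcon0}, $TH_0^\circ$ is a $\nabla'$-parallel subbundle, and by Definition \ref{def:inducedconnection} the induced connection $\nabla''$ is just the restriction of $\nabla'$ to this subbundle. Since curvature restricts to curvature on parallel subbundles, $\nabla''$ is flat. Alternatively, one can see this at the level of connection matrices: since $\Omega''$ is the pointwise restriction of $\imath_0^*\Omega'$ to $TH_0$ and $\imath_0^*\Omega' \wedge \imath_0^* \Omega' = 0$, the same identity holds for $\Omega''$.

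\emph{Torsion-freeness.} This is a direct consequence of Lemma \ref{lem:restcovder}. Let $X'', Y''$ be holomorphic vector fields on an open subset $U'' \subset H_0^\circ$ and choose holomorphic extensions $X, Y$ to a neighbourhood $U \subset V \setminus \mH'$ of $U''$ which are tangent to $H_0$; such extensions exist locally. Applying Lemma \ref{lem:restcovder},
\[
\nabla''_{X''} Y'' - \nabla''_{Y''} X'' - [X'', Y''] = \bigl(\nabla_X Y - \nabla_Y X - [X,Y]\bigr)\Big|_{H_0} = T_\nabla(X,Y)\Big|_{H_0} = 0,
\]
because $\nabla$ is torsion free. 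Hence $\nabla''$ is torsion free as well.

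\emph{Verification via residues (alternative).} One can also check the Basic Assumptions \ref{ass:basic} for the residues $A''_{H''} = (A_{H''})|_{H_0} \in \End(H_0)$ of Remark \ref{rmk:inducedresidues} and conclude by Proposition \ref{prop:ftfstcon}. Condition \textbf{(T)} is immediate: for $H'' \in \mH''$ and any $H \in \mH'$ with $H \cap H_0 = H''$, the inclusion $H'' \subset H$ gives $A_H|_{H''} = 0$ by the torsion-free condition on $\nabla$, whence $A_{H''}|_{H''} = 0$ and so $A''_{H''}$ vanishes on $H''$. Condition \textbf{(F)} requires $[A''_{L''}, A''_{M''}]=0$ whenever $L'' \subset M''$ are intersections in $\mL(\mH'')$; one observes that $L'', M''$ are also intersections in $\mL(\mH)$ containing $H_0$, so by (iii) of Proposition \ref{prop:comm} applied to $\nabla$ we have $[A_{L''}, A_{M''}] = 0$ in $\End V$, and restricting to $H_0$ yields the desired identity. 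The only mild point to watch is the bookkeeping between $A_{H''}$ and $A'_{H''} = A_{H''}-A_{H_0}$, which is harmless because $A_{H_0}$ vanishes on $H_0$; this is precisely the content of Equation \eqref{eq:restrictionAA'agree}. No serious obstacle is expected — the lemma is essentially a formality given the framework already in place, the only substantive ingredient being Lemma \ref{lem:restcovder}.
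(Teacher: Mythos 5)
Your proof is correct and follows exactly the routes the paper itself indicates: torsion-freeness from Lemma \ref{lem:restcovder} via local extensions tangent to \(H_0\), flatness from the fact that \(\nabla''\) is the restriction of the flat connection \(\nabla'\) to the parallel subbundle \(TH_0^{\circ}\), and the alternative check of the Basic Assumptions on the residues. The only nit is a wording slip in the last paragraph (\(L'', M''\) are \emph{contained in} \(H_0\), not "containing \(H_0\)"), which does not affect the argument.
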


\begin{proof}
	This is immediate from Lemma \ref{lem:restcovder}, using local extensions. Alternatively, flatness follows since \(\nabla'\) is flat and torsion free follows from \(H'' \subset \ker A''_{H''}\).
\end{proof}

\subsubsection{{\large Residues of the induced connection}}

\begin{lemma}\label{lem:weightsinducedconnection}
	Let \(L'' \in \mL_{irr}(\mH'')\) and let \(L \in \mL_{irr}(\mH)\) be such that \(L \cap H_0 = L''\) with \(L\) given as in Proposition \ref{prop:indirredsub}. Then \(A_L\) preserves \(H_0\) and \(A''_{L''}\) is equal to the restriction of \(A_{L}\) to \(H_0\). 
\end{lemma}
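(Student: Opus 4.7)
\medskip

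\noindent\textbf{Plan of proof.} The strategy is to compute $A''_{L''}$ directly from its definition via the sum $\sum_{H \in \mH'_{L''}} A_H|_{H_0}$ and to compare this with $A_L|_{H_0}$. We split the argument along the dichotomy of Proposition \ref{prop:indirredsub}, since the set of hyperplanes contributing to the two sums differs by whether or not $H_0$ itself contains $L''$.

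\smallskip

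\noindent First, I would unwind the notation. By Remark \ref{rmk:inducedresidues}, for every $H'' \in \mH''_{L''}$ the residue $A''_{H''}$ equals the sum $\sum_{H \in \mH', H \cap H_0 = H''} A_H$ restricted to $H_0$. Summing over $H'' \in \mH''_{L''}$ gives
\begin{equation*}
A''_{L''} \;=\; \Bigl(\sum_{H \in \mH',\, L'' \subset H} A_H\Bigr)\Big|_{H_0}.
\end{equation*}
By the very definition of $L$ in Proposition \ref{prop:indirredsub}, the set $\{H \in \mH' : L'' \subset H\}$ coincides with $\mH_L \setminus \{H_0\}$. Thus
\begin{equation*}
A''_{L''} \;=\; \Bigl(\sum_{H \in \mH_L \setminus \{H_0\}} A_H\Bigr)\Big|_{H_0}.
\end{equation*}

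\smallskip

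\noindent Next I would run through the two cases. In case (ii) of Proposition \ref{prop:indirredsub} we have $L = L'' \subset H_0$, so $H_0 \in \mH_L$, and the displayed formula becomes $A''_{L''} = (A_L - A_{H_0})|_{H_0}$. In case (i) we have $L \not\subset H_0$, so $H_0 \notin \mH_L$, and the formula reads $A''_{L''} = A_L|_{H_0}$ directly. Since the torsion-free hypothesis \textbf{(T)} gives $H_0 \subset \ker A_{H_0}$, the term $A_{H_0}|_{H_0}$ vanishes and in both cases we conclude
\begin{equation*}
A''_{L''} \;=\; A_L|_{H_0}.
\end{equation*}

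\smallskip

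\noindent Finally, I would verify that $A_L$ preserves $H_0$, which makes the restriction meaningful as an endomorphism of $H_0$ rather than merely a linear map into $V$. In case (ii), $L \subset H_0$ and the flatness condition (Proposition \ref{prop:comm}(iii)) gives $[A_L, A_{H_0}] = 0$ directly. In case (i), we use the decomposition $A_{L''} = A_L + A_{H_0}$ together with $[A_{L''}, A_{H_0}] = 0$ (from $L'' \subset H_0$) to again obtain $[A_L, A_{H_0}] = 0$. Either way $A_L$ commutes with $A_{H_0}$, hence preserves $\ker A_{H_0}$; and since we are operating under the Non-Zero Weight Assumptions (so $a_{H_0} \neq 0$ and therefore $\ker A_{H_0} = H_0$ by Lemma \ref{lem:rk1}), this shows $A_L(H_0) \subset H_0$.

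\smallskip

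\noindent There is no real obstacle here—the argument is purely bookkeeping around the definitions of the residues and the two cases of Proposition \ref{prop:indirredsub}. The one subtlety worth stating explicitly is that case (i) requires the flatness relation to be bootstrapped from $[A_{L''}, A_{H_0}]=0$ rather than applied directly, because $L \not\subset H_0$ there and so the naive inclusion relation fails.
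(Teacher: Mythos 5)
Your proof is correct and follows essentially the same route as the paper: the same reduction of \(A''_{L''}\) to a sum over \(\mH_L\setminus\{H_0\}\) restricted to \(H_0\), the same case split from Proposition \ref{prop:indirredsub}, and the same use of \(A_{H_0}|_{H_0}=0\) to absorb the extra term in case (ii). The only variation is in case (i), where the paper gets \(A_L(H_0)\subset H_0\) from Lemma \ref{lem:trint} (each normal \(n_H\) lies in \(H_0\) since \(H\pitchfork H_0\)) whereas you derive \([A_L,A_{H_0}]=0\) from \([A_{L''},A_{H_0}]=0\); both are valid, and your appeal to \(a_{H_0}\neq 0\) to get \(\ker A_{H_0}=H_0\) is harmless since the induced connection is only defined when \(A_{H_0}\neq 0\) anyway (the precise reference is the remark following Assumptions \ref{ass:nz} rather than Lemma \ref{lem:rk1}).
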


\begin{proof}
	It follows from Definition \ref{def:inducedconnection} that \(A''_{L''}\) equals \(A_{L''}\) restricted to \(H_0\), see Remark \ref{rmk:inducedresidues}. We analyse \(A_{L''}\) by considering the two cases \((i)\) and \((ii)\) of Proposition \ref{prop:indirredsub}. 
	
	Case \((i)\). If \(L''\subsetneq L\) then \(A_{L''}= A_{L}+A_{H_0}\). Since \(\mH_{L''} = \mH_{L} \uplus \{H_0\}\), Lemma \ref{lem:trint} implies that \(A_{L}\) preserves \(H_0\) because \(n_H \in H_0\) for every \(H \in \mH_L\).
	
	Case \((ii)\). If \(L''=L\) then \(A_{L''}=A_L\). Therefore \(H_0 \in \mH_L\) and the flatness condition \textbf{(F)}  implies \([A_{H_0}, A_L]=0\) from which it follows that \(A_L\) preserves \(H_0\).
	
	In any case we see that \(A_L\) preserves \(H_0\) and that \(A''_{L''}\) equals \(A_L\) restricted to \(H_0\).
\end{proof}

\begin{lemma} \label{lem:inducedweights}
	Suppose that the residues \(\{A_H, \,\ H \in \mH\}\) of \(\nabla\) satisfy the Non-Zero Weights Assumptions \ref{ass:nz}. If \(L'' \in \mL_{irr}(\mH'')\) then
	\(a''_{L''}=a_L\) where \(L \in \mL_{irr}(\mH)\) satisfies \(L \cap H_0 = L''\) and is given by Proposition \ref{prop:indirredsub}. In particular,
	the weights \(a''_{H''}\) for \(H'' \in \mL(\mH'')\) are given by
	\begin{equation}\label{eq:aH''}
	a''_{H''} = \begin{cases}
	a_{H} \,\ &\mbox{ if } H \pitchfork H_0 = H'', \\
	a_{H''} \,\ &\mbox{ if } H'' \in \mL_{irr}(\mH) .
	\end{cases}
	\end{equation}
\end{lemma}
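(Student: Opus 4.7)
The plan is to invoke Lemma \ref{lem:weightsinducedconnection}, which already identifies \(A''_{L''}\) with the restriction \(A_L|_{H_0}\), and then compute the trace using the explicit eigenspace decomposition of \(A_L\) provided by Proposition \ref{prop:AL} and Lemma \ref{lem:Lidec}. Since \(a''_{L''} = (\codim_{H_0} L'')^{-1} \tr(A_L|_{H_0})\) and \(a_L = (\codim_V L)^{-1} \tr(A_L)\), the identity \(a''_{L''}=a_L\) will reduce to a dimension count in each of the two cases of Proposition \ref{prop:indirredsub}.

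First I would treat Case (i) of Proposition \ref{prop:indirredsub}, where \(L'' \subsetneq L\) and the irreducible components of \(L'' \in \mL(\mH)\) are exactly \(L\) and \(H_0\). Applying Proposition \ref{prop:AL} to \(L''\) gives \(V = L'' \oplus L^{\perp} \oplus H_0^{\perp}\) with \(H_0 = L'' \oplus L^{\perp}\) and \(L = L'' \oplus H_0^{\perp}\); in particular \(L^{\perp}\subset H_0\). On the summand \(L'' \subset L = \ker A_L\) the endomorphism \(A_L\) vanishes, while by Lemma \ref{lem:Lidec} it acts on \(L^{\perp}\) as multiplication by \(a_L\). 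Thus
\begin{equation*}
\tr(A_L|_{H_0}) = a_L \cdot \dim L^{\perp} = a_L \cdot \codim_V L .
\end{equation*}
Since \(L\not\subset H_0\) we have \(\dim L = \dim L'' + 1\), hence \(\codim_V L = \codim_{H_0} L''\), and dividing gives \(a''_{L''}=a_L\).

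Next, in Case (ii) we have \(L=L'' \in \mL_{irr}(\mH)\) and \(H_0 \in \mH_L\), so \(L \subset H_0\). Apply Proposition \ref{prop:AL} to \(L\): then \(V = L \oplus L^{\perp}\), \(A_L\) vanishes on \(L\), and acts as \(a_L \cdot \Id\) on \(L^{\perp}\). Decomposing \(H_0 = L \oplus (H_0 \cap L^{\perp})\) we obtain
\begin{equation*}
\tr(A_L|_{H_0}) = a_L \cdot \dim(H_0 \cap L^{\perp}) = a_L \cdot (\codim_V L - 1) = a_L \cdot \codim_{H_0} L'',
\end{equation*}
which again yields \(a''_{L''}=a_L\). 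The displayed formula \eqref{eq:aH''} for hyperplanes \(H'' \in \mL(\mH'')\) is then immediate: Case (i) with \(\dim L'' = n-2\) forces \(L\) to be a hyperplane \(H \in \mH'\) with \(H \cap H_0 = H''\), i.e.\ \(H \pitchfork H_0 = H''\), giving \(a''_{H''}=a_H\); Case (ii) is exactly the situation \(H'' \in \mL_{irr}(\mH)\), giving \(a''_{H''}=a_{H''}\).

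There is essentially no hard step; the only thing to be careful about is correctly identifying which subspace plays the role of \(L\) in Proposition \ref{prop:indirredsub} and keeping the codimensions in \(V\) versus in \(H_0\) straight. Once Proposition \ref{prop:AL} is invoked the computation is a two-line trace calculation in each case.
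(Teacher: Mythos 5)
Your proof is correct and follows essentially the same route as the paper: both reduce to Lemma \ref{lem:weightsinducedconnection} (so that \(A''_{L''}=(A_L)|_{H_0}\)) and then exploit the eigenspace decomposition of \(A_L\) from Lemma \ref{lem:Lidec} and Proposition \ref{prop:AL} in the two cases of Proposition \ref{prop:indirredsub}. The only cosmetic difference is that you extract \(a''_{L''}\) by a trace-and-codimension count, whereas the paper identifies \((L'')^{\perp}\) with \(L^{\perp}\) (case (i)) or \(L^{\perp}\cap H_0\) (case (ii)) and reads off the eigenvalue directly.
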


\begin{proof}
	By Lemma \ref{lem:weightsinducedconnection}, we have
	\begin{equation}\label{eq:AL''}
	A''_{L''} = (A_L)|_{H_0} . 	
	\end{equation}
	Lemma \ref{lem:Lidec} tells us that
	\[A_L = 0 \cdot \Id_L \oplus \, a_L \cdot \Id_{L^{\perp}} \,\, \mbox{ and } \,\, A''_{L''} = 0 \cdot \Id_{L''} \oplus \, a''_{L''} \cdot \Id_{(L'')^{\perp}}  . \]
	We consider the two cases of Proposition \ref{prop:indirredsub}.
	\begin{itemize}
		\item[(i)] \(L'' \subsetneq L\). Then 
		\(\mH_{L''}=\mH_L \uplus \{H_0\}\) and
		\(n_H \in H_0\) for every \(H \in \mH_L\). It follows that \(L^{\perp} \subset H_0\) and
		\[(A_L)|_{H_0} = 0 \cdot \Id_{L''} \oplus \, a_L \cdot \Id_{L^{\perp}} . \]
		From Equation \eqref{eq:AL''} we conclude that
		\((L'')^{\perp}=L^{\perp}\) and \(a''_{L''}=a_L\).
		\item[(ii)] \(L''=L\). Then \(H_0 \in \mH_L\), so \(n_H \in L^{\perp}\) and \(L^{\perp} \not\subset H_0\). Hence,
		\[A''_{L''} = 0 \cdot \Id_L \oplus \, a_L \cdot \Id_{L^{\perp}\cap H_0} . \]
		From Equation \eqref{eq:AL''} we conclude that
		\((L'')^{\perp}=L^{\perp} \cap H_0\) and \(a''_{L''}=a_L\). \qedhere
	\end{itemize}
\end{proof}

\subsubsection{{\large Linear conditions on the weights}}

Lemma \ref{lem:inducedweights} leads to non-trivial linear equations that the weights of a flat torsion free connection must satisfy. In the next corollaries we collect a couple of such linear conditions which we will use later in Section \ref{sect:Lau} for the case of the braid  arrangement.

\begin{corollary}\label{cor:linear1}
	Let \(L'' \in \mL_{irr}(\mH'')\) and \(L \in \mL_{irr}(\mH)\) be as in Lemma \ref{lem:inducedweights}.
	Then
	\begin{equation}\label{eq:linear1}
	\frac{1}{\codim_V L} \sum_{H \in \mH_L} a_H  = \frac{1}{\codim_{H_0}L''} \sum_{H'' \in \mH''_{L''}} a''_{H''} .
	\end{equation}
\end{corollary}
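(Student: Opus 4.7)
The plan is to recognize that Equation \eqref{eq:linear1} is simply the definitional unpacking of the identity \(a_L = a''_{L''}\), which has already been established in Lemma \ref{lem:inducedweights}. First I would note that by Definition \ref{def:weights} applied to the standard connection \(\nabla\), together with Definition \ref{def:residue} and the additivity of trace,
\[
a_L = \frac{1}{\codim_V L}\tr A_L = \frac{1}{\codim_V L}\sum_{H \in \mH_L}\tr A_H = \frac{1}{\codim_V L}\sum_{H \in \mH_L} a_H,
\]
which is the left-hand side of \eqref{eq:linear1}.

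For the right-hand side, I would invoke Remark \ref{rmk:inducedresidues} (so that \(\nabla''\) is itself a standard connection on \(TH_0^{\circ}\) with residues \(A''_{H''}\)) and Lemma \ref{lem:indconectisftf} (so that \(\nabla''\) is flat and torsion free). Consequently, Definition \ref{def:weights} applies to \(\nabla''\) at the irreducible intersection \(L'' \in \mL_{irr}(\mH'')\), giving
\[
a''_{L''} = \frac{1}{\codim_{H_0} L''}\tr A''_{L''} = \frac{1}{\codim_{H_0} L''}\sum_{H'' \in \mH''_{L''}} a''_{H''},
\]
which is the right-hand side of \eqref{eq:linear1}. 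The identity then follows from Lemma \ref{lem:inducedweights}, which asserts \(a''_{L''} = a_L\).

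I do not expect any real obstacle here, since all the conceptual input is upstream: the correspondence \(L''\leftrightarrow L\) is supplied by Proposition \ref{prop:indirredsub}, the restriction formula \(A''_{L''} = (A_L)|_{H_0}\) by Lemma \ref{lem:weightsinducedconnection}, and the extraction of the eigenvalue \(a_L\) from this restriction (in both cases \((i)\) and \((ii)\) of Proposition \ref{prop:indirredsub}) is carried out inside the proof of Lemma \ref{lem:inducedweights} using the direct sum decomposition from Proposition \ref{prop:AL}. The present corollary just repackages that equality as a linear relation among the weights at hyperplanes.
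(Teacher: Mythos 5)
Your proof is correct and follows exactly the paper's route: the paper's own argument is precisely that Equation \eqref{eq:linear1} is the definitional unpacking of the identity \(a_L = a''_{L''}\) from Lemma \ref{lem:inducedweights}. You have merely spelled out the trace computations on both sides, which the paper leaves implicit.
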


\begin{proof}
	This is immediate from Lemma \ref{lem:inducedweights}, Equation \eqref{eq:linear1} is equivalent to \(a_L=a''_{L''}\).
\end{proof}

\begin{corollary}\label{cor:linear2}
	Let \((V, \mH)\) be essential and irreducible with \(\dim V =n\). Suppose that the Non-Zero Weights Assumptions \ref{ass:nz} hold. Let \(H_0 \in \mH\) be such that the induced arrangement \(\mH''\) is also essential and irreducible, then
	\begin{equation}\label{eq:linear2}
	\frac{1}{n} \sum_{H \in \mH} a_H = \frac{1}{n-1} \left(\sum_{H \pitchfork H_0} a_H + \sum_{\substack{\codim L =2 \\ L\subset H_0}} a_L \right)
	\end{equation}
	where the first sum on the r.h.s. is over all hyperplanes in \(\mH\) such that \(H \pitchfork H_0\) and the second sum is over all \(L \in \mL_{irr}(\mH)\) of codimension two with \(L \subset H_0\).
\end{corollary}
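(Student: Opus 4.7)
The approach is to recognize Equation \eqref{eq:linear2} as the instance of Corollary \ref{cor:linear1} obtained by taking $L'' = \{0\}$. Indeed, the left-hand side of \eqref{eq:linear2} is the weight $a_{\{0\}}$ of $\nabla$ at the origin, and the right-hand side — once we identify the terms via Lemma \ref{lem:inducedweights} — is the weight $a''_{\{0\}}$ of the induced connection $\nabla''$ at the origin of $\mH''$. Since $\mH''$ is essential and irreducible, $\{0\} = T(\mH'')$ and $\{0\} \in \mL_{irr}(\mH'')$, so the corollary is applicable provided we can identify the matching $L \in \mL_{irr}(\mH)$ of Proposition \ref{prop:indirredsub}.

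First, I would pin down this $L$. By definition it is $\bigcap_{H \in \mH',\, \{0\} \subset H} H = T(\mH')$. Since $\mH$ is essential irreducible with $n \geq 2$, it has at least two hyperplanes, so by Lemma \ref{lem:noseparator} the hyperplane $H_0$ is not a separator; hence $T(\mH') \subset H_0$, and combining with $T(\mH') \cap H_0 = T(\mH) = \{0\}$ gives $T(\mH') = \{0\}$. Thus $L = \{0\} = L''$, placing us in case (ii) of Proposition \ref{prop:indirredsub}, and in particular $L \in \mL_{irr}(\mH)$. Corollary \ref{cor:linear1} applied to this pair, together with $\codim_V \{0\} = n$ and $\codim_{H_0}\{0\} = n-1$, immediately yields
\[
\frac{1}{n} \sum_{H \in \mH} a_H \;=\; \frac{1}{n-1} \sum_{H'' \in \mH''} a''_{H''}.
\]

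Second, I would rewrite the right-hand sum using Lemma \ref{lem:inducedweights}. Every $H'' \in \mH''$ is a codimension two intersection of $\mH$ lying inside $H_0$, and Proposition \ref{prop:indirredsub} partitions these into two classes: either $H''$ is reducible in $\mH$, in which case $\mH_{H''} = \{H, H_0\}$ for a unique $H \in \mH'$ with $H \pitchfork H_0$ and Lemma \ref{lem:inducedweights} case (i) gives $a''_{H''} = a_H$; or $H'' \in \mL_{irr}(\mH)$ of codimension two with $L \subset H_0$, in which case Lemma \ref{lem:inducedweights} case (ii) gives $a''_{H''} = a_{H''}$. Summing over these two classes reproduces exactly the right-hand side of \eqref{eq:linear2}.

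The only nontrivial content is therefore the identification $L = L'' = \{0\}$, which follows from the absence of separators in irreducible arrangements; everything else is bookkeeping of the partition of $\mH''$ into reducible-in-$\mH$ versus irreducible-in-$\mH$ strata, so no genuine obstacle is expected.
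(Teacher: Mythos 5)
Your proposal is correct and follows exactly the paper's route: the paper's proof is precisely to apply Corollary \ref{cor:linear1} to \(\{0\} \in \mL_{irr}(\mH) \cap \mL_{irr}(\mH'')\) and then rewrite the right-hand side using Equation \eqref{eq:aH''}. Your extra verification that \(L = T(\mH') = \{0\}\) via the absence of separators (Lemma \ref{lem:noseparator}, cf. Corollary \ref{cor:pan}) is a detail the paper leaves implicit, but it is the same argument.
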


\begin{proof}
	Apply Corollary \ref{cor:linear1} to \(\{0\} \in \mL_{irr}(\mH) \cap \mL_{irr}(\mH'')\) together with Equation \eqref{eq:aH''}.
\end{proof}

\begin{note}
	If the Non-Zero Weights Assumptions \ref{ass:nz} hold and \(L \in \mL(\mH)\) then there is an induced standard connection on \(TL^{\circ}\) obtained by successive restrictions of \(\nabla\) to hyperplanes. Same as before, the commutation relations on the residues impose linear restrictions on the weights. For a description, in the Dunkl case, see \cite[Lemma 2.18 and Remark 2.19]{CHL}.
\end{note}

\subsection{Proof of Theorem \ref{thm:irredhol}}\label{sect:pfirrhol}

The proof is by induction on the dimension \(n\geq 2\). 

\begin{lemma}
	Theorem \ref{thm:irredhol} holds when \(n=2\).
\end{lemma}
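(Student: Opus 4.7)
The plan is to reduce the $n=2$ case to a known result on spherical metrics with coaxial monodromy. Since $\mH$ is essential and irreducible in $\C^2$, it consists of at least three distinct complex lines through the origin. First I would invoke Theorem~\ref{PRODTHM} (whose proof in Section~\ref{sect:MC} does not depend on Theorem~\ref{thm:irredhol}) to obtain a PK cone metric on $\C^2$ singular precisely along $\mH$, with cone angles $2\pi\alpha_H$ transverse to each hyperplane. Lemma~\ref{cor:pkarrang} shows this cone is regular, so the Reeb-quotient construction of Section~\ref{sect:FSquotPK} yields a singular Fubini--Study metric $g_{FS}$ on $\CP^1$ whose conical singularities are the distinct projective points $\pi(H)$, $H\in\mH$, with non-integer cone angles $2\pi\alpha_H=2\pi(1-a_H)$. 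Up to a rescaling of lengths, $g_{FS}$ is thus a spherical cone metric on $\CP^1$ with $|\mH|\ge 3$ distinct cone points, all of non-integer angle.

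I would then argue by contradiction. Suppose the holonomy representation $\rho\colon\pi_1((\C^2)^\circ)\to U(2)$ of $\nabla$ is reducible. Being unitary, $\rho$ is semisimple and decomposes into two one-dimensional characters, so up to conjugation $\rho$ lies in the diagonal torus $U(1)^2\subset U(2)$. Projecting to $PU(2)\cong SO(3)$, the representation $\bar\rho$ lies in the stabiliser of two antipodal points $\{N,S\}\subset S^2$, i.e.\ a one-parameter group of rotations about a common axis. Via the developing map of the PK cone (as in Section~\ref{sect:FSquotPK} and Lemma~\ref{lem:locCP}), $\bar\rho$ is identified with the monodromy of $g_{FS}$ viewed as a spherical metric on $\CP^1\setminus\pi(\mH)$; hence $g_{FS}$ has coaxial monodromy. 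At this point I would invoke Dey's structural theorem \cite{Dey} for spherical metrics on $\CP^1$ with coaxial monodromy: such a metric is a gluing of spherical digons along radial geodesics, and only the (at most two) cone points that develop onto the axis poles can have non-integer cone angle. Since our $g_{FS}$ has $|\mH|\ge 3$ non-integer cone points, this is a contradiction and the reducibility assumption on $\rho$ must fail.

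The main obstacle is correctly invoking \cite{Dey}: one has to check that the rigidity of the digon-gluing description applies to our Fubini--Study cone metrics (which satisfy $\mathrm{Ric}=4\,g_{FS}$, not the usual $K\equiv 1$ normalisation), but this is harmless because an overall rescaling of lengths converts $g_{FS}$ into a unit-curvature spherical cone metric with the same cone points and angles, and the notion of coaxial monodromy is scale-invariant.
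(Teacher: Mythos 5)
Your argument is correct and follows essentially the same route as the paper: pass via Theorem \ref{PRODTHM} and the link construction to a spherical metric on $\CP^1$ with at least three non-integer cone points, observe that reducible unitary holonomy projects to coaxial monodromy in $SO(3)$, and invoke Dey's characterization of coaxial metrics to force an integer cone angle, contradicting the non-integer hypothesis. The only caveat is that your parenthetical ``at most two'' holds for coaxial metrics \emph{all of whose} cone angles are non-integer (in general arbitrarily many cone points may develop onto the axis, provided the additional ones carry integer angles), which is precisely the form in which the paper extracts the contradiction from \cite[Theorem 5]{Dey}.
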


\begin{proof}
	By Theorem \ref{PRODTHM} we have a polyhedral K\"ahler cone metric on \(\C^2\) singular at three or more complex lines; also called PK cones of type \((1,1)\) in \cite[pg. 2208]{Pan}. By \cite[Theorem 1.8]{Pan} there is a corresponding  spherical metric on \(\CP^1\) with cone angles at the points represented by the lines. Moreover, we have a commutative diagram
	\[ \begin{tikzcd}
	\pi_1 \left(\C^2 \setminus \{\text{lines}\}\right) \arrow{r} \arrow[swap]{d}{\Hol(\nabla)} &  \pi_1 \left(\CP^1 \setminus \{\text{lines}\}\right) \arrow[swap]{d}{\rho}    \\
	U(2) \arrow{r} & SO(3)  
	\end{tikzcd}
	\]
	where \(\rho\) is the holonomy of the spherical metric and the map from \(U(2)\) to \(SO(3)=SU(2)/\pm 1\) is  division by \(\sqrt{\det}\). If \(\Hol(\nabla)\) was reducible then the spherical metric would be coaxial and \cite[Theorem 5]{Dey} implies that \(\alpha_H \in \Z\) for some line \(H\) in the arrangement.
\end{proof}

\begin{example}
	Consider the arrangement of three distinct lines in \(\C^2\), also known as \(\mA_2\), endowed with the Lauricella connection \(\nabla^a\) with parameters \(a=(a_1,a_2,a_3)\) as in \eqref{eq:A2matrices}. Take \(a_3=0\) and  \(a_1 \neq 0, a_2 \neq 0, a_1+a_2 \neq 0\). The weights along the three lines \((H_{1,3}, H_{2,3}, H_{1,2})\) are \((a_1,a_2,a_1+a_2)\) and
	\[H_{1,3}^{\perp} = H_{2,3}^{\perp} = H_{1,2} = \C \cdot (1,1) .\] 
	The main diagonal \(\C \cdot (1,1)\) is invariant under all matrices \(A_{i,j}\) and the holonomy of \(\nabla^a\) is reducible. 
	This shows that unitary holonomy is a necessary condition Theorem \ref{thm:irredhol}.  
\end{example}

\begin{lemma}\label{lem:unitaryinducedconect}
	If \(\nabla\) is unitary and \(a_{H_0} \notin \Z\) then  \(\nabla''\) is also unitary.
\end{lemma}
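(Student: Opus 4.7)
The plan is to construct the required $\nabla''$-parallel positive-definite Hermitian form on $TH_0^\circ$ by restricting the $\nabla$-parallel form to a carefully chosen sub-bundle of $TV$ that extends $TH_0$. The main observation enabling the whole argument is that the singular residue $A_{H_0}$ annihilates this sub-bundle, so the induced connection is regular across $H_0^\circ$ and standard ODE theory becomes applicable.

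Let $h$ denote the positive-definite $\nabla$-parallel Hermitian form on $TV|_{V\setminus\mathcal H}$. First I would note that since the monodromy $\exp(2\pi i A_{H_0})$ around $H_0$ must be unitary and has eigenvalues $e^{2\pi i a_{H_0}}$ (multiplicity one) and $1$ (multiplicity $n-1$), we have $a_{H_0}\in\mathbb R$; together with $a_{H_0}\notin\mathbb Z$ this gives $a_{H_0}\neq 0$. Then I would introduce the globally defined sub-bundle $E:=\ker dh_0\subset TV$; under the canonical identification $T_qV\simeq V$ its fiber is the linear subspace $H_0\subset V$, so $E|_{H_0^\circ}=TH_0|_{H_0^\circ}$. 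Each residue $A_H$ of $\nabla$ preserves the subspace $H_0$: by $\ker A_{H_0}=H_0$ for $H=H_0$, and by Lemma~\ref{lem:indcon0} for $H\in\mathcal H'$ (which applies since $A_{H_0}\neq 0$). Hence $E$ is $\nabla$-parallel over $V\setminus\mathcal H$.

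The critical step is that the induced connection $\nabla^E$ on $E$ extends holomorphically across $H_0^\circ$: since $A_{H_0}$ vanishes on $E$, the singular term $A_{H_0}\,dh_0/h_0$ acts trivially on sections of $E$, so $\nabla|_E=\nabla'|_E$ and the latter is defined on all of $V\setminus\mathcal H'$ with holomorphic connection form $\theta_{**}:=\bigl(\sum_{H\in\mathcal H'}A_H\,dh/h\bigr)\bigl|_E$. The restricted form $\hat h:=h|_{E\times E}$ is $\nabla^E$-parallel and therefore satisfies the smooth matrix ODE $d\hat h=-\theta_{**}^{\,T}\hat h-\hat h\,\overline{\theta_{**}}$ on $V\setminus\mathcal H'$, so it extends smoothly across $H_0^\circ$. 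The identity $d\log\det\hat h=-\operatorname{tr}\theta_{**}-\overline{\operatorname{tr}\theta_{**}}$ is exact and locally equals $-2\operatorname{Re}\,df$ for a holomorphic $f$, forcing $\det\hat h$ to be nowhere zero; since the eigenvalues of $\hat h$ are continuous and strictly positive on $V\setminus\mathcal H$, the extension remains positive definite on $H_0^\circ$.

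Finally, restricting $\nabla^E$ to the base $H_0^\circ$ with fiber $E|_{H_0^\circ}=TH_0|_{H_0^\circ}$ reproduces $\nabla''$: using $A_H|_{H_0}=A_H|_E$ for $H\in\mathcal H'$ and $A_{H_0}|_{H_0}=0$, the pulled-back connection form $\iota_0^*\theta_{**}$ matches the one of Remark~\ref{rmk:inducedresidues}. Thus $\hat h|_{H_0^\circ}$ furnishes a positive-definite $\nabla''$-parallel Hermitian form, proving $\nabla''$ is unitary. The anticipated main obstacle is the smooth extension of $\hat h$ across $H_0^\circ$ while preserving positive-definiteness, and the whole plan is designed around resolving it: by restricting to the parallel sub-bundle $E$ the pole of $\nabla$ disappears entirely and the extension reduces to standard ODE theory.
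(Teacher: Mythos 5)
There is a genuine gap at the foundation of your argument: the constant distribution \(E=\ker dh_0\subset TV\) is \emph{not} \(\nabla\)-parallel on \(V\setminus\mH\), and Lemma \ref{lem:indcon0} does not assert that it is. For a constant section \(s\in H_0\) one has \(\nabla_X s=-\sum_{H}\frac{dh(X)}{h}\,dh(s)\,n_H\), so preserving \(E\) away from \(H_0\) would force \(n_H\in H_0\) for \emph{every individual} \(H\in\mH'\). By Lemma \ref{lem:trint} this holds when \(H\pitchfork H_0\), but it fails whenever \(H\cap H_0\) is an irreducible codimension-two intersection: already for \(\mA_2\) with \(H_0=H_{1,2}=\{z_1=z_2\}\) and \(H=H_{1,3}=\{z_1=0\}\), Equation \eqref{eq:A2matrices} gives \(n_H=(a_1+a_3,a_1)\), which lies in \(H_0\) only if \(a_3=0\); this occurs for weights satisfying all hypotheses of Theorem \ref{PRODTHM}. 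What Lemma \ref{lem:indcon0} actually proves is that the \emph{sums} \(A'_{H''}=\sum_{H\cap H_0=H''}A_H\) preserve \(H_0\) (because \(A_{H''}\) commutes with \(A_{H_0}\)), which makes \(TH_0^{\circ}\) parallel for \(\nabla'\) \emph{along} \(H_0^{\circ}\) only; it yields no \(\nabla\)-parallel sub-bundle of \(TV\) over a punctured neighbourhood of \(H_0^{\circ}\). Since \(E\) is not parallel, the restricted form \(\hat h\) is not \(\nabla^E\)-parallel, the pole does not disappear, and the extension and positivity arguments downstream have nothing to apply to.

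The obstruction you hit is precisely what the Local Product Decomposition is built to remove: the correct parallel extension of \(TH_0\) to a punctured neighbourhood of \(x\in H_0^{\circ}\) is the distribution \(\mL\) of Definition \ref{def:dist}, obtained by pushing the constant distribution through the gauge transformation \(G\); it coincides with \(TH_0\) along \(H_0\) but is genuinely non-constant off it, and constructing \(G\) uses both unitarity (to conjugate the holonomies of \(\nabla\) and \(\nabla^L\)) and \(a_{H_0}\notin\Z\) (to extend \(G\) across \(H_0\)). The paper's proof of the lemma therefore just invokes Corollary \ref{cor:locprod}: near \(x\in H_0^{\circ}\) the metric is holomorphically isometric to \(\C_{\alpha_{H_0}}\times\C^{n-1}\), and the flat K\"ahler metric on the \(\C^{n-1}\) factor supplies the \(\nabla''\)-parallel positive definite Hermitian form on \(TH_0^{\circ}\). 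If you want to avoid citing the product decomposition wholesale, you would still need some version of Steps 1--2 of Section \ref{sec:locprod}; a purely linear-algebraic sub-bundle will not do.
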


\begin{proof}
	This follows from Corollary \ref{cor:locprod} applied to \(H_0\). Around points \(x \in H_0^{\circ}\) we have complex coordinates in which the metric splits as \(\C_{\alpha_{H_0}} \times \C^{n-1}\). We obtain a Hermitian form on \(H_0 = \{0\} \times \C^{n-1}\) which is parallel for \(\nabla''\).
\end{proof}

We proceed with the proof of Theorem \ref{thm:irredhol}.
Let \(n \geq 3\) and suppose by contradiction that there is some \(p \in (\C^n)^{\circ}\) and a proper vector subspace \(F_p \subset T_p \C^n\) which is invariant under the action of \(\Hol(\nabla, p)\). By parallel translation we get a flat sub-bundle \(F \subset T (\C^n)^{\circ}\). Taking the orthogonal complement we have another flat sub-bundle \(F^{\perp}\) such that \(F \oplus F^{\perp} = T(\C^n)^{\circ}\).

\begin{lemma}\label{lem:irredhol1}
	The sub-bundles \(F\) and \(F^{\perp}\) extend over the generic points of the arrangement \(\cup_{H \in \mH} H^{\circ}\) as holomorphic vector sub-bundles of \(T\C^n\). Moreover, if \(H \in \mH\) and \(x \in H^{\circ}\) then \(F_{x} \oplus F^{\perp}_{x} = T_{x}\C^n\).
\end{lemma}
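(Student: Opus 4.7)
The plan is to exploit the local product structure from Corollary \ref{cor:locprod} together with the non-integer weight hypothesis. Fix \(H \in \mH\) and \(x \in H^{\circ}\). By Corollary \ref{cor:locprod}, there exist holomorphic coordinates \((y_1, \ldots, y_n)\) centred at \(x\) in which \(g\) is holomorphically isometric to \(\C^{n-1} \times \C_{\alpha_H}\) with \(H = \{y_n = 0\}\). In these coordinates the tangent bundle decomposes orthogonally as \(TH \oplus N\) where \(N = \C \cdot \p_{y_n}\) is the normal line bundle, and the local Levi-Civita connection splits as the product of the trivial connection on \(TH\) and the connection \(d - a_H \, dy_n/y_n\) on \(N\). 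A parallel frame on the universal cover of the local punctured neighbourhood is thus \(\p_{y_1}, \ldots, \p_{y_{n-1}}, y_n^{a_H} \p_{y_n}\), and the monodromy around a small loop encircling \(H\) is the identity on \(TH\) and multiplication by \(\exp(2\pi i a_H)\) on \(N\).

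The subbundle \(F\), being parallel, lifts on the universal cover to a constant subspace \(F_0 \subset \C^n\) in this parallel frame. Since \(F_0\) must be invariant under the monodromy and the two eigenvalues \(1\) and \(\exp(2\pi i a_H)\) are distinct — here is where the non-integer assumption \(a_H \notin \Z\) enters crucially — \(F_0\) splits as a direct sum of the corresponding eigenspace intersections. Descending, this means that on the local chart \(F\) is spanned by a subset of \(\{\p_{y_1}, \ldots, \p_{y_{n-1}}\}\) together possibly with the line \(\C \cdot \p_{y_n}\). Both are manifestly holomorphic subbundles of \(T\C^n\) on the entire coordinate chart, yielding a holomorphic extension of \(F\) across \(H^{\circ}\).

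For \(F^{\perp}\), use that the product decomposition is orthogonal with respect to the parallel Hermitian form (Remark \ref{rmk:orth}), so \(F^{\perp} = (F \cap TH)^{\perp_{TH}} \oplus (F \cap N)^{\perp_N}\). The parallel Hermitian form restricted to \(TH\) has constant matrix in the parallel frame \(\p_{y_1}, \ldots, \p_{y_{n-1}}\), so the orthogonal complement of the constant subbundle \(F \cap TH\) is again a constant, and hence holomorphic, subbundle. The summand inside the line \(N\) is either \(0\) or all of \(N\). Together these give a holomorphic extension of \(F^{\perp}\) across \(H^{\circ}\), and by construction \(F_x \oplus F^{\perp}_x = T_x \C^n\) at every point \(x\) of the coordinate chart, including on \(H\).

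The key step — and the only nontrivial one — is the eigenspace splitting of \(F_0\) forced by the non-integer hypothesis. Without \(a_H \notin \Z\), the monodromy on \(N\) could coincide with the identity on \(TH\), \(F\) need not respect the \(TH \oplus N\) decomposition, and the natural extension of \(F\) across \(H\) might acquire genuine singularities (this is the same phenomenon that allowed us to establish the Local Product Decomposition in Section \ref{sec:locprod}, and the argument here is its direct analogue at the level of flat subbundles).
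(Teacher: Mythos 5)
Your proof is correct and follows the same route as the paper: Corollary \ref{cor:locprod} gives the local splitting \(\C^{n-1}\times\C_{\alpha_H}\), and the paper then simply asserts that the parallel sub-bundles ``must be constant vector subspaces in these coordinates'' --- your monodromy-eigenspace argument (distinct eigenvalues \(1\) and \(\exp(2\pi i a_H)\) since \(a_H\notin\Z\)) is exactly the justification that assertion leaves implicit. The only quibble is the phrase ``spanned by a subset of \(\{\p_{y_1},\ldots,\p_{y_{n-1}}\}\)'': the \(TH\)-component of \(F\) is a constant \emph{subspace} of \(\Span\{\p_{y_1},\ldots,\p_{y_{n-1}}\}\), not necessarily spanned by coordinate vectors, though this does not affect the conclusion.
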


\begin{proof}
	By Corollary \ref{cor:locprod} there are complex coordinates around \(x\) in which the metric splits as \(\C_{\alpha_H} \times \C^{n-1}\). The parallel sub-bundles \(F, F^{\perp}\) must be constant vector subspaces in these coordinates.
\end{proof}

\begin{lemma}\label{lem:irredhol2}
	Let \(U\) be an open ball of \(\C^n\) and \(Z \subset U\) an analytic subset of complex codimension \(\geq 2\). Let \(E\) be a trivial complex vector bundle over \(U\). Suppose that \(E_1, E_2\) are holomorphic sub-bundles of \(E\) defined on \(U \setminus Z\) and such that \(E_1\oplus E_2 = E\). Then \(E_1, E_2\) extend over the whole \(U\) as holomorphic vector sub-bundles.
\end{lemma}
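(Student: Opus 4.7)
The plan is to encode the direct sum decomposition via a pair of holomorphic projections, which are endomorphisms of the trivial bundle \(E\) and hence matrix-valued holomorphic functions on \(U \setminus Z\); classical Hartogs extension in codimension \(\geq 2\) then extends them across \(Z\) and recovers the extended sub-bundles as their images.

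Concretely, on \(U \setminus Z\) the decomposition \(E = E_1 \oplus E_2\) produces a unique holomorphic endomorphism \(P_1\) of \(E|_{U \setminus Z}\) with \(\img P_1 = E_1\) and \(\ker P_1 = E_2\); set \(P_2 = \Id - P_1\). Using the trivialization \(E = U \times \C^n\), we view \(P_1\) as a holomorphic map \(U \setminus Z \to M_n(\C)\). By the second Riemann extension theorem (Theorem \ref{Riemann} applied componentwise, noting that for \(Z\) of codimension \(\geq 2\) the boundedness hypothesis is automatic via Hartogs), the entries extend uniquely to holomorphic functions on \(U\). This gives a holomorphic extension \(P_1 : U \to M_n(\C)\), and we again set \(P_2 = \Id - P_1\). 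The algebraic relations \(P_i^2 = P_i\), \(P_1 + P_2 = \Id\), \(P_1 P_2 = P_2 P_1 = 0\), and \(\tr P_1 = \rk E_1\) all hold on the dense subset \(U \setminus Z\), hence by analytic continuation on all of \(U\). In particular, at every \(x \in U\) the operator \(P_1(x)\) is a projection of \(\C^n\) of rank \(\rk E_1\).

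Define \(\tilde E_1 := \img P_1\) and \(\tilde E_2 := \img P_2\) as subsets of \(E\). To see these are holomorphic sub-bundles, fix \(x_0 \in U\) and pick vectors \(v_1, \ldots, v_r \in \C^n\) (where \(r = \rk E_1\)) such that \(P_1(x_0) v_1, \ldots, P_1(x_0) v_r\) are linearly independent; then for \(x\) in a neighborhood of \(x_0\) the sections \(x \mapsto P_1(x) v_i\) form a holomorphic frame of \(\tilde E_1\), and similarly for \(\tilde E_2\). By construction \(\tilde E_i|_{U \setminus Z} = E_i\), so \(\tilde E_1, \tilde E_2\) are the required holomorphic extensions. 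The only nontrivial input is the classical Hartogs-type extension across codimension \(\geq 2\) analytic subsets, and I do not expect any genuine obstacle beyond that.
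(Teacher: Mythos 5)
Your proof is correct and follows essentially the same route as the paper: encode the splitting by holomorphic projection operators, extend them across the codimension-$\geq 2$ set $Z$ by Hartogs, and then check that the extended operators still have constant rank so that their images/kernels are sub-bundles. The only (minor) difference is how constant rank is verified — you use the identities $P_1^2=P_1$ and $\tr P_1=\rk E_1$ propagated by analytic continuation, while the paper observes that $\ker\pi_1(x)\cap\ker\pi_2(x)=\{0\}$ forbids the kernel dimensions from jumping up; both are valid.
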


\begin{proof}
	We have projector maps \(\pi_1, \pi_2 \in H^0(U \setminus Z, \End E)\) defined by 
	\[\ker \pi_1 = E_2, \qquad \pi_1|_{E_1}=\Id_{E_1}, \qquad \ker \pi_2 = E_1, \qquad \pi_2|_{E_2}=\Id_{E_2} . \] 
	By Hartogs we can extend \(\pi_1, \pi_2\) over all of \(U\) and the extensions satisfy \(\pi_1+\pi_2=\Id_E\). If \(x \in Z\) then \(\dim \ker \pi_1 (x) \geq \dim E_2\) and \(\dim \ker \pi_2 (x) \geq \dim E_1\). Since \(\ker \pi_1 (x) \cap \ker \pi_2 (x) = \{0\}\) the dimensions of \(\ker \pi_i\) can not jump-up at \(x\). We conclude that \(\ker \pi_i\) are vector sub-bundles providing the desired extensions. 
\end{proof}

\begin{lemma}\label{lem:irredhol3}
	\(F\) and \(F^{\perp}\) are constant vector subspaces of \(\C^n\).
\end{lemma}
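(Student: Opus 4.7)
The plan is to encode the parallel splitting $T\C^n = F \oplus F^{\perp}$ by its projector $\pi_F$ onto $F$, view it via the coordinate frame as a holomorphic matrix-valued function on $\C^n$, and prove that this function is constant. First I would use Lemmas \ref{lem:irredhol1} and \ref{lem:irredhol2} to extend $F$ and $F^{\perp}$ to complementary holomorphic sub-bundles of $T\C^n$ on all of $\C^n$; this makes $\pi_F \in H^0(\C^n, \End T\C^n)$ globally defined, and on $(\C^n)^{\circ}$ it is parallel for the induced connection $\nabla^{\End}$ on $\End T\C^n$.

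The core of the argument is to differentiate $\pi_F$ along the Euler vector field of $\nabla$. Lemma \ref{lem:eulstndcon} (applicable because $\mH$ is essential and irreducible and the non-integer hypothesis of Theorem \ref{PRODTHM} both gives the Non-Zero Weights Assumptions \ref{ass:nz} and ensures $\alpha_0 = 1-a_0 \ne 0$) provides $e = \alpha_0^{-1}\sum_i z_i\,\partial_{z_i}$. On the regular locus the parallel condition $\nabla^{\End}\pi_F = 0$ reads
\[
d\pi_F = [\Omega,\pi_F] = \sum_{H \in \mH} [A_H,\pi_F]\,\frac{dh}{h},
\]
and, since each $h$ is linear, $dh(e)/h = 1/\alpha_0$. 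Contracting with $e$ collapses the sum to the single commutator $\alpha_0^{-1}\bigl[\sum_{H}A_H,\pi_F\bigr]$. The key algebraic input here, and the one step I expect to be the real content of the proof, is Corollary \ref{cor:essirrA}: for an essential irreducible arrangement with Non-Zero Weights, $\sum_{H \in \mH} A_H = a_0\,\Id$. This forces $e(\pi_F) = 0$ on $(\C^n)^{\circ}$; both sides being holomorphic on $\C^n$, the identity extends across the arrangement.

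The conclusion is then a one-liner. The equation $e(\pi_F) = 0$ says $\pi_F$ is holomorphic and homogeneous of degree zero under the $\C^*$-action by scalar multiplication, i.e.\ $\pi_F(\lambda z) = \pi_F(z)$ for all $\lambda \in \C^*$; letting $\lambda \to 0$ and using continuity of $\pi_F$ at the origin gives $\pi_F \equiv \pi_F(0)$. Hence $\pi_F$ is a constant projector, and $F = \img\pi_F$, $F^{\perp} = \ker\pi_F$ are constant vector subspaces of $\C^n$. No further input is needed: the whole argument pivots on the scalar identity $\sum_H A_H = a_0\,\Id$, which is the feature of essential irreducible arrangements that makes the Euler contraction of the parallel-transport equation vanish identically.
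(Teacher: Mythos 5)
Your proof is correct, and it reaches the conclusion by a somewhat different, more computational route than the paper. The paper's argument is soft: since \(\nabla\) is standard it is invariant under scalar multiplication, so the \(\C^*\)-action preserves the parallel sub-bundles \(F\) and \(F^{\perp}\), and any holomorphic \(\C^*\)-invariant distribution on \(\C^n\) is constant. You instead \emph{derive} the \(\C^*\)-invariance by contracting the parallel-transport equation \(d\pi_F=[\Omega,\pi_F]\) with the Euler vector field and invoking Corollary \ref{cor:essirrA} to kill the resulting commutator \([\sum_H A_H,\pi_F]\); the final step (\(e(\pi_F)=0\) forces \(\pi_F\) to be homogeneous of degree zero, hence constant) coincides with the paper's last sentence. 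What your version buys is that the invariance is a computed identity for the specific projector \(\pi_F\), rather than an appeal to the slightly delicate claim that a connection-preserving diffeomorphism must carry the parallel sub-bundle \(F\) back to itself (a priori it could send it to a different parallel sub-bundle); what it costs is the hypothesis-checking for Corollary \ref{cor:essirrA} (essential, irreducible, Non-Zero Weights and \(\alpha_0\neq 0\)), which you handle correctly via the non-integer condition of Theorem \ref{PRODTHM}. Both arguments use Lemmas \ref{lem:irredhol1} and \ref{lem:irredhol2} in exactly the same way to make \(\pi_F\) globally holomorphic.
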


\begin{proof}
	By Lemmas \ref{lem:irredhol1} and \ref{lem:irredhol2} \(F, F^{\perp}\) are holomorphic vector sub-bundles of \(T\C^n\).
	Since the action of \(\C^*\) on \(\C^n\) by multiplication preserves \(\nabla\), it also preserves \(F\) and \(F^{\perp}\). However, any smooth \(\C^*\) invariant foliation on \(\C^n\) is constant.
\end{proof}

We can now establish Theorem \ref{thm:irredhol}.

\begin{proof}[Proof of Theorem \ref{thm:irredhol}]
	Let \(H_0 \in \mH\) be such that the induced arrangement \(\mH''\) is essential and irreducible, see Lemma \ref{lem:inducedirred}. The induced connection \(\nabla''\) is standard, flat and torsion free (see Definition \ref{def:inducedconnection}, Remark \ref{rmk:inducedresidues} and Lemma \ref{lem:indconectisftf}).
	By Lemma \ref{lem:unitaryinducedconect}, the induced connection is also unitary. Moreover, it follows from Lemma \ref{lem:weightsinducedconnection} that the weights of \(\nabla''\) are a subset of \(\{a_L, \,\ L \in \mL_{irr}(\mH)\}\) and therefore are non-integer. 
	
	If either \(F \cap H_0\) or \(F^{\perp} \cap H_0\) is a non-zero proper subspace of \(H_0\) then we get a contradiction by induction. It follows that \(F=\C \cdot n_{H_0}\) or \(F=H_0\). Swapping \(F\) with \(F^{\perp}\) if necessary, we can assume that \(F= \C \cdot n_{H_0}\) and \(F^{\perp}=H_0\).
	
	Suppose there is \(L \in \mL_{irr}(\mH)\) with \(\codim L=2\) and \(L \subset H_0\). Pick  \(x \in L^{\circ}\). Take complex coordinates centred at \(x\) as in Theorem \ref{thm:locprod} in which we have a splitting \[\left(\C^n/L, \, \mH_L/L, \, \nabla^{\C^n/L}, \, \langle \cdot, \cdot \rangle^{\perp}\right) \times \C^{n-2} .\] 
	The subspace \(F\) gives a non-zero one-dimensional parallel sub-bundle of \(\nabla^{\C^n/L}\), which contradicts the \(n=2\) case. We conclude that there are no codimension two irreducible subspaces \(L\subset H_0\), which is to say that \(H_0 \pitchfork H\) for every \(H \in \mH'\). By Lemma \ref{lem:trint} we have \(\C \cdot n_{H_0} \subset \cap_{H \in \mH'} H\), which implies \(\mH= \mH' \uplus \{H_0\}\) and contradicts the irreducibility of \(\mH\).
\end{proof}

To finish the section, we prove Corollary \ref{cor:uniqueinnerprod}.

\begin{proof}[Proof of Corollary \ref{cor:uniqueinnerprod}]
	Let \(\inn\) and \(\inn'\) be the parallel Hermitian inner products on the holomorphic tangent bundle determined by the K\"ahler metrics \(g\) and \(g'\).
	Since \(\inn\) and \(\inn'\) are both parallel with respect to \(\nabla\), it is enough to show that they differ by a scalar multiple at some fixed point \(p \in (\C^n)^{\circ}\). By Theorem \ref{thm:irredhol} the action of \(\Hol_p(\nabla)\) on \(T_p \C^n\) is irreducible and by Schur's lemma \(\inn'_p = \lambda \inn_p \) for some \(\lambda>0\).
\end{proof}

\section{Braid arrangement and the Lauricella connection}\label{sect:Lau}

In this Section we establish Theorem \ref{LAUTHM}.
We begin by introducing the (essential) braid arrangement and the Lauricella connection. 
Our first main result is the uniqueness statement given by
Proposition \ref{prop:UNIQ} which provides a characterization of the Lauricella connection among flat torsion free standard connections with poles at the hyperplanes of the \(\mA_n\)-arrangement. We prove Proposition \ref{prop:UNIQ} by induction on the dimension, making use of induced and quotient connections. Using this characterization, together with the existence Theorem \ref{PRODTHM} and results from Section \ref{sect:PK} on PK metrics, we establish
Theorem \ref{LAUTHM} in Section \ref{sect:pfLau}.

\subsection{The \(\mA_n\)-arrangement}\label{sect:braidarrang}

\begin{definition}\label{def:An}
	Let \((z_1, \ldots, z_n)\) be standard complex coordinates on \(\C^n\). The \textbf{\(\mA_n\)-arrangement} consists of \({n+1\choose2}\) hyperplanes indexed by pairs \(H_{p,q}=H_{q,p}\) with \(p,q\) running from \(1\) to \(n+1\) and \(p \neq q\). The hyperplanes are: \( H_{i,j} = \{z_i=z_j\}\) for \(1 \leq i < j \leq n\) and \(H_{i, n+1} = \{z_i = 0\}\) for \(1 \leq i \leq n\).
\end{definition}

The reason for the notation \(\mA_n\) is that its hyperplanes are the mirrors of the Coxeter group of type \(A_n\). See \cite[Chapter 6]{OT1}.

\begin{definition}
	Let \((\tz_1, \ldots, \tz_{n+1})\) be standard complex coordinates on \(\C^{n+1}\). The \(\textbf{braid arrangement}\) \(\tilde{\mA}_n\) consists of the following hyperplanes
	\begin{equation*}
		\tilde{H}_{i, j} = \{\tz_i=\tz_j\} \mbox{ for } 1\leq i<j\leq n+1 .
	\end{equation*}
\end{definition}

The reason for the name braid arrangement is that the fundamental group of the complement \(\C^{n+1} \setminus \tilde{\mA}_n\) is the pure braid group on \(n+1\) strands. See \cite[pg. 161]{OT1}.

\begin{remark}
	\(\tilde{\mA}_n\) is not essential, its centre is the main diagonal \(\tz_1 = \ldots = \tz_{n+1}\).
	The quotient \(\tilde{\mA}_n/\C \cdot (1, \ldots, 1)\) is identified with  \(\mA_n\), by setting
	\begin{equation*}
		z_i = \tz_i - \tz_{n+1}
	\end{equation*}
	for \(i=1, \ldots, n\). For this reason we also refer to \(\mA_n\) as the \textbf{essential braid arrangement}.
\end{remark}

\begin{notation}
	Given a positive integer number \(N\) we denote by \(\underline{N}\) the set \(\{1, \ldots, N\}\), i.e. 
	\[\underline{N} = \{1, \ldots, N\} . \]
\end{notation}

\begin{notation}
	A partition \(P=\{P_1, \ldots, P_k\}\) of \(\underline{n+1}\) consists of disjoint non-empty subsets \(P_i \subset \underline{n+1}\) such that their union is \(\underline{n+1}\). 
\end{notation}

\begin{definition}
	Let \(L \in \mL(\mA_n)\). We define \(\sim_{L}\) to be the equivalence relation on \(\underline{n+1}\) \emph{generated} by \(i \sim_{L} j\) if \(L \subset H_{i,j}\). The equivalence classes of \(\sim_{L}\) give us a partition of \(\underline{n+1}\) which we denote by  \(P(L)\).  
\end{definition}

\begin{definition}
	Let \(P=\{P_1, \ldots, P_k\}\) be a partition of \(\underline{n+1}\), we define 
	\[\mH(P)= \{H_{i,j} \in \mA_n, \,\ \{i,j\} \subset P_l \mbox{ for some } l\} .\] 
	The common intersection of all hyperplanes in \(\mH(P)\) is an element of \(\mL(\mA_n)\) which we denote
	by \(L(P) \in \mL(\mA_n) \).
\end{definition}

The next result is a straightforward consequence of the definitions.

\begin{lemma}[{\cite[Proposition 2.9]{OT1}}] \label{lem:interAn}
	The maps \(L \to P(L)\) and \(P \to L(P)\) are inverses of each other and give a natural correspondence between
	\(\mL(\mA_n)\) and partitions of \(\underline{n+1}\).
\end{lemma}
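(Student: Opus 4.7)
The plan is to verify directly that the two maps are mutually inverse, which amounts to checking \(P(L(P)) = P\) for every partition \(P\) of \(\underline{n+1}\) and \(L(P(L)) = L\) for every \(L \in \mL(\mA_n)\). Throughout, it is convenient to set \(z_{n+1} := 0\) formally so that every hyperplane of the arrangement is uniformly of the form \(\{z_i = z_j\}\) for some \(1 \leq i < j \leq n+1\).

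For the identity \(P(L(P)) = P\), I would unpack the description of \(L(P)\): a point lies in \(L(P)\) if and only if \(z_i = z_j\) whenever \(i\) and \(j\) are in the same block of \(P\). In particular, each block carries a single coordinate value (and the block containing \(n+1\) is forced to have value \(0\)), and those block values are otherwise free. From this it is immediate that \(L(P) \subset H_{i,j}\) precisely when \(i\) and \(j\) lie in the same block of \(P\); hence the relation \(\sim_{L(P)}\) coincides with the equivalence relation defining \(P\), and \(P(L(P)) = P\).

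For \(L(P(L)) = L\), the main observation is that \(\sim_L\) is defined as the equivalence closure of the relation \(L \subset H_{i,j}\), and this closure is compatible with the hyperplanes of \(\mA_n\): if \(L \subset H_{i,i_1} \cap H_{i_1, i_2}\), then on \(L\) one has \(z_i = z_{i_1} = z_{i_2}\), so \(L \subset H_{i, i_2}\). By induction along chains, \(i \sim_L j\) forces \(L \subset H_{i,j}\), giving the inclusion \(\mH(P(L)) \subset (\mA_n)_L\). Combined with the trivial reverse inclusion (if \(L \subset H_{i,j}\) then \(i \sim_L j\) by definition, so \(H_{i,j} \in \mH(P(L))\)), we conclude \(\mH(P(L)) = (\mA_n)_L\), and intersecting gives \(L(P(L)) = L\).

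There is no real obstacle here; the only point demanding care is the transitive closure step, which requires checking that the equivalence classes generated by the relation \(L \subset H_{i,j}\) remain compatible with membership in all the defining hyperplanes. Once that is observed, both directions follow immediately from the definitions, and the correspondence is manifestly natural (it commutes with permutations of the index set \(\underline{n+1}\), i.e.\ with the obvious \(S_{n+1}\)-action on the arrangement and on partitions).
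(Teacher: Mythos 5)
Your proof is correct, and since the paper offers no argument of its own (it simply cites Orlik--Terao and calls the statement a straightforward consequence of the definitions), your direct verification of the two composites is exactly the intended argument. The two points that need care — that the transitive closure of the generating relation is already realized by hyperplane containments, and that \(L = \bigcap_{H \in (\mA_n)_L} H\) for any \(L \in \mL(\mA_n)\) — are both handled properly.
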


\begin{note}
	The intersections \(\C^n, \{0\} \in \mL(\mA_n)\) correspond to the partitions \(\{\{1\}, \ldots, \{n+1\}\}\) and \(\{\underline{n+1}\}\) respectively. If \(L,M \in \mL(\mA_n)\) correspond to \(P(L)=\{P_1, \ldots, P_k\}\) and \(Q(M)=\{Q_1, \ldots, Q_l\}\) then \(M \subset L\) if and only if for every \(P_i\) there is some \(Q_j\) such that \(P_i \subset Q_j\). 
\end{note}

\begin{remark}
	There are two types of codimension two intersections \(L \in \mL(\mA_n)\)
	\begin{itemize}
		\item irreducible: \((\mA_n)_{L}=\{H_{i, j}, H_{j, k}, H_{i, k}\}\) with \(i<j<k\);
		\item reducible: \((\mA_n)_{L}=\{H_{i, j}, H_{k,l}\}\) with \(\{i, j\} \cap \{k, l\}= \emptyset\).
	\end{itemize}
\end{remark}

\begin{lemma} \label{lem:Anisirred}
	\(\mA_n\) is irreducible.
\end{lemma}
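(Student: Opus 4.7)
The plan is to exhibit a subset of $n+1$ hyperplanes of $\mathcal{A}_n$ in general position and then invoke Corollary \ref{cor:standardsubarrangement}, which will simultaneously give essentiality and irreducibility.

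Concretely, I would take the collection
\[
\mathcal{G} = \{H_{1,n+1},\, H_{2,n+1},\, \ldots,\, H_{n,n+1},\, H_{1,2}\} \subset \mathcal{A}_n,
\]
that is, the $n$ coordinate hyperplanes $\{z_i=0\}$ together with the diagonal hyperplane $\{z_1=z_2\}$. This is a set of $n+1 = \dim \mathbb{C}^n + 1$ hyperplanes, so it remains to check that any $n$ of them meet only at the origin. Dropping the diagonal $H_{1,2}$ leaves the $n$ coordinate hyperplanes, whose intersection is obviously $\{0\}$. Dropping one of the coordinate hyperplanes $H_{i,n+1}$ and keeping $H_{1,2}$: the remaining $n-1$ coordinate hyperplanes already force $z_j=0$ for all $j\neq i$; if $i\notin\{1,2\}$ this in particular sets $z_1=z_2=0$, and if $i\in\{1,2\}$ the equation $z_1=z_2$ together with the vanishing of the other coordinate forces the remaining coordinate to vanish. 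In every case the intersection is the origin, so $\mathcal{G}$ is in general position in the sense of Definition \ref{def:generalposition}.

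Having exhibited such a subset, Corollary \ref{cor:standardsubarrangement} immediately yields that $\mathcal{A}_n$ is essential and irreducible, which proves the lemma.

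No step is really an obstacle here: the verification of general position is a direct computation with linear equations, and the heavy lifting has already been packaged into Corollary \ref{cor:standardsubarrangement} (via the identification of essential irreducible arrangements with those whose identity component of $\mathrm{Aut}_0$ is $\mathbb{C}^*$). As an alternative I could proceed inductively, observing that the induced arrangement $(\mathcal{A}_n)^{H_{n,n+1}}$ on $H_{n,n+1}=\{z_n=0\}\cong \mathbb{C}^{n-1}$ is isomorphic to $\mathcal{A}_{n-1}$ and then applying Lemma \ref{lem:caseii}; but this requires a separate verification that $\mathcal{A}_n\setminus\{H_{n,n+1}\}$ is essential and adds nothing over the one-line argument above.
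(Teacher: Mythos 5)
Your strategy is exactly the paper's: exhibit $n+1$ hyperplanes of $\mA_n$ in general position and conclude via Corollary \ref{cor:standardsubarrangement}. However, the specific collection you chose is \emph{not} in general position once $n\geq 3$, and your verification contains the error. The three hyperplanes $\{z_1=0\}$, $\{z_2=0\}$, $\{z_1=z_2\}$ have linearly dependent defining equations, so any $n$-tuple from your set $\mathcal{G}$ that contains all three of them fails the test. Concretely, drop $H_{i,n+1}$ with $i\notin\{1,2\}$ (possible as soon as $n\geq 3$): the remaining $n$ hyperplanes are $\{z_j=0\}$ for $j\neq i$ together with $\{z_1=z_2\}$, and since $z_1=z_2=0$ is already forced by the coordinate hyperplanes, the equation $z_1=z_2$ is redundant. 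The common intersection is the line $\{z_j=0,\ j\neq i\}=\C\cdot e_i$, not the origin. In your write-up you correctly observe that this case "sets $z_1=z_2=0$" but then jump to "the intersection is the origin," overlooking that the coordinate $z_i$ remains free.

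The fix is to choose a collection whose defining linear forms stay independent after deleting any one member. The paper uses the chain
\[
H_{1,n+1}=\{z_1=0\},\quad H_{1,2}=\{z_1=z_2\},\quad H_{2,3}=\{z_2=z_3\},\ \ldots,\ H_{n-1,n}=\{z_{n-1}=z_n\},\quad H_{n,n+1}=\{z_n=0\},
\]
which is in general position: deleting any one hyperplane leaves a chain of $n$ equations that propagates $z_j=0$ through all indices from one or both ends. With that replacement, the appeal to Corollary \ref{cor:standardsubarrangement} goes through verbatim and the rest of your argument is fine.
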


\begin{proof}
	There is a subset of \(n+1\) hyperplanes in general position,  namely:
	\[\{x_1=0\}, \{x_2=x_1\}, \ldots, \{x_n=x_{n-1}\}, \{x_n=0\} . \]
	The lemma follows from Corollary \ref{cor:standardsubarrangement}.
\end{proof}

\begin{remark}\label{rmk:Anquotient}
	Let \(L \in \mL(\mA_n)\) be given by a partition \(P=\{P_1, \ldots, P_k\}\), then
	\[(\mA_n)_L/L \equiv \mA_{|P_1|-1} \times \ldots \times \mA_{|P_k|-1} . \]
	In particular, \(L\) is irreducible if and only if \(|P_i|\geq 2\) for at most one \(i\). See \cite[Corollary 6.28]{OT1} for a more general statement that comprises complex reflection arrangements.
\end{remark}

\begin{lemma}[{\cite[Proposition 6.73]{OT1}}] \label{lem:inducedAn}
	If \(L \in \mL(\mA_n)\) then the induced arrangement \((\mA_n)^L\) is isomorphic to \(\mA_{\dim L}\).
\end{lemma}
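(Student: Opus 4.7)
My plan is to exploit the explicit bijection between $\mL(\mA_n)$ and partitions of $\underline{n+1}$ provided by Lemma \ref{lem:interAn}, and then directly read off what the induced arrangement on $L$ looks like in suitable coordinates.

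First, I would pass to partitions: given $L \in \mL(\mA_n)$, Lemma \ref{lem:interAn} identifies $L$ with a partition $P(L) = \{P_1, \ldots, P_k\}$ of $\underline{n+1}$. Using the convention $z_{n+1}=0$ on $\C^n$, the subspace $L$ is cut out by the equations $z_i=z_j$ for $i,j$ in the same block of $P(L)$, so that two indices lying in the same block force the corresponding coordinates to be equal (and both equal to $0$ if that block contains $n+1$). A quick count of independent linear relations gives $\dim L = k-1$, which matches the extreme cases $P=\{\{1\},\ldots,\{n+1\}\}$ (yielding $L=\C^n$) and $P=\{\underline{n+1}\}$ (yielding $L=\{0\}$).

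Next I would introduce adapted linear coordinates on $L$. After relabelling, assume $P_k$ is the block containing $n+1$. Assigning a variable $w_a \in \C$ to each block $P_a$ for $1\le a\le k-1$ and setting $w_k=0$, the map
\begin{equation*}
(w_1,\ldots,w_{k-1}) \;\longmapsto\; (z_i)_{i=1}^{n}, \qquad z_i = w_a \text{ if } i \in P_a,
\end{equation*}
gives an isomorphism $\C^{k-1} \xrightarrow{\sim} L$. The hyperplanes of $\mA_n$ that \emph{do not} contain $L$ are precisely those $H_{i,j}$ with $i,j$ in distinct blocks $P_a, P_b$ (by definition of $\sim_L$). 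For such $H_{i,j}$, its trace on $L$ is the locus $\{w_a = w_b\}$ in the coordinates above, and this depends only on the unordered pair $\{a,b\}$. Two cases arise: if $a,b<k$, we obtain the hyperplane $\{w_a=w_b\}$; if $b=k$, we obtain $\{w_a=0\}$. Running over all pairs $1\le a<b\le k$ produces exactly the full list of defining hyperplanes of $\mA_{k-1}$ in $\C^{k-1}$ (as in Definition \ref{def:An}, with $k-1$ playing the role of $n$).

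The only slightly delicate point is checking that two distinct pairs $\{i,j\}$ and $\{i',j'\}$ lying in the same pair of blocks contribute the \emph{same} hyperplane of the induced arrangement (so there is no over-counting) and that every hyperplane of $\mA_{k-1}$ is hit exactly once; both follow immediately from the description above, since $H_{i,j}\cap L$ depends only on $\{a,b\}$. Thus the map $\{a,b\} \mapsto H_{i,j}\cap L$ is a bijection between unordered pairs of distinct blocks of $P(L)$ and the hyperplanes of $(\mA_n)^L$, and in the coordinates $(w_1,\ldots,w_{k-1})$ it realises $(\mA_n)^L$ as $\mA_{k-1}=\mA_{\dim L}$, proving the lemma. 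No real obstacle is expected; the argument is combinatorial and the only care required is tracking the block containing $n+1$ to get the two types of hyperplanes ($\{w_a=w_b\}$ and $\{w_a=0\}$) correctly.
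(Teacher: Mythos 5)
Your argument is correct. Every step checks out: the identification of $L$ with the partition $P(L)=\{P_1,\ldots,P_k\}$, the dimension count $\dim L=k-1$, the observation that $L\subset H_{i,j}$ precisely when $i,j$ lie in the same block (so the hyperplanes contributing to $(\mA_n)^L$ are exactly those with $i,j$ in distinct blocks), and the fact that $H_{i,j}\cap L$ depends only on the unordered pair of blocks, yielding exactly the hyperplanes $\{w_a=w_b\}$ and $\{w_a=0\}$ of $\mA_{k-1}$.

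Your route is genuinely different from the paper's. The paper argues by reduction: it invokes the transitivity of induced arrangements (the earlier lemma $(\mH^L)^M=\mH^M$) to reduce to the case where $L$ is a single hyperplane, and then verifies the claim explicitly for $L=\{z_n=0\}$, where the restricted coordinates $z_1,\ldots,z_{n-1}$ immediately exhibit $\mA_{n-1}$. That proof is shorter but leaves the general hyperplane $H_{i,j}$ to symmetry and the general $L$ to iteration. Your proof instead handles an arbitrary $L$ in one pass via the partition correspondence of Lemma \ref{lem:interAn}, constructing explicit adapted coordinates $w_1,\ldots,w_{k-1}$ indexed by the blocks. What your version buys is a self-contained, fully explicit description of the isomorphism (useful, for instance, when one later needs to track weights or residues under restriction, as in Section \ref{sect:indcon}); what the paper's version buys is brevity and the reuse of the general restriction formalism already set up in Section \ref{sect:defhyp}. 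Either is acceptable.
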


\begin{proof}
	It is enough to check it when \(L\) is a hyperplane, say \(L=\{z_n=0\}\). The restriction of coordinates functions \(z_1, \ldots, z_{n-1}\) to \(L\) give us coordinates, which we  denote
	by the same symbol. The hyperplanes of the induced arrangement are \(\{z_i=0\}\) for \(i=1, \ldots, n-1\) and \(\{z_i=z_j\}\) for \(1 \leq i<j \leq n-1\); which agrees with \(\mA_{n-1}\).
\end{proof}

\subsection{The Lauricella connection}\label{sect:Lauconnect}

We introduce the (reduced) Lauricella connection \(\nabla^a\) in Definition \ref{def:redLau} following \cite[Section 2.3]{CHL} with the difference that we impose no restrictions on the complex parameters \(a=(a_1, \ldots, a_{n+1})\).

\begin{notation}
	Let \((\tz_1, \ldots, \tz_{n+1})\) be standard complex coordinates on \(\C^{n+1}\). We fix defining equations for the hyperplanes \(\tH_{i, j} \in \tilde{\mA}_n\) given by 
	\[ \Th_{i, j} = \tz_i - \tz_j \,\ \mbox{ for } \,\ 1\leq i<j \leq n+1 . \]
\end{notation}

\begin{notation}
	We denote the coordinate vectors of \(\C^{n+1}\) by \(\tp_i = \p/\p\tz_i\). 
\end{notation}

\begin{definition}
	Fix \(a_1, \ldots, a_{n+1} \in \C\). For \(1 \leq i<j\leq n+1\) we introduce the following:
	\begin{itemize}
		\item vectors \(n_{i,j} \in \C^{n+1}\) given by
		\[n_{i,j} = a_j \tp_i - a_i \tp_j;\]
		\item endomorphisms \(\tA_{i,j} \in \End \C^{n+1}\) given by
		\[ \tA_{i,j} (v)= d \Th_{i, j}(v) \cdot n_{i,j} .  \]
	\end{itemize}
\end{definition}

\begin{note}
	It is easy to check that
	\[ \tH_{i, j} \subset \ker \tA_{i,j}, \,\ \img \tA_{i, j} \subset \C \cdot n_{i,j}, \,\ \tA_{i,j} n_{i,j} = (a_i + a_j) n_{i,j} . \]
\end{note}

\begin{note}
	The endomorphisms \(\tA_{i, j}\) are represented with respect to the standard basis \(\tp_1, \ldots, \tp_{n+1}\) by the \((n+1)\times(n+1)\) \textbf{Jordan-Pochammer matrices}:
	
	\begin{equation} \label{eq:pochjor0}
	\tA_{i, j} \qquad = \qquad
	\bordermatrix{ 
		& & & &  \underset{\downarrow}{i} &  & & & \underset{\downarrow}{j} \cr 
		& 0 & \cdots & 0 & 0 & 0 & \cdots & 0 & 0 & 0 & \cdots & 0 \cr 
		& \vdots & \ddots & \vdots & \vdots & \vdots & \ddots & \vdots & \vdots & \vdots & \ddots & \vdots \cr
		& 0 & \cdots & 0 & 0 & 0 & \cdots & 0 & 0 & 0 & \cdots & 0 \cr
		i \to & 0 & \cdots & 0 & {\color{red}a_j} & 0 & \cdots & 0 & {\color{red}-a_j} & 0 & \cdots & 0 \cr
		& 0 & \cdots & 0 & 0 & 0 & \cdots & 0 & 0 & 0 & \cdots & 0  \cr
		& \vdots & \ddots & \vdots & \vdots & \vdots & \ddots & \vdots & \vdots & \vdots & \ddots & \vdots \cr
		& 0 & \cdots & 0 & 0 & 0 & \cdots & 0 & 0 & 0 & \cdots & 0  \cr
		j \to & 0 & \cdots & 0 & {\color{red}-a_i} & 0 & \cdots & 0 & {\color{red}a_i} & 0 & \cdots & 0 \cr
		& 0 & \cdots & 0 & 0 & 0 & \cdots & 0 & 0 & 0 & \cdots & 0  \cr
		& \vdots & \ddots & \vdots & \vdots & \vdots & \ddots & \vdots & \vdots & \vdots & \ddots & \vdots \cr
		& 0 & \cdots & 0 & 0 & 0 & \cdots & 0 & 0 & 0 & \cdots & 0
	} .
	\end{equation}
\end{note}

\begin{notation}
	Let \(I \subset \underline{n+1}\) with \(|I| \geq 2\), then it defines \(\tL_{I} \in \mL_{irr}(\tilde{\mA}_n)\) given by
	\[\tL_{I} = \{\tz_i=\tz_j, \,\ i, j \in I\} . \]
	Similarly, we set
	\[\tA_{I} = \sum_{i,j \in I} \tA_{i, j} \] 
\end{notation}

\begin{lemma} \label{lem:LauCom}
	The matrices \(\{\tA_{i, j}, \,\ \tH_{i,j} \in \tilde{\mA}_n\}\) satisfy the Basic Assumptions \ref{ass:basic}. More explicitly, \(H_{i,j} \subset \ker A_{i,j}\) and
	\begin{equation}\label{eq:infbraidrel}
		\begin{aligned}
		[\tA_{i,j}, \tA_{i,k}+ \tA_{j,k}] = 0 \,\, &\mbox{ for } \,\, i<j<k,\\
		[\tA_{i,j}, \tA_{k,l}] = 0 \,\, &\mbox{ for } \,\, \{i,j\} \cap \{k,l\} = \emptyset.
		\end{aligned}
	\end{equation}
\end{lemma}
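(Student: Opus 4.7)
The plan is to verify the two Basic Assumptions \ref{ass:basic} in turn. The torsion-free condition \(\tH_{i,j}\subset\ker\tA_{i,j}\) is immediate: by construction \(\tA_{i,j}(v)=d\Th_{i,j}(v)\,n_{i,j}\), and \(d\Th_{i,j}\) annihilates \(\tH_{i,j}\) because \(\Th_{i,j}\) is a defining equation for that hyperplane. For the flatness condition, by Proposition \ref{prop:comm} it is enough to verify \([\tA_\tL,\tA_H]=0\) at every codimension two intersection \(\tL\in\mL(\tilde{\mA}_n)\) and every \(H\in(\tilde{\mA}_n)_{\tL}\). Such a \(\tL\) is either reducible of the form \(\tH_{i,j}\cap\tH_{k,l}\) with \(\{i,j\}\cap\{k,l\}=\emptyset\), or the irreducible \(\tL_{\{i,j,k\}}=\tH_{i,j}\cap\tH_{j,k}\cap\tH_{i,k}\) for a triple \(i<j<k\). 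In the irreducible case \(\tA_\tL=\tA_{i,j}+\tA_{j,k}+\tA_{i,k}\), so \([\tA_\tL,\tA_{i,j}]=0\) collapses to the triple relation of the lemma; the analogous identities obtained by commuting against \(\tA_{j,k}\) and \(\tA_{i,k}\) follow by relabelling the triple \(i,j,k\).

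For the reducible codimension two strata I would argue by disjoint support. The form \(d\Th_{i,j}\) annihilates every \(\tp_m\) with \(m\notin\{i,j\}\), so in particular it kills \(n_{k,l}=a_l\tp_k-a_k\tp_l\), which spans \(\img\tA_{k,l}\). Hence \(\tA_{i,j}\tA_{k,l}=0\), and by symmetry \(\tA_{k,l}\tA_{i,j}=0\); the commutator vanishes.

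For the irreducible triple I would work with the rank-one formula \(\tA_{i,j}(v)=d\Th_{i,j}(v)\,n_{i,j}\) and compute the two composition products directly. The relevant arithmetic identities are
\[d\Th_{i,j}(n_{i,k})=a_k,\qquad d\Th_{i,j}(n_{j,k})=-a_k,\qquad d\Th_{i,k}(n_{i,j})=a_j,\qquad d\Th_{j,k}(n_{i,j})=-a_i,\]
together with \(\Th_{i,j}=\Th_{i,k}-\Th_{j,k}\). Unpacking the first composition gives
\[\tA_{i,j}(\tA_{i,k}+\tA_{j,k})(v)=\bigl(a_k\,d\Th_{i,k}(v)-a_k\,d\Th_{j,k}(v)\bigr)n_{i,j}=a_k\,\tA_{i,j}(v),\]
while for the reverse composition the identity \((\tA_{i,k}+\tA_{j,k})(n_{i,j})=a_j n_{i,k}-a_i n_{j,k}=a_k\,n_{i,j}\) yields \((\tA_{i,k}+\tA_{j,k})\tA_{i,j}(v)=a_k\,\tA_{i,j}(v)\). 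Both products equal \(a_k\,\tA_{i,j}\), so the commutator vanishes. Alternatively, one observes that all three matrices preserve the three-plane \(\Span\{\tp_i,\tp_j,\tp_k\}\) and vanish on a complementary subspace, which reduces the verification to multiplying the \(3\times 3\) blocks read off from the Pochhammer form \eqref{eq:pochjor0}.

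The statement is the \(A_n\) case of the infinitesimal pure-braid (Kohno) relations, and no step is a genuine obstacle; the only thing to watch is the sign bookkeeping in the triple case, which the rank-one formulation above handles cleanly.
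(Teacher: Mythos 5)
Your proof is correct and follows essentially the same route as the paper: the reducible codimension-two case is handled by the disjoint-support observation that each rank-one factor kills the image of the other, and the irreducible triple case is verified by the same rank-one computation (the paper computes both products of $\tA_{i,j}$ with the full sum $\tA_{i,j,k}$, equal to $(a_i+a_j+a_k)\,d\Th_{i,j}(v)\cdot n_{i,j}$, which differs from your $a_k\,\tA_{i,j}(v)$ only by the harmless $\tA_{i,j}^2=(a_i+a_j)\tA_{i,j}$ term). All your arithmetic identities check out.
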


\begin{proof}
	Take \(\tL \in \mL(\tilde{\mA}_n)\) with \(\codim \tL =2\) and let \(\tH \in \tilde{\mA}_n\) such that \(\tL \subset \tH\). We will show that \([\tA_{\tH}, \tA_{\tL}] = 0\). There are two cases to consider.
	
	\begin{itemize}
		\item\(\tL = \tH_{i,j} \cap \tH_{j,k}\) with \(\{i,j\} \cap \{k,l\} = \emptyset\). In this case \([\tA_{i, j}, \tA_L] = [\tA_{i, j}, \tA_{k,l}]\). Moreover, we have \(n_{i,j} \in \tH_{k,l}\) and \(n_{j,k} \in \tH_{i,j}\), hence
		\[\tA_{i,j} \tA_{k,l} = \tA_{k,l} \tA_{i,j} = 0 . \]
		
		\item \(\tL = \tH_{i, j} \cap \tH_{j,k}\) with \(i<j<k\). In this case 
		\[\tA_{\tL}=\tA_{i,j,k}= \tA_{i, j} + \tA_{i, k} + \tA_{j, k} .\] 
		It is easy to check that
		\[a_k n_{i,j}+a_i n_{j,k} =a_j n_{i,k}\] 
		 and \(\tA_{i,j,k}n_{i,j} = (a_i+a_j+a_k)n_{i,j}\). An easy computation, using \(\tA_{i, j}n_{j,k}=-a_kn_{i,j}\) and \(\tA_{i, j} n_{i,k}=a_k n_{i,j}\), shows
		 \[\tA_{i,j,k} \tA_{i,j} v = \tA_{i, j} \tA_{i,j,k} v = (a_i+a_j+a_k)d\Th_{i, j}(v) \cdot n_{i,j} . \qedhere \] 
	\end{itemize}
\end{proof}

The Equations \eqref{eq:infbraidrel} are also known as infinitesimal braid relations and arise in the study of the KZ equation, see \cite[Chapter 2.1]{Kohno}.

\begin{definition}
	The Lauricella connection is
	\begin{equation*}
	\tilde{\nabla} = d - \sum_{i<j} \tA_{i,j} \frac{d\Th_{ij}}{\Th_{ij}}
	\end{equation*}
	where \(d\) is the Euclidean connection on \(\C^{n+1}\).
\end{definition}

\begin{notation}
	Write \(\omega_{i,j} = d\Th_{i, j}/\Th_{i, j}\). The \textbf{Arnold's relations}:
	\begin{equation}\label{eq:arnrel}
		\omega_{i,j} \wedge \omega_{j,k} + \omega_{j,k} \wedge \omega_{i,k} + \omega_{i,k} \wedge \omega_{i,j} = 0
	\end{equation}
	are easily checked.
\end{notation}

\begin{proposition}
	\(\tilde{\nabla}\) is flat and torsion free. 
\end{proposition}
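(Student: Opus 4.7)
The proposition is essentially an immediate corollary of the machinery already set up. The plan is to recognize that $\tilde{\nabla}$ is a standard connection on the trivial bundle over $\C^{n+1}$ associated to the braid arrangement $\tilde{\mA}_n$ (in the sense of Definition \ref{def:stnd}), and to apply Proposition \ref{prop:ftfstcon}, which asserts that a standard connection is flat and torsion free precisely when its residues satisfy the Basic Assumptions \ref{ass:basic}.

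First I would observe that the torsion free condition \textbf{(T)}, namely $\tH_{i,j} \subset \ker \tA_{i,j}$, has already been recorded in Lemma \ref{lem:LauCom} (and is immediate from the definition $\tA_{i,j}(v) = d\Th_{i,j}(v) \cdot n_{i,j}$, since $d\Th_{i,j}$ vanishes on $\tH_{i,j}$). Next, by Proposition \ref{prop:comm} the flatness condition \textbf{(F)} reduces to verifying $[\tA_{\tL}, \tA_{\tH}] = 0$ only at codimension two intersections $\tL \in \mL(\tilde{\mA}_n)$ with $\tH \in \tilde{\mA}_n$ containing $\tL$. By Remark \ref{rmk:Anquotient} (applied to the affinely equivalent essential arrangement $\mA_n$), the codimension two intersections of $\tilde{\mA}_n$ split into the two cases listed in Lemma \ref{lem:LauCom}: either $\tL = \tH_{i,j} \cap \tH_{k,l}$ with $\{i,j\} \cap \{k,l\} = \emptyset$, or $\tL = \tH_{i,j} \cap \tH_{j,k} = \tH_{i,j} \cap \tH_{i,k} \cap \tH_{j,k}$ with $i < j < k$. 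Both commutation relations have been checked in Lemma \ref{lem:LauCom} (equivalently, Equations \eqref{eq:infbraidrel} are the infinitesimal braid relations).

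Therefore the hypotheses of Proposition \ref{prop:ftfstcon} are satisfied, and we conclude that $\tilde{\nabla}$ is flat and torsion free. There is no real obstacle here: the work was done in verifying the infinitesimal braid relations in Lemma \ref{lem:LauCom}, and the current proposition is a one-line assembly of that lemma with Proposition \ref{prop:ftfstcon}. If desired, flatness could also be checked directly via the identity $d\Omega - \Omega \wedge \Omega = 0$ by combining the commutation relations \eqref{eq:infbraidrel} with Arnold's relations \eqref{eq:arnrel} (as in the proof of Lemma \ref{lem:cod2ii}), but routing through Proposition \ref{prop:ftfstcon} is more economical and avoids any redundant computation.
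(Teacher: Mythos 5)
Your proposal is correct and matches the paper's own argument: the paper's proof opens with precisely this one-line deduction ("This follows from Proposition \ref{prop:ftfstcon}"), relying on Lemma \ref{lem:LauCom} for the Basic Assumptions. The paper then additionally carries out the direct check of $\Omega \wedge \Omega = 0$ via Arnold's relations that you mention as an optional alternative, but this is presented as a supplementary verification rather than a different route.
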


\begin{proof}
	This follows from Proposition \ref{prop:ftfstcon}, but we also present a direct check.
	
	Write \(\Omega = \sum_{i<j} \tA_{i, j} \omega_{i,j}\). Using that \(\tA_{i, j} \tA_{k,l} =0\) if \(\{i,j\} \cap \{k,l\}=\emptyset\) together with Arnold's relation \eqref{eq:arnrel} we find
	\[\Omega \wedge \Omega = \sum_{i<j<k} \left( [\tA_{i,j,k}, \tA_{i, k}] \omega_{i,j} \wedge\omega_{i,k} + [\tA_{i,j,k}, \tA_{j, k}] \omega_{i,j} \wedge \omega_{j,k} \right) \]
	which vanishes because of Lemma \ref{lem:LauCom}.
\end{proof}

\begin{notation}
	If \(\tL \in \mL_{irr}(\tilde{\mA}_n)\) then we have defined \(\tilde{a}_{\tL}= (\codim \tL)^{-1}  \sum_{\tL \subset \tH} \tilde{a}_{\tH}\) where the sum is over all \(\tH \in \tilde{\mA}_n\) that contain \(\tL\). 
	
	If \(\tH = \tH_{i, j}\) then \(\tilde{a}_{\tH}=a_i + a_j\). We denote \(a_{i,j} = a_i + a_j\) and more generally, if \(I \subset \underline{n+1}\) we set
	\[a_{I} = \sum_{i \in I} a_i . \]
\end{notation}

\begin{lemma} \label{lem:weightsLau}
	If \(\tL = \tL_{I} \in \mL_{irr}(\tilde{\mA}_n)\) with \(I \subset \underline{n+1}\) then \(\tilde{a}_{\tL} = a_{I}\).
\end{lemma}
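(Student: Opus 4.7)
The plan is a direct combinatorial computation from the definition of $\tilde{a}_{\tL}$, so I expect no real obstacles here; the main thing to verify carefully is the codimension count and the indexing of the hyperplanes that contain $\tL_I$.

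First I would observe that $\tL_I = \{\tz_i = \tz_j : i,j \in I\}$ is the simultaneous zero locus of the $|I|-1$ independent linear forms $\tz_i - \tz_{i_0}$ for $i \in I \setminus \{i_0\}$ (where $i_0 \in I$ is any fixed element), so
\[
\codim \tL_I = |I| - 1.
\]
Next I would identify $(\tilde{\mA}_n)_{\tL_I}$, the set of hyperplanes of $\tilde{\mA}_n$ containing $\tL_I$. A hyperplane $\tH_{k,l} = \{\tz_k = \tz_l\}$ contains $\tL_I$ iff the corresponding linear equation $\tz_k - \tz_l$ vanishes on $\tL_I$, which under the correspondence of Lemma \ref{lem:interAn} happens iff $k$ and $l$ lie in the same block of the partition $P(\tL_I)$. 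Since $\tL_I \in \mL_{irr}(\tilde{\mA}_n)$, Remark \ref{rmk:Anquotient} tells us that $P(\tL_I)$ has exactly one non-singleton block, namely $I$ itself. Hence
\[
(\tilde{\mA}_n)_{\tL_I} = \{\tH_{i,j} : i,j \in I,\ i<j\},
\]
a set of cardinality $\binom{|I|}{2}$.

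Then I would substitute into the definition of the weight at an irreducible intersection. Since $\tilde{a}_{\tH_{i,j}} = a_i + a_j$, we get
\[
\tilde{a}_{\tL_I} = \frac{1}{|I|-1} \sum_{\substack{i,j \in I \\ i<j}} (a_i + a_j).
\]
A standard double-counting argument shows that each $a_k$ with $k \in I$ appears in this sum exactly $|I|-1$ times (once for each ordered pair with $k$ and some other element of $I$), so
\[
\sum_{\substack{i,j \in I \\ i<j}} (a_i + a_j) = (|I|-1)\sum_{k \in I} a_k = (|I|-1)\, a_I,
\]
which yields $\tilde{a}_{\tL_I} = a_I$, completing the proof.
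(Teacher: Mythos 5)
Your proof is correct and follows essentially the same route as the paper: the paper's proof is exactly the observation that $\codim \tL = |I|-1$ followed by the double-counting identity $\sum_{\{i,j\}\subset I,\, i<j}(a_i+a_j) = (|I|-1)\sum_{i\in I}a_i$. You simply spell out the intermediate steps (identifying $(\tilde{\mA}_n)_{\tL_I}$ and verifying the codimension) that the paper leaves implicit.
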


\begin{proof}
	Note that \(\codim \tL = |I|-1 \), so
	\[\tilde{a}_{\tL} = (|I|-1)^{-1} \sum_{\substack{\{i,j\}\subset I \\ i<j}} (a_i + a_j) = \sum_{i \in I} a_i . \qedhere \]
\end{proof}

\begin{corollary}
	The Non-Zero Weights Assumptions \ref{ass:nz} are equivalent to
	\(\sum_{i \in I}a_i \neq 0\) for every \(I \subset \underline{n+1}\) with \(|I| \geq 2\).
\end{corollary}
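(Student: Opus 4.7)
The proof plan is essentially an assembly of earlier results; no new ideas are required.

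First I would note that the conditions \textbf{(F)} and \textbf{(T)} of the Non-Zero Weights Assumptions \ref{ass:nz} are already established for the Lauricella residues by Lemma \ref{lem:LauCom}. Consequently the content of the corollary reduces to identifying condition \textbf{(NZ)}, i.e.\ that \(\tilde{a}_{\tL} \neq 0\) for every irreducible intersection \(\tL \in \mL_{irr}(\tilde{\mA}_n)\), with the stated combinatorial non-vanishing on subsets of \(\underline{n+1}\).

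The second step is to describe \(\mL_{irr}(\tilde{\mA}_n)\) explicitly. The combinatorics of the braid arrangement is the same as that of the essential braid arrangement \(\mA_n\): by the analogue of Lemma \ref{lem:interAn} and Remark \ref{rmk:Anquotient}, intersections are parametrized by partitions \(P=\{P_1,\dots,P_k\}\) of \(\underline{n+1}\), with \(\tilde{\mA}_n\) factoring as a product of braid arrangements on the blocks. Thus a partition gives an irreducible subspace precisely when exactly one block has size \(\geq 2\), which means \(\mL_{irr}(\tilde{\mA}_n)\) is precisely the family \(\{\tL_I : I \subset \underline{n+1},\ |I| \geq 2\}\).

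Finally I would invoke Lemma \ref{lem:weightsLau}, which computes \(\tilde{a}_{\tL_I} = a_I = \sum_{i \in I} a_i\). Combining, condition \textbf{(NZ)} is equivalent to \(a_I \neq 0\) for every \(I \subset \underline{n+1}\) with \(|I| \geq 2\), which is exactly the stated criterion. There is no serious obstacle — the only minor check is the identification \(\mL_{irr}(\tilde{\mA}_n) \leftrightarrow \{I : |I|\geq 2\}\), which follows directly from the product decomposition of localizations of braid arrangements along the partition-of-coordinates description.
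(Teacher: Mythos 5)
Your argument is correct and is essentially the paper's own (the corollary is stated there without proof as an immediate consequence): conditions \textbf{(F)} and \textbf{(T)} hold by Lemma \ref{lem:LauCom}, irreducible intersections of \(\tilde{\mA}_n\) are exactly the subspaces \(\tL_I\) with \(I \subset \underline{n+1}\), \(|I| \geq 2\), and Lemma \ref{lem:weightsLau} gives \(\tilde{a}_{\tL_I} = \sum_{i \in I} a_i\), so \textbf{(NZ)} becomes the stated non-vanishing condition.
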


\begin{definition}
	We consider the linear subspace
	\begin{equation*}
	V = \{\sum_{i=1}^{n+1} a_i \tz_i = 0\} \subset \C^{n+1}
	\end{equation*}
	and note that \(n_{i,j} \in V\) for all \(i<j\). In particular \(\tA_{i, j}(V) \subset V\).
\end{definition}

\begin{remark}
	If \(\sum_{i=1}^{n+1}a_i \neq 0\) then the hyperplane \(V\) is transversal to the main diagonal \(\C \cdot (1, \ldots,1)\) and the functions 
	\(z_i = (\tz_i - \tz_{n+1})|_V\)
	give linear coordinates on \(V\) such that the induced arrangement \(\{\tH \cap V, \,\  \tH \in \tilde{\mA}_n\}\) agrees with \(\mA_n\).
\end{remark}

\begin{notation}
	We denote
	\[a_0 = \sum_{i=1}^{n+1} a_i .\]
	According to Lemma \ref{lem:weightsLau} \(a_0 = \tilde{a}_{T(\tilde{\mA}_n)}\).
\end{notation}

\begin{lemma}
	Suppose that \(a_0 \neq 0\) and let \(z_i=(\tz_i - \tz_{n+1})|_V\) for \(1\leq i\leq n\) be linear coordinates on \(V\).
	The coordinate vector fields \(\p_i = \p/\p z_i\) on \(V\) are given by
	\[\p_i = \tp_i  - (a_i/a_0) \sum_{k=1}^{n+1} \tp_k \]
\end{lemma}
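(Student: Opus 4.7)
The plan is to give a direct linear-algebraic verification based on the two defining properties of a coordinate vector field. Since $V$ is a linear subspace of $\C^{n+1}$, a constant vector field $v = \sum_{k=1}^{n+1} c_k \tp_k$ is tangent to $V$ at every point of $V$ if and only if $v \in V$, i.e.\ $\sum_{k=1}^{n+1} a_k c_k = 0$. The coordinate vector field $\p_i$ on $V$ associated to $(z_1, \ldots, z_n)$ is then characterised uniquely by the two requirements: (i) $\p_i$ is tangent to $V$, and (ii) $dz_j(\p_i) = \delta_{ij}$ for $1 \leq j \leq n$. Condition~(ii) can be evaluated in the ambient space by using that $z_j$ is the restriction of the globally defined linear function $\tz_j - \tz_{n+1}$, so $dz_j = d\tz_j - d\tz_{n+1}$ and condition~(ii) becomes $c_j - c_{n+1} = \delta_{ij}$.

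Setting $v_i := \tp_i - (a_i/a_0) \sum_{k=1}^{n+1} \tp_k$, one reads off the coefficients $c_i = 1 - a_i/a_0$ and $c_k = -a_i/a_0$ for all $k \neq i$ (including $k = n+1$). The tangency condition is immediate:
\begin{equation*}
\sum_{k=1}^{n+1} a_k c_k \;=\; a_i \;-\; \frac{a_i}{a_0} \sum_{k=1}^{n+1} a_k \;=\; a_i - \frac{a_i}{a_0}\,a_0 \;=\; 0,
\end{equation*}
where $a_0 \neq 0$ is used only to make the formula well-defined. For condition~(ii), the case $j = i$ gives $c_i - c_{n+1} = (1 - a_i/a_0) - (-a_i/a_0) = 1$, and for $j \neq i$ with $j \leq n$ one gets $c_j - c_{n+1} = (-a_i/a_0) - (-a_i/a_0) = 0$. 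By uniqueness of the vector field characterised by (i) and (ii), $\p_i = v_i$.

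This computation is entirely elementary and presents no real obstacle; the only subtle point worth flagging is the role of the hypothesis $a_0 \neq 0$. Geometrically, $a_0 \neq 0$ is exactly the condition that the hyperplane $V$ is transverse to the main diagonal $\C \cdot (1, \ldots, 1) = T(\tilde{\mA}_n)$, which ensures that $z_1, \ldots, z_n$ genuinely restrict to linear coordinates on $V$ and identify the induced arrangement with $\mA_n$; algebraically, $a_0 \neq 0$ is what allows us to divide by $a_0$ in the formula. No deeper input is needed.
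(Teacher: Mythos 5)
Your proof is correct and follows exactly the same route as the paper, which simply remarks that one checks $\p_i \in V$ and $dz_j(\p_i)=\delta_{ij}$; you have spelled out those two verifications in full. Nothing is missing.
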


\begin{proof}
	It is easy to check that \(\p_i \in V\) and \(dz_j(\p_i)=\delta_{ij}\).
\end{proof}

\begin{note}
	If \(a_0 \neq 0\) we can write \(n_{i,j} = \sum_{k=1}^{n} dz_k(n_{i,j}) \cdot \p_k \) to get
	\[n_{i,j} = a_j \p_i - a_i \p_j, \,\ n_{i, n+1} = a_{n+1} \p_i + a_i \sum_{k=1}^{n} \p_k \]
	for \(1 \leq i < j \leq n\).
\end{note}

\begin{definition}
	\(A_{i,j} = \tA_{i, j}|_V \in \End V \).
\end{definition}

\begin{note}
	Assume \(a_0 \neq 0\) and identify \(V\cong \C^n\) by using the coordinates \((z_1, \ldots, z_n)\). It is straightforward to write the matrix that represents \(A_{i,j}\) with respect to the basis \(\p_i\).
	
	If \(i,j \leq n\) then we use \(A_{i,j} (\p_k) = (dz_i-dz_j)(\p_k) \cdot n_{i,j}\) to get the \((n\times n)\)-matrix
	\begin{equation} \label{eq:pochjorRED1}
		A_{i, j} \qquad = \qquad
		\scalebox{.8}{
		\bordermatrix{ 
		& & & &  \underset{\downarrow}{i} &  & & & \underset{\downarrow}{j} \cr 
		& 0 & \cdots & 0 & 0 & 0 & \cdots & 0 & 0 & 0 & \cdots & 0 \cr 
		& \vdots & \ddots & \vdots & \vdots & \vdots & \ddots & \vdots & \vdots & \vdots & \ddots & \vdots \cr
		& 0 & \cdots & 0 & 0 & 0 & \cdots & 0 & 0 & 0 & \cdots & 0 \cr
		i \to & 0 & \cdots & 0 & {a_j} & 0 & \cdots & 0 & {-a_j} & 0 & \cdots & 0 \cr
		& 0 & \cdots & 0 & 0 & 0 & \cdots & 0 & 0 & 0 & \cdots & 0  \cr
		& \vdots & \ddots & \vdots & \vdots & \vdots & \ddots & \vdots & \vdots & \vdots & \ddots & \vdots \cr
		& 0 & \cdots & 0 & 0 & 0 & \cdots & 0 & 0 & 0 & \cdots & 0  \cr
		j \to & 0 & \cdots & 0 & {-a_i} & 0 & \cdots & 0 & {a_i} & 0 & \cdots & 0 \cr
		& 0 & \cdots & 0 & 0 & 0 & \cdots & 0 & 0 & 0 & \cdots & 0  \cr
		& \vdots & \ddots & \vdots & \vdots & \vdots & \ddots & \vdots & \vdots & \vdots & \ddots & \vdots \cr
		& 0 & \cdots & 0 & 0 & 0 & \cdots & 0 & 0 & 0 & \cdots & 0
			}
		} .
	\end{equation}
	equal to the matrix \eqref{eq:pochjor0} with the \((n+1)\)-column and \((n+1)\)-row of zeros deleted.
	
	For \(i\leq n\) we use \(A_{i, n+1} (\p_k) = (dz_i)(\p_k) \cdot n_{i,n+1}\) to get the \((n\times n)\)-matrix
	\begin{equation} \label{eq:pochjorRED2}
	A_{i, n+1} \qquad = \qquad
	\bordermatrix{ 
		& & & &  \underset{\downarrow}{i} \cr 
		& 0 & \cdots & 0 & a_i & 0 & \cdots & 0  \cr 
		& \vdots & \ddots & \vdots & \vdots & \vdots & \ddots & \vdots \cr
		& 0 & \cdots & 0 & a_i & 0 & \cdots & 0  \cr
		i \to & 0 & \cdots & 0 & a_i + a_{n+1} & 0 & \cdots & 0 \cr
		& 0 & \cdots & 0 & a_i & 0 & \cdots & 0 \cr
		& \vdots & \ddots & \vdots & \vdots & \vdots & \ddots & \vdots \cr
		& 0 & \cdots & 0 & a_i & 0 & \cdots & 0  
	} .
	\end{equation}
	The \((n\times n)\)-matrices \eqref{eq:pochjorRED1} and \eqref{eq:pochjorRED2} are called \textbf{reduced Jordan-Pochammer} matrices.
\end{note}

\begin{lemma}\label{lem:LauRedCom}
	The reduced Jordan-Pochammer matrices \(\{A_H, \,\ H \in \mA_n\}\) satisfy the Basic Assumptions \ref{ass:basic}.
\end{lemma}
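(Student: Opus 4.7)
The plan is to deduce the Basic Assumptions for $\{A_H, H \in \mA_n\}$ from the already-established Basic Assumptions for the full Jordan--Pochammer matrices $\{\tA_{i,j}, \tH_{i,j} \in \tilde{\mA}_n\}$ (Lemma \ref{lem:LauCom}) by restriction to $V$. Throughout, assume $a_0 \neq 0$ so that $V$ is transverse to the main diagonal and the coordinates $(z_1,\ldots,z_n)$ give the identification $V \cong \C^n$ that converts the arrangement $\{\tH \cap V : \tH \in \tilde{\mA}_n\}$ into $\mA_n$.

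First, I would record that each $\tA_{i,j}$ preserves $V$, since its image is spanned by $n_{i,j} \in V$. Hence $A_{i,j} := \tA_{i,j}|_V$ is well-defined, and one checks it agrees with the explicit reduced matrices \eqref{eq:pochjorRED1}, \eqref{eq:pochjorRED2} after writing $n_{i,j}$ in the basis $\p_1,\ldots,\p_n$.

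Next, I would set up the bijection $\tL \mapsto \tL \cap V$ between $\mL(\tilde{\mA}_n)$ and $\mL(\mA_n)$. Both lattices are naturally indexed by partitions of $\underline{n+1}$: for $\tilde{\mA}_n$ an intersection is determined by the equivalence relation generated by $i \sim j$ iff $\{\tz_i = \tz_j\}$ is contained, and the same description applies to $\mA_n$ by Lemma \ref{lem:interAn} (using that $\{z_i=0\} = \{\tz_i = \tz_{n+1}\}|_V$ encodes the class of $n+1$). Under this bijection, a hyperplane $\tH$ contains $\tL$ iff $\tH \cap V$ contains $\tL \cap V$, so $(\tilde{\mA}_n)_{\tL}$ maps onto $(\mA_n)_L$ where $L = \tL \cap V$. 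Summing residues then gives
\[
A_L \;=\; \sum_{H \in (\mA_n)_L} A_H \;=\; \Big(\sum_{\tH \in (\tilde{\mA}_n)_{\tL}} \tA_{\tH}\Big)\Big|_V \;=\; \tA_{\tL}\big|_V.
\]

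Now the two Basic Assumptions follow immediately. For (T), if $H = \tH \cap V$ then $H \subset \tH \subset \ker \tA_{\tH}$, so $H \subset \ker \tA_{\tH} \cap V = \ker A_H$. For (F), given $L \subset M$ in $\mL(\mA_n)$ with corresponding $\tL \subset \tM$ in $\mL(\tilde{\mA}_n)$,
\[
[A_L, A_M] \;=\; [\tA_{\tL}|_V,\, \tA_{\tM}|_V] \;=\; [\tA_{\tL}, \tA_{\tM}]\big|_V \;=\; 0
\]
by Lemma \ref{lem:LauCom} (together with Proposition \ref{prop:comm} which promotes commutation at codimension two intersections to all nested pairs). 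There is no real obstacle here; the only thing requiring care is the bookkeeping that identifies $(\mA_n)_L$ with $(\tilde{\mA}_n)_{\tL}$ under the partition correspondence, ensuring that the residues match after restriction.
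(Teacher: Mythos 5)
Your argument reproduces the paper's ``alternative'' proof (restriction of the full Jordan--Pochammer matrices from $\C^{n+1}$ to the hyperplane $V$, using Lemma \ref{lem:LauCom}), and the bookkeeping you do --- that each $\tA_{i,j}$ preserves $V$ because $\img \tA_{i,j} \subset \C\cdot n_{i,j} \subset V$, that $\tL \mapsto \tL \cap V$ matches the localized sub-arrangements under the partition correspondence, and hence that $A_L = \tA_{\tL}|_V$ --- is correct and is exactly what makes that route work.

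There is, however, one omission: you assume $a_0 = \sum_i a_i \neq 0$ throughout, since that is what makes $V$ transverse to the main diagonal and yields the identification $V \cong \C^n$ carrying $\{\tH \cap V\}$ to $\mA_n$. But the reduced Jordan--Pochammer matrices \eqref{eq:pochjorRED1}, \eqref{eq:pochjorRED2} are defined by explicit formulas for \emph{every} $a \in \C^{n+1}$, and the lemma is asserted (and later used) without the restriction $a_0 \neq 0$; when $a_0 = 0$ the restriction argument breaks down because $V$ contains the diagonal. The fix is one sentence, which the paper itself supplies: conditions \textbf{(T)} and \textbf{(F)} are polynomial identities in the entries of the $A_{i,j}$, hence polynomial in $(a_1,\ldots,a_{n+1})$, so their validity on the Zariski-dense set $\{a_0 \neq 0\}$ extends by continuity to all of $\C^{n+1}$. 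You should add this to cover the degenerate case.
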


\begin{proof}
	Straightforward check. Alternatively, this follows from Lemma \ref{lem:LauCom} when \(a_0 \neq 0\) and by continuity also when \(a_0=0\).
\end{proof}

\begin{notation}
	For \(1 \leq i < j \leq n\) we set
	\[\omega_{i,j}= \frac{d(z_i-z_j)}{z_i-z_j}, \qquad \omega_{i,n+1} = \frac{dz_i}{z_i} . \]
\end{notation}

\begin{definition}\label{def:redLau}
	The \textbf{reduced Lauricella connection} is defined by
	\begin{equation}\label{eq:redLaucon}
		\nabla^a= d - \sum_{1 \leq i < j \leq n+1} A_{i,j} \omega_{i,j} 
	\end{equation} 
	where \(d\) is the Euclidean connection on \(\C^n\).
\end{definition}

\begin{notation}
	We have defined a family of connections parametrized by \(a=(a_1, \ldots, a_{n+1}) \in \C^{n+1}\). We write \(\nabla^{a} = d - \Omega^{a}\) where 
	\[\Omega^a =\sum_{1 \leq i < j \leq n+1} A_{i,j} \omega_{i,j}\]
	is the connection form. Note that the family  is a complex vector space and
	\[\Omega^{a+\lambda b} = \Omega^a + \lambda \Omega^b  \]
	for \(a,b \in \C^{n+1}\) and \(\lambda \in \C\).
\end{notation}

\begin{lemma}
	\(\nabla^a\) is flat and torsion free.
\end{lemma}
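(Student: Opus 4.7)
My plan is to reduce the statement to results already established in the excerpt, giving two independent arguments and letting the reader choose.

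The first and most direct approach is to invoke the general criterion of Proposition \ref{prop:ftfstcon}: a standard connection $d - \sum_H A_H \, dh/h$ is flat and torsion free if and only if the residues $\{A_H\}$ satisfy the Basic Assumptions \ref{ass:basic}. In our case the residues are the reduced Jordan--Pochammer matrices $A_{i,j}$ given by Equations \eqref{eq:pochjorRED1} and \eqref{eq:pochjorRED2}, and Lemma \ref{lem:LauRedCom} asserts precisely that they satisfy the Basic Assumptions. Combining these two statements yields the proposition immediately. There is no further work to do; the difficulty has already been absorbed into the verification of the commutation and kernel conditions in Lemma \ref{lem:LauRedCom}.

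As a sanity check, and to exhibit the connection with the unreduced Lauricella connection $\tilde{\nabla}$, I would also sketch the following continuity argument. For parameters with $a_0 = \sum_{i=1}^{n+1} a_i \neq 0$, the hyperplane
\begin{equation*}
V = \Bigl\{\,\textstyle\sum_{i=1}^{n+1} a_i \tilde{z}_i = 0\,\Bigr\} \subset \C^{n+1}
\end{equation*}
is transversal to the main diagonal and is preserved by every residue $\tilde{A}_{i,j}$, since $\img \tilde{A}_{i,j} \subset \C \cdot n_{i,j} \subset V$. Using the linear coordinates $z_i = (\tilde{z}_i - \tilde{z}_{n+1})|_V$ to identify $V \cong \C^n$, the restriction $\tilde{\nabla}|_{TV}$ is a flat torsion free standard connection whose residues are exactly the $A_{i,j} = \tilde{A}_{i,j}|_V$; that is, it coincides with $\nabla^a$. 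Hence $\nabla^a$ is flat and torsion free for every $a \in \C^{n+1}$ with $a_0 \neq 0$. Since the connection form $\Omega^a$ depends linearly (in particular continuously) on $a$, and the vanishing of $d\Omega^a$, $\Omega^a \wedge \Omega^a$ and the torsion tensor are closed conditions on $a$, the result extends by continuity to the locus $a_0 = 0$.

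The only place where any genuine computation is needed is Lemma \ref{lem:LauRedCom}, and there the hard part is the codimension-two triple-intersection case involving $\tilde{A}_{i,j,k}$; but this has already been checked in Lemma \ref{lem:LauCom} at the unreduced level and descends to $V$ automatically. Thus no additional obstacle remains.
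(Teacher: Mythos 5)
Your first argument is exactly the paper's proof: the lemma follows immediately from Lemma \ref{lem:LauRedCom} together with Proposition \ref{prop:ftfstcon}. Your second, restriction-plus-continuity argument is also sound and in fact mirrors the alternative justification the paper gives for Lemma \ref{lem:LauRedCom} itself, so nothing further is needed.
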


\begin{proof}
	Follows from Lemma \ref{lem:LauRedCom}.
\end{proof}

\begin{lemma} \label{lem:weightsRedLau}
	Let \(L \in \mL_{irr}({\mA}_n)\) correspond to \(I \subset \underline{n+1}\). Then the weight of the reduced Lauricella connection \(\nabla^a\) at \(L\) equals 
	\[a_{L} = \sum_{i \in I} a_i . \]
\end{lemma}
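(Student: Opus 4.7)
The plan is to combine the identification of $\mH_L$ via the partition correspondence (Lemma \ref{lem:interAn} and Remark \ref{rmk:Anquotient}) with the direct computation of the trace of each reduced Jordan--Pochammer matrix, then carry out the elementary double-counting.

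First, I would unravel the hypothesis. Since $L \in \mL_{irr}(\mA_n)$, by Remark \ref{rmk:Anquotient} the associated partition $P(L)$ has exactly one block $I$ of size $\geq 2$, and all other blocks are singletons. By the correspondence of Lemma \ref{lem:interAn},
\[
\mH_L = \{H_{i,j} \,:\, i,j \in I,\ i<j\},
\]
and $\codim L = |I|-1$ (because $L = \{z_i = z_j \text{ for all } i,j \in I\}$, cutting out one linear condition per additional index in $I$).

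Next I would compute the individual weights $a_{H_{i,j}} = \tr A_{i,j}$ directly from the explicit matrix descriptions \eqref{eq:pochjorRED1} and \eqref{eq:pochjorRED2}. For $1 \leq i < j \leq n$, inspection of the non-zero $2\times 2$ block on the diagonal of $A_{i,j}$ gives $\tr A_{i,j} = a_j + a_i$. For $j = n+1$ and $1 \leq i \leq n$, formula \eqref{eq:pochjorRED2} shows the only non-zero diagonal entry is the $(i,i)$-entry $a_i + a_{n+1}$, so $\tr A_{i,n+1} = a_i + a_{n+1}$. In all cases $a_{H_{i,j}} = a_i + a_j$.

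Finally I would apply the definition of $a_L$:
\[
a_L \;=\; \frac{1}{\codim L}\sum_{H \in \mH_L} a_H \;=\; \frac{1}{|I|-1}\sum_{\substack{\{i,j\}\subset I \\ i<j}} (a_i + a_j).
\]
Each index $i \in I$ appears in exactly $|I|-1$ unordered pairs $\{i,j\} \subset I$, so the double sum collapses to $(|I|-1)\sum_{i \in I} a_i$ and the factor $|I|-1$ cancels, giving $a_L = \sum_{i \in I} a_i$ as claimed.

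There is no real obstacle here; the argument is a direct combinatorial computation. The only point requiring care is the use of the irreducibility hypothesis to pin down $\mH_L$: without it, the sum on the right-hand side would include extra terms coming from the other (non-singleton) blocks of $P(L)$, and the clean formula $\sum_{i \in I} a_i$ would only describe the contribution of one irreducible component of $L$. Alternatively, for the range of parameters $a_0 \neq 0$, one could shortcut the computation by identifying $A_{i,j}$ as the restriction of $\tilde A_{i,j}$ to $V$ and quoting Lemma \ref{lem:weightsLau} directly, then extending to all $a$ by continuity; but the explicit trace computation above works uniformly.
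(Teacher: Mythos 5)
Your proof is correct and follows essentially the same route as the paper: the paper's proof of Lemma \ref{lem:weightsRedLau} simply refers back to the computation in Lemma \ref{lem:weightsLau}, which is exactly the double-counting identity $\sum_{\{i,j\}\subset I,\, i<j}(a_i+a_j)=(|I|-1)\sum_{i\in I}a_i$ combined with $\codim L=|I|-1$. Your explicit verification of the traces of the reduced Jordan--Pochammer matrices and of the identification of $\mH_L$ via the partition correspondence just makes explicit details the paper leaves implicit.
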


\begin{proof}
	Same as Lemma \ref{lem:weightsLau}.
\end{proof}

\begin{example}
	For \(\mA_1\) we obtain the standard connection on \(\C\) given by
	\[\nabla^a = d - \frac{a_1+a_2}{z} dz . \]
\end{example}

\begin{example}
	In the \(\mA_2\) case we have \(\nabla^a = d -\Omega^a\) with
\begin{equation}\label{eq:A2matrices} 
	\Omega^a = 
	\begin{pmatrix}
	a_1+a_3 & 0  \\
	a_1 & 0  
	\end{pmatrix}
	\frac{dz_1}{z_1}
	+
	\begin{pmatrix}
	0 &  a_2 \\
	0 &  a_2+a_3  
	\end{pmatrix}
	\frac{dz_2}{z_2} 
	+
	\begin{pmatrix}
	a_2 & -a_2  \\
	-a_1 & a_1  
	\end{pmatrix}
	\frac{d(z_1-z_2)}{z_1-z_2} .
\end{equation}
\end{example}

\subsection{Characterization of \(\nabla^a\)}\label{sect:charactLau}

In this section we characterize the Lauricella family as the unique flat torsion free connections with simple poles at the members of \(\mA_n\). We haven't found such a statement in the literature, except for the Dunkl case \cite[Proposition 2.29]{CHL}. Here it is the statement.

\begin{proposition} \label{prop:UNIQ}
	Let \(n\geq 2\) and suppose that \(\{B_H, \,\ H \in \mA_n\} \subset \End \C^n\) is such that
	\begin{equation} \label{eq:Bconnection}
		\nabla = d - \sum_{H\in \mA_n} B_H \frac{dh}{h}
	\end{equation}
	is flat and torsion free. Moreover, assume that the weights
	\[b_L= (\codim L)^{-1} \sum_{H \in (\mA_n)_L} \tr B_H\]
	are \(\neq 0\) for every \(L \in \mL_{irr}(\mA_n)\). Then there is a unique \(a \in \C^{n+1}\) such that \(\nabla\) is equal to the reduced Lauricella connection \(\nabla^a\) given by Equation \eqref{eq:redLaucon}.
\end{proposition}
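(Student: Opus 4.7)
The plan is to exploit the rigid structure of the residue matrices via torsion-freeness, the transversality pattern of \(\mA_n\), and the codimension-two commutation relations from flatness, to explicitly match each \(B_H\) with a reduced Jordan-Pochhammer matrix. The base case \(n=2\) and the range \(n\geq 3\) require slightly different arguments.

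For \(n\geq 3\): by Lemma \ref{lem:rk1} together with the non-zero weights hypothesis, each \(B_H\) is rank-one with \(\ker B_H=H\) and image \(\C\cdot n_H\) for some \(n_H\notin H\). Applying Lemma \ref{lem:trint} to every transversal pair of hyperplanes in \(\mA_n\) (i.e.\ to every pair \(H_{ij}, H_{kl}\) with \(\{i,j\}\cap\{k,l\}=\emptyset\), and to \(H_{ij}, H_{k,n+1}\) with analogous disjointness) forces a severely restricted shape on the image directions: one obtains \(n_{ij}=\alpha_{ij}\p_i+\beta_{ij}\p_j\) for \(1\leq i<j\leq n\), and \(n_{i,n+1}=\alpha_i\p_i+p_i\sum_{k\neq i,\,k\leq n}\p_k\) for \(1\leq i\leq n\). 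The trace relations give \(\alpha_{ij}-\beta_{ij}=b_{ij}\) and \(\alpha_i=b_{i,n+1}\), leaving one free complex parameter per residue.

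To identify these parameters with a Lauricella tuple \(a\in\C^{n+1}\), exploit codimension-two commutation at the irreducible stratum \(L_{ij,n+1}=\{z_i=z_j=0\}\), whose localization is \(\{H_{ij},H_{i,n+1},H_{j,n+1}\}\); by Proposition \ref{prop:ftfstcon} the associated commutators vanish. Evaluating \([B_{i,n+1}+B_{j,n+1},B_{ij}]=0\) on \(\p_i\) yields the vector identity \(\alpha_{ij}n_{i,n+1}+\beta_{ij}n_{j,n+1}=(\alpha_i-p_i)n_{ij}\); the analogous identity on \(\p_j\) gives the same expression with \(\alpha_i-p_i\) replaced by \(\alpha_j-p_j\), forcing \(\alpha_i-p_i=\alpha_j-p_j\) (call this common value \(a_{n+1}\)), while the \(\p_l\)-component (\(l\neq i,j\)) gives \(\alpha_{ij}p_i+\beta_{ij}p_j=0\). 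Next, \([B_{ij}+B_{j,n+1},B_{i,n+1}]=0\) evaluated on \(\p_i\) gives \(\alpha_{ij}=p_j\) (using \(p_i\neq 0\)); combining with \(\alpha_{ij}-\beta_{ij}=b_{ij}\) and the previous constraint produces \(\alpha_{ij}=p_j\), \(\beta_{ij}=-p_i\), and \(b_{ij}=p_i+p_j\). Setting \(a_i:=p_i\) for \(i\leq n\) then yields \(b_{ij}=a_i+a_j\) and \(b_{i,n+1}=a_i+a_{n+1}\); the residues coincide with the reduced Jordan-Pochhammer matrices of \eqref{eq:pochjorRED1}--\eqref{eq:pochjorRED2}, so \(\nabla=\nabla^a\). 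Flatness at the remaining codimension-two irreducibles \(L_{ijk}=\{z_i=z_j=z_k\}\), \(i<j<k\leq n\), is then automatic.

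The base case \(n=2\), where Lemma \ref{lem:trint} is vacuous since \(\mA_2\) has no transversal pairs, is handled via Corollary \ref{cor:essirrA}: the identity \(B_{13}+B_{23}+B_{12}=b_0\Id_{\C^2}\) combined with the rank-one shape of each residue linearly determines all matrix entries in terms of the weights, reproducing the Lauricella formulas with \(a_i=b_0-b_{jk}\) for \(\{i,j,k\}=\{1,2,3\}\). Uniqueness of \(a\) follows in both cases since the parameters are recoverable from the residues (e.g.\ \(a_i\) is any off-diagonal entry of \(A_{i,n+1}^a\) and \(a_{n+1}=b_{i,n+1}-a_i\)). The main technical obstacle is the careful commutator bookkeeping in the \(n\geq 3\) step, in particular the handling of degenerate sub-cases where some \(p_i\) or \(a_{n+1}\) vanishes: in those situations the division steps above must be replaced by direct algebraic arguments using \(b_{ij}\neq 0\).
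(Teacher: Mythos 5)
Your proof is essentially correct but follows a genuinely different route from the paper's. The paper splits the problem in two: it first recovers the tuple \(a\) from the \emph{traces} alone, by proving the compatibility relations \(b_{i,j}+b_{k,l}=b_{i,k}+b_{j,l}\) (for \(n=3\) via the linear constraints of Corollary \ref{cor:linear2} applied to two induced connections on hyperplanes of the complete quadrilateral, and for \(n\geq 4\) by passing to quotient connections \(\nabla^{\C^n/L}\) at codimension-three irreducible strata); it then identifies the full residue matrices \(B_{i,j}=A_{i,j}\) by induction on the dimension, restricting to a transversal hyperplane \(H_{k,l}\) and using the inductive uniqueness of the lower-dimensional Lauricella connection to pin down the eigendirections \(n_{i,j}\). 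You instead work entirely at the level of the residues in the ambient space: Lemma \ref{lem:trint} applied to transversal pairs confines each \(n_H\) to an explicit two-dimensional subspace, and the codimension-two commutation relations at the strata \(\{z_i=z_j=0\}\) then solve for all remaining parameters directly, with no induction needed for \(n\geq 3\). Your computations check out (I verified that \([B_{i,n+1}+B_{j,n+1},B_{ij}]\partial_i=0\) yields \(\alpha_{ij}n_{i,n+1}+\beta_{ij}n_{j,n+1}=(\alpha_i-p_i)n_{ij}\), etc.), and your approach buys a shorter, self-contained argument that never invokes induced connections; the paper's approach buys conceptual clarity (the trace identities and the dimensional induction are reused elsewhere) and sidesteps case analysis. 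Two small points: the commutator you cite as giving \(\alpha_{ij}=p_j\) "using \(p_i\neq 0\)" in fact more directly yields \((\beta_{ij}+p_i)n_{j,n+1}=0\), hence \(\beta_{ij}=-p_i\) with no division, and the symmetric commutator \([B_{ij}+B_{i,n+1},B_{j,n+1}]=0\) evaluated on \(\partial_j\) gives \((p_j-\alpha_{ij})n_{i,n+1}=0\), hence \(\alpha_{ij}=p_j\) outright since \(n_{i,n+1}\neq 0\) follows from \(b_{i,n+1}\neq 0\); so the degenerate sub-cases you flag at the end largely evaporate, but as written your proposal leaves them as an acknowledged gap rather than closing them.
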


\begin{note}
	We can restate the hypothesis of Proposition \ref{prop:UNIQ} more shortly by saying that \(\{B_H, \,\ H \in \mA_n\}\) satisfies the Non-Zero Weights Assumptions \ref{ass:nz}.
\end{note}

	Note that irreducible intersections of \(\mA_n\) correspond to subsets of \(\underline{n+1}\), see Remark \ref{rmk:Anquotient}.

\begin{notation}\label{not:bij}
	We write \(B_{i,j} = B_{H_{i,j}}\) and  \(b_{i,j} = b_{H_{i,j}} = \tr B_{i,j}\). 
	 If \(L \in \mL_{irr}(\mA_n)\) corresponds to \(I=\{i_1, \ldots, i_k\} \subset \underline{n+1}\), we denote
	\[b_{i_1, \ldots, i_k}= b_L = \frac{1}{k-1} \sum_{\substack{\{i,j\}\subset I \\ i<j}} b_{i,j} .\]
\end{notation}

\begin{lemma}\label{lem:compateq}
	Let \(n\geq2\) and let \(S: \C^{n+1} \to \C^{{n+1\choose 2}}\) be the linear map given by
	\[S(a_1, \ldots, a_{n+1}) = (a_i+a_j)_{i<j} . \]
	Then the following holds.
	\begin{itemize}
		\item[(i)] \(S\) is injective.
		\item[(ii)] If \(n=2\) then \(S: \C^3 \to \C^3\) is surjective. 
		\item[(iii)] If \(n \geq 3\) then \((b_{i,j})_{i<j}\) is in the image of \(S\) if and only if the compatibility equations
		\begin{equation} \label{eq:compat}
		b_{i,j} + b_{k,l} = b_{i,k} + b_{j,l} 
		\end{equation}
		hold for any quadruple \(\{i,j,k,l\} \subset \underline{n+1}\) of distinct indices.
	\end{itemize}
\end{lemma}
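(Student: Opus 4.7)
My plan is to prove the three items in order, with most of the work going into (iii), the sufficiency of the compatibility conditions.

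For (i), I would argue injectivity directly. Suppose $S(a_1,\ldots,a_{n+1}) = 0$, so $a_i + a_j = 0$ for all $i < j$. Since $n \geq 2$ we have at least three indices; from $a_1 + a_2 = 0$, $a_1 + a_3 = 0$, and $a_2 + a_3 = 0$ we solve the resulting $3 \times 3$ system to obtain $a_1 = a_2 = a_3 = 0$, and then $a_i + a_1 = 0$ forces $a_i = 0$ for all remaining $i$.

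For (ii), the map $S \colon \C^3 \to \C^3$ is injective by (i) and hence surjective by the rank-nullity theorem. So no further work is needed.

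For (iii), necessity is immediate: if $b_{i,j} = a_i + a_j$ then
\begin{equation*}
b_{i,j} + b_{k,l} = a_i + a_j + a_k + a_l = b_{i,k} + b_{j,l}.
\end{equation*}
The substantive step is sufficiency. Assume $(b_{i,j})_{i<j}$ satisfies the compatibility equations \eqref{eq:compat} for every quadruple of distinct indices. I would define
\begin{equation*}
a_1 = \tfrac{1}{2}(b_{1,2} + b_{1,3} - b_{2,3}), \qquad a_j = b_{1,j} - a_1 \,\, \text{ for } 2 \leq j \leq n+1,
\end{equation*}
and then verify that $a_i + a_j = b_{i,j}$ for all $i < j$. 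For $i = 1$ this is the definition. For $i = 2$, $j = 3$ a direct computation gives $a_2 + a_3 = b_{1,2} + b_{1,3} - 2a_1 = b_{2,3}$. For the remaining cases with $2 \leq i < j$, I compute
\begin{equation*}
a_i + a_j = b_{1,i} + b_{1,j} - 2a_1,
\end{equation*}
and the required equality $a_i + a_j = b_{i,j}$ becomes
\begin{equation*}
b_{1,i} + b_{1,j} - b_{i,j} = b_{1,2} + b_{1,3} - b_{2,3}.
\end{equation*}
This is a consequence of the compatibility relations applied to the quadruple $\{1, i, j\}$ together with one of $\{2, 3\}$, reducing step by step to the quadruple $\{1, 2, 3, k\}$ for some auxiliary index $k$; the hypothesis $n \geq 3$ ensures such a fourth index exists. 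Note that the stated compatibility $b_{i,j} + b_{k,l} = b_{i,k} + b_{j,l}$, being quantified over all quadruples $\{i,j,k,l\}$, actually yields equality of all three pair-sum partitions of any four distinct indices, which is what makes these reductions go through.

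The main (very mild) obstacle is bookkeeping in (iii): writing the equalities so that one sees that a finite number of applications of \eqref{eq:compat} to different quadruples produces the needed identity $b_{1,i} + b_{1,j} - b_{i,j} = b_{1,2} + b_{1,3} - b_{2,3}$. Once this is done, uniqueness of the preimage $a$ follows from (i), and the proof is complete.
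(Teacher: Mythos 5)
Your proposal is correct and follows essentially the same route as the paper: items (i) and (ii) are identical, and for (iii) both proofs build the preimage from the half-sum formula $\tfrac{1}{2}(b_{i,j}+b_{i,k}-b_{j,k})$ and use the compatibility relations to make the construction consistent (the paper defines every $a_i$ symmetrically and checks independence of the chosen triple, whereas you anchor at the index $1$; these are minor variants of the same argument). The reduction you sketch does go through, since for any two pairs from $\{2,\ldots,n+1\}$ sharing an index the quadruple relation gives $b_{1,i}+b_{1,j}-b_{i,j}=b_{1,i}+b_{1,k}-b_{i,k}$ directly, and disjoint pairs are handled by chaining.
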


\begin{notation}
	In Equation \eqref{eq:compat} we allow the indices \(\{i,j,k,l\}\) run over all its \(24\) permutations, using that \(b_{q,p}= b_{p,q}\) in case \(q>p\). 
	For each quadruple \(\{i,j,k,l\}\) we obtain in total two linearly dependent relations:
	\[b_{i,j} + b_{k,l} = b_{i,k} + b_{j,l} = b_{i,l} + b_{j,k} \,\ \mbox{ for } \,\ i < j < k < l; \]
	which correspond to the three different ways of splitting a quadruple into two pairs. 
\end{notation}

\begin{proof}
	Item \((i)\).
	Suppose \(S(a_1, \ldots, a_{n+1})=0\) and let \(i<j<k\) with \(\{i,j,k\}\subset \underline{n+1}\). Then \(a_i+a_j=0\) together with \(a_i+a_k=0\) imply that \(a_j=a_k=-a_i\) and \(a_j+a_k=0\) gives us \(a_i=a_j=a_k=0\).
	
	Item \((ii)\). If \(n=2\) the domain and target of the linear map \(S\) have the same dimension. Since \(S\) is injective, it must be an isomorphism. It is also useful to write a formula for its inverse. Given \((b_{i,j})\), the solution to \(a_i+a_j=b_{i,j}\) is given by 
	\begin{equation}\label{eq:sol}
	a_i = \frac{1}{2} \left(b_{i,j} + b_{i,k} - b_{j,k} \right) 
	\end{equation}
	where \(\{i,j,k\} = \{1, 2, 3\}\).
	
	Item \((iii)\).
	If we can solve \(b_{i,j}=a_i+a_j\) then Equation \eqref{eq:compat} clearly holds, since both sides of the equation are equal to \(a_i+a_j+a_k+a_l\). On the other hand, 
	let \((b_{i,j})_{i<j}\) be given and suppose that Equation \eqref{eq:compat} is satisfied. Fix \(i \in \underline{n+1}\) and define \(a_i\) by Equation \eqref{eq:sol},
	where \(j,k\) are such that \(\{i,j,k\} \subset \underline{n+1}\) is a triple of distinct indices.
	The compatibility conditions \eqref{eq:compat} imply that the value of \(a_i\) given by \eqref{eq:sol} is independent of choices \(\{i,j',k'\}\) and \(a_i+a_j=b_{i,j}\) by construction. 
\end{proof}

\begin{lemma}\label{lem:uniq2}
	Proposition \ref{prop:UNIQ} holds when \(n=2\).
\end{lemma}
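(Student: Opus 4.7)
The plan is to define $a\in\mathbb{C}^3$ from the given weights $(b_{i,j})_{i<j}$ and then compare the residues of $\nabla$ and $\nabla^a$ directly, using that $\mathcal{A}_2$ has a single codimension-two intersection, namely the origin, which is irreducible by Lemma~\ref{lem:Anisirred}. By Lemma~\ref{lem:compateq}(ii) the map $S$ is a bijection, so there is a unique $a=(a_1,a_2,a_3)\in\mathbb{C}^3$ with $a_i+a_j=b_{i,j}$; namely $a_i=\frac12(b_{i,j}+b_{i,k}-b_{j,k})$ for $\{i,j,k\}=\{1,2,3\}$. In particular $a_1+a_2+a_3=b_{1,2,3}$, where $b_{1,2,3}$ denotes the weight at the origin (Notation~\ref{not:bij}). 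Note also that the Lauricella connection $\nabla^a$ has the same weights as $\nabla$ at every irreducible intersection (Lemma~\ref{lem:weightsRedLau}), so it satisfies the Non-Zero Weights Assumptions~\ref{ass:nz} by hypothesis on $\nabla$. This already settles the uniqueness claim via Lemma~\ref{lem:compateq}(i).

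For existence, I would compare the residues pointwise. Since $b_{i,j}\neq 0$, the torsion-free condition and Lemma~\ref{lem:rk1} give $B_{i,j}(v)=dh_{i,j}(v)\cdot m_{i,j}$ with $dh_{i,j}(m_{i,j})=b_{i,j}$, and similarly $A_{i,j}(v)=dh_{i,j}(v)\cdot n_{i,j}$ for the reduced Jordan–Pochammer matrices of $\nabla^a$. Setting $u_{i,j}:=m_{i,j}-n_{i,j}$, we have $(B_{i,j}-A_{i,j})(v)=dh_{i,j}(v)\cdot u_{i,j}$, and since both matrices have the same trace $b_{i,j}$, we get $dh_{i,j}(u_{i,j})=0$, i.e.\ $u_{i,j}\in H_{i,j}$. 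Applying Corollary~\ref{cor:essirrA} to the essential irreducible arrangement $\mathcal{A}_2$, both $\sum_H B_H$ and $\sum_H A_H$ equal $b_{1,2,3}\operatorname{Id}$, so
\begin{equation*}
\sum_{1\le i<j\le 3} dh_{i,j}(v)\cdot u_{i,j}=0\qquad\text{for every }v\in\mathbb{C}^2.
\end{equation*}

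The final step is a short linear-algebra computation, which I expect to be the only real grinding: writing $H_{1,2}=\mathbb{C}(\partial_1+\partial_2)$, $H_{1,3}=\mathbb{C}\partial_2$, $H_{2,3}=\mathbb{C}\partial_1$, and expanding the vanishing relation above on the basis $\partial_1,\partial_2$ yields a $2\times 2$ linear system in the scalar parameters of $u_{1,2},u_{1,3},u_{2,3}$, whose only solution is $u_{1,2}=u_{1,3}=u_{2,3}=0$. Hence $B_{i,j}=A_{i,j}$ for all $i<j$, so $\nabla=\nabla^a$, which completes the proof. The main (very mild) obstacle is just this last verification; no deeper input is required because in dimension two there is only one codimension-two stratum and the commutation condition \textbf{(F)} from Assumptions~\ref{ass:basic} reduces to the single identity $\sum_H B_H=b_{1,2,3}\operatorname{Id}$ used above.
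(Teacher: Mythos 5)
Your proof is correct and rests on essentially the same two ingredients as the paper's: torsion-freeness reduces each residue to a rank-one map $v\mapsto dh_{i,j}(v)\cdot m_{i,j}$ with prescribed kernel and trace, and flatness (via Corollary \ref{cor:essirrA}) forces $\sum_H B_H$ to be the scalar $b_{1,2,3}\Id$, after which a small homogeneous linear system pins down the residues. The only difference is presentational — you difference against the known Lauricella matrices and show $u_{i,j}=0$, whereas the paper parametrizes the unknown entries $t_1,\dots,t_4$ and solves for them before matching with $\nabla^a$.
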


\begin{proof}
	Write \(b_{i,j} = \tr B_{i,j}\) as in Notation \ref{not:bij}.
	The torsion free condition \(\ker B_H=H\) implies 
	\[ 
	B_{1,3} = 
	\begin{pmatrix}
	b_{1,3} & 0 \\
	t_1 & 0
	\end{pmatrix}, \,\ 
	B_{2,3} = 
	\begin{pmatrix}
	0 & t_2 \\
	0 & b_{2,3}
	\end{pmatrix}, \,\ 
	B_{1,2} = 
	\begin{pmatrix}
	t_3 & -t_3 \\
	-t_4 & t_4
	\end{pmatrix} 
	 \]
	 for some \(t_1, \ldots, t_4 \in \C\) with 
	 \(t_3+t_4=b_{1, 2}\).
	 The flatness condition implies that the sum \(B_{1,3}+B_{2,3}+B_{1,2}\) commutes with each of its terms. Therefore it must preserve each of the three lines of the \(\mA_2\) arrangement and so must be a constant multiple of the identity. We get that
	 \[b_{1,3} + t_3 = b_{2,3} + t_4, \,\ t_2=t_3, \,\ t_1=t_4 . \]
	 In total we have four equations for \(t_1, \ldots, t_4\) which uniquely determine their values.
	 
	 To match with the Lauricella connection we take \((a_1,a_2,a_3)\) such that \(a_i+a_j = b_{i,j}\). We obtain  the equations
	 \begin{equation*}
	 	\begin{cases}
	 	t_3 + t_4 = a_1 + a_2 \\
	 	t_3 - t_4 = a_2 - a_1 
	 	\end{cases}
	 	,
	 	\qquad
	 	\begin{cases}
	 	t_1=t_4 \\
	 	t_2 = t_3
	 	\end{cases}
	 \end{equation*}
	 which give us \(t_1=t_4=a_1\) and \(t_2=t_3=a_2\), agreeing with Equation \eqref{eq:A2matrices}.
\end{proof}

\begin{figure}
	\centering
	\scalebox{0.8}{
		\begin{tikzpicture}
		
		\draw (-5,-1) to (5,-1);
		\draw[green, thick] (4.5,-1.75) to (-0.5,5.75);
		\draw[] (-4.5,-1.75) to (0.5,5.75);
		\draw (0,-2) to (0,6);
		\draw (-5,-1.5) to (3,2.5);
		\draw (5,-1.5) to (-3,2.5);
		
		\node[scale=1.2, red, thick] at (.8, 5) {\((i,j,k)\)};
		\node[scale=1, blue, thick] at (0, 6.2) {\((j,k)\)};
		\node[scale=1, blue, thick] at (-.9, 5.9) {\((i,j)\)};
		\node[scale=1, blue, thick] at (.9, 5.9) {\((i,k)\)};
		
		\node[scale=1.2, red, thick] at (-3.5, -1.4) {\((i,k,l)\)};
		\node[scale=1, blue, thick] at (-5.4, -1) {\((i,l)\)};
		\node[scale=1, blue, thick] at (-5.3, -1.5) {\((k,l)\)};
		\node[scale=1, blue, thick] at (-4.8, -1.9) {\((i,k)\)};

		\node[scale=1.2, red, thick] at (3.5, -1.4) {\((i,j,l)\)};
		\node[scale=1, blue, thick] at (5.4, -1) {\((i,l)\)};
		\node[scale=1, blue, thick] at (5.3, -1.5) {\((j,l)\)};
		\node[scale=1, blue, thick] at (4.8, -1.9) {\((i,j)\)};

		
		\node[scale=1, blue, thick] at (-3.5,2.5) {\((j,l)\)};
		\node[scale=1, blue, thick] at (3.5,2.5) {\((k,l)\)};
		
		\end{tikzpicture}
	}
	\caption{The projectivized \(\mA_3\)-arrangement consists of six lines in \(\CP^2\), it is known as the complete quadrilateral. The hyperplane \(H_{i,j}\) is highlighted in green.}
	\label{fig:completequad}
\end{figure}
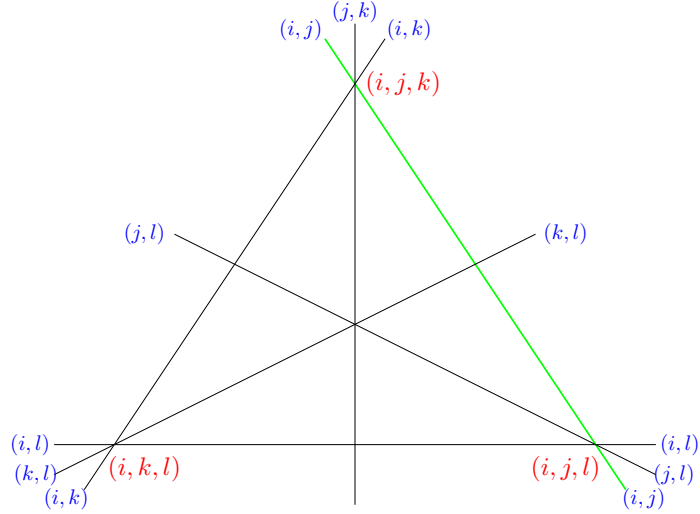

\begin{lemma}\label{lem:completequad}
	In the setting of Proposition \ref{prop:UNIQ} with \(n=3\) there is \((a_1, \ldots, a_4) \in \C^4\) such that \(b_{i,j}=a_i+a_j\).
\end{lemma}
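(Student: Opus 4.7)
My plan is to verify the compatibility equations of Lemma \ref{lem:compateq}(iii) by extracting three linear relations among the weights $b_{i,j}$ from the induced connections on the hyperplanes. Since $n=3$, there is only one quadruple of distinct indices in $\underline{4}$, and Lemma \ref{lem:compateq}(iii) reduces the claim to showing the single chain of equalities
\begin{equation*}
b_{1,2}+b_{3,4} \;=\; b_{1,3}+b_{2,4} \;=\; b_{1,4}+b_{2,3}.
\end{equation*}

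The key tool is Corollary \ref{cor:linear2}, which I can apply with $(V,\mH)=(\C^3,\mA_3)$: the hypotheses are met because $\mA_3$ is essential and irreducible by Lemma \ref{lem:Anisirred}, and for any $H_0\in\mA_3$ the induced arrangement is $\mA_2$ by Lemma \ref{lem:inducedAn}, which is likewise essential and irreducible. Fix a splitting $\{i,j\}\sqcup\{k,l\}=\underline{4}$ and take $H_0=H_{i,j}$. Then the hyperplanes $H\in\mA_3$ with $H\pitchfork H_0$ are exactly those whose index pair is disjoint from $\{i,j\}$, i.e.\ only $H_{k,l}$; and the irreducible codimension-two subspaces $L\subset H_0$ correspond (via Remark \ref{rmk:Anquotient} and Lemma \ref{lem:interAn}) to triples $I\supset\{i,j\}$, namely $I=\{i,j,k\}$ and $I=\{i,j,l\}$. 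Writing $s:=\sum_{p<q} b_{p,q}$, the formula from Corollary \ref{cor:linear2} becomes
\begin{equation*}
\frac{s}{3} \;=\; \frac{1}{2}\bigl(b_{k,l}+b_{i,j,k}+b_{i,j,l}\bigr),
\end{equation*}
and substituting $b_{i,j,k}=\tfrac{1}{2}(b_{i,j}+b_{i,k}+b_{j,k})$ and $b_{i,j,l}=\tfrac{1}{2}(b_{i,j}+b_{i,l}+b_{j,l})$ transforms the right hand side into $\tfrac{1}{2}(b_{i,j}+b_{k,l})+\tfrac{1}{4}(s-b_{i,j}-b_{k,l})$. A one-line manipulation then yields the clean identity
\begin{equation*}
b_{i,j}+b_{k,l} \;=\; \frac{s}{3}.
\end{equation*}

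Since this holds for each of the three partitions of $\underline{4}$ into two pairs, all three sums $b_{1,2}+b_{3,4}$, $b_{1,3}+b_{2,4}$, $b_{1,4}+b_{2,3}$ equal the common value $s/3$, which is exactly the compatibility relation. By Lemma \ref{lem:compateq}(iii) there exists $(a_1,a_2,a_3,a_4)\in\C^4$ with $b_{i,j}=a_i+a_j$, and by Lemma \ref{lem:compateq}(i) this $a$ is unique. No real obstacle arises: the argument is essentially bookkeeping, and the only input beyond linear algebra is the linear relation on weights produced by restriction to a hyperplane, which is already packaged in Corollary \ref{cor:linear2}.
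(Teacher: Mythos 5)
Your proof is correct and follows essentially the same route as the paper: both arguments extract the compatibility relation $b_{i,j}+b_{k,l}=b_{i,k}+b_{j,l}$ by computing the weight at the origin of the induced $\mA_2$-connections on hyperplanes of $\mA_3$ and matching it against the weight at the origin of the ambient connection (the paper does this by comparing two hyperplanes $H_{i,j}$ and $H_{i,k}$ and cancelling the common term, while you apply Corollary \ref{cor:linear2} to one hyperplane at a time and simplify to the symmetric identity $b_{i,j}+b_{k,l}=s/3$). The hypotheses of Corollary \ref{cor:linear2} are checked correctly and the algebra is right, so no changes are needed.
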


\begin{proof}
	Let \(\nabla\) be the connection given by Equation \eqref{eq:Bconnection}.
	Fix \(H= H_{i,j} \in \mA_3\) and consider the induced connection \(\nabla'' =\nabla^H\) (see Section \ref{sect:indcon}) obtained by restricting \(\nabla\) to the arrangement \((\mA_3)^H \equiv \mA_2\). Let \(k,l\) be complementary indices so that \(\{i,j,k,l\}=\{1,2,3,4\}\). The weights of \(\nabla^H\) on \((\mA_3)^H\) are 
	\begin{equation*} 
		w= \{ b_{i,j,k}, \,\ b_{k,l} , \,\ b_{i,j,l}\} 
	\end{equation*}
	as follows from Equation \eqref{eq:aH''}, see Figure \ref{fig:completequad}. Similarly, if we take \(H'=H_{i,k}\), the weights of the induced connection \(\nabla^{H'}\) on \((\mA_3)^{H'}\) are 
	\begin{equation*} 
	w' = \{ b_{i,j,k}, \,\ b_{j,l} , \,\ b_{i,k,l} \} . 
	\end{equation*}
	
	Note that \(\{0\}\) is an irreducible intersection of \((\mA_3)^H\) and its weight with respect to the induced connection \(\nabla^H\) is equal to the sum of the elements in \(w\) divided by two. Since \(\{0\}\) is also an irreducible intersection of \(\mA_3\), its weight can also be computed as \((1/3)\sum_{i<j}b_{i,j}\) (see Corollary \ref{cor:linear2}). The same reasoning applies to \((\mA_3)^{H'}\). We conclude that the sum of the elements in \(w\) equals the sum of the elements in \(w'\). After cancelling the common term \(b_{i,j,k}\), this gives us
	\begin{equation*}
		b_{k,l} + \frac{b_{i,j} + b_{i,l} + b_{j,l}}{2} =
		b_{j,l} + \frac{b_{i,k} + b_{i,l} + b_{k,l}}{2}
	\end{equation*} 
	which easily rearranges to Equation \eqref{eq:compat}. The statement follows from Lemma \ref{lem:compateq}.
\end{proof}

\begin{lemma}
	In the setting of Proposition \ref{prop:UNIQ} with \(n\geq 4\) there is \((a_1, \ldots, a_{n+1}) \in \C^{n+1}\) such that \(b_{i,j}=a_i+a_j\).
\end{lemma}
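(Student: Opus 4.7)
The plan is to verify the compatibility equations of Lemma \ref{lem:compateq}(iii), which then guarantees the existence of the desired $(a_1, \ldots, a_{n+1})$. I proceed by induction on $n$, with base case $n = 3$ supplied by Lemma \ref{lem:completequad}, and the step from $n-1$ to $n \geq 4$ accomplished via a two-restriction trick.

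Fix a quadruple $\{i,j,k,l\} \subset \underline{n+1}$. Since $n \geq 4$, I can choose $m \in \underline{n+1} \setminus \{i,j,k,l\}$. Apply Definition \ref{def:inducedconnection} with $H_0 = H_{m,i}$ and then with $H_0 = H_{m,l}$, obtaining two induced connections on arrangements that are both isomorphic to $\mA_{n-1}$ by Lemma \ref{lem:inducedAn}. These induced connections are flat, torsion-free and standard (Lemma \ref{lem:indconectisftf} and Remark \ref{rmk:inducedresidues}), and by Lemma \ref{lem:inducedweights} their weights at irreducible intersections of the induced arrangement form subsets of $\{b_L : L \in \mL_{irr}(\mA_n)\}$, which are non-zero by hypothesis. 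Hence they satisfy the Non-Zero Weights Assumptions \ref{ass:nz}, and the inductive hypothesis applies to both.

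The compatibility equation for the first induced connection at the quadruple $\{[m{=}i], j, k, l\}$ (where $[m{=}i]$ denotes the merged index in the reduced arrangement) translates, via Equation \eqref{eq:aH''} together with the weight formula $b_{m,i,q} = \tfrac{1}{2}(b_{m,i} + b_{m,q} + b_{i,q})$, into the equation
\begin{equation*}
b_{m,j} + b_{i,j} + 2\,b_{k,l} = b_{m,k} + b_{i,k} + 2\,b_{j,l}.
\end{equation*}
Similarly, compatibility for the second induced connection at $\{i, j, k, [m{=}l]\}$ yields
\begin{equation*}
2\,b_{i,j} + b_{m,k} + b_{k,l} = 2\,b_{i,k} + b_{m,j} + b_{j,l}.
\end{equation*}
Summing these two equations, the cross-terms $b_{m,j}$ and $b_{m,k}$ cancel exactly, producing the desired compatibility $b_{i,j} + b_{k,l} = b_{i,k} + b_{j,l}$; the other two variants at $\{i,j,k,l\}$ follow by running the same argument with the roles of the indices relabeled (for instance, swapping $k$ and $l$ produces the variant $b_{i,j} + b_{k,l} = b_{i,l} + b_{j,k}$).

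The main obstacle, and the reason for the two-restriction setup, is precisely the appearance of these cross-terms: a single restriction only yields a polluted version of compatibility at the original quadruple, involving the extraneous weights $b_{m,j}$ and $b_{m,k}$. The essential observation is that merging $m$ with two different elements of the quadruple produces two such polluted equations whose asymmetry in the cross-terms causes them to cancel upon addition. This setup is robust enough to handle the critical boundary case $n = 4$, where $m$ is forced to be the unique index outside the quadruple and both induced connections land in $\mA_3$, covered by Lemma \ref{lem:completequad}.
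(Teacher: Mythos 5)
Your argument is correct, but it takes a genuinely different route from the paper. The paper does not induct on $n$ at all: for a fixed quadruple $I=\{i,j,k,l\}$ it passes to the codimension-three irreducible flat $L=L(I)\in\mL_{irr}(\mA_n)$ and uses the \emph{quotient} connection $\nabla^{\C^n/L}$ (Definition \ref{def:quotconnect}, Lemma \ref{lem:quotconnect}), which is a flat torsion-free standard connection on $\C^3$ with poles at $(\mA_n)_L/L\equiv\mA_3$ whose hyperplane weights are exactly the six numbers $b_{p,q}$, $\{p,q\}\subset I$, with no distortion; Lemma \ref{lem:completequad} then hands over the compatibility relation $b_{i,j}+b_{k,l}=b_{i,k}+b_{j,l}$ directly, and Lemma \ref{lem:compateq}(iii) finishes. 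You instead stay within the induced-connection formalism (restriction to hyperplanes), where merging $m$ with an element of the quadruple necessarily contaminates the weights with triple sums $b_{m,i,q}$; your two-restriction trick and the cancellation of the cross-terms $b_{m,j}$, $b_{m,k}$ is a correct and rather neat way around this (I checked the algebra: summing your two displayed identities yields $3b_{i,j}+3b_{k,l}=3b_{i,k}+3b_{j,l}$ after the cross-terms cancel). The trade-off: the paper's quotient argument is shorter, reduces every $n\ge 4$ to $n=3$ in one step, and avoids the bookkeeping; yours is stylistically uniform with the proof of Lemma \ref{lem:completequad} (which also uses induced connections) but pays for it with an induction and the extra cancellation computation. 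Both sets of prerequisites (Lemmas \ref{lem:inducedAn}, \ref{lem:indconectisftf}, \ref{lem:inducedweights} on your side; Lemma \ref{lem:quotconnect} on the paper's) are established earlier in the text, so either proof is admissible.
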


\begin{proof}
	Let \(I=\{i,j,k,l\} \subset \underline{n+1}\) and consider the corresponding \(L \in \mL_{irr}(\mA_n)\) given by the intersection of all hyperplanes in
	\[(\mA_n)_L = \{ H_{p,q}, \,\ \{p,q\} \subset I \} .\]
	Note that \(\codim_{\C^n}L=3\) and \((\mA_n)_L/L \equiv \mA_3\). The quotient connection \(\nabla^{\C^n/L}\) (see Definition \ref{def:quotconnect}) has weights \(b_{i,j}\) along \(H_{i,j}/L\). By Lemma \ref{lem:completequad} there are \(a_i, a_j, a_k, a_l\) such that \(b_{p,q}=a_p+a_q\) for any pair \(\{p,q\} \subset \{i,j,k,l\}\). In particular, the relation \(b_{i,j}+b_{k,l}=b_{i,k}+b_{j,l}\) holds.  The statement follows from Lemma \ref{lem:compateq}-\((iii)\).
\end{proof}

The following lemma finishes the proof of Proposition \ref{prop:UNIQ}.

\begin{lemma}
	Let \(B_{i,j}\) be as in Proposition \ref{prop:UNIQ}. Let \(a \in \C^{n+1}\) be such that \(\tr B_{i,j} =a_i+a_j\). Let \(A_{i,j}\) be the reduced Jordan-Pochammer matrices \eqref{eq:pochjorRED1} and \eqref{eq:pochjorRED2} of the Lauricella connection \(\nabla^a\). Then \(B_{i,j}=A_{i,j}\).
\end{lemma}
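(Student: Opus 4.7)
The plan is to establish $n^B_{i,j}=n^A_{i,j}$ for every pair $i<j$ in $\underline{n+1}$, where $B_{i,j}(v)=dh_{i,j}(v)\cdot n^B_{i,j}$ and analogously for $A_{i,j}$ by Lemma \ref{lem:rk1}; this immediately yields $B_{i,j}=A_{i,j}$. The trace condition $\tr B_{i,j}=a_i+a_j=\tr A_{i,j}$ together with the common kernel $H_{i,j}$ forces the normalisation $dh_{i,j}(n^B_{i,j})=dh_{i,j}(n^A_{i,j})=a_i+a_j$, so $w_{i,j}:=n^B_{i,j}-n^A_{i,j}$ lies in $H_{i,j}$ and the problem is to show $w_{i,j}=0$. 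The case $n=2$ is exactly Lemma \ref{lem:uniq2}, so I may assume $n\geq 3$.

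Fix $i<j$ and, for each $k\in\underline{n+1}\setminus\{i,j\}$, set $L_k:=L_{\{i,j,k\}}\in\mathcal{L}_{irr}(\mathcal{A}_n)$, a codimension-two irreducible intersection with $(\mathcal{A}_n)_{L_k}/L_k\equiv\mathcal{A}_2$ (Remark \ref{rmk:Anquotient}). By Lemma \ref{lem:quotconnect} the quotient connection $\nabla^{\mathbb{C}^n/L_k}$ is flat, torsion-free, standard on $\mathcal{A}_2$, and inherits the Non-Zero Weights Assumption, with hyperplane weights $a_i+a_j,\; a_i+a_k,\; a_j+a_k$. Applying Lemma \ref{lem:uniq2} to this quotient, and independently to the quotient of the Lauricella connection $\nabla^a$, both quotients coincide with the Lauricella connection on $\mathcal{A}_2$ having parameters $(a_i,a_j,a_k)$. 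Their residues at $H_{i,j}/L_k$ therefore agree as endomorphisms of $\mathbb{C}^n/L_k$, which means the image lines $\mathbb{C}\cdot n^B_{i,j}$ and $\mathbb{C}\cdot n^A_{i,j}$ project to the same line in $\mathbb{C}^n/L_k$; the trace normalisation (and $a_i+a_j\neq 0$) rules out a scalar discrepancy, yielding $w_{i,j}\in L_k$.

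Varying $k$ and using the partition correspondence of Lemma \ref{lem:interAn}, the partitions of $\underline{n+1}$ associated to the subspaces $L_k$ (each with the single non-singleton block $\{i,j,k\}$) merge to the one-block partition $\{\underline{n+1}\}$, whence
\[
\bigcap_{k\neq i,j}L_k \;=\; T(\mathcal{A}_n)\;=\;\{0\}
\]
by essentiality (Lemma \ref{lem:Anisirred}). Hence $w_{i,j}=0$, as required. The only genuine content is the identification of the projected lines in $\mathbb{C}^n/L_k$, which is handled entirely by the $n=2$ base case; in particular, no separate induction on $n\geq 3$ is needed — a single quotient reduction plus a combinatorial intersection suffices.
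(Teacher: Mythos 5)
Your proof is correct, but it takes a genuinely different route from the paper's. The paper argues by induction on \(n\): it fixes a pair \(\{k,l\}\) disjoint from \(\{i,j\}\), notes that both normals \(N_{i,j}\) and \(n_{i,j}\) already lie in \(H_{k,l}\) by Lemma \ref{lem:trint}, and compares the \emph{induced} connections \(\nabla^{H_{k,l}}\) and \((\nabla^a)^{H_{k,l}}\) on the hyperplane \(H_{k,l}\cong\C^{n-1}\), which by the inductive hypothesis are both Lauricella with the same parameters; restricting the residues to \(H_{k,l}\) then shows \(n_{i,j}\) is an \((a_i+a_j)\)-eigenvector of \(B_{i,j}\). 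You instead pass to the \emph{quotient} connections at the codimension-two triple points \(L_{\{i,j,k\}}\), so that Lemma \ref{lem:quotconnect} and the two-dimensional uniqueness of Lemma \ref{lem:uniq2} (via Lemma \ref{lem:compateq}) identify the two quotients directly, giving \(n^B_{i,j}-n^A_{i,j}\in L_{\{i,j,k\}}\) for each \(k\); intersecting over \(k\) kills the difference by essentiality. This buys you a single reduction to the base case \(n=2\) with no induction on the full Proposition \ref{prop:UNIQ} in dimension \(n-1\), at the cost of invoking the quotient-connection formalism of Definition \ref{def:quotconnect} (in particular the fact that \(B_{H/L}(\bar v)=d\bar h(\bar v)\cdot\pi(n^B_H)\), so equality of the quotient residues pins down the projected normal \emph{vectors}, not merely their lines — which incidentally makes your extra remark about ruling out a scalar discrepancy unnecessary). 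One presentational point: strictly speaking you compare two standard connections on the same three-line arrangement in \(\C^n/L_{\{i,j,k\}}\) rather than on the literal \(\mA_2\) of Lemma \ref{lem:uniq2}, but since the uniqueness statement is invariant under linear isomorphisms of arrangements this is harmless.
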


\begin{proof}
	We know that \(\ker B_{i,j} = \ker A_{i,j}\). The matrices \(B_{i,j}\) and \(A_{i,j}\) have one dimensional \((a_i+a_j)\)-eigenspaces \(\C \cdot N_{i,j}\) and \(\C \cdot n_{i,j}\). We will show that \(\C \cdot N_{i,j} = \C \cdot n_{i,j}\) by induction, the case \(n=2\) being settled in Lemma \ref{lem:uniq2}.
	
	Assume \(n\geq 3\). Fix \(i,j\) and take \(\{k,l\} \subset \underline{n+1}\) such that \(\{i,j\} \cap \{k,l\} = \emptyset\).
	Write \(H= H_{k,l}\). By Lemma \ref{lem:trint} both \(N_{i,j}\) and \(n_{i,j}\) belong to \(H\). Let \(\mH\) be the induced arrangement \((\mA_n)^{H}\), so \(\mH \equiv A_{n-1}\). The induced connections \(\nabla^H\) and \((\nabla^a)^H\) have the same weights on members of \(\mH\) because the weights of induced connections only depend on weights of the given connections and not on the normal directions, see Lemma \ref{lem:inducedweights}.
	By induction \(\nabla^H\) is a Lauricella connection, as well as \((\nabla^a)^H\). Since Lauricella connections are uniquely determined by its weights at hyperplanes, see Lemma \ref{lem:compateq}-(i) and Equation \ref{eq:redLaucon}, we conclude that \(\nabla^H=(\nabla^a)^H\).
	The respective residues of \(\nabla^H\) and \((\nabla^a)^H\) at the hyperplane \(H_{i,j} \cap H \in \mH\) are equal to the restrictions \(B_{i,j}|_{H}\) and \(A_{i,j}|_{H}\), as follows from Remark \ref{rmk:inducedresidues} and the fact that \(H_{i,j} \pitchfork H\). Therefore, since the restrictions of the connections \(\nabla\) and \(\nabla^a\) to \(H\) agree, we must have \(B_{i,j}|_{H}=A_{i,j}|_{H}\). In particular,
	\[B_{i,j} n_{i,j} = (a_i + a_j) n_{i,j} .\]
	Since \(n_{i,j}\neq 0\) we conclude that \(\C \cdot n_{i,j} = \C \cdot N_{i,j}\). 
\end{proof}

\subsection{Flat Hermitian form}\label{sect:flathermform}

The fundamental group of \(\C^n \setminus \mA_n\) is the pure braid group on \(n+1\) strands, denoted by \(P_{n+1}\). We write
\begin{equation}\label{eq:gasrep}
	\Hol(a): P_{n+1} \to GL(n, \C) 	
\end{equation}
for the holonomy representation of the reduced Lauricella connection \(\nabla^a\) with parameters \(a=(a_1, \ldots, a_{n+1})\). Let's fix a base point \(p \in (\C^n)^{\circ}\),  generators for the fundamental group \(\pi_1((\C^n)^{\circ}, p) =  P_{n+1}\) and a suitable basis of \(T_p \C^n\). The homomorphism \ref{eq:gasrep} can be written in terms of explicit holonomy matrices and is identified with the `reduced Gassner representation' specialized at \(\exp(2\pi i a_i)\); see \cite[Chapter 8]{OT2} and also \cite{KM} for the case of purely imaginary parameters \(a_i\).

\begin{fact}\label{fact1}
	If \(a- a' \in \Z^{n+1}\) then \(\Hol(a)\) and \(\Hol(a')\) are conjugate.
\end{fact}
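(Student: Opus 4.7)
The plan is to realise $\Hol(a)$ as the monodromy of a Gauss--Manin local system built from twisted cohomology of punctured planes, and then to observe that this auxiliary local system depends only on $a$ modulo $\Z^{n+1}$.

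For $z = (z_1, \ldots, z_n) \in (\C^n)^{\circ}$, let $\mL_a^z$ denote the rank-one local system on $\C \setminus \{0, z_1, \ldots, z_n\}$ whose monodromy is multiplication by $\exp(-2\pi i a_i)$ around $z_i$ and by $\exp(-2\pi i a_{n+1})$ around $0$. As $z$ varies, the groups $H^1(\C \setminus \{0, z_1, \ldots, z_n\}, \mL_a^z)$ assemble into a rank-$n$ holomorphic vector bundle $\mathcal V^a$ over $(\C^n)^{\circ}$ equipped with a canonical flat Gauss--Manin connection $\nabla^{\mathrm{GM}}$, whose flat multi-sections are the twisted period integrals
\begin{equation*}
z \;\longmapsto\; \int_{\gamma(z)} t^{-a_{n+1}} \prod_{i=1}^n (t - z_i)^{-a_i}\, dt ,
\end{equation*}
indexed by locally constant families of twisted cycles $\gamma(z)$. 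The first step of the proof is to identify $(\mathcal V^a, \nabla^{\mathrm{GM}})$ with the trivial bundle equipped with $\nabla^a$. This identification is classical: differentiating the above integral in $z_i$ and integrating by parts produces, in a suitable basis of twisted cycles adapted to intervals joining consecutive punctures, precisely the reduced Jordan--Pochhammer matrices \eqref{eq:pochjorRED1}--\eqref{eq:pochjorRED2} as residues. Alternatively, once one has checked that $\nabla^{\mathrm{GM}}$ is flat, torsion-free, standard on $\mA_n$, and has the correct traces of residues at hyperplanes, Proposition \ref{prop:UNIQ} does the identification for us.

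Granting this identification, the conclusion is essentially tautological. The local system $\mL_a^z$ is determined purely by the tuple of monodromy multipliers $(e^{-2\pi i a_j})_{j=1}^{n+1}$, so $\mL_a^z = \mL_{a'}^z$ as soon as $a - a' \in \Z^{n+1}$. Hence $\mathcal V^a$ and $\mathcal V^{a'}$ are canonically the same local system on $(\C^n)^{\circ}$, and in particular their monodromy representations coincide on the common fibre over any base point $p$. Choosing any basis of twisted cycles in this fibre produces a matrix $C \in GL(n, \C)$ with $\Hol(a') = C \, \Hol(a)\, C^{-1}$.

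The main (purely technical) difficulty lies in the first step: verifying from scratch that the Gauss--Manin connection on twisted cohomology is the Lauricella connection would involve delicate bookkeeping of twisted intersection pairings and period matrices. This step, however, is standard in the theory of Lauricella hypergeometric systems and is carried out explicitly in \cite{DM} and \cite[Section~2.3]{CHL}; once it is accepted, integer-shift invariance of $\Hol(a)$ is immediate from the observation that twisted cohomology only depends on the exponents $e^{-2\pi i a_j}$.
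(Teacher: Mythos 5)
Your argument is correct in substance, but it takes a different route from the paper. The paper deduces Fact \ref{fact1} directly from the explicit matrix presentation of the reduced Gassner representation in \cite[pg.~76]{OT2}: in a suitable basis of $T_p\C^n$ (depending on $a$) the holonomy matrices have entries that are functions of $c_i = \exp(2\pi i a_i)$ only, so an integer shift of $a$ leaves them unchanged and the two representations are conjugate via the change of basis. You instead realise $\Hol(a)$ as the Gauss--Manin monodromy on the twisted cohomology groups $H^1(\C\setminus\{0,z_1,\ldots,z_n\},\mL_a^z)$, and observe that the rank-one local system $\mL_a^z$ -- hence the whole Gauss--Manin local system -- is literally unchanged under $a\mapsto a+\Z^{n+1}$. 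Both proofs rest on the same underlying mechanism (the monodromy data is a function of the exponentials $e^{2\pi i a_j}$ alone) and both defer the hard identification step to the literature: the paper to Orlik--Terao's computation of the Gassner matrices, you to the Deligne--Mostow/CHL identification of $\nabla^a$ with the Gauss--Manin connection on Lauricella periods. What your route buys is a conceptual explanation of \emph{why} only $e^{2\pi i a_j}$ matters; what the paper's route buys is concreteness and uniform validity in the parameters.

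One caveat you should make explicit: your fallback identification via Proposition \ref{prop:UNIQ} requires the Non-Zero Weights Assumptions ($\sum_{i\in I}a_i\neq 0$ for all $I$ with $|I|\ge 2$), and the twisted-cohomology picture itself is cleanest when $a_i\notin\Z$ for all $i$ and $\sum_i a_i\notin\Z$ (otherwise the period map need not be an isomorphism onto $H^1$, even though the rank of $H^1$ stays equal to $n$ as long as $\mL_a^z$ is nontrivial). Fact \ref{fact1} is invoked in the paper precisely to reduce Fact \ref{fact3} -- which explicitly allows integer values among $a_1,\ldots,a_{n+2}$ and degenerate Hermitian forms -- to the case $0\le a_i<1$, so the resonant cases are not vacuous. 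You can close this either by a continuity/closedness-of-conjugacy argument in $a$ for semisimple deformations, or simply by falling back on the explicit Gassner matrices as the paper does; as written, your proof covers the generic parameters but silently assumes non-resonance where it matters.
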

Fact \ref{fact1}
follows from the matrix representation (with respect to a suitable basis of \(T_p\C^n\) depending on \(a\)) presented in \cite[pg. 76]{OT2}. The matrix entries only depend on the values \(\exp(2\pi ia_i)\) - which the authors denote by \(c_i\) .

\begin{fact}[{\cite{MS}}]\label{fact2}
	The holonomy representation \eqref{eq:gasrep} is irreducible
	if and only if \(a_i \notin \Z\) for all \(i=1, \ldots, n+1\) and \(a_1+\ldots +a_{n+1} \notin \Z\).
\end{fact}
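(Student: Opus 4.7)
The necessity direction (reducibility under integrality) is the easy half, and I would handle it by Fact 1 (conjugacy invariance under integer shifts) combined with explicit parallel subbundles. If some $a_i \in \Z$, use Fact 1 together with the manifest $S_{n+1}$-symmetry of the unreduced Lauricella connection $\tilde\nabla$ on $\C^{n+1}$ to reduce to $a_{n+1} = 0$; then the diagonal vector $v = \partial_1 + \ldots + \partial_n$ (the image under $\C^{n+1} \to \C^n$ of the main diagonal) satisfies $A_{i,j} v = (dz_i-dz_j)(v)\cdot n_{i,j}=0$ for $i<j\leq n$, while $A_{i,n+1} v = n_{i,n+1} = a_i\,v$ (using $a_{n+1}=0$), so $\C\cdot v$ is a $\nabla^a$-parallel line. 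If instead $a_0 \in \Z$, use Fact 1 to reduce to $a_0 = 0$ and check that the 1-form $\tilde\omega = \sum a_i\, d\tilde z_i$ is parallel for the dual of $\tilde\nabla$: the kernel computation $(\tilde A_{i,j}^T\tilde\omega)_k = a_j c_i - a_i c_j$ vanishes when $c_k = a_k$. Since $\tilde\omega(1,\ldots,1)=a_0=0$, it descends to a parallel 1-form on $\C^n$ whose kernel is a proper invariant hyperplane.

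For the sufficiency direction (irreducibility from non-integrality), I would induct on $n$. The base $n=2$ is by direct analysis: a common invariant line for the three rank-one semisimple residues $A_{1,2}, A_{1,3}, A_{2,3}$ must, by Lemma \ref{lem:rk1}, lie in a kernel $H_{i,j}$ or span some $\C\cdot n_{i,j}$; enumerating the four configurations and imposing the simultaneous invariance forces either $a_i = 0$ for some $i$ or $a_0 = 0$, contradicting the hypothesis after Fact 1.

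For the inductive step ($n \geq 3$), I would seek a pair $(p,q)$ such that the induced reduced Lauricella $\nabla'' = \nabla^{H_{p,q}}$ on $H_{p,q}\equiv\mA_{n-1}$ (with parameters $(a_i)_{i\ne p,q}\cup\{a_p+a_q\}$) satisfies the inductive hypothesis. The sum of the induced parameters equals $a_0$, so that condition is automatic; the only new constraint is $a_p+a_q\notin\Z$. Given such $(p,q)$, any proper invariant subspace $W$ splits under the monodromy $\exp(2\pi i A_{p,q})$ (distinct eigenvalues $1$ and $e^{2\pi i(a_p+a_q)}$) as
\[
W = (W\cap H_{p,q})\oplus(W\cap\C\cdot n_{p,q}).
\]
The first summand is a subspace of $H_{p,q}$ invariant under the induced representation, hence $0$ or $H_{p,q}$ by induction. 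The case $W=H_{p,q}$ is ruled out by the action of any $A_{r,s}$ with $\{r,s\}\cap\{p,q\}\ne\emptyset$, whose normal $n_{r,s}$ does not lie in $H_{p,q}$ (Lemma \ref{lem:trint} only ensures $n_{r,s}\in H_{p,q}$ when the pairs are disjoint); the case $W=\C\cdot n_{p,q}$ is ruled out by the same analysis applied to every other pair, combined with $\bigcap_{(r,s)\ne(p,q)}H_{r,s}=\{0\}$.

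The hard part, and the main obstacle I expect, is the exceptional stratum where no pair $(p,q)$ has $a_p+a_q\notin\Z$. A short calculation shows this forces all $a_i$ to be congruent to $\tfrac12$ modulo $\Z$, in which case $a_0\notin\Z$ iff $n$ is even. At this specialization every residue $A_{p,q}$ has integer eigenvalues $0$ and $1$ and is semisimple, so the meridional monodromies $\exp(2\pi i A_{p,q})$ collapse to the identity and the induction scheme breaks down entirely. Handling this stratum requires a separate direct argument — presumably a dedicated analysis of the Gassner representation at the half-integer specialization $t_i = -1$, which is exactly what Mostow--Sasaki supply in \cite{MS} and which I would quote rather than reprove.
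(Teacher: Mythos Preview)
The paper does not prove this statement; it is cited from \cite{MS}, and the paper's own irreducibility result (Theorem~\ref{thm:irredhol}) covers only the unitary case needed for Theorem~\ref{PRODTHM}. Your necessity argument is correct.

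The sufficiency direction has a real gap. You treat a holonomy-invariant subspace $W\subset T_{p_0}\C^n$ as though it were invariant under each residue matrix $A_{p,q}$, asserting that $W$ ``splits under the monodromy $\exp(2\pi i A_{p,q})$''. But the meridional holonomy at $p_0$ is only \emph{conjugate} to $\exp(2\pi i A_{p,q})$, by a matrix depending on $H_{p,q}$ and the chosen path, so holonomy-invariance does not directly give the splitting $W=(W\cap H_{p,q})\oplus(W\cap\C\cdot n_{p,q})$, nor does it make $W\cap H_{p,q}$ invariant under the induced connection $\nabla''$. What you actually need is that the parallel sub-bundle determined by $W$ is a \emph{constant} subspace of $\C^n$; this is exactly what Lemmas~\ref{lem:irredhol1}--\ref{lem:irredhol3} establish in the unitary setting, and the Hartogs step (Lemma~\ref{lem:irredhol2}) genuinely uses the orthogonal complement $F^\perp$. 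Without unitarity one can still extend $F$ holomorphically over each $H^\circ$ via the Normal Form Theorem~\ref{thm:normalform} (since $a_H\notin\Z$), and radial parallel transport is scalar (as $\sum_H A_H=a_0\,\Id$) so $F$ is $\C^*$-invariant; but a $\C^*$-invariant holomorphic sub-bundle defined only outside a codimension-two set need not be constant (the tautological line over $\C^2\setminus\{0\}$ already shows this). So as written the induction does not go through, and in the end your proposal defers to \cite{MS} just as the paper does. A smaller inaccuracy: at the half-integer stratum the residues are resonant (eigenvalues $0$ and a nonzero integer), so the meridional monodromies are unipotent rather than equal to $\exp(2\pi i A_{p,q})=\Id$; your conclusion that the scheme breaks down there is nonetheless correct.
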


 \begin{notation}
 	We introduce
 	\[a_{n+2} = 2 - \sum_{i=1}^{n+1}a_i . \]
 	We will also denote \(a_{\infty}=a_{n+2}\).
 \end{notation}

\begin{fact}\label{fact3}
	If \(a_i \in \R\) for all \(i\) then \eqref{eq:gasrep} leaves invariant a natural Hermitian form \(\inn = \inn_a\) with the following properties.
	\begin{itemize}
		\item[(i)] The kernel of \(\inn\) has dimension equal to the number of integer values in the sequence \(a_1, \ldots, a_{n+2}\).
		\item[(ii)] The Hermitian form \(\inn\) has signature \((p,q)\)
		with
		\begin{equation}\label{eq:sgn}
			p = -1 + \sum_{i=1}^{n+2}\{a_i\}, \qquad q = -1 + \sum_{i=1}^{n+2}\{1-a_i\} . 	
		\end{equation}
	\end{itemize}
\end{fact}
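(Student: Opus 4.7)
The plan is to identify $\nabla^a$ with a Gauss--Manin connection on a family of rank-one twisted cohomology groups on $\CP^1$ minus $n+2$ points, to realise $\inn_a$ as the twisted intersection pairing on that cohomology, and then reduce (i) and (ii) to a topological computation on a single fibre.

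First I would set up the period picture. Fix $z=(z_1,\ldots,z_{n+1})\in(\C^n)^{\circ}$, let $z_{n+2}=\infty$, and let $S_z=\CP^1\setminus\{z_1,\ldots,z_{n+2}\}$. Let $\mL_a$ be the rank-one local system on $S_z$ whose flat sections are the branches of $\prod_{j=1}^{n+1}(t-z_j)^{-a_j}$. A change of variables $s=1/t$ shows that the local monodromy around $z_i$ equals $e^{-2\pi i a_i}$ for each $i\in\{1,\ldots,n+2\}$; the choice $a_\infty=2-\sum_{j\le n+1}a_j$ is exactly what forces the product of these monodromies around all $n+2$ punctures to be $1$. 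A standard Euler characteristic computation gives $\dim H^1(S_z,\mL_a)=n$ away from total resonance, and letting $z$ vary produces a flat holomorphic bundle of rank $n$ on $(\C^n)^{\circ}$ whose Gauss--Manin connection has simple logarithmic poles along $\mA_n$. A residue calculation at a generic point of each $H_{i,j}$ shows that the residue matrices have trace $a_i+a_j$, and then the uniqueness Proposition \ref{prop:UNIQ} identifies this Gauss--Manin connection with $\nabla^a$.

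Next I would build the form. For $a\in\R^{n+1}$ the dual local system $\mL_a^\vee$ coincides with the conjugate $\overline{\mL_a}$, so the topological Poincar\'e pairing
\[
H^1_c(S_z,\mL_a)\otimes H^1(S_z,\mL_a^\vee)\to\C
\]
combined with the canonical map $H^1_c\to H^1$ and with complex conjugation yields a sesquilinear pairing $\inn_a$ on $H^1(S_z,\mL_a)$. Hermiticity comes from the standard antisymmetry of intersection pairings on surfaces together with the conjugate-linear identification, and flatness is automatic because the pairing is purely topological. For (i) one localises the map $H^1_c\to H^1$ near each puncture: its kernel is spanned precisely by the fundamental classes of small loops around those punctures $z_i$ at which the local monodromy is trivial, i.e.\ those $i$ with $a_i\in\Z$, and these classes lie in the radical of $\inn_a$ by functoriality. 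A dimension count using Fact \ref{fact2} and continuity from the generic case shows no other degeneracy is possible.

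For (ii) my approach would be the Kita--Yoshida intersection formula applied to a basis of loaded cycles. After reducing to a real configuration $z_1<\cdots<z_{n+1}$ on the real line (the signature is deformation-invariant in the open stratum of real parameters with fixed integrality pattern), use the basis of twisted cycles supported on the segments $\gamma_k=[z_k,z_{k+1}]$, $k=1,\ldots,n+1$, modulo the single relation that they sum to a cycle around $\infty$. In this basis the intersection matrix is tridiagonal with entries that are elementary rational functions of the $e^{\pm 2\pi i a_i}$, depending only on the fractional parts $\{a_i\}$ and $\{1-a_i\}$. The main obstacle will be the explicit Sylvester-type sign count on the principal minors of this tridiagonal matrix: one must normalise the $a_i$ using the integer-shift freedom of Fact \ref{fact1}, then show that the number of sign changes in the sequence of leading principal minors equals $q=-1+\sum_{i=1}^{n+2}\{1-a_i\}$. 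Combined with $p+q=n-\#\{i:a_i\in\Z\}$, which is already forced by the rank statement in (i), this yields the stated signature $(p,q)$. Extension from this real stratum to arbitrary real $a$ with prescribed integer locus is then a continuity argument.
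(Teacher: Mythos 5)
You should first be aware that the paper does not prove Fact \ref{fact3} at all: it is stated as a Fact and attributed to \cite[Lemmas 3.21 and 3.22]{CHL} and \cite[Corollary 2.21]{DM} (with \cite[Section 1.6]{Loo} for the analytic continuation in \(a_{n+2}\)), after reducing to \(0\le a_i<1\) via the integer-shift invariance of Fact \ref{fact1}. What you have written is essentially a reconstruction of the proof contained in those references: the twisted cohomology of \(\CP^1\) minus \(n+2\) points, the pairing induced by \(H^1_c\to H^1\) against the conjugate local system, and the Kita--Yoshida intersection matrix of loaded segments with a Sylvester sign count. So you are not taking a different route from the paper; you are supplying the route the paper outsources. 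Your observation that \(p+q=n-\#\{i:a_i\in\Z\}\) follows formally from \(\{a\}+\{1-a\}\in\{0,1\}\), and your use of the integer-shift freedom to normalise before the minor computation, both match the paper's reduction.

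Two steps need shoring up. First, invoking Proposition \ref{prop:UNIQ} to identify the Gauss--Manin connection with \(\nabla^a\) is not immediate: that proposition applies to \emph{standard, torsion-free} connections on \(T\C^n\), and torsion-freeness is not an intrinsic property of the abstract cohomology bundle — you would first have to produce the period-map identification of the local system with the tangent bundle under which the Gauss--Manin connection becomes standard with \(H\subset\ker B_H\). Since all that Fact \ref{fact3} needs is an isomorphism of the monodromy representation with \eqref{eq:gasrep}, it is cleaner (and is what \cite{CHL} do) to match the monodromy directly via the classical period integrals rather than detour through the uniqueness of the connection. Second, in part (i) your ``continuity from the generic case'' only gives lower semicontinuity of the rank of \(\inn_a\), i.e.\ an upper bound on the kernel in the limit is exactly what it does \emph{not} give; the exact equality of the kernel with the span of the classes at the integer punctures must come from the topological identification of the radical with the kernel/cokernel of \(H^1_c\to H^1\) (the form is nondegenerate on the image), not from a deformation argument. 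With those two points repaired, the outline is a correct proof of the Fact; the remaining work is the explicit evaluation of the leading principal minors of the tridiagonal matrix, which you correctly identify as the computational core and which is carried out in the cited sources.
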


\begin{note}
	In this article the fractional part of a real number \(a\) is defined by \(\{a\} = a - \floor{a}\). It is the \(\Z\)-periodic extension of the identity function \(a \mapsto a\) for \(0 \leq a <1\), for example \(\{-0.7\}=0.3\). Note that in Equation \eqref{eq:sgn} we have included the weight at infinity \(a_{n+2}\) in the sum so the term \(\sum_{i=1}^{n+2}\{a_i\}\) is an integer.
\end{note}

Fact \ref{fact3} is a consequence of \cite[Lemmas 3.21 and 3.22]{CHL}. Notice that \cite[Lemma 3.21]{CHL} requires \(0\leq a_i<1\) for all \(i=1, \ldots, n+2\) and an analytic continuation argument is used to deal with the case when \(a_{n+2} \notin (0,1)\), see also \cite[Section 1.6]{Loo}. Subtracting an integer vector from \(a \in \R^{n+1}\) we can assume that \(0 \leq a_i<1\) for \(i=1, \ldots, n+1\) and the case of arbitrary real parameters \(a_i\) follows from Fact \ref{fact1}. 

Equation \eqref{eq:sgn} for the signature also appears in \cite[Corollary 2.21]{DM}. In particular, if none of \(a_1, \ldots, a_{n+2}\) are integers then \(\inn\) is non-degenerate and unique up to a constant factor because of Fact \ref{fact2}.

\subsubsection{{\large Picturing Equation \eqref{eq:sgn} for the signature}}

The expression for \(p\) given by Equation \eqref{eq:sgn} defines an integer-valued function
\begin{align*}
\R^{n+1} &\to \Z \\
a\hspace{4mm} &\mapsto p(a) .
\end{align*}
Here \(a=(a_1, \ldots, a_{n+1})\) and we have eliminated the dependence on \(a_{n+2}\) by using that \(a_{n+2} = 2 - \sum_{i=1}^{n+1}a_i\). This way \(p\) is periodic
\begin{equation}\label{eq:periodic}
	p\left(a+\Z^{n+1}\right)=p(a) 
\end{equation}
and we can easily analyse its values on the interior of the unit cube \(Q=(0,1)^{n+1}\) where \(0<a_i<1\) for all \(1\leq i \leq n+1\). If we let 
\[s=\sum_{i=1}^{n+1}a_i\] 
then \(0<s<n+1\) on \(Q\) and Equation \eqref{eq:sgn} reads as follows
\begin{equation}\label{eq:pq}
	p = \floor*{s} \in \{0, \ldots, n\}, \qquad q = n-p .	
\end{equation}

\begin{definition}\label{def:Tpm}
	We introduce the open simplexes
	\[T_- = \{s<1\} \cap Q, \qquad T_+ = \{s>n\} \cap Q  .   \]	
\end{definition}

\begin{figure}
	\scalebox{0.7}{
	\includegraphics[]{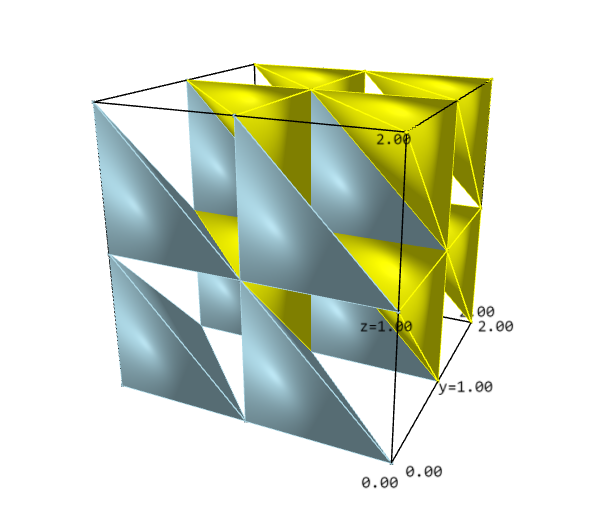}
	}
	\centering
	\caption{The simplices \(T_{-} + \Z^{n+1}\) and \(T_{+} + \Z^{n+1}\) illustrated in different colours.}
\end{figure}

\begin{corollary}\label{cor:sgn}
	Let \(a=(a_1, \ldots, a_{n+1}) \in \R^{n+1}\) and set
	\[\{a\}= \left(\{a_1\}, \ldots, \{a_{n+1}\}\right).\]
	Then the Hermitian form \(\inn_a\) in Fact \ref{fact3} is non-degenerate and definite if and only if one of the following holds:
	\begin{itemize}
		\item either \(\{a\} \in T_-\), in which case \(\inn_a<0\);
		\item or \(\{a\} \in T_+\), in which case \(\inn_a >0\).
	\end{itemize} 
\end{corollary}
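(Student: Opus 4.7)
The plan is to reduce everything to the interior of the unit cube $Q=(0,1)^{n+1}$ by periodicity, and then read off the signature from the formula \eqref{eq:pq}. The core of the argument is essentially bookkeeping with Fact \ref{fact3}; the only real input is computing the fractional part of $a_{n+2}=2-s$ correctly.

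First, I would invoke Fact \ref{fact1}: translating $a$ by an integer vector $m\in\Z^{n+1}$ conjugates the holonomy representation $\Hol(a)$, hence preserves the kernel dimension and signature of any invariant Hermitian form. Writing $a=\{a\}+m$ with $m\in\Z^{n+1}$, I may therefore assume $a=\{a\}\in Q$, so that $0<a_i<1$ for $i=1,\ldots,n+1$. In this regime none of $a_1,\ldots,a_{n+1}$ is an integer, and by Fact \ref{fact3}(i) the Hermitian form $\inn_a$ is non-degenerate if and only if $a_{n+2}=2-s\notin\Z$, equivalently $s=a_1+\cdots+a_{n+1}\notin\Z$.

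Next, I would compute the signature using Fact \ref{fact3}(ii). Since $0<a_i<1$ for $i\le n+1$, we have $\{a_i\}=a_i$, and if $s\in(k,k+1)$ for some $k\in\{0,1,\ldots,n\}$ then $2-s\in(1-k,2-k)$, giving $\lfloor 2-s\rfloor=1-k$ and thus $\{a_{n+2}\}=\{2-s\}=1-s+k$. Adding up,
\begin{equation*}
\sum_{i=1}^{n+2}\{a_i\}=s+(1-s+k)=1+k,
\end{equation*}
so $p=-1+\sum_{i=1}^{n+2}\{a_i\}=k=\lfloor s\rfloor$ and $q=n-p$, confirming Equation \eqref{eq:pq}. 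This is the step where one has to be a little careful; everything else is immediate.

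Finally, $\inn_a$ is definite precisely when $p=0$ or $q=0$, i.e.\ $p=0$ or $p=n$. In the non-degenerate range ($s\notin\Z$) these correspond to $s\in(0,1)$ and $s\in(n,n+1)$, that is to $\{a\}\in T_-$ (in which case $p=0$ and $\inn_a<0$) and $\{a\}\in T_+$ (in which case $p=n$ and $\inn_a>0$) respectively. Both of these open simplices lie inside $\{s\notin\Z\}$, so non-degeneracy is automatic in either case. Conversely, if $\{a\}\in Q\setminus(T_-\cup T_+)$ then either $s\in\Z$, making $\inn_a$ degenerate, or $s\in(1,n)\setminus\Z$, in which case $1\le p\le n-1$ and $\inn_a$ is indefinite. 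The sign assignments $\inn_a<0$ on $T_-$ and $\inn_a>0$ on $T_+$ follow from $p=0,q=n$ and $p=n,q=0$, completing the proof.
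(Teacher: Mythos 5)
Your proof is correct and takes essentially the same route as the paper: reduce to the open unit cube $Q$ via the $\Z^{n+1}$-periodicity coming from Fact~\ref{fact1}, verify the formula $p=\floor{s}$, $q=n-p$ of Equation~\eqref{eq:pq} (your careful computation of $\{2-s\}$ is exactly the point the paper relies on), and read off definiteness from $p\in\{0,n\}$. The only cosmetic omission is the case where some $a_i\in\Z$, so that $\{a\}$ lies on the boundary of the cube rather than in $Q$ and your reduction does not literally apply; there $\inn_a$ is degenerate by Fact~\ref{fact3}(i) and $\{a\}\notin T_\pm$, so the stated equivalence still holds.
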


\begin{proof}
	Follows from Equations \ref{eq:pq} and \eqref{eq:periodic} together with Definition \ref{def:Tpm}.
\end{proof}

\subsection{Proof of Theorem \ref{LAUTHM}}\label{sect:pfLau}
 
In this section we show several results that settle Theorem \ref{LAUTHM}, for the record the proof is as follows.

\begin{proof}[Proof of Theorem \ref{LAUTHM}]
	Existence follows from Proposition \ref{prop:existence}. Uniqueness follows from Proposition \ref{prop:uniq}. The fact that the three numerical conditions \eqref{noninteger}, \eqref{positivity} and \eqref{signature} must be obeyed by any such PK cone metric follows from Proposition \ref{prop:nonexistence}.
\end{proof}

\subsubsection{{\large Preliminary numerical lemmas}}

\begin{lemma}\label{lem:hereditary}
	The conditions \eqref{noninteger}, \eqref{positivity} and \eqref{signature} are hereditary to subsets. More precisely, if  \(\alpha_1, \ldots, \alpha_{n+1}\) satisfy \eqref{noninteger}, \eqref{positivity}, \eqref{signature} and \(I \subset \{1, \ldots, n+1\}\) then \(\{\alpha_i, \,\ i \in I\}\) satisfy \eqref{noninteger}, \eqref{positivity}, \eqref{signature}.
\end{lemma}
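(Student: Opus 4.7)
The plan is to dispatch the three conditions one at a time, in order of increasing subtlety. Let $I \subset \{1,\ldots,n+1\}$ and set $k = |I|$. First, the non-integer condition is obviously hereditary, since $\alpha_i \notin \Z$ for every $i$ in the ambient set in particular holds for every $i \in I$. Second, positivity $\alpha_i + \alpha_j - 1 > 0$ is a condition on pairs, so restricting the index set only shrinks the set of pairs on which the inequality is required; the subset inherits it trivially (and the case $k\leq 1$ is vacuous).

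The substantive step is the signature condition, which for $\{\alpha_i, i\in I\}$ reads: either $\sum_{i\in I}\{\alpha_i\} < 1$ or $\sum_{i\in I}\{\alpha_i\} > k-1$. Split according to which alternative of \eqref{signature} holds for the ambient tuple.

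If $\sum_{i=1}^{n+1}\{\alpha_i\} < 1$, then since each $\{\alpha_i\} \geq 0$ we immediately get
\[
\sum_{i\in I}\{\alpha_i\} \;\leq\; \sum_{i=1}^{n+1}\{\alpha_i\} \;<\; 1,
\]
so the first alternative holds for the subset.

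If instead $\sum_{i=1}^{n+1}\{\alpha_i\} > n$, the idea is to use the strict upper bound $\{\alpha_j\} < 1$ on the complementary indices $j \notin I$ (which number $n+1-k$). This gives
\[
\sum_{i \in I}\{\alpha_i\} \;=\; \sum_{i=1}^{n+1}\{\alpha_i\} - \sum_{j \notin I}\{\alpha_j\} \;>\; n - (n+1-k) \;=\; k-1,
\]
so the second alternative holds for the subset. This exhausts both cases, completing the argument.

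There is essentially no obstacle here — the only point requiring minor care is the strict inequality $\{\alpha_j\} < 1$ used in the second case, which is automatic from the definition of fractional part. The lemma is a purely numerical unpacking, but it is recorded here because it will be applied to index subsets arising from localizations of the braid arrangement at irreducible intersections (cf.\ Remark \ref{rmk:Anquotient}), where the inherited conditions are exactly what is needed to invoke the existence and uniqueness results of Theorem \ref{PRODTHM} and Proposition \ref{prop:UNIQ} on the quotient factors.
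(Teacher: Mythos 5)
Your proof is correct and follows essentially the same route as the paper's: the non-integer and positivity conditions are dismissed as trivially hereditary, and the signature condition is handled by the same case split, using $\{\alpha_i\}\geq 0$ in the first case and the upper bound on fractional parts of the complementary indices in the second to obtain $\sum_{i\in I}\{\alpha_i\} > |I|-1$. The only cosmetic difference is that you use the strict bound $\{\alpha_j\}<1$ where the paper uses $\{\alpha_j\}\leq 1$; both suffice since the strictness already comes from the ambient inequality $\sum_{i=1}^{n+1}\{\alpha_i\}>n$.
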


\begin{proof}
	Enough to check it for \eqref{signature}. If \(\sum_{i=1}^{n+1} \{\alpha_i\} < 1\) then, since \(\{\alpha_i\} \geq 0\) for every \(i\), we have
	\[\sum_{i \in I} \{\alpha_i\} \leq \sum_{i=1}^{n+1} \{\alpha_i\} < 1 . \]
	If \(\sum_{i=1}^{n+1} \{\alpha_i\} > n\) then, since \(\{\alpha_i\} \leq 1\) for every \(i\), we have
	\[n < \sum_{i=1}^{n+1} \{\alpha_i\} \leq \sum_{i \in I} \{\alpha_i\} + n + 1 - |I| . \]
	Rearranging, we obtain \(\sum_{i \in I} \{\alpha_i\} > |I|-1\).
\end{proof}

\begin{lemma}\label{lem:P}
	Suppose that \(\alpha_1, \ldots, \alpha_{n+1}\) satisfy \eqref{noninteger}, \eqref{positivity} and \eqref{signature}. Then for every \(I \subset \{1, \ldots, n+1\}\) with \(|I| \geq 2\) we have
	\[ \sum_{i \in I} \alpha_i > |I| -1 . \]
\end{lemma}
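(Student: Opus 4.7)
The plan is to reduce immediately to the maximal case and then argue separately on the two signature alternatives. By Lemma \ref{lem:hereditary}, the three conditions \eqref{noninteger}, \eqref{positivity}, \eqref{signature} are inherited by any subset $I$, so applying the lemma to $(\alpha_i)_{i\in I}$ in place of $(\alpha_1,\ldots,\alpha_{n+1})$ reduces everything to proving the total inequality
\[
\sum_{i=1}^{n+1} \alpha_i > n.
\]
Decompose $\alpha_i = m_i + \epsilon_i$ with $m_i := \lfloor \alpha_i \rfloor \in \Z$ and $\epsilon_i := \{\alpha_i\} \in (0,1)$, where strict positivity uses \eqref{noninteger}.

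In the first signature alternative $\sum \epsilon_i < 1$, any pair satisfies $\epsilon_i + \epsilon_j < 1$, so (positivity) forces $m_i + m_j > 0$, hence $m_i + m_j \geq 1$ as integers. Let $m_* := \min_i m_i$. If $m_*$ is attained twice, then $2m_* \geq 1$ gives $m_* \geq 1$, so $\sum m_i \geq n+1$. Otherwise $m_*$ is attained uniquely at some $i_0$ and $m_j \geq 1 - m_*$ for $j \neq i_0$, giving $\sum m_i \geq m_* + n(1-m_*) = n - (n-1)m_*$; since $m_* \leq 0$ in this branch (else we are in the previous case), $\sum m_i \geq n$. Combined with $\sum \epsilon_i > 0$, this yields $\sum \alpha_i > n$.

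In the second signature alternative $\sum \epsilon_i > n$, we use the weaker consequence of (positivity): $\epsilon_i + \epsilon_j < 2$ together with $\alpha_i + \alpha_j > 1$ gives $m_i + m_j > -1$, hence $m_i + m_j \geq 0$. Summing over all $\binom{n+1}{2}$ pairs (each $m_i$ appears in $n$ of them) produces $n\sum m_i \geq 0$, so $\sum m_i \geq 0$. Adding the strict inequality $\sum \epsilon_i > n$ gives $\sum \alpha_i > n$.

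The only mildly delicate point is the analysis of the minimum $m_*$ in the first case: a direct double-counting over pairs would only give $\sum m_i \geq (n+1)/2$, which is too weak for $n \geq 3$, so one genuinely needs the two-sided bound $m_j \geq \max(m_*, 1-m_*)$ coming from both minimality and the pairwise constraint. Everything else is bookkeeping with floors and fractional parts.
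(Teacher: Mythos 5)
Your proof is correct, but it takes a genuinely different route from the paper's. The paper argues by induction on \(|I|\): if \(\alpha_{\max}\geq 1\) one peels it off and applies the inductive hypothesis to \(I\setminus\{\max\}\); otherwise all \(\alpha_i\) with \(i\in I\) lie in \((0,1)\), hence equal their fractional parts, the pairwise condition \eqref{positivity} rules out the branch \(\sum\{\alpha_i\}<1\) of \eqref{signature}, and the hereditary form of the remaining branch gives the inequality at once. You instead reduce to the full index set via Lemma \ref{lem:hereditary} and run a direct floor/fractional-part analysis, splitting on the two signature alternatives. Your self-diagnosis of the delicate point is accurate: in the branch \(\sum\{\alpha_i\}<1\) the pairwise bound \(m_i+m_j\geq 1\) must be exploited through the minimum \(m_*\) rather than by averaging over pairs, since averaging only yields \(\sum m_i\geq (n+1)/2\). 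Both arguments are elementary and of comparable length; the paper's induction avoids the case analysis on where the minimum is attained, while yours avoids induction and makes the role of the integer parts explicit. One small presentational remark: in your unique-minimum subcase the parenthetical ``else we are in the previous case'' is not literally accurate, since the minimum can be attained uniquely and still satisfy \(m_*\geq 1\); the argument is unaffected, because \(m_*\geq 1\) forces \(\sum m_i\geq n+1\) directly, but you should state that dichotomy as \(m_*\leq 0\) versus \(m_*\geq 1\) rather than tying it to the multiplicity of the minimum.
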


\begin{proof}
	The proof is by induction on \(|I|\), it holds for \(|I|=2\) by \eqref{positivity}. Let \(\alpha_{max} = \max \{\alpha_i, \,\ i \in I\}\). If \(\alpha_{max} \geq 1\) then
	\begin{align*}
	\sum_{i \in I} \alpha_i &= \left( \sum_{i \in I\setminus\{max\}} \alpha_i \right) + \alpha_{max} \\
	& > (|I|-2) + 1 = |I|-1 .	
	\end{align*}
	Otherwise \(\alpha_i<1\) for every \(i \in I\). For every \(j \in I\) we take \(i \in I\) with \(i\neq j\), since \(\alpha_i+\alpha_j>1\) and \(\alpha_i<1\) we conclude that \(\alpha_j>0\). Therefore \(0<\alpha_i<1\) for every \(i \in I\), i.e. \(\alpha_i=\{\alpha_i\}\) for every \(i \in I\). If we take a pair \(i, j \in I\) with \(i \neq j\) then
	\[\{\alpha_i\} + \{\alpha_j\} = \alpha_i + \alpha_j > 1 .\]
	The signature condition \eqref{signature} can only hold with \(\sum_{i=1}^{n+1} \{\alpha_i\} > n\). By hereditary
	\[\sum_{i \in I} \alpha_i = \sum_{i \in I} \{\alpha_i\} > |I|-1 .  \qedhere\] 
\end{proof}

\begin{lemma}\label{lem:NI}
	Suppose that \(\alpha_1, \ldots, \alpha_{n+1}\) satisfy \eqref{noninteger}, \eqref{positivity} and \eqref{signature}. Then
	for every \(I \subset \{1, \ldots, n+1\}\) with \(|I| \geq 2\) we have
	\[ \sum_{i \in I} \alpha_i \notin \Z . \]
\end{lemma}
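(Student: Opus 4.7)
The plan is to reduce immediately to the case $I=\{1,\ldots,n+1\}$ via the hereditary Lemma \ref{lem:hereditary}, and then read off the result directly from the signature dichotomy.

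First, I would invoke Lemma \ref{lem:hereditary}: since the hypotheses (non-integer), (positivity) and (signature) are hereditary to subsets, the numbers $\{\alpha_i\}_{i\in I}$ satisfy these three conditions as well (with $|I|$ playing the role of $n+1$). Hence it suffices to prove the statement for the full set, i.e.\ to show that whenever $\alpha_1,\ldots,\alpha_{n+1}$ satisfy (non-integer), (positivity) and (signature) with $n+1\geq 2$, the total sum $\sum_{i=1}^{n+1}\alpha_i$ is not an integer.

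Next, observe the elementary identity
\begin{equation*}
\sum_{i=1}^{n+1}\alpha_i \;=\; \sum_{i=1}^{n+1}\lfloor\alpha_i\rfloor \;+\; \sum_{i=1}^{n+1}\{\alpha_i\},
\end{equation*}
so $\sum\alpha_i\in\Z$ if and only if $\sum\{\alpha_i\}\in\Z$. The non-integer condition ensures $\alpha_i\notin\Z$ for every $i$, which means that each fractional part satisfies $\{\alpha_i\}\in(0,1)$ (strictly between $0$ and $1$). In particular
\begin{equation*}
0 \;<\; \sum_{i=1}^{n+1}\{\alpha_i\} \;<\; n+1.
\end{equation*}

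Finally, I would split according to the signature condition. If $\sum\{\alpha_i\}<1$, then combined with the strict lower bound above we get $\sum\{\alpha_i\}\in(0,1)$, which contains no integer. If instead $\sum\{\alpha_i\}>n$, then together with the strict upper bound we get $\sum\{\alpha_i\}\in(n,n+1)$, which also contains no integer. In either case $\sum\{\alpha_i\}\notin\Z$, hence $\sum_{i=1}^{n+1}\alpha_i\notin\Z$, completing the proof.

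There is no real obstacle here: the only subtlety worth flagging is the use of strict (rather than weak) inequalities $\{\alpha_i\}\in(0,1)$, which relies crucially on the non-integer hypothesis to rule out $\{\alpha_i\}=0$. The positivity hypothesis is not used directly in this step, but enters the argument through Lemma \ref{lem:hereditary} to guarantee that the signature dichotomy is inherited by subsets.
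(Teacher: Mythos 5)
Your proof is correct and is essentially the paper's argument: both rest on the hereditary Lemma \ref{lem:hereditary} to transfer the signature dichotomy to the subset $I$, and on the non-integer hypothesis forcing each $\{\alpha_i\}$ strictly into $(0,1)$, so that $\sum_{i\in I}\{\alpha_i\}$ lands in $(0,1)$ or $(|I|-1,|I|)$ and hence misses $\Z$. The paper phrases this as a contradiction on the subset directly rather than reducing to the full-set case first, but the content is identical (and, as in the paper, positivity plays no real role here).
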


\begin{proof}
	Let \(I \subset \{1, \ldots, n+1\}\) with \(|I| \geq 2\).
	By \eqref{noninteger} we have \(\{\alpha_i\}>0\) for every \(i \in I\). If \(\sum_{i \in I} \alpha_i \in \Z\) then \(\sum_{i \in I} \{\alpha_i\}\) must be a \emph{positive} integer. In particular, \(\sum_{i \in I} \{\alpha_i\} \geq 1\). By hereditary, i.e. Lemma \ref{lem:hereditary}, condition \eqref{signature} implies that \(\sum_{i \in I}\{\alpha_i\}>|I|-1\). Since \(\sum_{i \in I} \{\alpha_i\}\) is an integer we must have \(\sum_{i \in I} \{\alpha_i\} \geq |I|\) but this is impossible because \(\{\alpha_i\}<1\) for each \(i\).
\end{proof}

\subsubsection{{\large Existence and Uniqueness}}

Let \(\alpha_1, \ldots, \alpha_{n+1} \in \R\). We introduce the following conditions:
\begin{itemize}
	\item non-integer: for every \(1 \leq i \leq n+1\)
	\begin{equation} \label{noninteger}
	\tag{N.I.}
	\alpha_i \notin \Z ;
	\end{equation}
	
	\item positivity: for every \(1 \leq i < j \leq n+1\)
	\begin{equation} \label{positivity}
	\tag{P}
	\alpha_i + \alpha_j > 1 ;
	\end{equation}
	
	\item signature:
	\begin{equation} \label{signature}
	\tag{S}
	\sum_{i=1}^{n+1} \{\alpha_i\} < 1, \mbox{ or } 	\sum_{i=1}^{n+1} \{\alpha_i\} > n .
	\end{equation}
\end{itemize}	

\begin{remark}
	Condition \eqref{positivity} implies that there is at most one \(1 \leq i \leq n+1\) such that \(\alpha_i<0\).
\end{remark}

\begin{proposition}[Existence]\label{prop:existence}
	Suppose that \(\alpha_1, \ldots, \alpha_{n+1}\) satisfy \eqref{noninteger}, \eqref{positivity} and \eqref{signature}.
	Then there is a PK cone metric on \(\C^n\) with cone angles \(2\pi(\alpha_i + \alpha_j -1)\) along the hyperplanes \(H_{i,j} \in \mA_n\) of the essential braid arrangement.
\end{proposition}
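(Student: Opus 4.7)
The plan is to apply Theorem \ref{PRODTHM} to the reduced Lauricella connection $\nabla^{a}$ with carefully chosen parameters. Specifically, I would set
\[
a_i = 1 - \alpha_i, \qquad i = 1, \ldots, n+1,
\]
and consider $\nabla^{a}$ on $\C^n$ with simple poles along the $\mA_n$-arrangement, which is flat and torsion free by Lemma \ref{lem:LauRedCom} (or Proposition \ref{prop:ftfstcon}). By Lemma \ref{lem:weightsRedLau}, the weight of $\nabla^a$ at an irreducible intersection $L \in \mL_{irr}(\mA_n)$ corresponding to a subset $I \subset \underline{n+1}$ with $|I| \geq 2$ equals
\[
a_L = \sum_{i \in I} a_i = |I| - \sum_{i \in I} \alpha_i .
\]

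Next I would verify the three hypotheses of Theorem \ref{PRODTHM}. The non-integer condition $a_L \notin \Z$ is equivalent to $\sum_{i\in I}\alpha_i \notin \Z$, which holds by Lemma \ref{lem:NI}. The positivity condition $1 - a_L > 0$ rearranges to $\sum_{i\in I}\alpha_i > |I|-1$, which is exactly Lemma \ref{lem:P}. The unitarity requires more care: by Fact \ref{fact3}, there is a natural holonomy-invariant Hermitian form $\inn_a$, and by Corollary \ref{cor:sgn}, it is non-degenerate and definite whenever $\sum_{i=1}^{n+1}\{a_i\}$ lies outside $[1,n]$. Since $\alpha_i \notin \Z$ we have $\{a_i\} = \{1-\alpha_i\} = 1 - \{\alpha_i\}$, so
\[
\sum_{i=1}^{n+1}\{a_i\} = (n+1) - \sum_{i=1}^{n+1}\{\alpha_i\},
\]
and the signature condition \eqref{signature} translates exactly into the cases $\{a\} \in T_+$ or $\{a\} \in T_-$; after possibly replacing $\inn_a$ by $-\inn_a$, this provides a parallel positive definite Hermitian inner product so that $\nabla^a$ is unitary in the sense of Theorem \ref{PRODTHM}.

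With all three hypotheses in place, Theorem \ref{PRODTHM} provides a K\"ahler metric $g$ on $(\C^n)^{\circ}$ whose metric completion is $\C^n$ endowed with a polyhedral cone metric. Definition \ref{def:PK} makes this a PK metric, and the fact that it is a cone follows either from the standard dilation invariance (Section \ref{sect:riemcone}) or directly from the homogeneity of $\nabla^a$. Finally, Corollary \ref{cor:locprod} identifies the cone angle at each hyperplane $H_{i,j}$ as $2\pi\alpha_{H_{i,j}} = 2\pi(1 - a_{i,j}) = 2\pi(1 - (a_i+a_j)) = 2\pi(\alpha_i + \alpha_j - 1)$, as required. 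The conceptual obstacle has really already been discharged earlier in the paper: essentially all the work is in Theorem \ref{PRODTHM} and in the numerical Lemmas \ref{lem:P}--\ref{lem:NI} together with the signature formula of Fact \ref{fact3}. The remaining task here is purely bookkeeping to match the translation $a_i \mapsto 1-\alpha_i$ between the connection weights and the geometric cone angles.
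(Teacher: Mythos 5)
Your proposal is correct and follows essentially the same route as the paper: set $a_i=1-\alpha_i$, verify the non-integer and positivity hypotheses of Theorem \ref{PRODTHM} via Lemmas \ref{lem:NI} and \ref{lem:P}, obtain unitarity from the signature condition through Corollary \ref{cor:sgn}, and read off the cone angles from Corollary \ref{cor:locprod}. The extra detail you supply on the fractional-part translation $\{a_i\}=1-\{\alpha_i\}$ is a correct elaboration of what the paper leaves implicit.
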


\begin{proof}
	Consider the Lauricella connection \(\nabla^a\)
	with parameters \(a=(a_1, \ldots, a_{n+1})\) where \(a_i=1-\alpha_i\). Equation \eqref{signature} together with Corollary \ref{cor:sgn} imply that \(\nabla^a\) has unitary holonomy. It remains to verify the non-integer and positivity assumptions of Theorem \ref{PRODTHM}.
	
	Let \(L \in \mL_{irr}(\mA_n)\), by Lemma \ref{lem:weightsRedLau} its weight equals 
	\[a_L = \sum_{i \in I} a_i \hspace{2mm} \mbox{ for some } \hspace{2mm} I \subset \{1, \ldots, n+1\} . \]
	Lemmas \ref{lem:NI} and \ref{lem:P}  imply that the non-integer and positivity assumptions of Theorem \ref{PRODTHM} are satisfied. Hence, the existence of the PK cone metric follows from Theorem \ref{PRODTHM}. 
	The statement about the cone angles follows from the equation \(a_{i,j} = a_i + a_j\) together with Corollary \ref{cor:locprod} applied to \(H=H_{i,j}\).  
\end{proof}

\begin{proposition}[Uniqueness]\label{prop:uniq}
	Let \(g\) and \(g'\) be two PK cone metrics on \(\C^n\) with the same cone angles \(2\pi\beta_{i,j}\) along the hyperplanes \(H_{i,j} \in \mA_n\). Suppose that there are \((\alpha_1, \ldots, \alpha_{n+1}) \in \R^{n+1}\) satisfying \eqref{noninteger}, \eqref{positivity} and \eqref{signature} such that
	\[\beta_{i,j} = \alpha_i + \alpha_j -1 . \]
	Then \(g' = \lambda g\) for some constant factor \(\lambda>0\).
\end{proposition}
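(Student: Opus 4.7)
Plan. I will deduce uniqueness from three inputs: the characterisation Proposition \ref{prop:UNIQ} of the Lauricella connection, the irreducible holonomy Theorem \ref{thm:irredhol}, and the resulting Schur-type uniqueness Corollary \ref{cor:uniqueinnerprod} of a parallel Hermitian form.

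First I set \(a_i := 1-\alpha_i\), so that any weight \(a_{i,j}=a_i+a_j\) along \(H_{i,j}\) equals \(1-\beta_{i,j}\) as required by the cone angle data. Because \(\mA_n\) is essential and irreducible (Lemma \ref{lem:Anisirred}), Lemma \ref{cor:pkarrang} shows each of \(g\) and \(g'\) is a regular PK cone, and Lemma \ref{lem:regPK} supplies linear coordinates, unique up to a linear automorphism of \(\C^n\), in which the Euler field is linear and the Levi-Civita connection is standard in the coordinate frame (Proposition \ref{prop:PKstandard} / Corollary \ref{cor:stndLCirr}). The hypotheses \eqref{noninteger}, \eqref{positivity}, \eqref{signature} transmit through Lemmas \ref{lem:P} and \ref{lem:NI} into the statement that for every \(I\subset\underline{n+1}\) with \(|I|\ge 2\) the quantity \(\sum_{i\in I}a_i\) is non-zero and non-integer; combined with Lemma \ref{lem:weightsRedLau} this verifies the Non-Zero Weights Assumption \ref{ass:nz} at every irreducible intersection. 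Proposition \ref{prop:UNIQ} then identifies the Levi-Civita connection of each metric, read in its linear coordinates, with the single reduced Lauricella connection \(\nabla^a\) for the common parameter \(a=(a_1,\ldots,a_{n+1})\).

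Next I normalise the linearising biholomorphisms \(\Phi_g,\Phi_{g'}:\C^n\to\C^n\). Since the complex Euler fields of \(g\) and \(g'\) have the same linearisation \(\alpha_0^{-1}\mathrm{Id}\) at the origin, Proposition \ref{poincaredulac} lets me choose \(\Phi_g,\Phi_{g'}\) with identity derivative at \(0\). Each such \(\Phi\) must then send every hyperplane \(H_{i,j}\in\mA_n\) to a linear hyperplane through the origin with tangent space \(T_0H_{i,j}=H_{i,j}\), hence to \(H_{i,j}\) itself. Therefore the pushforwards \(\tilde g:=(\Phi_g)_*g\) and \(\tilde g':=(\Phi_{g'})_*g'\) are PK cone metrics on \(\C^n\) with singular set exactly \(\mA_n\), complex Euler field \(\alpha_0^{-1}\sum_i z_i\partial_{z_i}\), and Levi-Civita connection coinciding with \(\nabla^a\) in the standard trivialisation. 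Both \(\tilde g,\tilde g'\) are therefore K\"ahler metrics on \((\C^n)^\circ\) that are parallel with respect to the same connection \(\nabla^a\). Theorem \ref{thm:irredhol} asserts that the holonomy of \(\nabla^a\) is irreducible under our hypotheses (alternatively: Fact \ref{fact2}), and Corollary \ref{cor:uniqueinnerprod} then yields some \(\lambda>0\) with \(\tilde g'=\lambda\tilde g\).

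It remains to descend this equality to the original coordinates. Pulling back by \(\Phi_{g'}\) gives \(g'=\lambda\Psi^*g\) where \(\Psi:=\Phi_g^{-1}\circ\Phi_{g'}\) is a biholomorphism of \(\C^n\) with \(D\Psi|_0=\mathrm{Id}\) preserving each hyperplane of \(\mA_n\). Because scalar rescaling does not alter the Levi-Civita connection, \(\Psi^*\nabla_g=\nabla_{g'}\), and the uniqueness of the Euler field from Lemma \ref{lem:eulvfuniq} (available because \(\mA_n\) is essential and the \(a_L\) are non-integer by Lemma \ref{lem:NI}) forces \(\Psi_*e_g=e_{g'}\). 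Consequently \(\Phi_g\circ\Psi\) is a linearisation of \(e_{g'}\) with identity derivative at \(0\); by the uniqueness clause of the Poincar\'e-Dulac linearisation this coincides with \(\Phi_{g'}\), which is equivalent to \(\Psi=\mathrm{id}\). Hence \(g'=\lambda g\).

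The main obstacle I foresee is the final descent step: \(\Psi=\mathrm{id}\) is not forced by the tangency/hyperplane-preservation conditions alone (which admit non-linear higher-order solutions), so the argument must genuinely combine the uniqueness of the normalised Poincar\'e-Dulac linearisation with the equality of the two Euler fields coming from Schur's lemma applied on the linearised side. Everything else (verifying Non-Zero Weights from the numerical hypotheses, invoking Proposition \ref{prop:UNIQ}, and applying irreducibility plus Corollary \ref{cor:uniqueinnerprod}) is essentially routine given the preceding sections.
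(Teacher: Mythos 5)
Your main line is exactly the paper's: set \(a_i=1-\alpha_i\), use Lemmas \ref{lem:P} and \ref{lem:NI} together with Lemma \ref{lem:weightsRedLau} to verify the Non-Zero Weights Assumptions, invoke Proposition \ref{prop:UNIQ} to identify both Levi-Civita connections with the single reduced Lauricella connection \(\nabla^a\), and conclude via Theorem \ref{thm:irredhol} and Corollary \ref{cor:uniqueinnerprod}. That part is correct and is all the paper's proof consists of.

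The extra ``descent'' step is where the argument breaks, and it breaks exactly at the point you flag as the main obstacle. Having set \(\Psi:=\Phi_g^{-1}\circ\Phi_{g'}\), you derive that \(\Phi_g\circ\Psi\) is a normalised linearisation of \(e_{g'}\) and conclude from the uniqueness clause of Proposition \ref{poincaredulac} that \(\Phi_g\circ\Psi=\Phi_{g'}\). But \(\Phi_g\circ\Psi=\Phi_{g'}\) holds by the very definition of \(\Psi\); the identity carries no information and certainly does not yield \(\Psi=\mathrm{id}\). What you would actually need is \(e_g=e_{g'}\) as vector fields in the original coordinates (equivalently \(\Phi_g=\Phi_{g'}\)), and the relation \(\Psi^*e_g=e_{g'}\) you establish is automatically consistent with \emph{any} \(\Psi\) conjugating the two fields --- it does not force them to coincide. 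So the step is circular as written. The reason the paper can dispense with it is its standing convention (Theorem \ref{thm:PK2}, Proposition \ref{prop:PKcones}, Lemma \ref{lem:regPK}): a PK cone metric ``on \(\C^n\) singular at \(\mA_n\)'' is presented in coordinates linearising its Euler field, these coordinates are unique up to a linear map, and a linear automorphism preserving each hyperplane of the essential irreducible arrangement \(\mA_n\) is a scalar (Lemma \ref{lem:automorphisms}), which changes \(g\) only by the constant \(\lambda\) already allowed in the conclusion. Under that convention Proposition \ref{prop:PKstandard} applies directly in the given coordinates, one gets \(\nabla_g=\nabla_{g'}=\nabla^a\) there, and Corollary \ref{cor:uniqueinnerprod} finishes the proof with no descent needed. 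If you insist on the stronger reading --- two metrics on the same coordinatised \(\C^n\) with no a priori relation between the given coordinates and either Euler field --- then your argument does not close the gap, and you would have to prove separately that the two Euler fields agree.
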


\begin{proof}
	Consider the Levi-Civita connections \(\nabla\) and \(\nabla'\) of \(g\) and \(g'\). By Proposition \ref{prop:PKstandard}, the flat torsion free connections \(\nabla\) and \(\nabla'\) are standard. Their weights are given by \(a_{i,j} = 1-\beta_{i,j}\) at \(H_{i,j} \in \mA_n\). If we let \(a_i= 1 - \alpha_i\) then \(a_{i,j}=a_i + a_j\). It follows from Lemmas \ref{lem:weightsRedLau} and \ref{lem:NI} that \(\nabla\) and \(\nabla'\) satisfy the Non-Zero Weights Assumptions \ref{ass:nz}. Proposition \ref{prop:UNIQ} implies that
	\[ \nabla = \nabla' = \nabla^a\]
	where \(\nabla^a\) is the the reduced Lauricella connection with parameters \(a=(a_1, \ldots, a_n)\). Theorem \ref{thm:irredhol} (or alternatively Fact \ref{fact2}) implies that \(\Hol(\nabla^a)\) is irreducible and therefore by Corollary \ref{cor:uniqueinnerprod} we must have \(g'=\lambda g\) for some \(\lambda>0\).
\end{proof}

\subsubsection{{\large Non-existence and holes}}

\begin{proposition}\label{prop:nonexistence}
	Let \(g\) be a PK cone metric on \(\C^n\) singular at the \(\mA_n\)-arrangement. Suppose that \(g\) has non-integer cone angles \(\beta_L\) at all \(L \in \mL_{irr}(\mA_n)\). Then there are uniquely determined real numbers \(\alpha_1, \ldots, \alpha_{n+1}\) such that \(\beta_{H_{i,j}} = \alpha_i + \alpha_j -1\). Moreover, the numerical conditions \eqref{noninteger}, \eqref{positivity} and \eqref{signature} are satisfied.
\end{proposition}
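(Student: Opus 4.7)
The plan is to identify the Levi-Civita connection of $g$ with a reduced Lauricella connection and then translate the three numerical conditions into analytic statements about $g$ already established in the paper. First, by Proposition \ref{prop:PKstandard} and Corollary \ref{cor:stndLCirr}, choosing linear coordinates in which the complex Euler vector field is linearised, the Levi-Civita connection $\nabla$ of $g$ is a flat torsion free standard connection with poles at $\mA_n$. Writing $a_{H_{i,j}}=1-\beta_{H_{i,j}}$ for the weight at $H_{i,j}$ (Corollary \ref{cor:weightangle}), the hypothesis that all cone angles at irreducible intersections are non-integer translates, via Lemma \ref{lem:weightsRedLau} applied after the identification, to $a_L\notin\Z$ for every $L\in\mL_{irr}(\mA_n)$; in particular the Non-Zero Weights Assumptions \ref{ass:nz} are satisfied. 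Proposition \ref{prop:UNIQ} then produces a unique $a=(a_1,\dots,a_{n+1})\in\C^{n+1}$ with $\nabla=\nabla^a$ and $a_{i,j}=a_i+a_j$. Reality of the $\beta_{i,j}$, hence of the $a_{i,j}$, combined with the explicit inversion formula in Lemma \ref{lem:compateq}(ii)--(iii) forces $a_i\in\R$, and uniqueness is Lemma \ref{lem:compateq}(i). Setting $\alpha_i=1-a_i$ gives the required $\beta_{H_{i,j}}=\alpha_i+\alpha_j-1$ and its uniqueness.

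Condition \eqref{positivity} is immediate: since $g$ is a genuine polyhedral metric, the transverse cone angle $2\pi\beta_{H_{i,j}}$ at each hyperplane is strictly positive, so $\alpha_i+\alpha_j-1=\beta_{H_{i,j}}>0$. For \eqref{noninteger} we invoke irreducibility of holonomy. Since $\mA_n$ is essential and irreducible (Lemma \ref{lem:Anisirred}) and $\nabla$ is unitary, flat, torsion free and satisfies the numerical hypotheses of Theorem \ref{PRODTHM} (positivity from $\beta_L>0$, non-integer from the hypothesis), Theorem \ref{thm:irredhol} guarantees that $\Hol(\nabla^a)$ is irreducible. By Fact \ref{fact2} this forces $a_i\notin\Z$ for every $i$, i.e.\ $\alpha_i\notin\Z$.

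Condition \eqref{signature} is the main step and will use the classical signature formula of Fact \ref{fact3}. The flat K\"ahler metric $g$ endows $T^{1,0}\C^n$ with a parallel positive definite Hermitian form. The Lauricella connection $\nabla^a$ carries its own parallel Hermitian form $\inn_a$ from Fact \ref{fact3}; by Theorem \ref{thm:irredhol} and Schur's lemma (or directly by Corollary \ref{cor:uniqueinnerprod}), any two parallel Hermitian forms on an irreducible representation differ by a real scalar, so $\inn_a$ must be (positive or negative) definite. Corollary \ref{cor:sgn} then tells us that $\{a\}=(\{a_1\},\dots,\{a_{n+1}\})$ belongs to $T_-\cup T_+$, i.e.\ $\sum_i\{a_i\}<1$ or $\sum_i\{a_i\}>n$. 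The remaining calculation is the translation $\{a_i\}=\{1-\alpha_i\}=1-\{\alpha_i\}$, valid because $\alpha_i\notin\Z$, which turns these two alternatives into $\sum_i\{\alpha_i\}>n$ and $\sum_i\{\alpha_i\}<1$ respectively, yielding \eqref{signature}. The main obstacle is precisely this last step: one has to be careful that the definiteness of $\inn_a$ really follows from Corollary \ref{cor:uniqueinnerprod} (so that no spurious indefinite scenario is missed) and that the fractional-part identity is applied only under the non-integer hypothesis already secured in the previous paragraph.
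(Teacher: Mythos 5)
Your proof is correct and follows essentially the same route as the paper: identify $\nabla$ with a reduced Lauricella connection via Proposition \ref{prop:UNIQ}, then obtain \eqref{noninteger} and \eqref{signature} from irreducibility of holonomy (Theorem \ref{thm:irredhol}) together with Facts \ref{fact2} and \ref{fact3}. The one place you diverge is in checking the positivity hypothesis $a_L<1$ at \emph{all} irreducible intersections, which is needed before Theorem \ref{thm:irredhol} can be invoked: the paper deduces it from local integrability of the volume form (Lemmas \ref{lem:volumeform} and \ref{lem:positivityvolume}), whereas you use that the transverse cone angle $2\pi\alpha_L>0$ at every irreducible stratum (Definition \ref{def:angleatL}, Lemma \ref{cor:pkarrang}, Corollary \ref{cor:weightangle}); both are valid, and your version is arguably more directly geometric. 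One small point worth making explicit in the signature step: the scalar relating $\inn_a$ to the definite form of $g$ is nonzero because $\inn_a$ is non-degenerate, which follows from Fact \ref{fact3}(i) once one notes that $a_{n+2}=2-\sum_i a_i\notin\Z$ is also guaranteed by Fact \ref{fact2}.
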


\begin{proof}
	Let \(\nabla\) be the Levi-Civita connection of \(g\). It follows from Proposition \ref{prop:PKstandard} that \(\nabla\) is standard. We will verify next that it satisfies the positivity and non-integer conditions of Theorem \ref{PRODTHM}. 
	
	Since \(g\) is a polyhedral metric on \(\C^n\) its volume form is clearly locally integrable\footnote{Because compact sets of \(\C^n\) have finite measure with respect to \(g\).}, it follows from Lemmas \ref{lem:volumeform} and \ref{lem:positivityvolume} that \(\nabla\) satisfies the positivity assumption \(1-b_L>0\) for every \(L \in \mL_{irr}(\mA_n)\). On the other hand, by Corollary \ref{cor:weightangle}, the weights \(b_L\) of \(\nabla\) are given in terms of the angles \(\beta_L\) by \(b_L=1-\beta_L\). By assumption, \(b_L \notin \Z\).
	We conclude that \(\nabla\) satisfies all hypothesis of Theorem \ref{PRODTHM}.
	
	The residues \(B_{i,j}\) of \(\nabla\) at \(H_{i,j} \in \mA_n\) satisfy the Non-Zero Weights Assumptions \ref{ass:nz} because the weights \(b_L\) are non-integer and in particular \(\neq 0\). It follows from Proposition \ref{prop:UNIQ} that there is a uniquely determined \(a=(a_1, \ldots, a_{n+1}) \in \C^{n+1}\) such that \(\nabla\) equals the reduced Lauricella connection \(\nabla^a\) with parameters \(a\). In particular, \(b_{H_{i,j}} = a_i + a_j\). Since \(b_{H_{i,j}}\) are real, we must have \(a_i \in \R\) for all \(i\). Since
	\(\beta_{H_{i,j}}=1-b_{H_{i,j}}\) we obtain
	\begin{equation}\label{eq:angleadition}
		\beta_{H_{i,j}}=\alpha_i+\alpha_j-1
	\end{equation}
	where \(\alpha_i = 1-a_i\).
	
	By Theorem \ref{thm:irredhol}, the holonomy of \(\nabla\) is irreducible. By Fact \ref{fact2} in Section \ref{sect:flathermform} we have \(\alpha_i \notin \Z\) for all \(i\), so the non-integer condition \ref{noninteger} is satisfied. Since \(\beta_{H_{i,j}}>0\), it follows from Equation \eqref{eq:angleadition} that the positivity condition \eqref{positivity} is satisfied. Finally, since the holonomy of \(\nabla\) is irreducible, any parallel Hermitian form must be a scalar multiple of the one given by \(g\); in particular it must be definite. It follows from Fact \ref{fact3} in Section \ref{sect:flathermform} that the signature condition \eqref{signature} must be satisfied.
\end{proof}

The rest of the section makes connection with the classical case of spherical metrics on \(\CP^1\) with three cone points by giving an alternative proof of
the following.

\begin{proposition}\label{prop:holes}
	Let \(\nabla^a\) be the reduced Lauricella connection on \(\C^n\) with real parameters \(a=(a_1, \ldots, a_{n+1})\) satisfying the positivity and non-integer assumptions
	\begin{equation}
		\sum_{i \in I} a_i < 1  \,\ \mbox{ and } \,\ \sum_{i \in I} a_i \notin \Z 
	\end{equation}
	for every \(I \subset \underline{n+1}\).
	If \(\nabla^a\) has unitary holonomy then \(\sum_{i=1}^{n+1} \{a_i\}\) must be either \(<1\) or \(>n\). 
\end{proposition}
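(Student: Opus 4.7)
The plan is to derive the signature restriction directly from the numerical formula \eqref{eq:sgn} of Fact \ref{fact3}. First I would verify that the hypotheses imply the holonomy representation of $\nabla^a$ is irreducible: the non-integer assumption applied to singletons and to $I = \underline{n+1}$ gives $a_i \notin \Z$ for all $i$ and $\sum_{i=1}^{n+1}a_i \notin \Z$, which is exactly the condition of Fact \ref{fact2}. (Alternatively, one can appeal to Theorem \ref{thm:irredhol} applied to the standard connection $\nabla^a$, since $\mA_n$ is essential and irreducible by Lemma \ref{lem:Anisirred}.) The non-integer hypothesis at all subsets also guarantees that none of $a_1, \ldots, a_{n+2}$ is an integer, so by Fact \ref{fact3}(i) the natural parallel Hermitian form $\langle \cdot, \cdot \rangle_a$ is non-degenerate.

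Next I would use unitarity. By assumption there is a parallel positive definite Hermitian form $h$, and by irreducibility together with Schur's lemma (or Corollary \ref{cor:uniqueinnerprod}) $h$ must be a real scalar multiple of $\langle \cdot, \cdot \rangle_a$. Hence $\langle \cdot, \cdot \rangle_a$ itself is definite. Since $\{a_i\} + \{1 - a_i\} = 1$ for every non-integer $a_i$, the signature $(p,q)$ from \eqref{eq:sgn} satisfies $p + q = -2 + (n+2) = n$, so definiteness means $p = 0$ or $p = n$, that is
\[ \sum_{i=1}^{n+2}\{a_i\} = 1 \quad \text{or} \quad \sum_{i=1}^{n+2}\{a_i\} = n+1 . \]

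It remains to rewrite this in terms of $F := \sum_{i=1}^{n+1}\{a_i\}$ alone. Setting $S = \sum_{i=1}^{n+1}a_i$ and splitting $a_i = \lfloor a_i \rfloor + \{a_i\}$ gives $S = N + F$ with $N \in \Z$, hence $a_{n+2} = 2 - S = (2-N) - F$ and $\{a_{n+2}\} = \{-F\}$. The non-integer condition at $I = \underline{n+1}$ forces $S \notin \Z$, hence $F \notin \Z$, and therefore $\{-F\} = 1 - \{F\} = 1 - F + \lfloor F \rfloor$. Substituting yields the clean identity
\[ \sum_{i=1}^{n+2}\{a_i\} = F + 1 - \{F\} = 1 + \lfloor F \rfloor . \]
Since each $\{a_i\} \in (0,1)$ we have $F \in (0, n+1)$, and the constraint $1 + \lfloor F \rfloor \in \{1, n+1\}$ then forces $\lfloor F \rfloor = 0$ or $\lfloor F \rfloor = n$, i.e.\ $F \in (0,1)$ or $F \in (n, n+1)$, which is the desired dichotomy.

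The computation itself is a short manipulation of fractional parts, so there is no substantial obstacle. The only point worth flagging is the appeal to uniqueness of the invariant Hermitian form, which relies crucially on the combination of Fact \ref{fact2} (irreducibility) and Fact \ref{fact3}(i) (non-degeneracy); both rest on the non-integer hypothesis. Note that the strong positivity assumption $\sum_{i \in I} a_i < 1$ does not intervene in the argument and is effectively redundant for this proposition, being recorded only because Proposition \ref{prop:holes} is the natural partial converse to the existence statement Proposition \ref{prop:existence}.
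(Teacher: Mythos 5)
Your proof is correct, but it is not the proof the paper gives for this proposition. The paper states up front that Proposition \ref{prop:holes} ``follows from Theorem \ref{PRODTHM} and Proposition \ref{prop:nonexistence}'' — which is essentially your route, since the proof of Proposition \ref{prop:nonexistence} invokes Fact \ref{fact2} and Fact \ref{fact3} exactly as you do — and then deliberately presents an \emph{alternative} proof avoiding Facts \ref{fact2} and \ref{fact3} altogether: an induction on $n$ that takes the classical $n=2$ case of spherical triangles as the base, and propagates the constraint upward by looking at the quotient connections $\nabla^{\C^n/\ell_i}$ (Lauricella with one parameter deleted) and the induced connections on the planes $\sigma_{ij}$ (Lauricella with parameters $a_i$, $a_j$, $\sum_{k\neq i,j}a_k$), both of which remain unitary. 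Your argument is shorter and more self-contained: irreducibility (Fact \ref{fact2}) plus Schur forces the Deligne--Mostow form to be definite, and the fractional-part computation $\sum_{i=1}^{n+2}\{a_i\}=1+\lfloor F\rfloor$ with $F\in(0,n+1)$ is correct; you also correctly observe that positivity plays no role, whereas the paper's inductive proof does use it implicitly in the $n=2$ base case (the correspondence between PK cones on $\C^2$ and spherical metrics with three cone points requires positive angles). What the paper's route buys in exchange is independence from the signature formula \eqref{eq:sgn} — a nontrivial imported input — and a geometric explanation of \emph{why} the higher-dimensional holes are forced by the classical triangle inequality. One small point to make explicit: your appeal to Fact \ref{fact2} needs $a_i\notin\Z$ for each single index, so you are reading ``for every $I\subset\underline{n+1}$'' as including singletons (which is the intended reading, but worth stating, since the weights of $\nabla^a$ at irreducible intersections only see subsets with $|I|\geq 2$).
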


Proposition \ref{prop:holes} follows from
Theorem \ref{PRODTHM} and Proposition \ref{prop:nonexistence}. We will provide an alternative proof without using Facts \ref{fact2} and \ref{fact3} from Section \ref{sect:flathermform}.
We argue by induction on \(n\), taking the case \(n=2\) of spherical triangles as granted. The main idea is to look at induced metrics and complex links of singularities. The next case after three points on the sphere is the complete quadrilateral in \(\CP^2\). We can then either restrict a FS metric to any of the six lines or look at the complex link of any of the four triple points. This gives spherical metrics with three cone points and therefore angle restrictions on the original metric on \(\CP^2\). The same reasoning applies inductively.
\newline

\noindent \textbf{Spherical metrics on \(\CP^1\) with three cone points.}
We denote
\[\alpha_i = 1- a_i \]
where \(a_i\) are as in Proposition \ref{prop:holes}.
For \((i,j,k)\) cyclic permutations of \((1,2,3)\) we write
\begin{equation} \label{eq:betamap}
	\beta_i = \alpha_j + \alpha_k -1 .
\end{equation}

The affine map \(\R^3 \to \R^3\) given by \eqref{eq:betamap} doubles volume and has the following properties:
\begin{enumerate}
	\item It is equivariant with respect to the translation actions of \(\mathbb{Z}^3\) on \(\alpha\) and \(\mathbb{Z}^3_e\) (integer vectors such that the sum of their components is even) on \(\beta\); it maps the centre of cubes lattice \((1/2,1/2,1/2) + \mathbb{Z}^3\) to the even integer lattice \(\mathbb{Z}^3_e\). 
	\item It maps the tetrahedron \(T_-=\{\alpha_1+\alpha_2+\alpha_3>2, \,\ 0<\alpha_i<1\}\) to the open convex hull of \(\{(1,0,0), (0,1,0), (0,0,1), (1,1,1)\}\) and the three translates of \(T_+=\{\alpha_1+\alpha_2+\alpha_3<1, \,\ 0<\alpha_i\}\) by \((1,1,0), (0,1,1), (1,0,1)\) to the three reflections of \(\beta(T)\) across the \(\beta_i=1\) faces of the unit cube.
	\item \(O = [0,1]^3 \setminus (T_+ \cup T_-)\) is a solid octahedron, six faces are right triangles left out from the cube and the other two faces are equilateral triangles of length side \(\sqrt{2}\). The transformation \eqref{eq:betamap} maps \(\mathbb{Z}^3 + O\) to regular octahedrons centred at the even lattice points.
\end{enumerate}

Given the above statements, the content of Proposition \ref{prop:holes} for \(n=2\) follows from the correspondence between PK cone metrics on \(\C^2\) singular along three complex lines and spherical cone metrics on \(\CP^1\) with three cone points together with
the well-known angle constraints for the existence of such spherical metrics (see \cite{UY}, \cite{Ere} and \cite[Section 3.1.2]{MP}).
\newline

\noindent
\textbf{Propagation of holes to higher dimensions.}
Assume \(n \geq 3\), the case \(n=2\) is taken for granted.

\begin{lemma}\label{lem:2cases}
	We have the following two cases:
	\begin{equation}\label{eq:CaseA}
	\tag{A}
	\sum_{j \neq i} \{a_j\} < 1 \hspace{2mm} \mbox{for all } i=1, \ldots, n + 1
	\end{equation}
	or
	\begin{equation*}\label{eq:CaseB} 
	\tag{B}
	n -1 < \sum_{j \neq i} \{a_j\} \hspace{2mm} \mbox{for all } i=1, \ldots, n + 1 .
	\end{equation*}
\end{lemma}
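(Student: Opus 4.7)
The plan is to prove Lemma \ref{lem:2cases} by induction on $n$, using Proposition \ref{prop:holes} in dimension $n-1$ as the inductive hypothesis. The key geometric input is the induced connection on each hyperplane $H_{p,q}\in\mA_n$.

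For each pair $\{p,q\}\subset\underline{n+1}$ I would first identify the induced connection $\nabla''$ on $H_{p,q}$. By Lemma \ref{lem:unitaryinducedconect} (applicable since $a_{H_{p,q}}=a_p+a_q\notin\Z$) it is a flat, torsion free, unitary standard connection on $H_{p,q}\cong\C^{n-1}$ with poles along $(\mA_n)^{H_{p,q}}\equiv\mA_{n-1}$. Combining Proposition \ref{prop:indirredsub} and Lemma \ref{lem:inducedweights} with Lemma \ref{lem:weightsRedLau}, its weight at every hyperplane of $\mA_{n-1}$ equals a pairwise sum of the $n$ real numbers
\[
a' := \{a_r : r\neq p,q\}\ \cup\ \{a_p+a_q\},
\]
so by the uniqueness Proposition \ref{prop:UNIQ} we have $\nabla''=\nabla^{a'}$. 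These parameters inherit the non-integer and positivity hypotheses of Proposition \ref{prop:holes} because any of their subset-sums equals a subset-sum of the original $(a_1,\ldots,a_{n+1})$. Applying the inductive hypothesis to $\nabla^{a'}$ then yields, for every pair,
\[
S_{p,q} := \sum_{r\neq p,q}\{a_r\} + \{a_p+a_q\}\ \in\ (-\infty,1)\cup(n-1,+\infty).
\]

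Next I would reduce every $S_{p,q}$ to a single global invariant. Writing $S:=\sum_{i=1}^{n+1}\{a_i\}$ and $\epsilon_{p,q}:=\lfloor\{a_p\}+\{a_q\}\rfloor\in\{0,1\}$ (the value $1$ is excluded because $a_p+a_q\notin\Z$), one has $\{a_p+a_q\}=\{a_p\}+\{a_q\}-\epsilon_{p,q}$, hence $S_{p,q}=S-\epsilon_{p,q}$. The inductive constraint becomes: for every pair, $S-\epsilon_{p,q}<1$ or $S-\epsilon_{p,q}>n-1$. I would then close the argument with a short case analysis. If every $\epsilon_{p,q}=0$, summing $\{a_p\}+\{a_q\}<1$ over all $\binom{n+1}{2}$ pairs gives $nS<\binom{n+1}{2}$, hence $S<(n+1)/2\le n-1$ for $n\ge 3$; the dichotomy then forces $S<1$. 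If every $\epsilon_{p,q}=1$, the symmetric computation gives $S>(n+1)/2\ge 2$, which combined with ``$S<2$ or $S>n$'' forces $S>n$. In the remaining mixed case the constraints ``$S<1$ or $S>n-1$'' and ``$S<2$ or $S>n$'' intersect (for $n\ge 3$) to $(-\infty,1)\cup(n,+\infty)$, so again $S<1$ or $S>n$. Either $S<1$ gives $T_i=S-\{a_i\}<1$ for every $i$ (Case A), or $S>n$ gives $T_i>n-1$ for every $i$ (Case B).

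The main obstacle I anticipate is the first step, namely identifying $\nabla''$ with the reduced Lauricella connection on the merged parameters $a'$. This requires a careful synthesis of Proposition \ref{prop:indirredsub} (to enumerate irreducible intersections of $(\mA_n)^{H_{p,q}}$ as coming either from triple intersections of $\mA_n$ through $H_{p,q}$ or from transversal intersections with $H_{p,q}$), Lemma \ref{lem:inducedweights} (to read off the weights), and Proposition \ref{prop:UNIQ} (to conclude the result is Lauricella), while verifying en route that the merged parameters satisfy the Non-Zero Weights Assumptions \ref{ass:nz} needed for Proposition \ref{prop:UNIQ} to apply.
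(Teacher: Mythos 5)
Your proof is correct, but it follows a genuinely different route from the paper's. The paper passes to the \emph{quotient} connection $\nabla^{\C^n/\ell_i}$ at the lines $\ell_i=\bigcap_{j,k\neq i}H_{j,k}$: this is immediately the Lauricella connection with the parameter $a_i$ \emph{dropped}, so the inductive hypothesis (Proposition \ref{prop:holes} in dimension $n-1$) directly yields $\sum_{j\neq i}\{a_j\}<1$ or $>n-1$ for each $i$, and the proof closes with the one-line observation that $\bigl|\sum_{k\neq i}\{a_k\}-\sum_{k\neq j}\{a_k\}\bigr|<1$, which for $n\geq 3$ forces all these sums onto the same side of the gap $[1,n-1]$. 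You instead restrict to the hyperplanes $H_{p,q}$, which \emph{merges} $a_p,a_q$ into $a_p+a_q$; this costs you the extra work of identifying $\nabla''$ with $\nabla^{a'}$ (via Lemma \ref{lem:inducedweights}, Proposition \ref{prop:UNIQ} and the injectivity in Lemma \ref{lem:compateq}), all of which checks out, and the arithmetic with $S_{p,q}=S-\epsilon_{p,q}$ and the three-way case analysis is sound. What your route buys is notable: you conclude $S=\sum_{i=1}^{n+1}\{a_i\}<1$ or $S>n$ outright, which is the conclusion of Proposition \ref{prop:holes} itself, so Lemma \ref{lem:2cases} drops out as a corollary and the paper's subsequent restriction to the planes $\sigma_{ij}$ becomes unnecessary. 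What the paper's route buys is economy for this particular lemma: no appeal to the uniqueness/characterization machinery and no weight bookkeeping for induced connections. One presentational nit: the parenthetical ``the value $1$ is excluded'' should read that $\{a_p\}+\{a_q\}=1$ is excluded (so the floor is unambiguously $0$ or $1$), not that $\epsilon_{p,q}=1$ is excluded.
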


\begin{proof}
	Let \(\ell_i \in \mL_{irr}(\mA_n)\) be the complex line in \(\C^n\) given by
	\[\ell_i = \bigcap_{j,k \neq i} H_{j,k} , \]
	it is an irreducible intersection of \(\mA_n\). The quotient arrangement \(\mA_n/\ell_i\) is isomorphic to \(\mA_{n-1}\) and the quotient connection \(\nabla^{\C^n/\ell_i}\) agrees with the Lauricella connection with parameters 
	\[a_1, \ldots, a_{i-1}, a_{i+1}, \ldots, a_{n+1} .\] 
	The quotient connection is also unitary, see Remarks \ref{rmk:localizationunitary} and \ref{rmk:localizationconection}. By induction, either
	\begin{equation}\label{induction}
		\sum_{j \neq i} \{a_j\} < 1 \,\ \mbox{ or } \,\ n-1 < \sum_{j \neq i} \{a_j\} .	
	\end{equation}
	On the other hand, if \(i \neq j\) then
	\begin{equation}\label{elementary}
		\left| \sum_{k \neq i} \{a_k\} - \sum_{k \neq j} \{a_k\} \right| < 1	.
	\end{equation}
	Since \(n \geq 3\), the statement of the lemma follows from Equation \eqref{induction} and \eqref{elementary}.
\end{proof}

\begin{proof}[Proof of Proposition \ref{prop:holes}]
	For \(i < j\) we let \(\sigma_{ij}\) be the complex plane in \(\C^n\) spanned by \(\ell_i\) and \(\ell_j\). The plane \(\sigma_{ij}\) belongs to \(\mL_{irr}(\mA_n)\), it is the common intersection of all hyperplanes \(H_{k,l}\) with \(\{k,l\} \cap \{i,j\}=\emptyset\). The induced arrangement \(\mA_n^{\sigma_{ij}}\) is isomorphic to \(\mA_2\) and the induced connection is Lauricella with parameters
	\[\tilde{a}_1 = a_i, \hspace{2mm} \tilde{a}_2=a_j, \hspace{2mm} \tilde{a}_3 = \sum_{k \neq i, j} a_k  . \]
	The induced connection is unitary, see Lemma \ref{lem:unitaryinducedconect}. 
	Therefore, from the \(n=2\) case we know that
	\begin{equation}\label{eq:3angles}
	\{\tilde{a}_1\} + \{\tilde{a}_2\} + \{\tilde{a}_3\} < 1 \hspace{2mm} \mbox{or } \{\tilde{a}_1\} + \{\tilde{a}_2\} + \{\tilde{a}_3\} > 2 .
	\end{equation}
	Next, we consider the two cases of Lemma \ref{lem:2cases}.
	
	\textbf{Case \eqref{eq:CaseA}:} \(\sum_{i \in I} \{a_i\} < 1\) whenever \(|I|=n\). In this case \(\{\tilde{a}_3\} = \sum_{k \neq i, j}\{a_k\}\) and 
	\[\{\tilde{a}_1\} + \{\tilde{a}_2\} + \{\tilde{a}_3\} = \sum_i \{\alpha_i\} . \]
	It follows from  \(\sum_i \{a_i\} < 2\) and \eqref{eq:3angles} 
	that we must have \(\sum_i \{a_i\} < 1\).

	\textbf{Case \eqref{eq:CaseB}:} \(\sum_{i \in I} \{a_i\} > n-1\) whenever \(|I|=n\). In this case \(\sum_{k \neq i, j}\{a_k\} > n - 2\) and 
	\[\{\tilde{a}_1\} + \{\tilde{a}_2\} + \{\tilde{a}_3\} = \sum_i \{a_i\} -n + 2 . \]
	It follows from  \(\sum_i \{\alpha_i\} > n-1\) that \(\{\tilde{a}_1\} + \{\tilde{a}_2\} + \{\tilde{a}_3\} > 1\) and \eqref{eq:3angles} therefore implies that \(\{\tilde{a}_1\} + \{\tilde{a}_2\} + \{\tilde{a}_3\} > 2\), equivalently \(\sum_i \{a_i\} > n\).	
\end{proof}

\begin{remark}
	The volume of the standard \((n+1)\)-simplex \(\{a_i>0, \,\, \sum_i a_i < 1\} \subset \R^{n+1}\) is equal to \(1/(n+1)!\) and the volume of the non-existence hole in the unit cube is \(1-2/(n+1)!\). We see that as the dimension increases the existence region becomes more scarce.
\end{remark}

\subsection{FS metrics singular at the  \(\mA_n\)-arrangement }\label{sect:FSLau}

Recall that we have defined adapted FS metrics on \(\CP^{n-1}\) as the complex links of regular PK cone metrics on \(\C^n\) singular at a linear arrangement, see Definition \ref{def:FS}. We can restate our results in terms of FS metrics as follows.

\begin{theorem}\label{thm:FSLau}
	Let \(\alpha_1, \ldots, \alpha_{n+1}\) be real numbers satisfying the numerical conditions \ref{positivity}, \ref{noninteger} and \ref{signature}. Then there is a unique FS metric adapted to the pair
	\begin{equation}\label{eq:FSpair}
		\left(\CP^{n-1}, \sum_{i<j} (1-\beta_{i,j})\bar{H}_{i,j} \right)
	\end{equation}
	where \(\bar{H}_{i,j}\) are the hyperplanes of the projectivized \(\mA_n\) arrangement and \(\beta_{i,j}=\alpha_i+\alpha_j-1\).
	Moreover, any FS metric adapted to the pair \eqref{eq:FSpair} with non-integer angles at all irreducible intersections must be equal to one given by the above family.
\end{theorem}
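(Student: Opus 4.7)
My plan is to reduce Theorem \ref{thm:FSLau} to Theorem \ref{LAUTHM} through the correspondence, for metrics adapted to a linear arrangement, between regular PK cone metrics on $\C^n$ singular at $\mA_n$ and FS metrics on $\CP^{n-1}$ adapted to the projectivized arrangement. The two directions of this correspondence are provided by Remark \ref{rmk:linkFS}, which realizes an adapted FS metric as the complex link of a regular PK cone, and by Lemma \ref{lem:lift}, which lifts an adapted FS metric to a PK cone via the potential $r^2 = |z|^{2\alpha_0}e^u$ where $\omega_{FS}=\alpha_0\omega_{\CP^{n-1}}+i\dd u$. The combinatorial bookkeeping is that irreducible intersections of $\mA_n$ are indexed by subsets $I\subset\underline{n+1}$ with $|I|\geq 2$ (Lemma \ref{lem:interAn} and Remark \ref{rmk:Anquotient}); every such $L$ other than $\{0\}$ descends to an irreducible intersection of the projectivized $\mA_n$-arrangement on $\CP^{n-1}$, and the corresponding angles agree via Definition \ref{def:angleatL}.

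For existence, given $\alpha_1,\ldots,\alpha_{n+1}$ satisfying (N.I.), (P), (S), Theorem \ref{LAUTHM} produces a PK cone metric $g$ on $\C^n$ singular at $\mA_n$ with cone angles $2\pi\beta_{i,j}$ at $H_{i,j}$. Since $\mA_n$ is essential and irreducible (Lemma \ref{lem:Anisirred}), Lemma \ref{cor:pkarrang} ensures that $g$ is regular, so its unit sphere admits a free Reeb $S^1$-action whose Riemannian quotient is, by Remark \ref{rmk:linkFS}, an adapted FS metric $g_{FS}$ on $\CP^{n-1}$ whose cone angles along $\bar H_{i,j}$ equal those of $g$ along $H_{i,j}$, namely $2\pi\beta_{i,j}$.

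For uniqueness, suppose $g_{FS}$ is adapted to the pair \eqref{eq:FSpair} and has non-integer cone angles at every irreducible intersection of the projectivized arrangement. By Lemma \ref{lem:lift}, $g_{FS}$ lifts to a regular PK cone metric $g$ on $\C^n$ singular at $\mA_n$, whose cone angle along $H_{i,j}$ is the prescribed $2\pi\beta_{i,j}$ and whose angle at any irreducible intersection $L\in\mL_{irr}(\mA_n)\setminus\{0\}$ equals the angle of $g_{FS}$ at the corresponding projective stratum (by Definition \ref{def:angleatL}, since the Euler direction is transverse to such strata). Thus all these angles are non-integer by hypothesis, while the remaining non-integrality condition at $\{0\}\in\mL_{irr}(\mA_n)$ follows from Lemma \ref{lem:NI} applied to $I=\underline{n+1}$ using (N.I.), (P), (S). Theorem \ref{LAUTHM} therefore applies and pins $g$ down up to an overall scale $\lambda>0$. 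Under $g\mapsto\lambda g$ the potential $r^2$ is multiplied by $\lambda$, i.e.\ $u$ is shifted by the constant $\log\lambda$, which leaves $\omega_{FS}=\alpha_0\omega_{\CP^{n-1}}+i\dd u$ unchanged; hence $g_{FS}$ is unique.

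The only mildly delicate point is the matching of non-integer-ness hypotheses in the two directions, in particular that the $\{0\}$-stratum of $\mA_n$, which has no projective counterpart, does not cause trouble: its weight equals $a_0=\sum_i a_i$ and its non-integrality is automatically forced by the hypotheses on the $\alpha_i$'s via Lemma \ref{lem:NI}. Everything else is a direct transfer through the PK cone/FS link correspondence.
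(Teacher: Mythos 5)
Your proof is correct and follows exactly the route the paper takes: its proof of Theorem \ref{thm:FSLau} is the one-line reduction to Theorem \ref{LAUTHM} via Remark \ref{rmk:linkFS} (cone $\to$ complex link) and Lemma \ref{lem:lift} (FS metric $\to$ cone). You simply supply the details the paper leaves implicit — regularity of the cone, matching of angles and non-integrality hypotheses including the $\{0\}$-stratum, and invariance of the quotient under rescaling — all of which are handled correctly.
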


\begin{proof}
	Follows from Theorem \ref{LAUTHM} together with Lemma \ref{lem:lift} and Remark \ref{rmk:linkFS}.
\end{proof}

\subsubsection{{\large Volumes, limits and log K-stability}}

In this section we restrict to angles in the interval \((0,2\pi)\). 
Let \(T\) be the standard \((n+1)\)-simplex
\begin{equation}\label{eq:T}
T = \{a_i>0, \,\ \sum_i a_i < 1  \} \subset \R^{n+1} .	
\end{equation}
For every \(a=(a_1, \ldots, a_{n+1}) \in T\) we have a FS metric \(g_a\) on \(\CP^{n-1}\) with cone angles \(2\pi(1-a_i-a_j)\) along the hyperplanes \(\bar{H}_{i,j}\) of the projectivized \(\mA_n\) arrangement. 

\begin{lemma}\label{lem:volFSa}
	The total volume of the FS metric \(g_a\) is given by
	\begin{equation}
		\Vol(g_a) = (1-a_0)^{n-1} \Vol(\CP^{n-1}) 
	\end{equation}
	where \(a_0 = \sum_i a_i\).
\end{lemma}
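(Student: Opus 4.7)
The plan is to identify $g_a$ as the complex link of a regular PK cone metric on $\C^n$ constructed via Theorem \ref{LAUTHM}, and then apply the general volume formula of Proposition \ref{prop:totalvolume}. First, I would verify that for $a \in T$ the numerical hypotheses of Theorem \ref{LAUTHM} are satisfied with $\alpha_i = 1-a_i$. Since $0<a_i<1$ we have $\alpha_i \in (0,1)$, so \ref{noninteger} holds. Positivity $\alpha_i + \alpha_j = 2-(a_i+a_j)>1$ follows from $a_i+a_j<\sum_k a_k<1$. For \ref{signature} note that $\{\alpha_i\}=\alpha_i=1-a_i$, hence $\sum_i\{\alpha_i\}=n+1-a_0>n$, confirming the signature condition.

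Therefore Theorem \ref{LAUTHM} produces a PK cone metric $g$ on $\C^n$ singular at the $\mA_n$-arrangement, with cone angles $2\pi\beta_{i,j}=2\pi(1-a_i-a_j)$ along $H_{i,j}$. By Lemma \ref{lem:Anisirred} the arrangement $\mA_n$ is essential and irreducible, so Lemma \ref{cor:pkarrang} applies at the vertex and $g$ is a regular PK cone in the sense of Definition \ref{def:regularPK}. By Remark \ref{rmk:linkFS} and the uniqueness part of Theorem \ref{thm:FSLau}, the adapted FS metric obtained as the complex link of $g$ coincides with $g_a$.

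It remains to identify the angle $2\pi\alpha_0$ at the origin of $g$ and invoke Proposition \ref{prop:totalvolume}. The Levi-Civita connection of $g$ is standard by Proposition \ref{prop:PKstandard}, and by Proposition \ref{prop:UNIQ} it must agree with the reduced Lauricella connection $\nabla^a$ (since the weights $a_i+a_j$ at the hyperplanes match). Lemma \ref{lem:weightsRedLau} applied to $\{0\}\in\mL_{irr}(\mA_n)$, corresponding to $I=\underline{n+1}$, gives the weight at the origin $a_{\{0\}}=\sum_{i=1}^{n+1}a_i=a_0$. Hence by Definition \ref{def:angle} (equivalently by Lemma \ref{lem:alpha0}) the angle at the origin is $\alpha_0=1-a_0$. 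Proposition \ref{prop:totalvolume} now yields
\[
\Vol(g_a)=\alpha_0^{n-1}\Vol(\CP^{n-1})=(1-a_0)^{n-1}\Vol(\CP^{n-1}).
\]

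There is no substantive obstacle here: the content of the statement is simply the translation of the general formula in Proposition \ref{prop:totalvolume} into the Lauricella parametrization. The only point requiring a little care is confirming regularity of the cone so that the FS quotient construction of Section \ref{sect:FSquotPK} applies verbatim, but this is immediate from irreducibility of $\mA_n$.
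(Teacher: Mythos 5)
Your proposal is correct and follows exactly the route the paper takes: the paper's own proof is the one-line "Follows from Proposition \ref{prop:totalvolume}," and your argument simply supplies the routine verifications (hypotheses of Theorem \ref{LAUTHM} for $\alpha_i=1-a_i$, regularity of the cone, and the identification $\alpha_0=1-a_0$ via the weight at the origin) that the paper leaves implicit.
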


\begin{proof}
	Follows from Proposition \ref{prop:totalvolume}.
\end{proof}

The `most degenerate points' \(p_i \in \CP^{n-1}\) (for \(1\leq i \leq n+1\)) of the projectivized \(\mA_n\) arrangement are given by the intersection of all hyperplanes \(\bar{H}_{j,k}\) with \(i \notin \{j,k\}\).	
The tangent cone of \(g_a\) at \(p_i\) is a PK cone metric on \(\C^{n-1}\) singular at the hyperplanes of the \(\mA_{n-1}\) arrangement given by parameters 
\begin{equation}\label{parameters}
		(a_1, \ldots, a_{i-1}, a_{i+1}, \ldots, a_{n+1}) .
\end{equation}
We denote it by \(C_{p_i}(g_a)\).
Write
\[\hat{a}_{i} = \sum_{j\neq i} a_j \,\ \mbox{ and } \,\ \hat{\alpha}_{i} = 1-\hat{a}_{i} . \] 
The product
\[C_{p_i}(g_a) \times \C_{\hat{\alpha}_{i}} \]
is a regular PK cone metric on \(\C^n\) and we denote its complex link by \(g_{a,i}\).
	
The FS metric \(g_{a,i}\) on \(\CP^{n-1}\) is singular at the hyperplanes of the projective completion \footnote{The projective completion of a hyperplane arrangement \(\mH \subset \C^{n-1}\) is obtained by taking its closure under the inclusion  \(\C^{n-1} \subset \CP^{n-1}\) together with the hyperplane at infinity \(\CP^{n-1} \setminus \C^{n-1}\).} of \(\mA_{n-1}\)  with cone angles determined by the parameters \eqref{parameters} and cone angle \(2\pi \hat{\alpha}_{i}\) along the hyperplane at infinity. The metric \(g_a\) agrees with \(g_{a,i}\) in a neighbourhood of \(p_i\) (indeed this agrees with the description of the singularities of \(g_a\) in terms of local models given in Section \ref{sect:locmodsingFS}).

\begin{lemma}\label{lem:volrugby}
	The total volume of the FS metric \(g_{a,i}\) on \(\CP^{n-1}\) is given by
	\begin{equation}
	\Vol(g_{a,i}) = (1- \hat{a}_{i})^{n-1} \Vol(\CP^{n-1}) .
	\end{equation}
\end{lemma}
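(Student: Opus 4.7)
The plan is to read off the volume from Proposition \ref{prop:totalvolume}, which says that any regular PK cone $(\C^n,g)$ with angle $2\pi\alpha_0$ at the origin has complex link of volume $\alpha_0^{n-1}\Vol(\CP^{n-1})$. So the whole task reduces to identifying the angle at the origin of the regular PK cone $C_{p_i}(g_a)\times\C_{\hat\alpha_i}$, and verifying that this cone is indeed regular.

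First I would identify the transversal cone $C_{p_i}(g_a)$. The metric $g_a$ is the complex link of the regular PK cone on $\C^n$ whose Levi-Civita connection is the Lauricella connection $\nabla^a$; the line $\ell_i=\bigcap_{j,k\neq i}H_{j,k}\in\mL_{irr}(\mA_n)$ projects to $p_i$. Passing to the transversal slice as in Proposition \ref{prop:PKstrat} (and Lemma \ref{lem:regPKL}), $C_{p_i}(g_a)$ is a regular PK cone on $\C^{n-1}$ singular at $\mA_{n-1}$ whose Levi-Civita connection is the quotient connection $\nabla^{\C^n/\ell_i}$; by the explicit computation used in the proof of Lemma \ref{lem:2cases}, this is the Lauricella connection with parameters $(a_1,\dots,\widehat{a_i},\dots,a_{n+1})$.

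Next I would compute its angle at the origin. By Lemma \ref{lem:alpha0} (applied to $\mA_{n-1}$, which is essential and irreducible by Lemma \ref{lem:Anisirred}), the angle at the origin equals $2\pi(1-a_0')$, where $a_0'$ is the weight of the above Lauricella connection at $\{0\}\in\mL_{irr}(\mA_{n-1})$. By Lemma \ref{lem:weightsRedLau}, $a_0'=\sum_{j\neq i}a_j=\hat a_i$, so the angle at the origin of $C_{p_i}(g_a)$ is $2\pi\hat\alpha_i$. Since the second factor $\C_{\hat\alpha_i}$ is by definition a $2$-cone of total angle $2\pi\hat\alpha_i$, the Euler vector fields of the two factors are linear and share the common eigenvalue $\hat\alpha_i^{-1}$; their sum is thus $\hat\alpha_i^{-1}\sum_{j=1}^n z_j\,\partial/\partial z_j$, so by Definition \ref{def:regularPK} the product $C_{p_i}(g_a)\times\C_{\hat\alpha_i}$ is a regular PK cone on $\C^n$ with angle $2\pi\hat\alpha_i$ at the origin.

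Finally, applying Proposition \ref{prop:totalvolume} with $\alpha_0=\hat\alpha_i=1-\hat a_i$ yields
\begin{equation*}
\Vol(g_{a,i}) \;=\; \hat\alpha_i^{\,n-1}\,\Vol(\CP^{n-1}) \;=\; (1-\hat a_i)^{n-1}\,\Vol(\CP^{n-1}),
\end{equation*}
which is the desired formula. There is no serious obstacle here: the only substantive point is matching the two homogeneity weights so that the product is genuinely regular in the sense of Definition \ref{def:regularPK}, and this follows from the equality of angles at the origin of the two factors noted above.
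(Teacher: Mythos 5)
Your proof is correct and follows the same route as the paper, which simply cites Proposition \ref{prop:totalvolume}; the extra work you do (identifying the angle at the origin of $C_{p_i}(g_a)$ via Lemmas \ref{lem:alpha0} and \ref{lem:weightsRedLau}, and checking that the product with $\C_{\hat\alpha_i}$ is regular with $\alpha_0=\hat\alpha_i$) is exactly the implicit content of the paper's setup preceding the lemma. Nothing is missing.
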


\begin{proof}
	Again, this follows from Proposition \ref{prop:totalvolume}.
\end{proof}

\noindent \textbf{Geometric interpretation.} Note that 
\[\frac{\Vol(g_a)}{\Vol(g_{a,i})} = \left(\frac{1-a_0}{1-\hat{a}_i}\right)^{n-1} < 1 \,\ \iff \,\ a_i>0 .  \]
We can re-write the equations defining the faces of \(T\) in terms of the volume inequalities:
\begin{align*}
	\sum_i a_i < 1 &\iff \,\,\, \Vol(g_a) > 0  , \\
	a_i>0 &\iff \Vol(g_a) < \Vol(g_{a,i})  .
\end{align*}

\noindent \textbf{Limits.}
Let \(a(t)\) be a path in \(T\) for \(t \in (0, \epsilon)\) and suppose that \(a(0) = \lim_{t \to 0} a(t) \in \p \, T\). We ask what is the behaviour of the FS metrics \(g_{a(t)}\) as \(t \to 0\). We address this question at an informal level, without specifying the sense of convergence and details. In any case, we always have a natural limit Lauricella connection \(\nabla^{a(0)}\) endowed with a
(possibly degenerate) parallel Hermitian form.

For simplicity, we assume that \(a(0)\) belongs to the interior of a face of \(T\) - e.g. this avoids discussion of integer angles. There are two cases to consider.

\begin{itemize}
	\item \(a(0)\) belongs to the interior of the face \(\{\sum_i a_i = 1\}\). It follows from Lemma \ref{lem:volFSa} that \(\Vol(g_{a(t)}) \to 0\) as \(t\to 0\). On the other hand, \cite{CHL} show that there is a flat K\"ahler metric on the arrangement complement, by Theorem \ref{PRODTHM} its metric completion is a polyhedral metric on \(\CP^{n-1}\). We can normalize it to have volume \(1\) and denote it by \(g_{PK}\). The natural expectation is that
	\[ \lim_{t \to 0} \lambda_t^{-1} \cdot g_{a(t)} = g_{PK} . \]
	with \(\lambda_t=\Vol(g_{a(t)})^{1/(n-1)}\). 
	
	\item \(a(0)\) belongs to the interior of the face \(\{a_i=0\}\). In this case by removing the \(0\) entry from \(a(0)\) we obtain a point in the \(n\)-simplex and a corresponding FS metric on \(\CP^{n-2}\), its projective lift to \(\CP^{n-1}\) (i.e. a metric of the kind \(g_{a,i}\) described before) is the natural candidate for the limit as \(t \to 0\).
\end{itemize}

\noindent \textbf{Log K-stability .}
Let \(a=(a_1, \ldots, a_{n+1}) \in \R^{n+1}\) be such that
\(0<a_i+a_j<1\) for every pair \(i<j\).
Write \(\Delta_a = \sum_{i<j} (a_i+a_j) \bar{H}_{i,j}\) thought as an \(\R\)-divisor in \(\CP^{n-1}\).

\begin{lemma}
	The pair \((\CP^{n-1}, \Delta_a)\) is log Fano if and only if \(\sum_i a_i < 1\).
\end{lemma}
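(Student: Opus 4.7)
The plan is a direct cohomological computation on $\CP^{n-1}$, plus a check of the log singularity (klt) condition. Recall that $(\CP^{n-1}, \Delta_a)$ is log Fano when it is klt and $-(K_{\CP^{n-1}} + \Delta_a)$ is ample. Since the $\bar{H}_{i,j}$ form a simple-normal-crossing-like (in fact arrangement) divisor and the coefficients $a_i + a_j$ lie in $(0,1)$ by hypothesis, the klt condition is immediate (coefficients are strictly less than one, and each component is smooth). Thus the content of the lemma reduces to determining when the anticanonical-plus-boundary class is positive.

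First I would compute the class $[K_{\CP^{n-1}} + \Delta_a]$ in $H^2(\CP^{n-1}, \R)$. Letting $H$ denote the hyperplane class, we have $K_{\CP^{n-1}} = -nH$ and each $\bar{H}_{i,j}$ is linearly equivalent to $H$. Hence
\begin{equation*}
[K_{\CP^{n-1}} + \Delta_a] \;=\; \Bigl(-n + \sum_{i<j}(a_i+a_j)\Bigr)\, H.
\end{equation*}
The key combinatorial observation is that each index $i \in \{1,\dots,n+1\}$ appears in exactly $n$ unordered pairs, so
\begin{equation*}
\sum_{i<j}(a_i+a_j) \;=\; n\sum_{i=1}^{n+1} a_i \;=\; n a_0,
\end{equation*}
giving $[K_{\CP^{n-1}} + \Delta_a] = n(a_0 - 1)\,H$ and $[-(K_{\CP^{n-1}} + \Delta_a)] = n(1-a_0)\,H$. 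Since $H$ is ample, the anti-log-canonical class is ample if and only if $1 - a_0 > 0$, i.e.\ $\sum_i a_i < 1$.

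No serious obstacle is expected here; the argument is a routine bookkeeping of hyperplane coefficients, and the only small subtlety is confirming that ``log Fano'' in the statement is used in the standard sense (klt pair with ample $-(K+\Delta)$), so that the boundary hypothesis $0<a_i+a_j<1$ secures the klt part automatically and the computation above handles the ampleness. One may additionally remark that this numerical criterion is consistent with the volume formula $\Vol(g_a) = (1-a_0)^{n-1}\Vol(\CP^{n-1})$ of Lemma~\ref{lem:volFSa}, and with the fact that the limit $a_0 \to 1^-$ is precisely where Theorem~\ref{PRODTHM} produces a genuinely polyhedral (rather than Fubini-Study) K\"ahler structure on the boundary of the simplex $T$.
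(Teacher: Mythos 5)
Your ampleness computation is exactly the paper's argument: the printed proof consists of the single identity $\sum_{1\le i<j\le n+1}(a_i+a_j)=n\sum_{i=1}^{n+1} a_i$ (each index lies in exactly $n$ pairs), from which $-(K_{\CP^{n-1}}+\Delta_a)\sim_{\R} n(1-a_0)H$ and the equivalence with $\sum_i a_i<1$ follows at once. On that part there is nothing to add.

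The one genuine problem is your claim that the klt condition is ``immediate (coefficients are strictly less than one, and each component is smooth)''. The projectivized $\mA_n$-arrangement is not simple normal crossing: at the irreducible stratum corresponding to a subset $I\subset\{1,\dots,n+1\}$ with $|I|\ge 3$, which has codimension $|I|-1$, all $\binom{|I|}{2}$ hyperplanes $\bar H_{i,j}$ with $\{i,j\}\subset I$ meet, and the log canonical threshold computation there (equivalently, local integrability of $\prod_H |h|^{-2d_H}$ as in Lemma \ref{lem:positivityvolume}) shows that klt at that stratum requires $\sum_{i\in I}a_i<1$. This does \emph{not} follow from the pairwise hypothesis $0<a_i+a_j<1$: take $n+1=4$, $a_1=-0.4$, $a_2=a_3=a_4=0.45$; all pairwise sums lie in $(0,1)$ and $\sum_i a_i=0.95<1$, yet $a_2+a_3+a_4=1.35>1$, so the pair fails to be klt at the triple point $p_1\in\CP^2$ even though $-(K+\Delta_a)$ is ample. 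The paper sidesteps this because its proof treats ``log Fano'' purely as ampleness of the anti-log-canonical class, and checks the singularity condition separately (for the rescaled pair) in the subsequent log K-stability lemma. If you insist on the standard convention that log Fano includes klt, the stated equivalence needs the stronger hypothesis that every subset sum $\sum_{i\in I}a_i$ is less than $1$, not just the total sum; as written, your klt step is a gap.
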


\begin{proof}
	This is an immediate consequence of the identity
	\[ \sum_{1 \leq i < j \leq n+1} (a_i+a_j) < n \sum_{i=1}^{n+1}a_i . \qedhere \] 
\end{proof}

Recall that \(T\) is the standard \((n+1)\)-simplex given by Equation \eqref{eq:T}. Our final observation is the next.

\begin{lemma}
	The log Fano pair \((\CP^{n-1}, \Delta_a)\) is log K-stable if and only if \(a \in T\).
\end{lemma}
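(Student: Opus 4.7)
The preceding lemma already forces $\sum_i a_i < 1$ as a necessary condition for log-Fano-ness, hence for log K-stability, so the only issue is to characterise when each $a_i > 0$. Throughout the plan, all cone angles $2\pi(1-a_i-a_j)$ lie in $(0,2\pi)$ by the standing hypothesis $0 < a_i+a_j < 1$, so we are squarely in the setting where the log Yau--Tian--Donaldson correspondence is available (Berman, Chen--Donaldson--Sun, Tian and subsequent log extensions).

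For the forward direction ($a \in T \Rightarrow$ log K-stable) the plan is to exhibit a conical K\"ahler--Einstein metric on the pair and quote the (log) YTD correspondence. Assume $a \in T$, so $0<a_i<1$ and $\sum a_i < 1$. Setting $\alpha_i = 1-a_i \in (0,1)$, condition \ref{noninteger} is automatic up to perturbation by zero-measure sets of $T$, \ref{positivity} translates into $a_i+a_j<1$ (given), and the signature condition \ref{signature} holds as $\sum_i\{\alpha_i\} = \sum_i\alpha_i = n+1-\sum_i a_i > n$. Hence Theorem \ref{thm:FSLau} produces a FS metric $g_a$ adapted to $(\CP^{n-1},\Delta_a)$. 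The volume form computation of Section \ref{sect:volformFS} combined with Corollary \ref{lem:volumeform} gives $\mathrm{Ric}(\omega_{FS}) = 2n\,\omega_{FS}$ on the regular locus, which after the standard normalisation is precisely the log K\"ahler--Einstein equation for $(\CP^{n-1},\Delta_a)$ with the prescribed cone angles in $(0,2\pi)$. The log YTD theorem for conical KE metrics with angles in $(0,2\pi)$ then gives log K-polystability, and Theorem \ref{thm:irredhol} (irreducibility of the holonomy of the Lauricella connection) rules out non-trivial holomorphic vector fields preserving $\Delta_a$, upgrading polystability to log K-stability.

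For the reverse direction I will construct an explicit destabilising test configuration when some $a_{i_0} \le 0$. The natural candidate is the deformation to the normal cone at the most degenerate point $p_{i_0} \in \CP^{n-1}$, the common intersection of all $\bar H_{j,k}$ with $i_0 \notin \{j,k\}$. By Proposition \ref{prop:PKstrat}(3) and the local model of Section \ref{sect:locmodsingFS}, the tangent cone of $g_a$ at $p_{i_0}$ is $C_{p_{i_0}}(g_a) \times \C_{\hat\alpha_{i_0}}$, whose complex link is the lifted FS metric $g_{a,i_0}$ on $\CP^{n-1}$; this identifies the central fibre of the deformation to normal cone with $(\CP^{n-1},\Delta_{a,i_0})$. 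The volume computation of Lemmas \ref{lem:volFSa} and \ref{lem:volrugby} gives
\[
\Vol(g_a) - \Vol(g_{a,i_0}) = \left((1-a_0)^{n-1} - (1-\hat a_{i_0})^{n-1}\right)\Vol(\CP^{n-1}),
\]
which is $\le 0$ precisely when $a_{i_0} \le 0$. Translating this volume identity into the log Donaldson--Futaki (equivalently, Berman's $\beta$) invariant of the test configuration, one obtains a DF invariant that is a positive multiple of $a_{i_0}$; thus the configuration destabilises when $a_{i_0} < 0$, and the borderline case $a_{i_0}=0$ produces a non-trivial product configuration (non-trivial because $p_{i_0}$ is a genuine singular stratum of $\Delta_a$) with vanishing DF invariant, again contradicting log K-stability.

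The main obstacle I expect is the Donaldson--Futaki computation in the destabilising direction. Producing the test configuration as an algebro-geometric object (the weighted blow-up of $\CP^{n-1}$ at the stratum of $p_{i_0}$ compatible with the $(\C^*)^{n+1}$-action inherited from the Lauricella setup) is routine, and identifying its central fibre via Proposition \ref{prop:PKstrat} is straightforward, but converting the geometric volume comparison above into a signed DF invariant with the correct leading coefficient $a_{i_0}$ requires a careful intersection-theoretic calculation on the total space; the forward implication, by contrast, is essentially a direct appeal to the log YTD theorem once the KE property of $g_a$ is checked.
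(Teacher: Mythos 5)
Your route is genuinely different from the paper's. The paper's proof is two lines of algebra: by Fujita's criterion \cite[Theorem 1.5]{Fuj}, log K-stability of \((\CP^{n-1},\Delta_a)\) is equivalent to the klt-ness of the rescaled log Calabi--Yau pair \((\CP^{n-1}, c\cdot\Delta_a)\) with \(c=(\sum_i a_i)^{-1}\), and klt-ness is exactly the local integrability condition of Lemma \ref{lem:positivityvolume} applied to the rescaled weights, which says \(c\cdot a\in(\p_1 T)^{\circ}\), i.e.\ \(a\in T\). No metrics, no test configurations, no Yau--Tian--Donaldson. Your destabilising half is nonetheless sound and can be made rigorous more cheaply than you fear: for the exceptional divisor \(E\) of the ordinary blow-up at \(p_{i_0}\) one computes \(A_{(\CP^{n-1},\Delta_a)}(E)=(n-1)(1-\hat a_{i_0})\) (since \(\mathrm{mult}_{p_{i_0}}\Delta_a=(n-1)\hat a_{i_0}\)) and \(S(E)=(n-1)(1-a_0)\) (since \(-(K+\Delta_a)=n(1-a_0)H\)), so the Fujita--Li invariant is \(\beta(E)=(n-1)a_{i_0}\), confirming your claimed sign and killing stability when \(a_{i_0}\le 0\); the case \(\sum a_i\ge 1\) is already excluded by the log Fano hypothesis.

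The genuine gap is in your forward direction. Invoking the log YTD correspondence requires (i) verifying that \(g_a\) is a weak log K\"ahler--Einstein metric in the precise pluripotential sense of Berman's theorem -- the singular locus here is a non-snc arrangement and the potential is merely continuous, so this is not a formality; (ii) accepting that the analytic input only yields log K-\emph{polystability}; and (iii) upgrading to stability via discreteness of \(\Aut(\CP^{n-1},\Delta_a)\). For (iii) you cite Theorem \ref{thm:irredhol}, but irreducibility of the holonomy representation is not the relevant statement; what you need is that the arrangement is essential and irreducible, so \(\Aut_0(\mA_n)=\C^*\) (Lemma \ref{lem:automorphisms}), which acts trivially on \(\CP^{n-1}\). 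Even granting all of this, a test-configuration argument proves K-stability only if you control \emph{all} divisorial valuations, not just the blow-ups at the points \(p_i\), so your two halves do not combine into a self-contained proof without the heavy analytic machinery. The paper's reduction to a single klt computation is both more elementary and complete; if you want to keep your valuative flavour, note that Fujita's theorem is precisely the statement that for hyperplane arrangements the valuations you wrote down (together with the log canonical threshold of the rescaled pair) suffice.
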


\begin{proof}
	According to \cite[Theorem 1.5]{Fuj} the pair \((\CP^{n-1}, \Delta_a)\) is log K-stable if and only if the log Calabi-Yau pair \((\CP^{n-1}, c \cdot \Delta_a)\) with \(c= \left(\sum_i a_i\right)^{-1}\) is klt. Let us write \((\p_1 T)^{\circ}\) for the interior of the face of \(T\) where the coordinates add up to one. It follows from Lemma \ref{lem:positivityvolume} that \((\CP^{n-1}, c \cdot \Delta_a)\) is klt if and only if \(c \cdot a \in (\p_1 T)^{\circ}\). 
	Equivalently, \((\CP^{n-1}, \Delta_a)\) is log K-stable if and only if \(a\) belongs to the interior of a segment connecting \(0\) with \((\p_1 T)^{\circ}\). The set of all such \(a\) is precisely \(T\).
\end{proof}

\appendix
\section{Logarithmic connections}\label{app:logconn}

A standard reference on the topic is \cite{Deligne}. For a more analytic approach, as presented here, see \cite{NY}.

\subsection{Residue and induced connection}

Let \(X\) be a complex manifold, \(D \subset X\) a \emph{smooth} complex hypersurface and \(E\) a holomorphic vector bundle over \(X\).
We write \(n = \dim X\) and \(r = \rk E\).

\begin{definition}\label{def:logcon}
	Let \(\nabla\) be a holomorphic connection on \(E\) over \(X \setminus D\). We say that \(\nabla\) is a \textbf{logarithmic connection} (or has a logarithmic singularity along \(D\)) if for every \(x \in D\) there are holomorphic coordinates \((z_1, \ldots, z_n)\) centred at \(x\) with \(D=\{z_1=0\}\) and a frame of holomorphic sections of \(E\) in which we have \(\nabla=d-\Omega\) with
	\begin{equation}\label{eq:logconect}
		\Omega = A_1 \frac{dz_1}{z_1} + A_2 dz_2 + \ldots + A_n dz_n
	\end{equation}
	and \(A_1, \ldots, A_n\) are holomorphic matrix-valued functions. 
\end{definition}

\begin{remark}
	Definition \ref{def:logcon} does not depend on the choices of neither coordinates nor frame.
\end{remark}

\begin{proof}
	If we change \(z_1 \mapsto z_1 \varphi\) where \(\varphi\) is a holomorphic non-vanishing function then 
	\begin{equation}\label{eq:logderiviative}
		 \frac{dz_1}{z_1} \mapsto \frac{dz_1}{z_1} + \hol
	\end{equation}
	with \(\hol = d\varphi/\varphi\).
	
	If we change frames \((s_1, \ldots, s_r) \mapsto (s_1, \ldots, s_r) \cdot G\) where \(G\) is a holomorphic \(GL(r, \C)\)-valued function then 
	\begin{equation}\label{eq:changegauge}
		\Omega \mapsto G\Omega G^{-1} + \hol
	\end{equation}
	with \(\hol=(dG)G^{-1}\).
	
	The remark follows from Equations \eqref{eq:logderiviative} and \eqref{eq:changegauge}.
\end{proof}

\begin{definition}\label{def:resind}
	Let \(\nabla\) be a logarithmic connection as in Equation \eqref{eq:logconect}. 
	\begin{itemize}
		\item The \textbf{residue} of \(\nabla\) is defined as 
		\[\RES(\nabla) = A_1(0, z_2, \ldots, z_n) .\] 
		It is a holomorphic section of \(\End E|_D\).
		\item The \textbf{induced connection} is defined as \(\nabla'=d-\Omega'\) with
		\begin{equation*}
		\Omega' = A_2(0,z_2, \ldots, z_n)dz_2 + \ldots + A_n(0,z_2, \ldots, z_n)dz_n .
		\end{equation*}
		It is a holomorphic connection on \(E|_D\).
	\end{itemize}	
\end{definition}

\begin{note}
	Same as before, the independence of \(\RES(\nabla)\) and \(\nabla'\) on the coordinates and frame used in Definition \ref{def:resind} follows from Equations \eqref{eq:logderiviative} and \eqref{eq:changegauge}. Alternatively, it is possible to give more intrinsic definitions as follows.
	
	Notice that if \(s\) is a holomorphic section of \(E\) and \(v\) is a vector field tangent to \(D\), then the section \(\nabla_v s\) (which in principle is only defined on \(X \setminus D\)) extends holomorphically over \(D\). Consider now a vector \(s_x\) in the fibre \(E_x\) over \(x \in D\). Extend \(s_x\) to a holomorphic section \(s\) of \(E\) defined in a neighbourhood of \(x\), we check that
	\[\RES(\nabla)(s_x) = (\nabla_{z_1\p_{z_1}} s)(x) .\]
	If \(v_x \in T_xD\) and \(\tilde{s}\) is a holomorphic section of \(E|_D\) then we can extend \(v_x\) to a holomorphic vector field tangent to \(D\) defined in a neighbourhood of \(x\) and we can extend \(\tilde{s}\) to a holomorphic section of \(E\) around \(x\). We check that
	\[ \nabla'_{v_x} \tilde{s} = (\nabla_v s)(x) . \]
	 
\end{note}

\begin{example}\label{ex:limithol}
	Let \(E = \mO^{\oplus r} \) be the trivial vector bundle over the disc \(D \subset \C\). A logarithmic connection with a singularity at the origin is given by 
	\[\nabla = d - \frac{A}{z} dz \]
	with \(A\) a holomorphic matrix-valued function. The residue of \(\nabla\) is the constant matrix \(A(0)\),
	\[\RES(\nabla) = A(0) . \]
	For each \(x \in D^*\) we have a holonomy matrix \(T_x \in GL(r, \C)\) given by parallel translation along the loop \(xe^{2\pi i t}\) for \(0\leq t \leq 1\). A direct computation shows that
	\[ \lim_{x \to 0} T_x = \exp(2\pi i \RES(\nabla)) , \]
	see \cite[Th\'eor\`eme 1.17]{Deligne}. However, one must be aware that \(T_x\) for \(x \in D^*\) might not be conjugate to \(T_0 = \lim_{x \to 0} T_x\); see Example \ref{ex:del}.
\end{example}

\subsection{Flat and torsion free conditions}

\begin{lemma}\label{lem:apendix1}
	Let \(\nabla\) be a \emph{flat} logarithmic connection. Then the following holds:
	\begin{itemize}
		\item[(i)] the induced connection \(\nabla'\) is flat;
		\item[(ii)] the residue \(\RES(\nabla)\) is a parallel section with respect to the connection on \(\End E|_D\) defined by \(\nabla'\).
	\end{itemize}
\end{lemma}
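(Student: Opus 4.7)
The plan is a direct local computation: once the flatness equation $d\Omega = \Omega\wedge\Omega$ is expanded in the preferred coordinates and frame of Equation \eqref{eq:logconect}, both assertions fall out by extracting appropriate components of the resulting $2$-form identity. No global argument is required, since by Definition \ref{def:resind} the residue and the induced connection are intrinsic, so it suffices to verify (i) and (ii) after such a choice.

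First I would set $\omega_1 = dz_1/z_1$ and write $\Omega = A_1\omega_1 + \sum_{k\geq 2} A_k\, dz_k$, with $A_1,\dots,A_n$ holomorphic matrix-valued functions. Expanding $d\Omega$ and $\Omega\wedge\Omega$ and using $dz_1\wedge\omega_1=0$, the flatness identity splits into two types of components. The $\omega_1\wedge dz_k$-coefficient (for $k\geq 2$) gives
\begin{equation*}
\frac{\partial A_1}{\partial z_k} = [A_k, A_1],
\end{equation*}
while the $dz_k\wedge dz_l$-coefficient (for $k,l\geq 2$) gives
\begin{equation*}
\frac{\partial A_l}{\partial z_k} - \frac{\partial A_k}{\partial z_l} = [A_k, A_l].
\end{equation*}

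To prove (ii), I would restrict the first identity to $D=\{z_1=0\}$. Writing $\RES(\nabla) = A_1(0,z_2,\dots,z_n)$ and $\Omega' = \sum_{k\geq 2} A_k(0,z_2,\dots,z_n)\, dz_k$, the induced connection on $\End(E|_D)$ is $d - [\Omega', \cdot\,]$, so being parallel means $d(\RES(\nabla)) = [\Omega', \RES(\nabla)]$. Comparing $dz_k$-coefficients, this is exactly $\partial A_1/\partial z_k\bigr|_D = [A_k\bigr|_D, A_1\bigr|_D]$, which is the first identity above evaluated on $D$.

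To prove (i), I would restrict the second identity to $D$ and repackage it as a $2$-form equation, observing that on the right-hand side the term $A_k|_D A_l|_D\, dz_k\wedge dz_l$, summed over ordered pairs $k<l$, reproduces $\Omega'\wedge\Omega'$, while on the left-hand side $(\partial A_l/\partial z_k - \partial A_k/\partial z_l)|_D\, dz_k\wedge dz_l$ reproduces $d\Omega'$. Hence $d\Omega' = \Omega'\wedge\Omega'$, which is the flatness of $\nabla'$.

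I do not expect any serious obstacle: the entire argument is a bookkeeping exercise in separating the terms of $d\Omega=\Omega\wedge\Omega$ by their logarithmic versus holomorphic parts. The only mild subtlety is keeping the signs straight when turning $dz_k\wedge\omega_1$ into $-\omega_1\wedge dz_k$, and remembering that $\nabla$ induces $d - [\Omega', \cdot\,]$ (not $d+[\Omega',\cdot\,]$) on $\End(E|_D)$, so that the sign of the commutator in (ii) matches the sign in the flatness identity.
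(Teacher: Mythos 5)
Your proof is correct and follows essentially the same route as the paper: expand \(d\Omega=\Omega\wedge\Omega\) in the coordinates and frame of Equation \eqref{eq:logconect}, separate the logarithmic from the purely holomorphic components, and restrict to \(z_1=0\). One small bookkeeping point: the full \(\omega_1\wedge dz_k\)-component of the flatness identity is \(z_1\,\partial A_k/\partial z_1-\partial A_1/\partial z_k=[A_1,A_k]\), so your displayed identity \(\partial A_1/\partial z_k=[A_k,A_1]\) holds only after restriction to \(D\), where the extra term vanishes — which is all you use, so nothing breaks.
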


\begin{proof}
	In a local trivialization \(\nabla=d-\Omega\), and the flatness condition means that \(d\Omega=\Omega\wedge\Omega\). If we write \(\Omega\) as in Equation \eqref{eq:logconect} we obtain
	\begin{equation}\label{eq:pflem1}
		\Omega \wedge \Omega = \tilde{\Omega} \wedge \tilde{\Omega} + \sum_{j \geq 2}[A_1, A_j] \frac{dz_1 \wedge dz_j}{z_1} ,
	\end{equation}
	with \(\tilde{\Omega} = \sum_{j=2}^{n} A_j dz_j\); and
	\begin{equation}\label{eq:pflem2}
		d\Omega = \tilde{d} \tilde{\Omega} + \sum_{j \geq 2} \left( z_1 \frac{\p A_j}{\p z_1} -\frac{\p A_1}{\p z_j} \right) \frac{dz_1 \wedge dz_j}{z_1} 
	\end{equation}
	where \(\tilde{d} = \sum_{j=2}^{n} (\p/\p z_j) dz_j\). Since \(d\Omega= \Omega \wedge \Omega\), the r.h.s. of Equations \eqref{eq:pflem1} and \eqref{eq:pflem2} must match each other term by term. Setting \(z_1=0\) we obtain
	\begin{equation}\label{eq:pflem3}
		d \Omega' = \Omega' \wedge \Omega' \hspace{2mm} \mbox{ and } \hspace{2mm} \frac{\p \bar{A}_1}{\p z_j} = [\bar{A}_j, \bar{A}_1] \hspace{1mm} \mbox{ for } j=2, \ldots, n . 	
	\end{equation}
	Here \(\Omega' = \tilde{\Omega}(0, z_2, \ldots, z_n)\) is as in Definition \ref{def:resind} and \(\bar{A}_i = A_i(0, z_2, \ldots, z_n)\). The first equation in \eqref{eq:pflem3} implies that \(\nabla'\) is flat, the second set of equations imply that \(\RES(\nabla)\) is parallel; see Note \ref{not:end}.
\end{proof}

\begin{remark}
	It follows from the second item in Lemma \ref{lem:apendix1} that \(\RES(\nabla)\) varies in the same conjugacy class along \(D\), in particular its eigenvalues are constant.
\end{remark}

\begin{note}\label{not:end}
	Let \(\nabla\) be a connection on a vector bundle \(E\) written as \(\nabla=d-\Omega\) with respect to some frame defined on some open set \(U\). Sections of \(E\) over \(U\) are represented by vector-valued functions \(v=(v_1, \ldots, v_r)\) and \(\nabla v = dv - \Omega v\).
	The connection on \(\End E\) induced by \(\nabla\) is uniquely defined by imposing the Leibniz rule. More precisely, a section of \(\End E\) over \(U\) is represented by a matrix-valued function \(M\) and we require that
	\begin{equation}\label{eq:Leibniz}
		\nabla (Mv) = (\nabla M) v + M(\nabla v) .
	\end{equation}
	Expanding the terms \(\nabla(Mv)\) and \(\nabla v\), it follows that
	\[\nabla M = dM - [\Omega, M] . \]
\end{note}

\begin{note}\label{not:gaugeflat}	 
	 Equation \eqref{eq:Leibniz} implies that \(M\) is a gauge automorphism, i.e. it commutes with \(\nabla\), precisely when \(\nabla M =0\). The same discussion applies when we have two bundles with connections \((E_1, \nabla_1)\) and \((E_2, \nabla_2)\). There is an induced connection on \(\Hom(E, F)\) defined by the Leibniz rule and gauge transformations correspond to flat sections.  
\end{note}

\begin{lemma}[cf. {\cite[Lemma 1.7]{CHL}}]\label{lem:apendixTDflat}
	Suppose \(\nabla\) is a logarithmic connection on \(TX\). If \(\nabla\) is torsion free then \(TD \subset \ker \RES(\nabla)\). In particular, if \(\nabla\) is flat torsion free and \(\RES(\nabla)\) is non-zero then \(TD\) is a flat sub-bundle of \((TX|_D, \nabla')\) hence the restriction of \(\nabla'\) to \(TD\) defines a flat torsion free connection.
\end{lemma}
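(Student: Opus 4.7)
The approach is to work in local coordinates and exploit the torsion-free condition on coordinate vector fields. Choose holomorphic coordinates $(z_1,\ldots,z_n)$ centred at a point of $D$ with $D=\{z_1=0\}$, and trivialize $TX$ by the frame $\partial_{z_1},\ldots,\partial_{z_n}$. Write $\nabla=d-\Omega$ with $\Omega=A_1\,dz_1/z_1+\sum_{j\geq 2}A_j\,dz_j$. Since coordinate vector fields commute, torsion-freeness gives $\nabla_{\partial_{z_i}}\partial_{z_j}=\nabla_{\partial_{z_j}}\partial_{z_i}$, which translates into the matrix identity $\Omega_{\cdot,j}(\partial_{z_i})=\Omega_{\cdot,i}(\partial_{z_j})$ for every pair $i,j$.

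Plugging in $i=1$ and $j\geq 2$, the left-hand side is the $j$-th column of $A_1/z_1$ while the right-hand side is the $1$-st column of the holomorphic matrix $A_j$. For equality to hold as holomorphic functions on $X\setminus D$, the $j$-th column of $A_1$ must vanish on $D$ for every $j\geq 2$; that is, $A_1(0,z_2,\ldots,z_n)\,\partial_{z_j}=0$ on $D$ for $j\geq 2$. Since $\partial_{z_2},\ldots,\partial_{z_n}$ frame $TD$, this says precisely $TD\subset\ker\RES(\nabla)$, establishing the first claim.

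For the second assertion, assume $\nabla$ is also flat and $\RES(\nabla)\not\equiv 0$. By Lemma \ref{lem:apendix1}(ii), $\RES(\nabla)$ is $\nabla'$-parallel as a section of $\End(TX|_D)$; hence its kernel is a $\nabla'$-flat sub-bundle of $TX|_D$ (by Note \ref{not:gaugeflat}, applied to viewing $\RES(\nabla)$ as a parallel bundle homomorphism whose kernel has locally constant rank once $\RES(\nabla)$ is nonzero and has constant conjugacy class along $D$). Combined with the inclusion $TD\subset\ker\RES(\nabla)$ from the first part and the rank count $\dim TD=n-1\leq\dim\ker\RES(\nabla)\leq n-1$ (the upper bound holding because $\RES(\nabla)\neq 0$ implies its rank is at least one on the connected hypersurface $D$), we get $TD=\ker\RES(\nabla)$, so $TD$ itself is $\nabla'$-flat.

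Finally, one must verify that the restriction of $\nabla'$ to $TD$ is torsion-free. The key observation is a direct local computation: for holomorphic vector fields $X,Y$ on a neighbourhood in $X$ tangent to $D$, the function $X(z_1)$ is divisible by $z_1$, so the potentially singular term $A_1\,X(z_1)/z_1$ in $\Omega(X)$ is in fact holomorphic, and its value on $D$ annihilates $Y|_D\in TD\subset\ker\RES(\nabla)$. Thus $(\nabla_XY)|_D=\nabla'_{X|_D}(Y|_D)$, and torsion-freeness of the restriction is inherited from torsion-freeness of $\nabla$ via $\nabla'_{X|_D}Y|_D-\nabla'_{Y|_D}X|_D-[X|_D,Y|_D]=(T(X,Y))|_D=0$. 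The only delicate point in the whole argument is confirming that $\ker\RES(\nabla)$ has constant rank along $D$ (equivalently, that $\RES(\nabla)$ has constant rank), which follows because $\RES(\nabla)$ is $\nabla'$-parallel and $D$ is connected; granting that, everything reduces to the linear-algebraic consequence of the coordinate calculation.
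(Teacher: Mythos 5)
Your proof is correct and follows essentially the same route as the paper's: the first claim comes from evaluating the vanishing torsion on the pair $\p_{z_1}$ (resp.\ $z_1\p_{z_1}$) and $\p_{z_j}$ and reading off that the $j$-th column of $A_1$ vanishes on $D$, and the second claim combines $TD\subset\ker\RES(\nabla)$ with the dimension count and the parallelism of $\RES(\nabla)$ from Lemma \ref{lem:apendix1}. You are in fact slightly more thorough than the paper, which leaves implicit both the constant-rank point for $\ker\RES(\nabla)$ and the verification (essentially Lemma \ref{lem:restcovder}) that the restricted connection is torsion free.
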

 
\begin{proof}
	Recall that the torsion of \(\nabla\) is given by
	\[T(X,Y) = \nabla_X Y - \nabla_Y X - [X, Y] . \]
	Write \(\nabla = d -\Omega\) as in Equation \eqref{eq:logconect} using the trivialization of \(TX\) given by the coordinate vector fields \(\p_{z_1}, \ldots, \p_{z_n}\).
	Replace \(\nabla=d-\Omega\) in the equation for \(T\) to obtain
	\[T(X,Y) = (\imath_Y \Omega)(X) - (\imath_X \Omega)(Y) , \]
	where \(\imath_X\) denotes contraction with the vector \(X\).  Take \(X=z_1\p_{z_1}\) and \(Y=\p_{z_j}\) with \(j>1\), so
	\[\imath_X \Omega = A_1, \qquad \imath_Y \Omega = A_j . \]
	Since \(T(X,Y) \equiv 0\) we get that \(A_1(\p_{z_j}) - A_j(z_1 \p_{z_1})\) vanishes identically, restricting to \(z_1=0\) we obtain \(\RES(\nabla)(\p_{z_j})=0\).
	
	For the second part, note that if the residue is non-zero then \(TD= \ker \RES(\nabla)\), by Lemma \ref{lem:apendix1} \(\RES(\nabla)\) is parallel, hence \(TD\) must be a flat sub-bundle.
\end{proof}

\subsection{Normal Form Theorem}

The next result is well-known. 

\begin{theorem}[Normal Form Theorem]\label{thm:normalform}
	Let \(\nabla = d - \Omega\)  be a flat logarithmic connection on the trivial bundle over the polydisc \(D^n\) given by Equation \eqref{eq:logconect}. Let \(A\) be the constant matrix given by the residue of \(\nabla\) at the origin, that is \(A=A_1(0)\). Moreover, suppose that the following \emph{non-resonance condition} holds:
	\begin{itemize}
		\item no two eigenvalues of \(A\) differ by a non-zero integer. 
	\end{itemize}
	Then \(\nabla\) is holomorphically gauge equivalent to the standard connection \(\nabla^0 =d-\Omega^0\) with \(\Omega^0=Adz_1/z_1\).
\end{theorem}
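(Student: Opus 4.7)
The plan is to construct the desired gauge transformation $G$ as a holomorphic power series in $z_1$ whose coefficients are determined by a triangular linear system made invertible by the non-resonance hypothesis. The argument has three stages: a preliminary reduction using Lemma \ref{lem:apendix1}, a formal recursion, and a convergence estimate.

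First I would exploit that the induced connection $\nabla'$ on $\{z_1=0\}\cap D^n$ is a flat holomorphic connection on a simply connected polydisc, so admits a global holomorphic fundamental solution; regarded as a $z_1$-independent gauge transformation on $D^n$ (constant in $z_1$), this trivializes $\nabla'$. By Lemma \ref{lem:apendix1} the residue $A_1(0,z_2,\ldots,z_n)$ is $\nabla'$-parallel, so in the new frame it becomes a constant matrix, and after a further constant change of frame we may take this constant to be $A = A_1(0)$. Possibly shrinking $D^n$, we may henceforth assume $A_1(0,z')=A$ and $A_j(0,z')=0$ for $j\geq 2$, where $z'=(z_2,\ldots,z_n)$.

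I would then make the ansatz $G = I + \sum_{k\geq 1} G_k(z')\, z_1^k$, expand $A_1 = A + \sum_{k\geq 1} B_k(z')\, z_1^k$ and $A_j = \sum_{k\geq 1} B_{j,k}(z')\, z_1^k$ for $j\geq 2$, and require $dG = \Omega^0 G - G\Omega$. Matching the $z_1^k$-coefficient of the $dz_1$-component gives the recursion
\[
(k\cdot\mathrm{Id} - \mathrm{ad}_A)(G_k) = -\sum_{m=1}^{k} G_{k-m} B_m \qquad (k\geq 1),
\]
which determines $G_k$ uniquely provided $k\cdot\mathrm{Id}-\mathrm{ad}_A$ is invertible on $\mathrm{Mat}_r(\mathbb{C})$. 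The eigenvalues of $\mathrm{ad}_A$ are the differences $\lambda_i-\lambda_j$ of eigenvalues of $A$, so the non-resonance hypothesis $\lambda_i-\lambda_j\notin\mathbb{Z}\setminus\{0\}$ is exactly what ensures $k-(\lambda_i-\lambda_j)\neq 0$ for every integer $k\geq 1$. Compatibility with the $dz_j$-components is automatic: the overdetermined system $dG = \Omega^0 G - G\Omega$ is Frobenius-integrable since both $\nabla$ and $\nabla^0$ are flat, so the unique formal solution with $G|_{z_1=0}=I$ also solves the transverse equations (and the preliminary reduction $A_j(0,z')=0$ makes this compatibility visible at first order).

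The principal obstacle is the convergence of the formal series. Non-resonance gives the operator bound $\|(k\cdot\mathrm{Id}-\mathrm{ad}_A)^{-1}\| = O(1/k)$ as $k\to\infty$, and Cauchy estimates on a compact subpolydisc yield $\|B_m(z')\|\leq M\rho^{-m}$ for some $M,\rho>0$. A standard majorant argument then controls $\|G_k(z')\|$ on a smaller subpolydisc by the coefficients of a convergent scalar series, producing a holomorphic $G$ on some neighborhood of the origin with $G(0)=I$. Invertibility of $G$ near $0$ follows from $G(0)=I$ by continuity, and the fact that $G$ conjugates $\nabla$ to $\nabla^0$ is built into the recursion. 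This yields the theorem on a possibly smaller polydisc, which is all that is needed when the result is invoked (in Lemma \ref{gauge}).
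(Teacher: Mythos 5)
Your proof is correct, but it takes a genuinely different route from the paper's. The paper first shows (Lemma \ref{lem:conjhol}) that the holonomies of \(\nabla\) and \(\nabla^0\) are conjugate, by restricting to \(D^*\times\{0\}\), quoting the one-variable normal form theorem for non-resonant Fuchsian systems, and using that \(\pi_1(D^*\times D^{n-1})\) is generated by loops around the divisor; this produces a holomorphic gauge equivalence \(G\) on all of \(D^*\times D^{n-1}\). It then shows \(G\) is meromorphic along \(\{z_1=0\}\) via a growth estimate on \(z_1\,\p G/\p z_1\) (Lemma \ref{lem:Gmero}), and finally kills the polar part by examining the lowest Laurent coefficient \(G_p\): the relation \((A+p\Id)G_p=G_pA\) forces \(A+p\Id\) and \(A\) to share an eigenvalue, so non-resonance gives \(p=0\) (Lemma \ref{lem:Ghol}). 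Your argument is instead the classical constructive one: flatten the induced connection on the divisor so that \(A_1(0,z')=A\) and \(A_j(0,z')=0\), solve the recursion \((k\Id-\mathrm{ad}_A)G_k=-\sum_m G_{k-m}B_m\) (non-resonance entering as invertibility of \(k\Id-\mathrm{ad}_A\), with \(\|(k\Id-\mathrm{ad}_A)^{-1}\|=O(1/k)\) holding for large \(k\) regardless of non-resonance), and close with a majorant estimate. What your approach buys is self-containedness — no appeal to the one-variable theorem — and an explicit construction; what it costs is the slightly fiddly verification that the formal solution of the \(dz_1\)-equation also satisfies the transverse equations (which does need the flatness of both connections and the normalization \(A_j(0,z')=0\), as you note), and the fact that the conclusion is a priori only obtained on a smaller polydisc. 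The latter is harmless for the application in Lemma \ref{gauge}, and can in any case be upgraded to all of \(D^n\): your local \(G\) is a flat section of \(\Hom\bigl((E,\nabla),(E,\nabla^0)\bigr)\), so it continues analytically over \(D^*\times D^{n-1}\) and is single-valued there because the inclusion of the small punctured polydisc induces an isomorphism on \(\pi_1\); holomorphy across the divisor then follows as in the paper's Lemma \ref{lem:Ghol}.
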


The proof of Theorem \ref{thm:normalform} is a consequence of the next few lemmas and it is given at the end of this section.

\begin{lemma}\label{lem:conjhol}
	Let \(\nabla\) and \(\nabla^0\) be as in Theorem \ref{thm:normalform}. Then the holonomies of \(\nabla\) and \(\nabla^0\) are conjugate.
\end{lemma}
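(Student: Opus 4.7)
The plan is to reduce to the one-dimensional setting by restricting to a transversal disc, and then to exhibit a holomorphic fundamental solution of $\nabla$ of Frobenius form that displays the monodromy explicitly.

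First I would reduce to one complex dimension. The inclusion of the transversal disc $\ell = \{(z_1, 0, \ldots, 0) : z_1 \in D\}$ into the polydisc induces an isomorphism of fundamental groups $\pi_1(\ell \setminus \{0\}) \xrightarrow{\sim} \pi_1(D^n \setminus \{z_1 = 0\}) \cong \Z$, both generated by a small loop around the origin. Consequently, the holonomies of $\nabla$ and $\nabla^0$ are conjugate if and only if the same is true for their pullbacks to $\ell \setminus \{0\}$. The pullback of $\nabla^0$ is again $d - A \frac{dz_1}{z_1}$, while the pullback of $\nabla$ is $d - B(z_1) \frac{dz_1}{z_1}$ with $B(z_1) = A_1(z_1, 0, \ldots, 0)$ holomorphic on $D$ and $B(0) = A$.

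Next, I would look for a fundamental solution of the one-dimensional connection of the form $X(z) = P(z) \cdot z^A$, where $P : D \to GL(r, \C)$ is holomorphic with $P(0) = \Id$ and $z^A := \exp(A \log z)$ is the canonical multi-valued fundamental solution of $\nabla^0$. If such a $P$ exists, then in the basis of parallel sections given by the columns of $X$, the monodromy of $\nabla$ around the origin is $\exp(2\pi i A)$, which agrees with the monodromy of $\nabla^0$ in its canonical basis, yielding the desired conjugacy. Substituting $X = Pz^A$ into the equation $z X' = B X$ produces the ODE $z P'(z) = B(z) P(z) - P(z) A$. Writing $B(z) = A + \sum_{k \geq 1} B_k z^k$ and $P(z) = \Id + \sum_{k \geq 1} P_k z^k$ and matching coefficients gives the triangular recursive system
\[ (k \cdot \Id - \mathrm{ad}_A)(P_k) = B_k + \sum_{j=1}^{k-1} B_j P_{k-j}, \qquad k \geq 1.\]
The eigenvalues of $\mathrm{ad}_A$ on $M_r(\C)$ are the differences $\lambda_i - \lambda_j$ of eigenvalues of $A$, so the non-resonance assumption ensures that $k \cdot \Id - \mathrm{ad}_A$ is invertible for every $k \geq 1$, and the coefficients $P_k$ are uniquely determined.

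The hard part, which I expect to be the main obstacle, is showing that the resulting formal series $P(z) = \Id + \sum P_k z^k$ has positive radius of convergence. The key input is the uniform bound $\|(k \cdot \Id - \mathrm{ad}_A)^{-1}\| \leq C/k$ valid for all $k$ larger than some constant depending only on $\|A\|$, combined with Cauchy estimates $\|B_k\| \leq M \rho^{-k}$ on any disc of radius $\rho$ on which $B$ is holomorphic. A standard majorant argument, comparing $\sum \|P_k\| r^k$ to the solution of an auxiliary geometric functional equation, yields convergence on a sufficiently small disc around the origin. After possibly shrinking $D$, we may assume $P$ is holomorphic on all of $D$, which then completes the argument.
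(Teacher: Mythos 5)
Your proposal is correct and follows essentially the same route as the paper: restrict to the transversal disc $D^*\times\{0\}$, note that its meridian generates $\pi_1(D^*\times D^{n-1})$, and invoke the one-dimensional non-resonant normal form. The only difference is that where the paper cites this classical one-dimensional result (\cite[Theorem 16.16]{Yak}), you supply its standard proof via the Frobenius ansatz $X=P(z)z^{A}$ and a majorant argument.
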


\begin{proof}
	 We pull-back \(\nabla\) to the punctured disc via the inclusion \(D^* \times \{0\} \subset D^* \times D^{n-1} \) to obtain a `non-resonant Fuchsian system' which is holomorphically equivalent to the Euler system on \(D^{*}\) defined by \(Adz_1/z_1\); see \cite[Theorem 16.16]{Yak}.
	 In particular, the holonomies of \(\nabla\) and \(\nabla^0\) along the standard loop encircling \(\{z_1=0\}\) are conjugate. Since the fundamental group of \(D^* \times D^{n-1}\) is generated by such loops, the statement follows. 
\end{proof}

\begin{example}[{\cite[pg. 54]{Deligne}}]\label{ex:del}
	An example of a resonant Fuchsian system of rank two is given by the following
	\[\nabla \binom{u}{v} = d \binom{u}{v} - \begin{pmatrix}
	0 & 0 \\
	0 & 1
	\end{pmatrix}  \frac{dx}{x}
	\binom{u}{v} - \begin{pmatrix}
	0 & 0 \\
	dx & 0
	\end{pmatrix}
	\binom{u}{v} .   \]
	The equations for a parallel section \(\nabla s= 0\) 	with \(s=(u,v)\) are
	\[du=0, \,\ dv = \left(\frac{v}{x} + u\right)dx .\]
	The solutions are given by \(u = a\), \(v=a x \log x + bx\) with \(a,b \in \C\). The holonomy is
	\[T_x = \begin{pmatrix}
	1 & 2\pi i x \\
	0 & 1
	\end{pmatrix} . \]
	We see that \(T_0\) is not conjugate to \(T_x\) for \(x\neq0\).	
\end{example}

The proofs of Lemmas \ref{lem:Gmero} and \ref{lem:Ghol} that follow are taken from Malgrange's exposition in \cite{Borel}.

\begin{lemma}\label{lem:Gmero}
	Let \(\nabla\) and \(\nabla^0\) be as in Theorem \ref{thm:normalform}. Then any holomorphic gauge transformation \(G\) between \(\nabla\) and \(\nabla^0\) defined on \(D^* \times D^{n-1}\) is meromorphic on \(D^n\).
\end{lemma}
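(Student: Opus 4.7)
The strategy is to derive an explicit linear ODE for $G$ in the $z_1$-direction, apply Gronwall's inequality to obtain polynomial growth $\|G\| = O(|z_1|^{-N})$ uniformly in the transverse parameters, and then invoke the Riemann removable singularity theorem to conclude that $z_1^N G$ extends holomorphically across $\{z_1=0\}$.

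First I would extract the governing equation. From $dG = \Omega G - G\,\Omega^0$, with $\Omega = A_1\,dz_1/z_1 + A_2\,dz_2 + \cdots + A_n\,dz_n$ and $\Omega^0 = A\,dz_1/z_1$, contraction with $\partial_{z_1}$ yields
\[ z_1\,\partial_{z_1} G(z) = A_1(z)\,G(z) - G(z)\,A, \]
a linear first-order ODE in $z_1$ with $z_2, \ldots, z_n$ as holomorphic parameters, whose coefficient $A_1$ is holomorphic on the whole polydisc $D^n$. The remaining relations $\partial_{z_j} G = A_j G$ for $j \geq 2$ merely express holomorphic dependence on parameters, which is automatic.

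Next I would carry out a Gronwall estimate. Fix a relatively compact polydisc $K \subset D^{n-1}$ and a radius $\rho > 0$ with $\overline{D_\rho} \times \overline{K}$ contained in the domain of $A_1$; on this compact set $A_1$ is bounded in norm by some $M$, so
\[ \bigl\|\partial_{z_1} G(z_1, z')\bigr\| \le \frac{M + \|A\|}{|z_1|}\,\bigl\|G(z_1, z')\bigr\| \quad \text{on } D_\rho^* \times K. \]
Fixing a reference point $z_1^0 \in D_\rho^*$ and integrating along a path in $D_\rho^*$ from $z_1^0$ to $z_1$ consisting of a circular arc at radius $|z_1^0|$ followed by a radial segment (neither meeting the origin), the path integral of $|d\zeta|/|\zeta|$ is bounded by $2\pi + \log(|z_1^0|/|z_1|)$. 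Gronwall's inequality, combined with the uniform bound on $\|G(z_1^0, \cdot)\|$ on $\overline K$ available because $G$ is holomorphic on $D^* \times D^{n-1}$, then gives
\[ \|G(z_1, z')\| \le C\,|z_1|^{-N} \quad \text{for all } (z_1, z') \in D_\rho^* \times K, \]
with constants $C, N$ depending only on $K$, $\rho$, and the sup of $\|A_1\|$ on $\overline{D_\rho} \times \overline K$.

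Finally, the matrix-valued function $z_1^N G$ is then bounded and holomorphic on $D_\rho^* \times K$, so it extends entrywise by the Riemann extension Theorem \ref{Riemann} to a holomorphic matrix on $D_\rho \times K^\circ$. Since every point of $\{z_1 = 0\} \cap D^n$ has such a product neighbourhood, $G$ is meromorphic on $D^n$ with a pole of order at most $N$ along $\{z_1 = 0\}$ (where $N$ may depend on the polydisc). The main technical point is simply the uniformity of the growth exponent across transverse parameters, which follows immediately from the uniform bound on $A_1$ on compacta; I note that the non-resonance hypothesis on $A$ plays no role at this stage, entering only in the stronger companion Lemma \ref{lem:Ghol} that upgrades meromorphic to holomorphic extension.
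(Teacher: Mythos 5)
Your proposal is correct and follows essentially the same route as the paper: extract the $dz_1$-component of the gauge equation to get $z_1\,\partial_{z_1}G = A_1 G - GA$, integrate the resulting differential inequality to obtain $\|G\| = O(|z_1|^{-N})$ on compacta, and conclude meromorphic extension. You merely make explicit two steps the paper leaves implicit (the Gronwall path integral and the application of the Riemann extension theorem to $z_1^N G$), and your closing observation that non-resonance is not used here is accurate.
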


\begin{proof}
	The two connection forms are
	\[\Omega = \frac{M_1}{z_1} dz_1 + \sum_{i>1} M_i dz_i, \hspace{3mm} \Omega^0 = \frac{A}{z_1} dz_1 \]
	with \(M_1=A + O(|z|)\). The \(dz_1\) component of the equation
	\(dG = G \Omega^0 - \Omega G\) implies
	\[z_1 \frac{\p G}{\p z_1} = G A - M_1 G . \]
	Therefore
	\[|z_1| \left\| \frac{\p G}{\p z_1} \right\| \leq C \|G\| \]
	for some \(C>0\), which integrates to give the bound \(\|G\|=O(|z_1|^{-N})\) for some \(N>0\). This implies that \(G\) extends meromorphically.
\end{proof}

\begin{note}
	In the context of Lemma \ref{lem:Gmero}, \(G\) is a flat section of \(\End E\) endowed with the logarithmic connection induced by \(\nabla\) and \(\nabla^0\). It is a general fact that flat sections of logarithmic connections have `moderate growth', see \cite{Deligne}, from which it follows that \(G\) must be meromorphic.
\end{note}

\begin{lemma}\label{lem:Ghol}
	Let \(\nabla\) and \(\nabla^0\) be as in Theorem \ref{thm:normalform}. Then any holomorphic gauge transformation \(G\) between \(\nabla\) and \(\nabla^0\) defined on \(D^* \times D^{n-1}\) is holomorphic on \(D^n\).
\end{lemma}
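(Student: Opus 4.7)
\medskip

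\noindent\textbf{Proof proposal for Lemma \ref{lem:Ghol}.}

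The plan is to start from the meromorphic extension provided by Lemma \ref{lem:Gmero} and rule out any pole along $\{z_1=0\}$ by an eigenvalue argument on the leading Laurent coefficient, using crucially the non-resonance hypothesis of Theorem \ref{thm:normalform} and the fact that the residue of $\nabla$ along $\{z_1=0\}$ is, pointwise, conjugate to $A$ (a consequence of Lemma \ref{lem:apendix1}).

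First I would write $M_1(z_1,z_2,\ldots,z_n) = \widetilde{M}(z_2,\ldots,z_n) + z_1 R(z)$ where $\widetilde{M}(z_2,\ldots,z_n)=M_1(0,z_2,\ldots,z_n)$ is the residue of $\nabla$ along $\{z_1=0\}$, and observe that by Lemma \ref{lem:Gmero} we have a Laurent expansion
\begin{equation*}
G(z_1,\ldots,z_n) = \sum_{k\ge -N} z_1^{k}\, G_k(z_2,\ldots,z_n),
\end{equation*}
with each $G_k$ holomorphic on $D^{n-1}$ and $N\in\Z$ minimal with $G_{-N}\not\equiv 0$. I would then extract the $dz_1$-component of the gauge equation $dG = G\Omega^0 - \Omega G$, which reads
\begin{equation*}
z_1\,\frac{\partial G}{\partial z_1} = G A - M_1 G ,
\end{equation*}
and compare coefficients of $z_1^{-N}$ on both sides. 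The only contributions to order $z_1^{-N}$ come from $G_{-N}$ itself, yielding the Sylvester-type relation
\begin{equation*}
\widetilde{M}\, G_{-N} - G_{-N}\, A = N\, G_{-N}
\end{equation*}
as an equation of holomorphic matrix-valued functions on $D^{n-1}$.

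Next I would assume for contradiction that $N>0$. The key point is the spectral analysis of the linear operator $L(z_2,\ldots,z_n) : X \mapsto \widetilde{M}\,X - X A$ on $\mathrm{Mat}_{r\times r}(\C)$. By Lemma \ref{lem:apendix1} the residue $\widetilde{M}$ is parallel for the induced connection on $\End E|_D$, so its eigenvalues are constant along $D$ and coincide with those of $A$; equivalently, at each point $\widetilde{M}$ is conjugate to a matrix with the same Jordan spectrum as $A$. Consequently the spectrum of $L$ at every point of $D^{n-1}$ is contained in $\{\lambda_i-\lambda_j : \lambda_i,\lambda_j\in\mathrm{Spec}(A)\}$. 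The non-resonance hypothesis says that no nonzero integer lies in this set, so $L - N\cdot\mathrm{Id}$ is invertible at every point of $D^{n-1}$. The Sylvester equation then forces $G_{-N}\equiv 0$, contradicting the choice of $N$.

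Hence $N\le 0$ and $G$ is holomorphic on $D^n$. Applied also to $G^{-1}$ (which satisfies the analogous gauge equation with the roles of $\Omega$ and $\Omega^0$ swapped, and is meromorphic by the same argument as Lemma \ref{lem:Gmero}), this shows both $G$ and $G^{-1}$ are holomorphic, so $G$ takes values in $GL(r,\C)$ across $\{z_1=0\}$, completing the proof of Lemma \ref{lem:Ghol} and, together with Lemma \ref{lem:conjhol} and Lemma \ref{lem:Gmero}, of Theorem \ref{thm:normalform}. The main obstacle is step three: one must verify that the operator $L(z_2,\ldots,z_n)-N\cdot\mathrm{Id}$ is invertible \emph{uniformly} on $D^{n-1}$ (not only at the origin). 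This is precisely where the parallel-transport property of $\RES(\nabla)$ from Lemma \ref{lem:apendix1} enters; without it one would only control eigenvalues at a single point.
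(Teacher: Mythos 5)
Your proposal is correct and follows essentially the same route as the paper: extract the $dz_1$-component of the gauge equation, compare leading Laurent coefficients to get a Sylvester-type relation for $G_{-N}$, and use the non-resonance hypothesis to force $N=0$. Your treatment is in fact slightly more careful than the paper's, which writes the relation directly with $A$ in place of $\widetilde{M}(z_2,\ldots,z_n)$ and tacitly uses that the residue has constant spectrum along $D$ — the point you make explicit via Lemma \ref{lem:apendix1}; the remark about $G^{-1}$ is a harmless addition not required by the statement of the lemma.
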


\begin{proof}
	By Lemma \ref{lem:Gmero} we can take \(G=\sum_k G_k z_1^k\) to be the Laurent series of \(G\) with respect to \(z_1\) and coefficients \(G_k=G_k(z_2, \ldots, z_n)\) given by  holomorphic functions in \(z_i\) for \(i>1\).
	Let \( p = \min_{k \in \Z} \{k : \,\ G_k \neq  0\}\). The equation \(dG=G \Omega^0 - \Omega G\) implies that
	\[p G_p = G_p A - A G_p \]	
	We rewrite this as
	\[(A + p \Id) G_p = G_p A . \]
	Now we use the following fact:
	\begin{itemize}
		\item If \(B, C, S \in M(r \times r, \C)\) satisfy \(BS=SC\) and \(S\neq 0\) then \(B\) and \(C\) have a common eigenvalue.
	\end{itemize}
	We apply the above with \(B= A + p \Id\), \(C=A\) and \(S=G_p\). It follows that there are two eigenvalues \(\lambda_1, \lambda_2\) of \(A\) with \(\lambda_2=\lambda_1 + p\).
	Since \(p \in \Z\), our hypothesis imply that the only way this can happen is when \(p=0\). It follows that \(G\) is holomorphic.
\end{proof}

We can now establish the Normal Form Theorem.

\begin{proof}[Proof of Theorem \ref{thm:normalform}]
	By Lemma \ref{lem:conjhol} there is a holomorphic gauge transformation \(G\) between \(\nabla\) and \(\nabla^0\) defined on \(D^*\times D^{n-1}\). By Lemma \ref{lem:Ghol} the gauge transformation \(G\) extends holomorphically to the whole \(D^n\).
\end{proof}

\bibliographystyle{alpha}
\bibliography{BIBLIO}

\Address

\end{document}